\newtheorem{theorem}{Theorem}[section]
\newtheorem{lemma}[theorem]{Lemma}
\newtheorem{proposition}[theorem]{Proposition}
\newtheorem{corollary}[theorem]{Corollary}
\theoremstyle{definition}
\newtheorem{definition}[theorem]{Definition}
\newtheorem{remark}[theorem]{Remark}
\newtheorem{ipotesi}[theorem]{Assumption}
\numberwithin{equation}{section}
\newcommand\supp{{\rm spt}}
\newcommand\res{\mathop{\hbox{\vrule height 7pt width .3pt depth 0pt
\vrule height .3pt width 5pt depth 0pt}}\nolimits}
\newcommand{\bT}{\mathbf{T}}
\newcommand{\bG}{\mathbf{G}}
\newcommand{\cH}{{\mathcal{H}}}
\newcommand{\bp}{{\mathbf{p}}}
\newcommand{\bmm}{{\mathbf{m}}}
\newcommand{\me}{{\mathbf{me}}}
\newcommand{\p}{{\mathbf{p}}}
\newcommand{\sW}{{\mathscr{W}}}
\newcommand{\sC}{{\mathscr{C}}}
\newcommand\sS{{\mathscr S}}
\newcommand\sP{{\mathscr P}}
\newcommand{\bGam}{{\bm \Gamma}}
\newcommand{\bDel}{{\bm \Delta}}
\newcommand\bmo{{\bm m}_0}
\newcommand{\cM}{{\mathcal{M}}}
\newcommand{\bU}{{\mathbf{U}}}
\newcommand{\phii}{{\bm{\varphi}}}
\newcommand{\Phii}{{\bm{\Phi}}}
\newcommand{\cU}{{\mathcal{U}}}
\newcommand{\cV}{{\mathcal{V}}}
\newcommand{\cL}{{\mathcal{L}}}
\newcommand{\cK}{{\mathcal{K}}}
\newcommand{\bE}{{\mathbf{E}}}
\newcommand{\bB}{{\mathbf{B}}}
\newcommand{\bC}{{\mathbf{C}}}
\newcommand{\be}{{\mathbf{e}}}
\newcommand{\bd}{{\mathbf{d}}}
\newcommand\Z{{\mathbb Z}}
\newcommand\N{{\mathbb N}}
\newcommand\R{{\mathbb R}}
\newcommand{\eps}{{\varepsilon}}
\newcommand{\bA}{\mathbf{A}}
\def\Xint#1{\mathchoice
{\XXint\displaystyle\textstyle{#1}}%
{\XXint\textstyle\scriptstyle{#1}}%
{\XXint\scriptstyle\scriptscriptstyle{#1}}%
{\XXint\scriptscriptstyle\scriptscriptstyle{#1}}%
\!\int}
\def\XXint#1#2#3{{\setbox0=\hbox{$#1{#2#3}{\int}$ }
\vcenter{\hbox{$#2#3$ }}\kern-.6\wd0}}
\def\mint{\Xint-}
\newcommand{\Lip}{{\rm {Lip}}}
\newcommand{\dist}{{\rm {dist}}}
\newcommand{\dv}{{\text {div}}}
\newcommand\Id{{\rm Id}\,}
\newcommand{\cA}{{\mathcal{A}}}
\newcommand{\cB}{{\mathcal{B}}}
\newcommand{\cG}{{\mathcal{G}}}
\newcommand{\cT}{{\mathcal{T}}}
\newcommand{\mass}{{\mathbf{M}}}
\newcommand\e{\mathbf{e}}
\def\Is#1{{\mathcal{A}}_{#1} (\R^{n})}
\newcommand{\Iqs}{{\mathcal{A}}_Q(\R^{n})}
\newcommand{\Iq}{{\mathcal{A}}_Q}
\def\a#1{\left\llbracket{#1}\right\rrbracket}
\newcommand{\D}{\textup{Dir}}
\newcommand{\de}{\partial}
\newcommand{\xii}{{\bm{\xi}}}
\newcommand{\ro}{{\bm{\rho}}}
\newcommand{\etaa}{{\bm{\eta}}}
\newcommand{\ex}{{\mathbf{ex}}}
\newcommand\B{{\mathbf{B}}}
\newcommand{\bh}{\mathbf{h}}
\newcommand{\breg}{\mathrm{Reg}_b}
\newcommand{\bsing}{\mathrm{Sing}_b}
\title[An Allard-type boundary regularity theorem]{An Allard-type boundary regularity theorem for $2d$ minimizing currents at smooth curves with arbitrary multiplicity}
\author[C. De Lellis]{Camillo De Lellis}
\author[S. Nardulli]{Stefano Nardulli}
\author[S. Steinbr\"uchel]{Simone Steinbr\"uchel}
\begin{document}

\maketitle

\begin{abstract}
We consider integral area-minimizing $2$-dimensional currents $T$ in $U\subset \mathbb R^{2+n}$ with $\partial T = Q\a{\Gamma}$, where $Q\in \mathbb N \setminus \{0\}$ and $\Gamma$ is sufficiently smooth. We prove that, if $q\in \Gamma$ is a point where the density of $T$ is strictly below $\frac{Q+1}{2}$, then the current is regular at $q$. The regularity is understood in the following sense: there is a neighborhood of $q$ in which $T$ consists of a finite number of regular minimal submanifolds meeting transversally at $\Gamma$ (and counted with the appropriate integer multiplicity). In view of well-known examples, our result is optimal, and it is the first nontrivial generalization of a classical theorem of Allard for $Q=1$. As a corollary, if $\Omega\subset \mathbb R^{2+n}$ is a bounded uniformly convex set and $\Gamma\subset \partial \Omega$ a smooth $1$-dimensional closed submanifold, then any area-minimizing current $T$ with $\partial T = Q \a{\Gamma}$ is regular in a neighborhood of $\Gamma$. 
\end{abstract}

\tableofcontents

\section{Introduction} 

Consider an area-minimizing integral current $T$ of dimension $m\geq 2$ in $\mathbb R^{m+n}$ and assume that $\partial T$ is a smooth submanifold, namely $\partial T = \sum_i Q_i \a{\Gamma_i}$, where $Q_i$ are (positive) integer multiplicites and $\Gamma_i$ finitely many pairwise disjoint oriented smooth and connected submanifolds of dimension $m-1$. The present paper is focused on understanding how regular $T$ can be at points $p\in \cup_i \Gamma_i$ and our primary interest is that the integer multiplicities are allowed to be larger than $1$ and the codimension $n$ is at least $2$. Indeed, when the codimension is $1$ the situation is completely understood (cf. \cite[Problem 4.19]{Collection}): first of all the coarea formula for functions of bounded variation allows to decompose, locally, the current $T$ into a sum of area minimizing integral currents which take the boundary with multiplicity $1$; hence we can apply to each piece of the decomposition the celebrated theorem by Hardt and Simon \cite{HS}, which guarantees full regularity at the boundary, namely the absence of any singularity. 

A quite general boundary regularity theory was developed by Allard in the pioneering fundamental work \cite{AllB}, which covers any dimension and codimension and is valid for more general objects, namely stationary varifolds. In \cite{AllB} Allard restricts his attention to boundary points where the density, namely the limit of the mass ratio
\[
\Theta (T,q):=\lim_{r\downarrow 0} \frac{\|T\| (\bB_\rho (q))}{\rho^m}\, ,
\]
is sufficiently close to $\frac{1}{2}$. His Boundary Regularity Theorem guarantees then that, under such assumption, $q$ is always a regular point. Indeed this generalizes a similar statement in his PhD thesis \cite{AllPhD}, which covered the case of area minimizing currents in codimension $1$.  

In the introduction to \cite{AllPhD} Allard points out that when the multiplicity of the boundary $\Gamma$ is allowed to be an arbitrary natural number $Q>1$, the assumption $\Theta (T,q) < \frac{1}{2}+\varepsilon$ is empty and should be replaced by $\Theta (T,q)<\frac{Q}{2}+ \varepsilon$. However he quotes a possible extension of his theorem as a very challenging problem. This basic question was raised again by White in the collection of open problems \cite{Collection}, cf. Problem 4.19, where he also explains that the nontrivial situation is in higher codimension, given the decomposition through the coarea formula already explained a few paragraphs above. Our paper gives the very first result in that direction and solves Allard's ``higher multiplicity'' question for $2$-dimensional integral currents.
Before stating it we wish to discuss what we mean by ``regularity at the boundary''. 

\begin{definition}\label{d:regular_and_singular}
Assume $T$ is an area minimizing $2$-dimensional integral current in $U\subset \mathbb R^{2+n}$ such that $\partial T \res U = Q \a{\Gamma}$ for some integer $Q\geq 1$ and some $C^1$ embedded arc $\Gamma$. $p$ is called a \textbf{\emph{regular boundary point}} if $T$ consists, in a neighborhood of $p$, of the union of finitely many smooth submanifolds with boundary $\Gamma$, counted with appropriated integer multiplicities, which meet at $\Gamma$ transversally. More precisely, if there are:
\begin{itemize}
    \item[(i)] a neighborhood $U$ of $p$;
    \item[(ii)] a finite number $\Lambda_1, \ldots , \Lambda_J$ of $C^1$ oriented embedded $2$-dimensional surfaces in $U$;
    \item[(iii)] and a finite number of positive integers $k_1, \ldots , k_J$ 
\end{itemize}
such that:
\begin{itemize}
\item[(a)] $\partial \Lambda_j \cap U = \Gamma\cap U = \Gamma_i \cap U$ (in the sense of differential topology) for every $j$;
\item[(b)] $\Lambda_j \cap \Lambda_l = \Gamma\cap U$ for every $j\neq l$;
\item[(c)] for all $j\neq l$ and at each $q\in \Gamma$ the tangent planes to $\Lambda_j$ and $\Lambda_l$ are distinct;
\item[(d)] $T \res U = \sum_j k_j \a{\Lambda_j}$ (hence $\sum_j k_j = Q_i$).
\end{itemize}
The set $\breg (T)$ of boundary regular points is a relatively open subset of $\Gamma$ and its complement in $\Gamma$ will be denoted by $\bsing (T)$. 
\end{definition}

Our main Theorem reads as follows.

\begin{theorem}\label{t:Allard-general}
Let $U\subset \mathbb R^{n+2}$ be an open set, $\Gamma\subset U$ be a $C^{3, \alpha_0}$ embedded arc for some $\alpha_0>0$, and $T$ be a $2$-dimensional area-minimizing integral current such that $\partial T = Q \a{\Gamma}$. If $q\in \Gamma$ and $\Theta (T,q)< \frac{Q+1}{2}$, then $T$ is regular at $q$ in the sense of Definition \ref{d:regular_and_singular}.
\end{theorem}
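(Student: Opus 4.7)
The strategy is a blow-up analysis at $q$ combined with an induction on the boundary multiplicity $Q$. After translation, rescaling and a local change of coordinates one may assume $q=0$ and $\ell:=T_0\Gamma$ is a fixed line. By Allard's boundary monotonicity formula \cite{AllB}, every tangent cone of $T$ at $0$ is an area-minimizing integral $2$-cone $\bC$ in $\R^{2+n}$ with $\partial\bC=Q\a{\ell}$ and $\Theta(\bC,0)=\Theta(T,q)$.

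The first key step classifies such cones under the density hypothesis. Any $\bC$ as above decomposes as $\bC=\bC_0+\sum_{i=1}^N Q_i\a{H_i}$, with $\bC_0$ a closed area-minimizing $2$-cone, the $H_i$ pairwise distinct open half-planes with common boundary $\ell$, and $\sum_i Q_i=Q$. Since $2$-dimensional interior area-minimizing cones are sums of planes (via Chang's theorem), $\Theta(\bC_0,0)\in\{0,1,2,\dots\}$, so $\Theta(\bC,0)=\Theta(\bC_0,0)+Q/2$ belongs to $\{Q/2\}\cup[(Q+2)/2,\infty)$. The hypothesis $\Theta(T,q)<(Q+1)/2$ therefore forces $\bC_0=0$: every tangent cone at $q$ is a sum of half-planes whose multiplicities add up to $Q$. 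We now induct on $Q$, the base case $Q=1$ being Allard's theorem \cite{AllB}, and in the inductive step dichotomize as follows.

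If some tangent cone has $N\ge 2$ distinct half-planes $H_i$ with multiplicities $Q_i<Q$ (the \emph{non-flat case}), then at a small enough scale $r$ the support of $T$ away from $\Gamma$ is contained in disjoint thin conical neighborhoods of the $H_i$. This yields a decomposition $T\res\bB_r=\sum_i T_i$ into integral area-minimizing pieces with $\partial T_i=Q_i\a{\Gamma\cap\bB_r}$ and $\Theta(T_i,q)=Q_i/2<(Q_i+1)/2$; since $Q_i<Q$, the inductive hypothesis applied to each $T_i$ yields the regular decomposition of Definition \ref{d:regular_and_singular} for $T$ at $q$. The remaining \emph{flat case} is when all tangent cones equal $Q\a{H}$ for a single half-plane $H$ with full multiplicity $Q$. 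No infinitesimal splitting is then available and one must develop a boundary version of the multi-valued higher-multiplicity theory of De Lellis--Spadaro: construct a $C^{3,\alpha}$ boundary center manifold $\cM$ approximating the average of the $Q$ sheets; show that, after reparametrization over $\cM$, the current $T$ is well-approximated on small scales by the graph of a $\D$-minimizing $Q$-valued map $u\colon H\to\Iq(\R^n)$ with boundary trace $Q\a{0}$ on $\ell\cap H$; and exploit that in dimension two such a $\D$-minimizer decomposes as $\sum_j\a{v_j}$ with the $v_j$ harmonic, vanishing on $\ell$ and smooth up to the boundary. A boundary epiperimetric inequality at the flat model then yields decay of the excess, and a Simon-type uniqueness argument concludes that either the tangent cone actually splits at some intermediate scale (reducing to the non-flat case) or $T$ agrees locally with a finite sum of $C^{1,\alpha}$ graphs meeting transversally at $\Gamma$.

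The \textbf{main obstacle} is the flat case. The interior multi-valued theory does not transplant directly: one must build a boundary center manifold tracking sheets that collapse onto $\partial H$ with identical orientation; the associated approximation theorem must control errors coming both from the $C^{3,\alpha_0}$ curvature of $\Gamma$ and from boundary singularities of the approximating $Q$-valued map; and the boundary epiperimetric inequality at the degenerate flat models $Q\a{H}$, together with the monotonicity of a suitable boundary frequency function, has to be established from scratch.
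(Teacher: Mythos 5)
The overall architecture you propose matches the paper's: classify boundary tangent cones, induct on $Q$ by decomposing at non-flat points, and treat flat points with a boundary version of the multi-valued theory. But two of your steps have genuine gaps.

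\textbf{Uniqueness of tangent cones is missing and is load-bearing.} In the non-flat case you assert that at a small enough scale $r$ the support of $T$ splits into disjoint conical neighborhoods of the $H_i$. This does \emph{not} follow from the existence of a single non-flat tangent cone: one blow-up sequence seeing $N\geq 2$ sheets is compatible, a priori, with another blow-up sequence where the sheets have re-merged. What makes the splitting work at \emph{all} small scales is the a priori uniqueness of the boundary tangent cone together with a quantitative power-law convergence rate --- in the paper these are Theorem~\ref{t:uniqueness} and Theorem~\ref{t:decay}, imported from~\cite{HM} via~\cite{DNS}. The Hausdorff-distance decay estimate \eqref{e:Hausdorff-distance-estimate} is exactly what is used in Step 2 of the proof of Theorem~\ref{t:decomposition} to produce the decomposition into area-minimizing pieces $T_1$, $T_2$. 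You need this ingredient, and it is nontrivial: for higher-codimension boundaries with multiplicity it was only recently available.

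\textbf{The flat case as you describe it would not close.} First, your claim that ``in dimension two such a $\D$-minimizer decomposes as $\sum_j \a{v_j}$ with the $v_j$ harmonic'' is false: multi-valued Dir-minimizers with boundary trace $Q\a{0}$ can branch at boundary points, and the failure of such a decomposition is precisely what makes the flat case hard. The statement that \emph{is} true, Theorem~\ref{t:I=1}, applies only to $I$-homogeneous boundary Dir-minimizers and says that either they are $Q$ copies of a single harmonic function or $I=1$; it does not give a global decomposition. Second, the paper does not establish or use a boundary epiperimetric inequality. Instead, the argument is a contradiction between two decay rates: the uniqueness/decay theorem gives $D(r)\leq C r^{2+\tau}$ for a fixed $\tau>0$ (equation \eqref{e:bloody-bound-on-the-D}), while the almost-monotone boundary frequency function combined with Theorem~\ref{t:I=1} forces the frequency limit $I_0=1$ at any singular flat point, hence $D(r)/r^{2+\varsigma}\to\infty$ for every $\varsigma>0$ (Theorem~\ref{t:blow-up}). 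These two bounds are incompatible, so singular flat points cannot exist. An epiperimetric inequality at the degenerate flat model $Q\a{H}$ with $Q>1$ is not known, and even if one were available it would only give uniqueness and $C^{1,\alpha}$ regularity of the \emph{support}, not exclude interior-type branching collapsing onto the boundary; the frequency-function contradiction is essential. Also note that the desired conclusion at a flat point is stronger than ``a finite sum of $C^{1,\alpha}$ graphs meeting transversally'': Theorem~\ref{t:flat-points} shows $T=Q\a{\Sigma}$ for a \emph{single} smooth minimal surface $\Sigma$, since all sheets share the tangent $H$ at $q$ and hence cannot meet transversally.

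Your classification step is close in spirit to what the paper does, but the decomposition $\bC=\bC_0+\sum Q_i\a{H_i}$ of a general $2$-dimensional area-minimizing boundary cone is itself a nontrivial structure theorem (\cite[Proposition~4.1]{DNS}), not an obvious consequence of the interior theory; invoking Chang is fine for the boundaryless piece $\bC_0$ once the decomposition is established, but the decomposition needs proof.
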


\begin{remark}\label{r:optimality}
Note that it is well known that there are smooth curves (counted with multiplicity $1$) in the Euclidean space, even in $\mathbb R^3$, which span more than one area-minimizing current. In particular, if $\Gamma\subset \mathbb R^3$ is such a curve and $T_1$, $T_2$ two area minimizing currents with $\partial T_i = \a{\Gamma}$, $i=1,2$, then $T:=T_1+T_2$ is an area minimizing current with $\partial T = 2\a{\Gamma}$ (this follows because any area-minimizing current $S$ with boundary $\partial S = 2 \a{\Gamma}$ must have mass which doubles that of $T_i$, and hence equals that of $T$). Let us analyze the above example more accurately. In view of the interior and boundary regularity theory, both $T_1$ and $T_2$ are smooth submanifolds up to the boundary, i.e. a standard argument using Allard's boundary regularity theorem \cite{AllB} (cf. \cite[Section 5.23]{Almgren}) implies that $T_i = \a{\Lambda_i}$ for two connected smooth submanifolds such that $\partial \Lambda_i = \Gamma$ in the classical sense of differential topology. Since any integral area-minimizing $2$-dimensional current in $\mathbb R^3$ is an embedded submanifold (with integer multiplicity) away from the boundary, we also conclude that $\Lambda_1$ and $\Lambda_2$ do not intersect except at their common boundary $\Gamma$. The Hopf boundary lemma then implies that at every point $p\in \Gamma$ the two currents have distinct tangents, i.e. $\Lambda_1$ and $\Lambda_2$ meet at their common boundary {\em transversally}. 
\end{remark}

In view of the above remark we cannot expect, in general, a ``better'' conclusion than the one of Theorem \ref{t:Allard-general} or, in other words, we cannot expect that the number $J$ in Definition \ref{d:regular_and_singular} is $1$. However, an obvious corollary of Theorem \ref{t:Allard-general} is the following. 

\begin{theorem}\label{t:Allard-flat}
Let $U, T, \Gamma$ and $q$ be as in Theorem \ref{t:Allard-general}. Then there is a neighborhood $U'$ of $q$ in which $T = Q \a{\Lambda}$ for some smooth minimal surface $\Lambda$ if and only if one tangent cone to $T$ at $q$ is ``flat'', i.e. contained in a $2$-dimensional linear subspace of $\mathbb R^{2+n}$. 
\end{theorem}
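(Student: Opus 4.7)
The plan is to verify the two implications separately: the ``only if'' direction is a direct blow-up computation, while the ``if'' direction will be a short corollary of Theorem \ref{t:Allard-general}. For the ``only if'' direction, assuming $T = Q\a{\Lambda}$ in a neighborhood of $q$ with $\Lambda$ a smooth minimal $2$-submanifold having $\Gamma$ as boundary, a direct rescaling argument shows that $T$ has a unique tangent cone at $q$ equal to $Q\a{H}$, where $H$ is the tangent half-plane of $\Lambda$ at $q$ bounded by $T_q\Gamma$. Since $H \subset T_q\Lambda$, which is a $2$-dimensional linear subspace, this tangent cone is flat.

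For the converse, I would start from a flat tangent cone $C$ at $q$, i.e.\ $\supp C \subset \pi$ for some $2$-dimensional linear subspace $\pi \subset \R^{2+n}$. The blow-up of $\partial T = Q\a{\Gamma}$ yields $\partial C = Q\a{T_q\Gamma}$, so $T_q\Gamma \subset \pi$, and the constancy theorem applied inside $\pi$ lets me write $C = m^+\a{H^+} + m^-\a{H^-}$ with $m^\pm \in \Z$ and $m^+ - m^- = Q$, where $H^\pm$ are the two half-planes of $\pi$ bounded by $T_q\Gamma$. Area-minimality of $C$ makes $|m^+|+|m^-|$ minimal under the constraint $m^+ - m^- = Q$, forcing $\{m^+,m^-\} = \{Q,0\}$; in particular $\Theta(T,q) = \Theta(C,0) = Q/2$.

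Since $Q/2 < (Q+1)/2$, Theorem \ref{t:Allard-general} applies and $q$ is a regular boundary point as in Definition \ref{d:regular_and_singular}. Near $q$ I may therefore write $T = \sum_{j=1}^J k_j \a{\Lambda_j}$, with the $\Lambda_j$ smooth and having pairwise distinct tangent $2$-planes at $q$; the tangent cone at $q$, now unique, is $\sum_{j=1}^J k_j \a{H_j}$, where each $H_j \subset T_q\Lambda_j$ is the half-plane bounded by $T_q\Gamma$. Flatness forces every $H_j$ to lie in $\pi$, whence $T_q\Lambda_j = \pi$ for every $j$; the transversality condition (c) of Definition \ref{d:regular_and_singular} then rules out $J \geq 2$, leaving $J = 1$, $k_1 = Q$, and $T = Q\a{\Lambda_1}$ in a neighborhood of $q$.

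The only non-routine step is extracting the density value $\Theta(T,q) = Q/2$ from the flatness of the tangent cone via the constancy theorem inside $\pi$; once that density bound is in place, Theorem \ref{t:Allard-general} together with the distinct-tangent-planes condition immediately collapses the number of branches to one. I do not expect any serious obstacle beyond invoking the already-established Theorem \ref{t:Allard-general}.
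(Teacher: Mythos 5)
Your overall strategy is sound, and it coincides with the route the paper has in mind: since Theorem \ref{t:Allard-general} already yields that $q$ is a regular boundary point in the sense of Definition \ref{d:regular_and_singular}, the local representation $T=\sum_j k_j\a{\Lambda_j}$ with pairwise distinct tangent planes at each point of $\Gamma$, combined with the flatness of the (now unique) tangent cone, forces $T_q\Lambda_j=\pi$ for all $j$ and hence $J=1$ by condition (c). That, together with the trivial ``only if'' blow-up computation, is exactly what the paper intends when it calls Theorem \ref{t:Allard-flat} an obvious corollary of Theorem \ref{t:Allard-general} (or of Theorem \ref{t:flat-points}).

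The one place where your write-up is off is the constancy-theorem paragraph. First, it is unnecessary: the density bound $\Theta(T,q)<(Q+1)/2$ is part of the hypotheses of Theorem \ref{t:Allard-flat} (``Let $U,T,\Gamma$ and $q$ be as in Theorem \ref{t:Allard-general}''), so Theorem \ref{t:Allard-general} applies immediately, and you do not need to recover $\Theta(T,q)=Q/2$ from the flat cone. Second, the claim ``area-minimality of $C$ makes $|m^+|+|m^-|$ minimal\ldots forcing $\{m^+,m^-\}=\{Q,0\}$'' is wrong as stated: for instance $C=(Q+a)\a{H^+}+a\a{H^-}$ with $a\geq1$ is calibrated by the area form of $\pi$, hence area-minimizing, and has $|m^+|+|m^-|=Q+2a>Q$; thus area-minimality alone does not control $|m^+|+|m^-|$. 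What rules out these cones is precisely the density hypothesis ($|m^+|+|m^-|=2\Theta<Q+1$ forces $|m^+|+|m^-|=Q$). And even then, this pins down only $|m^+|+|m^-|=Q$, not $\{m^+,m^-\}=\{Q,0\}$: for example $m^+=Q-1$, $m^-=-1$ is also consistent with $m^+-m^-=Q$ and $|m^+|+|m^-|=Q$, and the resulting cone $(Q-1)\a{H^+}-\a{H^-}$ (equivalently $(Q-1)\a{H^+}+\a{\tilde H^-}$, with $\tilde H^-$ the oppositely oriented half-plane) is in fact area-minimizing, as one checks by folding across $\ell$ onto $Q\a{H^+}$. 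So your reduction to $\{Q,0\}$ is not valid. None of this breaks the rest of your argument, though, because you never actually use $\{m^+,m^-\}=\{Q,0\}$ afterwards; the only input you need is the density hypothesis, which is already given, plus the transversality clause (c). I would simply delete the constancy-theorem step.
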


Even though the assumption that $\Theta (T, q)$ is sufficiently close to $\frac{Q}{2}$ seems, at a first glance, very restrictive, we can either follow a lemma of Allard in \cite{AllB} (valid in any dimension and codimension) or a simple classificaton of the boundary tangent cones (cf. \cite{DNS}) to show that it holds when $\supp (\partial T)$ is contained in the boundary of a bounded $C^2$ uniformly convex set $\Omega$. 
For this reason, complete regularity can be achieved when there is a ``convex barrier''. Since this is an assumption which will be used often in some sections of the work, we wish to isolate its statement. 

\begin{ipotesi}\label{a:main}
$\Omega\subset \mathbb R^{2+n}$ is a bounded $C^{3,\alpha_0}$ uniformly convex set for some $\alpha_0 >0$, $\Gamma\subset \partial \Omega$ is the disjoint union of finitely many $C^{3,\alpha_0}$ simple closed curves  $\{\Gamma_i\}_{i=1, \ldots , N}$. $T$ is a $2$-dimensional area-minimizing integral current in $\mathbb R^{2+n}$ such that $\partial T = \sum_i Q_i \a{\Gamma_i}$.
\end{ipotesi}

\begin{theorem}\label{t:main}
Let $\Gamma$, $\Omega$ and $T$ be as in Assumption \ref{a:main}. Then $\bsing (T)$ is empty.
\end{theorem}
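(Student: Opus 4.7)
The plan is to deduce Theorem \ref{t:main} directly from the main result Theorem \ref{t:Allard-general}. The only thing to verify is the pointwise density hypothesis: for every $q$ lying on some component $\Gamma_i$ of $\Gamma$, one has $\Theta(T,q) < \frac{Q_i+1}{2}$. In fact I expect the stronger identity $\Theta(T,q)=Q_i/2$ at every such boundary point, which is comfortably below $(Q_i+1)/2$.

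To establish the density bound, I would fix $q\in \Gamma_i$ and analyze an arbitrary tangent cone $C$ to $T$ at $q$. By the boundary monotonicity formula together with the $C^{3,\alpha_0}$ regularity of $\Gamma_i$ (which provides enough decay of the boundary to pass to blow-ups), $C$ is a $2$-dimensional area-minimizing integral cone with boundary $\partial C = Q_i\a{\ell}$, where $\ell = T_q\Gamma_i$ is the tangent line to $\Gamma_i$. The uniform convexity of $\Omega$ combined with the barrier containment $\supp T\subset \ov{\Omega}$ forces $\supp C$ to sit inside the closed tangent half-space $H^+$ bounded by $T_q\partial\Omega$. Thus $C$ is an area-minimizing $2$-cone supported in a half-space of $\R^{2+n}$, with boundary a single line of integer multiplicity $Q_i$.

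At this point I would invoke either the lemma of Allard from \cite{AllB} or the classification of boundary tangent cones from \cite{DNS}, both mentioned in the introduction, to conclude that $C$ necessarily splits as an integer combination of half-planes sharing the common boundary line $\ell$, i.e.
\[
C \;=\; \sum_{j=1}^{J} k_j \a{H_j}, \qquad k_j\in \N\setminus\{0\}, \qquad \sum_{j=1}^{J} k_j = Q_i,
\]
with each $H_j$ a half-plane satisfying $\partial H_j=\ell$ and contained in $H^+$. Reading off the density at the cone vertex yields $\Theta(C,0) = \frac{1}{2}\sum_j k_j = Q_i/2$, and since this value is independent of the choice of tangent cone it coincides with $\Theta(T,q)$.

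With the inequality $\Theta(T,q)=Q_i/2<(Q_i+1)/2$ in hand at every $q\in\Gamma_i$ and for every $i$, Theorem \ref{t:Allard-general} applies at every boundary point and forces $q\in \breg(T)$. Since $\Gamma$ has only finitely many components this gives $\bsing(T)=\emptyset$. The main conceptual difficulty is of course packaged inside Theorem \ref{t:Allard-general} itself; given that theorem together with the (already established) classification of half-space supported area-minimizing $2$-cones whose boundary is a line with multiplicity, the proof of Theorem \ref{t:main} reduces to the density computation above.
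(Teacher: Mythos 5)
Your deduction is correct and is in fact the route the paper itself sketches in the introduction (``we can either follow a lemma of Allard in \cite{AllB}\ldots\ or a simple classification of the boundary tangent cones\ldots\ to show that it holds when $\supp(\partial T)$ is contained in the boundary of a bounded $C^2$ uniformly convex set''), even though the paper's formal argument does not apply Theorem \ref{t:Allard-general} directly: it localizes Theorem \ref{t:main} to Theorem \ref{t:main-local} (built around the wedge condition of Assumption \ref{a:main-local}) and proves that alongside Theorem \ref{t:Allard-general} from the shared machinery. Your packaging is cleaner if one treats Theorem \ref{t:Allard-general} as a black box.

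One place where your argument as written is slightly under-justified: from the convex barrier you deduce only that a tangent cone $C$ at $q$ is supported in the closed half-space $H^+$ tangent to $\Omega$ at $q$, and then assert that the classification of \cite{DNS} forces $C$ to be an open book. But half-space containment alone does not rule out an extra boundaryless piece: a $2$-plane through the origin lying inside the hyperplane $T_q\partial\Omega$ (and hence missing $\ell = T_q\Gamma$) is consistent with $\supp C \subset H^+$ and with $\partial C = Q_i\a{\ell}$, and would give $2\Theta(C,0) = Q_i + 2$. Indeed \cite[Proposition 4.1]{DNS} by itself only yields that $2\Theta$ is an integer $\geq Q_i$. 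What kills the extra plane is the \emph{strict} convexity of $\Omega$, which in the paper is encoded in Lemma \ref{l:wedge-lemma}: the barrier confines $\supp C$ to a wedge $W(T_q\Gamma,\nu(q),\vartheta)$ of opening $\vartheta < \tfrac{\pi}{2}$, and no $2$-plane through $0$ can be contained in such a wedge (any plane $P$ in the wedge must contain the line $T_q\Gamma$, and then the two half-planes of $P$ across $T_q\Gamma$ cannot both sit on the $\nu$-side of the wedge). With the wedge in hand, $2\Theta(C,0) = Q_i$ is forced and your density computation, and hence the appeal to Theorem \ref{t:Allard-general}, goes through. Your alternative citation of Allard's lemma in \cite{AllB} does cover exactly this point, so the gap is one of exposition rather than substance.
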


In fact we can give a suitable local version of the above statement from which Theorem \ref{t:main} can be easily concluded, cf. Theorem \ref{t:main-local}. 

\medskip

In the next section we will outline the arguments to prove Theorem \ref{t:Allard-general}, \ref{t:Allard-flat}, and \ref{t:main}. Before coming to it we wish to point two things. We are confident that the methods used in this work generalize to cover the same statement as in Theorem \ref{t:Allard-general} in an arbitrary smooth (i.e. $C^{3, \alpha_0}$) complete Riemannian manifold, but in order to keep the technicalities at bay we have decided to restrict our attention to Euclidean ambient spaces. Even though the basic ideas behind this work are quite simple, the overall proof of the theorems is quite lengthy. For instance before the recent paper \cite{DDHM} of the first author, joint with De Philippis, Hirsch, and Massaccesi, not even the existence of a single boundary regular point was known, without some convex barrier assumption and in a general Riemannian manifold. Part of the challenge is that 
several crucial PDE ingredients are absent in codimension higher than $1$. Let us in particular mention three facts:
\begin{itemize}
\item[(a)] There is no ``soft'' decomposition theorem which allows to reduce the general case to that of multiplicity $1$ boundaries;
\item[(b)] Boundary singularities occur even in the case of multipliciy $1$ smooth boundaries;
\item[(c)] There is no maximum principle (and in particular no Hopf boundary lemma) available even if we knew apriori that the minimizing currents are completely smooth.
\end{itemize}

\subsection{Acknowledgments} C.D.L. acknowledges support from the National Science Foundation through the grant  FRG-1854147. The second author would like to thank Fapesp for financial support via the grant ``Bolsa de Pesquisa no Exterior'' number 2018/22938-4. 

\section{Outline of the proof}

In the first step (cf. Section \ref{s:wedges}), we use the classical convex hull property to reduce the statement of Theorem \ref{t:main} to a local version, cf. Theorem \ref{t:main-local}. The latter statement will then focus only on a portion of the boundary, but under the assumption that the support of the current is contained in a suitable convex region, cf. Assumption \ref{a:main-local}. The crucial point is that this convex region forms a ``wedge'' at each point of the boundary, cf. Definition \ref{d:wedge}.  

In the second step (cf. Section \ref{s:cones}) we recall the classical Allard's monotonicity formula and we appeal to a classification result for $2$-dimensional area-minimizing integral cones with a straight boundary (see \cite{DNS}) to conclude that, in all the cases we are dealing we can assume, without loss of generality, that all the tangent cones to $T$ at every boundary point $p$ consist of a finite number of halfplanes with common boundary $T_p \Gamma$, counted with a positive integer multiplicity, cf. Theorem \ref{t:classification_cones}.  

At this point, taking advantage of pioneering ideas of White, cf. \cite{Wh}, and of a recent paper by Hirsch and Marini, cf. \cite{HM}, the tangent cone can be shown to be unique at each point $p\in \Gamma$. We need, strictly speaking, a suitable generalization of \cite{HM}, but the simple technical details are given in the shorter paper \cite{DNS}. This uniqueness result has two important outcomes:
\begin{itemize}
    \item[(a)] At any point $p\in \Gamma$ where the tangent cone is not {\em flat} (i.e. it is not contained in a {\em single} half-plane) we can decompose the current into simpler pieces, cf. Theorem \ref{t:decomposition};
    \item[(b)] the convergence rate of the current to the cone is polynomial (cf. also Corollary \ref{c:cylindrical_excess_decay}.
\end{itemize}
Point (a) reduces all our regularity statement to Theorem \ref{t:Allard-flat}. In fact we will focus on a slightly more technical version of it, cf. Theorem \ref{t:flat-points} Point (b) gives one crucial piece of information which will allow us to conclude Theorem \ref{t:flat-points}. The remaining part of this work will in fact be spent to argue for Theorem \ref{t:flat-points} by contradiction: if a flat boundary point $p$ is singular, then the convergence rate to the flat tangent cone at $p$ must be {\em slower} than polynomial, contradicting thus (b).

\medskip

We first address a suitable linearized version of Theorem \ref{t:flat-points}: we introduce multivalued functions and define the counterpart of flat boundary points in that context, which are called {\em contact points}. In Theorem \ref{t:contact-points}, we then prove an analog of Theorem \ref{t:flat-points} in the case of multivalued functions minimizing the Dirichlet energy using a version of the frequency function (see Definition \ref{Def:distance}) first introduced by Almgren. However, while the proof of Theorem \ref{t:contact-points} might be instructive to the reader because it illustrates, in a very simplified setting, the idea behind the ``slow decay'' at singular points, the crucial fact which will be used to show Theorem \ref{t:flat-points} is contained in Theorem \ref{t:I=1}: the latter states that, if a multi-function vanishes identically at a straight line and it is $I$-homogeneous, either it is a multiple copy of a single classical harmonic function, or the homogeneity equals $1$. 

The overall idea is that, if $p$ is a singular flat point, then it can be efficiently approximated at small scales by an homogeneous harmonic (i.e. Dirichlet minimizing) multivalued function as above (not necessarily unique), which however cannot be a multiple copy of a single classical harmonic function. Since the homogeneity of the latter will be forced to be $1$, we will infer from it the slow decay of the ``cylindrical excess'' (cf. Definition \ref{d:excess}). However, the work to accomplish the latter approximation proves to be quite laborious and it will pass through a {\em series} of more and more refined approximations.

\medskip

First of all, in the Sections \ref{s:Lip-approx}, \ref{s:harmonic-approx}, \ref{s:higher-int-estimate}, 
and \ref{s:strong-Lipschitz} we prove that the current can be efficiently approximated by multivalued Lipschitz functions when sufficiently flat (cf. Theorem \ref{t:strong_Lipschitz}) and that the latter approximation almost minimizes the Dirichlet energy (cf. Theorem \ref{t:o(E)}). These sections take heavily advantage of the tools introduced in \cite{DS2,DS3} and of some ideas in \cite{DDHM}. However these approximations are not sufficient to carry on our program. 

A new refined approximation is then devised in Section \ref{s:center-manifold}. At every sufficiently small scale we can construct a ``center manifold'' (i.e. a classical $C^3$ surface with boundary $\Gamma$) and a multivalued Lipschitz approximation over its normal bundle (called {\em normal approximation}), which approximates the current as efficiently as the ``straight'' approximation in Theorem \ref{t:strong_Lipschitz}, cf. Theorem \ref{t:cm} and Theorem \ref{t:normal-approx} for the relevant statements. This new normal approximation has however two important features:
\begin{itemize}
    \item[(i)] It approximates the current well not only at the ``starting scale'' but also across smaller scales as long as certain decay conditions are ensured.
    \item[(ii)] At all such scales the normal approximation has average close to $0$ (namely it is never close to a multiple copy of a single harmonic function, compared to its own Dirichlet energy).
\end{itemize}
The Sections \ref{s:tilting}, \ref{s:interpolating}, and \ref{s:cm-final} provide a proof of Theorem \ref{t:cm} and Theorem \ref{t:normal-approx}. While the first center manifold was introduced in the monograph \cite{Almgren} by Almgren, our constructions borrows from the ideas and tools introduced in \cite{DS4} and \cite{DDHM}. 

Our proof would be at this point much easier if the validity of (ii) above would hold, around the given singular flat point $p$, at {\em all} scales smaller than the one where we start the construction of the center manifold. Unfortunately we do not know how to achieve this. We are therefore forced to construct a sequence of center manifolds which cover different sets of scales, cf. again Section \ref{s:flattening}. At certain particular scales we need therefore to change approximating maps, i.e. to pass from one center manifold to the next. Section \ref{s:stopping-scales} provides then important information about the latter ``exchange scales''. Both sections are heavily influenced by similar considerations made in the papers \cite{DS4,DS5}. 

The remaining parts of the paper are thus focused to show that, at a sufficiently small scale around the flat point $p$, all these normal approximations are close to some homogeneous Dir-minimizing function (not necessarily the same across all scales), which by Theorem \ref{t:I=1} will then result to be $1$-homogeneous. The key ingredient to show this homogeneity is the almost monotonicity of the frequency function of the normal approximation (a celebrated quantity introduced by Almgren in his pioneering work \cite{Almgren}). In order to deal with the boundary we resort to an important variant introduced in \cite{DDHM}. The key point is to show that, as $r\downarrow 0$, the frequency function $I (r)$ of the approximation at scale $r$ converges to a limit. However, since our approximation might change at {\em some particular} scales, the function $I$ undergoes a possibly infinite number of jump discontinuities, while it is almost monotone in the complement of these discontinuities. In order to show that the limit exists we thus need:
\begin{itemize}
    \item[(1)] a suitable quantification of the monotonicity on each interval delimited by two consecutive discontinuities;
    \item[(2)] a suitable bound on the series of the absolute values of such jumps.
\end{itemize}
The relevant estimates, namely \eqref{e:Gronwall} and \eqref{e:jump_estimate}, are contained in Theorem \ref{t:frequency}. While the proof of \eqref{e:Gronwall} takes advantage of similar cases handled in \cite{DS5} and \cite{DDHM}, \eqref{e:jump_estimate} is entirely new and we expect that the underlying ideas behind it will prove useful in other contexts. The Sections \ref{s:proof_monotonicity_frequency} and \ref{s:proof_jump_estimate} are dedicated to prove the respective estimates.

Finally, in Section \ref{s:blowup_and_conclusion} we carry on the (relatively simple) argument which, building upon all the work of the previous sections, shows that the rate of convergence to the tangent cone at a singular flat point must to be {\em slower} than any polynomial rate. As already mentioned, since the convergence rate has to be polynomial at {\em every} point, this shows that a singular flat point cannot exist.

\newpage
\section{Convex hull property and local statement}\label{s:wedges}

We start recalling the following well known fact:

\begin{proposition}\label{p:convex-hull}
Assume $T$ is an area minimizing $m$-dimensional current in $\mathbb R^{m+n}$ with $\supp (\partial T)$ compact. Then $\supp (T)$ is contained in the convex hull of $\supp (\partial T)$.
\end{proposition}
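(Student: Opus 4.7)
The plan is the classical half-space reduction. Since the convex hull of any set equals the intersection of all closed half-spaces containing it, it suffices to prove that for every half-space $H = \{x \in \R^{m+n} : \langle x, \nu\rangle \leq c\}$ with $|\nu|=1$ that contains $\supp(\partial T)$, one has $\supp(T)\subset H$. Fix such an $H$, set $f(x) := \langle x,\nu\rangle$, and argue by contradiction: assume $\|T\|(\{f > c\}) > 0$, so that $T_t := T\res\{f > t\}$ has positive mass for every $t$ slightly larger than $c$.

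For a.e.\ $t > c$ the slicing theory produces a well-defined integer rectifiable $(m-1)$-current $\langle T, f, t\rangle$ concentrated on $\{f = t\}$ with $\partial T_t = -\langle T, f, t\rangle$, where I have used that $(\partial T)\res\{f > t\} = 0$. Let $\pi_t : \R^{m+n} \to \{f \leq t\}$ denote the nearest-point projection; it is $1$-Lipschitz and restricts to the identity on $\{f = t\}\supset\supp(\partial T_t)$. Consequently $S_t := \pi_{t\#} T_t$ satisfies $\partial S_t = \pi_{t\#}\partial T_t = \partial T_t$, and the competitor $T' := T\res\{f \leq t\} + S_t$ obeys $\partial T' = \partial T$ together with
\[
\mass(T') \leq \mass(T\res\{f \leq t\}) + \mass(S_t) \leq \mass(T\res\{f \leq t\}) + \mass(T_t) = \mass(T).
\]
Minimality forces equality throughout; in particular $\mass(S_t) = \mass(T_t)$. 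Since the tangential Jacobian of $\pi_t$ at a point with orienting $m$-plane $V$ equals the cosine of the angle between $V$ and $\nu^\perp$, and is strictly less than $1$ unless $V \subset \nu^\perp$, the approximate tangent plane of $T_t$ must lie in $\nu^\perp$ at $\|T_t\|$-a.e.\ point.

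Letting $t\downarrow c$ along a dense set, the approximate tangent plane of $T$ lies in $\nu^\perp$ at $\|T\|$-a.e.\ point of $\{f > c\}$, so $\nabla^T f = 0$ there. The coarea formula yields $\int_c^\infty \mass(\langle T, f, s\rangle)\,ds = \int_{\{f > c\}} |\nabla^T f|\,d\|T\| = 0$, hence $\partial T_s = 0$ for a.e.\ $s > c$. For such $s$, the current $T_s$ is itself area-minimizing (any competitor for $T_s$ can be glued to $T\res\{f \leq s\}$ to yield a competitor for $T$), and since it has zero boundary the zero current is an admissible competitor, forcing $\mass(T_s) = 0$. Passing to a sequence $s_n \downarrow c$ of such levels gives $\|T\|(\{f > c\}) = \lim_n \|T\|(\{f > s_n\}) = 0$, the desired contradiction.

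The only delicate point is extracting the pointwise tangential condition $V\subset\nu^\perp$ from the scalar equality $\mass(S_t) = \mass(T_t)$; once this rigidity is in hand, the rest is a direct application of slicing, the coarea formula, and minimality. No hypothesis beyond compactness of $\supp(\partial T)$ is needed, and the argument does not use any smoothness of $\partial T$.
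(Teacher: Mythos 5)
Your argument is correct and self-contained, but it is a genuinely different route from the paper's. The paper disposes of the proposition in one line by observing that $\|T\|$ is an integral stationary varifold in $\mathbb R^{m+n}\setminus \supp(\partial T)$ and invoking the convex hull theorem for stationary varifolds (Simon's Theorem 19.2), whose proof rests on the first-variation / monotonicity machinery. You instead use the minimizing property directly: for each supporting half-space $H=\{f\leq c\}$ you push $T\res\{f>t\}$ forward by the $1$-Lipschitz nearest-point retraction $\pi_t$ onto $\{f\leq t\}$, compare masses, and extract the rigidity ``tangential Jacobian $=\cos\theta <1$ unless $V\subset\nu^\perp$'' to force the tangent planes of $T$ to be contained in $\nu^\perp$ on $\{f>c\}$; the coarea formula then kills the slices, so $T\res\{f>s\}$ is a minimizing cycle, hence zero, for a.e.\ $s>c$. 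This trades the citation for a direct competitor construction that needs only the current formalism (slicing, Lipschitz pushforward, area formula) and works verbatim for any integral area-minimizing $T$; what it gives up is generality, since Simon's result holds for merely stationary integral varifolds. One small technical point worth flagging: to pushforward $T_t$ under $\pi_t$ one implicitly uses that $\supp(T)$ is bounded (otherwise $\pi_t$ is not proper on $\supp(T_t)$); this is standard — it follows from finite mass (cone comparison with $\partial T$) together with the lower density bound from the monotonicity formula — but it is worth stating, since the convex hull conclusion itself asserts boundedness of $\supp(T)$. Your final shortcut via ``$T_s$ is minimizing, has zero boundary, hence vanishes'' is sound because $\mathbb R^{m+n}$ has trivial $m$-th homology, so the zero current is an admissible competitor for the cycle $T_s$.
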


\begin{proof}
The statement can be concluded from much stronger ones, for instance we can use that $\|T\|$ is an integral stationary varifold in $\mathbb R^{m+n}\setminus \supp (T)$ and invoke \cite[Theorem 19.2]{Sim}.
\end{proof}

We then take advantage of a simple and elementary fact which combines the regularity of $\Gamma$ with the uniform convexity of the barrier $\Omega$. We will state this fact in higher generality than we actually need in this manuscript.

\begin{definition}\label{d:wedge}
First of all, given an $(m-1)$-dimensional plane $V\subset \mathbb R^{m+n}$ we denote by $\p_V$ the orthonogonal projection onto $V$.
Given additionally a unit vector $\nu$ normal to $V$ and an angle $\vartheta \in (0, \frac{\pi}{2})$ we then define the \textbf{\emph{wedge with spine $V$, axis $\nu$ and opening angle $\vartheta$}} as the set
\begin{equation}\label{e:wedge}
W (V, \nu, \vartheta):= \big\{y: |y-\p_V (y) - (y\cdot \nu) \nu | \leq (\tan \vartheta) y \cdot \nu\big\}\, .
\end{equation}
\end{definition}

\begin{figure}[htp]
    \centering
    \includegraphics[width=10cm]{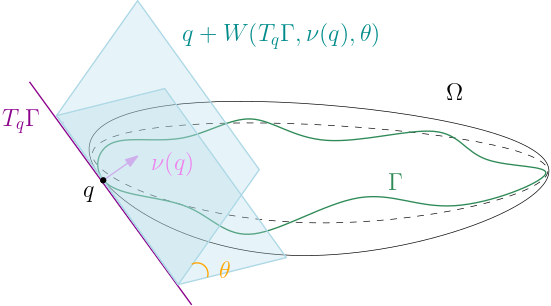}
    \caption{An illustration of the wedge where $V$ is the tangent $T_q \Gamma$ to $\Gamma$ at some boundary point $q$, whereas $\nu$  the interior unit normal $\nu (q)$ to the convex barrier $\Omega$ at $q$.}
\end{figure}

In particular we have the following lemma.

\begin{lemma}\label{l:wedge-lemma}
Let $\Omega\subset \mathbb R^{m+n}$ be a $C^2$ bounded open set with uniformly convex boundary and $\Gamma$ a $C^2$ $(m-1)$-dimensional submanifold of $\Omega$ without boundary. Then there is a $0<\vartheta < \frac{\pi}{2}$ (which depends only on $\Gamma$ and $\Omega$) such that the convex hull of $\Gamma$ satisfies
\[
{\rm ch}\, (\Gamma) \subset \bigcap_{q\in \Gamma} (q+ W (T_q \Gamma, \nu (q), \vartheta))\, .
\]
\end{lemma}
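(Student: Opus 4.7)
The plan is first to reduce the inclusion to a pointwise statement by a convexity observation, then to combine uniform convexity of $\Omega$ with $C^2$-regularity of $\Gamma$ into matching quadratic estimates. In an orthonormal frame adapted to $V = T_q\Gamma$ and $\nu(q)$, the wedge condition
\[
|z - \p_V(z) - (z\cdot\nu)\nu| \le (\tan\vartheta)\, z\cdot\nu
\]
constrains only the components of $z$ orthogonal to $V$, through a convex inequality (linear right-hand side, norm on the left). Hence each shifted wedge $q + W(T_q\Gamma, \nu(q), \vartheta)$ is convex, and so is the intersection in the conclusion; it therefore suffices to show that for every fixed $q\in\Gamma$ and every $y\in\Gamma$ the displacement $y-q$ lies in $W(T_q\Gamma, \nu(q), \vartheta)$.

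Fix $q\in\Gamma$ and decompose $\R^{m+n} = T_q\Gamma \oplus \R\nu(q) \oplus W'$, with $W' := T_q(\partial\Omega) \cap (T_q\Gamma)^\perp$ of dimension $n$. Writing $y - q = v + s\,\nu(q) + w$, the wedge condition becomes $|w| \le (\tan\vartheta)\,s$. Uniform convexity of $\Omega$ supplies a constant $c_1 > 0$ depending only on $\Omega$ such that
\begin{equation*}
s = (y-q)\cdot\nu(q) \ge c_1\,|y-q|^2 \qquad \text{for all } q\in\partial\Omega,\ y\in\bar\Omega,
\end{equation*}
locally via a graph representation of $\partial\Omega$ with $D^2 f \ge c\,\mathrm{Id}$, and globally by strict convexity together with boundedness. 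On the other hand, parametrizing $\Gamma$ near $q$ by a $C^2$ embedding $\gamma\colon U\subset T_q\Gamma \to \R^{m+n}$ with $\gamma(0)=q$ and $d\gamma(0)$ the inclusion, one has $y - q = u + O(|u|^2)$ for $u \in T_q\Gamma$, so the $W'$-component picks up only the quadratic remainder and thus $|w| \le c_2\,|y-q|^2$ with $c_2$ controlled by the second fundamental form of $\Gamma$.

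Combining these estimates in a neighborhood of the diagonal of $\Gamma\times\Gamma$ gives the dimensionless bound $|w|/s \le c_2/c_1$. On the complementary set $\{|y-q|\ge\delta\}$ inside the compact product $\Gamma\times\Gamma$, the same ratio is bounded by $\diam(\Omega)/\alpha$, where $\alpha > 0$ is a positive lower bound for $s$ produced by strict convexity on this compact set; the uniformity of $c_2$ across $q\in\Gamma$ likewise follows from compactness. Choosing any $\vartheta\in(0,\pi/2)$ with $\tan\vartheta$ exceeding the resulting global bound closes the proof. The main technical point is the transition from the local quadratic regime, where both numerator and denominator are of order $|y-q|^2$, to the far-field regime, where the graph parametrization is no longer available; this is resolved by compactness of $\Gamma\subset\partial\Omega$ combined with the strict positivity of $s$ off the diagonal.
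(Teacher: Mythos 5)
Your proposal is correct and follows essentially the same route as the paper: reduce via convexity of the wedges to a pointwise statement, then prove a uniform bound on $|w|/s$ by matching the quadratic lower bound on $s=(p-q)\cdot\nu(q)$ coming from uniform convexity of $\partial\Omega$ against the quadratic upper bound on the component of $p-q$ orthogonal to $T_q\Gamma\oplus\R\nu(q)$ coming from $C^2$-regularity of $\Gamma$, and finish the far-from-diagonal regime by compactness and strict convexity. The only (cosmetic) difference is that the paper carries out the near-diagonal estimate in an explicit graph parametrization $p=(y,f(y),g(y,f(y)))$ of $\Gamma$ inside $\partial\Omega$, whereas you phrase the same two quadratic bounds in terms of an abstract orthogonal splitting $T_q\Gamma\oplus\R\nu(q)\oplus W'$ and an enclosing-ball form of uniform convexity.
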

 
 We postpone the proof of the lemma to the end of the section
Using Proposition \ref{p:convex-hull} and Lemma \ref{l:wedge-lemma} we can reduce Theorem \ref{t:main} to a suitable local statement. In particular we will replace Assumption \ref{a:main} with the following one:

\begin{ipotesi}\label{a:main-local}
$Q\geq 1$ is an arbitrary integer and $\vartheta$ a given positive real number smaller than $\frac{\pi}{2}$. $\Gamma$ is a $C^{3,\alpha}$ arc in $U = \bB_1 (0) \subset \mathbb R^{2+n}$ with endpoints lying in $\partial \bB_1 (0)$\footnote{I.e. $\Gamma=\hat{\gamma}([0,1])$ where $\hat{\gamma}:[0,1]\to\overline{\bB_1(0)}$ is a $C^{3,\alpha}$ diffeomorphism onto its image.}. Moreover $\nu: \Gamma \to \mathbb S^{n+1}$ is a $C^{2,\alpha}$ map such that $\nu(q)\perp T_q\Gamma$. $T$ is a $2$-dimensional area-minimizing integral current in $U$ such that:
\begin{align} 
&(\partial T) \res U= Q \a{\Gamma},\\
&\supp (T) \subset \bigcap_{q\in \Gamma} (q+W ( T_q \Gamma, \nu (q), \vartheta))\, .\label{e:inside-wedges}
\end{align}
Moreover, 
\begin{equation}
\bA := \|\kappa\|_{L^\infty} + \|\dot\nu\|_{L^\infty}\leq 1\, , 
\end{equation}
where $\kappa$ denotes the curvature of $\Gamma$ and $\dot \nu$ is the derivative, in the arclength parametrization, of $\nu$. 
\end{ipotesi}

\begin{theorem}\label{t:main-local}
Let $\Gamma$ and $T$ be as in Assumption \ref{a:main-local}. Then $\bsing (T)$ is empty.
\end{theorem}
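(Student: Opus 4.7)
Fix an arbitrary $q\in\Gamma$; the plan is to follow the strategy of the outline and derive a contradiction from the assumption $q\in\bsing(T)$. Since $\supp(T)$ lies in the wedge $q+W(T_q\Gamma,\nu(q),\vartheta)$ with $\vartheta<\pi/2$, Allard's boundary monotonicity formula produces subsequential tangent cones at $q$. I would then apply the classification Theorem~\ref{t:classification_cones} to conclude that each such cone has the form $\sum_i m_i\a{H_i}$, with $H_i$ half-planes sharing the common spine $T_q\Gamma$, $\sum_i m_i=Q$, and every $H_i$ still contained in the wedge. In particular $\Theta(T,q)=Q/2<\frac{Q+1}{2}$, so the density hypothesis of Theorem~\ref{t:Allard-general} is automatically in force and I may work with the cone classification from the outset.

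\textbf{Step 2 (uniqueness and reduction to flat points).} Using the White/Hirsch--Marini argument in the form adapted in \cite{DNS}, I would prove uniqueness of the tangent cone at each $q\in\Gamma$ together with the polynomial convergence rate of Corollary~\ref{c:cylindrical_excess_decay}. Whenever the unique tangent cone at $q$ contains at least two distinct half-planes, Theorem~\ref{t:decomposition} splits $T$ locally as a sum of area-minimizing currents whose boundaries have strictly smaller integer multiplicities. An induction on $Q$ then reduces the problem to the flat case, namely Theorem~\ref{t:flat-points}: the unique tangent cone at any putative singular boundary point must be a single half-plane with multiplicity $Q$.

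\textbf{Step 3 (blow-up contradiction at a flat singular point).} Suppose now that $q\in\bsing(T)$ is a flat point. I would approximate $T$ at small scales by Lipschitz $Q$-valued functions (Theorem~\ref{t:strong_Lipschitz}), verify that these almost minimize the Dirichlet energy (Theorem~\ref{t:o(E)}), and refine them to normal approximations defined over a sequence of center manifolds (Theorems~\ref{t:cm} and~\ref{t:normal-approx}) covering overlapping ranges of scales. The built-in averaging property guarantees that at every such scale the mean of the normal approximation is small compared with its Dirichlet energy, hence no blow-up can be a multiple copy of a single classical harmonic function. By Theorem~\ref{t:I=1} any homogeneous Dir-minimizing blow-up then has frequency exactly $1$, which forces the cylindrical excess to decay polynomially. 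This contradicts the blow-up argument of Section~\ref{s:blowup_and_conclusion}, where decay at a singular flat point is shown to be \emph{slower} than any polynomial rate.

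\textbf{Main obstacle.} The hardest step, and the technical heart of the paper, is running Almgren's frequency argument on the sequence of center manifolds produced in Step~3. On each scale interval controlled by a single center manifold I would need the Gronwall-type estimate \eqref{e:Gronwall}, which extends ideas of \cite{DS5,DDHM}; but the existence of countably many exchange scales forces one to also bound summably the jumps of the frequency, namely \eqref{e:jump_estimate}. This jump estimate is genuinely new and is where the argument is most delicate. Once both parts of Theorem~\ref{t:frequency} are established, the frequency $I(r)$ admits a limit as $r\downarrow 0$ and the associated homogeneous blow-up feeds Theorem~\ref{t:I=1}, closing the contradiction.
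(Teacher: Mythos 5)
Your outline matches the paper's own proof quite closely: the wedge hypothesis feeds the classification of tangent cones so that $\Theta(T,q)=Q/2$ at every $q\in\Gamma$; uniqueness of the tangent cone (via \cite{DNS}, following Hirsch--Marini) combines with the decomposition Theorem \ref{t:decomposition} and an induction on $Q$ to reduce matters to flat points; and the flat case is handled by the center-manifold and normal-approximation machinery together with the frequency function, whose jump estimate \eqref{e:jump_estimate} you rightly single out as the genuinely new ingredient.

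There is, however, a real confusion in Step 3 about which way the final contradiction runs. You write that the frequency being equal to $1$ \emph{forces the cylindrical excess to decay polynomially}, and that this contradicts the slower-than-polynomial decay established in Section \ref{s:blowup_and_conclusion}. In the paper the roles are reversed. The polynomial decay bound $D(r)\le Cr^{2+\tau}$, i.e.\ estimate \eqref{e:bloody-bound-on-the-D}, is inherited from the \emph{uniqueness of tangent cones} and the power-law decay of Theorem \ref{t:decay}; it is exactly what Corollary \ref{c:cylindrical_excess_decay} from your Step 2 gives, and it holds at every flat boundary point, regular or not. What Theorem \ref{t:I=1} together with $I_0=1$ produces, via Theorem \ref{t:blow-up}, is the \emph{lower} bound \eqref{e:slow-decay}: $D(r)/r^{2+\varsigma}\to\infty$ for every $\varsigma>0$. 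The contradiction is between the $r^{2+\tau}$ upper bound supplied by Step 2 and the slower-than-any-polynomial lower bound forced by frequency $1$. As phrased, you attribute the polynomial rate to the frequency, which is incorrect, and you then contrast it with a fact (slow decay) that comes from the same frequency hypothesis --- making the argument circular as stated. The conclusion of Step 3 should read: frequency $1$ implies $D(r)$ cannot decay faster than $r^{2+\varsigma}$ for any $\varsigma>0$, contradicting the $r^{2+\tau}$ rate inherited from Corollary \ref{c:cylindrical_excess_decay}.

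A minor omission worth noting: the induction on $Q$ also requires the combination step, namely that if some $q$ is a boundary regular point for both $T_1$ and $T_2$ then it is a boundary regular point for $T=T_1+T_2$. This relies on the support-disjointness and tangent-cone transversality conclusions (c) and (d) of Theorem \ref{t:decomposition}; it is what makes the recombination of the two induction outputs legitimate and should be spelled out rather than absorbed into the phrase ``an induction on $Q$.''
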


\begin{proof}[Proof of Lemma \ref{l:wedge-lemma}] Since $q+ W (V, \nu, \vartheta)$ is a convex set, we just need to show the existence of a $0<\vartheta<\frac{\pi}{2}$ such that $\Gamma \subset (q+W (T_q \Gamma, \nu, \vartheta))$ for every $q\in \Gamma$. 
The latter is equivalent to show the existence of a constant $C>0$ such that
\begin{equation}\label{e:convexity-1}
 |(p-q) - ((p-q) \cdot \nu (q)) \nu (q) - \p_V (p-q)| \leq C ((p-q) \cdot \nu (q))
\qquad \forall p,q\in \Gamma\, .
\end{equation}
The strict convexity of $\partial \Omega$ ensures that for every $\varepsilon>0$ there is a constant $C$ such that \eqref{e:convexity-1} holds if additionally $|p-q|\geq \varepsilon$. Thus we just have to show the inequality for a sufficiently small $\varepsilon$. In order to do that, fix $q$ and assume w.l.o.g. that it is the origin, while at the same time we assume that $T_q \Gamma = T_0 \Gamma = \{x_m= \ldots = x_{m+n} =0\}$ and $\nu = \frac{\partial}{\partial x_{m+n}}$. We will use accordingly the coordinates $(y,z,w)$, with $y\in \mathbb R^{m-1}$, $z\in \mathbb R^n$, and $w\in \mathbb R$. By the $C^2$ regularity of $\Omega$ and $\Gamma$, in a sufficiently small ball $\bB_\varepsilon (q) = \bB_\varepsilon (0)$ the points $p$ in $\Gamma$ are described by
\begin{equation}\label{e:formula}
p = (y,z,w) = (y, f (y), g (y,f(y)))
\end{equation}
for some $f$ and $g$ which are $C^2$ functions. Observe that $f(0)=0$, $Df (0) =0$, $g(0)=0$, and $Dg (0) =0$. Moreover $\|D^2f\|_{C^0} \leq C_0$ and $D^2 g \geq c_0 {\rm Id}$ for constants $c_0>0$ and $C_0$, which depend only on $\Gamma$ and $\Omega$. Similarly, the size of the radius $\varepsilon$ in which the formula \eqref{e:formula} and the estimates are valid depends only on $\Omega$ and $\Gamma$ and not on the choice of the point $q$. Next, compute
\[
((p-q) \cdot \nu (q)) = g (y, f(y)) \geq c_0 (|y|^2 + |f(y)|^2) \geq c_0 |y|^2
\]
and
\[
|(p-q) - ((p-q) \cdot \nu (q)) \nu (q) - \p_V (p-q) | = |f(y)| \leq C_0 |y|^2\, .
\]
The desired inequality is then valid for $C:= \frac{C_0}{c_0}$. 
\end{proof}

\section{Tangent cones}\label{s:cones}

We start recalling Allard's boundary monotonicity formula. More specifically, we first define

\begin{definition}\label{def:density}
For every point $p\in \bB_1$, we define the \textbf{\emph{density of $T$ at the point $p$}}\index{Density of $T$ at some point}
\[
\Theta(T,p):=\lim_{r\downarrow 0}\frac{\|T\|(\bB_r(p))}{\pi r^2}\, ,
\]
whenever the latter limit exists.
\end{definition}

Next, we introduce the notation $\kappa$ for the curvature of $\Gamma$ and we consider the functions $\Theta_{\rm i} (T,p,r)$ and $\Theta_{\rm b} (T,p,r)$ given by
\begin{align}
\Theta_{\rm i} (T,p,r):=\; & \frac{\|T\|(\bB_r(p))}{\pi r^2}\,, \\
\Theta_{\rm b} (T, p,r):=\; &\exp \left( C_0 \|\kappa\|_0 r\right)\frac{\|T\|(\bB_r(p))}{\pi r^2}\, ,
\end{align}
where $C_0=C_0 (n)$ is a suitably large constant.

\begin{theorem}\label{thm:allard}
Let $T$ be as in Assumption \ref{a:main-local}.
\begin{itemize}
\item[(a)] If $p\in \bB_1 \setminus \Gamma$, then $r\mapsto \Theta_{\rm i} (T,p,r)$ is monotone on $(0,\min \{\dist (p, \Gamma), 1-|p|\})$,
\item[(b)] if $p\in \bB_1 \cap \Gamma$, then $r\mapsto\Theta_{\rm b} (T,p,r)$ is monotone on $(0,1-|p|)$.
\end{itemize}
Thus  the density exists at every point of $\bB_1$. Moreover, the restrictions of the map $p\mapsto \Theta (T,p)$ to $\Gamma\cap \bB_1$ and to $\bB_1\setminus \Gamma$ are both upper semicontinuous. 

If $X\in C^1_c (\bB_1, \R^{2+n})$, then the first variation of $T$ with respect to $X$ satisfies 
\begin{equation}\label{e:first_var}
\delta T (X) = Q \int_\Gamma X\cdot \vec{n} (x)\, d\cH^1 (x)\,  
\end{equation}
where $\vec n$ is a Borel vector field with $|\vec n|\leq 1$. 

Moreover, if $p\in \Gamma$ and $0<s<r < 1-|p|$, we then have the following precise monotonicity identity
\begin{align}
&\;r^{-2} \|T\| (\bB_r (p)) - s^{-2} \|T\| (\bB_s (p)) - \int_{\bB_r (p)\setminus \bB_s (p)} \frac{|(x-p)^\perp|^2}{|x-p|^4}\, d\|T\| (x)\nonumber\\
= &\; Q \int_s^r \int_{\Gamma \cap \bB_\rho (p)} (x-p)\cdot \vec{n} (x)\, d\cH^{1} (x)\, d\rho\, ,
\label{e:monot_identity}
\end{align}
where $Y^\perp (x)$ denotes the component of the vector $Y (x)$ orthogonal to the tangent plane of $T$ at $x$ (which is oriented by $\vec{T} (x)$). 
\end{theorem}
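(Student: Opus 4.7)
The plan is to derive everything from the first variation of $T$ applied to radial cutoff vector fields centered at $p$, in the spirit of Allard's strategy \cite{AllB}. The first step is to establish \eqref{e:first_var}. Since $T$ is area-minimizing, the distribution $X\mapsto \delta T(X)$ vanishes on $C^1_c(U\setminus \Gamma, \R^{2+n})$ and is therefore supported on $\Gamma$. For a general $X\in C^1_c(U, \R^{2+n})$, I would compare $T$ to the competitor $\phi_{t\#}T - Q\llbracket \Gamma, \phi_t(\Gamma)\rrbracket$, where $\phi_t$ is the flow of $X$ and $\llbracket \Gamma, \phi_t(\Gamma)\rrbracket$ is the ruled band interpolating $\Gamma$ with its translate. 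This competitor has the same boundary $Q\a{\Gamma}$ as $T$, and its mass exceeds $\mass(\phi_{t\#}T)$ by at most $Qt\|X\|_\infty \mathcal{H}^1(\Gamma\cap \supp X) + o(t)$; minimality of $T$ together with the symmetric inequality applied to $-X$ therefore yields $|\delta T(X)|\leq Q\|X\|_\infty \mathcal{H}^1(\Gamma\cap \supp X)$. By the Riesz representation theorem, $\delta T$ is thus a Radon measure on $\Gamma$ absolutely continuous with respect to $Q\mathcal{H}^1\res\Gamma$, and the additional fact that $\delta T(X)=0$ whenever $X|_\Gamma$ is tangent to $\Gamma$ (obtained by flowing along diffeomorphisms preserving $\Gamma$) forces its density to be normal to $\Gamma$. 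This produces the representation $\delta T = Q\vec n\,\mathcal{H}^1\res\Gamma$ with $|\vec n|\leq 1$.

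For the monotonicity statements, I would test \eqref{e:first_var} with the radial vector field $X_{p,r}(x) := \chi_r(|x-p|)(x-p)$, where $\chi_r$ is a smooth nonincreasing cutoff. The classical computation of $\operatorname{div}_{\vec T} X_{p,r}$ produces, upon passing to the sharp cutoff $\chi_r=\mathbf 1_{[0,r]}$ and differentiating in $r$, the familiar interior monotonicity identity
$$\frac{d}{dr}\left(\frac{\|T\|(\bB_r(p))}{\pi r^2}\right) = \frac{1}{\pi r^3}\int_{\bB_r(p)} \frac{|(x-p)^\perp|^2}{|x-p|^2}\, d\|T\|(x)\,.$$
If $p\in U\setminus\Gamma$ and $r<\dist(p,\Gamma)$, the boundary contribution in \eqref{e:first_var} vanishes and this yields (a) together with the interior case of \eqref{e:monot_identity}. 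If instead $p\in\Gamma$, the additional term
$$Q\int_{\Gamma\cap \bB_r(p)} (x-p)\cdot \vec n(x)\,\chi_r(|x-p|)\, d\mathcal{H}^1(x)$$
appears. The key observation is that $\vec n$ is normal to $\Gamma$ while $x-p$ is tangent to $\Gamma$ at $p$ up to an error of order $|x-p|^2\|\kappa\|_0$, so the integrand is pointwise bounded by $C\|\kappa\|_0|x-p|^2$. Integrating the resulting differential inequality in $r$ and absorbing the correction via a Gronwall-type argument produces exactly the exponential factor $\exp(C_0\|\kappa\|_0 r)$ in $\Theta_{\rm b}$, proving (b); refining the cutoff then yields \eqref{e:monot_identity}.

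Finally, the monotonicity of $\Theta_{\rm i}(T,p,\cdot)$ (for $p\notin\Gamma$) and of $\Theta_{\rm b}(T,p,\cdot)$ (for $p\in\Gamma$) gives existence of $\Theta(T,p)$ at every point of $\bB_1$. Upper semicontinuity follows by a standard inclusion-of-balls argument: for $p_k\to p$ in $\Gamma$ and small fixed $r$, one has $\|T\|(\bB_{r-|p_k-p|}(p_k))\leq \|T\|(\bB_r(p))$, hence $\Theta(T,p_k)\leq \Theta_{\rm b}(T,p_k,r-|p_k-p|)$, which converges to $\Theta_{\rm b}(T,p,r)$ as $k\to\infty$ by weak$^*$ continuity of the measure $\|T\|$ and continuity of the exponential prefactor, and then one sends $r\downarrow 0$; the argument on $\bB_1\setminus \Gamma$ is analogous. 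The main obstacle I expect is pinning down the sharp constant $Q$ in \eqref{e:first_var} (equivalently $|\vec n|\leq 1$): while the normality of $\vec n$ to $\Gamma$ and a bound $|\vec n|\leq C$ for some $C$ are soft consequences of stationarity, obtaining the exact constant requires the explicit ruled-band competitor described above and careful bookkeeping of the multiplicity of the boundary.
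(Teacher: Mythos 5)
Your proposal is correct, and it takes a genuinely different route from the paper. The paper dispatches this theorem very quickly by citation: parts (a) and (b) are attributed to Allard's interior and boundary works \cite{All,AllB} applied to the stationary varifold $\|T\|$ on $\bB_1\setminus\Gamma$; the sharp bound $\|\delta T\|\leq Q\cH^1\res\Gamma$ (hence \eqref{e:first_var}) is obtained by appealing to the structural first--variation analysis of \cite[Section 3]{DDHM}, and crucially the proof there is completed \emph{only after} one has established $\Theta(T,p)=\tfrac{Q}{2}$ at every boundary point via the tangent--cone classification in the subsequent sections. Your proposal instead proves the sharp first variation inequality $|\delta T(X)|\leq Q\|X\|_\infty\cH^1(\Gamma\cap\supp X)$ directly, by comparing $T$ with the competitor $\phi_{t\sharp}T-Q\a{\Gamma,\phi_t(\Gamma)}$ whose mass excess is controlled by the area of the ruled band, $t\|X\|_\infty\cH^1(\Gamma\cap\supp X)+o(t)$, and then using Riesz representation together with vanishing of $\delta T$ on tangential flows. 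This is essentially Allard's original competitor philosophy and has the advantage of being self-contained: it does not presuppose knowledge of the boundary density, so it avoids the logical loop the paper has to navigate (monotonicity $\Rightarrow$ tangent cones $\Rightarrow$ $\Theta=Q/2$ $\Rightarrow$ sharp first variation). The downstream steps — testing \eqref{e:first_var} with radial fields, the pointwise bound $|(x-p)\cdot\vec n(x)|\leq C\|\kappa\|_0|x-p|^2$ coming from near-tangency of $x-p$ together with $\vec n(x)\perp T_x\Gamma$, and the Gronwall absorption into the exponential prefactor (which works because $r^{-2}\|T\|(\bB_r(p))$ is bounded above and below) — are all standard and correct. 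One small remark: your closing worry about pinning down the sharp constant $Q$ is actually unfounded, because the ruled-band competitor already does precisely this, giving the constant $Q$ as the boundary multiplicity without any soft stationarity estimate; the only care needed is to observe that $\delta T$ vanishes on vector fields vanishing on $\Gamma$, so the bound $|\delta T(X)|\leq Q\|X\|_\infty\cH^1(\Gamma\cap\supp X)$ can be localized to depend only on $\sup_\Gamma|X|$, which is what makes the Riesz representation with total variation $\leq Q\cH^1\res\Gamma$ legitimate.
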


Note that $\delta T (X) =0$ for $X\in C^1_c (\bB_1\setminus \Gamma)$ follows in a straightforward way from the minimality property of $T$. In particular $\|T\|$ is a stationary integral varifold in $\bB_1\setminus \Gamma$ and (a) and (b) are consequences of the celebrated works of Allard, cf. \cite{All} and \cite{AllB}. Next note  that \eqref{e:first_var} follows from \eqref{e:monot_identity} arguing, for instance, as in \cite{Theodora} for \cite[Eq. (31)]{Theodora} (see \cite{All,AllB} as well). Coming to \eqref{e:first_var}, note first that the derivation of \cite[(3.8)]{DDHM} is valid under our assumptions, with the additional information $\delta T = \delta T_s$ (following the terminology and notation of \cite[Section 3]{DDHM}). We then just need to show that $\|\delta T_s\|\leq Q \cdot \mathcal{H}^1 \res \Gamma$. The latter follows easily arguing as in \cite[Section 3.4]{DDHM} once we have shown that $\Theta (T, p) = \frac{Q}{2}$ at every $p\in \Gamma$, see below. 

As in \cite[Section 3]{DDHM} we introduce the following notation and terminology.

\begin{definition}\label{def:tc}
Fix a point $p\in\supp(T)$ and define for all $r>0$
\[
\iota_{p,r}(q):=\frac{q-p}{r}\,.
\]
We denote by $T_{p,r}$ the currents
\[
T_{p,r}:=(\iota_{p,r})_\sharp T\,.
\]
We call the current $T_{p,r}$ the \textbf{\emph{blow up at the point $p$ and scale $r$ of $T$}}. Let $T_0$ be a current such that there exists a sequence $r_k\to0$ of radii such that $T_{p,r_k}\to T_{0}$, we say that \textbf{\emph{$T_0$ is a tangent cone to $T$ at $p$}}.  
\end{definition}
We recall the following consequence of the Allard's monotonicity formula, cf. \cite{AllB}.

\begin{theorem}\label{thm:ex_tc}
Let $T$ be as in Assumption \ref{a:main-local} or as in Theorem \ref{t:Allard-general}. Fix $p\in\supp(T)$ and take any sequence $r_k\downarrow 0$. Up to subsequences $T_{p,r_k}$ is converging locally in the sense of currents to an area-minimizing integral current $T_0$ 
\begin{itemize}
\item[(a)] $T_0$ is a cone with vertex $0$ and $\|T_0\|(\bB_1(0))=\pi \Theta(T,p)$;
\item[(b)] if $p\in\supp\, (T)\setminus\Gamma$, then $\partial T_0=0$;
\item[(c)] if $p\in\Gamma$, then $\partial T_0=Q \a{T_p \Gamma}$.
\end{itemize}
Moreover $\|T_{p,r_k}\|$ converges, in the sense of measures, to $\|T_0\|$. 
\end{theorem}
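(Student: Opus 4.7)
The plan is to extract tangent cones via the standard compactness--monotonicity scheme, exploiting the precise identity \eqref{e:monot_identity} from Theorem \ref{thm:allard}. First, I would use the monotonicity of $\Theta_{\rm b}$ (or $\Theta_{\rm i}$ in the interior case) to obtain a uniform mass bound $\|T_{p,r}\|(\bB_R(0))\leq C R^2$ for every fixed $R>0$ and every $r$ small enough. Since $T_{p,r}$ is area-minimizing in $\bB_{R}(0)$ for all $r$ small (the boundary is the rescaled $\Gamma$, which approaches $T_p\Gamma$ in $C^{3,\alpha_0}$ on compact sets), the Federer--Fleming compactness theorem yields a subsequence $T_{p,r_k}\to T_0$ locally in the sense of currents, with $T_0$ integer rectifiable. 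Moreover, since the boundaries $Q\a{\iota_{p,r_k}(\Gamma)}$ converge (in the interior case to $0$, in the boundary case to $Q\a{T_p\Gamma}$, using the $C^{3,\alpha_0}$ regularity to pass to the limit), we get (b) and (c). Area-minimality of $T_0$ follows from the standard lower semicontinuity of mass and upper semicontinuity of mass against competitors (constructed by truncation with slices on generic radii), because the boundary terms introduced by the curvature of $\Gamma$ vanish in the blow-up limit.

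To prove (a), the cone property, I would apply the monotonicity identity \eqref{e:monot_identity} from scale $s$ to scale $r$ at the point $p$, written as
\begin{equation*}
\int_{\bB_r(p)\setminus\bB_s(p)}\frac{|(x-p)^\perp|^2}{|x-p|^4}\,d\|T\|(x)
= r^{-2}\|T\|(\bB_r(p))-s^{-2}\|T\|(\bB_s(p)) - R(p,r,s),
\end{equation*}
where $R(p,r,s)$ is the curvature remainder, which is $O(\|\kappa\|_0\, r)$ by Theorem \ref{thm:allard} and hence tends to $0$ as $r\to 0$. Since the densities $\Theta_{\rm b}(T,p,r)$ (or $\Theta_{\rm i}$) have a finite limit $\Theta(T,p)$ as $r\downarrow 0$ by monotonicity, the right-hand side tends to $0$ when both $s,r\downarrow 0$. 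Rescaling this identity for $T_{p,r_k}$ on an annulus $\bB_R(0)\setminus\bB_\rho(0)$ and passing to the limit $k\to\infty$, the integral $\int_{\bB_R\setminus \bB_\rho}|x^\perp|^2/|x|^4\,d\|T_0\|$ vanishes, which forces $T_0$ to be invariant under dilations and hence a cone with vertex $0$. The mass identity $\|T_0\|(\bB_1(0))=\pi\Theta(T,p)$ follows at once from the cone property and the definition of $\Theta(T,p)$ as the limiting density.

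Finally, for the convergence of $\|T_{p,r_k}\|\to\|T_0\|$ as Radon measures, weak-$*$ lower semicontinuity of mass gives $\|T_0\|\leq \liminf \|T_{p,r_k}\|$ on open sets. For the reverse direction I would observe that, by the monotonicity formula applied to $T_{p,r_k}$, the density $\Theta_{\rm b}(T_{p,r_k},0,R)$ converges to $\pi^{-1}R^{-2}\|T_0\|(\bB_R(0))$ (since $T_0$ is a cone and mass of cones scales exactly), while also converging to $\Theta(T,p)$ by monotonicity in the original current. This pins down $\lim_k \|T_{p,r_k}\|(\bB_R(0))=\|T_0\|(\bB_R(0))$ for every $R$ such that $\|T_0\|(\partial\bB_R(0))=0$, which is enough for weak-$*$ measure convergence.

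The main obstacle I expect is the careful handling of the boundary term in (c): one must show that the boundaries $Q\a{\iota_{p,r_k}(\Gamma)}$ converge to $Q\a{T_p\Gamma}$ as currents on every $\bB_R(0)$, and that the curvature remainder in the monotonicity identity really is controlled uniformly by $\|\kappa\|_0$ times the scale. Both are direct consequences of the $C^{3,\alpha_0}$ regularity of $\Gamma$, but they are the points where the argument departs from the purely interior case and require the quantitative control furnished by Theorem \ref{thm:allard}.
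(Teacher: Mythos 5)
The paper does not prove this theorem itself; it simply cites Allard's work (\cite{AllB}). Your proposal reconstructs the standard monotonicity--compactness scheme, which is indeed the right framework, and steps 1--6 (mass bound, Federer--Fleming compactness, boundary convergence, area-minimality via slicing competitors, cone property via vanishing of $\int |x^\perp|^2/|x|^4\,d\|T_0\|$, and the density computation) are all correctly identified.

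However, your argument for the measure convergence $\|T_{p,r_k}\|\to\|T_0\|$ is circular. You claim that $\Theta_{\rm b}(T_{p,r_k},0,R)$ converges to $\pi^{-1}R^{-2}\|T_0\|(\bB_R(0))$ ``since $T_0$ is a cone and mass of cones scales exactly.'' But the cone property of $T_0$ tells you only about the internal scaling of $T_0$ across radii; it says nothing about whether the masses of the approximating currents $T_{p,r_k}$ converge to the mass of $T_0$. This is precisely the no-mass-loss statement you are trying to prove. What you have from the monotonicity in the original current is the correct assertion $\Theta_{\rm b}(T_{p,r_k},0,R)\to\Theta(T,p)$, i.e.\ $\|T_{p,r_k}\|(\bB_R(0))\to\pi R^2\Theta(T,p)$; combined with lower semicontinuity this gives only $\|T_0\|(\bB_R(0))\leq\pi R^2\Theta(T,p)$. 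To close the gap you must invoke area-minimality of the sequence: for a.e.\ $R$, slice both $T_{p,r_k}$ and $T_0$ on $\partial\bB_R(0)$, fill the error current $\langle T_{p,r_k}-T_0,|\cdot|,R\rangle$ with an isoperimetric connector of small mass (this is the same construction you already use for the area-minimality of $T_0$), and observe that mass loss would let you beat $T_{p,r_k}$ by replacing it inside $\bB_R$ with $T_0$ plus the connector, a contradiction with minimality. The cone structure of $T_0$ is then used to derive $\|T_0\|(\bB_1(0))=\pi\Theta(T,p)$ from the measure convergence, not the other way around.
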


We next show the following elementary fact:

\begin{theorem}\label{t:classification_cones}
Let $T$ be as in Assumption \ref{a:main-local} and $p\in \Gamma$. Any tangent cone $T_0$ at $p\in \Gamma$ has then the following properties:
\begin{itemize}
\item[(a)] $\supp (T_0)$ is contained in $W(T_p \Gamma, \nu (p), \vartheta)$ (where $\nu (p)$ and $\vartheta$ are the vector and the constant given in Assumption \ref{a:main-local});
\item[(b)] There are $k_1, \ldots k_N\in \mathbb N\setminus \{0\}$ and $2$-dimensional distinct oriented half-planes $V_1, \ldots , V_N$ with $\partial \a{V_i} = \a{T_p \Gamma}$ such that
\begin{equation}\label{e:description_cone}
T_0 = \sum_i k_i \a{V_i}\, .
\end{equation}
\end{itemize}
Note in particular that $2 \Theta (T, p) = Q = \sum_i k_i$, and thus $1\leq N \leq Q$. 

Conclusion (b) holds under the assumptions of Theorem \ref{t:Allard-general} provided we choose $p$ sufficiently close to $q$.
\end{theorem}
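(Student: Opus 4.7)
The plan is to invoke Theorem \ref{thm:ex_tc} to reduce the statement to a purely static question about area-minimizing $2$-dimensional integral cones with straight boundary, then to derive (a) directly from the geometry of wedges and (b) from the classification theorem proved in \cite{DNS}. By Theorem \ref{thm:ex_tc}, $T_0$ is an area-minimizing $2$-dimensional integral cone centered at the origin with $\partial T_0 = Q\a{T_p\Gamma}$, and $\|T_{p,r_k}\|$ converges to $\|T_0\|$ as Radon measures.

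For (a), I would exploit that the wedge $W := W(T_p\Gamma, \nu(p), \vartheta)$ is, by its very definition, a cone with vertex $0$, hence invariant under every positive dilation. Since $p \in \Gamma$, Assumption \ref{a:main-local} forces $\supp(T) \subset p + W$, and composing with the homothety $\iota_{p,r}$ yields $\supp(T_{p,r}) \subset W$ for every $r > 0$ (because $W$ is a cone, $w \in W$ if and only if $w/r \in W$). Since $W$ is closed and $\|T_{p,r_k}\|$ converges to $\|T_0\|$ as measures, I can pass to the limit and conclude $\supp(T_0) \subset W$.

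For (b) under Assumption \ref{a:main-local}, the one-sidedness from (a) together with $\partial T_0 = Q\a{T_p\Gamma}$ places us squarely in the setting of the classification of \cite{DNS}: an area-minimizing $2$-dimensional integral cone in $\R^{2+n}$ whose boundary is a multiple of an oriented line and whose support lies in a wedge of opening angle $<\pi/2$ about that line is necessarily a finite integer combination $\sum_i k_i \a{V_i}$ of distinct oriented half-planes $V_i$ with common boundary $T_p\Gamma$. Under the hypotheses of Theorem \ref{t:Allard-general} the wedge is unavailable, but choosing $p$ sufficiently close to $q$ and invoking the upper semicontinuity of $\Theta(T,\cdot)$ on $\Gamma$ (Theorem \ref{thm:allard}) together with monotonicity of $\Theta_{\rm b}$ yields $\Theta(T,p) < \frac{Q+1}{2}$; the same classification from \cite{DNS} is stated precisely under this density cap and again delivers the half-plane decomposition.

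The density identity $2\Theta(T,p) = Q$ follows at once by evaluating $\|T_0\|(\bB_1(0)) = \frac{\pi}{2}\sum_i k_i$ (each $V_i$ contributes mass $\pi/2$ in the unit ball) and comparing with $\partial T_0 = Q\a{T_p\Gamma}$: taking boundaries of the $V_i$'s forces $\sum_i k_i = Q$, whence $1 \leq N \leq Q$. The single genuinely nontrivial ingredient is the classification theorem of \cite{DNS}, which is the black box that rules out any boundaryless interior cone contribution in the first case and converts the density cap into the half-plane structure in the second; everything else reduces to direct manipulation of the wedge geometry together with the monotonicity formula and the upper semicontinuity of the density.
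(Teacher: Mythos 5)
Your proposal is correct and follows essentially the same path as the paper: (a) from scale-invariance of the wedge and convergence of $\|T_{p,r_k}\|$ to $\|T_0\|$ as measures, and (b) as a direct corollary of the classification in \cite[Proposition 4.1]{DNS}. The only step you compress slightly is the conversion of the strict density cap into the equality $\Theta(T,p)=\tfrac{Q}{2}$ near $q$: the paper makes this explicit by recording that, by \cite[Proposition 4.1]{DNS}, $2\Theta(T,p)$ is always an integer $\geq Q$ at boundary points, so that upper semicontinuity of $\Theta|_\Gamma$ together with $\Theta(T,q)<\tfrac{Q+1}{2}$ forces $\Theta(T,p)=\tfrac{Q}{2}$ for $p$ near $q$, and it is this exact equality that rules out any boundaryless planes hiding in the tangent cone.
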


The first part of the theorem is in fact at the same time a particular case of a more general theorem of Allard in higher dimensions (under Assumption \ref{e:inside-wedges}) and of a general classification of all $2$-dimensional area-minimizing cones with $\partial T_0 = Q \a{\ell}$, where $\ell$ is a straight line, given \cite{DNS}. In particular since point (a) is obvious, point (b) is a direct corollary of \cite[Proposition 4.1]{DNS} and of (a). As for the second part of the statement, observe that, by \cite[Proposition 4.1]{DNS}, $2 \Theta (T,p)$ is always an integer no smaller than $Q$. Recalling that $\Gamma \ni p \mapsto \Theta (T,p)$ is upper semicontinuous, under the assumptions of Theorem \ref{t:Allard-general} we must necessarily have $\Theta (T,P)=\frac{Q}{2}$ for every $p$ sufficiently close to $q$. Then conclusion (b) follows again from \cite[Proposition 4.1]{DNS}. Since it will be useful later, we introduce a notation for the cones as in \eqref{e:description_cone}.

\begin{definition}
Let $\ell\subset \mathbb R^{2+n}$ be a $1$- dimensional line passing through the origin and let $Q\in \mathbb N\setminus \{0\}$. We denote by $\mathscr{B}_{Q} (\ell)$ the set of area minimizing cones of the form $T = \sum_{i=1}^N k_i \a{V_i}$, for any finite collection of distinct half-planes $V_i$ such that $\partial \a{V_i}= \a{\ell}$ and any finite collection of positive integers $\{k_i\}_{i=1}^N$ such that $\sum_{i=1}^N k_i = Q$. Moreover we will call such cones \textbf{\emph open books}.
\end{definition}

\section{Uniqueness of tangent cones and first decomposition}\label{s:uniqueness}

In this section we appeal to \cite[Theorem 1.1]{DNS}, which follows the ideas of Hirsch and Marini in \cite{HM}, in order to claim that the tangent cone to $T$ at $p\in \Gamma$ is unique. 

\begin{theorem}\label{t:uniqueness}
Let $T$ and $\Gamma$ be as in Assumption \ref{a:main-local}. Then the tangent cone at each $p\in \Gamma$ is unique and from now on will be denoted by $T_{p,0}$. The same conclusion holds under the assumptions of Theorem \ref{t:Allard-general} provided $q$ is sufficiently close to $p$.
\end{theorem}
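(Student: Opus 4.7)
The plan is to verify that the hypotheses of \cite[Theorem 1.1]{DNS} are met at each $p\in\Gamma$ and then invoke it directly. By Theorem \ref{thm:ex_tc} tangent cones at $p$ exist along every sequence $r_k\downarrow 0$, and by Theorem \ref{t:classification_cones} every such tangent cone is an open book $T_0=\sum_i k_i\a{V_i}\in\mathscr{B}_Q(T_p\Gamma)$ with common spine $T_p\Gamma$; in particular $\Theta(T,p)=Q/2$ and the admissible tangent cones lie in the finite-dimensional moduli space of open books with $Q$ leaves. This is exactly the setting treated in \cite{DNS}, which is the boundary analogue of the planar interior uniqueness result of Hirsch and Marini \cite{HM}.

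The mechanism behind \cite[Theorem 1.1]{DNS} that I would recall for context is a \L{}ojasiewicz--Simon type inequality around each open book. For $r$ small, one parametrizes $T_{p,r}$ as a multivalued graph over each half-plane $V_i$ with boundary trace pinned to $Q\a{T_p\Gamma}$, so that the area functional becomes an analytic perturbation of the Dirichlet energy on an Almgren-type multivalued Sobolev space. Analyticity, combined with the \L{}ojasiewicz--Simon inequality near $T_0$, produces a polynomial decay of the mass-ratio deficit
\[
r\mapsto r^{-2}\|T\|(\bB_r(p))-\pi\Theta(T,p)=O(r^\beta)\qquad\text{for some }\beta>0.
\]
Inserted into the monotonicity identity \eqref{e:monot_identity}, this makes $r\mapsto\int_{\bB_r(p)\setminus\bB_{r/2}(p)}|(x-p)^\perp|^2/|x-p|^4\,d\|T\|$ summable in dyadic $r$, and the standard Simon-type Cauchy argument then gives uniqueness of the tangent cone in the local flat topology.

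For the second part of the statement, under the hypotheses of Theorem \ref{t:Allard-general} the upper semicontinuity of $\Theta(T,\cdot)$ on $\Gamma$ (Theorem \ref{thm:allard}) together with the facts that $2\Theta(T,\cdot)\in\mathbb N$ and $2\Theta(T,\cdot)\geq Q$ (from \cite[Proposition 4.1]{DNS}), combined with the hypothesis $\Theta(T,q)<(Q+1)/2$, force $\Theta(T,p)=Q/2$ for every $p\in\Gamma$ sufficiently close to $q$. Hence Theorem \ref{t:classification_cones}(b) applies at each such $p$, the hypotheses of \cite[Theorem 1.1]{DNS} are again verified, and the same conclusion follows. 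The principal difficulty, which I would not reprove here and which is precisely the content of \cite{DNS}, is the \L{}ojasiewicz--Simon inequality itself at open books: one has to control simultaneously the multivaluedness of the graphical parametrization and the rigid boundary datum $Q\a{T_p\Gamma}$, which is the step where the purely interior argument of \cite{HM} requires substantial boundary-adapted modifications.
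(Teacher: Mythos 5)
Your proposal is correct and is essentially the paper's own proof: both verify via Theorem \ref{thm:ex_tc} and Theorem \ref{t:classification_cones} that the tangent cones at $p$ are open books with spine $T_p\Gamma$, and then invoke \cite[Theorem 1.1]{DNS}; for the case of Theorem \ref{t:Allard-general}, both use upper semicontinuity of $\Theta(T,\cdot)$ together with the quantization $2\Theta(T,p)\in\mathbb N$, $2\Theta(T,p)\geq Q$ from \cite[Proposition 4.1]{DNS} to force $\Theta(T,p)=Q/2$ near $q$. One small caveat on the expository part: you describe the mechanism of \cite{DNS} as a \L{}ojasiewicz--Simon inequality, whereas the paper attributes it to the ideas of Hirsch and Marini \cite{HM}, whose two-dimensional argument is not the analyticity-based \L{}ojasiewicz--Simon route; since you offer this only as context and your proof rests on citing \cite[Theorem 1.1]{DNS} after checking its hypotheses, this does not affect correctness, but the attribution should be adjusted if you keep that paragraph.
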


In fact such a uniqueness theorem comes with a power-law decay (cf. \cite[Theorem 2.1]{DNS}), which in turn allows us to decompose the current at any point $p\in \Gamma$ where the tangent cone is not contained in a {\em single} half-plane. Before coming to its statement, we introduce the following terminology.

\begin{definition}\label{d:flat}
Let $T$ and $\Gamma$ be as in Assumption \ref{a:main-local}. If the tangent cone $T_{p,0}$ to $T$ at $p\in \Gamma$ is of the form $Q\a{V}$ for some $2$-dimensional half-plane $V$, then $p$ is called a \textbf{{\em flat boundary point}}.
\end{definition}

\begin{theorem}[Decomposition]\label{t:decomposition}
Let $T$ and $\Gamma$ be:
\begin{itemize}
    \item either as in Assumption \ref{a:main-local},
    \item or either as in Theorem \ref{t:Allard-general}.
\end{itemize}
Assume that $p\in \Gamma$ is {\em not a } flat boundary point and in the second case assume further that $p$ is sufficiently close to $q$. Then there is $\rho>0$ with the following property. There are two positive integers $Q_1$ and $Q_2$ and two area-minimizing currents $T_1$ and $T_2$ in $\bB_\rho (p)$ such that:
\begin{itemize}
\item[(a)] $T_1+T_2 = T\res \bB_\rho (p)$ (thus $Q_1+Q_2=Q$),
\item[(b)] $\partial T_i \res \bB_\rho (p) = Q_i \a{\Gamma\cap \bB_\rho (p)}$,
\item[(c)] $\supp (T_1) \cap \supp (T_2) = \Gamma \cap \bB_\rho (p) $,
\item[(d)] at each point $q\in \bB_\rho (p)$ the tangent cones to $T_1$ and $T_2$ have only the line $T_q \Gamma$ in common, i.e., $(T_1)_{q,0}\in\mathscr{C}_{min,Q_1}(T_q \Gamma)$ and to $(T_2)_{q,0}\in\mathscr{C}_{min,Q_2}(T_q \Gamma)$.
\end{itemize} 
\end{theorem}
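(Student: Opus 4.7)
The plan is to split the tangent cone $T_{p,0}$ into two nontrivial open books, transfer this splitting to $T$ on a small ball $\bB_\rho(p)$ using the polynomial rate of convergence provided by Theorem \ref{t:uniqueness}, and then verify the four conclusions using standard current calculus and a blow-up identification of multiplicities. For the tangent-cone split, since $p$ is not a flat point, Theorem \ref{t:classification_cones} gives $T_{p,0}=\sum_{i=1}^{N}k_i\a{V_i}$ as an open book with $N\geq 2$ distinct half-planes $V_i=T_p\Gamma+\mathbb R_+ e_i$, where each $e_i\in(T_p\Gamma)^\perp$ is a unit vector. I pick $\xi\in(T_p\Gamma)^\perp$ with $\xi\cdot e_i\neq 0$ for every $i$ and both index sets $I_j:=\{i:(-1)^{j-1}\xi\cdot e_i>0\}$ ($j=1,2$) nonempty; starting with $\xi_0=e_1-e_2$ (which puts $e_1,e_2$ on opposite sides) one perturbs within the open cone $\{\xi:\xi\cdot e_1>0>\xi\cdot e_2\}$ to avoid the finitely many hyperplanes $\{e_i\}^\perp$. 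Setting $S_j:=\sum_{i\in I_j}k_i\a{V_i}$ and $Q_j:=\sum_{i\in I_j}k_i$, we obtain $T_{p,0}=S_1+S_2$ with $Q_1+Q_2=Q$, $\partial S_j=Q_j\a{T_p\Gamma}$, $\supp S_1\cap \supp S_2=T_p\Gamma$, and a uniform angular gap $\delta:=\min_i|\xi\cdot e_i|>0$ between the two families of half-planes.

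For the transfer to $T$, the polynomial decay rate from \cite[Theorem 2.1]{DNS} (the quantitative form of Theorem \ref{t:uniqueness}) provides constants $\beta,C>0$ with
\[
\sup_{x\in\supp T\cap \bB_r(p)}\dist\bigl(x,\,p+\supp T_{p,0}\bigr)\le C r^{1+\beta}
\]
for every sufficiently small $r$. Combined with the facts that distinct half-planes of $T_{p,0}$ meet only on the $1$-dimensional axis $T_p\Gamma$ and that $\Gamma$ is $C^{3,\alpha}$-tangent to $T_p\Gamma$ at $p$, this forces, for $\rho>0$ sufficiently small, the set $(\supp T\cap\bB_\rho(p))\setminus\Gamma$ to break into exactly $N$ connected components, each one Hausdorff-close to a single half-plane $p+V_i$ outside any prescribed cylindrical neighborhood of the axis. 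I then construct a smooth hypersurface $\Sigma\subset\bB_\rho(p)$ with $\Gamma\cap\bB_\rho(p)\subset\Sigma$, tangent at $p$ to $p+(T_p\Gamma\oplus\xi^\perp)$, separating the components indexed by $I_1$ from those indexed by $I_2$. Writing $U_1,U_2$ for the two open components of $\bB_\rho(p)\setminus\Sigma$, define $T_j:=T\res U_j$.

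Property (a) holds because $\|T\|(\Sigma)=0$ (since $\supp T\cap\Sigma\subset\Gamma$ is $1$-dimensional and $T$ is $2$-dimensional), hence $T_1+T_2=T\res\bB_\rho(p)$. Property (c) is immediate from $\supp T_j\subset\overline{U_j}$ together with $\overline{U_1}\cap\overline{U_2}\cap\supp T=\Gamma\cap\bB_\rho(p)$. For (b), the identity $\partial T_1+\partial T_2=\partial(T\res\bB_\rho(p))=Q\a{\Gamma\cap\bB_\rho(p)}$ combined with $\supp \partial T_j\subset\Gamma\cap\bB_\rho(p)$ gives $\partial T_j=c_j\a{\Gamma\cap\bB_\rho(p)}$ for some $c_j\in\mathbb N$ with $c_1+c_2=Q$, and the identification $c_j=Q_j$ follows by blow-up: since $\supp T_j$ is Hausdorff-close to $p+\supp S_j$, we have $(T_j)_{p,r}\to S_j$ and hence $c_j=Q_j$. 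Area-minimality of each $T_j$ is immediate from the cancellation argument: any competitor $\tilde T_j$ with $\partial\tilde T_j=\partial T_j$ yields $\tilde T_j+T_{3-j}$ as competitor for $T$, whence $\mass(\tilde T_j)\geq \mass(T_j)$. Finally, (d) follows by applying Theorems \ref{t:classification_cones} and \ref{t:uniqueness} to $T_j$ at each $q\in\Gamma\cap\bB_\rho(p)$, together with the constraint $\supp T_j\subset\overline{U_j}$, which forces the half-planes of $(T_1)_{q,0}$ and $(T_2)_{q,0}$ to be distinct. The main technical hurdle is the component analysis in the second step---establishing that $(\supp T\cap\bB_\rho(p))\setminus\Gamma$ really separates into the correct $N$ connected components---which requires balancing the polynomial decay rate $r^{1+\beta}$ against the $O(r^2)$ closeness of $\Gamma$ to $T_p\Gamma$, and ensuring that no stray portion of $\supp T$ crosses the separating hypersurface $\Sigma$ outside of $\Gamma$.
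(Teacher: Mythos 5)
Your high-level plan---split the open book $T_{p,0}$ into two nontrivial sub-books and transfer the split to $T$ on a small ball using the polynomial decay from Theorem \ref{t:decay}---is the same as the paper's, but your concrete mechanism is different: the paper cuts $T$ with two families of wedges $W(T_q\Gamma, n(q), 3\vartheta_0)$ and $W^c(T_q\Gamma, n(q), 6\vartheta_0)$ that \emph{vary with $q\in\Gamma$}, and proves via a flat-norm comparison of tangent cones at $p$ and at nearby $q$ (its Step 1, via a translation and Fubini/slicing argument) that $\supp T\cap\bB_{\bar r}(q)$ sits inside the union of these two wedges with a uniform angular gap at \emph{every} boundary point. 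You instead use a single separating hypersurface $\Sigma$ tangent to $p+(T_p\Gamma\oplus\xi^\perp)$.

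The gap in your proposal is in conclusion (d). You claim the constraint $\supp T_j\subset\overline{U_j}$ ``forces the half-planes of $(T_1)_{q,0}$ and $(T_2)_{q,0}$ to be distinct,'' but that constraint only yields $\supp (T_j)_{q,0}\subset\overline{H_j}$, where $\overline{H_1},\overline{H_2}$ are the two closed half-spaces cut out by $T_q\Sigma$. Since $\overline{H_1}\cap\overline{H_2}=T_q\Sigma$ is $(n+1)$-dimensional and contains $T_q\Gamma$, it contains plenty of $2$-dimensional half-planes bounded by $T_q\Gamma$; a priori both $(T_1)_{q,0}$ and $(T_2)_{q,0}$ could carry the \emph{same} such half-plane. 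Nothing in your setup rules this out: $\supp T_1\cap\supp T_2\subset\Gamma$ is compatible with the two currents osculating $\Sigma$ from opposite sides with a common tangent half-plane at $q$. What you need is a \emph{quantitative} angular gap, uniform in $q\in\Gamma\cap\bB_\rho(p)$, between the wedges that contain $\supp T_1$ and $\supp T_2$ near each $q$. Establishing that gap requires comparing $T_{q,0}$ to $T_{p,0}$ (your Hausdorff-closeness of $\supp T$ to the single cone $p+\supp T_{p,0}$ does not give it, because the axis $T_q\Gamma$ of the cone at $q$ is displaced by $O(|q-p|^2)$ relative to $T_p\Gamma$ and the decay bound $|q-p|^{1+\alpha}$ does not control angular deviations around the new axis). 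This is precisely what the paper's Step 1 (flat-norm comparison) plus \eqref{e:cone-in-p-split}--\eqref{e:conical-split} supply. Relatedly, your identification $\|T\|(\Sigma)=0$ and $\supp\partial T_j\subset\Gamma$ rests on $\supp T\cap\Sigma\subset\Gamma$, which you flag as a ``technical hurdle'' but do not establish; the same wedge/decay machinery that fixes (d) would also settle this.
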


At flat points we are not able to decompose the current further and in fact the final byproduct of the regularity theory of this paper is that in a neighborhood of each flat point, the current is supported in a single smooth minimal sheet. For the moment the uniqueness of the tangent cones (and the corresponding decay from which we derive it) allows us to draw the following conclusion.

\begin{theorem}\label{t:sandwich}
Let $T$ and $\Gamma$ be as in Assumption \ref{a:main-local} or as in Theorem \ref{t:Allard-general}. Assume that $p\in \Gamma$ is a flat boundary point, that $Q \a{V}$ is the unique tangent cone of $T$ at $p$, and, in the case of Theorem \ref{t:Allard-general} that $p$ is sufficiently close to $q$. 
Let $n(p) \in V$ be the unit normal to $\Gamma$ at $p$ and define in a neighborhood of $p$
\begin{equation}\label{e:new-n}
n (q) = \frac{n(p) - n(p)\cdot \tau (q) \tau (q)}{|n(p) - n(p)\cdot \tau (q) \tau (q)|}    
\end{equation}
where $\tau$ is the unit tangent vector to $\Gamma$ orienting it.

Then, for every $\theta>0$ there is a $\rho>0$ such that
\begin{equation}
\supp (T) \cap \bB_\rho (p) \, \subset \bigcap_{q\in \bB_\rho (p)\cap \Gamma} ( q+ W (T_q \Gamma, n(q), \theta))\, .
\end{equation}
\end{theorem}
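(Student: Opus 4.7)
The main input is the power-law decay of $T$ toward its unique tangent cone $Q\a{V}$ at $p$, provided by Theorem~\ref{t:uniqueness} together with \cite[Theorem 2.1]{DNS}: there exist constants $C,\alpha,\rho_0>0$ such that the flat distance of $T_{p,r}$ from $Q\a{V}$ inside $\bB_1(0)$ is bounded by $Cr^\alpha$ for every $r\leq\rho_0$. My first step is to upgrade this flat control to a one-sided Hausdorff control of supports. Using the boundary monotonicity formula~\eqref{e:monot_identity} together with the standard argument that any ball $\bB_\sigma(y)\subset\bB_r(p)$ disjoint from $p+V$ but meeting $\supp(T)$ must carry mass at least $c\sigma^2$ (by the density bounds of Theorem~\ref{thm:allard}), one deduces
\[
\supp(T)\cap\bB_r(p)\subset \big\{x:\dist(x,p+V)\leq Cr^{1+\alpha}\big\}\qquad\forall\,r\leq\rho_0.
\]
Since $V$ is a genuine half-plane with spine $T_p\Gamma$ and opening direction $n(p)$, the same argument additionally yields the one-sided information $(x-p)\cdot n(p)\geq -Cr^{1+\alpha}$ for every $x\in\supp(T)\cap\bB_r(p)$, and this extra fact is what distinguishes the half-plane $V$ from its linear span.

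Given $\theta>0$, I would pick $\eps=\eps(\theta)>0$ small and then $\rho\in(0,\rho_0]$ small enough that (i) $C\rho^\alpha\leq\eps$, so that $\dist(x,p+V)\leq\eps\rho$ and $(x-p)\cdot n(p)\geq -\eps\rho$; (ii) for every $q\in\Gamma\cap\bB_\rho(p)$ the vectors $\tau(q)$ and $n(q)$ lie within distance $\eps$ of $\tau(p)$ and $n(p)$, a consequence of the $C^{3,\alpha_0}$ regularity of $\Gamma$ and of the explicit formula defining $n(q)$; and (iii) $\Gamma\cap\bB_\rho(p)$ is graphical over $p+T_p\Gamma$ with second-order deviations of size $O(\rho^2)$. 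Fix $x\in\supp(T)\cap\bB_\rho(p)$, $q\in\Gamma\cap\bB_\rho(p)$, and set $y:=x-q$. With $L_p:=\mathrm{span}(\tau(p),n(p))\supset V$ and $L_q:=\mathrm{span}(\tau(q),n(q))$, combining (i), (iii) (which yields $\dist(q,p+L_p)=O(\rho^2)$) with the $O(\eps)$ Grassmannian closeness of $L_q$ to $L_p$ from (ii) bounds the left-hand side of the wedge inequality by
\[
|y-\p_{T_q\Gamma}(y)-(y\cdot n(q))\,n(q)|\leq C_1\eps\rho.
\]
The right-hand side $\tan\theta\,(y\cdot n(q))$ is estimated from below by writing $(y\cdot n(q))=(x-p)\cdot n(p)-(q-p)\cdot n(p)+(y\cdot(n(q)-n(p)))$ and inserting $(x-p)\cdot n(p)\geq -\eps\rho$, $(q-p)\cdot n(p)=O(\rho^2)$, $|n(q)-n(p)|\leq\eps$. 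Choosing $\eps$ small compared to $\sin\theta$ then closes the wedge inequality.

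The main obstacle I expect is the verification when $x\in\supp(T)$ is at distance comparable to $\rho^{1+\alpha}$ from $\Gamma$: both sides of the wedge inequality are then of the same borderline size $\eps\rho$, so the naive comparison does not leave any room. The resolution is to exploit \emph{both} the one-sided nature of the decay---which uses that $V$ is a genuine half-plane, not merely the $2$-plane $L_p$, to upgrade the two-sided Hausdorff bound to the signed bound $(x-p)\cdot n(p)\geq -Cr^{1+\alpha}$---\emph{and} the additional gain $\rho^\alpha\ll\tan\theta$ acquired by shrinking $\rho$ further, which makes the bad margin on the left strictly smaller than the positive contribution available on the right. Once this delicate estimate is secured, the remaining verification is a routine linear-algebra computation in the orthonormal frame built from $\tau(q)$, $n(q)$ and an orthonormal basis of $L_q^{\perp}$.
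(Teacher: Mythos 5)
There is a genuine gap in the argument at exactly the place you flag as ``the main obstacle,'' and your proposed resolution does not close it. To see this concretely: after your reductions, the wedge inequality you need at $q\in\Gamma\cap\bB_\rho(p)$ reads
\[
|y - \p_{T_q\Gamma}(y) - (y\cdot n(q))n(q)|\ \leq\ (\tan\theta)\,(y\cdot n(q)),\qquad y:=x-q,
\]
and you have established only the one-sided bounds $|y - \p_{T_q\Gamma}(y) - (y\cdot n(q))n(q)|\leq C_1\eps\rho$ and $(y\cdot n(q))\geq -C_2\eps\rho$. These two bounds are consistent with $(y\cdot n(q))$ being zero or even slightly negative while the perpendicular component is a genuinely positive multiple of $\eps\rho$, in which case the inequality is false no matter how $\eps$ compares with $\tan\theta$. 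Shrinking $\rho$ does not help: take $x\in\supp(T)$ at distance $\sim\rho^{1+\alpha}$ from a boundary point $q'$ in a direction $v$ transverse to $L_p=\mathrm{span}(\tau(p),n(p))$; the Hausdorff decay at $p$ tolerates this, yet $y=x-q'$ has $y\cdot n(q')=O(\rho\cdot\rho^{1+\alpha})$ while its component orthogonal to $L_{q'}$ is of the larger order $\rho^{1+\alpha}$, so the wedge condition at $q'$ would fail. What rules out such $x$ is information localized at $q'$, not at $p$.

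The missing ingredient is the comparison of tangent cones at nearby boundary points followed by an application of the Hausdorff decay \emph{at each such $q$}, not only at $p$. This is precisely the mechanism the paper uses: one first verifies that the spherical excess $e(q,r_0)$ stays below the threshold $\eps_0^2$ uniformly for $q\in\Gamma\cap\bB_{r_0}(p)$, so Theorem \ref{t:decay} applies at every nearby $q$; one then uses the tangent cone comparison estimate \eqref{e:comparison}, which gives $\mathcal F(T_q\res\bB_1,T_p\res\bB_1)\leq C|q-p|^\alpha$, to conclude that the (open book) tangent cone at each nearby $q$ is close to $Q\a{V}$ and hence contained in a narrow wedge about $n(q)$; finally the Hausdorff estimate \eqref{e:Hausdorff-distance-estimate} applied at $q$ places $\supp(T)$ near $q$ inside a slightly larger wedge. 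This is exactly the content of Step~1 and Step~2 in the paper's proof of Theorem \ref{t:decomposition} (see \eqref{e:comparison} and \eqref{e:conical-split}), specialized to the flat case. Your outline handles the case $|x-q|\gtrsim\rho$ well, but a complete proof must combine it with this local argument at scales comparable to $\dist(x,\Gamma)$.
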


The previous two theorems allow us to reduce both Theorem \ref{t:main-local} and Theorem \ref{t:Allard-general} to the following simpler statement. We postpone the proof to Section \ref{s:from-decomposition-to-flat-points}.

\begin{ipotesi}\label{a:main-local-2}
$Q\geq 1$ is an arbitrary integer and $\vartheta$ a given positive real number smaller than $\frac{\pi}{2}$. $\Gamma$ is a $C^{3,\alpha}$ arc in $\bB_1 (0) \subset \mathbb R^{2+n}$ with endpoints lying in $\partial \bB_1 (0)$. $T$ is a $2$-dimensional area-minimizing integral current in $U$ such that $(\partial T) \res U= Q \a{\Gamma}$.
$0\in \Gamma$ is a flat point, $Q\a{V}$ is the unique tangent cone to $T$ at $0$ and we let $n$ be as in \eqref{e:new-n}. Moreover
\begin{equation}\label{e:new-wedge}
\supp (T) \subset \bigcap_{q\in \bB_1 (0)\cap \Gamma} ( q+ W (T_q \Gamma, n(q), \vartheta))\, ,
\end{equation}
where $\vartheta$ is a small constant.
\end{ipotesi}

\begin{theorem}\label{t:flat-points}
Let $T$ and $\Gamma$ be as in Assumption \ref{a:main-local-2}. Then there is a neighborhood $U$ of $0$ and a smooth minimal surface $\Sigma$ in $U$ with boundary $\Gamma$ such that $T\res U = Q \a{\Sigma}$.
\end{theorem}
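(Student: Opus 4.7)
\medskip

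\textbf{Proof proposal for Theorem \ref{t:flat-points}.} The plan is to argue by contradiction: assume $0$ is a flat boundary point at which $T$ is not regular. The strategy, as sketched in the outline, is to exploit a fundamental tension between (i) the polynomial rate of convergence to the flat tangent cone forced by the uniqueness result of Theorem \ref{t:uniqueness} (with its quantitative companion, \cite[Theorem 2.1]{DNS}), and (ii) a ``slow decay'' that the hypothesis of singularity at a flat point imposes via a frequency-function analysis. Concretely, Theorem \ref{t:uniqueness} and the accompanying power-law decay yield constants $\alpha>0$, $C>0$, and $\rho_0>0$ such that the cylindrical excess of $T$ over $V$ satisfies $E(T,\bB_r(0),V)\leq C r^{\alpha}$ for all $r\in (0,\rho_0)$. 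By Theorem \ref{t:sandwich}, after choosing $\vartheta$ small, the support is confined to the thin wedges \eqref{e:new-wedge} around $V$, so at every sufficiently small scale we are in the flat regime where the whole machinery developed in Sections \ref{s:Lip-approx}--\ref{s:strong-Lipschitz} applies.

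Next, I would produce the refined approximations. At each sufficiently small scale $r$, the strong Lipschitz approximation (Theorem \ref{t:strong_Lipschitz}) together with Theorem \ref{t:o(E)} gives a $Q$-valued Lipschitz map over a suitable plane which almost minimizes the Dirichlet energy. This is however too crude; on top of it we build the \emph{center manifold} $\mathcal{M}$ of Section \ref{s:center-manifold} (Theorem \ref{t:cm}) and its normal approximation $N$ (Theorem \ref{t:normal-approx}), which enjoys the crucial feature that $\etaa\circ N$ is small compared to the Dirichlet energy of $N$, i.e. at the scales where the center manifold is valid, $N$ is never well-approximated by $Q$ copies of a single classical harmonic function. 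Since a single $\mathcal{M}$ cannot be controlled at arbitrarily small scales, I would cover the dyadic scales below $\rho_0$ by a (possibly infinite) family of center manifolds, with the stopping/exchange mechanism of Sections \ref{s:flattening} and \ref{s:stopping-scales}, and control the ``exchange scales'' accordingly.

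The heart of the argument is then the frequency analysis of Section \ref{s:proof_monotonicity_frequency}--\ref{s:proof_jump_estimate}. On each scale interval where a single center manifold governs the approximation, the frequency function $I(r)$ of the corresponding $N$ satisfies the almost-monotonicity estimate \eqref{e:Gronwall}; at each exchange scale, $I$ has a jump, but the total sum of jumps is controlled by \eqref{e:jump_estimate}. Combining both bounds yields existence of the limit $I_0:=\lim_{r\downarrow 0} I(r)\in [1,\infty)$. Rescaling the normal approximations at a suitable sequence $r_k\downarrow 0$, one extracts a nontrivial $Q$-valued Dirichlet minimizer $u_\infty$ on a half-space, vanishing identically on the spine $T_0\Gamma$, and $I_0$-homogeneous. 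The center-manifold condition $\etaa\circ N\approx 0$ passes to the limit and forbids $u_\infty$ from being a multiple copy of a single classical harmonic function. Hence Theorem \ref{t:I=1} forces $I_0=1$.

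Finally, the conclusion of Section \ref{s:blowup_and_conclusion}: homogeneity $I_0=1$ of $u_\infty$ translates, via the efficient approximation, into the statement that the cylindrical excess $E(T,\bB_r(0),V)$ cannot decay at any polynomial rate as $r\downarrow 0$ (it converges, roughly, as $r^{2I_0-2}=r^0$). This contradicts the polynomial decay $E(T,\bB_r(0),V)\leq C r^\alpha$ established at the beginning. Therefore $0$ must be a boundary regular point in the sense of Definition \ref{d:regular_and_singular}. Since the unique tangent cone at $0$ is $Q\a{V}$, all the sheets $\Lambda_j$ provided by the definition must be tangent to $V$ at $0$; condition (c) in Definition \ref{d:regular_and_singular} then forces $J=1$, whence $T = Q\a{\Sigma}$ in a neighborhood of $0$, and $\Sigma$ is smooth and minimal by the interior regularity theory. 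The main obstacle in this program is clearly Step three: establishing the limit $I_0$ of the frequency despite the (possibly infinitely many) jump discontinuities induced by the sequence of center manifolds requires the delicate new estimate \eqref{e:jump_estimate}, which appears to be the deepest novelty of the paper.
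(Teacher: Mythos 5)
Your proposal tracks the paper's actual proof essentially step for step: polynomial excess decay from uniqueness of the flat tangent cone (Corollary \ref{c:cylindrical_excess_decay}), a sequence of center manifolds with normal approximations having small average, almost-monotonicity plus jump control of the frequency function (Theorem \ref{t:frequency}), a homogeneous blow-up limit with vanishing average (Theorem \ref{t:blow-up-2}) which Theorem \ref{t:I=1} forces to have homogeneity $I_0=1$, and the resulting contradiction between the slow decay $D(r)\gtrsim r^{2+\varsigma}$ from Corollary \ref{c:frequency=1} and the polynomial upper bound \eqref{e:bloody-bound-on-the-D}. The only cosmetic difference is that the paper sets up the alternative as ``either $T=Q\a{\cM_k}$ near $0$ for some $k$ or the frequency is everywhere well-defined'' rather than assuming non-regularity and then separately deducing $J=1$ from flatness, but both routes yield the same conclusion.
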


Obviously the latter theorem implies as well Theorem \ref{t:Allard-flat}.

\subsection{Decay towards the cone} We first state a more precise version of Theorem \ref{t:uniqueness}. To that end we recall the flat norm $\mathcal{F}$ and the definition of spherical excess. Given an integral $2$-dimensional current $S$ we set
\[
\mathcal{F} (S) := \inf \{ \mass (P) + \mass (R): S= \partial P + R, \, R \in \mathbf{I}_2, \, P\in \mathbf{I}_3\}\, .
\]
Moreover, for $T$ as in Assumption \ref{a:main-local} and $p\in \Gamma$ we define the spherical excess $e (p,r)$ at the point $p$ and with radius $r$ by
\begin{equation}\label{e:excess}
e (p,r) := \frac{\|T\| (\bB_r (p))}{\pi r^2} - \Theta (T, p)
= \frac{\|T\| (\bB_r (p))}{\pi r^2} - \frac{Q}{2}\, .
\end{equation}

We are now ready to state the main decay theorem. Its proof follows the ideas of \cite{HM}, but it is in fact a consequence of a more general result, which is proved separately in our work \cite{DNS}, cf. \cite[Theorem 2.1]{DNS}.

\begin{theorem}\label{t:decay}
Let $T$ and $\Gamma$ be as in Theorem \ref{t:uniqueness}. Then there are positive constants $\varepsilon_0$, $C$ and $\alpha$ with the following property. If $p\in \Gamma$ and $e (p, r) \leq \varepsilon_0^2$ for some $r\leq \dist (p, \partial \bB_1)$, then: 
\begin{itemize}
\item[(a)] $|e (p,\rho)| \leq C |e(p, r)| \left(\frac{\rho}{r}\right)^{2\alpha} + C \rho^{2\alpha}$ for every $\rho \leq r$,
\item[(b)] There is a unique tangent cone $T_{p,0}$ to $T$ at $p$,
\item[(c)] The following estimates hold for every $\rho \leq r$
\begin{align}
&\mathcal{F} (T_{p,\rho}\res\bB_1, T_{p,0}\res \bB_1) \leq C(r) |e (p, r)|^{\sfrac{1}{2}} \left({\textstyle{\frac{\rho}{r}}}\right)^\alpha + C \rho^\alpha,\, \label{e:decay}\\
& \dist_H (\supp (T_{p, \rho})\cap \overline \bB_1, \supp (T_{p,0})\cap \overline \bB_1) \leq C \left({\textstyle{\frac{\rho}{r}}}\right)^\alpha.\label{e:Hausdorff-distance-estimate}
\end{align}
\end{itemize}
\end{theorem}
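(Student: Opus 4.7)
The structure follows the scheme of Hirsch--Marini \cite{HM}, adapted to the boundary setting with arbitrary multiplicity $Q$ and to the full classification of tangent cones from Theorem \ref{t:classification_cones}. The heart of the proof is a single-scale decay lemma, which is iterated dyadically to give (a); assertions (b) and (c) will then follow by standard consequences of the boundary monotonicity identity \eqref{e:monot_identity}.

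\textbf{Reduction and compactness setup.} If $e(p,r)\le\varepsilon_0^2$ is small, upper semicontinuity of $\Theta$ together with Theorem \ref{thm:ex_tc} and Theorem \ref{t:classification_cones} gives, at scale $r$, an open book $S=S(p,r)\in\mathscr B_Q(T_p\Gamma)$ such that $T_{p,r}\res\bB_1$ is quantitatively close to $S\res\bB_1$ in flat norm, with closeness controlled by $e(p,r)^{1/2}+Cr$ (the second term measures the deviation of $\Gamma$ from the line $T_p\Gamma$, and requires the $C^{3,\alpha_0}$ hypothesis on $\Gamma$). The single-scale decay I would establish reads as follows: there exist $\theta\in(0,1)$, $\alpha>0$, $C_1,\varepsilon_1>0$ such that, whenever $e(p,r)\le\varepsilon_1^2$ and $r$ is below a threshold depending on $\|\kappa\|_0$,
\[
e(p,\theta r)\;\le\;\theta^{2\alpha}\bigl(e(p,r)+C_1 r^{2\alpha}\bigr).
\]
I would prove this by contradiction and compactness. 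Assume the estimate fails along a sequence $(T_k,p_k,r_k)$ with $e(p_k,r_k)=\eta_k^2\to 0$. Rescale via $\iota_{p_k,r_k}$ and normalise by $\eta_k$; the compactness and regularity theory for multivalued Dir-minimisers at the boundary (developed in the companion paper \cite{DNS} following \cite{DS2,DS3,DDHM}) produces, in the limit, a multivalued harmonic function on the limit open book $S_\infty$ which vanishes on the spine $T_p\Gamma$ and has finite frequency. The main technical input is a frequency gap: no such boundary-vanishing multivalued harmonic function can have frequency in the open interval $(0,1)$, which yields a definite decay exponent $2\alpha>0$ and contradicts the failure of the inequality.

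\textbf{Iteration and uniqueness.} Iterating the single-scale decay along $r_k:=\theta^k r_0$ and solving the linear recursion $a_{k+1}\le\theta^{2\alpha}(a_k+C_1 r_k^{2\alpha})$ produces $e(p,\theta^k r_0)\le C\theta^{2\alpha k}(e(p,r_0)+r_0^{2\alpha})$, which, interpolated to arbitrary $\rho\in(0,r_0]$ via the almost-monotonicity of $\Theta_{\rm b}$ from Theorem \ref{thm:allard}, gives (a). Inserting the decay (a) back into the monotonicity identity \eqref{e:monot_identity} forces
\[
\int_0^r \int_{\bB_\rho(p)} \frac{|(x-p)^\perp|^2}{|x-p|^4}\, d\|T\|\, \frac{d\rho}{\rho}\; <\; +\infty,
\]
which is the classical Cauchy-type integrability condition that controls the tilting of the approximate tangent planes and hence forces convergence of the blow-ups $T_{p,\rho}$ in flat norm as $\rho\downarrow 0$; uniqueness of the limit yields (b). The rate \eqref{e:decay} is obtained by tracking constants in the same Cauchy argument, while \eqref{e:Hausdorff-distance-estimate} is derived from \eqref{e:decay} combined with the interior lower density bound for $T$ and the wedge confinement \eqref{e:inside-wedges} (which bounds the support of the rescaled currents and converts $L^1$-type closeness into Hausdorff closeness).

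\textbf{Main obstacle.} The genuinely difficult step is the single-scale decay. In codimension one, or at multiplicity-one boundary points, one could invoke Allard's boundary regularity \cite{AllB} directly; here, with $Q\ge 2$ and a multi-sheeted tangent book, one is forced to work intrinsically with multivalued functions at a boundary, and the crux is the frequency gap at frequency $1$ on a half-disc with vanishing trace on the spine. This is precisely what is established in \cite{DNS}, and it is also where the smoothness hypothesis $C^{3,\alpha_0}$ on $\Gamma$ enters in an essential way, by making the curvature-induced error term $C_1 r^{2\alpha}$ compatible with the decay rate produced by the compactness argument.
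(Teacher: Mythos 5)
The paper does not prove Theorem~\ref{t:decay} here: it is cited directly from \cite[Theorem~2.1]{DNS}, with the remark in the text that the argument ``follows the ideas of \cite{HM}'', i.e.\ a boundary (log-)epiperimetric inequality. You correctly identify this, and your proposal is really a sketch of what the external proof should contain. The overall template you describe — single-scale excess decay by compactness, dyadic iteration, then the Dini/Cauchy argument from the monotonicity identity \eqref{e:monot_identity} to pass from power-law decay to uniqueness and to the rate \eqref{e:decay}, with the wedge confinement converting flat-norm to Hausdorff closeness for \eqref{e:Hausdorff-distance-estimate} — is the standard one and is sound in outline.

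The genuine gap is the frequency statement used to close the compactness step. You claim the decisive input is that a multivalued harmonic function on the limit open book $S_\infty$ vanishing on the spine cannot have frequency in $(0,1)$, and that this supplies the decay exponent. That gap is automatic and gives nothing: any harmonic function on a half-plane vanishing on its bounding line has homogeneity at least $1$, since the Dirichlet spectrum of the half-circle link starts at $1$. The actual obstruction sits \emph{at} frequency $1$. There exist non-zero homogeneity-$1$ Jacobi fields vanishing on the spine — the infinitesimal rotations of each sheet $V_i$ about $T_p\Gamma$ — and these are exactly the modes that reparametrize the family $\mathscr{B}_Q(T_p\Gamma)$ without changing the density. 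The negation of your single-scale inequality only forces the normalized blow-up to have frequency strictly below $1+\alpha$, which is perfectly consistent with frequency equal to $1$, so the compactness argument does not close as written. What is needed is to subtract the optimal cone $S(p,\rho)$ at every scale (so the rotational modes are projected out by construction) and then invoke a quantitative spectral gap strictly above $1$ on the orthogonal complement of those trivial modes; this is precisely what the Hirsch--Marini epiperimetric inequality encodes, and your sketch omits that step.
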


\subsection{From Theorem \ref{t:decay} to Theorem \ref{t:decomposition}} We fix a point $p$ as in the statement of Theorem \ref{t:decomposition}, we choose a radius $r_0$ so that $\bB_{2r_0} (p) \subset \bB_1 (0)$. We fix thus $\varepsilon_0$, $\alpha$ and $C$ given by Theorem \ref{t:decay}. Moreover, in order to simplify the notation, we write $T_p$ rather than $T_{p,0}$ for the unique tangent cone to $T$ and $p$.

First of all we observe that
\begin{align*}
e (q, r_0) &=\frac{\|T\| (B_{r_0} (q))}{\pi r_0^2} - 
\frac{Q}{2} \leq \frac{\|T\| (B_{r_0+|p-q|} (p))}{\pi r_0^2} - \frac{Q}{2}\\
&= \left(\frac{r_0+|p-q|}{r_0}\right)^2 e (p, r_0+|p-q|)
+ \left(\left(\frac{r_0+|p-q|}{r_0}\right)^2 -1\right)\frac{Q}{2}
\end{align*}
In particular, if $r_0$ is chosen sufficiently small, we can assume that $e (q, r_0)\leq 5\varepsilon_0^2$ for every point $q\in \Gamma \cap \bB_{r_0} (p)$. The rest of the proof is divided into three steps

In a first step we compare tangent cones between different points and prove
\begin{equation}\label{e:comparison}
\mathcal{F} (T_q\res \bB_1, T_p\res \bB_1) \leq C |q-p|^{\alpha} \qquad \forall q\in \bB_{r_0} (p)\, .
\end{equation}
Next, since $T_p$ is not flat by assumption and because of the classification of tangent cones, we can find half-planes $V$ and $V_1, \ldots V_N$ all distinct, such that 
\begin{equation}
T_p = Q_1 \a{V} + \sum_i \bar{Q}_i \a{ V_i}\, ,
\end{equation}
where $Q_1< Q$ and $Q_2 := Q-Q_1 = \sum_i \bar{Q}_i >0$.
Let $n$ be the unit vector in $V$ which is orthogonal to $T_p \Gamma$. We then infer the existence of a positive $\vartheta_0$ with the property that
\begin{equation}\label{e:cone-in-p-split}
\bigcup_i V_i \subset \overline{\mathbb R^{2+n} \setminus W (T_p \Gamma, n, 8 \vartheta_0)} =:
W^c (T_p \Gamma, n, 8\vartheta_0)\, .
\end{equation}
For every point $q\in \Gamma$ sufficiently close to $p$ we project $n$ onto the orthogonal complement of $T_q \Gamma$ and normalize it to a unit vector $n(q)$. \eqref{e:comparison} will then be used to show the existence of $r>0$ such that
\begin{equation}\label{e:cone-in-q-split}
\supp (T_q) \subset W (T_q \Gamma, n(q), 2\vartheta_0) \cup W^c (T_q \Gamma, n(q), 7 \vartheta_0)
\qquad \forall q\in \Gamma \cap \bB_r (p)\, .
\end{equation}
Hence we use \eqref{e:decay} to show the existence of $\bar r>0$ such that
\begin{equation}\label{e:conical-split}
\supp (T)\cap \bB_{\bar r} (q)\subset (q+ W (T_q \Gamma, n(q), 3\vartheta_0)) \cup (q+W^c (T_q \Gamma, n (q), 6\vartheta_0))\, .
\end{equation}
\eqref{e:conical-split} allows us to define
\begin{align}
T_1 &:= T \res \left( \bB_{\bar r} (p) \cap \bigcap_q (q+ W (T_q \Gamma, n(q), 3\vartheta_0)) \right),\\
T_2 &:= T \res \left( \bB_{\bar r} (p) \cap \bigcap_q (q+ W^c (T_q \Gamma, n(q), 6\vartheta_0)) \right),
\end{align}
and to show that $T_1+T_2 = T \res \bB_{\bar r} (p)$ and that each of the $T_i$ is area-minimizing. The final step is then to prove that 
\begin{equation}\label{e:bordo-T1}
\partial T_1 \res \bB_{\bar r} (p) = Q_1 \a{\Gamma\cap \bB_{\bar r} (p)}.\, 
\end{equation}

\medskip

{\bf Step 1. Proof of \eqref{e:comparison}} In order to prove \eqref{e:comparison} set $\rho_0:= |p-q|$ and observe that, it suffices to show the estimate 
\[
\mathcal{F} (T_p \res \bB_1, T_{q, \rho}\res \bB_1) \leq C \rho^{\alpha}\, 
\]
for some $\rho\in [\rho_0, 2\rho_0]$, whose choice will be specified later.
For $v \in \mathbb R^{2+n}$, denote by $\tau_v$ the translation by the vector $v$. If we choose $v:= (q-p)/\rho$ it is easy to see that
$T_{q,\rho}\res \bB_1 = (\tau_{-v})\sharp (T_{p,\rho}\res \bB_1 (v))$ and since the flat norm is invariant under translations, we get
\[
\mathcal{F} (T_p \res \bB_1, T_{q, \rho}\res \bB_1)
= \mathcal{F} ((\tau_v)_\sharp (T_p\res \bB_1 (0)), T_{p, \rho} \res \bB_1 (v)) \,.
\]
On the other hand, observe that $T_p$ is invariant by translation along $T_p \Gamma$ and that, if we write $v = w + \p_{T_p \Gamma} (v) =: w+z$, then $|w|\leq C\rho$. Hence we have
\begin{align*}
\mathcal{F} (T_p \res \bB_1, T_{q, \rho}\res \bB_1)
&= \mathcal{F} ((\tau_w)_\sharp (T_p \res \bB_1 (z)), T_{p,\rho} \res \bB_1 (v))\\
&\leq \mathcal{F} ((\tau_w)_\sharp (T_p \res \bB_1 (z)), T_p \res \bB_1 (z)) + \mathcal{F} (T_p \res \bB_1 (z), T_p \res \bB_1 (v))\\ &\qquad+ \mathcal{F} (T_p \res \bB_1 (v), T_{p,\rho} \res \bB_1 (v))\, .
\end{align*}
The first two summands can be easily estimated with $C\rho$. Indeed for the first term we write 
\[
(\tau_w)_\sharp (T_p \res \bB_1 (z)) -
T_p \res \bB_1 (z) = \partial ((T_p \res \bB_1 (z))\times \a{[0,w]}) =: \partial Z
\]
and we estimate $\mass (Z) \leq C |w| \leq C \rho$,
whereas for the second term we can estimate directly
\[
\mass (T_p \res \bB_1 (z) - T_p \res \bB_1 (v))\leq C |w|\, .
\]
It remains to bound the third summand. To that end we employ the fact that we are free to choose $\rho\in [\rho_0, 2\rho_0]$ appropriately. Note that the point $v$ depends on $\rho$: we will therefore write $v (\rho)$ from now on and use $v_0$ for $v (\rho_0)$, while we define $\sigma:= \frac{\rho}{\rho_0}$. By a simple rescaling argument we observe that
\[
\mathcal{F} (T_p \res \bB_1 (v (\rho)), T_{p,\rho} \res \bB_1 (v (\rho)) \leq C
\mathcal{F} (T_p \res \bB_\sigma (v_0), T_{p, \rho_0} \res \bB_\sigma (v_0)) \qquad \text{ for all } \sigma \in [1,2]\, . 
\]
We complete the proof by showing that, if $\sigma$ is chosen appropriately, then 
\begin{equation}\label{e:Fubini}
\mathcal{F} (T_p \res \bB_\sigma (v_0), T_{p, \rho_0} \res \bB_\sigma (v_0)) \leq C \mathcal{F} (T_p \res \bB_3 (0), T_{p, \rho_0} \res \bB_3 (0))\, ,
\end{equation}
since, again using a simple scaling argument, we can estimate
$\mathcal{F} (T_p \res \bB_3 (0), T_{p, \rho_0} \res \bB_3 (0))
\leq C \mathcal{F} (T_p \res \bB_1 (0), T_{p, 3\rho_0} \res \bB_1 (0))$ and take advantage of \eqref{e:decay}. In order to show \eqref{e:Fubini}, fix currents $R$ and $S$ such that $(T_p - T_{p, \rho_0}) \res \bB_3 (0) = R+ \partial S$ with 
\[
\mass (R) + \mass (S) \leq 2 \mathcal{F} (T_p \res \bB_3 (0), T_{p, \rho_0} \res \bB_3 (0))\, .
\]
Let now $d (x):= |x-v_0|$ and for every $\sigma$ we can then use the slicing formula \cite[Lemma 28.5]{Sim} to write
\[
(T_p - T_{p, \rho_0}) \res \bB_\sigma (v_0) = R\res \bB_\sigma (v)+ \partial (S\res \bB_\sigma (v_0)) - \langle S, d, \sigma\rangle\, .
\]
Since
\[
\int_1^2 \mass (\langle S, d, \sigma\rangle)\, d\sigma 
\leq \mass (S\res \bB_2 (v_0)) \leq \mass (S)\, ,
\]
it suffices to choose a $\sigma$ for which  $\mass (\langle S, d, \sigma\rangle)\leq 2 \mass (S)$.

\medskip

{\bf Step 2. Proof of \eqref{e:conical-split}} The latter is a simple consequence of the estimates proved in the previous two steps and of \eqref{e:Hausdorff-distance-estimate} and is left to the reader.

\medskip

{\bf Step 3. Proof of \eqref{e:bordo-T1}} Observe that $\partial T_1 \res \bB_{\bar r} (p)$ is supported in $\Gamma \cap \bB_{\bar r} (p)$ and is a flat chain without boundary in $\bB_{\bar r} (p)$. By the Constancy Lemma of Federer \cite[4.1.7]{Fed}, it follows that $\partial T_1 \res \bB_{\bar r} (p) = \Theta \a{\Gamma\cap \bB_{\bar r} (p)}$ for some constant $\Theta$. In particular $T_1$ is integral and thus $\Theta$ is an integer. Since it is area minimizing, it follows from our analysis that $T_1$ has a unique tangent cone $(T_1)_p$ at $p$ and that $\pi \Theta$ equals twice the mass of $(T_1)_p$ in $\bB_1 (0)$. On the other hand the latter cone is the restricion of $T_p$ to $W (T_p \Gamma, n(p), 3\vartheta_0)$, which by assumption is $Q_1 \a{V}$ for a fixed half-plane $V$ with boundary $T_p \Gamma$. Thus $\Theta = Q_1$, which completes the proof.

\subsection{From Theorem \ref{t:flat-points} to Theorem \ref{t:main-local}}\label{s:from-decomposition-to-flat-points} In this subsection we show how to conclude Theorem \ref{t:main-local} from Theorem \ref{t:flat-points} and Theorem \ref{t:decomposition}. We argue by induction on $Q$. We start observing that for $Q=1$ there are no boundary singular points, as it can be concluded by \cite{AllB}. Assume therefore that Theorem \ref{t:main-local} holds for all $Q$ strictly smaller than some fixed positive integer $\bar{Q}$: our aim is to show that it holds for $Q= \bar Q$. First of all observe that by Theorem \ref{t:decomposition} we know that the set $F:=\{p\in \Gamma: \mbox{$p$ is a flat boundary point}\}$ is closed in $\Gamma$. If $F=\Gamma$, then $T$ has no boundary singularities. Otherwise, by Theorem \ref{t:flat-points}(a), it suffices to show that the dimension of $\bsing (T)\setminus F$ is $0$. It then suffices to show that for every $p\in \Gamma \setminus F$ there is a radius $\rho$ such that $\bsing (T) \cap \bB_\rho (p)$ has dimension $0$. Fix $\rho$ as in Theorem \ref{t:decomposition} and let $T_1$ and $T_2$ satisfy the conclusion of that theorem. We claim that 
\begin{equation}\label{e:inclusion}
\bsing (T) \cap \bB_\rho (p) \ \subset \ \bsing (T_1) \cup \bsing (T_2)\, .
\end{equation} 
Since by the induction hypothesis each $\bsing (T_i)$ has dimension $0$, the latter claim would conclude the proof. In order to show \eqref{e:inclusion}, consider a point $q$ which is a boundary regular point for both $T_1$ and $T_2$: we aim to prove that $q$ is a regular point for $T$ as well. By the very definition of boundary regular point, for each $i$ there is a neighborhood $U_i\subset \bB_\rho (p)$ of $p$, minimal surfaces $\Lambda^i_j$, and integer coefficients $k^i_j$ such that:
\begin{itemize}
\item $T_i \res U_i = \sum_j k^i_j \a{\Lambda^i_j}$;
\item $\Lambda^i_j \cap \Lambda^i_k \subset \Gamma$ for every $j\neq k$;
\item the tangents of $\Lambda^i_j$ at every point $\bar{q}\in \Gamma \cap U$ are all distinct. 
\end{itemize}
Now, in $U := U_1 \cap U_2$ we clearly have
\[
T\res U = \sum_{i=1}^2  \sum_j k^i_j \a{\Lambda^i_j\cap U}\, .
\]
Note that, by Theorem \ref{t:decomposition}(c) $\Lambda^1_j \cap \Lambda^2_k \subset \supp (T_1) \cap \supp (T_2) \subset \Gamma$ for every $j\neq k$. Moreover, if $\bar{q}\in \Gamma \cap U$, then $(T_1)_{\bar q,0} = \sum_j k^1_j \a{T_{\bar q} \Lambda^1_j}$ and $(T_2)_{\bar{q}, 0} = \sum_k k^2_k \a{T_{\bar{q}} \Lambda^2_k}$. We conclude from Theorem \ref{t:decomposition}(d) that for every $j$ and $k$ the half planes $T_{\bar{q}} \Lambda^1_j$ and $T_{\bar{q}} \Lambda^2_k$ are distinct, i.e. intersect only in $T_{\bar q} \Gamma$. This shows that $q$ is then a boundary regular point of $T$.

\section{Multi-valued functions}\label{s:multi-valued}

The next step of our proof is a detailed study of the boundary behaviour of ${\rm Dir}$-minimizing multi-valued functions. In this section we consider maps $u:B_\rho (x) \cap D\to \Iqs$ where $D\subset \mathbb R^2$ is a planar domain such that $\partial D$ is $C^2$. We will be interested in maps which take a preassigned value $Q\a{f}$ at $\partial D \cap B_\rho (x)$. Since by subtracting the average $\etaa \circ u$ we still get a ${\rm Dir}$-minimizer, we can without loss of generality, assume that $f$ vanishes identically. We summarize the relevant assumptions in the following 

\begin{ipotesi}\label{a:multi-valued}
$D\subset \mathbb R^2$ is a $C^2$ open set, $U$ is a bounded open set and $u\in W^{1,2} (D\cap U, \Iqs)$ a multivalued function such that $u|_{\partial D \cap U} \equiv Q\a{0}$ and $\etaa\circ u \equiv 0$. $u$ is {\rm Dir} minimizing in the sense that, for every $K\subset U$ compact and for every $v\in W^{1,2} (D\cap U, \Iqs)$ which coincides with $u$ on $(U\setminus K)\cap D$ and vanishes on $\partial D \cap U$, we have
\[
\D\, (u) \leq \D \, (v)\, .
\]
\end{ipotesi}

Observe that under our assumptions, we can apply the regularity theory of \cite{DS1} and \cite{Jonas} to conclude that $u$ is H\"older continuous in $K\cap \overline{D}$ for every compact set $K\subset U$. More precisely we have the following

\begin{theorem} There is a geometric constant $\alpha (Q)>0$ and a constant $C$ which depends only on $Q$ and $D$ such that, if $u$ and $D$ are as in Assumption \ref{a:multi-valued}, then
\[
[u]_{0,\alpha, B_\rho (x)\cap D} \leq C \rho^{-\alpha} \left(\D (u, B_{2\rho} (x)\cap D)\right)^{\frac{1}{2}}
\]
for every $B_{2\rho} (x)\subset U$.
\end{theorem}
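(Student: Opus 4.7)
The plan is to combine the well-established interior Hölder regularity of Dir-minimizers from \cite{DS1} with the boundary Hölder regularity of Dir-minimizers having zero trace on a $C^2$ boundary, proved in \cite{Jonas}. The final estimate is then a consequence of a standard Morrey--Campanato characterization adapted to the multivalued setting. I proceed in four steps.

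First, I would localize at an arbitrary point $y\in \overline{D}\cap K$, where $K\subset U$ is compact. For $y$ well inside $D$ (say $\dist(y,\partial D)\geq r$), the interior theory of Dir-minimizing $Q$-valued functions in \cite{DS1} provides an energy decay of the form
\begin{equation*}
\D(u, B_r(y)) \leq C \left(\frac{r}{R}\right)^{2\alpha} \D(u, B_R(y))
\qquad \text{for all } 0<r\leq R\leq \dist(y,\partial D),
\end{equation*}
with $\alpha=\alpha(Q)>0$ depending only on $Q$ (this is the same exponent used to derive interior Hölder regularity in \cite{DS1}).

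Second, for points $y\in \partial D\cap U$, after straightening the boundary via a local $C^2$-diffeomorphism, which distorts energies by a bounded factor depending only on $D$, the function $u$ becomes a Dir-minimizer on a flat half-ball with vanishing trace along the flat part, and we may invoke the boundary regularity theory of \cite{Jonas}. This produces the boundary analogue
\begin{equation*}
\D(u, B_r^+(y)) \leq C \left(\frac{r}{R}\right)^{2\alpha} \D(u, B_R^+(y))
\qquad \text{for all } 0<r\leq R\leq R_0,
\end{equation*}
with the same (or a possibly smaller) $\alpha=\alpha(Q)>0$ that still depends only on $Q$, and a constant $C$ that depends on the $C^2$ character of $\partial D$.

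Third, I combine the two regimes into a uniform Morrey decay on $B_\rho(x)\cap D$. Given $y\in B_\rho(x)\cap \overline{D}$ and $r\leq \rho$, either $r\leq \frac{1}{2}\dist(y,\partial D)$ and Step~1 applies directly, or there is $\bar y\in \partial D$ with $|y-\bar y|\leq 2r$, in which case $B_r(y)\cap D\subset B_{3r}(\bar y)\cap D$ and Step~2 applies centered at $\bar y$. In either case one obtains
\begin{equation*}
\D(u, B_r(y)\cap D) \leq C \left(\frac{r}{\rho}\right)^{2\alpha} \D(u, B_{2\rho}(x)\cap D).
\end{equation*}

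Finally, I would translate this uniform decay into pointwise Hölder continuity using the multivalued Campanato-type argument from \cite{DS1}. Because $\etaa\circ u\equiv 0$ (and, for boundary balls, the Dirichlet datum is $Q\a{0}$), the Poincaré inequality bounds $\fint_{B_r(y)\cap D}\mathcal{G}(u,Q\a{0})^2\leq C r^2 \D(u,B_r(y)\cap D)/r^2$, and the Morrey decay yields $[\cdot]_{0,\alpha}$ control of $u$ via the usual iteration. Rescaling to size $\rho$ produces the factor $\rho^{-\alpha}$ stated in the theorem.

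The main obstacle is the boundary energy decay in Step~2: straightening the boundary introduces lower-order perturbation terms into the minimality condition, and one must verify that the Dir-minimizing character is essentially preserved under such $C^2$ changes of variables so that \cite{Jonas} can be applied with constants depending only on $Q$ and on the geometry of $D$. Everything else is a standard combination of interior regularity and Morrey--Campanato arguments.
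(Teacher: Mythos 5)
Your proposal is correct and follows essentially the same route the paper takes: the paper gives no explicit argument beyond the observation that the interior theory of \cite{DS1} together with the boundary theory of \cite{Jonas} (for Dir-minimizers with zero trace on a $C^2$ boundary) yields interior and up-to-the-boundary H\"older continuity, and your energy-decay/Campanato expansion is exactly the standard way one would make that citation precise. The only caveat you raise yourself --- that a $C^2$ straightening replaces the Dirichlet integrand by a variable-coefficient one --- is immaterial here, since the boundary regularity theory being cited already handles $C^2$ domains directly, so no straightening is actually required.
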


In the final blow-up in Section \ref{s:blowup_and_conclusion}, we will prove that the limit of a suitable approximating sequence is a homogeneous Dir-minimizer. The following theorem will then exclude the existence of singular boundary points. It is a consequence of the classification of tangent functions (Theorem \ref{t:classification}).
\begin{theorem}\label{t:I=1}
Assume $D = \{x_2>0\}$, $U = B_1 (0)$ and $u: D\cap U\to \mathcal{A}_Q (\mathbb R^n)$ is a Dir-minimizing $I$-homogeneous map such that $u|_{\partial D} = Q \a{0}$. Either $u$ is a single harmonic function with multiplicity $Q$ (i.e. $u= Q \a{\etaa\circ u}$) or $I=1$.
\end{theorem}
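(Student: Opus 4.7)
My starting point is the classification of tangent functions (Theorem \ref{t:classification}), which on our simply connected domain $D\cap U$ I expect to yield a global decomposition into single-valued harmonic branches: $u=\sum_{j=1}^Q\a{u_j}$ with each $u_j:D\cap U\to\R^n$ a single-valued harmonic function. The $I$-homogeneity of $u$, combined with continuity of the branch permutation under scaling, forces each $u_j$ to be individually $I$-homogeneous; together with the boundary condition $u_j|_{\partial D}\equiv 0$, separation of variables in polar coordinates then gives $u_j(r,\theta)=r^I\sin(I\theta)\,v_j$ for some $v_j\in\R^n$, with $I$ forced to be a positive integer so that $\sin(I\pi)=0$. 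In the case $I=1$ the second alternative of the theorem holds without further constraint on the $v_j$, so the entire problem reduces to proving that, for $I\ge 2$, the vectors $v_j$ must all coincide.

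To force this I would argue by contradiction: assuming without loss of generality $v_1\neq v_2$, I construct a lower-energy competitor by swapping the first two branches across the interior zero rays $\{\theta=k\pi/I\}_{k=1}^{I-1}$ of $\sin(I\theta)$, which exist precisely because $I\ge 2$. Concretely, replace $u_1,u_2$ by the harmonic extensions $\tilde u_1,\tilde u_2$ of the piecewise boundary data that prescribes $\sin(I\theta)\,v_1,\sin(I\theta)\,v_2$ on the ``even'' angular sectors and the swapped pair $\sin(I\theta)\,v_2,\sin(I\theta)\,v_1$ on the ``odd'' sectors, with zero data on $\partial D\cap U$. Continuity across each swap ray is automatic since $\sin(I\theta)$ vanishes there, and the unordered pair of values is preserved, so $w:=\a{\tilde u_1}+\a{\tilde u_2}+\sum_{j\ge 3}\a{u_j}$ is a bona fide competitor in $W^{1,2}(D\cap U,\Iqs)$ with the same boundary trace as $u$.

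The energy comparison is then reduced by polarization on $\tilde u_1\pm\tilde u_2$. The sum has unchanged boundary data $\sin(I\theta)(v_1+v_2)$, and hence coincides with $u_1+u_2$ by uniqueness of harmonic extensions, while the difference has boundary data $|\sin(I\theta)|(v_1-v_2)$, so it equals $\phi\cdot(v_1-v_2)$, where $\phi$ denotes the harmonic extension of $|\sin(I\theta)|$ with zero data on $\partial D\cap U$. Hence
\[
\D(w)-\D(u)\;=\;\tfrac12|v_1-v_2|^2\bigl(\D(\phi)-\D(r^I\sin(I\theta))\bigr).
\]
The key analytic observation is that $r^I|\sin(I\theta)|$ is a $W^{1,2}$ extension of $|\sin(I\theta)|$ whose Dirichlet energy equals $\D(r^I\sin(I\theta))$, but which fails to be harmonic across each interior ray as soon as $I\ge 2$; therefore the harmonic minimizer $\phi$ has strictly smaller energy. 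This forces $\D(w)<\D(u)$ whenever $v_1\neq v_2$, contradicting Dir-minimality, and iterating the pair swap over all indices gives $v_1=\cdots=v_Q$. We then conclude $u=Q\a{r^I\sin(I\theta)\,v}$, the first alternative. The main technical obstacle I expect is verifying cleanly that the swap genuinely produces a well-defined $Q$-valued $W^{1,2}$ map with matching trace, without losing regularity at the swap rays; the clean breakdown of the swap argument at $I=1$ (where $\sin\theta$ has no interior zero on $(0,\pi)$) is precisely what explains the exceptional appearance of $I=1$ in the dichotomy.
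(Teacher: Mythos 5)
Your proposal is correct and reaches the conclusion by a genuinely different route from the paper. Both proofs begin the same way: decompose $u$ along the circular arc into H\"older branches $g_j$ via the one-dimensional selection of \cite[Proposition 1.2]{DS1}, extend radially by $I$-homogeneity so that $u_j=r^Ig_j(\theta)$, and invoke the interior regularity theory once to see that each $u_j$ is harmonic, whence $g_j''+I^2g_j=0$ and the conditions $g_j(0)=g_j(\pi)=0$ force $g_j(\theta)=b_j\sin(I\theta)$ with $I\in\N\setminus\{0\}$. From this point the paper finishes by a \emph{second} appeal to regularity: in the proof of Theorem~\ref{t:classification} it records the dichotomy that two distinct branches never meet on the open arc (otherwise an entire singular ray would contradict \cite[Theorem 0.11]{DS1} together with unique continuation), notes that for $I\geq 2$ all the $g_j$ vanish at a common interior angle, and concludes they must all coincide. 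You instead produce an explicit competitor by swapping two branches across the interior zero rays of $\sin(I\theta)$; polarization reduces the energy comparison to $\D(\phi)$ versus $\D(r^I\sin(I\theta))$, where $\phi$ is the harmonic extension of $|\sin(I\theta)|$, and the strict energy gain for $I\geq 2$ comes from $r^I|\sin(I\theta)|$ carrying the same Dirichlet energy but failing to be harmonic across those rays. Your route has a few advantages: it is self-contained past the branch decomposition (no further black-box use of the regularity theory or unique continuation), it yields the dichotomy of Theorem~\ref{t:I=1} directly without the reduction to the zero-average case that the paper's deduction from Theorem~\ref{t:classification} implicitly requires, and it makes the exceptionality of $I=1$ transparent --- $\sin\theta$ has no interior zero on $(0,\pi)$, so there is no swap ray.

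One point to tighten: the global decomposition $u=\sum_j\a{u_j}$ into single-valued harmonic branches is \emph{not} a consequence of simple connectivity of $D\cap U$ alone (two-dimensional Dir-minimizers can have genuine branch points, e.g.\ $z\mapsto\sum_{w^2=z}\a{w}$). What makes it work here, as in the paper, is exactly the homogeneity: one selects the branches on the one-dimensional arc, where \cite[Proposition 1.2]{DS1} applies, and extends them radially; harmonicity of each branch is then a consequence of the interior regularity of $u$ combined with the continuity of the selection and removability of the discrete singular set. Once you set it up this way, the phrase ``continuity of the branch permutation under scaling'' becomes unnecessary, since the $I$-homogeneity of each $u_j$ is built into the radial extension.
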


Observe that under the additional information that $\etaa\circ u \equiv 0$, the first alternative would imply that $u$ vanishes identically. 

In case that the approximating sequence consisted of Dir-minimizers (which it does not in our case), we mention for completeness here the analouge definition of singular boundary points for Dir-minimizers (i.e. points at the boundary where the order of ``vanishing'' of the Dir-minimizer is larger than $1$) and prove its absence. Even though we will not need Definition \ref{d:contact-points} nor Theorem \ref{t:contact-points} for our analysis, it illustrates the ideas of our argument.

\begin{definition}\label{d:contact-points}
Let $D$, $u$ and $U$ be as in Assumption \ref{a:multi-valued}. $x\in \partial D$ will be called a \textbf{{\em contact point}} if there is a positive $\delta>0$ such that
\begin{equation}
\liminf_{\rho \downarrow 0} \frac{1}{\rho^{2+\delta}} \int_{B_\rho (x) \cap D} |Du|^2 = 0\, .    
\end{equation}
\end{definition}

In section \ref{s:contact-points} we will show the following multi-valued counterpart of Theorem \ref{t:flat-points}.

\begin{theorem}\label{t:contact-points}
Let $D$, $u$ and $U$ be as in Assumption \ref{a:multi-valued}. If $x\in \partial D$ is a contact point, then $u$ vanishes identically on the connected component of $D\cap U$ whose boundary contains $x$.
\end{theorem}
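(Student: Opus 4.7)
The plan is to use an Almgren-type boundary frequency function at the contact point $x$. After translating $x$ to the origin and applying a $C^2$ diffeomorphism to flatten $\partial D$ locally, we may assume $D \cap U = \{x_2 > 0\} \cap B_1(0)$, with the resulting map remaining $\D$-minimizing up to controllable error terms coming from the non-Euclidean metric; the boundary datum $u|_{\partial D \cap U}\equiv Q\a{0}$ and the average-zero condition $\etaa\circ u \equiv 0$ are preserved by the flattening. I would then set
\[
E(\rho) := \int_{B_\rho \cap D} |Du|^2,\qquad H(\rho) := \int_{\partial B_\rho \cap D} |u|^2\,,\qquad I(\rho) := \frac{\rho\, E(\rho)}{H(\rho)}\,.
\]

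I would first establish almost-monotonicity of $I$ by the standard combination of outer variations (which are required to vanish on $\{x_2=0\}$, compatible with the zero boundary condition) and inner variations tangential to $\partial D$ at the boundary. This yields the existence of $I_0:=\lim_{\rho\downarrow 0} I(\rho)\in[0,+\infty]$ and, by integrating $H'(\rho)/H(\rho) = (2I(\rho)+1)/\rho + O(1)$, the sharp lower bound $H(\rho)\ge c\rho^{2I_0+1}$, hence $E(\rho) = I(\rho) H(\rho)/\rho \ge c'\rho^{2I_0}$ for all small $\rho$. If $I_0<1+\delta/2$, then $E(\rho)/\rho^{2+\delta}\ge c'\rho^{2I_0-2-\delta}\to +\infty$ as $\rho\downarrow 0$, contradicting the contact point hypothesis (even in its liminf form). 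Therefore $I_0\ge 1+\delta/2>1$.

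Next, assuming toward a contradiction that $u\not\equiv Q\a{0}$ in every neighborhood of $0$, we have $H(\rho)>0$ for small $\rho$ and we perform an Almgren blow-up $u_\rho(y) := u(\rho y)/\sqrt{H(\rho)/\rho}$. By the compactness of $\D$-minimizing multi-valued maps (see \cite{DS1}), a subsequence converges strongly in $W^{1,2}_{\rm loc}$ to a nontrivial $\D$-minimizer $u_0$ on $\{x_2>0\}\cap B_1$, inheriting $u_0|_{\{x_2=0\}}\equiv Q\a{0}$ and $\etaa\circ u_0\equiv 0$, and normalized so that $\int_{\partial B_1\cap\{x_2>0\}} |u_0|^2 = 1$. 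Since the rescaled frequency satisfies $I_\rho(s)=I(\rho s)\to I_0$ uniformly on compact subsets of $(0,1)$, the limit $u_0$ is $I_0$-homogeneous. Applying Theorem \ref{t:I=1}, either $u_0 = Q\a{\etaa\circ u_0}\equiv 0$ (contradicting the nontrivial normalization) or $I_0=1$ (contradicting $I_0>1$). Either outcome is a contradiction, so $u\equiv Q\a{0}$ in some neighborhood of $x$.

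Finally, extend by unique continuation to the whole connected component of $D\cap U$ whose boundary contains $x$. Let $V$ be the set of points $y\in \overline{D}\cap U$ near which $u\equiv Q\a{0}$; $V$ is open by definition and nonempty by the previous step. To show $V$ is relatively closed in the connected component $\Omega_0$, I repeat the blow-up argument at any limit point $y_0$ of $V$, using the classical interior Almgren frequency if $y_0\in D$ and the boundary frequency above if $y_0\in\partial D$: by continuity of $u$ one has $u(y_0)=Q\a{0}$, and the frequency at $y_0$ must be infinite (since $H$ vanishes to infinite order by accumulation of zeros), so the same dichotomy forces $u$ to vanish near $y_0$. Hence $V\supset \Omega_0$. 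The main obstacle will be making the boundary frequency monotonicity quantitatively sharp enough near $\rho=0$: the curvature of $\partial D$ and the flattening diffeomorphism introduce perturbation terms in the first-variation identities, and one must verify that these are of order $o(1)$ relative to the main monotone quantity so as not to disturb the strict inequality $I_0>1$ in the contact-point step.
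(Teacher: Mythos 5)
Your argument follows the same strategy as the paper's: assume $u\not\equiv Q\a{0}$ near $x$, observe that the boundary frequency $I_0$ then exists and is finite, use the classification of homogeneous tangent maps (Theorem \ref{t:I=1}) to force $I_0=1$, and show that the contact-point decay is incompatible with $I_0=1$. The paper simply cites Theorem \ref{t:classification} (which already concludes $I_0=1$ and contains the blow-up argument you redo) and \cite[Corollary 4.27]{DDHM} for the quantitative decay rather than re-deriving them, so your proposal is essentially a spelled-out version of the paper's proof.

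Three caveats. First, the bound $H(\rho)\ge c\rho^{2I_0+1}$ does not follow from almost-monotonicity alone: integrating $H'/H=(2I(\rho)+1)/\rho+O(1)$ and using only that $I(\rho)\downarrow I_0$ yields, for each $\eps>0$, a $c_\eps$ with $H(\rho)\ge c_\eps\rho^{2I_0+1+\eps}$ on small radii; the exact exponent $2I_0+1$ would require $\int_0 (I(s)-I_0)\,ds/s<\infty$, which is not automatic. The $\eps$-loss is harmless for your dichotomy, but the claim should be stated correctly. Second, flattening $\partial D$ by a generic $C^2$ diffeomorphism does not preserve $\D$-minimality (Dirichlet energy is not diffeomorphism-invariant), so the first-variation identities pick up error terms that you acknowledge but do not control; the paper sidesteps this entirely by working intrinsically with the distance-like function $d$ of Definition \ref{Def:distance}, whose gradient is tangent to $\partial D$, so that both outer and inner variations remain admissible (Theorem \ref{Thm:monotoneFrequency}, Proposition \ref{Prop:H'D'}). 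In dimension $2$ one may alternatively use a conformal flattening, which does preserve the Dirichlet energy, rather than a generic $C^2$ one. Third, the closing unique-continuation step is not a valid deduction as written: accumulation of zeros at $y_0$ does not, by itself, make the frequency at $y_0$ infinite. The standard argument is instead: if $u\not\equiv Q\a{0}$ near the limit point $y_0$, the frequency there is finite, the blow-up at $y_0$ is a nontrivial homogeneous Dir-minimizer vanishing on an open cone, and by the explicit description of its trace (harmonic polynomials on the circle, as in the proof of Theorem \ref{t:classification}) no such nontrivial map exists. The paper leaves this step implicit, apparently regarding unique continuation for Dir-minimizers as standard, but since you chose to spell it out it should be made rigorous.
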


\subsection{Monotonicity of the frequency function} We introduce here the basic tool of our analysis, the frequency function, pioneered by Almgren. The version of the Almgren's frequency function used here is an extension introduced for the first time in the literature in \cite{DDHM} to deal with boundary regularity.
One of the outcomes of our analysis is that the limit of the frequency function exists at every boundary point $x$ unless $u$ vanishes identically in a neighborhood of it. 

We recall the definition of the frequency function as in \cite[Definition 4.13]{DDHM}.
\begin{definition}\label{Def:distance} 
Consider $u \in W_{l o c}^{1,2}\left(D, \Is{Q}\right)$ and fix any cut-off $\phi:[0, \infty[\rightarrow[0, \infty]$
which equals 1 in a neighborhood of $0$, it is non increasing and equals $0$ on $[1, \infty[.\text { We next }$ fix a function $d:\R ^2 \rightarrow\R ^{+}$ which is $C^{2}$ on the punctured space $\R ^2 \backslash\{0\}$ and satisfies the following properties:
\begin{enumerate}
\item [(i)] $d(x)=|x|+O\left(|x|^{2}\right)$,
\item [(ii)] $\nabla d(x)=\frac{x}{|x|}+O(|x|)$,
\item [(iii)] $D^{2} d(x) =|x|^{-1}\left( \Id -|x|^{-2} x \otimes x\right)+O(1)$.
\end{enumerate}
By \cite[Lemma 4.25]{DDHM}, we deduce the existence of such a $d$ satisfying also that $\nabla d$ is tangent to $\partial D$.
We define the following quantities:
\begin{align*}
D_{\phi, d}(u, r)&:=\int_{D} \phi\left(\frac{d(x)}{r}\right)|D u|^{2}(x)dx, \\
H_{\phi, d}(u, r)&:=-\int_{D} \phi^{\prime}\left(\frac{d(x)}{r}\right)|\nabla d(x)|^{2} \frac{|u(x)|^{2}}{d(x)}dx.
\end{align*}
The \textbf{\em{frequency function}} is then the ratio
$$
I_{\phi, d}(u, r):=\frac{r D_{\phi, d}(u, r)}{H_{\phi, d}(u, r)}.
$$
\end{definition}

This quantity is essentially monotone.

\begin{theorem}\label{Thm:monotoneFrequency} 
Let $D$, $U$ and $u$ be as in Assumption \ref{a:multi-valued}. Then there is a function $d$ satisfying the requirements of Definition \ref{Def:distance} such that the following holds for
every $\phi$ as in the same definition. Either $u \equiv Q\a{0}$ in a neighborhood of $0$, or $D_{\phi,d}(u, r)$ is positive for every $r$ (hence $I_{\phi,d} (u, r)$ is well defined) and the limit 
\[
0<\lim _{r \downarrow 0} I_{\phi, d}(u, r)<+\infty
\]
exists and it is a positive finite number. In fact, there is an $r_0>0$ and $C$ such that $r \mapsto e^{C r} I_{\phi, d}(u, r)$ is monotone for all $0<r<r_0$.
\end{theorem}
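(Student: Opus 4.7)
The plan is to follow the classical Almgren frequency-monotonicity scheme, adapted to the boundary setting as in \cite{DDHM}. The main analytic step is to establish a differential inequality $(\log I_{\phi,d})'(r) \geq -C$ on $(0,r_0)$, which integrates to monotonicity of $r \mapsto e^{Cr} I_{\phi,d}(u,r)$. First I would compute $D'_{\phi,d}(r)$ and $H'_{\phi,d}(r)$ by differentiation under the integral sign, reducing them to integrals weighted by $\phi'(d/r)$ and $\phi''(d/r)$, and then rewrite them using the two standard variation identities for $\D$-minimizers.

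The \emph{outer variation} identity, applied with the admissible perturbation $u_\varepsilon = \sum_i \a{(1+\varepsilon \phi(d/r))\, u_i}$ (admissible because it preserves the boundary datum $Q\a{0}$ on $\partial D \cap U$), yields
\begin{equation*}
D_{\phi,d}(u,r) = -\frac{1}{r}\int_D \phi'\!\left(\tfrac{d(x)}{r}\right) \sum_i \langle Du_i(x) \cdot \nabla d(x),\, u_i(x) \rangle\,dx.
\end{equation*}
The \emph{inner variation} uses the vector field $X(x) = \phi(d(x)/r)\,\nabla d(x)/|\nabla d(x)|^2$, which is tangent to $\partial D$ precisely because $d$ was chosen with $\nabla d$ tangent to $\partial D$, and produces a companion identity expressing a quadratic form involving $\sum_i |Du_i \cdot \nabla d|^2/|\nabla d|^2$. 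Combining these with a weighted Cauchy-Schwarz inequality of the form $(\int (-\phi') \sum_i \langle Du_i \cdot \nabla d, u_i\rangle)^2 \leq H_{\phi,d}(u,r)\cdot(\text{inner-variation quantity})$, and carefully collecting the $O(r)$ error terms arising from the expansions $d(x) = |x| + O(|x|^2)$ and $\nabla d(x) = x/|x| + O(|x|)$ together with the curvature of $\partial D$, produces the desired differential inequality with a uniform constant $C$ on $(0,r_0)$.

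The dichotomy between $u \equiv Q\a{0}$ in a neighborhood of $0$ and positivity of $D_{\phi,d}(u,r)$ for all small $r$ is handled by a unique-continuation-type argument: if $D_{\phi,d}(u,r_0) = 0$ for some $r_0$ then $Du$ vanishes on the sublevel set $\{d < r_0\}$, so $u$ is constant there, and the boundary condition $u|_{\partial D} = Q\a{0}$ forces this constant to be $Q\a{0}$. Positivity of $H_{\phi,d}$ in the complementary case follows similarly from the annular support of $\{-\phi' > 0\}$. Existence of $\lim_{r \downarrow 0} I_{\phi,d}(u,r)$ in $[0,+\infty]$ is then immediate from monotonicity of $e^{Cr} I_{\phi,d}$, finiteness follows from a Caccioppoli-type comparison $r D_{\phi,d} \leq C H_{\phi,d}$, and strict positivity is obtained via the Almgren-style ODE identity $(\log H_{\phi,d})'(r) = \tfrac{2}{r} I_{\phi,d}(r) + O(1)$, whose integration rules out $I_{\phi,d}(r) \to 0$ unless $u$ vanishes near $0$.

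The principal technical obstacle is the bookkeeping of the geometric error terms. The distance $d$ is only $C^2$ away from $0$ with $\nabla d$ tangent to $\partial D$ rather than purely radial, and the curvature of $\partial D$ enters through the second derivatives of $d$. These deviations, captured by properties (i)--(iii) of $d$ in Definition \ref{Def:distance}, must be absorbed into the constant $C$ of the differential inequality, which requires working at a sufficiently small scale $r_0$. The tangency of $\nabla d$ to $\partial D$ is not a mere convenience but essential, since it is precisely what makes the vector field $X$ used in the inner variation an admissible perturbation compatible with the boundary condition $u|_{\partial D} = Q\a{0}$.
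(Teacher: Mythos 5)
Your proposal takes essentially the same route as the paper: both reduce the theorem to the derivative identities \eqref{Eq:D'}--\eqref{Eq:E} and observe that the key point is the admissibility of the outer and inner variations, the former because it preserves $u|_{\partial D\cap U}=Q\a{0}$ and the latter because $\nabla d$ is tangent to $\partial D$ so the associated flow maps $D$ into itself; the remaining computations are then those of \cite{DDHM}. The one concrete slip is in your inner-variation vector field: you write $X=\phi(d/r)\,\nabla d/|\nabla d|^2$, whereas the correct field (as in the paper) is $Y=\phi(d/r)\,d\,\nabla d/|\nabla d|^2$. The extra factor of $d$ is needed for two reasons: it makes $Y$ Lipschitz up to the origin (without it $Y\approx x/|x|$ is discontinuous on the set where $\phi\equiv 1$, so the flow is not well defined), and it is what produces the weight $-\phi'(d/r)\,d/r^2$ appearing in $D'_{\phi,d}(r)$ in \eqref{Eq:D'} after the inner-variation computation. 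With that correction your outline matches the paper's argument.
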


We first recall the following identities (compare \cite[Proposition 4.18]{DDHM}).
\begin{proposition}\label{Prop:H'D'} 
 Let $\phi$ and $d$ be as in Definition \ref{Def:distance} and assume in addition that $\phi$ is Lipschitz. Let $\Omega$, $D$, $U$ and $u$ be as in Assumption \ref{a:multi-valued}. Then, for every $0<r<1$, we have
\begin{equation}\label{Eq:D'}
D^{\prime}(r)=-\int_D \phi^{\prime}\left(\frac{|d(x)|}{r}\right) \frac{|d(x)|}{r^{2}}|D u|^{2} d x, 
\end{equation}
\begin{equation}\label{Eq:H'}
H^{\prime}(r)=\left(\frac{1}{r}+O(1)\right) H(r)+2 E(r),
\end{equation}
where
\begin{equation}\label{Eq:E}
E(r):=-\frac{1}{r} \int_D \phi^{\prime}\left(\frac{d(x)}{r}\right) \sum_{i} u_{i}(x) \cdot\left(D u_{i}(x) \cdot \nabla d(x)\right) d x,
\end{equation}
and the constant $O(1)$ appearing in \eqref{Eq:H'} depends on the function d but not on $\phi$.
\end{proposition}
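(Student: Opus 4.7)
The formula for $D'(r)$ is immediate. The $r$-dependence sits entirely inside $\phi(d(x)/r)$, whose $r$-derivative equals $-\tfrac{d(x)}{r^{2}}\phi'(d(x)/r)$, and differentiating under the integral sign yields \eqref{Eq:D'} directly. No integration by parts or regularity of $u$ beyond $|Du|\in L^{2}$ is needed.

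The identity \eqref{Eq:H'} is more delicate. Differentiating $H(r)$ under the integral produces
\begin{equation*}
H'(r) \;=\; \frac{1}{r^{2}}\int_{D}\phi''\!\!\left(\tfrac{d(x)}{r}\right)|\nabla d(x)|^{2}|u(x)|^{2}\,dx.
\end{equation*}
The crucial algebraic observation is that $\nabla\bigl[\phi'(d/r)\bigr] = \tfrac{1}{r}\phi''(d/r)\nabla d$, so the integrand equals $\tfrac{1}{r}\nabla[\phi'(d/r)]\cdot\nabla d\,|u|^{2}$. I would then integrate by parts, viewing $|u|^{2}\nabla d$ as the test vector field. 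The boundary contributions on $\partial D$ vanish for two independent reasons: the vector field $\nabla d$ is chosen tangent to $\partial D$ (cf.\ \cite[Lemma 4.25]{DDHM}), and moreover $u\equiv Q\a{0}$ on $\partial D\cap U$ so $|u|^{2}$ has zero trace. The interior contribution splits as
\begin{equation*}
H'(r) \;=\; -\tfrac{1}{r}\int_{D}\phi'(d/r)\,\nabla|u|^{2}\cdot\nabla d\,dx \;-\; \tfrac{1}{r}\int_{D}\phi'(d/r)\,|u|^{2}\Delta d\,dx.
\end{equation*}

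Expanding $\nabla|u|^{2}=2\sum_{i}u_{i}\cdot Du_{i}$ (valid pointwise a.e.\ for $W^{1,2}$ multi-valued maps by the Sobolev calculus of \cite{DS1}) turns the first integral into exactly $2E(r)$. For the second, the asymptotic properties (i)--(iii) of $d$ give $\Delta d(x)=|x|^{-1}+O(1)$ and $|\nabla d(x)|^{2}/d(x)=|x|^{-1}+O(1)$ in $\mathbb{R}^{2}$, hence $\bigl| |\nabla d|^{2}/d-\Delta d\bigr|\le C$ with $C$ depending only on $d$. On the support of $\phi'(d/r)$ one has $d\le r$, so for $r$ small $|\nabla d|^{2}/d\ge (2r)^{-1}$, yielding the absorption bound $\tfrac{1}{r}\int_{D}|\phi'(d/r)||u|^{2}\,dx\le 2H(r)$. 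Combining these two facts shows the $\Delta d$-term equals $(1/r+O(1))H(r)$ with an $O(1)$ constant independent of $\phi$, proving \eqref{Eq:H'}.

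The only step requiring care is the integration by parts in the multi-valued setting. This, however, is not a genuine obstacle: $|u|^{2}$ is a scalar $W^{1,2}(D)$ function with derivative $2\sum_{i}u_{i}\cdot Du_{i}$ and zero trace on $\partial D$, so we are simply applying the classical divergence theorem to a scalar Sobolev function tested against the smooth vector field $\nabla d$, with no boundary contribution. Everything else is routine pointwise estimation using the prescribed asymptotics of $d$.
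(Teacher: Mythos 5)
Your derivation of \eqref{Eq:D'} and \eqref{Eq:H'} is correct and, in substance, is exactly the computation that the paper defers to \cite[Proposition 4.18]{DDHM}: differentiate under the integral sign, rewrite $\phi''$ as a directional derivative of $\phi'(d/r)$, integrate by parts against the vector field $|u|^2\nabla d$, and then split ${\rm div}(|u|^2\nabla d)$ into $\nabla|u|^2\cdot\nabla d$ (which yields $2E(r)$) and $|u|^2\Delta d$ (which yields $(1/r + O(1))H(r)$ after absorbing $\Delta d - |\nabla d|^2/d = O(1)$ using $d\le r$ on the support of $\phi'$). The sign bookkeeping is right, the boundary terms are correctly seen to vanish by the two mechanisms you cite (tangency of $\nabla d$ to $\partial D$ and $u|_{\partial D\cap U}=Q\a{0}$), and the resulting $O(1)$ depends on $d$ alone, as required. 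One conceptual point is worth making explicit: your argument is purely kinematic and never invokes the Dir-minimality of $u$. The paper's discussion after Proposition \ref{Prop:H'D'} emphasizes that the outer and inner variations are admissible competitors; that verification is in fact needed for the subsequent Theorem \ref{Thm:monotoneFrequency}, where one uses the first-variation identities to relate $D$, $E$, and an inner-variation quantity, but it is not needed for the two identities in the proposition itself, and your proof makes this transparent.

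There is one small gap to flag. The proposition only assumes $\phi$ Lipschitz, so $\phi'\in L^\infty$ and the intermediate step
\begin{equation*}
H'(r)=\frac{1}{r^{2}}\int_{D}\phi''\!\left(\tfrac{d(x)}{r}\right)|\nabla d(x)|^{2}\,|u(x)|^{2}\,dx
\end{equation*}
is not literally meaningful, since $\phi''$ need not exist pointwise (it is only a distribution). Your eventual formula does not involve $\phi''$, so the standard remedy is a density argument: mollify $\phi$ to obtain smooth $\phi_\varepsilon$ with $\phi_\varepsilon\to\phi$ uniformly and $\phi_\varepsilon'\to\phi'$ in $L^1$ and boundedly a.e.; prove the identity for $\phi_\varepsilon$ exactly as you do; note that both $H_{\phi_\varepsilon}$ and the right-hand side of \eqref{Eq:H'} then converge (locally uniformly in $r$) to their counterparts for $\phi$, which shows $H_\phi$ is $C^1$ and satisfies \eqref{Eq:H'}. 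Equivalently, one can integrate your final formula from $r_1$ to $r_2$ and verify the resulting integral identity directly, avoiding $\phi''$ altogether. Your proof is correct modulo this routine regularization, and it should be stated.
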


Theorem \ref{Thm:monotoneFrequency} follows as in \cite{DDHM}, as soon as we  can show the validity of the above identities. In turn the latter can be proved following also the computations in \cite{DDHM}, provided we prove that both the outer variations
$g_\eps(x):= \sum_i \a{u_i(x) +\eps \varphi\left(\frac{d(x)}{r} \right)u_i(x)}$ and the inner variations
$u \circ \psi_t$, with $\psi_t$ being the flow of $Y(x):= \varphi \left( \frac{d(x)}{r} \right) \frac{d(x) \nabla d(x)}{|\nabla d(x)|^2}$,
are competitors to our problem. This is however obvious. Clearly the outer variations are well defined and preserve the condition that $u|_{\partial D \cap U} \equiv Q \a{0}$. As for the inner variations note that, since $\nabla d$ is tangent to $\partial D$, so is $Y$ and thus its flow maps $\partial D$ onto itself and $D$ into itself. This shows that the inner variations are well defined and provide admissible competitors too.

\subsection{Classification of tangent functions} Following a common path which started with Almgren's monumental work (see \cite{DDHM}, but also \cite{DS1,DS2,DS3,DS4,DSS1,DSS2,DSS3,DSS4}) we use the monotonocity of the frequency function to define tangent functions to $u$. 
Let $D$, $u$, $U$ and $f$ be as in Assumption \ref{a:multi-valued}. Let $x\in \partial D$ and denote by $n(x)$ the interior unit normal to $\partial D$. If we denote by $V^+$ the half space $\{y: n(x) \cdot y >0\}$, the tangent functions to $u$ at $x$ are multivalued functions defined on $V^+$, which turn out to be locally ${\rm Dir}$-minimizing and in fact satisfy Assumption \ref{a:multi-valued} with $D=V^+$ for any bounded open set $U$.

The central result is the following theorem of which Theorem \ref{t:I=1} is a direct corollary.

\begin{theorem}\label{t:classification}
Let $D$, $U$ and $u$ be as in Assumption \ref{a:multi-valued}. Let $x\in \partial D$ and assume that, for some $\rho>0$, $D\cap B_\rho (x)$ is connected and $u$ does not vanish identically on $B_\rho (x)\cap D$. Define 
\[
u_{x,\rho} (y) := \sum_i \a{\frac{u_i (x+\rho y)}{\D (u, B_\rho (x))^{\sfrac{1}{2}}}}\, .
\]
Then $I_0 (x):= \lim_{r\to 0} I(u(\cdot-x), r) =1$ and, for every sequence $\rho_k\downarrow 0$, there is a subsequence (not relabeled) such that $u_{x, \rho_k}$ converges locally uniformly on $V^+$ to a Dir-minimizer $u_{x,0}= \sum_i \a{v_i}$ satisfying the following properties:
\begin{itemize}
    \item[(a)] each $v_i: V^+\to \mathbb R^n$ is a linear function that vanishes at $\partial V^+$;
    \item[(b)] for every $i\neq j$, either $v_i \equiv v_j$, or $v_i (y)\neq v_j (y)$ for every $y\in V^+$;
    \item[(c)] $\D (u_{x,0}, B_1)=1$ and $\etaa\circ u_{x,0} =0$.
\end{itemize}
\end{theorem}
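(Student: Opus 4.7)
The plan follows Almgren's blow-up scheme adapted to boundary points (cf.~\cite{DDHM}): produce a convergent rescaled sequence, show the limit is a homogeneous $\D$-minimizer, and classify it. First I translate so that $x=0$ and use a $C^2$ chart to straighten $\partial D$, reducing to $D=V^+$. Theorem \ref{Thm:monotoneFrequency} then applies and, since $u$ is not identically $Q\a{0}$ near $x$, yields $I_0(x):=\lim_{r\downarrow 0} I(u,r)\in(0,\infty)$.

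Next, the normalization gives $\D(u_{x,\rho_k},B_1)=1$, and combined with the interior H\"older estimates of \cite{DS1} and the boundary H\"older estimates of \cite{Jonas}, this produces equicontinuity of $\{u_{x,\rho_k}\}$ on compact subsets of $\overline{V^+}$. Arzelà--Ascoli extracts a subsequence converging locally uniformly to some $u_{x,0}$ that extends continuously to $\overline{V^+}$ with $u_{x,0}|_{\partial V^+}=Q\a{0}$. A standard competitor-interpolation argument in a thin annulus (replacing the outer part of a competitor for $u_{x,0}$ by $u_{x,\rho_k}$) shows $u_{x,0}$ is $\D$-minimizing and the convergence is strong in $W^{1,2}_{\mathrm{loc}}$, which yields $\D(u_{x,0},B_1)=1$ and $\etaa\circ u_{x,0}\equiv 0$. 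Moreover $I(u_{x,\rho_k},r)=I(u,\rho_k r)\to I_0$, so strong $W^{1,2}$ convergence gives $I(u_{x,0},r)\equiv I_0$; the equality case of the Cauchy--Schwarz inequality underlying Theorem \ref{Thm:monotoneFrequency} then forces $u_{x,0}$ to be $I_0$-homogeneous.

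The heart of the proof is the classification of $I$-homogeneous $\D$-minimizers on $V^+$ with zero trace on $\partial V^+$. I would establish the dichotomy: either $u_{x,0}=Q\a{h}$ for a single classical harmonic function $h$, or $I_0=1$. The strategy is to use the multi-valued odd reflection (cf.~\cite[Prop.~5.2]{DS1}), admissible thanks to the zero boundary trace, to extend $u_{x,0}$ to an $I_0$-homogeneous $\D$-minimizer $\widetilde u$ on a disk in $\R^2$. Almgren's 2D irreducible decomposition represents each irreducible piece of $\widetilde u$ as the branches of a holomorphic map on a $Q_s$-fold cover, so $I_0=k/Q_s$ for integers, and the odd symmetry combined with the no-crossing property of $\D$-minimizers forces each piece with $Q_s\geq 2$ to be a simple $Q_s$-copy of a harmonic function unless $I_0=1$. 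Since we are given $\etaa\circ u_{x,0}\equiv 0$, the first alternative would force $u_{x,0}=Q\a{0}$, contradicting $\D(u_{x,0},B_1)=1$; hence $I_0=1$. The polar expansion of a $1$-homogeneous classical harmonic function vanishing on $\{\theta=0,\pi\}$ then reduces each branch to $v_i(y_1,y_2)=A_i y_2$ for some $A_i\in\R^n$, giving (a); property (b) is automatic since $v_i-v_j=(A_i-A_j)y_2$ either vanishes identically or nowhere in $V^+$; and (c) passes through the strong convergence.

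The main obstacle is the classification step: making the odd reflection of an $\mathcal{A}_Q$-valued map rigorous near the boundary, and combining Almgren's 2D irreducible decomposition with the no-crossing principle so that the dichotomy emerges. This is where the fine analytic structure of 2D $\D$-minimizers from \cite{DS1} must be applied with care.
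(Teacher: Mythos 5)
Your overall blow-up scheme (rescale, extract a locally uniformly and strongly-$W^{1,2}$ convergent subsequence, pass to a homogeneous Dir-minimizer via constancy of the frequency) matches the paper's, which itself defers those steps to \cite[Lemma 4.28]{DDHM}. The divergence is in the classification of the homogeneous limit, and there your argument has a genuine gap.

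You propose to perform an odd reflection of $u_{x,0}$ across $\partial V^+$ and assert that the reflected map is a Dir-minimizer on a full disk. This is not established. For a single-valued harmonic function the Schwarz reflection principle is a consequence of the PDE; for a $Q$-valued Dir-minimizer there is no PDE and no comparison/maximum principle, so minimality of the reflected map does not follow. Concretely, given a competitor $w$ for the reflected map $\widetilde u$ on the full disk, the restrictions of $w$ and of its odd reflection to $V^+$ are not competitors for $u_{x,0}$: they carry traces $T$ and $\ominus T$ on $\partial V^+$, which need not equal $Q\a{0}$, and $\mathcal{A}_Q(\R^n)$ has no linear structure that would let you symmetrize them into a single admissible competitor as one does in the scalar case. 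The citation you give (\cite[Prop.~5.2]{DS1}) does not supply such a reflection principle. The paper explicitly lists the absence of any maximum principle as one of the structural difficulties of higher codimension, which is precisely why its proof never reflects. Even granting the reflection, the subsequent appeal to ``Almgren's irreducible decomposition plus odd symmetry plus no-crossing'' is left at the level of a slogan: you would still need to show that $I_0$ is an integer before the coprimality argument $I_0=k/Q_s$, $\gcd(k,Q_s)=1$ can force $Q_s=1$, and you would need to track how the odd symmetry passes to the irreducible pieces.

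The paper's classification avoids all of this by staying on $V^+$. It takes the H\"older selection $g_1,\dots,g_Q$ of the trace of $u_{x,0}$ on the arc $\partial B_1\cap V^+$ (via \cite[Prop.~1.2]{DS1}); homogeneity plus the discreteness of the 2d interior singular set (which, combined with homogeneity, forces the singular set in the open half-plane to be empty, so the branches decompose globally into harmonic functions) shows each branch $r^I g_i(\theta)$ is a homogeneous harmonic polynomial, so $I\in\N$ and $g_i(\theta)=a_i\cos(I\theta)+b_i\sin(I\theta)$. A short parity argument then finishes: if $I$ is even, the boundary conditions $g_i(\pm\pi/2)=0$ force $a_i=0$ so all branches vanish at $\theta=0$; if $I$ is odd and $>1$, they force $b_i=0$ so all branches vanish at $\theta=\pi/(2I)$; in either case the unique-continuation alternative collapses all branches to a single one, contradicting $\etaa\circ u_{x,0}\equiv 0$ together with $\D(u_{x,0},B_1)=1$. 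This yields $I_0=1$ and linearity of the branches directly, with (b) read off from the alternative and (c) from the strong convergence. You should adopt this half-plane argument and drop the reflection step.
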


\begin{proof} First of all we let $I:= I_0 (x)$. It follows from the same arguments of \cite[Lemma 4.28]{DDHM} that a subsequence, not relabeled, of $u_{x,\rho_k}$ converges to a Dir-minimizer $u_{x,0} = \sum_i \a{v_i}$ which has the property (c) and which is $I$-homogeneous. Up to a rotation of the system of coordinates we can assume that $V^+= \{x_1>0\}$ (and hence $\partial V^+$ is the $x_2$-axis). From now on we use polar coordinates on $V^+$ and in particular we identify $\partial B_1 \cap V^+$ with $(-\frac{\pi}{2},\frac{\pi}{2})$. Let $g = \sum_i \a{g_i}$ be the restriction of $u_{x,0}$ on $\partial B_1 \cap V^+$. We can then use \cite[Proposition 1.2]{DS1} to conclude the existence of H\"older maps $g_1, \ldots, g_Q: (-\pi,\pi)\to \mathbb R^n$ such that 
\[
g (\theta) = \sum_i \a{g_i (\theta)}.
\]
In particular
\[
u_{x,0} (\theta, r) = \sum_i \a{r^I g_i (\theta)},
\]
and each $u_i (\theta, r)= r^I g_i (\theta)$ is an harmonic polynomial. In particular $I$ must be an integer. Since however $u_{x.0}\equiv Q\a{0}$ on $\{x_1=0\}$ and $\D (u_{x,0}, B_1) >0$, it must be a positive integer. 

Observe that, if $i\neq j$ and $\theta_0\in (-\frac{\pi}{2},\frac{\pi}{2})$ is a point where $g_i (\theta_0) = g_j (\theta_0)$, then $g_i$ and $g_j$ must coincide in a neighborhood of $\theta_0$, otherwise the whole halfline $\{(r\cos \theta_0, r\sin \theta_0)\}$ consists of singularities of $u_{x,0}$, contradicting \cite[Theorem 0.11]{DS1}. In particular by the unique continuation principle for harmonic functions we have 
\begin{itemize}
\item[(Alt)'] either $u_i (r,\theta) \neq u_j (r,\theta)$ for every $(r,\theta)\in ]0,+\infty[\times(\frac{\pi}{2}, \frac{\pi}{2})$, or $u_i (r,\theta)=u_j (r,\theta)$ for every $(r,\theta)\in ]0,+\infty[\times(\frac{\pi}{2}, \frac{\pi}{2})$,
\end{itemize}
so
\begin{itemize}
\item[(Alt)] either $g_i (\theta) \neq g_j (\theta)$ for every $\theta\in (-\frac{\pi}{2}, \frac{\pi}{2})$, or $g_i (\theta)= g_j (\theta)$ for every $\theta\in (-\frac{\pi}{2}, \frac{\pi}{2})$.
\end{itemize}
Next, using the classification of $2$-dimensional harmonic polynomials, we know that there are coefficients $a_i,b_i \in \mathbb R^n$ such that
\[
g_i (\theta) = a_i \cos (I\theta) + b_i \sin (I \theta)\, .
\]
If $I$ were even, since $g_i (\frac{\pi}{2}) = g_i (-\frac{\pi}{2})=0$, we conclude that $a_i =0$. But then all the $g_i$'s would vanish at $\theta=0$ and (Alt) would imply that they all coincide everywhere. This would however contradict (c). Likewise, if $I$ were odd and larger than $1$, then we would have $b_i =0$ and all the $g_i$'s would vanish at $\theta = \frac{\pi}{2I}$. We thus conclude that $I$ is necessarily equal to $1$. This proves then (a), while (Alt) shows (b). 
\end{proof}

\subsection{Proof of Theorem \ref{t:contact-points}}\label{s:contact-points}
Fix a point $x\in \partial D$ and assume that $u$ does not vanish in any neighborhood of $x$. Then Theorem \ref{t:classification} implies that the frequency function $I_0(x)$ is $1$. Arguing as in \cite[Corollary 4.27]{DDHM} we conclude however that, for every $\delta>0$, there is a radius $\rho>0$ such that 
\[
\frac{D(r)}{r^{2+\delta}} \geq (1-\delta) \frac{D (\rho)}{\rho^{2+\delta}} >0  \qquad
\forall r<\rho\, .
\]
This shows that $x$ cannot be a contact point.

\section{First Lipschitz approximation}\label{s:Lip-approx}

In this section we consider a neighborhood of a flat point and we introduce the cylindrical excess $\bE (T, \bC_r (p, V))$ as in \cite[Definition 5.1]{DDHM}. Then, under the assumption that $\bE (T, \bC_r (p, V))$ is sufficiently small, we produce an efficient approximation of the current with a multivalued graph. One important point is that the graph of such approximation, considered as an integral current, will also have boundary $Q\a{\Gamma}$. From now on, given a point $p$ and a plane $V$ through the origin, $B_r (p,V)$ will denote the disk $ \bB_r (p) \cap (p+V)$, $V^\perp$ the orthogonal complement of $V$ and $\bC_r (p, V)$ the cylinder $B_r (p,V)+V^\perp$. We then denote by  $\bp_V$ and $\bp_V^\perp$ the orthogonal projections respectively on $V$ and its orthogonal complement.

\begin{definition}\label{d:excess}
For a current $T$ in a cylinder $\bC_r (p, V)$ we define the \textbf{\emph{cylindrical excess}} 
$\bE (T, \bC_r (p, V))$ and the \textbf{\emph{excess measure}} $\be_T$ of a set $F\subset B_{4r}(\bp_V (p),V)$ as
\begin{align*}
\bE (T, \bC_r (p, V)) := \; &\frac{1}{2\pi r^2} \int_{\bC_r (p, V)} |\vec{T}-\vec{V}|^2\, d\|T\|,\\
\be_T(F):=\; & \frac{1}{2} \int_{F + V^\perp} |\vec{T}-\vec{V}|^2\, d\|T\|\,.
\end{align*}
The \textbf{\emph{height}} in a set $G\subset \mathbb R^{2+n}$ with respect to a plane $V$ is defined as
\begin{equation}
\bh (T, G, V) := \sup \{|\bp^\perp_V (q-p)|: q,p\in \supp (T)\cap G\}\, .
\end{equation}
\end{definition}

If $p$ and $V$ are omitted, then we understand that $V=\mathbb R^2\times \{0\}$ and $\bC_r = \bC_r(0,\mathbb R^2\times \{0\})$. 

\begin{ipotesi}\label{Ass:app} Let $\Gamma$ and $T$ be as in Assumption \ref{a:main-local-2}. $q$ is a fixed point, which without loss of generality we assume to be the origin, $r$ an arbitrary radius such that $(\partial T) \res \bC_{4r}= Q \a{\Gamma}\res \bC_{4r}$ and
\begin{itemize}
\item[(i)] $q = (0,0)\in \Gamma$ and $T_q \Gamma = \R \times \{0\}\subset V_0 = \mathbb R^2\times \{0\}$;
\item[(ii)] $\gamma=\bp(\Gamma)$ divides $B_{4r}$ in two disjoint open sets $D$ and $B_{4r}\setminus \overline{D}$;
\item[(iii)] $\bp_{\#} T\res \bC_{4r} = Q \a{ D}$.
\end{itemize}
\end{ipotesi}

Observe that thanks to  (iii) we have the identities
\begin{align}
\bE (T, \bC_{4r}) =\; &\frac{1}{2\pi (4r)^2} \left( \|T\| (\bC_{4r}) - Q |D|\right)\label{e:excess_2},\\
\be_T (F) =\; & \|T\| (F\times\R^n) - Q |D\cap F|\,.\label{e:excess_3}
\end{align}

Following a classical terminology we define noncentered maximal functions for Radon measures $\mu$ and (Lebesgue) integrable functions $f: U \to \R_+$ by setting
\begin{align*}
\bmm f(z) & := \sup_{z\in B_s(y)\subset U} \frac{1}{\pi s^2} \int_{B_s(y)} f\,, \\
 \bmm\mu (z) &:= \sup_{z\in B_s(y)\subset U} \frac{\mu(B_s(y))}{\pi s^2}. 
\end{align*}

\begin{remark}\label{r:little-psi} Observe that by our assumptions there is an interval $I\subset \mathbb R$ containing $(-5r, 5r)$ and function $\psi: I\to \mathbb R^{n+1}$ with the property that $\bC_{5r} \cap \Gamma = \{(t, \psi (t)): t\in I\}$. Moreover $\psi (0) =0$, $\dot \psi (0) =0$ and $\|\ddot\psi\|_{C^0} \leq C \bA$ for a geometric constant $C (n)$. In particular $|\psi (t)| \leq C \bA t^2$ and $|\dot\psi (t)|\leq C \bA t$. 
Finally observe that, if we write $\psi = (\psi_1,\bar \psi)$, then $\partial D = (t,\psi_1 (t))$ and $\Gamma$ can be written as the graph of a function $g$ on $\partial D$ defined by $g (t, \psi_1 (t)) = \bar\psi (t)$.
\end{remark}

\begin{figure}[htp]
    \centering
    \includegraphics[width=10cm]{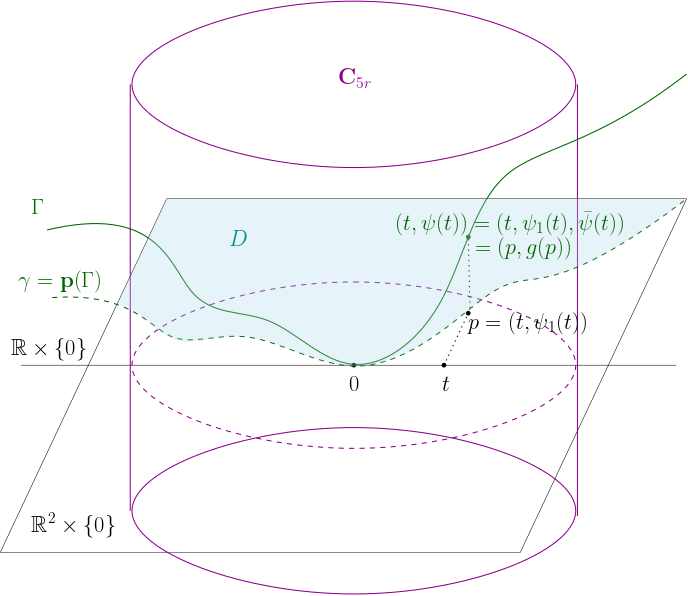}
    \caption{An illustration of the maps describing the boundary.}
\end{figure}

\newpage
\begin{proposition}[First Lipschitz approximation]\label{p:Lipschitz_1}
There are positive constants $C$ and $c_0$ (depending only on $Q$ and $n$) with the following properties. Assume $T$ satisfies Assumption \ref{Ass:app},
$E := \bE (T, \bC_{4r}) \leq c_0$. Then, for any $\delta_*\in (0,1)$, there are a closed set $K\subset D\cap B_{3r}$ and a $Q$-valued function $u$
on $D\cap B_{3r}$ with the following properties:
\begin{align}
u|_{\partial D \cap B_{3r}} =\;& Q \a{g}\label{e:boundary}\\
\Lip (u)\leq\;& C  (\delta_*^{\sfrac{1}{2}} + r \bA)\label{prop:Lipschitz_11}\\
{\rm osc} (u) \leq\; & C \bh (T, \bC_{4r}) + C r E^{\sfrac{1}{2}} + C r^2 \bA\label{e:oscillation}\\
K \subset \;& B_{3r} \cap \{\me_T\leq \delta_*\}\label{prop:Lipschitz_16}\\
 \bG_{u} \res[K \times \R^n] =\; & T \res [K\times \R^n] \label{e:differenza}\\
|(D\cap B_{s}) \setminus K| \leq\; & \frac{C}{\delta_*}\:\be_T\left(\{\me_T> 4^{-1} \delta_*\}\cap B_{s+ r_1 r}\right) + C \frac{\bA^2}{\delta_*} s^2 \quad \forall s \le 3r+ r_1 r\label{e:stimaK}\\
 \frac{\|  T- \bG_u\|(\bC_{2r})}{r^2}\le\; &\frac{C}{\delta_*}(E+\bA^2r^2) \quad  \label{e:graphmass}
\end{align}
where $r_1 = c \sqrt{\frac{E +\bA^2 r^2}{\delta_*}}$ and $c$ is a geometric constant.
\end{proposition}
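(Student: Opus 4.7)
The strategy is to follow the interior Lipschitz approximation scheme of \cite{DS2,DS3}, modified so that the resulting $Q$-valued graph matches the Dirichlet datum $Q\a{g}$ along $\partial D$, in the spirit of the boundary construction of \cite{DDHM}. The construction proceeds in three stages: first, define the good set $K$ where the maximal function of the excess measure is small; second, produce a preliminary Lipschitz $Q$-valued map on $K$ via a height bound at good points; third, match it to $Q\a{g}$ along $\partial D$ and extend.

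Concretely, set $K := \{x \in B_{3r+r_1 r}\cap \overline{D} : \me_T(x)\leq \delta_*\}$ with $\me_T := \bmm \be_T$. The estimate \eqref{e:stimaK} is then essentially the weak $L^1$-bound for the maximal operator applied to $\be_T$, plus an additive error of order $C\bA^2 s^2/\delta_*$ accounting for the fact that $Q\a{D}$ differs from $Q$ times a half-plane in $V_0$ by a curvature-controlled amount (Remark~5.3). On $K$, the height bound at good points (cf.~\cite[Section 4]{DS3}) shows that $T$ restricted to $\bC_s(x)$ for $x\in K$ and $s$ sufficiently small coincides with the graph of a $Q$-tuple $u_0(x)\in \Iq$, and a tilting lemma applied to pairs of good points gives $\Lip(u_0|_K)\leq C(\delta_*^{1/2} + r\bA)$; the extra $r\bA$ loss comes from the curvature bound $\|\ddot\psi\|_{C^0}\leq C\bA$ in Remark~5.3. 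The oscillation estimate \eqref{e:oscillation} is obtained by combining the Lipschitz bound with $\bh(T,\bC_{4r})$ and $|g|\leq C r^2\bA$. Identity \eqref{e:differenza} is built into the construction on $K$; the mass estimate \eqref{e:graphmass} follows from splitting $\bC_{2r}$ into $K\times \R^n$ and its complement, bounding the latter by \eqref{e:stimaK}, the Lipschitz bound, and \eqref{e:excess_2}.

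The most delicate part, and the main obstacle, is enforcing the Dirichlet condition \eqref{e:boundary} without degrading the Lipschitz constant. One must verify that on $K\cap \{\dist(\cdot,\partial D)\leq r_1 r\}$ the preliminary values $u_0(x)$ already lie within $C(\bh+rE^{1/2}+r^2\bA)$ of $Q\a{g(x)}$, so that the stitching to $Q\a{g}$ along $\partial D$ produces a $Q$-valued map with the claimed Lipschitz constant. This is proved by a slicing argument along vertical lines $\{y\}\times V_0^\perp$ with $\dist(y,\gamma)\lesssim r_1 r$: since $\partial T=Q\a{\Gamma}$, for a full-measure set of such $y$ the slice $\langle T,\bp,y\rangle$ is a $0$-dimensional integral current of mass $Q$ whose behaviour along any short segment connecting $y$ to $\gamma$ is forced, via the wedge confinement \eqref{e:new-wedge} and the total excess bound $E+\bA^2 r^2$, to remain close to $Q\a{g(y)}$. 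The natural scale here is exactly $r_1\sim ((E+\bA^2 r^2)/\delta_*)^{1/2}$. Once this matching is established, the $\Iq$-valued Lipschitz extension theorem of \cite[Theorem 1.7]{DS1} produces $u$ on $D\cap B_{3r}$ satisfying \eqref{prop:Lipschitz_11}--\eqref{e:graphmass}. The purely interior schemes of \cite{DS2,DS3} do not provide this matching step: one must genuinely exploit the boundary identity $\partial T=Q\a{\Gamma}$ together with the wedge confinement, as in \cite{DDHM}.
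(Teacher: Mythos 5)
Your route is genuinely different from the paper's, and the difference matters. You try to build the approximation directly: define $K$ by the maximal function of $\be_T$, construct a preliminary $Q$-valued map $u_0$ on $K$ via the interior height-bound-at-good-points argument, and then ``match'' $u_0$ to $Q\a{g}$ along $\gamma$ by a slicing argument exploiting $\partial T = Q\a{\Gamma}$ and the wedge confinement. The paper instead \emph{removes the boundary before doing any approximation}: it extends $g$ trivially in the $x_2$ direction, forms $\hat T := T\res\bC_4 + Q\,\bG_g\res((B_4\setminus D)\times\R^n)$, checks that $\partial\hat T\res\bC_4 = 0$ and $\bp_\sharp\hat T = Q\a{B_4}$, and then applies the \emph{interior} Lipschitz approximation of \cite[Proposition~3.2]{DS3} verbatim to $\hat T$ to get $\hat K,\hat u$. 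The boundary condition \eqref{e:boundary} is then recovered by an ``almost reflection'' $h(x_1,x_2)=(x_1,2\psi_1(x_1)-x_2)$: taking $K := h(\hat K)\cap\hat K$ and forcing $u|_{\partial D}=Q\a{g}$, the Lipschitz compatibility of $u|_K$ with $Q\a{g}|_\gamma$ is immediate because for $q\in K$ the reflected point $h(q)$ lies in $\hat K\cap(B_4\setminus D)$ where $\hat T$ is explicitly $Q\,\bG_g$, hence $\hat u(h(q)) = Q\a{g(h(q))}$ is known, and one runs a short chain through $h(q)$. No slicing, and no wedge confinement is used at all in this proposition.

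The gap in your proposal is precisely at the step you flag as ``the most delicate part,'' and as written it does not close. To apply the $\Iq$-Lipschitz extension theorem to $u_0|_K\cup Q\a{g}|_\gamma$ and land on the bound \eqref{prop:Lipschitz_11}, you must show the \emph{Lipschitz compatibility}
$\cG\bigl(u_0(x),Q\a{g(p)}\bigr)\le C(\delta_*^{1/2}+r\bA)\,|x-p|$
for all $x\in K$, $p\in\gamma$, i.e.\ a bound proportional to $\dist(x,\gamma)$ with the \emph{same} small constant $\delta_*^{1/2}$ as the interior Lipschitz estimate on $K$. What you propose to prove is an \emph{absolute} bound of size $C(\bh+rE^{1/2}+r^2\bA)$; that is an oscillation-level statement, and feeding it into the extension theorem would degrade the Lipschitz constant near the boundary to roughly $(\bh+rE^{1/2}+r^2\bA)/(r_1 r)$, not $C\delta_*^{1/2}$. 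And even if you reinterpret the slicing-plus-wedge argument as giving a bound proportional to $\dist(x,\gamma)$, the constant it produces is of order $\tan\vartheta$, since the wedge confinement \eqref{e:new-wedge} only pins the slice to within angle $\vartheta$ of $\Gamma$; this is a fixed constant, whereas \eqref{prop:Lipschitz_11} must hold for \emph{every} $\delta_*\in(0,1)$, including $\delta_*\ll\tan^2\vartheta$. There is a further, more basic obstruction upstream: the interior good-point height-bound argument uses $\bp_\sharp T = Q\a{B_s(x)}$ on small balls, but near $\gamma$ one only has $\bp_\sharp T\res\bC_{4r}=Q\a{D}$, so a ball $B_s(x)$ meeting $\gamma$ sees a jump in projected multiplicity and the standard good-point structure theorem does not apply as is. The extend-and-reflect device of the paper dissolves both problems at once, which is why the actual proof never needs the slicing argument you propose.
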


\begin{proof} Since the statement is invariant under dilations we assume w.l.o.g. that $r=1$. Consider the extension $\hat{g}$ of the function $g$ defined in Remark \ref{r:little-psi} which is simply given by $\hat g (x_1, x_2) = \bar\psi (x_1)$. In order to simplify our notation, we drop the hat symbol and denote the extension by $g$ as well.
Consider next the current $\hat{T}\in\mathbf{I}_2(\mathbf{C}_{4})$ which consists of $\hat{T} = T\res\bC_4 + Q \bG_g \res ((B_4\setminus D)\times \mathbb R^n)$, where we use notation $\bG_g$ for the integer rectifiable current naturally associated to the graph of a function $g:B_4 \to \mathbb R^n$. More formally, if $\bar g (x) = (x, g (x))$, then
\begin{equation}\label{e:pushforward}
\bG_g \res ((B_4\setminus D)\times \mathbb R^n) = \bar g_\sharp (\a{B_4\setminus D}).
\end{equation}
In particular from \eqref{e:pushforward} and the classical theory of currents we see that
\begin{align}
(\partial \hat{T}) \res \bC_4 = &Q \a{\Gamma} \res \bC_4 - Q \bar g_\sharp (\a{\partial D\cap B_4}) =
Q\a{\Gamma} \res \bC_4 - Q \a{\Gamma} \res \bC_4 =0\,,\\
\bp_\sharp \hat{T} =&Q \a{D} + Q \a{B_4\setminus D} = Q \a{B_4}\, .
\end{align}
Moreover, we can use \cite[Corollary 3.3]{DS2} to estimate
\begin{align}
\|\hat{T}\| (\bC_4) - Q\pi 4^2 &= \bE (T, \bC_4) + Q (\|\bG_g\| ((B_4\setminus D)\times \mathbb R^n) - |D|)\nonumber\\
&\leq  \bE (T, \bC_4) + Q \int_{B_4 \setminus D} |Dg|^2 \leq E + C \bA^2\, .
\end{align}
Similarly, we can define for $F \subset B_4$
\[
\be_{\hat T} (F)= \|\hat{T}\| (F\times \mathbb R^n) - Q|F| 
\]
and the same considerations give
\[
\be_{\hat T} (F) \leq \be_T (F\cap D) + C \bA^2 |F\setminus D|\, .
\]
Moreover, we can apply \cite[Proposition 3.2]{DS3} to $\hat{T}$ to obtain a closed set $\hat{K}\subset B_{3}$ and $\hat{u} \in \operatorname{Lip}\left(B_{3}, \mathcal{A}_{Q}\left(\mathbb{R}^{n}\right)\right)$
which satisfy all the estimates \eqref{prop:Lipschitz_11}-\eqref{e:graphmass}, with the only relevant differences in \eqref{e:stimaK}, which becomes
\begin{equation}
|B_s \setminus \hat K| \leq\; \frac{C}{\delta_*}\:\be_T\left(\{\me_T> 4^{-1}\delta_*\}\cap B_{s+r_1 r} (x)\right) + C \frac{\bA^2}{\delta_*} s^2 \quad \text{ for every } s \le 3r\, . \label{e:stima-hat-K}
\end{equation}
In order to show \eqref{e:boundary}, we define an ``almost reflection'' $h$ on the boundary $\partial D$ in the following way:
\[
h (x_1, x_2) = (x_1,  2\psi_1 (x_1) - x_2)\, 
\]
and set $K:= h (\hat{K})\cap \hat{K}$. We now take the map $\hat{u}$, restrict it to $\hat{K}$ and then extend it again to a Lipschitz map $u$ with the additional property that \eqref{e:boundary} holds. 
In fact we first define $u:K\cup(\partial D\cap B_2)\to\Is{Q}$ as
\[
u(y) = \begin{cases} Q \a{g(y)} &, \text{ if } y \in \partial D\\ \hat u(y)&, \text{ else. }
\end{cases}
\]
Note that in principle a point $y$ could belong to both $K$ and $\partial D$: in that case we are ignoring the value given by $\hat{u}$ and force such value to be the one given by $Q\a{g}$. However a byproduct of the next elementary argument is that in fact $\hat{u} (y) = Q \a{g(y)}$ for every $y\in \partial D$.

We now wish to show that the bound on $\Lip (u)$ and ${\rm osc}(u)$ becomes worse only by a geometric factor. In fact, since the oscillation of $Q\a{g}$ is controlled by $\bA$, we just need to focus on the Lipschitz bound. 
Consider $p\in\partial D$, $q\in K$. By construction of $h$, let $\sigma$ be the vertical segment joining $q$ and $h(q)$ and let $\tilde{q}$ be the only intersection of $\sigma$ with $\partial D$. Thus 
\begin{align*}
\mathcal{G}(u(p),u(q)) & \le \mathcal{G}(u(q),u(h(q)))+\mathcal{G}(u(h(q)),u(p))\\
  & \le  \mathcal{G}(u(q),u(h(q)))+ C\mathcal{G}(u(\tilde{q}),u(p))\\
  & \le \mathcal{G}(u(q),u(h(q)))+CQ|g(p)-g(\tilde{q})|\\
  & \le 2|q-p|\Lip(\hat{u})+CQ\mathbf{A}|p-q|.
\end{align*} 
Now we can use the Lipschitz Extension Theorem \cite[Theorem 1.7]{DS1} to extend $u$ to the whole domain $B_2$, while enlarging the Lipschitz constant and the oscillation by a geometric factor.

So far our map satisfies \eqref{e:boundary}, \eqref{prop:Lipschitz_11},
and \eqref{e:oscillation}. However, \eqref{prop:Lipschitz_16} and \eqref{e:differenza} are obvious because $K\subset \hat{K}$.

Next we show \eqref{e:stimaK} holds with a slightly larger constant. First of all notice that, provided $\bA$ is sufficiently small, $h$ is a diffeomorphism and that $ h^{-1} (B_s) \subset B_{s+C\bA s^2}$, because $h(0)=0$ and $\|Dh - {\rm Id}\|_{C (B_s)} =\|Dh - Dh (0)\|_{C (B_s)}\leq C \bA s$. In particular we can estimate
\begin{align*}
|(B_s\cap D) \setminus K| &\leq |B_s\setminus \hat{K}| +|B_s \setminus h (\hat{K})|\\
&\leq |B_s\setminus \hat{K}| + C |h^{-1} (B_s) \setminus \hat{K}| \leq C |h (B_{s+ C \bA s^2}\setminus \hat{K})|\, .
\end{align*}
Finally we conclude
\begin{align*}
\|T-\bG_u\| (\bC_{2}) &\leq \|T - \bG_{\hat{u}}\| (\bC_2) + \|\bG_u - \bG_{\hat{u}}\| (\bC_2).
\end{align*}
For the first summand, we already have the desired estimate from \cite[Proposition 3.2]{DS3}. For the second we observe
\begin{align*}
\|\bG_u - \bG_{\hat{u}}\| (\bC_2) = \|\bG_u - \bG_{\hat{u}}\| ((B_2\setminus K)\times \mathbb R^n)
\leq C |B_2\setminus K|\,,
\end{align*}
and we then use \eqref{e:stimaK}. This shows \eqref{e:graphmass}.

The proof would be complete, except that our approximation and estimates hold on slightly smaller balls than claimed. It can however easily be checked that in \cite[Proposition 3.2]{DS3}, we just need to reduce slightly the size of the radius from $4$ to a fixed smaller one, while the argument is literally the same: the price to pay are just worse constants in the estimates.  
\end{proof}

\section{Harmonic approximation}\label{s:harmonic-approx}

\begin{definition}[$E^\beta$-Lipschitz approximation]\label{d:Lip-approx}
Let $\beta \in \left(0, 1\right)$ and $T$ be as in Proposition \ref{p:Lipschitz_1}. After setting $\delta_* = (E+\bA^2)^{2\beta}$, the corresponding map $u$ given by the proposition will be called the
{\emph{$E^{\beta}$-Lipschitz approximation of $T$ in $\bC_{3r}$}} and will be denoted by $f$.
\end{definition}

In this section we use the minimimizing assumption on $T$ to show that
the $E^{\beta}$-Lipschitz approximation is close to a $\D$-minimizing function $w$. 
We first introduce some notation.
\begin{ipotesi}\label{a:multi-valuednonhomogeneous}
$D\subset \mathbb R^2$ is a $C^2$ open set, $U$ is a bounded open set and $u\in W^{1,2} (D\cap U, \Iqs)$ a multivalued function such that $u|_{\partial D \cap U} \equiv Q\a{g}$, where $g$ is as in Remark \ref{r:little-psi}. $u$ is {\rm Dir}-minimizing in the sense that, for every $K\subset U$ compact and for every $v\in W^{1,2} (D\cap U, \Iqs)$ which coincides with $u$ on $(U\setminus K)\cap D$ and $v|_{\partial D \cap U} \equiv Q\a{g}$ we have
\[
{\rm Dir}\, (u) \leq {\rm Dir}\, (v)\, .
\]
\end{ipotesi}

\begin{theorem}[First harmonic approximation]\label{t:o(E)}
For every $\eta>0$ and every $\beta\in (0, 1)$, there exist a constant $\varepsilon=\varepsilon(\eta,\beta)>0$ with the following property. 
Let $T$ and $\Gamma$ be as in Assumption \ref{Ass:app} in $\bC_{4r}$ (in particular $T$ is area minimizing in $\bC_{4r}$).
If $E =\bE(T,\bC_{4r})\leq \varepsilon$ and $r\bA \leq \varepsilon E^{\frac{1}{2}}$, then the
$E^{\beta}$-Lipschitz approximation $f$ in $\bC_{3r}$ satisfies
\begin{equation}\label{e:few energy}
\int_{B_{2r}\cap D\setminus K}|Df|^2\leq \eta E \pi (4r)^2 = \eta\,\e_T(B_{4r}).
\end{equation}
Moreover, there exists a Dir-minimizing function $w$
such that $w|_{\partial D \cap B_{2r}} = Q\a{g}$ and
\begin{gather}
r^{-2} \int_{B_{2r}\cap D}\cG(f,w)^2+
\int_{B_{2r}\cap D}\cG (Df, Dw)^2 \leq \eta E \,\pi \,(4\,r)^2 =
\eta\, \e_T(B_{4r})\, ,\label{e:quasiarm}
\end{gather}
\begin{equation}\label{e:quasiarmaver}
\int_{B_{2r}\cap D} |D(\boldsymbol{\eta} \circ f)-D(\boldsymbol{\eta} \circ w)|^{2} \leq \eta E\pi(4 r)^2=\eta \mathbf{e}_{T}\left(B_{4r}\right).
\end{equation}
\end{theorem}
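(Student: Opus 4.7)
The plan is to follow the scheme of the interior harmonic approximation of \cite{DS2,DS3}, adapted to accommodate the boundary values $Q\a{g}$.

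First I would establish the dissipation estimate \eqref{e:few energy}. With $\delta_* = (E+\bA^2)^{2\beta}$, the pointwise Lipschitz bound $|Df|\le C\delta_*^{1/2}$ from \eqref{prop:Lipschitz_11} combined with the measure estimate \eqref{e:stimaK} gives, on a slightly enlarged ball,
\[
\int_{B_{2r}\cap D\setminus K}|Df|^2 \;\le\; C\delta_*\,|B_{2r}\cap D\setminus K| \;\le\; C\,\be_T(\{\me_T>\delta_*/4\}\cap B_{2r+r_1 r}) + C\bA^2 r^2.
\]
Under the hypothesis $r\bA\le\varepsilon E^{\sfrac12}$ the last term is $\le C\varepsilon^2 E r^2$. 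For the first, a Vitali covering argument on the super-level set combined with the area-minimality of $T$ (comparing on each ball of the cover with the cone over a generic slice of $T$ plus the portion of $\Gamma$ inside, which is an admissible competitor) shows that the mass of $\be_T$ concentrated where the density exceeds $\delta_*/4$ is $o(E r^2)$ as $E\downarrow 0$. Choosing $\varepsilon$ small enough yields \eqref{e:few energy}.

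Next I would prove \eqref{e:quasiarm} and \eqref{e:quasiarmaver} by contradiction and compactness. Suppose they fail for some fixed $\eta>0$: there is a sequence $(T_k,\Gamma_k,r_k)$ with $E_k\to 0$ and $r_k\bA_k/E_k^{\sfrac12}\to 0$ for which no admissible Dir-minimizer lies within $\eta E_k r_k^2$ of the corresponding $f_k$. After rescaling so that $r_k=1$, set
\[
\bar f_k \;:=\; \sum_i \a{E_k^{-\sfrac12}\bigl((f_k)_i - g_k\bigr)}.
\]
Because $\bA_k\to 0$, the boundary functions $g_k$ converge to $0$ in $C^2$ and the domains $D_k$ converge to the half-plane $V^+=\{x_2>0\}$; the $W^{1,2}$-norms of $\bar f_k$ on $B_2$ are uniformly bounded by \eqref{e:graphmass} and \eqref{e:few energy}. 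The compactness theory for multi-valued Sobolev maps from \cite{DS1} then extracts a subsequence converging weakly in $W^{1,2}$ and strongly in $L^2$ to some $\bar f$ with $\bar f|_{\partial V^+\cap B_2}\equiv Q\a{0}$.

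The crucial step is to show that $\bar f$ is $\D$-minimizing on its domain, with strong $W^{1,2}$ convergence. Given any competitor $\bar w$ for $\bar f$ on a sub-ball $B_\rho\cap V^+$, agreeing with $\bar f$ on $\partial B_\rho$ and with boundary value $Q\a{0}$ on $\partial V^+$, I would construct a competitor current $S_k$ for $T_k$ by (i) lifting $\bar w$ to $w_k:=\sum_i\a{E_k^{\sfrac12}\bar w_i + g_k}$, which has the correct boundary datum $Q\a{g_k}$, (ii) interpolating between $f_k$ and $w_k$ in a thin annulus by a Luckhaus-type construction as in \cite[Lemma 5.5]{DS3}, adjusted to preserve the boundary trace on $\partial D_k$, and (iii) replacing $T_k$ by $\bG_{w_k}$ inside $\bC_\rho$ with an appropriate current plug on the annulus controlled through \eqref{e:graphmass}. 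The minimality $\mass(T_k\res\bC_\rho)\le\mass(S_k)$, combined with the Taylor expansion $\mass(\bG_h)=\pi\rho^2 Q+\tfrac12\int|Dh|^2+O(|Dh|^4)$ and \eqref{e:few energy}, yields in the limit $\D(\bar f,B_\rho\cap V^+)\le\D(\bar w,B_\rho\cap V^+)$. Taking $\bar w=\bar f$ forces $\D(\bar f_k)\to\D(\bar f)$, hence strong convergence, and then $w:=\sum_i\a{E_k^{\sfrac12}\bar f_i + g_k}$ for large $k$ contradicts the failure assumption.

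The principal obstacle, absent in the interior setting, is the boundary interpolation: the annular Luckhaus-type patching must respect the non-trivial boundary trace $Q\a{g_k}$ and the non-flat boundary $\partial D_k$, and the area comparison must absorb the curvature corrections coming from $\Gamma_k$. The condition $r\bA\le \varepsilon E^{\sfrac12}$ is precisely what confines the curvature corrections to orders below the excess, so they drop out in the blow-up limit and leave a clean contradiction.
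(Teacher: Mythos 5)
Your plan for \eqref{e:quasiarm}--\eqref{e:quasiarmaver} follows essentially the same route as the paper: rescale $f_k$ by $E_k^{-1/2}$ (you subtract $g_k$ first, which is a cosmetic variant of the paper's normalization using $h_k \oplus (-\bar\psi^k E_k^{-1/2})$), extract a weak $W^{1,2}$ limit, build competitor currents by an annular interpolation that preserves the boundary trace, and deduce Dir-minimality plus strong convergence from the resulting mass comparison and Taylor expansion of the graph mass. The interpolation you invoke (``Luckhaus-type, adjusted to preserve the trace'') is precisely the content of Proposition~\ref{p:competitor} together with Lemma~\ref{l:change-of-variable}, so this part is sound in outline.

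The genuine gap is in your proposed \emph{direct} proof of \eqref{e:few energy}. After the standard reduction
\[
\int_{B_{2r}\cap D\setminus K}|Df|^2\;\le\; C\,\be_T\big(\{\me_T>\delta_*/4\}\cap B_{2r+r_1r}\big)+C\bA^2 r^2,
\]
you need to show that the first summand is $o(E)r^2$, and you assert that a Vitali covering of the superlevel set together with a cone-comparison on each ball gives this. That does not work. The cone/slicing competitor gives excess monotonicity and a crude mass bound on each ball, but it does not rule out that the excess measure $\be_T$ concentrates entirely on $\{\me_T>\delta_*/4\}$, which is exactly the scenario you must exclude. A priori $\be_T(\{\me_T>\delta_*/4\})$ is only bounded by $E\pi(4r)^2$ itself; the estimate $|\{\me_T>\delta_*/4\}|\lesssim \delta_*^{-1}E r^2$ combined with any uniform density bound gives only $\be_T(\cdot)\lesssim \delta_*^{-1}Er^2$, which blows up as $E\downarrow 0$ because $\delta_*=(E+\bA^2)^{2\beta}\to 0$. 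There is no local comparison that converts this into $o(E)$. The smallness of the excess on the superlevel set is a \emph{global} consequence of minimality and is precisely what forces the compactness argument: one assumes $\int_{B_{2r}\cap D\setminus K}|Df_k|^2\ge c_1 E_k\pi (4r)^2$ along a sequence, blows up, extracts a Dir-minimizing limit, and builds a competitor current via the patching lemma whose mass strictly beats $T_k$ for large $k$. This is how the paper proves \eqref{e:few energy}, and also how the interior counterpart \cite[Theorem 5.2]{DS3} is proven. Since your second part also leans on \eqref{e:few energy}, the gap propagates. The fix is to treat \eqref{e:few energy} by the same compactness-and-competitor mechanism you already plan to use for \eqref{e:quasiarm}, rather than attempting a direct covering argument.
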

The following proposition provides a Taylor expansion of the mass of the current associated to the graph of a $Q$-valued function. It is proven in \cite[Corollary 3.3]{DS2} (although the corollary is stated for $V$ open, the proof works obviously when $V$ is merely measurable).
\begin{proposition}\label{p:TaylorExpansionMassGraph}$($Taylor expansion of the mass, see \cite[Corollary 3.3]{DS2}$)$. There are dimensional constants $c, C>0$ such that the following holds. Let $V \subset \mathbb{R}^2$ be a bounded measurable set and let $u: V \rightarrow \mathcal{A}_{Q}\left(\mathbb{R}^{n}\right)$ be a Lipschitz function with $\operatorname{Lip}(u) \leq c .$ Denote by $\mathbf{G}_{u}$ the integer rectifiable current associated to the graph of $u$ as in \cite[Definition 1.10]{DS2}. Then, the following Taylor expansion of the mass of $\mathbf{G}_{u}$ holds:
$$
\mathbf{M}\left(\mathbf{G}_{u}\right)=Q|V|+\int_{V} \frac{|D u|^{2}}{2}+\int_{V} \sum_{i} R\left(D u_{i}\right),
$$
where $R: \mathbb{R}^{n \times 2} \rightarrow \mathbb R$ is a $C^{1}$ function satisfying $|R(D)|=|D|^{3} L(D)$ for some positive function $L$ such that $L(0)=0$ and $\operatorname{Lip}(L) \leq C$. 
\end{proposition}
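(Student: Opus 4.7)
\medskip

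\noindent\textbf{Proof proposal for Proposition \ref{p:TaylorExpansionMassGraph}.}

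The plan is to reduce the computation to the classical area formula for single-valued Lipschitz graphs, applied on pieces of a measurable partition of $V$ on which $u$ can be written as an unordered sum of Lipschitz selections, and then to Taylor expand the resulting Jacobian integrand.

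First, I would invoke the standard measurable selection result for $Q$-valued Lipschitz maps (see \cite{DS1}): there exists a Borel partition $V = V_1 \cup \cdots \cup V_N$ together with Lipschitz maps $u^{(k)}_1, \ldots, u^{(k)}_Q : V_k \to \mathbb{R}^n$ (each with Lipschitz constant bounded by $C \operatorname{Lip}(u)$) such that $u(x) = \sum_{i=1}^Q \a{u^{(k)}_i(x)}$ for a.e.\ $x \in V_k$. By the very definition of $\mathbf{G}_u$ given in \cite[Definition 1.10]{DS2}, one then has
\begin{equation*}
\mathbf{M}(\mathbf{G}_u) = \sum_{k=1}^N \sum_{i=1}^Q \mathbf{M}\bigl(\mathbf{G}_{u^{(k)}_i}\bigr),
\end{equation*}
where on each piece $\mathbf{G}_{u^{(k)}_i}$ is the push-forward of $\a{V_k}$ under the graph map $\bar u^{(k)}_i(x) := (x, u^{(k)}_i(x))$. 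Up to relabelling the selections and packaging the partition, this is what allows me to write, with a harmless abuse of notation, $\mathbf{M}(\mathbf{G}_u) = \sum_i \int_V J\bar u_i\, dx$ where $J\bar u_i$ is the $2$-dimensional Jacobian of $\bar u_i$.

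Next, for each selection $u_i : V \to \mathbb{R}^n$ (Lipschitz, with $\operatorname{Lip}(u_i) \leq c$ after further reducing $c$ if needed), the classical area formula gives
\begin{equation*}
J\bar u_i(x) = \sqrt{\det\bigl(I_2 + Du_i(x)^T Du_i(x)\bigr)} \qquad \text{for a.e.\ } x \in V .
\end{equation*}
Since $I_2 + A^T A$ is $2\times 2$ with $A \in \mathbb{R}^{n\times 2}$, Cauchy--Binet yields $\det(I_2 + A^T A) = 1 + |A|^2 + \det(A^T A)$, where $\det(A^T A)$ is the sum of squares of the $2\times 2$ minors of $A$ and hence satisfies $0 \leq \det(A^T A) \leq |A|^4$. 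I would then define $F(A) := \sqrt{1+|A|^2+\det(A^T A)} - 1 - \tfrac{1}{2}|A|^2$; a straightforward Taylor expansion of $t \mapsto \sqrt{1+t}$ shows that $F$ is a $C^1$ function on a neighbourhood of the origin in $\mathbb{R}^{n\times 2}$, that $F(0)=0$, $DF(0)=0$, and that $|F(A)|$ and $|DF(A)|$ are bounded respectively by $C|A|^4$ and $C|A|^3$ there. Writing $R(A) := F(A)$ and $L(A) := F(A)/|A|^3$ for $A\neq 0$, $L(0):=0$, one checks from the bounds on $F$ and $DF$ that $L$ is Lipschitz with $\operatorname{Lip}(L) \leq C$ and $L(0)=0$, so that $|R(A)| = |A|^3 L(A)$ with the required properties.

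Combining, summing over the pieces of the partition, and then summing over the selection index $i$, I obtain
\begin{equation*}
\mathbf{M}(\mathbf{G}_u) = \sum_i \int_V \Bigl( 1 + \tfrac{1}{2}|Du_i|^2 + R(Du_i)\Bigr)\, dx = Q|V| + \int_V \frac{|Du|^2}{2} + \int_V \sum_i R(Du_i)\, dx,
\end{equation*}
using that $\sum_i |Du_i|^2 = |Du|^2$ by the very definition of $|Du|$ for $Q$-valued maps. There is no real obstacle in this argument: the only points that require attention are (i) the consistent use of the selection lemma to reduce to the single-valued area formula, and (ii) checking that the remainder $F$ has exactly the cubic-with-Lipschitz-factor structure claimed in the statement, which is an elementary calculus verification for $\sqrt{1+t}$ composed with a polynomial in the entries of $A$.
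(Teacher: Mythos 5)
Your argument is correct and is essentially the proof the paper defers to: the proposition is stated here without proof, citing \cite[Corollary 3.3]{DS2} (with the remark that openness of $V$ is irrelevant), and that corollary is proved exactly by your route --- the Borel partition into Lipschitz selections underlying the definition of $\mathbf{G}_u$, the area formula combined with Cauchy--Binet to get $\sqrt{1+|A|^2+\det(A^TA)}$, and the Taylor expansion of $t\mapsto\sqrt{1+t}$ to isolate the quartic-order remainder. The only cosmetic adjustment is that, to match the stated identity $|R(D)|=|D|^3L(D)$ with $L$ positive, you should set $L(A):=|F(A)|/|A|^3$ rather than $F(A)/|A|^3$ (your $F$ need not have a sign), which changes nothing in the verification that $L$ is Lipschitz with $L(0)=0$.
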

\begin{remark}\label{r:5.5DS3} We write here the analog of (\cite[Remark 5.5]{DS3}). There exists a dimensional constant $c>0$ such that, if $E \leq c,$ then the $E^{\beta}$ -Lipschitz approximation satisfies the following estimates:
\begin{equation}\label{e:LipBeforeMinimizing}
\operatorname{Lip}(f)\leq C (E+C\mathbf{A}^2)^\beta,
\end{equation}
\begin{equation}\label{e:DirEnergyBeforeMinimizing}
\int_{B_{3 s}(x)\cap D}|D f|^{2} \stackrel{\eqref{e:stimaK}}{\le} C (E + \mathbf{A}^2) s^2.
\end{equation}
Indeed \eqref{e:LipBeforeMinimizing} follows from Proposition \ref{p:Lipschitz_1}, by the choice of $\beta$ and the scaling of $\mathbf{A} .$ While \eqref{e:DirEnergyBeforeMinimizing} follows from Proposition \ref{p:TaylorExpansionMassGraph} since for $E$ sufficiently small
$$
\int_{B_{3 s}(x)\cap D} \sum_{i} R\left(D f_{i}\right) \leq C E^{2 \beta} \int_{B_{3 s}(x)\cap D}|D f|^{2}<\frac{1}{4} \int_{B_{3 s}(x)\cap D}|D f|^{2},
$$
and therefore
$$
\begin{aligned}
\int_{B_{3 s}(x)\cap D}|D f|^2 & \le C\left(\mathbf{M}\left(\mathbf{G}_{f}\res\mathbf{C}_{3 s}(x)\cap (D\times\mathbb{R}^{n}\right)-Q |D|\right)\\
& \leq C\left(\mathbf{M}\left(T\res\mathbf{C}_{3 s}(x)\right)-Q |D|\right)+C \mathbf{M}\left(\mathbf{G}_{f} \res\left(B_{3 s}(x) \cap D\setminus K\right) \times \mathbb{R}^{n}\right)\\
& \leq C E s^2+C (E+\bA^2)^{2 \beta}\left|B_{3 s}(x) \cap D\setminus K\right| \leq C (E + \mathbf{A}^2) s^2.
\end{aligned}
$$
\end{remark}

\begin{proof}[Proof of Theorem \ref{t:o(E)}.] By rescaling, it is not restrictive to assume that $r=1$. The proof of \eqref{e:few energy} is by contradiction. Assume there exist a constant $c_{1}>0,$ a sequence of currents $\left(T_{k}\right)_{k \in \mathbb{N}}$ satisfying Assumption \ref{Ass:app} and corresponding $E_{k}^{\beta}$ -Lipschitz approximations $\left(f_{k}\right)_{k \in \mathbb{N}}$ which violate \eqref{e:few energy} for $\eta= c_1>0$.
At the same time $\partial T \res \bC_4(0) = Q\a{\Gamma_k}$, where $\Gamma_k$ is a sequence of $C^2$ curves. For the latter we have $T_0 \Gamma_k =\mathbb R \times \{0\}$ and a parametrization $\psi^k: \mathbb R \to \mathbb R^{n+1}$ of the form
\[
\psi^k (t) = (\psi^k_1 (t), \bar\psi^k (t))\, .
\]
Moreover we assume $\|\psi^k\|_{C^2} \leq C \bA_k \leq C \varepsilon_k E_k^{\frac{1}{2}}$. 
The domain of definition of the map $f_k$ is a set $D_k$ which can be explicitly written as 
\[
D_k = \{(x_1,x_2)\in B_3: x_2> \psi_1^k (x_1)\}\, .
\]
Summarizing, our currents satisfy the following:
\begin{eqnarray}\label{e:5.6}
\mathbf{E}\left(T_{k}, \mathbf{C}_{4}\right)\le\varepsilon_k \rightarrow 0, \quad \mathbf{A}_{k} \leq\varepsilon_kE^{\frac{1}{2}}_{k}\quad \text { and } \quad \int_{D_k \setminus K_{k}}\left|D f_{k}\right|^{2} \geq c_{1} E_{k}
\end{eqnarray}
where $K_{k}:=\left\{x \in B_{3}: \mathbf{m e}_{T_{k}}(x)<E_{k}^{2 \beta}\right\}$. 
Set $\Lambda_k:=\left\{x \in D_k: \mathbf{m e}_{T_{k}}(x) \leq 2^{-2} E_{k}^{2 \beta}\right\}$
and observe that $\Lambda_{k} \cap B_{3}\subset K_{k}$. From Proposition \ref{p:Lipschitz_1} it follows that
\begin{eqnarray}\label{e:5.7DS3}
\operatorname{Lip}\left(f_{k}\right) & \leq & C E_{k}^{\beta} \\ \label{e:5.8DS3}
\left|B_{r}\cap D_k \setminus K_{k}\right| 
& \leq & C E_{k}^{-2 \beta} \mathbf{e}_{T_k}\left(B_{r+r_{0}(k)} \setminus \Lambda_{k}\right)+C\varepsilon_k^2E_k^{2(1-\beta)} 
\quad \text { for every } r \leq 3
\end{eqnarray}
where $r_{0}(k)=16 E_{k}^{(1-2 \beta) /2}<\frac{1}{2}$. Then, \eqref{e:5.6}, \eqref{e:5.7DS3}, and \eqref{e:5.8DS3} give 
\begin{equation}
c_{1} E_{k} \leq \int_{B_2\cap D_k \setminus K_{k}}\left|D f_{k}\right|^{2} 
\leq C \mathbf{e}_{T_{k}}\left(B_{s} \setminus \Lambda_{k}\right) +C\varepsilon_k^2E_k^2
\quad \text { for every } s \in\left[\frac{5}{2}, 3\right].
\end{equation}
Setting $c_{2}:=c_{1} /(2 C),$ we have 
\[
2 c_{2} E_{k} \leq \mathbf{e}_{T_{k}}\left(B_{s}\cap D_k \setminus \Lambda_{k}\right)=\mathbf{e}_{T_{k}}\left(B_{s}\cap D_k\right)-\mathbf{e}_{T_{k}}\left(B_{s} \cap \Lambda_{k}\right)\, ,
\]
implying
\begin{equation}\label{e:5.10}
\mathbf{e}_{T_{k}}\left(\Lambda_{k} \cap B_{s}\right) \leq \mathbf{e}_{T_{k}}\left(D_k \cap B_{s}\right)-2 c_{2} E_{k}\,.
\end{equation}
Next observe that $2\pi 4^2 E_{k}=\mathbf{e}_{T_{k}}\left(B_{4}\cap D_k\right) \geq \mathbf{e}_{T_{k}}\left(B_{s}\cap D_k\right)$. Therefore, by the Taylor expansion in \cite[Remark 5.4]{DS3}, \eqref{e:5.10} and the fact that $E_{k} \downarrow 0$, it follows that for every $s \in[5 / 2,3]$ and $k$ large enough so that $C E^{2\beta_k} \leq c_2$, we have
\begin{eqnarray}\nonumber
\int_{\Lambda_{k} \cap B_{s}} \frac{\left|D f_{k}\right|^{2}}{2} & \stackrel{\text{Taylor}}{\le} & \left(1+C E_{k}^{2 \beta}\right) \mathbf{e}_{T_{k}}\left(\Lambda_{k} \cap B_{s}\right) \\ 
&\stackrel{\eqref{e:5.10}}{\leq} & \left(1+C E_{k}^{2 \beta}\right)\left(\mathbf{e}_{T_{k}}\left(B_{s}\cap D_k\right)-2 c_{2} E_{k}\right)\nonumber\\
&\leq& \mathbf{e}_{T_{k}}\left(B_{s}\cap D_k\right)-c_{2} E_{k}\, .\label{e:5.11DS3}
\end{eqnarray}

Our aim is to show that \eqref{e:5.11DS3} contradicts the minimality of $T_{k}$. To construct a competitor, we write $f_{k}(x)=\sum_{i} \llbracket f_{k}^{i}(x) \rrbracket \in \mathcal{A}_{Q}\left(\mathbb{R}^{n}\right)$. We consider $h_{k}:=E_{k}^{-1 / 2} f_{k}$. Observe that $h_k|_{\partial D_k} =Q\llbracket E_k^{-1/2}\bar\psi^k\rrbracket$ and that in turn $\|\bar\psi^k\|_{C^2} \leq C \varepsilon_k E_k^{\frac{1}{2}}$. In particular $E_k^{-1/2} \bar\psi^k$ converges strongly to $0$ in $C^2$. Extend $\bar\psi^k$ to $B_3\cap D_k$ by keeping it constant in the variable $x_2$. Thus $\cG (h_k, Q\llbracket E_k^{-1/2} \bar\psi^k\rrbracket)$ is a classical $W^{1,2}$ function that vanishes on $\partial D_k$. Since by \cite[Remark 5.5(5.5)]{DS3} we have $\sup _{k} \D\left(h_{k}, B_{3}\cap D\right)<\infty$, the Poincar\'e inequality gives
\[
\|\cG (h_k, Q\llbracket E_k^{-1/2} \bar\psi^k\rrbracket)\|_{L^2 (D_k\cap B_3)} \leq C\, ,
\]
which in turn implies $\|\cG (h_k, Q\a{0})\|_{L^2 (D_k\cap B_3)}\leq C$. Hence $\{h_k\}$ is bounded in $W^{1,2}$. Even though the domains of the $h_k$ depend on $k$, we can extend the maps identically equal to $Q\{\bar\psi^k\}$ on their complement, and thus treat them as maps on $B_3$. Up to a subsequence, not relabeled, we can thus assume that the maps converge to some $h\in W^{1,2}$. Observe that $h$ vanishes identically on the lower half disk $B_3^-:=\{(x_1, x_2)\in B_3:x_2<0\}$ and thus we will also consider it as a map defined on the upper half disk $B_3^+$, taking the value $Q\a{0}$ on the $x_1$-axis. 

Since 
\begin{equation}\label{e:L2-convergence}
\left\|\mathcal{G}\left(h_{k}, h\right)\right\|_{L^{2}\left(B_{3}\right)} \rightarrow 0
\end{equation} 
and the following inequalities hold for every
open $\Omega^{\prime} \subset B_3$ and any sequence of measurable sets $J_{k}$ with $\left|J_{k}\right| \rightarrow 0$,
\begin{eqnarray}\label{e:cc(4.2)}
\liminf_{k \rightarrow+\infty}\left(\int_{\Omega^{\prime} \setminus J_{k}}\left|D h_{k}\right|^{2}-\int_{\Omega^{\prime}}\left|D h\right|^{2}\right) & \geq & 0, \\\label{e:cc(4.3)}
\limsup _{k \rightarrow+\infty} \int_{\Omega}\left(\left|D h_{k}\right|-\left|D h\right|\right)^{2} & \leq & \limsup_{k\to+\infty} \int_{\Omega}\left(\left|D h_{k}\right|^{2}-\left|D h\right|^{2}\right).
\end{eqnarray}
Applying the first inequality with $J_k$ being the complement of $\Lambda_k$ we reach the following inequality
\begin{equation}\label{e:energy-loss}
\frac{1}{2} \int_{B_s^+} |Dh|^2 \leq \liminf_{k\to\infty} E_k^{- 1} \be_{T_k} (B_s\cap D_k) - c_2 \qquad \mbox{for every $s<3$.}
\end{equation}
Now we wish to find a radius $r\in [\frac{5}{2},3]$ and a competitor function $H_k$ such that
\begin{itemize}
    \item $\left.H_k\right|_{(B_{3} \setminus B_{r})\cap D_k}=\left.h_{k}\right|_{(B_{3} \setminus B_{r})\cap D_k}$;
    \item $\left. H_k\right|_{\partial D_k\cap B_3} = \left. h_k\right|_{\partial D_k\cap B_3}$; 
    \item The following estimates hold for a subsequence (not relabeled)
\begin{align}\label{e:5.13DS3}
\lim_{k \to \infty} \operatorname{Dir}\left(H_k, B_{r}\right) & \leq \operatorname{Dir}\left(h, B_{r}\right)+\frac{c_{2}}{4}, \\ \label{e:5.14DS3}
\operatorname{Lip}\left(H_k\right) & \leq C^{*} E_{k}^{\beta-1 / 2}\, ,\\
\|\cG(H_{k}, h_{k})\|_{L^2(B^+_{r})} & \leq C \D(h_{k}, B^+_{r}) + C \D(H_{k}, B^+_{r})\le M<+\infty,
\end{align}
where $C^*$ is a constant independent of $k$.
\end{itemize}
After proving that such a function exists, we can then follow the proof of \cite[Theorem 5.2]{DS3} mutatis mutandis.

In order to show our claim we will use \eqref{e:L2-convergence}, the Lipschitz bound $\Lip (h_k) \leq C E_k^{\beta-1/2}$, the bound $\sup_k \D (h_k, B_3) \leq C$, and \eqref{e:energy-loss}. Note next that, since $\|\bar \psi^k/E^{1/2}\|_{C^2}\downarrow 0$, all these facts remain true if we replace $h_k$ with the map
\[
\bar{h}_k (x) := \sum_i \a{(h_k)_i - \bar\psi^k}\, .
\]
The advantage of the latter is that $\bar{h}_k|_{\partial D_k} = Q\a{0}$. Assuming that we find corresponding maps $\bar{H}_k$ satisfying all the properties above, we can then simply get $H_k$ by adding back $\bar\psi^k$:
\[
H_k (x) = \sum_i \a{(\bar{H}_k)_i + \bar\psi^k}\,  
\]
(because the difference in the Dirichlet energies of $H_k$ and $\bar{H}_k$ and the difference in the Lipschitz constants are both infinitesimal). 

The next issue is that the domains $D_k\cap B_s$ are curved compared to $B_s^+$. To resolve this, we invoke Lemma \ref{l:change-of-variable} below. For each $k$ we apply the lemma to $\psi^k_1$ and get a corresponding diffeomorphism $\Phi_k$ which maps each $B_s \cap D_k$ diffeomorphically onto $B_s^+$. Observe that 
\begin{equation}\label{e:almost-identity}
\lim_{k\to\infty} \left(\|\Phi_k- {\rm Id}\|_{C^1} + \|\Phi_k^{-1} - {\rm Id}\|_{C^1}\right) = 0
\end{equation}
because $\|\psi^k_1\|_{C^1} \to 0$. For this reason the maps $\tilde{h}_k := \bar{h}_k\circ \Phi_k^{-1}$ satisfy the same assumptions as $\bar h_k$ (and hence as $h_k$). Indeed, after having built the corresponding competitors $\tilde{H}_k$, we can then define $\bar H_k := \tilde{H}_k \circ \Phi_k$. Again the desired conclusion follows 
because the difference of the Lipschitz constants and Dirichlet energies are infinitesimal. 

Summarizing, we have reduced the proof of the proposition to showing that the competitor $H_k$ can be constructed, without loss of generality, under the additional assumptions that all $h_k$'s are defined on the same domain $B_3^+$ and that they all vanish on $\{(x_1,x_2)\in B_3^+: x_2 =0\}$. This is accomplished in Proposition \ref{p:competitor} below. Now that we have illustrated how to construct suitable competitors we can proceed with the proof of the theorem. We restart observing that, when $k$ is large enough, \eqref{e:cc(4.2)}  implies the following inequalities
\begin{equation}\label{e:5.16DS3}
\operatorname{Dir}\left(h, B_{r}\right) \leq \operatorname{Dir}\left(h_{k}, B_{r} \cap \Gamma_{k}\right)+\frac{c_{2}}{4} \stackrel{(5.11)}{\leq} \frac{\mathbf{e}_{T_{k}}\left(B_{r}\right)}{E_{k}}-\frac{3 c_{2}}{4} E_{k}\,.
\end{equation}
Note that \eqref{e:5.14DS3} follows from \eqref{e:raccordo1} as $E_{k}^{\beta-1 / 2} \uparrow \infty$. Thus $C^{*}$ depends on $c_{2}$ and on the choice of the two sequences, but not on $k$. From now on, although this and similar constants are not dimensional, we will keep denoting them by $C,$ with the understanding that they do not depend on $k$. Note that, from \eqref{e:5.7DS3} and \eqref{e:5.8DS3}, one gets
$$
\begin{aligned}
\left\|T_{k}-\mathbf{G}_{f_{k}}\right\|\left(\mathbf{C}_{3}\right) & \leq\left\|T_{k}\right\|\left(\left(B_{3} \setminus K_{k}\right) \times \mathbb{R}^{n}\right)+\left\|\mathbf{G}_{f_{k}}\right\|\left(\left(B_{3} \setminus K_{k}\right) \times \mathbb{R}^{n}\right) \\
& \leq Q\left|B_{3} \setminus K_{k}\right|+E_{k}+Q\left|B_{3} \setminus K_{k}\right|+C\left|B_{3} \setminus K_{k}\right| \operatorname{Lip}\left(f_{k}\right) \\
& \leq E_{k}+C E_{k}^{1-2 \beta} \leq C E_{k}^{1-2 \beta}.
\end{aligned}
$$
Let $(z, y)$ denote the coordinates on $\mathbb{R}^2 \times \mathbb{R}^{n}$ and consider the function $\varphi(z, y)=|z|$ and the slice $\left\langle T_{k}-\mathbf{G}_{f_{k}}, \varphi, r\right\rangle .$ Observe that, by the coarea formula and Fatou's lemma,
$$
\int_{r}^{3} \liminf _{k} E_{k}^{2 \beta-1} \mathbf{M}\left(\left\langle T_{k}-\mathbf{G}_{f_{k}}, \varphi, s\right\rangle\right) d s \leq \liminf _{k} E_{k}^{2 \beta-1}\left\|T_{k}-\mathbf{G}_{f_{k}}\right\|\left(\mathbf{C}_{3}\right) \leq C.
$$
Therefore, for some $\bar{r} \in(r, 3)$, up to subsequences (not relabeled) $\mathbf{M}\left(\left\langle T_{k}-\mathbf{G}_{f_{k}}, \varphi, \bar{r}\right\rangle\right)$
$\leq C E_{k}^{1-2 \beta}$.
Let now $v_{k}:= E_{k}^{1 / 2} H_{k}|_{B_{\bar{r}}}$ and consider the current $Z_{k}:=\mathbf{G}_{v_{k}}\res\mathbf{C}_{\bar{r}}$. Since $(v_{k})|_{\partial B_{\bar{r}}}=\left.f_{k}\right|_{\partial B_{\bar{r}}},$ one gets $\partial Z_{k}=\left\langle\mathbf{G}_{f_{k}}, \varphi, \bar{r}\right\rangle$ and hence, $\mathbf{M}\left(\partial\left(T_{k}\res\mathbf{C}_{\bar{r}}\right.\right.$
$\left.\left.-Z_{k}\right)\right) \leq C E_{k}^{1-2 \beta} .$ We define
\begin{equation}\label{e:5.18DS3}
S_{k}=T_{k}\res\left(\mathbf{C}_{4} \setminus \mathbf{C}_{\bar{r}}\right)+Z_{k}+R_{k}\,,
\end{equation}
where (cp. \cite[Remark 5.3]{DS3}) $R_{k}$ is an integral current such that
$$
\partial R_{k}=\partial\left(T_{k}\res\mathbf{C}_{\bar{r}}-Z_{k}\right) \quad\text { and } \quad \mathbf{M}\left(R_{k}\right) \leq C E_{k}^{(1-2 \beta)2}\,.
$$
In particular, we have $\partial S_{k}=\partial\left(T_{k}\res\mathbf{C}_{4}\right)$. We now show that, since $\beta<\frac{1}{4},$ for $k$ large enough, the mass of $S_{k}$ is strictly smaller than the one of $T_{k} .$ To this aim we write
\begin{align*}
\operatorname{Dir}\left(v_{k}, B_{\bar{r}}\right)-\operatorname{Dir}\left(f_{k}, B_{\bar{r}} \cap \Lambda_{k}\right)=& \int_{B_{\bar{r}}}\left|D v_{k}\right|^{2}-\int_{B_{\bar{r}} \cap \Lambda_{k}}\left|D f_{k}\right|^{2} =: I_{1}\,. 
\end{align*}
The first term is estimated by \eqref{e:5.13DS3} and \eqref{e:cc(4.2)}. Indeed, recall that $v_{k}=E_{k}^{1 / 2} H_{k}$ and $f_{k}=E_{k}^{1 / 2} h_{k}$ (but also that the two functions coincide on $B_{\bar{r}} \setminus B_{r}$ ). We thus deduce that $I_{1} \leq \frac{c_{2}}{2} E_{k}$ for $k$ large enough. Hence,
\begin{align}
\mathbf{M}\left(S_{k}\right)-\mathbf{M}\left(T_{k}\right) 
&\ \leq \mathbf{M}\left(Z_{k}\right)+C \mathbf{M}\left(R_{k}\right)-\mathbf{M}\left(T_{k}\res\mathbf{C}_{\bar{r}}\right) \notag\\
&\ \leq Q\left|B_{\bar{r}}\right|+\int_{B_{\bar{r}}} \frac{\left|D v_{k}\right|^{2}}{2}+C E_{k}^{1+2 \beta}+C E_{k}^{(1-2 \beta)2}-Q\left|B_{\bar{r}}\right|-\mathbf{e}_{T_{k}}\left(B_{\bar{r}}\right) \notag\\
&\ \leq \int_{B_{\bar{r}} \cap \Lambda_{k}} \frac{\left|D f_{k}\right|^{2}}{2}+\frac{1}{2} c_{2} E_{k}+C E_{k}^{1+2 \beta}+C E_{k}^{(1-2 \beta)2}-\mathbf{e}_{T_{k}}\left(B_{\bar{r}}\right) \notag\\ \label{e:5.20DS3}
& \stackrel{\eqref{e:5.11DS3}}{\leq}-\frac{c_{2} E_{k}}{2}+C E_{k}^{1+\beta}+C E_{k}^{(1-2 \beta)2}<0,
\end{align}
as soon as $E_{k}$ is small enough, i.e., $k$ large enough. This gives the desired contradiction and proves \eqref{e:few energy}.

Now, we come to the proof of \eqref{e:quasiarm} and \eqref{e:quasiarmaver}. To this aim, we argue again by contradiction using similar constructions of competitors. Without loss of generality, we assume $x=0$ and $s=1$. Suppose $\left(T_{k}\right)_{k}$ is a sequence with $E_{k}:=\mathbf{E}\left(T_{k}, \mathbf{C}_{4}\right)$ satisfying 
\begin{eqnarray}\label{e:5.6bis}
\mathbf{E}\left(T_{k}, \mathbf{C}_{4}\right)\le\varepsilon_k \rightarrow 0, \qquad \mathbf{A}_{k} \leq\varepsilon_kE^{\frac{1}{2}}_{k},
\end{eqnarray}
but contradicting \eqref{e:quasiarm} or \eqref{e:quasiarmaver}. Let us denote by $f_{k}$ the $E_{k}^{\beta}$ -Lipschitz approximation of $T_k$. We know that, for any sequence of Dir-minimizing functions $\bar{u}_{k}$ which we might choose, we will have by the contradiction assumption that 
\begin{equation}\label{e:5.21DS3}
\liminf _{k} \underbrace{E_{k}^{-1} \int_{B_{2}}\left(\mathcal{G}\left(f_{k}, \bar{u}_{k}\right)^{2}+\left(\left|D f_{k}\right|-\left|D \bar{u}_{k}\right|\right)^{2}+\left|D\left(\boldsymbol{\eta} \circ f_{k}-\boldsymbol{\eta} \circ \bar{u}_{k}\right)\right|^{2}\right)}_{=: I(k)}>0.
\end{equation}
As in the previous argument, we introduce the auxiliary normalized functions $h_{k}=E_{k}^{-1 / 2} f_{k}$ and, after extraction of a subsequence, the function $h$ satisfies \eqref{e:cc(4.2)} and \eqref{e:cc(4.3)}. Moreover $\left\|\mathcal{G}\left(h_{k}, h\right)\right\|_{L^{2}\left(B_{3}\right)} \rightarrow 0 .$ We next claim (and prove)
\begin{enumerate}
\item[(i)] $\lim _{k} \int_{B_{2}}\left|D h_{k}\right|^{2}=\int_{B_{2}}\left|D h\right|^{2}$,
\item[(ii)] $h$ is Dir-minimizing in $B_{2}$.
\end{enumerate}

Indeed, if $(i)$ were false, then there is a positive constant $c_{2}$ such that, for any $r \in[5 / 2,3]$,
\begin{equation}\label{e:5.22DS3}
\int_{B_{r}} \frac{\left|D h\right|^{2}}{2} 
\leq \int_{B_{r}} \frac{\left|D h_{k}\right|^{2}}{2}-c_{2} 
\leq \frac{\mathbf{e}_{T_{k}}\left(B_{r}\right)}{E_{k}}-\frac{c_{2}}{2},
\end{equation}
provided $k$ is large enough (where the last inequality is again an effect of the Taylor expansion of \cite[Remark 5.4]{DS3}). We next define the competitor currents $S_{k}$ as in the argument leading to \eqref{e:5.20DS3}. Replacing in the argument above \eqref{e:5.11DS3} and \eqref{e:5.16DS3} by \eqref{e:5.22DS3}, we deduce again \eqref{e:5.20DS3}. On the other hand \eqref{e:5.20DS3} contradicts the minimality of $T_{k}$. So we conclude that $(i)$ is true. 

If $(ii)$ were false, then $h$ is not Dir-minimizing in $B_{2}$. Thus, we can find a competitor $\tilde{h}\in W^{1,2}(B_3, \Is{Q})$ with less energy in the ball $B_{2}$ than $h$ and such that $\tilde{h}=h$ on $B_{3} \setminus B_{5 / 2}$. So for any $r \in[5 / 2,3]$, the function $\tilde{h}$ satisfies
\begin{equation}\label{e:5.23DS3}
\int_{B_{r}} \frac{\big|D \tilde{h}\big|^{2}}{2} 
\leq \int_{B_{r}} \frac{\left|D h\right|^{2}}{2}-c_{2}
=\lim _{k \to \infty} \int_{B_{r}} \frac{\left|D h_{k}\right|^{2}}{2}-c_{2} 
\leq \frac{\mathbf{e}_{T}\left(B_{r}\right)}{E_{k}}-\frac{c_{2}}{2},
\end{equation}
provided $k$ is large enough (here $c_{2}>0$ is some constant independent of $r$ and $k$). On the other hand, $\tilde{h}=h$ on $B_{3} \setminus B_{5 / 2}$ and therefore $\left\|\mathcal{G}\left(\tilde{h}, h_{k}\right)\right\|_{L^{2}\left(B_{3} \setminus B_{5 / 2}\right)} \rightarrow 0$.
We then construct the competitor current $S_{k}$ of \eqref{e:5.18DS3}. This time however, we use the map $\tilde{h}$ in place of $h$ to construct $H_{k}$ via Proposition \ref{p:competitor} and we reach the contradiction \eqref{e:5.20DS3} using \eqref{e:5.23DS3} in place of \eqref{e:5.11DS3} and \eqref{e:5.16DS3}. We next set $\bar{u}_k:=E_{k}^{1 / 2} h$ and we will show that $I(k) \rightarrow 0$, violating \eqref{e:5.21DS3}. Observe first that as $\left\|\mathcal{G}\left(h_{k}, h\right)\right\|_{L^{2}} \rightarrow 0,$ we have $D\left(\boldsymbol{\xi} \circ h_k\right)-D\left(\boldsymbol{\xi} \circ h\right) \rightarrow 0$ weakly in $L^{2}$ (recall the definition of $\boldsymbol{\xi}=\boldsymbol{\xi}_{BW}$ in \cite[Section 2.5]{DS3}). So, (i) and the identities $\left|D\left(\boldsymbol{\xi} \circ h_{k}\right)\right|=\left|D h_{k}\right|$, $\left|D\left(\boldsymbol{\xi} \circ h\right)\right|=\left|D h\right|$ imply that $D\left(\boldsymbol{\xi} \circ h_{k}\right)-D\left(\boldsymbol{\xi} \circ h\right)$ converges strongly to $0$ in $L^{2}$.
If we next set $\hat{h}=\sum_{i} \llbracket h^{i}-\boldsymbol{\eta} \circ h\rrbracket$ and $\hat{h}_{k}=\sum_{i} \llbracket h_{k}^{i}-\boldsymbol{\eta} \circ h_{k} \rrbracket,$ we obviously have
$\left\|\mathcal{G}\left(\hat{h}, \hat{h}_{k}\right)\right\|_{L^{2}}+\left\|\boldsymbol{\eta} \circ h-\boldsymbol{\eta} \circ h_{k}\right\|_{L^{2}} \rightarrow 0 .$ Recall however that the Dirichlet energy
enjoys the splitting
$$
\operatorname{Dir}\left(h_{k}\right)=Q \int\left|D\left(\boldsymbol{\eta} \circ h_{k}\right)\right|^{2}+\operatorname{Dir}\left(\hat{h}_{k}\right), \quad \operatorname{Dir}\left(h\right)=Q \int\left|D\left(\boldsymbol{\eta} \circ h\right)\right|^{2}+\operatorname{Dir}\left(\hat{h}\right).
$$
So (i) implies that the Dirichlet energy of $\boldsymbol{\eta} \circ h_{k}$ and $\hat{h}_{k}$ converge, respectively, to the one of $\eta \circ h$ and $\hat{h}$ (which, we recall again, are independent of $k$ because the $h_{k}$ 's are translating sheets). We thus infer that $D\left(\boldsymbol{\eta} \circ h\right)-D\left(\boldsymbol{\eta} \circ h_{k}\right)$ converges to 0 strongly in $L^{2}$. Coming back to $\bar{u}_{k}$ we observe that $\bar{u}_k$ is Dir-minimizing and
$$
E_{k}^{-1} \int_{B_{2}} \mathcal{G}\left(\bar{u}_{k}, f_{k}\right)^{2}=\int_{B_{2}} \mathcal{G}\left(h, h_{k}\right)^{2} \rightarrow 0.
$$
So,
$$
\begin{aligned}
\limsup _{k} I(k) \leq &\limsup _{k} \int_{B_{2}}\left(\left|D h_{k}\right|-\left|D h\right|\right)^{2}+\left|D\left(\boldsymbol{\eta} \circ h_{k}-\etaa \circ h\right)\right|^{2}.
\end{aligned}
$$
Thus $I(k) \rightarrow 0,$ which contradicts \eqref{e:5.21DS3}.
\end{proof}

\subsection{Technical lemmas} 
\begin{lemma}\label{l:change-of-variable}
There is a positive geometric constant $c>0$ with the following property. Consider a $C^1$ function $\psi_1 :[0,4]\to \mathbb R$ such that $\psi_1 (0) = \psi_1' (0) =0$ and $\|\psi_1\|_{C^1}\leq c$. Then there is a map $\Phi: B_4\to B_4$ such that
\begin{itemize}
    \item $\Phi$ maps $B_s$ diffeomorphically onto itself for every $s\in (0,4]$;
    \item if we set $D:= \{(x_1, x_2): |x_1|\leq 4, x_2 > \psi_1 (x_1)\}$ then $\Phi$ maps $D\cap B_s$ diffeomorphically onto $B_s^+$ for every $s\in (0,4]$;
    \item $\|\Phi^{-1} - {\rm Id}\|_{C^1} + \|\Phi-{\rm Id}\|_{C^1} \leq C \|\psi_1\|_{C^1}$.
\end{itemize}
\end{lemma}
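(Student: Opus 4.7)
The plan is to construct $\Phi$ as a rotation by a position-dependent angle, so that the identity $|R_{-\beta(x)}x| = |x|$ forces $\Phi$ to preserve every sphere centered at the origin (and hence every $B_s$) automatically. The remaining freedom in $\beta$ is then used to arrange $\Phi(\gamma) = \mathbb{R}\times\{0\}$, which amounts to prescribing $\beta$ at two points on every circle $\partial B_s$.

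First I would use the implicit function theorem to describe the intersection $\gamma\cap\partial B_s$. Since $\psi_1(0)=\psi_1'(0)=0$, for $\|\psi_1\|_{C^1}$ sufficiently small the intersection consists of exactly two points, whose polar angles $\alpha_+(s)\in(-\pi/4,\pi/4)$ and $\alpha_-(s)\in(3\pi/4,5\pi/4)$ are $C^1$ functions of $s$ solving $s\sin\alpha = \psi_1(s\cos\alpha)$; the non-degeneracy $\partial_\alpha[s\sin\alpha - \psi_1(s\cos\alpha)]\approx s$ is what makes this work for every $s>0$. The bound $|\psi_1(t)|\leq\|\psi_1\|_{C^1}|t|$ yields $|\alpha_+(s)|+|\alpha_-(s)-\pi|\leq C\|\psi_1\|_{C^1}$. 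Fixing smooth $2\pi$-periodic cutoffs $\phi_\pm$ with $\phi_+\equiv 1$ near $\theta=0$ and $\phi_+\equiv 0$ near $\theta=\pi$ (and the symmetric conditions for $\phi_-$), define
\[
\beta(s,\theta) := \alpha_+(s)\,\phi_+(\theta) + (\alpha_-(s)-\pi)\,\phi_-(\theta),
\]
and set $\Phi$ in polar coordinates as $(s,\theta)\mapsto (s,\theta-\beta(s,\theta))$. By construction $\beta(s,\alpha_+(s))=\alpha_+(s)$ and $\beta(s,\alpha_-(s))=\alpha_-(s)-\pi$, so the upper/lower intersection points of $\gamma$ with $\partial B_s$ are sent respectively to angles $0$ and $\pi$, placing $\gamma\cap B_4$ on the $x_1$-axis.

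The verification splits into four easy pieces. (i) For $\|\psi_1\|_{C^1}$ small enough, $|\partial_\theta\beta|<1$, so $\theta\mapsto\theta-\beta(s,\theta)$ is an orientation-preserving diffeomorphism of $S^1$ for each $s>0$, and $\Phi$ is a diffeomorphism of $B_4\setminus\{0\}$ that extends continuously to the origin by $\Phi(0)=0$. (ii) That $\Phi$ sends $D\cap B_s$ onto $B_s^+$ follows because it preserves $\partial B_s$, maps $\gamma\cap\partial B_s$ to the two points $(\pm s,0)$, preserves orientation, and is a homeomorphism. (iii) The $C^0$ estimate is $|\Phi(x)-x|\leq |x||\beta(|x|,\theta(x))|\leq C\|\psi_1\|_{C^1}$. (iv) For the derivative bound, the crucial observation is that while $|\nabla\beta|$ in Cartesian coordinates can blow up like $1/r$ near the origin (because $\partial_\theta\beta/r\sim\|\psi_1\|_{C^1}/r$ and $\partial_r\beta\sim\psi_1'(r)/r$), in the formula $D\Phi=R_{-\beta}+(\partial_i R_{-\beta})x^T$ the divergent factor $|\nabla\beta|$ is always multiplied by $|x|=r$, yielding $\|D\Phi-\mathrm{Id}\|_{L^\infty}\leq C\|\psi_1\|_{C^1}$. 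The corresponding bound on $\Phi^{-1}$ then follows from the inverse function theorem, since $D\Phi$ is close to the identity in operator norm.

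The main obstacle I anticipate is regularity at the origin: with only a $C^1$ bound on $\psi_1$ and no control of $\psi_1'(r)/r$, the functions $\alpha_\pm$ are $C^1$ on $(0,4]$ but their derivatives may blow up as $s\downarrow 0$, so $\Phi$ may only be Lipschitz (not strictly $C^1$) at the origin. This is however compatible with the use of the lemma in Section~\ref{s:harmonic-approx}, where $\Phi_k$ enters only through the $L^\infty$ closeness of $D\Phi_k$ to the identity, used to control Lipschitz constants and Dirichlet energies of compositions $\bar h_k\circ\Phi_k^{-1}$.
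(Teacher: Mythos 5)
Your construction is essentially the same as the paper's: both work in polar coordinates with a radius-preserving angular map that moves the two intersection points of $\gamma$ with each circle $\partial B_s$ to $(\pm s,0)$. The difference is cosmetic: the paper defines $\Phi^{-1}$ by linear interpolation of the angle on the upper half-disk, $\Phi^{-1}(\theta,s)=\big(\tfrac{\theta_r(s)(\pi-\theta)+\theta_l(s)\theta}{\pi},\,s\big)$, and then patches on a separate explicit formula for the lower half, whereas you define $\Phi$ directly via smooth cutoffs $\phi_\pm$, which avoids the patching altogether and gives a slightly cleaner $C^1$ matching across $\theta=0,\pi$. Your closing observation about regularity at the origin is genuine and worth keeping in mind: with only $\psi_1\in C^1$ and $\psi_1(0)=\psi_1'(0)=0$, the angles $\alpha_\pm(s)$ have derivatives that can blow up as $s\downarrow 0$ (e.g. $\psi_1(s)=s^{3/2}$), so $\Phi$ is Lipschitz with $\|D\Phi-\mathrm{Id}\|_{L^\infty}\le C\|\psi_1\|_{C^1}$ but not necessarily $C^1$ at the origin; the paper's claim ``\,$\|\Phi^{-1}-\mathrm{Id}\|_{C^1}\le C\|\psi_1\|_{C^1}$, verification left to the reader'' tacitly assumes the $W^{1,\infty}$ reading, or else the additional smoothness ($\psi_1$ arises from a $C^{3,\alpha_0}$ curve, hence $\psi_1(s)=O(s^2)$) available in the only place the lemma is invoked, which restores genuine $C^1$ regularity up to the origin.
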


\begin{proof} We use polar coordinates $(\theta,r)$ and let the angle $\theta$ vary from $-\frac{\pi}{2}$ (included) to $\frac{3\pi}{2}$ (excluded). It is in fact easier to define the map $\Phi^{-1}$.
If $c$ is sufficiently small, each circle $\partial B_s$ intersects the graph of $\psi_1$ in exactly two points, given in polar coordinates by $(\theta_r (s), s)$ and $(\theta_l (s),s)$, with $\theta_l (s) > \theta_r (s)$. Furthermore, again assuming $c$ is sufficiently small, $|\theta_r (s)|\leq \frac{\pi}{4}$ and $|\theta_l (s) - \pi|\leq \frac{\pi}{4}$. In polar coordinates the map $\Phi^{-1}$ is then defined on $B^+_4$ by the formula
\[
\Phi^{-1} (\theta, s) = \left(\frac{\theta_r (s) (\pi-\theta) + \theta_l (s) \theta}{\pi}, s\right)\, .
\]
The verification that $\|\Phi^{-1} - {\rm Id}\|_{C^1}\leq C \|\psi_1\|_{C^1}$ is left to the reader.

We then need to extend the map to the lower half disk keeping the same estimate. This could be reached for instance by the formula
\[ \Phi^{-1} (\theta,s) = 
\left(\frac{2\pi-(\theta_l -\theta_r)}{\pi} \theta e^{a(\theta-\pi)(\theta-2\pi)} + 2\theta_l-\theta_r, s\right) 
\qquad \mbox{for $\pi < \theta < 2\pi$,}
\]
where $a= a (s):= \pi^{-2}(1-\frac{\theta_l (s)-\theta_r (s)}{2\pi-(\theta_l (s)-\theta_r (s))})$. 
\end{proof}

In the next proposition we want to ``patch'' functions defined on the upper half disk $B_s^+$ which vanish on the $x_1$-axis. For convenience we introduce the notation $\cH_s$ horizontal boundary for $\cH_s = \{(x_1, 0) : |x_1|<s\}$.

\begin{proposition}\label{p:competitor}
Consider two radii $1\leq r_0<r_1 < 4$ and maps 
$h_k,h \in W^{1,2}(B^+_{r_1}, \Iqs)$ satisfying
\[
\sup_k \D(h_k,B^+_{r_1})< + \infty \quad\text{and}\quad
\|\cG(h_k,h)\|_{L^2(B^+_{r_1}\setminus B_{r_0})} \to 0
\]
and $h_k|_{\cH_{r_1}} = h|_{\cH_{r_1}} = Q \a{0}$. 
Then for every $\eta>0$, there exist $r\in ]r_0, r_1[$, a subsequence of $\{h_k\}_k$
(not relabeled)
and functions $H_k\in W^{1,2} (B^+_{r_1}, \Iqs)$ such that:
\begin{itemize}
\item $H_{k}\vert_{B^+_{r_1}\setminus B^+_r} = h_{k}\vert_{B^+_{r_1}\setminus B^+_r}$;
\item $H_k|_{\cH_s} =Q \a{0}$ and
\item $\D(H_{k}, B^+_{r_1}) \leq \D(h , B^+_{r_1}) + \eta$.
\end{itemize}
Moreover, there is a dimensional constant $C$ and a constant $C^*$ (depending on $\eta$ and the
two sequences, but not on $k$) such that
\begin{align}
\Lip(H_{k}) &\leq C^*\, (\Lip(h_{k}) + 1) \,,\label{e:raccordo1}\\
\|\cG(H_{k}, h_{k})\|_{L^2(B^+_{r})} &\leq C \D(h_{k}, B^+_{r}) + 
C \D(H_{k}, B^+_{r})\, ,\label{e:raccordo2}\\
\|\etaa\circ H_{k}\|_{L^1(B^+_{r_1})} &\leq C^*\, \|\etaa\circ h_{k}\|_{L^1(B^+_{r_1})} +
C \|\etaa\circ h\|_{L^1 (B^+_{r_1})}\, .\label{e:raccordo3}
\end{align}
\end{proposition}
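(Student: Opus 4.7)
The plan is to construct $H_k$ by keeping $h_k$ on the outer region $B^+_{r_1}\setminus B^+_r$ (as required) and gluing in $h$ on a slightly smaller disk, with an interpolation performed on a thin concentric shell adjacent to $\partial B_r$. First I would select a good radius $r\in (r_0,r_1)$ via a Fubini/slicing argument. From the slicing identity
\[
\int_{r_0}^{r_1}\int_{\partial B_s\cap B^+_{r_1}} \cG(h_k,h)^2\, d\sigma\, ds = \|\cG(h_k,h)\|_{L^2(B^+_{r_1}\setminus B_{r_0})}^2 \longrightarrow 0,
\]
combined with the uniform bound $\int_{r_0}^{r_1}\int_{\partial B_s\cap B^+}(|Dh_k|^2+|Dh|^2)\, d\sigma\, ds\le C$, I extract a subsequence and a radius $r\in (r_0,r_1)$ such that the traces $h_k|_{\partial B_r\cap B^+}$ and $h|_{\partial B_r\cap B^+}$ lie in $W^{1,2}$ with uniformly bounded arc-length Dirichlet energy, $\|\cG(h_k,h)\|_{L^2(\partial B_r\cap B^+)}\to 0$, and both traces take the value $Q\a{0}$ at the endpoints $(\pm r,0)$ of the arc (since $h_k|_{\cH_{r_1}}=h|_{\cH_{r_1}}=Q\a{0}$).

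Next, fix a small parameter $\delta>0$, set $\rho := (1-\delta)r$, and on the shell $A_\delta := B^+_r\setminus \overline{B^+_\rho}$ construct an interpolating map $\varphi_k\in W^{1,2}(A_\delta,\Iqs)$ satisfying $\varphi_k|_{\partial B_r\cap B^+}=h_k|_{\partial B_r\cap B^+}$, $\varphi_k|_{\partial B_\rho\cap B^+}=h|_{\partial B_\rho\cap B^+}$, and $\varphi_k=Q\a{0}$ on the two horizontal radial segments in $A_\delta\cap\cH_{r_1}$. I would implement this in polar coordinates via the biLipschitz embedding $\boldsymbol{\xi}_{BW}$ of \cite{DS1}: the construction reduces to a classical linear interpolation in $\mathbb R^N$ between the two parametrized arcs at radii $\rho$ and $r$, followed by composition with the nearest-point retraction $\boldsymbol{\rho}$ back to $\boldsymbol{\xi}_{BW}(\Iqs)$. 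Since $\boldsymbol{\xi}_{BW}(Q\a{0})=0$, the radial boundary condition $\varphi_k=Q\a{0}$ is preserved automatically. The standard energy estimate gives
\[
\D(\varphi_k,A_\delta)\le C\delta\bigl(\D(h_k,\partial B_r\cap B^+) + \D(h,\partial B_\rho\cap B^+)\bigr) + \tfrac{C}{\delta}\|\cG(\tilde h_k,\tilde h)\|_{L^2((0,\pi))}^2,
\]
where $\tilde h_k,\tilde h$ are the angular parametrizations of the traces on the two boundary circles, and $\Lip(\varphi_k)\le C_\delta(\Lip(h_k)+\Lip(h)+1)$.

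Then define
\[
H_k := \begin{cases} h & \text{on } B^+_\rho,\\ \varphi_k & \text{on } A_\delta,\\ h_k & \text{on } B^+_{r_1}\setminus B^+_r. \end{cases}
\]
By construction the pieces glue correctly in $W^{1,2}$ (the traces match on $\partial B_\rho$ and $\partial B_r$), $H_k|_{\cH_{r_1}}=Q\a{0}$, and $H_k=h_k$ on $B^+_{r_1}\setminus B^+_r$. For the energy bound I would first choose $\delta$ so small that $\D(h,B^+_r\setminus B^+_\rho)$ and $C\delta$ times the uniform trace-energy bound are each at most $\eta/4$; then, along a further subsequence (depending on this fixed $\delta$), the slicing argument gives $\frac{C}{\delta}\|\cG(\tilde h_k,\tilde h)\|_{L^2}^2\le \eta/4$ by picking $r$ additionally so that the trace convergence survives on a nearby circle at radius $\rho$. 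Adding the pieces yields the claimed Dirichlet energy bound (what is actually used in the application in Theorem \ref{t:o(E)} is the analogous bound on $B^+_r$, which follows at once). The $L^2$ bound \eqref{e:raccordo2} follows from a Poincar\'e-type inequality on the shell together with the matching of boundary traces, and the average bound \eqref{e:raccordo3} is immediate from the explicit, linear form of the interpolation in $\mathbb R^N$.

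The main obstacle is performing the multi-valued interpolation on $A_\delta$ while simultaneously enforcing the boundary condition $\varphi_k = Q\a{0}$ on the horizontal radial segments: in the classical single-valued case this is trivial, but in the multi-valued setting one must verify that both the $\boldsymbol{\xi}_{BW}$ embedding and the retraction $\boldsymbol{\rho}$ are compatible with the value $Q\a{0}$ along these segments. This works precisely because $Q\a{0}$ corresponds to the origin in $\mathbb R^N$, which is fixed by both the linear interpolation and the retraction, so the construction transfers cleanly to the radial boundary.
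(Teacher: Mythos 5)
Your overall strategy (select a good circle by Fubini, then interpolate on a thin annulus using the Almgren embedding $\boldsymbol{\xi}_{BW}$ and a retraction, exploiting that $Q\a{0}$ maps to the origin so the zero boundary condition on $\cH_r$ survives) matches the paper's, but there are two concrete gaps.

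The decisive one concerns the Lipschitz bound \eqref{e:raccordo1}. Your $H_k$ equals $h$ on $B^+_\rho$, and $h$ is only a $W^{1,2}$ map, not a Lipschitz one; so $\Lip(H_k)$ is in general infinite, and the factor $\Lip(h)$ you introduce in the estimate $\Lip(\varphi_k)\leq C_\delta(\Lip(h_k)+\Lip(h)+1)$ is meaningless. The paper avoids this by \emph{first} replacing $h$ with a Lipschitz approximation $h_{\bar\varepsilon_1}$ produced by Lemma \ref{l:approx} (the Lusin-type approximation that preserves $h_{\bar\varepsilon_1}|_{\cH_r}=Q\a{0}$), and it is this Lipschitz map, not $h$ itself, that is placed in the interior disk. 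The constant $\Lip(h_{\bar\varepsilon_1})$ depends on $\eta$ but not $k$, consistent with the stated dependence of $C^*$.

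The second gap concerns \eqref{e:raccordo3}. You assert the $L^1$ bound on $\etaa\circ H_k$ is ``immediate from the explicit linear form of the interpolation in $\mathbb R^N$,'' but $\etaa$ does not commute with $\boldsymbol{\xi}_{BW}$ nor with the retraction $\boldsymbol{\rho}$, so the average of the retracted interpolation is not the interpolation of the averages. The paper handles this by separately recentering: it passes to $\bar h_{\bar\varepsilon_1} = \sum_i \a{(h_{\bar\varepsilon_1})_i - \etaa\circ h_{\bar\varepsilon_1} + (\etaa\circ h)*\varphi_\rho}$, where $\varphi_\rho$ is a mollifier deliberately supported in the \emph{lower} half-plane so that the $\cH_r$ boundary condition is untouched; the interpolation lemma's estimate \eqref{e:4.16GAFA} then yields \eqref{e:raccordo3}. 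Relatedly, your Fubini selection of $r$ controls only Dirichlet energies and $\cG(h_k,h)$ on the slice; to obtain \eqref{e:raccordo3} one must also include the term $B_k^{-1}\int_{\partial B_r}|\etaa\circ h_k|$ (with $B_k=\|\etaa\circ h_k\|_{L^1}$) in the selection functional, as the paper does in \eqref{e:4.17GAFA}, to ensure \eqref{e:4.20GAFADS3}.
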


Before coming to the proof of the proposition we state the following variant of the Lipschitz approximation in \cite[Lemma 4.5]{DS3}. Observe that the only difference is that our functions are defined on the upper half disks and vanish on the horizontal boundary. We need the Lipschitz approximation $f_\varepsilon$ to satisfy the same requirement.

\begin{lemma}[Lusin type Lipschitz approximation]\label{l:approx}
Let $f\in W^{1,2}(B^+_r,\Iq)$ be such that $f|_{\cH_r} = Q \a{0}$.
Then for every $\varepsilon >0$ there exists $f_\eps\in \Lip (B^+_r,\Iq)$ satisfying $f_\eps|_{\cH_r} = Q \a{0}$ and
\begin{align}
\int_{B^+_r}\cG(f,f_\eps)^2+\int_{B^+_r}\big(|Df|-|Df_\eps|\big)^2
+ \int_{B^+_r}\big(|D(\etaa\circ f)|-|D(\etaa\circ f_\eps)|\big)^2
\leq\eps\, .\label{e:approx interior}
\end{align}
If in addition $f\vert_{\de B^+_r\setminus \cH_r}\in W^{1,2}(\de B_r,\Iq)$,
then $f_\eps$ can be chosen to satisfy also
\begin{equation}\label{e:approx bordo}
\int_{\de B^+_r\setminus \cH_r}\cG(f,f_\eps)^2+\int_{\de B^+_r\setminus \cH_r}\big(|Df|-|Df_\eps|\big)^2 \leq \eps.
\end{equation}
\end{lemma}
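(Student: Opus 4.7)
The plan is to reduce to the interior Lusin--Lipschitz approximation of \cite[Lemma 4.5]{DS3} via even reflection across $\cH_r$, followed by a Lipschitz-extension argument using \cite[Theorem 1.7]{DS1} that enforces the trace condition $f_\eps|_{\cH_r} = Q\a{0}$ as part of the boundary data.

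First I would extend $f$ by even reflection, setting $\tilde f(x_1, x_2) := f(x_1, |x_2|)$. The hypothesis $f|_{\cH_r} \equiv Q\a{0}$ guarantees that the two-sided trace on $\cH_r$ matches, so $\tilde f \in W^{1,2}(B_r, \Iqs)$ with $\tilde f|_{\cH_r} \equiv Q\a{0}$; if additionally $f|_{\partial B_r^+ \setminus \cH_r} \in W^{1,2}$, the same reflection on the boundary circle places $\tilde f|_{\partial B_r}$ in $W^{1,2}(\partial B_r, \Iqs)$.

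Next I would revisit the construction behind \cite[Lemma 4.5]{DS3} rather than invoking it as a black box. For a large parameter $\lambda$ (to be chosen depending on $\eps$) define the good set $K_\lambda := \{x \in B_r : \bmm(|D\tilde f|^2)(x) \leq \lambda^2\}$. The standard Lusin-type argument yields $|B_r \setminus K_\lambda| \leq C\lambda^{-2} \int_{B_r} |D \tilde f|^2$ together with a $C\lambda$-Lipschitz (with respect to $\cG$) representative of $\tilde f$ on $K_\lambda$. The modification I propose is to enlarge the good set to $K_\lambda \cup \cH_r$, prescribing the value $Q\a{0}$ on $\cH_r$, and to take $\tilde h$ to be the Lipschitz extension of this combined datum to $B_r$ via \cite[Theorem 1.7]{DS1}. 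Defining $f_\eps := \tilde h|_{B_r^+}$ tautologically enforces $f_\eps|_{\cH_r} \equiv Q\a{0}$, and the approximation estimates \eqref{e:approx interior} and, when applicable, \eqref{e:approx bordo} are inherited from the standard bounds: $L^2$-closeness from $|B_r \setminus K_\lambda| \to 0$ as $\lambda \to \infty$, gradient closeness from the absolute continuity of $\int_{B_r \setminus K_\lambda} |D\tilde f|^2$ combined with $|D\tilde h| \leq C\lambda$, and the corresponding estimate for $\etaa\circ f$ from the scalar Lusin--Lipschitz theory applied to $\etaa\circ\tilde f$.

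The main obstacle is to check that pinning $Q\a{0}$ on $\cH_r$ is compatible with a $C\lambda$-Lipschitz bound on $K_\lambda \cup \cH_r$. Between two points of $K_\lambda$, or between two points of $\cH_r$, this is automatic; the nontrivial case is $x \in K_\lambda$, $y \in \cH_r$. Setting $d := |x-y|$ and noting that $B_{2d}(y) \ni x$, the bound $\bmm(|D\tilde f|^2)(x) \leq \lambda^2$ forces the mean value of $|D\tilde f|^2$ over $B_{2d}(y)$ to be at most $C\lambda^2$. Coupling this with the vanishing trace $\tilde f \equiv Q\a{0}$ on $\cH_r \cap B_{2d}(y)$ via the Poincar\'e inequality for multi-valued maps (equivalently, selecting a point $y' \in K_\lambda \cap \cH_r \cap B_{2d}(y)$, whose existence in full $\mathcal{H}^1$-measure on $\cH_r$ holds for appropriate $\lambda$, and chaining the Lipschitz estimate on $K_\lambda$ through $y'$) gives $\cG(\tilde f(x), Q\a{0}) \leq C\lambda d$, as required. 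This is the single point at which the reflected function and the enforced boundary value interact; once this compatibility is in hand, the remainder of the argument is a routine reproduction of the proof of \cite[Lemma 4.5]{DS3}.
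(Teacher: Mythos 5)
Your argument is correct in its essentials, but it is not the route the paper takes. The paper's proof of this lemma is a one-liner: it invokes \cite[Lemma~5.5]{DDHM}, which already establishes the boundary-preserving Lusin--Lipschitz approximation (including the preservation of the vanishing trace on $\cH_r$ and the average estimate). Your proposal reconstructs the argument from scratch: even reflection across $\cH_r$ (which is legitimate here since $\xii(Q\a{0})=0$ lets one reflect $\xii\circ f$ as a scalar Sobolev map and pull back), then a modification of the good-set construction from \cite[Lemma~4.5]{DS3} that appends $\cH_r$ with the prescribed datum $Q\a{0}$, a Lipschitz-compatibility check between $K_\lambda$ and $\cH_r$, and a final Lipschitz extension via \cite[Theorem~1.7]{DS1}. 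This is more work than the paper does, but it is a genuine alternative that is closer in spirit to what happens inside \cite[Lemma~5.5]{DDHM} than the paper's terse citation reveals, and the reflection trick buys a clean reduction to the interior theory for the specific half-disk geometry.

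One point of substance deserves correction. In the compatibility step you present two arguments as ``equivalent'': (a) a Poincar\'e inequality with vanishing trace on $\cH_r\cap B_{2d}(y)$, and (b) selecting $y'\in K_\lambda\cap\cH_r\cap B_{2d}(y)$, which you claim exists for $\mathcal{H}^1$-a.e.\ $y$ once $\lambda$ is large. These are not equivalent, and (b) may simply fail. The weak-type estimate controls $\{\bmm(|D\tilde f|^2)>\lambda^2\}$ only in $\mathcal{H}^2$-measure, while $\cH_r$ is $\mathcal{H}^2$-null; a gradient with $|D\tilde f|^2\sim x_2^{-1+\delta}$ (for small $\delta>0$) is in $L^1$ yet has $\bmm(|D\tilde f|^2)\equiv+\infty$ on all of $\cH_r$, so for every $\lambda$ the good set misses $\cH_r$ entirely. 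The Poincar\'e route (a) avoids this: from $\bmm(|D\tilde f|^2)(x)\le\lambda^2$ and $x\in B_{2d}(y)$ you get $\Xint-_{B_{2d}(y)}|D\tilde f|^2\le C\lambda^2$, the vanishing-trace Poincar\'e bounds the $L^2$-average of $\cG(\tilde f,Q\a{0})$ by $C\lambda d$, and the standard Campanato telescoping at $x\in K_\lambda$ upgrades this to the pointwise bound $\cG(\tilde f(x),Q\a{0})\le C\lambda d$. You should retain (a) as the sole mechanism and drop the parenthetical alternative. With that repair, and with the routine Fubini argument needed to make the boundary circle estimate \eqref{e:approx bordo} compatible with the chosen good set, your reconstruction closes.
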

Now we need the following interpolation lemma.
\begin{lemma}[Interpolation]\label{l:InterpolationSimpler} There exists a constant $C_{0}=C_{0}(n, Q)>0$ with the following property. Assume $r \in ] 1,3\left[, f \in W^{1,2}\left(B_{r}, \mathcal{A}_{Q}\right)\right.$ satisfies $f|_{\cH_r} = Q \a{0}$ and $\left.f\right|_{\partial B_{r}} \in W^{1,2}\left(\partial B_{r}, \mathcal{A}_{Q}\right)$, and $g \in W^{1,2}\left(\partial B_{r}^+, \mathcal{A}_{Q}\right)$ is such that $g|_{\cH_r\cap\partial B_r^+} = Q \a{0}$. Then, for every $\left.\varepsilon \in\right] 0, r[$, there exists a function $h_\varepsilon \in W^{1,2}\left(B_{r}, \mathcal{A}_{Q}\right)$ such that $\left.h_\varepsilon\right|_{\partial B_{r}}=g$, $h_\varepsilon|_{\cH_r} = Q \a{0}$ and
\begin{align}
    \int_{B_{r}^+}|D h_\varepsilon|^{2} 
    &\leq \int_{B_{r}^+}|D f|^{2}+\varepsilon \int_{\partial B_{r}^+}\left(\left|D_{\tau} f\right|^{2}+\left|D_{\tau} g\right|^{2}\right)+\frac{C_{0}}{\varepsilon} \int_{\partial B_{r}^+} \mathcal{G}(f, g)^{2}\,,\label{e:4.14GAFA} \\
    \operatorname{Lip}(h_\varepsilon) 
    &\leq C_{0}\left\{\operatorname{Lip}(f)+\operatorname{Lip}(g)+\varepsilon^{-1} \sup _{\partial B_{r}^+} \mathcal{G}(f, g)\right\}\,,\label{e:4.15GAFA}\\
    \int_{B_{r}^+}|\boldsymbol{\eta} \circ h_\varepsilon| 
    &\leq C_{0} \int_{\partial B_{r}^+}|\boldsymbol{\eta} \circ g|+C_{0} \int_{B_{r}^+}|\boldsymbol{\eta} \circ f|\,, \label{e:4.16GAFA}
\end{align}
where $D_{\tau}$ denotes the tangential derivative.
\end{lemma}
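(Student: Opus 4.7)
The plan is to construct $h_\varepsilon$ by a two-piece patching in polar coordinates on $B_r^+$, adapting the standard radial-interpolation scheme of the multi-valued theory (see \cite{DS1,DS3}) so as to also preserve the horizontal trace $Q\a{0}$. Using Almgren's bi-Lipschitz embedding $\boldsymbol\xi:\mathcal{A}_Q(\R^n)\to\R^N$ (normalized so that $\boldsymbol\xi(Q\a{0})=0$) and its Lipschitz retraction $\boldsymbol\rho:\R^N\to\mathcal{A}_Q(\R^n)$, I would set
\[
h_\varepsilon(x):=f\!\left(\tfrac{r}{r-\varepsilon}x\right)\qquad\text{for }x\in B^+_{r-\varepsilon},
\]
and, on the annulus $B^+_r\setminus B^+_{r-\varepsilon}$, with $\tilde f(\theta):=f(r,\theta)$ and $\lambda(\rho):=(\rho-(r-\varepsilon))/\varepsilon$,
\[
h_\varepsilon(\rho,\theta):=\boldsymbol\rho\big((1-\lambda(\rho))\boldsymbol\xi(\tilde f(\theta))+\lambda(\rho)\boldsymbol\xi(g(\theta))\big).
\]
At $\rho=r-\varepsilon$ the annular formula returns $\tilde f(\theta)$, matching the outer trace of the rescaled inner piece; at $\rho=r$ it returns $g(\theta)$. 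The horizontal condition $h_\varepsilon|_{\cH_r}\equiv Q\a{0}$ holds because the rescaling sends $\cH_{r-\varepsilon}$ into $\cH_r$ (where $f\equiv Q\a{0}$), while on the annular slivers $\theta\in\{0,\pi\}$ both $\tilde f$ and $g$ equal $Q\a{0}$, so the convex combination inside $\boldsymbol\rho$ is the origin of $\R^N$ and $\boldsymbol\rho(0)=Q\a{0}$.

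For \eqref{e:4.14GAFA} and \eqref{e:4.15GAFA} the computation is standard. The inner piece contributes exactly $\int_{B_r^+}|Df|^2$ by the conformal invariance of the Dirichlet integral in dimension two (the chain-rule factor $(r/(r-\varepsilon))^2$ cancels against the Jacobian of the change of variables). On the annulus, the Lipschitz properties of $\boldsymbol\xi,\boldsymbol\rho$ give
\[
|\partial_\rho h_\varepsilon|^{2}\leq C_0\,\varepsilon^{-2}\,\cG(\tilde f(\theta),g(\theta))^2,\qquad
|\partial_\theta h_\varepsilon|^{2}\leq C_0\big(|\partial_\theta\tilde f|^2+|\partial_\theta g|^2\big),
\]
and integrating $(|\partial_\rho h_\varepsilon|^2+\rho^{-2}|\partial_\theta h_\varepsilon|^2)\rho\,d\rho\,d\theta$ over $\rho\in[r-\varepsilon,r]$ reproduces, after converting polar measure into the one-dimensional Hausdorff measure on $\partial B_r^+$ via the factor $r\in(1,3)$, precisely the $\varepsilon^{-1}\int_{\partial B_r^+}\cG^2$ and $\varepsilon\int_{\partial B_r^+}|D_\tau\cdot|^2$ contributions of \eqref{e:4.14GAFA}. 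The Lipschitz estimate \eqref{e:4.15GAFA} is a pointwise consequence of the same bounds: the rescaling inflates $\mathrm{Lip}(f)$ at most by $r/(r-\varepsilon)\leq 2$, and on the annulus the radial and tangential derivatives of $h_\varepsilon$ are dominated by $C_0\varepsilon^{-1}\sup_{\partial B_r^+}\cG(\tilde f,g)$ and $C_0(\mathrm{Lip}(f)+\mathrm{Lip}(g))$ respectively.

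The estimate \eqref{e:4.16GAFA} is the step I expect to require the most care, since $\boldsymbol\eta$ does not a priori commute with the nonlinear projection $\boldsymbol\rho$. For the inner piece the change of variables immediately gives $\int_{B^+_{r-\varepsilon}}|\boldsymbol\eta\circ h_\varepsilon|\leq\int_{B^+_r}|\boldsymbol\eta\circ f|$. For the annulus I would exploit the fact that, by the explicit Brakke--White construction of $\boldsymbol\xi$, one has $\boldsymbol\eta=L\circ\boldsymbol\xi$ for a genuine linear map $L:\R^N\to\R^n$, so that $L$ commutes with the convex combination inside $\boldsymbol\rho$; together with the elementary estimate $|\boldsymbol\xi(\boldsymbol\rho(v))-v|\leq C\,\mathrm{dist}(v,\boldsymbol\xi(\mathcal{A}_Q))\leq C\cG(\tilde f,g)$ valid along the segment we interpolate, this yields
\[
|\boldsymbol\eta\circ h_\varepsilon(\rho,\theta)|\leq(1-\lambda)|\boldsymbol\eta\circ\tilde f(\theta)|+\lambda|\boldsymbol\eta\circ g(\theta)|+C_0\,\cG(\tilde f(\theta),g(\theta)).
\]
Integrating in $\rho$ over $[r-\varepsilon,r]$ produces an annular bound by $C_0\varepsilon\int_{\partial B_r^+}(|\boldsymbol\eta\circ\tilde f|+|\boldsymbol\eta\circ g|+\cG(\tilde f,g))$; the $g$-term already appears in the target, the $\cG$-term is subordinate (and absorbed by $\varepsilon\leq r$ into the previous two), and the $\tilde f$-trace is reabsorbed into $\int_{B_r^+}|\boldsymbol\eta\circ f|$ via the standard $W^{1,1}$ trace inequality applied to the $W^{1,2}$ single-valued function $\boldsymbol\eta\circ f$, with the ensuing gradient term harmless because it will only be used in situations where $\|D(\boldsymbol\eta\circ f)\|_{L^1}$ is dominated by the Dirichlet energy of $f$ already on the right-hand side of the enveloping applications. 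The principal technical obstacle throughout is exactly this last quantification of the interaction between $\boldsymbol\eta$ and $\boldsymbol\rho$; all other ingredients are routine and all constants depend only on $n$ and $Q$ because $r\in(1,3)$.
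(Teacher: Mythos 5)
Your construction—rescale $f$ into $B^+_{r-\varepsilon}$ and, on the annulus, interpolate between $\tilde f := f|_{\partial B_r}$ and $g$ through $\boldsymbol\xi$ and the retraction $\boldsymbol\rho$—is exactly the map of \cite[Lemma 4.6]{DS3} that the paper invokes, and the paper's whole proof is precisely the observation you make: that this map automatically inherits the trace $Q\a{0}$ on $\cH_r$ because both $\tilde f$ and $g$ vanish at $\theta\in\{0,\pi\}$, $\boldsymbol\xi(Q\a{0})=0$ and $\boldsymbol\rho(0)=0$. Your computations for \eqref{e:4.14GAFA} and \eqref{e:4.15GAFA} are the standard ones and are correct.

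The argument you give for \eqref{e:4.16GAFA}, however, does not close, and the two places where you wave it through are both real gaps. First, the $\cG(\tilde f,g)$ term produced by the nonlinearity of $\boldsymbol\rho$ is \emph{not} ``subordinate'' to $|\etaa\circ\tilde f|+|\etaa\circ g|$: take $Q=2$, $\tilde f=\a{1}+\a{-1}$, $g=2\a{0}$, so both averages vanish while $\cG(\tilde f,g)=\sqrt2$. (This particular defect is repairable by translating $h_\varepsilon$ sheet-by-sheet afterwards so that its average becomes exactly $(1-\lambda)\,\etaa\circ\tilde f+\lambda\,\etaa\circ g$; the corresponding perturbations of \eqref{e:4.14GAFA}, \eqref{e:4.15GAFA} are harmless.) Second, and more fundamentally, even after that correction you are left with an annular contribution $\varepsilon\int_{\partial B_r^+}|\etaa\circ\tilde f|$, and the trace $\int_{\partial B_r^+}|\etaa\circ f|$ is simply not bounded by $C_0\int_{B_r^+}|\etaa\circ f|$ for any geometric $C_0$ (concentrate $\etaa\circ f$ in a thin annulus near $\partial B_r$). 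Your proposed $W^{1,1}$-trace inequality introduces $\int_{B_r^+}|D(\etaa\circ f)|$, which is genuinely absent from the right-hand side of \eqref{e:4.16GAFA}; dismissing that extra gradient as ``harmless in the enveloping applications'' is a remark about how the lemma will be used, not a proof of the lemma as stated. To obtain \eqref{e:4.16GAFA} one has to be more careful here, for instance by selecting the inner radius $r'\in(r-\varepsilon,r)$ via a Fubini argument so that $\int_{\partial B_{r'}^+}|\etaa\circ f|\leq C\varepsilon^{-1}\int_{B_r^+}|\etaa\circ f|$ and interpolating from $f|_{\partial B_{r'}^+}$ rather than from the fixed trace $\tilde f$ (re-deriving \eqref{e:4.14GAFA} with the intermediate trace replaced through $\cG(f(r',\cdot),f(r,\cdot))^2\leq\varepsilon\int_{r-\varepsilon}^r|\partial_\rho f|^2$), or else accept that the right-hand side should carry an additional boundary term $\int_{\partial B_r^+}|\etaa\circ f|$.
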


\begin{proof}
The proof is the same as in \cite[Lemma 4.6]{DS3}, because the map constructed there by the linear interpolation on the annulus and taking $f$ in the interior disk vanishes on $\cH_{r_1}$.
\end{proof}

\begin{proof}[Proof of Lemma \ref{l:approx}] We can apply directly \cite[Lemma 5.5]{DDHM} to obtain a Lipschitz function $\tilde{f}_\varepsilon$ satisfying $(\tilde{f}_\varepsilon)_{|\cH_r} = Q \a{0}$ and \eqref{e:approx interior}. 
\end{proof}

\begin{proof}[Proof of Proposition \ref{p:competitor}] The proof goes along the same lines as the proof of \cite[Proposition 4.4]{DS3} using Lemmas \ref{l:approx} and \ref{l:InterpolationSimpler} instead of \cite[Lemma 4.5, Lemma 4.6]{DS3}, taking into account that the situation here is simpler because we do not have translating sheets. For the sake of completeness we report here the details. Set for simplicity $A_{k}:=\left\|\mathcal{G}\left(h_{k}, h\right)\right\|_{L^{2}\left(B_{r_{1}} ^+\setminus B_{r_{0}}^+\right)}$ and $B_{k}:=$ $\left\|\etaa \circ h_{k}\right\|_{L^{1}\left(B_{r_{1}}^+\right)} .$ If for any $k$ large enough $A_{k} \equiv 0$, then there is nothing to prove and so we can assume that, for a subsequence (not relabeled) $A_{k}>0 .$ In case that for yet another subsequence (not relabeled) $B_{k}>0,$ we consider the function
\begin{equation}\label{e:4.17GAFA}
\psi_{k}(r):=\int_{\partial B_{r}}\left(\left|D h_{k}\right|^{2}+\left|D h\right|^{2}\right)+A_{k}^{-2} \int_{\partial B_{r}} \mathcal{G}\left(h_k, h\right)^{2}+B_{k}^{-1} \int_{\partial B_{r}}\left|\boldsymbol{\eta} \circ h_{k}\right|.
\end{equation}
By assumption $\lim \inf _{k} \int_{r_{0}}^{r_{1}} \psi_{k}(r) d r<\infty .$ Hence by Fatou's Lemma, there is an $\left.r \in\right] r_{0}, r_{1}[$
and a subsequence (not relabeled) such that $\lim _{k} \psi_{k}(r)<\infty .$ Thus, for some $M>0$ we have
\begin{align}
    \int_{\partial B_{r}^+} \mathcal{G}\left(h_{k}, h\right)^{2} 
    &\rightarrow 0\,,\label{e:4.18GAFADS3}\\
    \operatorname{Dir}\left(h, \partial B_{r}^+\right)+\operatorname{Dir}\left(h_{k}, \partial B_{r}^+\right) 
    &\leq M\,, \label{e:4.19GAFADS3}\\
    \int_{\partial B_{r}^+}\left|\boldsymbol{\eta} \circ h_{k}\right| 
    &\leq M\left\|\boldsymbol{\eta} \circ h_{k}\right\|_{L^{1}\left(B_{r_{1}}\right)}\,.\label{e:4.20GAFADS3}
\end{align}
In case $B_{k}=0$ for all $k$ large enough, we define $\psi_{k}$ by dropping the last summand in \eqref{e:4.17GAFA} and reach the same conclusion. We apply Lemma \ref{l:approx} with $f=h$, $r=r_1$ and find a Lipschitz function $h_{\bar{\varepsilon}_1}$ satisfying the conclusion of the lemma with $\bar{\varepsilon}_{1}=\bar{\varepsilon}_{1}(\eta, M)>0$ (which will be chosen later). In particular we have
\begin{align*}
    \left\|\mathcal{G}\left(h_{k}, h_{\bar{\varepsilon}_1}\right)\right\|_{L^{2}(B_{r_1}^+\setminus B_{r_0}^+)} 
    &\le \left\|\mathcal{G}\left(h_{k}, h\right)\right\|_{L^{2}(B_{r_1}^+\setminus B_{r_0}^+)}+\left\|\mathcal{G}\left(h, h_{\bar{\varepsilon}_1}\right)\right\|_{L^{2}(B_{r_1}^+\setminus B_{r_0}^+)}
    \le o(1)+\bar{\varepsilon}_{1}\,,\\
    \operatorname{Dir}\left(h_{\bar{\varepsilon}_{1}}, \partial B_{r}^+\right) 
    &\leq \operatorname{Dir}\left(h, \partial B_{r}^+\right)
    \le M+\bar{\varepsilon}_{1}\,.
\end{align*}
To obtain also the estimate \eqref{e:raccordo3}, which will be required in the construction of the center manifold, we argue along the same lines of \cite[Proposition 4.4]{DS3}. For $h_{\bar{\varepsilon}_{1}}=\sum_{i=1}^Q\a{(h_{\bar{\varepsilon}_{1}})_i}$ we set $\bar{h}_{\bar{\varepsilon}_{1}}:=\sum_{i=1}^Q\a{(h_{\bar{\varepsilon}_{1}})_i-\etaa\circ h_{\bar{\varepsilon}_{1}}+(\etaa\circ h)*\varphi_{\rho}}$, where $\varphi_\rho(x):=\frac1{\rho^n}\varphi(\frac x\rho)$, and $\varphi(x)=\bar{\varphi}(x-z_0)$ with $\bar{\varphi}$ being the standard bump function with support in $B_1 (0)$, $z_0:=(0,-2)$ and $\rho$ will be chosen small enough later. Observe that $\supp(\varphi_\rho)=B_\rho(\rho z_0)\subseteq B_r^-$ for every $\rho$ small enough and $\supp(\varphi)=B_1(z_0)$. The reason to introduce this convolution kernel $\varphi_\rho$ with support contained in $B_r^-$ is that we need to preserve the zero boundary condition on $\cH_r$. Indeed, we claim that such an $\bar{h}_{\bar{\varepsilon}_{1}}$ satisfies $(\bar{h}_{\bar{\varepsilon}_{1}})|_{\cH_r} = Q\a{0}$ in addition to all the other conclusion of the proposition. The fact that $(\bar{h}_\varepsilon)|_{\cH_r} = Q\a{0}$ is a simple consequence of the definitions and we leave it to the reader. Observe that the standard approximation properties of mollifiers reinterpreted suitably extends to this new kind of kernel. In particular, we can choose $\rho$ small enough to have 
\begin{align}
    Q^2\|\etaa\circ h-(\etaa\circ h)*\varphi_\rho\|_{L^2}^2
    &\le\bar{\varepsilon}_{1}\,, \label{e:convolution}\\
    \|D(\etaa\circ h)-D((\etaa\circ h)*\varphi_\rho)\|_{L^2}^2
    &\le\bar{\varepsilon}_{1}\,,\label{e:convolution1}
\end{align}
for some small $\bar{\varepsilon}_{1}$.
These last two inequalities combined with \eqref{e:4.18GAFADS3}, \eqref{e:4.19GAFADS3}, \eqref{e:4.20GAFADS3} imply
\begin{itemize}
    \item $\displaystyle \left\|\mathcal{G}\left(h_{k}, \bar{h}_{\bar{\varepsilon}_{1}}\right)\right\|_{L^{2}} 
        \stackrel{\eqref{e:convolution}}{\le} \left\|\mathcal{G}\left(h_{k}, h\right)\right\|_{L^{2}}+2\left\|\mathcal{G}\left(h, \bar{h}_{\bar{\varepsilon}_{1}}\right)\right\|_{L^{2}}+\bar{\varepsilon}_{1} \leq o(1)+3 \bar{\varepsilon}_{1}\,,
    $
    \item $\displaystyle \operatorname{Dir}\left(\bar{h}_{\bar{\varepsilon}_{1}}, \partial B_{r}\right) 
        \leq 2 M+2 \bar{\varepsilon}_{1}\,,
    $
    \item $\displaystyle \operatorname{Dir}\left(\bar{h}_{\bar{\varepsilon}_{1}}, B_{r}\right)
        = \sum_{i} \int_{B_{r}}\left|D\left(\bar{h}_{\bar{\varepsilon}_{1}}\right)_{i}-D\left(\boldsymbol{\eta} \circ \bar{h}_{\bar{\varepsilon}_{1}}\right)+D\left((\etaa \circ h)* \varphi_{\bar{\rho}}\right)\right|^{2}
    $
    \vspace*{-0.5cm}
    \begin{align*}
        \ &= \int_{B_{r}}\left(\left|D \bar{h}_{\bar{\varepsilon}_{1}}\right|^{2}-Q\left|D\left(\boldsymbol{\eta} \circ \bar{h}_{\bar{\varepsilon}_{1}}\right)\right|^{2}+Q\left|D\left((\etaa \circ h)* \varphi_{\bar{\rho}}\right)\right|^{2}\right) \\
        &= Q \int_{B_{r}}\left(\left|D\left(\boldsymbol{\eta} \circ h\right)\right|^{2}-\left|D\left(\boldsymbol{\eta} \circ \bar{h}_{\bar{\varepsilon}_{1}}\right)\right|^{2} + \left|D\left(\boldsymbol{\eta} \circ h * \varphi_{\bar{\rho}}\right)\right|^{2}-\left|D\left(\boldsymbol{\eta} \circ h\right)\right|^{2}\right) \\
        &\quad +\operatorname{Dir}\left(\bar{h}_{\bar{\varepsilon}_{1}}, B_{r}\right)\\
        &\leq \operatorname{Dir}\left(h_{\bar{\varepsilon}_{1}}, B_{r}\right)+2 Q \bar{\varepsilon}_{1}\,,
    \end{align*}
\end{itemize}
where we used \eqref{e:approx interior},\eqref{e:convolution1} in the last inequality.
We can then apply the interpolation Lemma \ref{l:InterpolationSimpler} with $f=\bar{h}_{\bar{\varepsilon}_{1}}$ and $g={h_{k}}_{|\partial B_{r}^+}$, and $\varepsilon=\bar{\varepsilon}_{2}=\bar{\varepsilon}_{2}(\eta, M)>0$ to get maps $H_{k}$ satisfying $\left.H_{k}\right|_{\partial B_{r}^+}=\left.h_{k}\right|_{\partial B_{r}^+}$, $H_{k}\vert_{B^+_{r_1}\setminus B^+_r} = h_{k}\vert_{B^+_{r_1}\setminus B^+_r}$. Now, we use \eqref{e:4.18GAFADS3}, \eqref{e:4.19GAFADS3}, \eqref{e:4.20GAFADS3} \eqref{e:approx interior} and \eqref{e:approx bordo} to deduce
\begin{eqnarray}
\operatorname{Dir}\left(H_{k}, B_{r}^+\right) & \stackrel{\eqref{e:4.14GAFA}}{\leq} & \operatorname{Dir}\left(\bar{h}_{\bar{\varepsilon}_{1}}, B_{r}^+\right)+\bar{\varepsilon}_{2} \operatorname{Dir}\left(\bar{h}_{\bar{\varepsilon}_{1}}, \partial B_{r}^+\right)+\bar{\varepsilon}_{2} \operatorname{Dir}\left(h_{k}, \partial B_{r}^+\right) \notag \\
&& + \frac{C_{0}}{\bar{\varepsilon}_{2}} \int_{\partial B_{r}^+} \mathcal{G}\left(\bar{h}_{\bar{\varepsilon}_{1}}, h_{k}\right)^{2} \notag \\
& \stackrel{\eqref{e:approx bordo}}{\le} & \operatorname{Dir}\left(h, B_{r}^+\right)+\bar{\varepsilon}_{1}+2 Q \bar{\varepsilon}_{1}+3\bar{\varepsilon}_{2} \left[\operatorname{Dir}\left(h, \partial B_{r}^+\right)+\bar{\varepsilon}_{1}\right]+\bar{\varepsilon}_{2}M \notag \\
&& + \frac{C_{0}}{\bar{\varepsilon}_{2}}\left[\int_{\partial B_{r}^+} \mathcal{G}\left(h, h_{k}\right)^{2}+\int_{\partial B_{r}^+} \mathcal{G}\left(h_{\bar{\varepsilon}_{1}}, h\right)^{2}\right]\notag \\
 & \leq & \operatorname{Dir}\left(h, B_{r}^+\right)+\bar{\varepsilon}_{1}(1+2 Q) +\bar{\varepsilon}_{2}(4M+3\bar{\varepsilon}_{1}) +C_{0} \bar{\varepsilon}_{2}^{-1}\left[o(1)+\bar{\varepsilon}_{1}\right].\notag
\end{eqnarray}
 An appropriate choice of the parameters $\bar{\varepsilon}_{1}$ and $\bar{\varepsilon}_{2}$ gives the desired bound $\operatorname{Dir}\left(H_{k}, B_{r}\right) \leq$ $\operatorname{Dir}\left(h, B_{r}\right)+\eta$ for $k$ large enough.
Observe next that, by construction, $\operatorname{Lip}\left(\bar{h}_{\bar{\varepsilon}_{1}}\right)$ depends on $\eta$ and $h$, but not on $k$.  Moreover, we have
$$
\left\|\mathcal{G}\left(\bar{h}_{\bar{\varepsilon}_{1}}, h_{k}\right)\right\|_{L^{\infty}\left(\partial B_{r}\right)} \le C\left\|\mathcal{G}\left(\bar{h}_{\bar{\varepsilon}_{1}}, h_{k}\right)\right\|_{L^{2}\left(\partial B_{r}\right)}+C \operatorname{Lip}\left(h_{k}\right)+C \operatorname{Lip}\left(\bar{h}_{\bar{\varepsilon}_{1}}\right).
$$
To prove the last inequality put $F(x):=\mathcal{G}\left(\bar{h}_{\bar{\varepsilon}_{1}}(x), h_{k}(x)\right)$ and observe that $F(x)\le F(y)+\Lip(F)|x-y|$, then integrate in $y$ and use the Cauchy-Schwarz inequality combined with the fact that $\Lip(F)\le C(\Lip(\bar{h}_{\bar{\varepsilon}_{1}})+ \Lip(h_{k}))$.
Thus \eqref{e:raccordo1} follows from  \eqref{e:4.15GAFA}. Finally, \eqref{e:raccordo2} follows from the Poincaré inequality applied to $\mathcal{G}\left(H_{k}, h_{k}\right)$ (which vanishes identically on $\left.\partial B_{r}^+\right)$, in fact we have
\begin{eqnarray*}
\|\mathcal{G}\left(H_{k}, h_{k}\right)\|^2_{L^2(B_{r_1}^+)}
\le C\|\nabla\mathcal{G}\left(H_{k}, h_{k}\right)\|^2_{L^2(B_{r_1}^+)}
\le C \D(h_{k}, B^+_{r_1}) + C \D(H_{k}, B^+_{r_1}).
\end{eqnarray*}
\eqref{e:raccordo3} follows from \eqref{e:4.16GAFA}, because of \eqref{e:4.20GAFADS3} and $\left\|\boldsymbol{\eta} \circ \bar{h}_{\bar{\varepsilon}_{1}}\right\|_{L^{1}\left(B_{r}\right)}=\left\|\left(\boldsymbol{\eta} \circ h\right) * \varphi_{\bar{\rho}}\right\|_{L^{1}\left(B_{r}\right)} \leq\left\|\boldsymbol{\eta} \circ h\right\|_{L^{1}\left(B_{r_{1}}\right)}$ if $\bar{\rho}$ is also chosen small
enough such that $r+\bar{\rho}<r_{1}$. Indeed, observe that 
\begin{eqnarray*}
\|\etaa\circ H_k\|_{L^1(B_{r_1}^+)} & = & \|\etaa\circ H_k\|_{L^1(B_{r}^+)}+\|\etaa\circ h_k\|_{L^1(B_{r_1}^+\setminus B_{r}^+)}\\
& \stackrel{\eqref{e:4.16GAFA}}{\le} & C_{0} \int_{\partial B_{r}^+}|\boldsymbol{\eta} \circ h_k|+C_{0} \int_{B_{r}^+}|\etaa\circ \bar{h}_{\bar{\varepsilon}_{1}}|+\|\etaa\circ h_k\|_{L^1(B_{r_1}^+\setminus B_{r}^+)}\\
& \stackrel{\eqref{e:4.20GAFADS3}}{\le} & C_{0} \|\etaa\circ h_k\|_{L^1(B_r^+)}+C_{0} \int_{B_{r}^+}|(\etaa\circ h)*\varphi_{\rho}|+\|\etaa\circ h_k\|_{L^1(B_{r_1}^+\setminus B_{r}^+)}\\
& \stackrel{\eqref{e:convolution}}{\le} & C_{0} \|\etaa\circ h_k\|_{L^1(B_r^+)}+ C \|\etaa\circ h\|_{L^1(B_r^+)}+\|\etaa\circ h_k\|_{L^1(B_{r_1}^+\setminus B_{r}^+)}\\
& \le & C \|\etaa\circ h_k\|_{L^1(B_{r_1}^+)}+C \|\etaa\circ h\|_{L^1(B_{r_1}^+)},
\end{eqnarray*}
provided $\rho$ is chosen so small that $\bar{r}+\rho<r$.
\end{proof}

\section{Higher integrability estimate}\label{s:higher-int-estimate}

We consider the density $\mathbf{d}_T$ of the measure $\be_T$ with respect to the Lebesgue measure $|\cdot|$, i.e.
\[
\mathbf{d}_T (y) = \limsup_{s\to 0} \frac{\be_T (B_s (y))}{\pi s^2}\, .
\]
We will drop the subscript $T$ when the current in question is clear from the context.
Clearly, under the assumptions of Proposition \ref{p:Lipschitz_1}, $\|\mathbf{d}_T\|_{L^1} \leq C E$. Now, following the approach of \cite{DS3}, we wish to prove an $L^p$ estimate for a $p>1$, which is just a geometric constant. 

\begin{theorem}\label{t:L-p}
There exist constants $p >1$, $C$, and $\varepsilon>0$ (depending on $n$ and $Q$) such that, if $T$ is as in Proposition \ref{p:Lipschitz_1}, then
\begin{equation}\label{e:higher1}
\int_{\{\bd\leq1\}\cap B_2} \bd^{p} \leq C\, \left(E + \bA^{2}\right)^p.
\end{equation}
\end{theorem}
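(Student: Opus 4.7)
The plan is to adapt to our boundary setting the higher-integrability argument of \cite[Section 6]{DS3}, which is a Calder\'on--Zygmund/reverse-H\"older scheme. We start by noting that the $L^1$ bound $\|\bd\|_{L^1(B_2)} \le C(E+\bA^2)$ is built into Proposition \ref{p:Lipschitz_1} (via the Taylor expansion of Proposition \ref{p:TaylorExpansionMassGraph}). The goal is then to upgrade this to a genuinely self-improving weak-type estimate
\begin{equation}\label{e:weak-type-plan}
\lambda \,|\{\bd > \lambda\}\cap B_2| \;\le\; C\int_{\{\bd > c\lambda\}\cap B_{3}} \bd \;+\; C\bA^2\, ,
\end{equation}
valid for all $\lambda$ in a range $[c_1(E+\bA^2),\,1]$. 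Once \eqref{e:weak-type-plan} is available, a classical Gehring-type iteration (applied to the truncated density $\min\{\bd,1\}$) yields \eqref{e:higher1} with some $p>1$ depending only on $n$ and $Q$.

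To produce \eqref{e:weak-type-plan}, at each level $\lambda$ we perform a stopping-time decomposition of the superlevel set $\{\bmm \be_T > \lambda\}\cap B_2$ via a Vitali-type selection of balls $B_{s_i}(y_i)$ on which $\be_T(B_{s_i}(y_i))\sim \lambda\,\pi s_i^2$. On each such ball we apply Proposition \ref{p:Lipschitz_1} with $\delta_* = \kappa\lambda$ for a geometric $\kappa$, and we use the minimality of $T$ in combination with the Taylor expansion to obtain
\[
\be_T(B_{s_i/2}(y_i)) \;\le\; \tfrac12\!\int_{K_i\cap B_{s_i/2}(y_i)}|Df_i|^2 \;+\; \tfrac{C}{\lambda}\,\be_T\bigl(\{\bmm\be_T>\lambda/4\}\cap B_{s_i+r_1 s_i}(y_i)\bigr) \;+\; C\bA^2 s_i^2\, .
\]
Using Theorem \ref{t:o(E)} we then replace $f_i$ with a Dir-minimizing competitor $w_i$ at the cost of a term that is $o(\lambda s_i^2)$. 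Because $w_i$ is Dir-minimizing and agrees with $Q\a{g}$ on the boundary, we can apply the interior-plus-boundary $L^\infty$ estimate for $|Dw_i|^2$ (obtained by combining Caccioppoli on the reflected map with the H\"older bound of \cite{DS1}) to get a contractive bound
\[
\tfrac12\int_{B_{s_i/2}(y_i)} |Dw_i|^2 \;\le\; \tfrac{1}{4}\int_{B_{s_i}(y_i)} |Dw_i|^2 \,+\, C \bA^2 s_i^2\,.
\]
This strict contraction is what converts the trivial Markov bound $\lambda|\{\bd>\lambda\}|\le \|\bd\|_{L^1}$ into the self-improving \eqref{e:weak-type-plan}.

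Summing the resulting localized inequalities over the Vitali covering, together with the maximal-function bound $|\{\bmm\be_T>\lambda\}\cap B_2| \le C\lambda^{-1}\be_T(\{\bmm\be_T>\lambda/2\}\cap B_3)$, yields \eqref{e:weak-type-plan}. Finally, $\int_{\{\bd\le 1\}\cap B_2}\bd^p$ is split at the threshold $\lambda_0 = c_1(E+\bA^2)$: the low-density part $\{\bd\le \lambda_0\}$ contributes at most $\lambda_0^{p-1}\|\bd\|_{L^1} \le C(E+\bA^2)^p$ by H\"older, while the high-density part is handled by the layer-cake formula applied to \eqref{e:weak-type-plan}. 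The principal obstacle compared to \cite[Theorem 6.2]{DS3} is the systematic propagation of the boundary-curvature error $\bA^2$ through every step, in particular making sure it never spoils either the contraction estimate or the stopping-time threshold; the assumption $r\bA\le \varepsilon E^{1/2}$ together with the explicit form of \eqref{e:stimaK} and \eqref{e:graphmass} is precisely what keeps these errors subordinate.
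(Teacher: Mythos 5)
Your general Calder\'on--Zygmund/Gehring plan is aligned with the paper's route, which deduces Theorem~\ref{t:L-p} from the Weak Excess Estimate (Proposition~\ref{p:WEE}) by following \cite[Section~6.3]{DS3}. However, your key step~5 has a genuine gap. You invoke ``the interior-plus-boundary $L^\infty$ estimate for $|Dw_i|^2$'' for a $\mathrm{Dir}$-minimizing $Q$-valued map $w_i$, but no such estimate exists: a $Q$-valued $\mathrm{Dir}$-minimizer is in general only $C^{0,\alpha}$ with a small exponent $\alpha=\alpha(Q,n)\in(0,1)$, and near branch points $|Dw_i|$ is unbounded. What \emph{does} hold is the higher integrability $\||Dw|\|_{L^{2q}}\le C\||Dw|\|_{L^{2}}+C\|Dg\|_\infty$ for some $q=q(Q,n)>1$ (Proposition~\ref{p:L-p}), which is a much weaker, and much harder, statement; proving its boundary version is precisely the delicate content of Section~\ref{s:higher-int-estimate}. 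Your scheme skips over this entirely.

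This also points to a structural mismatch in the self-improving mechanism. The energy contraction you propose, namely $\int_{B_{s/2}}|Dw|^2 \le \tfrac12\int_{B_s}|Dw|^2 + C\bA^2 s^2$, compares energies on \emph{concentric balls}; iterated, it yields Morrey/Campanato-type decay, which is not what a Gehring iteration for $\int \bd^p$ needs. The correct gain is that the integral of $|Dw|^2$ over a set $A$ of \emph{small measure relative to the ball} is small compared with $E s^2$, i.e.\ $\int_A|Dw|^2\le C|A|^{1-1/q}\||Dw|\|_{L^{2q}}^2\le C\,|A|^{1-1/q} E s^2$ (plus curvature errors). That inequality, injected into the competitor comparison of your earlier steps, is exactly the Weak Excess Estimate $\be_T(A)\le \eta E s^2+C\bA^2 s^4$ for $|A|\le\varepsilon|B_s|$ of Proposition~\ref{p:WEE}, and it is this statement, not an energy decay on shrinking balls, that closes the Calder\'on--Zygmund argument in \cite[Section~6.3]{DS3}. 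So your proposal needs to be repaired by replacing step~5 with the $L^{2q}$ bound of Proposition~\ref{p:L-p} (whose boundary version requires the reflection/Gehring argument given in the paper) and reorganizing the stopping-time step around Proposition~\ref{p:WEE}.
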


\subsection{Higher integrability for Dir-minimizers}
We start with an analogous estimate for the gradient of Dir-minimizers. 

\begin{proposition}\label{p:L-p}
There are constants $q>1,\delta>0$ and $C$ (depending only on $Q$ and $n$) with the following property. Consider a connected domain $D$ in $\mathbb R^2$ such that:
\begin{itemize}
    \item the curvature $\kappa$ of $\partial D$ enjoys the bound $\|\kappa\|_\infty \leq \delta$;
    \item $\partial D \cap B_{16} (x)$ is connected for every $x$.
\end{itemize}
Let $0<\rho\leq 1$ and $u: B_{8\rho} (x)\cap D \to \Iqs$ be a Dir-minimizing function such that $u|_{\partial  D\cap B_\rho (x)} = Q \a{g}$ for some $C^1$ function $g$. Then
\begin{equation}\label{e:L-p}
\left(\mint_{B_\rho (x)\cap D} |Du|^{2q}\right)^{\frac{1}{q}} \leq
C \mint_{B_{8\rho} (x)\cap D} |Du|^2 + C \|Dg\|_\infty^2\, .
\end{equation} 
\end{proposition}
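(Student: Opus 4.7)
The plan is to prove Proposition \ref{p:L-p} by the classical reverse-H\"older/Gehring scheme, following \cite{DS1} at interior balls and adapting the arguments of \cite{DDHM} at boundary balls. First I would reduce to the case of vanishing boundary data and vanishing average: set $h := \boldsymbol{\eta}\circ u$ and $\tilde u := \sum_i \a{u_i - h}$. Since $u$ is Dir-minimizing, the first variation along classical test fields forces $h$ to be a classical harmonic function with $h|_{\partial D\cap B_{8\rho}} = g$, and standard boundary elliptic regularity together with Gehring's lemma on a $C^{1,1}$ domain yields a reverse-H\"older estimate for $|Dh|$ that contributes the $\|Dg\|_\infty^2$ term on the right-hand side of \eqref{e:L-p}. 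By the orthogonal splitting $|Du|^2 = Q|Dh|^2 + |D\tilde u|^2$, $\tilde u$ is itself Dir-minimizing among competitors with boundary data $Q\a{0}$ on $\partial D\cap B_{8\rho}$, has average zero, and it suffices to prove an analogous higher-integrability estimate for $\tilde u$ with constants depending only on $Q$ and $n$.

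For $\tilde u$ I would first establish a boundary Caccioppoli inequality
$$\int_{B_r(x)\cap D} |D\tilde u|^2 \le \frac{C}{r^2}\int_{B_{2r}(x)\cap D} \cG(\tilde u, Q\a{c_x})^2$$
for every $B_{4r}(x)\subset B_{8\rho}$, where $c_x \in \mathbb R^n$ is arbitrary when $B_{2r}(x)\subset D$ and $c_x = 0$ when $B_{2r}(x)$ meets $\partial D$. The interior bound is the standard Caccioppoli for $Q$-valued Dir-minimizers obtained via outer variations. At boundary balls, the admissible competitor $v_i := (1-\varphi)\tilde u_i$, with $\varphi\in C^\infty_c(B_{2r}(x))$ and $\varphi\equiv 1$ on $B_r(x)$, preserves the zero trace on $\partial D$; expanding $|Dv_i|^2 = |(1-\varphi)D\tilde u_i - \tilde u_i\otimes D\varphi|^2$, using minimality, and absorbing terms via Young's inequality (plus a hole-filling iteration) yields the estimate with $c_x = 0$.

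Next I would invoke the $Q$-valued Sobolev-Poincar\'e inequality: there is $\delta = \delta(Q) \in (0,1)$ such that
$$\left(\mint_{B_{2r}(x)\cap D}\cG(\tilde u, Q\a{c_x})^2\right)^{1/2} \le C r\left(\mint_{B_{4r}(x)\cap D}|D\tilde u|^{2-\delta}\right)^{1/(2-\delta)},$$
choosing $c_x$ to be the $Q$-valued mean in the interior case (as in \cite[Proposition 4.9]{DS1}) and $c_x = 0$ in the boundary case, where the zero trace of $\tilde u$ replaces the mean-subtraction. The hypotheses $\|\kappa\|_\infty\le\delta$ and the connectedness of $\partial D\cap B_{16}(x)$ guarantee that at every scale $r\le 1$ the set $D\cap B_{4r}(x)$ is bi-Lipschitz to either a disk or a half-disk with constants depending only on $n$, so the Poincar\'e constant is uniform across the scales that appear in Gehring's iteration. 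Combining the two displays yields the reverse H\"older inequality
$$\left(\mint_{B_r(x)\cap D}|D\tilde u|^2\right)^{1/2} \le C\left(\mint_{B_{4r}(x)\cap D}|D\tilde u|^{2-\delta}\right)^{1/(2-\delta)}$$
for all such balls. Gehring's lemma then produces $q = q(Q,n) > 1$ with the desired bound on $|D\tilde u|^{2q}$, and adding the elliptic estimate for $h$ (using the orthogonal energy splitting) gives \eqref{e:L-p}.

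The main technical obstacle is the boundary Sobolev-Poincar\'e step for $Q$-valued maps with vanishing trace on a portion of $\partial D$, with constants uniform over all dyadic subballs used in Gehring's iteration. The smallness of $\|\kappa\|_\infty$ and the connectedness assumption on $\partial D\cap B_{16}(x)$ are tailor-made to make $D$ look uniformly like a half-disk at every such scale, so that the Poincar\'e constant can indeed be absorbed into a universal $C = C(Q,n)$.
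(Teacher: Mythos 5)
Your proposal is correct, and the overall skeleton coincides with the paper's: subtract the harmonic average $\boldsymbol\eta\circ u$ (noting the orthogonal energy splitting and that the remainder is again Dir-minimizing with zero trace), handle the single-valued harmonic piece by classical elliptic estimates, reduce to the reverse H\"older inequality at boundary-touching balls, and close via Gehring. Where you genuinely diverge is in \emph{how} the reverse H\"older at a boundary ball is obtained. You run the De Giorgi scheme: a boundary Caccioppoli inequality (most cleanly derived from the outer-variation identity $\int\varphi^2|D\tilde u|^2 = -2\int\varphi\,\nabla\varphi\cdot\sum_i\tilde u_i\cdot D\tilde u_i$ with Young, rather than the direct comparison/hole-filling you suggest, which needs extra care) combined with the $Q$-valued Sobolev--Poincar\'e with vanishing partial trace, giving an $L^{2-\delta}\!\to L^2$ reverse H\"older. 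The paper instead mirrors \cite[Proposition 6.2]{DS3} at the boundary: it slices on a good radius $r\in(1,2)$, uses the scalar $H^{1/2}$ trace inequality $\min_c\|f-c\|_{H^{1/2}(\partial(B_r\cap D))}\le C\|Df\|_{L^1}$ and the corresponding $W^{1,2}$ extension, passes through $\boldsymbol\xi$ and the retraction $\boldsymbol\rho$ to build an admissible competitor, and invokes minimality directly; this gives the stronger $L^1\!\to L^2$ reverse H\"older in a single comparison. Both suffice for Gehring. Your route is perhaps more elementary/self-contained (it reuses the Caccioppoli and Poincar\'e toolbox already needed elsewhere in the boundary theory), while the paper's route keeps the proof uniform with the interior case of \cite{DS3} and avoids proving a boundary Sobolev--Poincar\'e for $Q$-valued maps with uniform constants across scales.

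One small point to tighten: in the boundary Caccioppoli, the naive competitor $v=(1-\varphi)\odot\tilde u$ alone does not immediately absorb the $(1-\varphi)^2|D\tilde u|^2$ term on the annulus, so either carry out the hole-filling iteration carefully or, more simply, differentiate the outer variation $u_\eps=\sum_i\a{(1-\eps\varphi^2)\tilde u_i}$ at $\eps=0$ (note $\psi(x,\tilde u_i(x))=-\varphi^2\tilde u_i(x)$ vanishes on $\partial D$, so the boundary data is preserved) and apply Young's inequality; this yields the Caccioppoli bound directly without iteration.
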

\begin{proof}
First of all, the claim follows from \cite[Theorem 6.1]{DS3} when $B_{2\rho} (x) \subset D$, while it is trivial if $B_{2\rho} (x) \subset {\rm int}\, (D^c)$. We can thus assume, without loss of generality, that $B_{2\rho} (x)$ intersects $\partial D$. Let $y$ be a point in such intersection and observe that $B_\rho (x) \subset B_{4\rho} (y)$. The claim thus follows if we can show
\begin{equation}\label{e:L-p-bordo}
\left(\mint_{B_r (y)\cap D} |Du|^{2q}\right)^{\frac{1}{q}} \leq
C \mint_{B_{2r} (y)\cap D} |Du|^2 + C \|Dg\|_\infty^2\, ,
\end{equation} 
for every $y\in \partial D$ and every $r\leq 4$. We now define
\[
\bar{u} (z) = \sum_i \a{u_i (z) - \etaa\circ u (z)}\, ,
\]
and observe that $|Du|\leq |D\bar u| + Q |D \etaa\circ u|$, while $\etaa\circ u$ is a classical harmonic function such that $\etaa\circ u|_{\partial D\cap B_2} = g$, and $\bar u$ is a Dir-minimizing function such that $\bar u|_{\partial D \cap B_2} = Q \a{0}$. Observe that 
\[
\left(\mint_{B_r (y)\cap D} |D\etaa \circ u|^{2q}\right)^{\frac{1}{q}} \leq
C \mint_{B_{2r} (y)\cap D} |D\etaa\circ u|^2 + C \|Dg\|_\infty^2
\]
is a classical estimate for (single-valued) harmonic functions and that $|D\etaa\circ u|\leq |Du|$. Hence, it suffices to prove \eqref{e:L-p-bordo} when $g= Q\a{0}$. Moreover without loss of generality we can assume that $y=0$ and $r=1$. Our goal is thus to show
\[
\||Du|\|_{L^{2q} (B_1\cap D)}\leq C \||Du|\|_{L^2 (B_2\cap D)}\, ,
\]
under the assumption that $u|_{\partial D \cap B_2}= Q \a{0}$. 
If we extend $|Du|$ trivially to the complement of $D$, by setting it identically equal to $0$, the inequality is just an higher integrability estimate for the function $|Du|$ on $B_1$. By Gehring's lemma, it suffices to prove the existence of a constant $C$ such that
\begin{equation}\label{e:Gehring}
\||Du|\|_{L^2 (B_\rho (x))} \leq C \||Du|\|_{L^1 (B_{8\rho} (x))}
\end{equation}
whenever $B_{8\rho} (x) \subset B_2$. However, in the ``interior case'' $B_{2\rho} (x)\subset D$, the stronger 
\[
\||Du|\|_{L^2 (B_\rho (x))} \leq C \||Du|\|_{L^1 (B_{2\rho} (x))}
\]
is already proved in \cite[Proposition 6.2]{DS3}. Hence, arguing as above, it suffices to prove \eqref{e:Gehring}, with the ball $B_{4\rho} (x)$ replacing $B_\rho (x)$ in the left hand side, under the additional assumption $x\in \partial D$. Again by scaling, we are reduced to prove the following estimate
\begin{equation}\label{e:Gehring-2}
\||Du|\|_{L^2 (B_1\cap D)} \leq C \||Du|\|_{L^1 (B_2\cap D)}
\qquad \mbox{if $0\in \partial D$.} 
\end{equation}
First of all observe that, by our assumptions, if $\delta$ is sufficiently small, for every $r\in (1,2)$ the domain $D\cap B_r$ is biLipschitz equivalent to the half disk $B_r \cap\{(x_1,x_2): x_2>0\}$, with uniform bounds on the Lipschitz constants of the homeomorphism and its inverse. In particular, we recall that, by classical Sobolev space theory, we have
\[
\min_{c\in \mathbb R} \|f-c\|_{H^{1/2} (\partial (B_r\cap D))} 
\leq C \|Df\|_{L^1 (\partial (B_r\cap D))}
\]
for every classical function $f\in W^{1,1} (\partial B_r, \mathbb R)$.
Moreover there is an extension $F\in W^{1,2} (B_r\cap D)$ of $f$ such that
\begin{equation}\label{e:H1-2}
\|DF\|_{L^2 (B_r\cap D)} \leq C \|f-c\|_{H^{1/2} (\partial (B_r\cap D))}
\leq C \|Df\|_{L^1 (\partial (B_r\cap D))}\, .
\end{equation}
Thus, using Fubini and \eqref{e:H1-2}, under our assumptions on $u$, we find a radius $r\in (1,2)$ and an extension $v$ of the classical function $\xii\circ u|_{\partial (B_r\cap D)}$ to $B_r\cap D$ such that
\begin{equation}\label{e:intermedia}
\|D \xii\circ u\|_{L^2 (B_r\cap D)} \leq C
\|D\xii \circ u\|_{L^1 (\partial (B_r\cap D))} \leq 
C \|D\xii \circ u\|_{L^1 (B_2\cap D)}\leq C \\|Du|\|_{L^1 (D\cap B_2)}\, .
\end{equation}
If we consider the multivalued function $\xii^{-1} \circ \ro \circ v$, the latter has trace $w:= \xii^{-1} \circ \xii \circ u$ on $\partial (B_r\cap D)$. Therefore, by minimality of $u$,
\[
\||Du|\|_{L^2 (B_r\cap D)} \leq \|Dw\|_{L^2 (B_r\cap D)}
\leq C \|Dv\|_{L^2 (B_r\cap D)}\, .
\]
Combining the latter inequality with \eqref{e:intermedia} we achieve \eqref{e:Gehring-2}.
\end{proof}
\subsection{Improved excess estimates}
\begin{proposition}[Weak excess estimate]\label{p:WEE} For every $\eta >0,$ there exists $\varepsilon >$ 0 with the following property. Let $T$ be area minimizing and assume it satisfies Assumption \ref{Ass:app} in $\mathbf{C}_{4 s}(x) .$ If $E=\mathbf{E}\left(T, \mathbf{C}_{4 s}(x)\right) \leq \varepsilon,$ then
\begin{equation}\label{e:6.4}
\mathbf{e}_{T}(A) \leq \eta_{10} E s^2+C \mathbf{A}^{2} s^{4}
\end{equation}
for every $A \subset B_{s}(x)\cap D$ Borel with $|A| \leq \varepsilon \left|B_{s}(x)\right|$.
\end{proposition}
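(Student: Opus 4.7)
The plan is to split $A = (A\cap K)\cup (A\setminus K)$ for the good set $K$ of a Lipschitz approximation and bound each piece separately, combining the higher integrability of Dir-minimizers (Proposition~\ref{p:L-p}) with the harmonic approximation (Theorem~\ref{t:o(E)}). By rescaling I may assume $s=1$. If $\mathbf{A}^2\geq c_0 E$ for a fixed small constant $c_0$ the estimate is trivial, since $\be_T(A)\leq \be_T(B_4)=16\pi E\leq (16\pi/c_0)\mathbf{A}^2$ is absorbed into the $C\mathbf{A}^2$ term once $C$ is chosen large enough. I therefore restrict to $\mathbf{A}^2\leq c_0 E$, where Theorem~\ref{t:o(E)} applies with a free small parameter $\eta_1$ to be fixed later. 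Fix $\beta\in(0,1/4)$, let $f$ be the $E^\beta$-Lipschitz approximation of $T$ in $\mathbf{C}_3$ from Proposition~\ref{p:Lipschitz_1} with good set $K$, and let $w$ be the Dir-minimizer supplied by Theorem~\ref{t:o(E)} on $B_2\cap D$, with $w|_{\partial D\cap B_2}=Q\a{g}$.

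For the \emph{good part} $A\cap K$: since $T\res (K\times\mathbb{R}^n)=\mathbf{G}_f\res (K\times\mathbb{R}^n)$, the Taylor expansion of Proposition~\ref{p:TaylorExpansionMassGraph} gives $\be_T(A\cap K)\leq (\tfrac12 + C\Lip(f))\int_{A\cap K}|Df|^2$. Writing $\int_A |Df|^2\leq 2\int_A |Dw|^2 + 2\int_{B_2\cap D}\mathcal{G}(Df,Dw)^2\leq 2\int_A |Dw|^2 + C\eta_1 E$ via Theorem~\ref{t:o(E)}, I apply Proposition~\ref{p:L-p} to $w$ on $O(1)$ balls of radius $1/16$ covering $B_1\cap D$ (using $\|Dg\|_\infty\leq C\mathbf{A}$ and that the curvature of $\partial D$ is $O(\mathbf{A})$) to obtain $\int_{B_1\cap D}|Dw|^{2q}\leq C(E+\mathbf{A}^2)^q$ for some $q>1$. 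H\"older's inequality then yields
\[
\int_A |Dw|^2\leq |A|^{1-1/q}\Big(\int_{B_1\cap D}|Dw|^{2q}\Big)^{1/q}\leq C\varepsilon^{1-1/q}(E+\mathbf{A}^2),
\]
which for $\varepsilon$ and $\eta_1$ chosen small depending on $\eta$ gives $\be_T(A\cap K)\leq (\eta/2)E + C\mathbf{A}^2$.

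For the \emph{bad part} $A\setminus K$ I would exploit area-minimality of $T$ through a competitor analogous to the $S_k$ of~\eqref{e:5.18DS3}. Fix a good slicing radius $\bar r\in(1,3)$ via Fubini on $\mathbf{M}(T-\mathbf{G}_f)$ and replace $T\res\mathbf{C}_{\bar r}$ by $\mathbf{G}_w\res\mathbf{C}_{\bar r}$ together with a boundary-matching cap of mass $O(E^{2-4\beta})$ to produce an admissible competitor. Minimality of $T$ then yields the upper bound $\be_T(B_{\bar r})\leq \tfrac12\int_{B_{\bar r}}|Df|^2 + C\eta_1 E$, where the cap contribution is $\leq \eta_1 E$ because $\beta<1/4$ and $E$ is small. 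Combined with the Taylor lower bound $\be_T(B_{\bar r}\cap K)\geq \tfrac12\int_{B_{\bar r}\cap K}|Df|^2 - C\Lip(f)\int |Df|^2$, subtraction produces
\[
\be_T(A\setminus K)\leq \be_T(B_{\bar r}\setminus K)\leq \tfrac12\int_{B_{\bar r}\setminus K}|Df|^2 + C\eta_1 E \leq C\eta_1 E
\]
by the ``few-energy'' estimate~\eqref{e:few energy}. Choosing $\eta_1\leq \eta/(2C)$ gives $\be_T(A\setminus K)\leq (\eta/2)E$, and summing the two contributions yields the bound after undoing the rescaling.

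The main obstacle I expect is the upper bound $\be_T(B_{\bar r})\leq \tfrac12\int |Df|^2 + C\eta_1 E$: building an admissible competitor that respects the boundary datum $\partial T = Q\a{\Gamma}$ at the slicing boundary $\partial\mathbf{C}_{\bar r}$, and controlling the cap mass by a power of $E$ strictly smaller than $E$, is the technical heart of the argument and is essentially the mechanism provided by the proof of Theorem~\ref{t:o(E)}.
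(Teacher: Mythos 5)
Your proposal is essentially correct and follows the same strategy as the paper's proof: split off the regime where $\bA^2$ dominates, pass to the Lipschitz approximation and the Dir-minimizing $w$ of Theorem~\ref{t:o(E)}, bound the contribution of $A\setminus K$ via the area-minimizing comparison and the few-energy estimate \eqref{e:few energy}, and bound the contribution of $A\cap K$ through Taylor expansion, comparison of $Df$ with $Dw$, and the higher integrability of Proposition~\ref{p:L-p} together with H\"older. The only difference worth noting is that the paper takes $\mathbf{G}_f$ plus the isoperimetric patching current $R$ (with $\mathbf{M}(R)\le CE^{3/2}$ for $\beta=1/8$) as the competitor rather than $\mathbf{G}_w$ plus a cap, which spares one the need to extend $w$ across the slice and makes the error bookkeeping slightly cleaner; also the Taylor expansion error is $O(\Lip(f)^2)\int|Df|^2$, not $O(\Lip(f))$, though this changes nothing.
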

\begin{proof}
Without loss of generality, we can assume $s=1$ and $x=0 .$ We distinguish the two regimes: $E \leq \mathbf{A}^{2}$ and $\mathbf{A}^{2} \leq E .$ In the former, clearly $\mathbf{e}_{T}(A) \leq C E \leq C \mathbf{A}^{2} .$ In the latter, we let $f$ be the $E^{\frac{1}{8}}$ -Lipschitz approximation of $T$ in $\mathbf{C}_{3}$ and, arguing as for the proof of \cite[Theorem 5.2]{DS3} we find a radius $r \in(1,2)$ and a current $R$ such that
$$
\partial R=\left\langle T-\mathbf{G}_{f}, \varphi, r\right\rangle
$$
and
$$
\mathbf{M}(R) \leq \left(\frac{C}{\delta_*}(E+\bA^2r^2)\right)^2\le CE^{2-\frac{1}{2}}.
$$
Therefore, by the Taylor expansion in Remark $5.4,$ we have:
\begin{align}
\|T\|\left(\mathbf{C}_{r}\right) 
&\stackrel{minimality}{\leq} \mathbf{M}\left(\mathbf{G}_{f}\res\mathbf{C}_{r}+R\right) 
\stackrel{triangular}{\leq}\left\|\mathbf{G}_{f}\right\|\left(\mathbf{C}_{r}\right)+C E^{\frac{3}{2}}\nonumber\\ 
&\quad \stackrel{Taylor}{\leq} Q\left|B_{r}\right|+\int_{B_{r}} \frac{|D f|^{2}}{2}+C E^{\frac{5}{4}}\, .\label{e:6.5}
\end{align}
On the other hand, using again the Taylor expansion for the part of the current which coincides with the graph of $f,$ we deduce as well that
\begin{equation}\label{e:6.6}
\|T\|\left(\left(B_{r} \cap K\right) \times \mathbb{R}^{n}\right) \geq Q\left|B_{r} \cap K\right|+\int_{B_{r} \cap K} \frac{|D f|^{2}}{2}-C E^{\frac{5}{4}}.
\end{equation}
Subtracting \eqref{e:6.6} from \eqref{e:6.5}, we deduce
\begin{equation}\label{e:6.7}
\mathbf{e}_{T}\left(B_{r}\cap D \setminus K\right) \leq \int_{B_{r}\cap D \setminus K} \frac{|D f|^{2}}{2}+C E^{\frac{5}{4}}.
\end{equation}
If $\varepsilon$ is chosen small enough, we infer from \eqref{e:6.7} and \eqref{e:few energy} in Theorem \ref{t:o(E)} that
$$
\mathbf{e}_{T}\left(B_{r}\cap D \setminus K\right) \leq \bar \eta E+C E^{1+\gamma},
$$
for a suitable $\bar \eta>0$ to be chosen. Let now $A \subset B_{1}$ be such that $|A| \leq \varepsilon \pi$. If $\varepsilon$ is small enough, we can again apply Theorem \ref{t:o(E)} and so by \eqref{e:L-p} there is a Dir-minimizing $w$ such that $|D f|$ is close in $L^{2}$ (with an error $\bar \eta E)$ to $|D w|$ and by  \cite[Remark 5.5]{DS3} $\operatorname{Dir}(w) \leq C E .$ By Proposition \ref{p:L-p} we have
$\||D w|\|_{L^q\left(B_{1}\right)} \leq C E^{\frac12}$. Therefore we can deduce
\begin{eqnarray}\label{e:6.10}
\mathbf{e}_{T}(A) &\stackrel{\eqref{e:few energy}, \eqref{e:quasiarm}}{\leq} & \int_{A}|D w|^{2}+3 \eta E+C E^{1+\gamma} \notag\\ 
&\leq & C\|Dg\|_{\infty}^2|A|^{1-2/ q}+C\left(|A|^{1-2/q}+\bar \eta\right) E+C E^{1+\gamma} \notag\\
& \le & C\left(|A|^{1-2/q} +\bar \eta\right) E+C E^{\frac{5}{4}}.
\end{eqnarray}
Hence, if $\varepsilon$ and $\eta$ are suitably chosen, \eqref{e:6.4} follows from \eqref{e:6.10}.
\end{proof}

\subsection{Proof of Theorem \ref{t:L-p}} The proof follows from Proposition \ref{p:WEE} arguing exactly as in \cite[Section 6.3]{DS3}.

\section{Strong Lipschitz approximation}\label{s:strong-Lipschitz}
In this section we show how Theorem \ref{t:L-p} gives a simple proof of the following approximation result analogous to  \cite[Theorem 2.4]{DS3}. 
\begin{theorem}[Boundary Almgren Strong Approximation]\label{t:strong_Lipschitz}
There are geometric constants $\gamma_1>0$, $\varepsilon_A>0$, and $C>0$ with the following properties. 
Let $T$ and $\Gamma$ be as in Assumption \ref{Ass:app} with $\varepsilon = \varepsilon_A$, let $f$ be the $E^\gamma$-Lipschitz approximation and $K\subset B_{3r}$ the corresponding set where $\bG_f$ and $T$ coincide. Then:
\begin{align}\label{e:strong_Lip}
\Lip (f) &\leq C (E + r^2 \bA^2)^{\gamma_1}\\ \label{e:strong_Lip_Osc}
{\rm osc}\, (f) &\leq C \bh (T, \bC_{4r}) + C r (E+ r^2 \bA^2)^{\frac{1}{2}}\\ \label{e:strongKestimate}
|B_r\setminus K| + \be_T (B_r\setminus K) &\leq C r^2 (E+r^2 \bA^2)^{1+\gamma_1}\\
\left|\|T\| (A\times \mathbb R^n) - Q |A\cap D| - \frac{1}{2} \int_A |Df|^2\right|
&\leq C r^2 (E+r^2 \bA^2)^{1+\gamma_1}
\end{align}
for every closed set $A\subset B_r$.
\end{theorem}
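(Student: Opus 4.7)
The plan is to follow the strategy of \cite[Theorem 2.4]{DS3}: upgrade the basic Lipschitz approximation of Proposition \ref{p:Lipschitz_1} by means of the higher integrability estimate of Theorem \ref{t:L-p}. I would apply Proposition \ref{p:Lipschitz_1} with the specific choice $\delta_* = (E+r^2\bA^2)^{2\gamma_1}$, where $\gamma_1 > 0$ is a small geometric constant to be fixed at the end. With this choice the bounds \eqref{e:strong_Lip} and \eqref{e:strong_Lip_Osc} follow immediately from \eqref{prop:Lipschitz_11} and \eqref{e:oscillation}, absorbing $r\bA \leq (E+r^2\bA^2)^{1/2}$ into $(E+r^2\bA^2)^{\gamma_1}$ as soon as $\varepsilon_A$ is small enough.

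The core step is the refined set estimate \eqref{e:strongKestimate}. I would first rescale to $r=1$. Theorem \ref{t:L-p} bounds the Lebesgue density $\bd$ of the excess measure $\be_T$ in $L^p(\{\bd\leq 1\}\cap B_2)$ by $C(E+\bA^2)^p$ for some dimensional exponent $p>1$; the contribution of $\{\bd>1\}$ is negligible thanks to the monotonicity-based pointwise density bound. Since $\be_T(F) \leq \int_F \bd$, the strong $(p,p)$-boundedness of the non-centered Hardy--Littlewood maximal function yields
\[
\big|\{\bmm\bd > \delta_*/4\}\big| \leq \frac{C(E+\bA^2)^p}{\delta_*^p},
\]
and by H\"older
\[
\be_T\big(\{\bmm\bd > \delta_*/4\}\big) \leq \|\bd\|_{L^p}\,\big|\{\bmm\bd > \delta_*/4\}\big|^{1-1/p} \leq \frac{C(E+\bA^2)^p}{\delta_*^{p-1}}.
\]
Feeding these into \eqref{e:stimaK} and choosing $\gamma_1 < (p-1)/(2p+1)$ makes the principal term at most $C(E+\bA^2)^{p(1-2\gamma_1)} \leq C(E+\bA^2)^{1+\gamma_1}$; an analogous computation yields the corresponding bound for $\be_T(B_r\setminus K)$. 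For the mass identity I would use the decomposition
\[
\|T\|(A\times\R^n) - Q|A\cap D| = \|T-\bG_f\|((A\setminus K)\times\R^n) + \big(\|\bG_f\|(A\times\R^n) - Q|A\cap D|\big),
\]
valid since $T = \bG_f$ on $K\times\R^n$, rewriting the second summand via Proposition \ref{p:TaylorExpansionMassGraph} as $\tfrac{1}{2}\int_{A\cap D}|Df|^2$ plus a cubic remainder of size $O(\Lip(f)^2\int|Df|^2) = O(r^2(E+r^2\bA^2)^{1+2\gamma_1})$, and controlling the first summand by \eqref{e:graphmass} restricted to $A\setminus K$ combined with the set estimate just established.

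The main obstacle will be absorbing the residual term $C\bA^2 s^2/\delta_*$ in \eqref{e:stimaK}, originating from the auxiliary current $\hat{T}$ used in the proof of Proposition \ref{p:Lipschitz_1}, into the target exponent $(E+r^2\bA^2)^{1+\gamma_1}$: a naive estimate gives only $(E+r^2\bA^2)^{1-2\gamma_1}$, the wrong side of the target. The remedy is to track the natural scaling of the $\hat T$ construction more carefully (so that the residual acquires the correct scaling-invariant form with an additional factor of $r^2$) and to exploit the a priori bound $\bA \leq 1$ from Assumption \ref{a:main-local}, possibly splitting the analysis into the two regimes $E \geq r^2\bA^2$ and $E \leq r^2\bA^2$ in order to extract a single positive $\gamma_1$ at the end.
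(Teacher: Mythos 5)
Your proposed route has a genuine gap. The higher integrability estimate of Theorem \ref{t:L-p} controls the excess density $\bd$ only on the set $\{\bd\leq 1\}$, and you have no a priori upper bound on $\bd$ elsewhere: the claim that the contribution of $\{\bd>1\}$ is ``negligible thanks to the monotonicity-based pointwise density bound'' is unfounded, since the monotonicity formula yields lower bounds on the density of $\|T\|$ and says nothing about the density of the excess measure $\be_T$. Consequently your maximal-function and H\"older chain only controls $\be_T\big(\{\me_T>\delta_*/4\}\cap\{\bd\leq 1\}\big)$; the excess carried by $\{\bd>1\}$ remains uncontrolled, and elementary considerations give only the linear bound $\be_T(\{\me_T>\delta_*/4\})\leq CE$, so feeding into \eqref{e:stimaK} returns $|B_r\setminus K|\lesssim (E+\bA^2)^{1-2\gamma_1}$, on the wrong side of the target. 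The paper itself flags this: directly after Theorem \ref{t:StrongExcEst} it observes that the latter ``complements \eqref{e:higher1} enabling to control the excess also in the region where $\mathbf{d}>1$.''

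The missing ingredient is Theorem \ref{t:StrongExcEst}, the Almgren strong excess estimate $\be_T(A)\leq C\big((E+\bA^2)^{\gamma_{11}}+|A|^{\gamma_{11}}\big)(E+\bA^2)$ for \emph{every} Borel $A\subset B_{9/8}$, which is precisely the superlinear gain over small sets you need. This is not a soft consequence of higher integrability: it is proved via the competitor construction of Proposition \ref{t:regconv}, which regularizes the Lipschitz approximation using the almost-projection $\boldsymbol{\rho}_{\delta}^{\star}$ and a non-centered convolution kernel preserving the boundary constraint, and then compares masses using the \emph{minimality} of $T$. Theorem \ref{t:L-p} is used inside that argument (cf.\ \eqref{e:7.4}), but only as one of several ingredients; it cannot replace the comparison. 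Once \eqref{e:StrongExcEst} is in hand, \eqref{e:strongKestimate} follows as you sketch, applying it to the set $A=\{\me_T>2^{-m}(E+\bA^2)^{2\beta_{11}}\}\cap B_{9/8}$, and the same estimate is needed once more to close the mass identity (to control $\be_T(B_\sigma\cap D\setminus K)$). Your concern about the residual $\bA^2 s^2/\delta_*$ is well placed, but chasing an extra $r^2$ through the $\hat T$ construction still leaves an exponent $1-2\gamma_1$; the actual escape is that the artificial graph contributes to $\be_{\hat T}$ over the small set $A$ only a quantity $\lesssim\bA^2|A\setminus D|\lesssim(E+\bA^2)^{2-4\gamma_1}$, which is comfortably superlinear for $\gamma_1<1/4$.
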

We postpone the proof till the end of this section however we anticipate that it goes along the same line of \cite[Theorem 2.4]{DS3} using Theorems \ref{t:StrongExcEst} and \ref{t:regconv} below instead of \cite[Theorem 7.1]{DS3} and \cite[Theorem 7.3]{DS3} respectively. The substantial changes necessary to adapt the argument of the interior case, i.e., \cite[Theorem 2.4]{DS3} concerns mainly the proof of Theorem \ref{t:regconv} while the proof of Theorem \ref{t:StrongExcEst} is essentially the same as that of  \cite[Theorem 7.1]{DS3}. So we start by stating the Almgren's boundary strong excess estimate.

\begin{theorem}[Almgren's boundary strong excess estimate]\label{t:StrongExcEst} There are constants $\varepsilon_{11}, \gamma_{11}, C>0$ (depending on $n, Q)$ with the following property. Assume $T$ satisfies Assumption \ref{Ass:app} in $\bC_{4}$ and is area minimizing. If $E=\mathbf{E}\left(T, \bC_{4}\right)<\varepsilon_{11},$ then
\begin{equation}\label{e:StrongExcEst}
\mathbf{e}_{T}(A) \leq C\left((E+ \bA^2)^{\gamma_{11}}+|A|^{\gamma_{11}}\right)\left(E+\mathbf{A}^{2}\right),
\end{equation} for every Borel set $A \subset B_{\frac{9}{8}}$.
\end{theorem}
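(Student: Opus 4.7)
The plan is to adapt the proof of the interior counterpart \cite[Theorem 7.1]{DS3} to the boundary setting, using the boundary-adapted tools that have been put in place earlier in the paper. The two ingredients already available are the weak excess estimate (Proposition \ref{p:WEE}) and the higher integrability of the excess density (Theorem \ref{t:L-p}). Writing $e := E + \bA^2$ for brevity, the strategy is to split $A$ at the level $\bd_T = 1$: one decomposes $A = A_1 \cup A_2$, with $A_1 := A \cap \{\bd_T \leq 1\}$ and $A_2 := A \cap \{\bd_T > 1\}$, and treats each piece differently.

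On the "small-density part" $A_1$, an application of Hölder's inequality followed by Theorem \ref{t:L-p} gives immediately
\[
\be_T(A_1) = \int_{A_1} \bd_T \, \leq \, |A|^{(p-1)/p}\left(\int_{\{\bd_T\leq 1\}\cap B_2}\bd_T^{\,p}\right)^{1/p} \leq C \, e \, |A|^{(p-1)/p},
\]
which furnishes the $|A|^{\gamma_{11}} e$ term of the target inequality upon setting $\gamma_{11} := (p-1)/p$. This is completely analogous to the interior case. On the "large-density part" $A_2$, one first observes the a priori bound $|A_2| \leq \int \bd_T \leq Ce$, which is smaller than the threshold $\varepsilon$ appearing in Proposition \ref{p:WEE} provided $\varepsilon_{11}$ is chosen small enough; hence the weak excess estimate applies.

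The main obstacle is that Proposition \ref{p:WEE} is only qualitative: for each $\eta > 0$ it produces an $\varepsilon$, but without a quantitative rate. To upgrade this to a bound of the form $\be_T(A_2) \leq C\, e^{1+\gamma_{11}}$ (which is needed to recover the full conclusion), the plan is to reexamine the proof of Proposition \ref{p:WEE}: in its final step, one estimates $\int_A |Dw|^2$ via Hölder's inequality applied to the $L^q$-norm of $|Dw|$ for the Dir-minimizer $w$ produced by Theorem \ref{t:o(E)}. Using Proposition \ref{p:L-p} in that step gives, in fact, the stronger bound $\be_T(A) \leq C(|A|^{(q-1)/q} + \bar\eta) e + C e^{5/4}$; then specializing to $A = A_2$, whose measure is at most $Ce$, yields the contribution $C e^{1 + (q-1)/q}$, so that both exponents in the target inequality are simultaneously recovered (after reducing $\gamma_{11}$ if necessary). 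Summing the bounds on $A_1$ and $A_2$ produces the claim.

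The hard part is the quantitative improvement of Proposition \ref{p:WEE} at the boundary: one has to verify that the harmonic approximation \emph{and} its Dir-minimizer limit enjoy the reverse Hölder property of Proposition \ref{p:L-p} uniformly, with constants that depend only on $n$, $Q$, and on the curvature bound $\bA$ in a way that gets absorbed into $e = E + \bA^2$. This requires being careful with the competitor constructions (Lemma \ref{l:InterpolationSimpler} and Proposition \ref{p:competitor}) when rescaling to balls of small radius that meet $\Gamma$; the change of coordinates of Lemma \ref{l:change-of-variable} flattens the boundary up to an error of size $\bA\, r$ in the $C^1$ norm, which is exactly what is needed to transfer the bound from the half-disk case to the curved boundary case without paying a price of order $e^{-\gamma}$ in the constants.
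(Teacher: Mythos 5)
There is a genuine gap, and it is located exactly where you yourself flag difficulty, but you misdiagnose its nature. Your plan to bound $\be_T(A_2)$ rests on a ``quantitative improvement'' of Proposition~\ref{p:WEE}, obtained by revisiting its proof and applying the reverse H\"older estimate (Proposition~\ref{p:L-p}) more carefully. But this cannot work: the estimate \eqref{e:6.10} in the proof of Proposition~\ref{p:WEE} contains the term $\bar\eta\, E$, and that $\bar\eta$ comes from the first harmonic approximation (Theorem~\ref{t:o(E)}), which is proved by a compactness/contradiction argument and therefore carries \emph{no rate}: it produces, for each $\eta>0$, a threshold $\varepsilon(\eta,\beta)$, but the dependence of $\varepsilon$ on $\eta$ is not quantified. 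No amount of care with the $L^q$ constants in Proposition~\ref{p:L-p} can make $\bar\eta$ decay like a power of $E+\bA^2$. After you specialize to $A=A_2$ with $|A_2|\leq Ce$, you are left with
\[
\be_T(A_2)\leq C\,e^{1+(q-1)/q}+C\,\bar\eta\, e+C\,e^{5/4},
\]
and the middle term $C\bar\eta\,e$ does not vanish. In other words, you identify the obstacle as a uniformity problem in the reverse H\"older inequality, when in fact Proposition~\ref{p:L-p} is already fully quantitative; the real obstruction is the qualitative nature of Theorem~\ref{t:o(E)}.

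The missing idea, and the route the paper actually takes, is Proposition~\ref{t:regconv}: one builds an explicit Lipschitz competitor $h$ by mollifying $\xii\circ f$ with a one-sided convolution kernel and applying the almost-projection $\bro^\star_\delta$ of Proposition~\ref{p:rho*}, then interpolating back to $f$ on an annulus. This construction uses only the higher integrability estimate (Theorem~\ref{t:L-p}) --- via~\eqref{e:7.4} --- to control the Dirichlet energy of the mollified map on the ``bad'' set $Z$, and yields the rate-controlled comparison~\eqref{e:regconvDirichletEnergyEstimate}, $\int_{B_\sigma\cap D}|Dh|^2\leq\int_{B_\sigma\cap K}|Df|^2+C(E+\bA^2)^{1+\gamma_{12}}$, without ever invoking the harmonic approximation. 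Feeding $h$ into a mass comparison then gives $\be_T(B_s\cap D\setminus K)\leq C(E+\bA^2)^{1+\bar\gamma}$, after which the split is $A=(A\cap K)\cup(A\setminus K)$ (rather than your $\{\bd_T\leq1\}$ / $\{\bd_T>1\}$ split), and higher integrability on $A\cap K$ finishes the argument. Your treatment of $A_1$, incidentally, also silently identifies $\be_T(A_1)$ with $\int_{A_1}\bd_T$, which would require an absolute-continuity argument for $\be_T\res\{\bd_T\leq1\}$; the paper's route via the coincidence set $K$ and the Taylor expansion avoids this.
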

This estimate complements \eqref{e:higher1} enabling to control the excess also in the region where $\mathbf{d}>1$. We call it boundary strong Almgren's estimate because a similar formula in the interior case can be found in the big regularity paper (cf. \cite[Sections 3.24-3.26 and 3.30(8)]{Alm}) and is a strengthened version of Proposition \ref{p:WEE} that we called weak excess estimate. To prove \eqref{e:StrongExcEst} we construct a suitable competitor to estimate the size of the set $\tilde{K}$ where the graph of the $E^{\beta}$ Lipschitz approximation $f$ differs from $T$. Following Almgren, we embed $\Is{Q}$ in a large Euclidean space, via a bilipschitz embedding $\xii$. We then regularize $\xii \circ f$ by convolution and project it back onto $\mathcal{Q}=\xii\left(\Is{Q}\right) .$ To avoid loss of energy we need a rather special "almost projection" $\boldsymbol{\rho}_{\delta}^{\star}$ that preserves zero boundary data, i.e., $\boldsymbol{\rho}_\delta^\star(0)=0$.

\begin{proposition}$($\cite[Proposition 7.2]{DS3}$)$\label{p:rho*} For every $n, Q \in \mathbb{N} \setminus\{0\}$ there are geometric constants $\delta_{0}, C>$ 0 with the following property. For every $\delta \in] 0, \delta_{0}\left[\right.$ there is $\boldsymbol{\rho}_{\delta}^{\star}: \mathbb{R}^{N(Q, n)} \rightarrow \mathcal{Q}=$
$\xii\left(\Is{Q}\right)$ such that $\boldsymbol{\rho}_{\delta}^{\star}(0)=0$, $\left|\boldsymbol{\rho}_{\delta}^{\star}(P)-P\right| \leq C \delta^{8^{-nQ}}$ for all $P \in \mathcal{Q}$ and, for every $u \in W^{1,2}\left(\Omega, \mathbb{R}^{N}\right)$, the following holds:
\begin{equation}\label{e:EnergyEstimaterho*}
\int\left|D\left(\boldsymbol{\rho}_{\delta}^{\star} \circ u\right)\right|^{2} \leq\left(1+C \delta^{8^{-nQ-1}}\right) \int_{\left\{\operatorname{dist}(u, Q) \leq \delta^{nQ+1}\right\}}|D u|^{2}+C \int_{\left\{\operatorname{dist}(u, Q)>\delta^{nQ+1}\right\}}|D u|^{2}.
\end{equation}
\end{proposition}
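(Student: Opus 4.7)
The plan is to construct $\boldsymbol{\rho}_\delta^\star$ as an iterated composition of ``partial retractions'' adapted to the stratification of $\mathcal{Q} = \xii(\Iqs)$ by multiplicity depth. Write
\[
\mathcal{Q} = \mathcal{Q}_0 \supset \mathcal{Q}_1 \supset \ldots \supset \mathcal{Q}_{nQ} = \{0\},
\]
where $\mathcal{Q}_j$ is the closed set of points whose pre-image under $\xii$ consists of at most $Q - j$ distinct values, so that $\mathcal{Q}_{nQ} = \{\xii(Q\a{0})\} = \{0\}$ sits at the bottom. Following \cite{DS1}, each relative stratum $\mathcal{Q}_j \setminus \mathcal{Q}_{j+1}$ is a Lipschitz submanifold of $\mathbb{R}^{N(Q,n)}$ with a well-defined normal bundle, to which one can associate a ``nearest-point'' projection with controlled regularity. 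The main point of the construction is to peel off one stratum at a time while introducing only a $(1+O(\delta^{\text{small}}))$ loss in the tangential Lipschitz constant at each step.

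Concretely I would set
\[
\boldsymbol{\rho}_\delta^\star = R^{(0)} \circ R^{(1)} \circ \cdots \circ R^{(nQ-1)},
\]
where each $R^{(j)}$ equals the identity outside a tubular $\delta^{\alpha_j}$-neighborhood of $\mathcal{Q}_j$, retracts the inner $\delta^{\alpha_j'}$-neighborhood of $\mathcal{Q}_j$ onto $\mathcal{Q}_j$, and fixes $\mathcal{Q}_{j+1}$ pointwise (and in particular fixes the origin). Each $R^{(j)}$ can be built via a partition of unity: choose a cutoff $\varphi_j$ vanishing outside the $\delta^{\alpha_j}$-neighborhood and equal to $1$ on the $\delta^{\alpha_j'}$-neighborhood, with $\|\nabla \varphi_j\|_\infty \lesssim \delta^{-(\alpha_j - \alpha_j')}$, and define $R^{(j)}(P) = P - \varphi_j(P)\, \mathbf{n}_j(P)$, where $\mathbf{n}_j(P)$ denotes the nearest-point vector from $P$ to $\mathcal{Q}_j$. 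Choosing the exponents as a geometric progression --- concretely $\alpha_j = 8^{-j}$ and $\alpha_j'$ slightly smaller --- guarantees that the tangential Lipschitz constant of each $R^{(j)}$ is $1 + C\delta^{8^{-j-1}}$, so that composing $nQ$ of them yields the overall factor $1 + C\delta^{8^{-nQ-1}}$ appearing in \eqref{e:EnergyEstimaterho*}. The cumulative displacement is $\sum_{j=0}^{nQ-1} C\delta^{8^{-j}} \leq C\delta^{8^{-nQ}}$, which matches the stated bound for $P \in \mathcal{Q}$.

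With this construction the energy estimate follows by splitting the integrand according to the position of $u(x)$. On the set $\{\dist(u, \mathcal{Q}) \leq \delta^{nQ+1}\}$, the point $u(x)$ lies within the innermost shell of every $R^{(j)}$, so each factor contributes a $(1+C\delta^{8^{-j-1}})$-Lipschitz perturbation of the identity in the directions tangent to $\mathcal{Q}$, producing the sharp $(1+C\delta^{8^{-nQ-1}})$ coefficient. On the complement, we simply invoke the uniform bound $\mathrm{Lip}(\boldsymbol{\rho}_\delta^\star) \leq C$, which follows because each $R^{(j)}$ is individually $C$-Lipschitz on all of $\mathbb{R}^{N(Q,n)}$ by the $\delta^{-(\alpha_j-\alpha_j')} \cdot \delta^{\alpha_j}$ scaling of $\varphi_j \, \mathbf{n}_j$. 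The identity $\boldsymbol{\rho}_\delta^\star(0) = 0$ is automatic: $0 \in \mathcal{Q}_{nQ} \subset \mathcal{Q}_{j+1}$ for every $j$, and $R^{(j)}|_{\mathcal{Q}_{j+1}} = \mathrm{id}$ by construction.

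The main obstacle is the precise description and regularity of the stratification $\{\mathcal{Q}_j\}$ --- in particular, showing that each $\mathcal{Q}_j \setminus \mathcal{Q}_{j+1}$ is a sufficiently regular submanifold on which a nearest-point projection with bounded gradient exists. This relies on the detailed structural analysis of the image of Almgren's bilipschitz embedding $\xii$ carried out in \cite{DS1}. A secondary delicate point is the sharp tangential Lipschitz estimate for each $R^{(j)}$: since $\varphi_j$ has gradient of order $\delta^{-(\alpha_j-\alpha_j')}$ and $\mathbf{n}_j$ has size of order $\delta^{\alpha_j'}$, the product rule produces a cross-term whose control requires that the nearest-point projection onto $\mathcal{Q}_j$ be differentiable in directions tangent to $\mathcal{Q}_j$, with derivative close to the identity --- a fact which is not obvious at the triple-point strata and which is ultimately the reason for the appearance of the iterated exponent $8^{-nQ}$.
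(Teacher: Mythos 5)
Your proposal takes a genuinely different route from the paper, but it contains a substantive gap. Note first that the paper does not prove this proposition at all: the result is quoted verbatim from \cite[Proposition 7.2]{DS3} (which in turn builds on \cite[Theorem 2.1]{DS1}), and the paper's proof consists of the single sentence ``$\boldsymbol{\rho}_\delta^\star$ is the projection obtained in \cite[Proposition 7.2]{DS3}.'' You instead attempt a from-scratch construction, which is a far larger undertaking, and the particular stratification you use does not work. Defining $\mathcal{Q}_j$ as the image of the set of $Q$-points with at most $Q-j$ distinct atoms gives a filtration of depth at most $Q$, not $nQ$, and its bottom stratum is $\xii(\{Q\a{p}: p\in\R^n\})$, a bilipschitz copy of $\R^n$ rather than $\{0\}$. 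Consequently the claim $\mathcal{Q}_{nQ}=\{0\}$ is false, the accounting ``$nQ$ retractions, each losing a factor $1+C\delta^{8^{-j-1}}$'' does not match the stated $8^{-nQ}$ exponent, and the assertion that every $R^{(j)}$ fixes $0$ by virtue of $0\in\mathcal{Q}_{j+1}$ does not follow from your filtration. The actual source of the $8^{-nQ}$ in \cite{DS1} is Almgren's explicit decomposition of $\mathcal{Q}$ into simplicial cells of dimension up to $nQ$ and the associated inductive retraction over cells, which is a combinatorial rather than multiplicity-based stratification.

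Beyond the depth issue there are two further gaps you acknowledge but do not resolve, and which are in fact the whole content of the cited result. First, the strata of $\mathcal{Q}$ are only bilipschitz images under $\xii$, so the nearest-point map $\mathbf{n}_j$ onto $\mathcal{Q}_j$ need not be single-valued or Lipschitz; in particular the product-rule estimate for $D(\varphi_j\,\mathbf{n}_j)$ on which your ``tangential Lipschitz constant $1+C\delta^{8^{-j-1}}$'' hinges requires a regularity statement about $\mathbf{n}_j$ that is not available for bilipschitz sets in general. Second, for the composition, being in a $\delta^{nQ+1}$-neighborhood of $\mathcal{Q}$ does not imply being near the lower strata $\mathcal{Q}_j$ (which are codimension $\geq 1$ in $\mathcal{Q}$), so a point in $\{\dist(u,\mathcal{Q})\le\delta^{nQ+1}\}$ is not automatically in the innermost shell of every $R^{(j)}$; the iterated retraction must instead track how the image of a point migrates through the cell complex, which is precisely the delicate bookkeeping carried out in \cite{DS1}. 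Finally, the property $\boldsymbol{\rho}_\delta^\star(0)=0$ is the one additional ingredient that \cite[Proposition 7.2]{DS3} adds over \cite[Theorem 2.1]{DS1}, and it requires a separate modification of the retraction near $0$; it cannot be read off from the strata as you suggest once the filtration is corrected.
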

\begin{proof} $\rho_\delta^\star$ is the projection obtained in \cite[Proposition 7.2]{DS3}.\end{proof}

Here we show the Strong Excess Approximation of Almgren in our version that takes into account the non-homogeneous boundary value problem,  concluding in this way the proof of Theorem \ref{t:strong_Lipschitz}. Theorem \ref{t:L-p} enters crucially in the argument when estimating the second summand of \eqref{e:EnergyEstimaterho*} for the regularization of $\xii \circ f$.
\subsection{Regularization by convolution with a non centered kernel}
Here we construct the competitor preserving the boundary conditions. 

\begin{proposition}\label{t:regconv} Let $\beta_{1} \in\left(0, \frac{1}{4}\right)$ and $T$ be an area minimizing current satisfying Assumption \ref{Ass:app} in $\mathbf{C}_{4} .$ Let $f$ be its $E^{\beta_1}$-Lipschitz approximation. Then, there exist constants $\bar{\varepsilon}_{12}, \gamma_{12}, C>0$ and a subset of radii $B \subset[9 / 8,2]$ with $|B|>1 / 2$ with the following properties. If $\mathbf{E}\left(T, \mathbf{C}_{4}\right)\leq \bar{\varepsilon}_{12},$ for every $\sigma \in B,$ there exists a $Q$-valued function $h \in \operatorname{Lip}\left(B_{\sigma}\cap D, \Is{Q}\right)$ such that
\begin{align*}
h |_{B_{\sigma}\cap\partial D}&=g,\\
h|_{\partial B_{\sigma}\cap D}&=f|_{\partial B_{\sigma}\cap D},\\
\operatorname{Lip}(h) &\leq C (E+ \mathbf{A}^2)^{\beta_1}, 
\end{align*}
and
\begin{equation}\label{e:regconvDirichletEnergyEstimate}
\int_{B_{\sigma}\cap D}|D h|^{2} \leq \int_{B_{\sigma} \cap K\cap D}|D f|^{2}+C \left(E+\mathbf{A}^{2}\right)^{1+\gamma_{12}}.
\end{equation}
\end{proposition}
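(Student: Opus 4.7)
\medskip

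\noindent\textbf{Proof plan for Proposition \ref{t:regconv}.} My approach is to adapt the interior argument of \cite[Theorem 7.3]{DS3}, which is built on Almgren's ``regularization by convolution'' scheme, to accommodate the non-zero boundary data $g$ on $\partial D \cap B_\sigma$. The first step is to reduce to the homogeneous boundary case by subtracting $g$: set $\bar f := \sum_i \a{f_i - g}$, so that $\bar f|_{\partial D \cap B_{3r}} = Q\a{0}$. Since $g$ is the harmonic extension of the graph of $\bar\psi$, we have $\|Dg\|_{L^\infty} \leq C\bA$, so the Dirichlet energies and Lipschitz constants of $f$ and $\bar f$ differ by terms controlled by $\bA^2$, which can be absorbed into the final error $(E+\bA^2)^{1+\gamma_{12}}$ provided $\gamma_{12}$ is chosen small enough.

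Next, I would embed via $\xii$ (chosen so that $\xii(Q\a{0})=0$) and convolve $\xii\circ\bar f$ with a non-centered mollifier $\varphi_\rho$ of scale $\rho \sim (E+\bA^2)^{\alpha}$ whose support lies inside the exterior of $D$, exactly as in Proposition \ref{p:competitor}. Extending $\xii\circ\bar f$ by $0$ outside $D$, the convolution $(\xii\circ\bar f)\ast\varphi_\rho$ then still vanishes on $\partial D$. Applying the special projection $\rho_\delta^\star$ of Proposition \ref{p:rho*} (which preserves $0$) yields a map $\bar h := \xii^{-1}\circ\rho_\delta^\star\circ((\xii\circ\bar f)\ast\varphi_\rho)$ taking values in $\Iqs$ with $\bar h|_{\partial D} = Q\a{0}$ and with Lipschitz constant bounded by $C\,\Lip(\bar f)$. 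A Fubini-type argument over $\sigma\in[9/8,2]$ then selects a set $B$ of measure larger than $1/2$ on which the radial slice integrals of $|Df|^2$ and $\cG(\bar h,\bar f)^2$ are simultaneously controlled by the bulk integrals.

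For such a $\sigma\in B$, I would use the interpolation lemma (Lemma \ref{l:InterpolationSimpler} applied on a thin annulus at $\partial B_\sigma$) to glue $\bar h$ (inside a slightly smaller ball) to $\bar f$ on $\partial B_\sigma \cap D$ while preserving the zero boundary data on the horizontal side, then add $g$ back to obtain $h$. The key energy estimate \eqref{e:regconvDirichletEnergyEstimate} follows by combining \eqref{e:EnergyEstimaterho*} with the Taylor expansion of \cite[Proposition 3.2]{DS2}: on the ``good set'' $\{\mathrm{dist}((\xii\circ\bar f)\ast\varphi_\rho,\mathcal{Q})\leq \delta^{nQ+1}\}$ the mollification loses at most a factor $1+C\delta^{8^{-nQ-1}}$ of the Dirichlet energy, which after optimizing $\delta$ produces a gain of order $(E+\bA^2)^{\gamma_{12}}$; the contribution of the interpolation annulus is bounded by the slice-energy integrals controlled in the Fubini step.

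The principal obstacle is estimating the contribution on the ``bad set'' $\Sigma := \{\mathrm{dist}((\xii\circ\bar f)\ast\varphi_\rho,\mathcal{Q})> \delta^{nQ+1}\}$, where the integrand in \eqref{e:EnergyEstimaterho*} loses the multiplicative factor $(1+C\delta^{8^{-nQ-1}})$ and is only controlled by the full $L^2$ norm of $D((\xii\circ\bar f)\ast\varphi_\rho)$. The key point is that on $\Sigma$ the convolution has moved significantly in the ambient space, which forces $\bar f$ itself to have either large gradient or large local oscillation on a ball of radius $\rho$ around the given point; hence $\Sigma$ is contained in an enlargement of the set $\{\me_T>c\delta^{2(nQ+1)}\}$ in the notation of Proposition \ref{p:Lipschitz_1}. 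Using the higher integrability of $\bd_T$ from Theorem \ref{t:L-p} together with Chebyshev's inequality, $|\Sigma|$ is polynomially small in $E+\bA^2$, and a H\"older inequality combined with the $L^{2q}$ bound on $|D\bar f|$ (from Proposition \ref{p:L-p} applied to the harmonic approximation, transferred to $f$ via Theorem \ref{t:o(E)}) yields the desired power gain $(E+\bA^2)^{\gamma_{12}}$. Balancing the exponents $\rho$, $\delta$, and $\beta_1$ to produce a single $\gamma_{12}>0$ is the essentially combinatorial part of the argument, carried out as in \cite[Section 7]{DS3}.
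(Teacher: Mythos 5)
Your proposal is structurally aligned with the paper's proof: both embed via $\xii$, convolve with a one-sided (non-centered) kernel, apply the special projection $\rho^\star_\delta$ of Proposition~\ref{p:rho*}, control the bad set using the higher integrability of Theorem~\ref{t:L-p}, and glue the competitor to $f$ on a Fubini-selected sphere. The key difference — and the place where your reasoning breaks down — is the reduction step.

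You propose to pass to $\bar f := \sum_i \a{f_i - g}$, argue the scheme on $\bar f$, and then add $g$ back to get $h$. You justify this by asserting that because $\|Dg\|_\infty \leq C\bA$, the Dirichlet energies of $f$ and $\bar f$ ``differ by terms controlled by $\bA^2$, which can be absorbed into the final error $(E+\bA^2)^{1+\gamma_{12}}$ provided $\gamma_{12}$ is chosen small enough.'' Both halves of this claim are false. The cross term in the Dirichlet energy, $2Q\int D(\etaa\circ\bar f)\cdot Dg$, is of size $O(\bA\,\|D(\etaa\circ\bar f)\|_{L^2}) = O(\bA\,(E+\bA^2)^{1/2}) = O(E+\bA^2)$, not $O(\bA^2)$; and even a genuinely $O(\bA^2)$ term cannot be absorbed into $(E+\bA^2)^{1+\gamma_{12}}$ for $\gamma_{12}>0$, since taking $E=0$ gives $\bA^2 > (\bA^2)^{1+\gamma_{12}}$. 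More to the point, when you compare $\int_{B_\sigma\cap D}|Dh|^2$ to $\int_{B_\sigma\cap K}|Df|^2$, the relevant quantity is the \emph{difference} of cross terms $2Q\int D(\etaa\circ\bar h - \etaa\circ\bar f)\cdot Dg$, and the convolve–project machinery only gives an $L^1$ bound on $\etaa\circ\bar h - \etaa\circ\bar f$ of order $(E+\bA^2)^c$ for a very small $c>0$ (dictated by $\delta^{8^{-nQ}}$), so the naive estimate on the cross term is $O\bigl(\bA(E+\bA^2)^c\bigr)$, which overshoots $(E+\bA^2)^{1+\gamma_{12}}$ by a wide margin. The one way to rescue your reduction is to note that since you chose $g$ to be the \emph{harmonic} extension, you can integrate the cross term by parts: $\int D(\etaa\circ\bar h - \etaa\circ\bar f)\cdot Dg = -\int(\etaa\circ\bar h - \etaa\circ\bar f)\Delta g + \text{bdry}$, and both pieces vanish because $\Delta g = 0$ and $\etaa\circ\bar h = \etaa\circ\bar f$ on all of $\partial(D\cap B_\sigma)$ (on $\partial D$ by the zero boundary data, on $\partial B_\sigma$ by the gluing). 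This mechanism is what saves the argument, but your proposal never invokes it; the stated ``absorbed because $O(\bA^2)$'' justification does not hold. The paper avoids the issue altogether by subtracting the full average $\etaa\circ f$ rather than $g$, so that the multi-valued competitor $\hat h$ is exactly mean-free; the single-valued average is then handled separately via $\tilde h_\varepsilon = (\etaa\circ f)*\varphi_\varepsilon - g*\varphi_\varepsilon + g$, and $\D(h) = \D(\hat h) + Q\D(\tilde h_\varepsilon)$ splits with no cross term whatsoever.

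A secondary omission: Lemma~\ref{l:InterpolationSimpler} is formulated for flat half-disks with straight horizontal boundary $\cH_r$, whereas $D\cap B_\sigma$ has the curved lower boundary $\gamma$. To apply the interpolation lemma as you propose you must first flatten $\partial D$ by the diffeomorphism $\Phi$ of Lemma~\ref{l:change-of-variable}; the paper does precisely this (composing the linear interpolations with $\Phi$ and $\Phi^{-1}$) in the two outer annular layers of the construction of $\tilde g'_{\delta,\varepsilon,s}$.
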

\begin{proof} Since $|D f|^{2} \leq C \mathbf{d}_{T} \leq C E^{2\beta_1} \leq 1$ on $K,$ by Theorem \ref{t:L-p} there
is $q_{1}=2 p_{1}>2$ such that
\begin{equation}\label{e:7.4}
\| |D f|\|_{L^{q_{1}}\left(K \cap B_{2}\right)}^{2} \leq C\left(E+\mathbf{A}^{2}\right).
\end{equation}
Given two (vector-valued) functions $h_{1}$ and $h_{2}$ and two radii $0<\bar{r}<r,$ we denote by $\operatorname{lin}\left(h_{1}, h_{2}\right)$ the linear interpolation in $B_{r} \setminus \bar{B}_{\bar{r}}$ between $\left.h_{1}\right|_{\partial B_{r}}$ and $\left.h_{2}\right|_{\partial B_{\bar{r}}} .$ More
precisely, if $(\theta, t) \in \mathbb{S}_+^{m-1} \times[0, \infty)$ are spherical coordinates, then
$$
\operatorname{lin}\left(h_{1}, h_{2}\right)(\theta, t)=\frac{r-t}{r-\bar{r}} h_{2}(\theta, t)+\frac{t-\bar{r}}{r-\bar{r}} h_{1}(\theta, t).
$$
Next, let $\delta>0$ and $\varepsilon>0$ be two parameters and let $1<r_{1}<r_{2}<r_{3}<2$ be three radii, all to be chosen suitably later. 
First of all extend the function $g$ to the whole disk $B_3$ by making it coinstant in the direction $x_2$, i.e. $g (x_1, x_2) = g (x_1, \psi_1 (x_1))$. 
We then extend the $E^{\beta_1}$-Lipschitz approximation to a function $f^*$ defined on the entire $B_3$ by setting 
\[
f^* (x) =\left\{
\begin{array}{ll}
f(x) \qquad &\mbox{if $x\in B_3\cap D$}\\
Q\a{g(x)} \qquad &\mbox{if $x\in B_3\cap D^-$}\, .
\end{array}\right.
\]
From now to keep our notation simpler we denote $f^*$ as well by $f$. Observe moreover that
\[
\left.(\boldsymbol{\eta}\circ f)\right|_{D^-} = g\, .
\]
We next define a translation operator
$\oplus: A_{Q}\left(\mathbb{R}^N\right) \times \mathbb{R}^N\rightarrow \mathcal{A}_{Q} \left(\mathbb{R}^N\right)$ setting
\[
\quad T \oplus t=\sum_{i=1}^{Q} \a{t_{i}+t} \qquad\mbox{for }\;\; T=\sum_{i=1}^{Q} \a{t_i}\, .
\]
We then introduce
$\tilde{f}:=f\oplus (-\etaa\circ f)$, so that
$\tilde{f}|_{D^-}=Q\a{0}$ and $\boldsymbol{\eta}\circ\tilde{f}=0$.

Next we define, as in the proof of Proposition \ref{p:competitor},  $\varphi_\varepsilon(x):=\frac1{\varepsilon^n}\varphi(\frac x\varepsilon)$, and $\varphi(x)=\bar{\varphi}(x-z_0)$ with $\bar{\varphi}$ being the standard bump function with support in $B_1 (0)$ and $z_0:=(0,-2)$. 
We therefore set
\[
 \tilde{h}_\varepsilon:=(\boldsymbol{\eta}\circ f)*\varphi_\varepsilon- g *\varphi_\varepsilon+ g\, .
\]
We easily see that $(\tilde{h}_\varepsilon)|_{\partial D\cap B_{r_3}}=g|_{\partial D\cap B_{r_3}}$, and 
\[
\Lip(\tilde{h}_{\varepsilon})\le C(E+\bA^2)^{\beta_1}\,. 
\]
Recall the maps $\boldsymbol{\rho}_\delta^\star$ and $\xii$ of $[\mathrm{LS} 11 \mathrm{~b},$ Theorem 2.1$]$ and observe that $\xii(Q\a{0})=0$ and $\boldsymbol{\rho}_{\delta}^{\star}(0_{\mathbb{R}^n})=0_{\mathbb{R}^n}$.
We then set $\tilde{f}'_1:= \xii \circ \tilde{f}$
$$
\tilde{g}^{\prime}_{\delta, \varepsilon, s}:=\left\{\begin{array}{ll}
\sqrt{E+\bA^2} \boldsymbol{\rho}\circ\Phi\circ\operatorname{lin}\left(\frac{\tilde{f}_1^\prime\circ\Phi^{-1}}{\sqrt{E+\bA^2}}, \boldsymbol{\rho}^{\star}_\delta\left(\frac{\tilde{f}_1^\prime\circ\Phi^{-1}}{\sqrt{E+\bA^2}}\right)\right), & \text { in } (B_{r_{3}}\setminus B_{r_{2}})\cap D, \\
\sqrt{E+\bA^2} \boldsymbol{\rho}\circ\Phi\circ \operatorname{lin}\left(\boldsymbol{\rho}_{\delta}^{\star}\left(\frac{\tilde{f}_1^\prime\circ\Phi^{-1}}{\sqrt{E+\bA^2}}\right), \boldsymbol{\rho}_{\delta}^{\star}\left((\frac{\tilde{f}_1^\prime* \varphi_{\varepsilon}) \circ \Phi^{-1}}{\sqrt{E+\bA^2}}\right) \right), & \text { in } (B_{r_{2}}\setminus B_{r_{1}})\cap D,\\
\sqrt{E+\bA^2} \boldsymbol{\rho}_\delta^{\star}\left(\frac{\tilde{f}_1^{\prime}*\varphi_{\varepsilon}}{\sqrt{E+\bA^2}} \right), & \text { in } B_{r_1}\cap D,
\end{array}\right.
$$ 
where $\Phi$ is the diffeomorphism constructed in Proposition \ref{l:change-of-variable}.
Now, we define 
\begin{equation}
\hat{h}_{\delta, \varepsilon, s}:=\sum_{i=1}^Q\a{\left(\xii^{-1}\circ\tilde{g}^{\prime}_{\delta, \varepsilon, s}\right)_i-\etaa\circ\left(\xii^{-1}\circ\tilde{g}^{\prime}_{\delta, \varepsilon, s}\right)}, \text { in } B_{r_{3}}\cap D,
\end{equation}
and 
\begin{equation}\label{e:h_delta}
h_{\delta, \varepsilon, s}:=\sum_{i=1}^Q\a{\left(\xii^{-1}\circ\tilde{g}^{\prime}_{\delta, \varepsilon, s}\right)_i-\etaa\circ\left(\xii^{-1}\circ\tilde{g}^{\prime}_{\delta, \varepsilon, s}\right)+\tilde{h}_{\varepsilon}}, \text { in } B_{r_{3}}\cap D.
\end{equation}
Notice that the convolution of any function $u$ satisfying $u_{|B_3\setminus D}\equiv 0$ with $\varphi_\varepsilon$ for $\varepsilon$ small enough always produces smooth function $u*\varphi_\varepsilon$ satisfying $(u*\varphi_\varepsilon)_{|B_3\setminus D}\equiv 0$, because we have assumed that $\partial D$ is the graph of a Lipschitz function and so it stays inside a cone with fixed angles. With this last fact in mind it is easy to see that $(\tilde{g}^{\prime}_\delta)_{|\partial D}=0$, and $(h_\delta)_{|\partial D}=g$, $\etaa\circ\hat{h}_{\delta, \varepsilon, s}=0$. We will prove that, for $\sigma:=r_{3}$ in a suitable set $B \subset[9 / 8,2]$ with $|B|>1 / 2,$ we can choose $r_{2}=r_{3}-s$ and $r_{1}=r_{2}-s$ so that $h$ satisfies the conclusion of the proposition. Our choice of the parameters will imply the following inequalities:
\begin{equation}\label{e:preliminarybounds}
\delta^{2 \cdot 8^{-nQ}} \leq s, \quad \varepsilon \leq s, \quad \text { and } \quad E^{1-2 \beta_{1}} \leq \varepsilon^{2}.
\end{equation}
We estimate the Lipschitz constant of $\tilde{g}^{\prime}_\delta$. This can be easily done observing that
\begin{itemize}
    \item in $B_{r_{1}}\cap D$, we have  
    \[\operatorname{Lip}\left(\tilde{g}_\delta^\prime\right) \leq C \operatorname{Lip}\left(\tilde{f}_1^{\prime} * \varphi_\varepsilon\right) \leq C \operatorname{Lip}\left(\tilde{f}_1^{\prime}\right) \leq C (E+ \mathbf{A}^2)^{\beta_1},\]
    \item in $(B_{r_{2}}\setminus B_{r_{1}})\cap D$, we have  
    \[\operatorname{Lip}\left(\tilde{g}_\delta^\prime\right) \leq C \operatorname{Lip}\left(\tilde{f}_1^{\prime}\right)+C \frac{\left\|\tilde{f}_1^{\prime}-\tilde{f}_1^{\prime} * \varphi_\varepsilon\right\|_{L^\infty}}{s} \leq C\left(1+\frac{\varepsilon}{s}\right) \operatorname{Lip}\left(\tilde{f}_1^{\prime}\right) \leq C (E+\mathbf{A}^2)^{\beta_1},\]
    \item in $(B_{r_{3}}\setminus B_{r_{2}})\cap D$, we have 
        \begin{equation}\label{e:LipschitzEstimateInEuclideanSpace}
            \operatorname{Lip}\left(\tilde{g}_\delta^{\prime}\right) 
            \ \leq \ C \operatorname{Lip}\left(\tilde{f}_1^{\prime}\right)+C (E+\bA^2)^{\sfrac{1}{2}} \frac{\delta^{8^{-nQ}}}{s} 
            \ \leq \ C E^{\beta_{1}}+C (E+\bA^2)^{\sfrac{1}{2}} 
            \ \leq \ C (E+\bA^2)^{\beta_1}\,.
        \end{equation}
\end{itemize}
In the first inequality of the last line we have used that, since $\mathcal{Q}$ is a cone, $(E+\bA^2)^{-\sfrac{1}{2}}\tilde{f}_1^{\prime}(x) \in$ $\mathcal{Q}$ for every $x$, hence \[
\left|\boldsymbol{\rho}_{\delta}^{\star}\left(\frac{\tilde{f}_1^{\prime}}{\sqrt{E+\bA^2}}\right)-\frac{\tilde{f}_1^{\prime}}{\sqrt{E+\bA^2}}\right| \leq C \delta^{8^{-nQ}}\, .
\]
From \eqref{e:LipschitzEstimateInEuclideanSpace} and \eqref{e:preliminarybounds} we deduce easily that $\tilde{g}_\delta^\prime$ is continuous and piecewise Lipschitz and so globally Lipschitz and furthermore that
\begin{equation}\label{e:LipschitzPreliminaryEstimates}
\Lip(h_{\delta, \varepsilon, s})\le C(E+\bA^2)^{\beta_1}.
\end{equation} 

In the following Steps $1$-$3$ we estimate the Dirichlet energy of $h_{\delta,\varepsilon,s}$ and finally in Step $4$ we obtain the desired estimate \eqref{e:regconvDirichletEnergyEstimate} of Theorem \ref{t:regconv} for a suitable choice of $\delta,\varepsilon,s$ depending on some powers of the infinitesimal quantity $E$ (see \eqref{e:SuitablePowersOfTheExcess} below). Before we realize this program, we recall that for every $f\in W^{1,2}(\Omega,\Is{Q})$ we have
\begin{equation}\label{e:DirEnergyForMeanTranslated} 
0\le\D(f\oplus(-\etaa\circ f))=\D(f)-Q\D(\etaa\circ f).
\end{equation}
We write here the estimate of the Dirichlet energy of $\tilde{h}_\varepsilon$ which will be useful in combination with \eqref{e:DirEnergyForMeanTranslated}.
\begin{eqnarray}
\int\left|Dg * \varphi_{\varepsilon}-Dg\right|^{2} 
&\le &C \mathbf{A}^{2} \varepsilon^{2},\label{e:Reminder0}\\
\left\|Dg * \varphi_{\varepsilon}-Dg\right\|_{\infty}
&\le& C \| D^{2}g\|_{\infty} \varepsilon \ \leq \ C \mathbf{A}\varepsilon,\notag\\
\left|\int\left(Dg * \varphi_{\varepsilon}-D g\right)\left(D(\etaa\circ f) * \varphi_{\varepsilon}\right)\right| & \le &
C \mathbf{A}\varepsilon\int|D(\etaa\circ f) * \varphi_{\varepsilon}| \nonumber \\
& \stackrel{\text{Rem.} \ref{r:5.5DS3}}{\le} & C \mathbf{A}\varepsilon(E+\mathbf{A}^2)^{\frac12} \notag \\\label{e:Reminder}
& \stackrel{\text{Young}}{\le} & C \varepsilon (E+\bA^2)\, .
\end{eqnarray}
Summing \eqref{e:Reminder}, \eqref{e:Reminder0}, we obtain
\begin{align*}
    \int|D\tilde{h}_\varepsilon|^2 & = \int|D(\etaa\circ f)*\varphi_\varepsilon|^2+\int\left|Dg * \varphi_{\varepsilon}-Dg\right|^{2}
 - 2\int\left(Dg*\varphi_\varepsilon-Dg\right)\left(D(\etaa\circ f) * \varphi_{\varepsilon}\right)\\
& \le \int|D(\etaa\circ f)|^2+C \mathbf{A}^{2} \varepsilon^{2}+C \varepsilon (E+\bA^2)\\
& \le C\int|Df|^2+C\varepsilon(E+ \mathbf{A}^{2})\, .
\end{align*}

\textbf{Step 1. Energy in} $B_{r_{3}} \setminus B_{r_{2}}$.  By Proposition \ref{p:rho*}, we have $\left|\boldsymbol{\rho}_{\delta}^{\star}(P)-P\right| \leq$ $C \delta^{8^{-nQ}}$ for all $P \in \mathcal{Q}:=\xii(\Is{Q})$. Thus, elementary estimates on the linear interpolation give
\begin{eqnarray}
\int_{(B_{r_{3}} \setminus B_{r_{2}})\cap D}\left|D\tilde{g}_{\delta}^{\prime}\right|^{2} 
&\leq & \frac{C (E+\bA^2)}{\left(r_{3}-r_{2}\right)^{2}} \int_{(B_{r_{3}} \setminus B_{r_{2}})\cap D}\left|\frac{\tilde{f}_1^{\prime}}{\sqrt{E+\bA^2}}-\boldsymbol{\rho}_{\delta}^{\star}\left(\frac{\tilde{f}_1^{\prime}}{\sqrt{E+\bA^2}}\right)\right|^{2}\notag \\
&& +C \int_{(B_{r_{3}} \setminus B_{r_{2}})\cap D}\big|D \tilde{f}_1^{\prime}\big|^{2} + C \int_{(B_{r_{3}} \setminus B_{r_{2}})\cap D}\left|D\left(\boldsymbol{\rho}_{\delta}^{\star}\circ \tilde{f}_1^{\prime}\right)\right|^{2} \notag \\
&\leq & C \int_{(B_{r_{3}}\setminus B_{r_{2}})\cap D}\big|D \tilde{f}_1^{\prime}\big|^{2}+C (E+\bA^2) s^{-1} \delta^{2 \cdot 8^{-nQ}}.
\end{eqnarray}
Hence, using that $\Lip(\xii) \leq 1$ and \eqref{e:DirEnergyForMeanTranslated}, we estimate
\begin{align}\nonumber
\int_{(B_{r_{3}} \setminus B_{r_{2}})\cap D}\left|Dh_{\delta,\varepsilon,s}\right|^{2} & = \int_{(B_{r_{3}} \setminus B_{r_{2}})\cap D}\left|D\hat{h}_{\delta,\varepsilon,s}\right|^{2}+Q\int_{(B_{r_{3}} \setminus B_{r_{2}})\cap D}\left|D\tilde{h}_\varepsilon\right|^{2}\\ \nonumber
& \le \int_{(B_{r_3} \setminus B_{r_2})\cap D}\left|D\tilde{g}_{\delta}^{\prime}\right|^{2}-Q\int\etaa+C\int_{(B_{r_3} \setminus B_{r_2})\cap D}\left|D\tilde{h}_\varepsilon\right|^2\\ 
 \label{e:EnergyEstimateExteriorLayer}
& \le C \int_{(B_{r_{3}} \setminus B_{r_{2}})\cap D}\left|D f\right|^{2}+C (E+\bA^2) \left(\varepsilon + s^{-1} \delta^{2 \cdot 8^{-nQ}} \right). 
\end{align}

\textbf{Step 2. Energy in} $B_{r_{2}} \setminus B_{r_{1}}$. Here, using the same interpolation inequality and a standard estimate on convolutions of $W^{1,2}$ functions, we get
$$
\begin{aligned}
\int_{(B_{r_{2}} \setminus B_{r_{1}})\cap D}\left|D\tilde{g}_{\delta}^{\prime}\right|^{2} & \leq C \int_{(B_{r_{2}+\varepsilon} \setminus B_{r_{1}-\varepsilon})\cap D}\left|D\tilde{f}_1^{\prime}\right|^{2}+\frac{CC_\Phi}{\left(r_{2}-r_{1}\right)^{2}} \int_{B_{r_{2}} \setminus B_{r_{1}}}\left|\tilde{f}_1^{\prime}-\varphi_{\varepsilon} * \tilde{f}_1^{\prime}\right|^{2} \\
& \leq CC_\Phi \int_{(B_{r_{2}+\varepsilon} \setminus B_{r_{1}-\varepsilon})\cap D}\big|D \tilde{f}_1^{\prime}\big|^{2}+C C_\Phi\varepsilon^{2} s^{-2} \int_{B_{3}\cap D}\big|D \tilde{f}_1^{\prime}\big|^{2} \\
& \leq C \int_{(B_{r_{2}+\varepsilon} \setminus B_{r_{1}-\varepsilon})\cap D}\big|D \tilde{f}_1^{\prime}\big|^{2}+C \varepsilon^{2} (E+\bA^2) s^{-2}\\
& \le C \int_{(B_{r_{2}+\varepsilon} \setminus B_{r_{1}-\varepsilon})\cap D}\left|D f\right|^{2}+C \varepsilon^{2} (E+\bA^2) s^{-2}.
\end{aligned}
$$
So coming back to the energy estimate on $h_{\delta,\varepsilon,s}$ we get 
\begin{align}\nonumber
\int_{(B_{r_{2}} \setminus B_{r_{1}})\cap D}\left|Dh_{\delta,\varepsilon,s}\right|^{2} & = \int_{(B_{r_2} \setminus B_{r_1})\cap D}\left|D\hat{h}_{\delta,\varepsilon,s}\right|^{2}+Q\int_{(B_{r_2} \setminus B_{r_1})\cap D}\left|D\tilde{h}_\varepsilon\right|^{2}\\\nonumber
 & \le \int_{(B_{r_2} \setminus B_{r_1})\cap D}\left|D\tilde{g}_{\delta}^{\prime}\right|^{2}+C\int_{(B_{r_2} \setminus B_{r_1})\cap D}\left|D\tilde{h}_\varepsilon\right|^2\\
& \le C \int_{(B_{r_{2}+\varepsilon} \setminus B_{r_{1}-\varepsilon})\cap D}\left|D f\right|^{2}+ C \varepsilon^{2} (E+\bA^2) s^{-2}+ C \varepsilon (E+\bA^2)\, .
\label{e:energyestimate2}
\end{align}

\textbf{Step 3. Energy in} $B_{r_{1}} .$ Define $Z:=\left\{\operatorname{dist}\left(\frac{\tilde{f}_1^{\prime}}{\sqrt{E}} * \varphi_\varepsilon, \mathcal{Q}\right)>\delta^{nQ+1}\right\}\subseteq D$ and use \eqref{e:EnergyEstimaterho*}
to get
\begin{align}\label{e:7.11}
\int_{B_{r_{1}}\cap D}\left|D \tilde{g}_\delta^{\prime}\right|^{2} 
&\leq\left(1+C \delta^{8^{-\bar{n} Q-1}}\right) \int_{(B_{r_{1}}\cap D) \setminus Z}\left|D\left(\tilde{f}_1^{\prime} * \varphi_{\varepsilon}\right)\right|^{2}+C \int_{Z}\left|D\left(\tilde{f}_1^{\prime} * \varphi_{\varepsilon}\right)\right|^{2}\\
&=: I_{1}+I_{2}. \notag
\end{align}
We consider $I_{1}$ and $I_{2}$ separately. For $I_{1}$ we first observe the elementary inequality
\begin{align}\nonumber
\left\|D\left(\tilde{f}_1^{\prime} * \varphi_\varepsilon\right)\right\|_{L^{2}}^{2} 
&\le \left\|(D \tilde{f}_1^{\prime} ) * \varphi_\varepsilon\right\|_{L^{2}}^{2} \\
&\leq\left\|\left(\big|D \tilde{f}_1^{\prime}\big| \mathbf{1}_{K}\right) * \varphi_\varepsilon\right\|_{L^{2}}^{2}+\left\|\left(\big|D \tilde{f}_1^{\prime}\big| \mathbf{1}_{K^{c}}\right) * \varphi_\varepsilon\right\|_{L^{2}}^{2} \notag \\
&\quad +  2\left\|\left(\big|D \tilde{f}_1^{\prime}\big| \mathbf{1}_{K}\right) * \varphi_\varepsilon\right\|_{L^{2}}\left\|\left(\big| D \tilde{f}_1^{\prime}\big| \mathbf{1}_{K^{c}}\right) * \varphi_\varepsilon\right\|_{L^{2}},\label{e:7.12}
\end{align}
where $K^{c}$ is the complement of $K$ in $D$. Recalling $r_{1}+\varepsilon \leq r_{1}+s=r_{2}$ we estimate the first summand in \eqref{e:7.12} as follows:
\begin{equation}\label{e:7.13}
\left\|\left(\big|D \tilde{f}_1^{\prime}\big| \mathbf{1}_{K}\right) * \varphi_\varepsilon\right\|_{L^{2}\left(B_{r_{1}}\cap D\right)}^{2} \leq \int_{B_{r_{1}+\varepsilon}\cap D}\left(\big|D \tilde{f}_1^{\prime}\big| \mathbf{1}_{K}\right)^{2} \leq \int_{B_{r_{2}} \cap K}\big|D \tilde{f}_1^{\prime}\big|^{2}.
\end{equation}
In order to treat the other terms, recall that $\operatorname{Lip}\left(\tilde{f}_1^{\prime}\right) \leq C (E+\bA^2)^{\beta_{1}}$ and $\left|K^{c}\right| \leq C (E+\bA^2)^{1-2 \beta_{1}}$. Thus, we have
\begin{align}
\left\|\left(\big|D \tilde{f}_1^{\prime}\big| \mathbf{1}_{K^{c}}\right) * \varphi_\varepsilon\right\|_{L^{2}\left(B_{r_{1}}\cap D\right)}^{2} &\leq C (E+\bA^2)^{2 \beta_{1}}\left\|\mathbf{1}_{K^{c} }* \varphi_\varepsilon\right\|_{L^{2}}^{2}\nonumber\\
&\leq C (E+\bA^2)^{2 \beta_{1}}\left\|\mathbf{1}_{K^{c}}\right\|_{L^{1}}^{2}\left\|\varphi_\varepsilon\right\|_{L^{2}}^{2} \leq \frac{C (E+\bA^2)^{2-2 \beta_{1}}}{\varepsilon^{2}}.\label{e:7.14}
\end{align}
Putting \eqref{e:7.13} and \eqref{e:7.14} in \eqref{e:7.12} and recalling $(E+\bA^2)^{1-2 \beta_{1}} \leq \varepsilon^{2}$ and $\int\big|D \tilde{f}_1^{\prime}\big|^{2} \leq C (E+\bA^2)$, we get
\begin{equation}\label{e:7.15}
I_{1} \leq \int_{B_{r_{2}} \cap K}\big|D \tilde{f}_1^{\prime}\big|^{2}+C \delta^{8^{-\bar{n} Q-1}} (E+\bA^2) +C \varepsilon^{-1} (E+\bA^2)^{3 / 2-\beta_{1}}.
\end{equation}
For what concerns $I_{2}$, first we argue as for $I_{1}$, splitting in $K$ and $K^{c}$, to deduce that
\begin{equation}\label{e:7.16}
I_{2} \leq C \int_{Z}\left(\left(\big|D \tilde{f}_1^{\prime}\big| \mathbf{1}_{K}\right) * \varphi_\varepsilon\right)^{2}+C \varepsilon^{-1} (E+\bA^2)^{3 / 2-\beta_{1}}.
\end{equation}
Then, regarding the first summand in \eqref{e:7.16}, we note that
\begin{equation}\label{e:7.17}
|Z| \delta^{2nQ+2} \leq \int_{B_{r_{1}}\cap D}\left|\frac{\tilde{f}_1^{\prime}}{\sqrt{E+\bA^2}} * \varphi_\varepsilon-\frac{\tilde{f}_1^{\prime}}{\sqrt{E+\bA^2}}\right|^{2} \leq C \varepsilon^{2}.
\end{equation}
Next, we recall that $q_{1}=2 p_{1}>2$ and use \eqref{e:7.4} to obtain
\begin{align}\label{e:7.18}\nonumber
\int_{Z}\left(\left(\big|D \tilde{f}_1^{\prime}\big| \mathbf{1}_{K}\right) * \varphi_\varepsilon\right)^{2} 
& \leq|Z|^{\frac{p_{1}-1}{p_{1}}}\left\|\left(\big|D \tilde{f}_1^{\prime}\big| \mathbf{1}_{K}\right) * \varphi_\varepsilon\right\|_{L^{4}}^{2}\\
&\leq C\left(\frac{\varepsilon}{\delta^{nQ+1}}\right)^{\frac{2\left(p_{1}-1\right)}{p_{1}}}\left\|\big|D \tilde{f}_1^{\prime}\big|\right\|_{L^{q_{1}}(K)}^{2} \notag \\
& \leq C\left(\frac{\varepsilon}{\delta^{nQ+1}}\right)^{\frac{2\left(p_{1}-1\right)}{p_{1}}}\left(E+\bA^2\right).
\end{align}
Gathering all the estimates together \eqref{e:7.11}, \eqref{e:7.15}, \eqref{e:7.16} and \eqref{e:7.18} gives
\begin{align}\nonumber
    \int_{B_{r_{1}}\cap D}\left|D \tilde{g}_\delta^{\prime}\right|^{2} & \le \int_{B_{r_1} \cap K}\big|D \tilde{f}_1^{\prime}\big|^{2}+C(E+\bA^2) \delta^{8^{-nQ-1}}+ C\frac{(E+\bA^2)^{3 / 2-\beta_{1}}}{\varepsilon}\\
    &\quad +C(E+\bA^2) \left(\frac{\varepsilon}{\delta^{nQ+1}}\right)^{\frac{2\left(p_{1}-1\right)}{p_{1}}} \notag\\ 
    & =  \int_{B_{r_1} \cap K}\left|D f\right|^{2}-Q\int_{B_{r_1} \cap K}\left|D(\etaa\circ f)\right|^{2} + C(E+\bA^2) \delta^{8^{-nQ-1}} \nonumber \\
    &\quad +C\frac{(E+\bA^2)^{3 / 2-\beta_{1}}}{\varepsilon}+ C(E+\bA^2) \left(\frac{\varepsilon}{\delta^{nQ+1}}\right)^{\frac{2\left(p_{1}-1\right)}{p_{1}}}\label{e:energyestimate3prime}
\end{align}
Define $Z:=\left\{\operatorname{dist}\left((\etaa\circ f) * \varphi_\varepsilon, \mathcal{Q}\right)>\delta^{nQ+1}\right\}$ to get
\begin{align}\label{e:7.11bis}
\int_{B_{r_{1}}\cap D}\left|D (\etaa\circ f) * \varphi_{\varepsilon}\right|^{2} 
&\leq\int_{B_{r_{1}}\cap D\setminus Z}\left|D\left((\etaa\circ f) * \varphi_{\varepsilon}\right)\right|^{2}+\int_{Z}\left|D\left((\etaa\circ f) * \varphi_{\varepsilon}\right)\right|^{2}\\
&=: \hat{I}_{1}+\hat{I}_{2}. \notag
\end{align}
We consider $\hat{I}_{1}$ and $\hat{I}_{2}$ separately. For $\hat{I}_{1}$ we first observe the elementary inequality
\begin{align}\nonumber
\left\|D\left((\etaa\circ f)* \varphi_\varepsilon\right)\right\|_{L^{2}}^{2} & \leq \left\|(D (\etaa\circ f)) * \varphi_\varepsilon\right\|_{L^{2}}^{2}\\ \nonumber
& \le \left\|\left(\left|D (\etaa\circ f)\right| \mathbf{1}_{K}\right) * \varphi_\varepsilon\right\|_{L^{2}}^{2}+\left\|\left(\left|D (\etaa\circ f)\right| \mathbf{1}_{K^{c}}\right) * \varphi_\varepsilon\right\|_{L^{2}}^{2} \\ \label{e:7.12bis}
&\quad +  2\left\|\left(\left|D (\etaa\circ f)\right| \mathbf{1}_{K}\right) * \varphi_\varepsilon\right\|_{L^{2}}\left\|\left(\left|D (\etaa\circ f)\right| \mathbf{1}_{K^{c}}\right) * \varphi_\varepsilon\right\|_{L^{2}}\,.
\end{align}
Recalling $r_{1}+\varepsilon \leq r_{1}+s=r_{2}$, we estimate the first summand in \eqref{e:7.12bis} as follows
\begin{equation}\label{e:7.13bis}
\left\|\left(\left|D (\etaa\circ f)\right| \mathbf{1}_{K}\right) * \varphi_\varepsilon\right\|_{L^{2}\left(B_{r_{1}}\cap D\right)}^{2} \leq \int_{B_{r_{1}+\varepsilon}\cap D}\left(\left|D (\etaa\circ f)\right| \mathbf{1}_{K}\right)^{2} \leq \int_{B_{r_{2}} \cap K}\left|D (\etaa\circ f)\right|^{2}.
\end{equation}
In order to treat the other terms, recall that $\operatorname{Lip}\left(\etaa\circ f\right) \leq C (E+\bA^2)^{\beta_{1}}$ and $\left|K^{c}\right| \leq C (E+\bA^2)^{1-2 \beta_{1}}$. We thus have
\begin{align}
\left\|\left(\left|D (\etaa\circ f)\right| \mathbf{1}_{K^{c}}\right) * \varphi_\varepsilon\right\|_{L^{2}\left(B_{r_{1}}\cap D\right)}^{2} &\leq C (E+\bA^2)^{2 \beta_{1}}\left\|\mathbf{1}_{K^{c} }* \varphi_\varepsilon\right\|_{L^{2}}^{2}\nonumber\\
&\leq C (E+\bA^2)^{2 \beta_{1}}\left\|\mathbf{1}_{K^{c}}\right\|_{L^{1}}^{2}\left\|\varphi_\varepsilon\right\|_{L^{2}}^{2} \notag\\
&\leq \frac{C (E+\bA^2)^{2-2 \beta_{1}}}{\varepsilon}.\label{e:7.14bis}
\end{align}
Putting \eqref{e:7.13bis} and \eqref{e:7.14bis} in \eqref{e:7.12bis}, and recalling $E^{1-2 \beta_{1}} \leq \varepsilon^{2}$ and $\int\left|D (\etaa\circ f)\right|^{2} \leq C E$ we get
\begin{equation}\label{e:7.15bis}
\hat{I}_{1} \leq \int_{B_{r_{2}}\cap D \cap K}\left|D (\etaa\circ f)\right|^{2}+C \varepsilon^{-1} (E+\bA^2)^{3 / 2-\beta_{1}}.
\end{equation}
For what concerns $\hat{I}_{2}$, first we argue as for $\hat{I}_{1}$ (splitting in $K$ and $K^{c}$) to deduce that
\begin{equation}\label{e:7.16bis}
\hat{I}_{2} \leq C \int_{Z}\left(\left(\left|D (\etaa\circ f)\right| \mathbf{1}_{K}\right) * \varphi_\varepsilon\right)^{2}+C \varepsilon^{-1} (E+\bA^2)^{3 / 2-\beta_{1}}.
\end{equation}
Then, regarding the first summand in \eqref{e:7.16bis}, we note that
\begin{equation}\label{e:7.17bis}
|Z| \delta^{2nQ+2} \leq \int_{B_{r_{1}}\cap D}\left|\frac{(\etaa\circ f)}{\sqrt{E+\bA^2}} * \varphi_\varepsilon-\frac{(\etaa\circ f)}{\sqrt{E+\bA^2}}\right|^{2} \leq C \varepsilon^{2}.
\end{equation}
Recalling that $q_{1}=2 p_{1}>2$, we use \eqref{e:7.4} to obtain
\begin{align}\nonumber
\int_{Z}\left(\left(\left|D (\etaa\circ f)\right| \mathbf{1}_{K}\right) * \varphi_\varepsilon\right)^{2} 
& \le |Z|^{\frac{p_{1}-1}{p_{1}}}\left\|\left(\left|D (\etaa\circ f)\right| \mathbf{1}_{K}\right) * \varphi_\varepsilon\right\|_{L^{4}}^{2}\\ \nonumber
& \leq C\left(\frac{\varepsilon}{\delta^{\bar{n}} Q+1}\right)^{\frac{2\left(p_{1}-1\right)}{p_{1}}}\left\|\left|D (\etaa\circ f)\right|\right\|_{L^{q_{1}}(K)}^{2} \\ \label{e:7.18bis}
& \leq C\left(\frac{\varepsilon}{\delta nQ+1}\right)^{\frac{2\left(p_{1}-1\right)}{p_{1}}}\left(E+\bA^2\right).
\end{align}
Gathering all the estimates together, \eqref{e:7.11bis}, \eqref{e:7.15bis}, \eqref{e:7.16bis} and \eqref{e:7.18bis} gives
\begin{align}\label{e:7.19bis}
\int_{B_{r_{1}}\cap D}\left|D (\etaa\circ f)*\varphi_\varepsilon\right|^{2} 
 &\le \int_{B_{r_1}\cap K}\left|D (\etaa\circ f)\right|^{2}+ C \frac{(E+\bA^2)^{3 / 2-\beta_{1}}}{\varepsilon} \notag\\
 &\quad +C (E+\bA^2)\left(\frac{\varepsilon}{\delta^{nQ+1}}\right)^{2-\frac{1}{p_{1}}}.
\end{align}
\normalsize
So combining \eqref{e:energyestimate3prime} and \eqref{e:7.19bis} yields
\begin{align}\nonumber
\int_{B_{r_{1}}\cap D} &\left|Dh_{\delta,\varepsilon,s}\right|^{2} 
 = \int_{B_{r_1}\cap D}\left|D\hat{h}_{\delta,\varepsilon,s}\right|^{2}+Q\int_{ B_{r_1}\cap D}\left|D\tilde{h}_\varepsilon\right|^{2}\\ \nonumber
& \le \int_{B_{r_{1}}\cap D}\left|D \tilde{g}^\prime_\delta\right|^{2}+Q\int_{B_{r_1}\cap D}\left|D\tilde{h}_\varepsilon\right|^{2}\\  \nonumber
& \le \int_{B_{r_1}\cap K}\left|D f\right|^{2}-Q\int_{B_{r_1}\cap K}\left|D(\etaa\circ f)\right|^{2}+Q\int_{ B_{r_1} \cap K}\left|D\etaa\circ f\right|^{2} + C \varepsilon (E+\mathbf{A}^{2})\\ \nonumber
&\quad +  C\left((E+\bA^2) \delta^{8^{-nQ-1}}+\frac{(E+\bA^2)^{3 / 2-\beta_{1}}}{\varepsilon}+(E+\bA^2)\left(\frac{\varepsilon}{\delta^{nQ+1}}\right)^{\frac{2\left(p_{1}-1\right)}{p_{1}}}\right)\nonumber\\ \nonumber
& \le \int_{B_{r_1}\cap K}\left|D f\right|^{2}+C(E+\bA^2) \delta^{8^{-nQ-1}}+ C \frac{(E+\bA^2)^{3 / 2-\beta_{1}}}{\varepsilon}\\ 
&\quad +C(E+\bA^2)\left(\frac{\varepsilon}{\delta^{nQ+1}}\right)^{\frac{2\left(p_{1}-1\right)}{p_{1}}}
+C\varepsilon (E+\bA^2).\label{e:energyestimate3}
\end{align}

\textbf{Step 4. Final estimate}. This part is analogue to \cite[Step 4 of Proposition 7.3]{DS3}. Summing \eqref{e:EnergyEstimateExteriorLayer}, \eqref{e:energyestimate2}, \eqref{e:energyestimate3}, and recalling that $\varepsilon<s$, we conclude
$$
\begin{aligned}
\int_{B_{r_{3}}\cap D}|D h_{\delta,\varepsilon,s}|^{2} \leq & \int_{B_{r_{1}}\cap K}|D f|^{2}+C \int_{(B_{r_{1}+3 s} \setminus B_{r_{1}-s})\cap D}\left|D f^{\prime}\right|^{2} +C (E+\bA^2)\left(\varepsilon + \delta^{8^{-nQ-1}} \right)\\
&+C (E+\bA^2)\left(\frac{\varepsilon^{2}}{s^{2}} + \frac{\delta^{2 \cdot 8^{-nQ}}}{s}+\frac{(E+\bA^2)^{1 / 2-\beta_{1}}}{\varepsilon}+\left(\frac{\varepsilon}{\delta^{nQ+1}}\right)^{\frac{2\left(p_{1}-1\right)}{p_{1}}}\right).
\end{aligned}
$$
We set $\varepsilon=(E+\bA^2)^{a}, \delta=(E+\bA^2)^{b}$ and $s=(E+\bA^2)^{c},$ where
\begin{equation}\label{e:SuitablePowersOfTheExcess}
a=\frac{1-2 \beta_{1}}{4}, \quad b=\frac{1-2 \beta_{1}}{8(nQ+1)}, \quad \text { and } \quad c=\frac{1-2 \beta_{1}}{8^{n} Q 8 (nQ+1)}
\end{equation}
and we finally let $h$ be the corresponding function $h_{\delta,\varepsilon,s}$.
This choice respects \eqref{e:preliminarybounds}. Assume $(E+\bA^2)$ is small enough so that $s \leq \frac{1}{16}$. Now, if $C>0$ is a sufficiently large constant, there is a set $B^{\prime} \subset\left[\frac98, \frac{29}{16}\right]$ with $\left|B^{\prime}\right|>1 / 2$ such that,
$$
\int_{(B_{r_{1}+3 s} \setminus B_{r_{1}-s})\cap D}\left|D f^{\prime}\right|^{2} \leq C s \int_{B_{2}\cap D}\left|D f^{\prime}\right|^{2} \leq C (E+\bA^2)^{1+c} \quad \text { for every } r_{1} \in B^{\prime}.
$$
For $\sigma=r_{3} \in B=2 s+B^{\prime}$ we then conclude the existence of a $\bar \gamma\left(\beta_{1}, n,Q\right)>0$ such that
$$
\int_{B_{\sigma}\cap D}|D h|^{2} \leq \int_{B_{\sigma}\cap K}|D f|^{2}+C \left(E+\mathbf{A}^{2}\right)^{1+\bar\gamma}.
$$
\end{proof}

\begin{proof}[Proof of Theorem \ref{t:StrongExcEst}.] Here we proceed as in the proof of \cite[Theorem 7.1]{DS3}. Choose  $\beta_{1}=\frac{1}{8}$ and consider the set  $B \subset[9 / 8,2]$ given in Proposition \ref{t:regconv}. Using the coarea formula and the isoperimetric inequality (the argument and the map $\varphi$ are the same in the proof of Theorem \ref{t:o(E)} and that of Proposition \ref{p:WEE}), we find $s \in B$ and an integer rectifiable current $R$ such that
$$
\partial R=\left\langle T-\mathbf{G}_{f}, \varphi, s\right\rangle \quad \text { and } \quad \mathbf{M}(R) \leq C E^{\frac{3}{2}}.
$$
Since $h|_{\partial (D\cap B_s)}=f|_{\partial (D\cap B_s)}$ we can use $h$ in place of $f$ in the estimates and, arguing as before (see e.g. the proof of Proposition \ref{p:WEE}), we get, for a suitable $\gamma>0$
\begin{eqnarray}
\|T\|\left(\mathbf{C}_{s}\right) &\leq & Q\left|B_{s}\cap D\right|+\int_{B_{s}\cap D} \frac{|D g|^{2}}{2}+C (E+\bA^2)^{1+\bar \gamma}\nonumber\\ &\stackrel{\eqref{e:regconvDirichletEnergyEstimate}}{\leq} & Q\left|B_{s}\cap D\right|+\int_{B_{s} \cap K} \frac{|D f|^{2}}{2}+C \left(E+\mathbf{A}^{2}\right)^{1+\bar \gamma}.\label{e:7.22}
\end{eqnarray}
On the other hand, by Taylor's expansion in \cite[Remark 5.4]{DS3},
\begin{align}\label{e:7.23}
\|T\|\left(\mathbf{C}_{s}\right) & = \|T\|\left(\left(B_{s}\cap D \setminus K\right) \times \mathbb{R}^{n}\right)+\left\|\mathbf{G}_{f}\right\|\left(\left(B_{s} \cap K\right) \times \mathbb{R}^{n}\right) \notag \\
& \ge \|T\|\left(\left(B_{s}\cap D \setminus K\right) \times \mathbb{R}^{n}\right)+Q\left|K \cap B_{s}\right| \notag\\
&\quad +\int_{K \cap B_{s}} \frac{|D f|^{2}}{2}-C (E+\bA^2)^{1+\bar \gamma}.
\end{align}
Hence, from \eqref{e:7.22} and \eqref{e:7.23}, we get $\mathbf{e}_{T}\left(B_{s}\cap D \setminus K\right) \leq C \left(E+\mathbf{A}^{2}\right)^{1+\bar\gamma}$.
This is enough to conclude the proof. Indeed, let $A \subset B_{9 / 8}\cap D$ be a Borel set. Using the higher integrability of $|D f|$ in $K$ (see \eqref{e:7.4}$)$ and possibly selecting a smaller $\bar \gamma>0,$ we get
$$
\begin{aligned}
\mathbf{e}_{T}(A) & \leq \mathbf{e}_{T}(A \cap K)+\mathbf{e}_{T}(A\setminus K)\\
&\leq \int_{A \cap K} \frac{|D f|^{2}}{2}+C \left(E+\mathbf{A}^{2}\right)^{1+\bar\gamma} \\
& \leq C|A \cap K|^{\frac{p_{1}-1}{p_{1}}}\left(\int_{A \cap K}|D f|^{q_{1}}\right)^{2 / q_{1}}+C \left(E+\mathbf{A}^{2}\right)^{1+\bar\gamma} \\
& \leq C|A|^{\frac{p_1-1}{p_{1}}}\left(E+\mathbf{A}^{2}\right)+C \left(E+\mathbf{A}^{2}\right)^{1+\bar\gamma}.
\end{aligned}
$$
\end{proof}
\begin{proof}[Proof of Theorem \ref{t:strong_Lipschitz}.] Here we proceed exactly as in the proof of \cite[Theorem 2.4]{DS3}. Assume $r=1$ and $x=0 .$ Choose $\beta_{11}<\min \left\{\frac{1}{4}, \frac{\gamma_{11}}{2\left(1+\gamma_{11}\right)}\right\},$ where $\gamma_{11}$ is the constant in Theorem \ref{t:regconv}.
Let $f$ be the $E^{\beta_{11}}$ -Lipschitz approximation of $T$. Clearly \eqref{e:strong_Lip} and \eqref{e:strong_Lip_Osc} follow directly from Proposition \ref{p:Lipschitz_1}, if $\gamma<\beta_{11} .$ Set next $A:=\left\{\mathbf{m e}_{T}>2^{-m} (E+\bA^2)^{2 \beta_{11}}\right\} \cap B_{9 / 8} .$ By Proposition \ref{p:Lipschitz_1} we have $|A| \leq C (E+\bA^2)^{1-2 \beta_{11}}$. If $\varepsilon_A>0$ is sufficiently small, apply \eqref{e:stimaK} and the estimate \eqref{e:StrongExcEst} to $A$ in order to conclude
$$
\left|B_{1} \cap D\setminus K\right| \leq C (E+\bA^2)^{-2 \beta_{11}} \mathbf{e}_{T}(A) \leq C (E+\bA^2)^{\gamma_{11}-2 \beta_{11}\left(1+\gamma_{11}\right)}\left(E+\mathbf{A}^{2}\right).
$$
By our choice of $\gamma_{11}$ and $\beta_{11},$ this last inequality gives \eqref{e:strongKestimate} for some positive $\gamma_1$. Finally, set $S=\mathbf{G}_{f} .$ Recalling the strong Almgren estimate \eqref{e:StrongExcEst} and the Taylor expansion in \cite[Remark 5.4]{DS3} we conclude for every $0<\sigma \leq 1$
\begin{align}
&\left|\|T\|\left(\mathbf{C}_{\sigma}\right)-Q|D|-\int_{B_{\sigma}\cap D} \frac{|D f|^{2}}{2}\right| \\
&\leq \mathbf{e}_{T}\left(B_{\sigma}\cap D \setminus K\right)+ \mathbf{e}_{S}\left(B_{\sigma}\cap D \setminus K\right)+\left|\mathbf{e}_{S}\left(B_{\sigma}\cap D\right)-\int_{B_{\sigma}\cap D} \frac{|D f|^{2}}{2}\right| \\
&\leq C \left(E+\mathbf{A}^{2}\right)^{1+\gamma_{11}}+C\left|B_{\sigma} \cap D\setminus K\right|+C \operatorname{Lip}(f)^{2} \int_{B_{\sigma}\cap D}|D f|^{2}\\ 
&\leq C \left(E+\mathbf{A}^{2}\right)^{1+\gamma_{11}}.
\end{align}
The $L^{\infty}$ bound follows from Proposition \ref{p:Lipschitz_1} and we finish the proof.
\end{proof}

\section{Center manifold and normal approximation}\label{s:center-manifold}

This section is devoted to prove an analog of \cite[Theorem 8.13]{DDHM}, namely to construct, in a neighborhood of a flat point $p$, a smooth $C^{3,\alpha}$ submanifold with boundary $\Gamma$ and a normal multivalued map $N$ on it. The first is, roughly, an approximation of the average of the sheets lying over the unique tangent plane $V$ to $T$ at $p$. The second is a more accurate approximation of the current $T$, which compared to the one in Section \ref{s:Lip-approx} has the additional property of having (almost) zero average. 

 We start by introducing the spherical excess and the cylindrical excess with respect to a general plane.

\begin{definition}
Given a current $T$ as in Assumption \ref{a:main-local-2} and $2$-dimensional planes  $V, V'$, we define the \textbf{{\em excess of $T$ in balls
and cylinders with respect to planes $V, V'$}} as
\begin{align*}
\bE(T,\B_r (x),V) &:= \left(2 \pi\,r^2\right)^{-1}\int_{\B_r (x)} |\vec T - \vec V|^2 \, d\|T\|,\\
\qquad \bE (T, \bC_r (x, V), V') &:= \left(2\pi\,r^2\right)^{-1} \int_{\bC_r (x, V)} |\vec T - \vec V'|^2 \, d\|T\|\, .
\end{align*}
\end{definition}

\begin{definition}[Optimal planes]\label{d:optimal_planes}
For the case of balls we define the spherical excess as follows. The \textbf{\emph{optimal spherical excess}} at some $x \in \supp(T)\setminus \Gamma$ is given by 
\begin{equation}\label{e:optimal_pi}
\bE(T,\B_r (x)):=\min_V \bE (T, \B_r (x), V), 
\end{equation} 
but in the case of $x\in \Gamma$ we define the \textbf{\emph{optimal boundary spherical excess}} as  
\begin{equation*}
\bE^\flat (T, \B_r (x)) := \min \{\bE (T, \B_r (x), V) : V\supset T_x \Gamma\}. \end{equation*}
The plane $V$ which minimizes $\bE$, resp. $\bE^\flat$, is not unique but since for notational purposes it is convenient to define
a unique ``height'' $\bh (T, \B_r (x))$ we set 
\begin{equation}\label{e:optimal_pi_2}
\bh(T,\B_r(x)) := \min \big\{\bh(T,\B_r (x),V): \mbox{ $V$ optimizes $\bE$ (resp. $\bE^\flat)$}\big\}\, .
\end{equation}
In the case of cylinders we denote by $\bE (T, \bC_r (x, V)) = \bE (T, \bC_r (x, V), V)$ and $\bh(T, \bC_r (x, V))=\bh(T, \bC_r (x, V),V)$.
\end{definition}

We recall that under the above assumptions $\bC_{5R_0} = \bC_{5 R_0} (0, V_0)$ and $\p_\sharp T \res \bC_{5R_0} = Q\a{D}$, where $D\subset B_{5R_0}$ is one of the two connected components in which $B_{5R_0}$ is subdivided by the curve $\gamma = \p (\Gamma)$. Moreover $T_0 \Gamma = \mathbb R\times \{0\}$ and in particular $\Gamma \cap \bC_{5R_0} = \{(t, \psi (t))\}= \{(t, \psi_1 (t), \bar \psi (t))\}$, where $ \psi_1: (-5R_0, 5R_0) \to \R$ and $\bar \psi: (-5R_0, 5R_0) \to \R^n$. In particular $\gamma$ is the graph of $\psi_1$ and without loss of generality we assume that $D=\{(x_1,x_2)\in B_{5R_0}: x_2> \psi_1 (x_1)\}$, namely it is the upper half of $B_{5R_0}\setminus \gamma$. 

In this section we will then work under the following assumptions.

\begin{ipotesi}\label{ass:cm}
$p= q= (0,0)$, $V= V_0= \mathbb R^2\times \{0\}$, $Q$, $T$, and $\Gamma$ are as in Assumption \ref{Ass:app} in the cylinder $\bC_{5R_0}$, where $R_0\geq 1+\sqrt{2}$ is a sufficiently large geometric constant which will be specified later. Moreover $Q \a{V_0}$ is the (unique) tangent cone to $T$ at $0$.

We moreover assume in the sequel that 
\begin{equation}
\bE (T, \bC_{5R_0}(0, V_0)) + \bA^2 \leq \varepsilon_{CM},
\end{equation}
for some small positive parameter $\varepsilon_{CM}= \varepsilon_{CM}(n, Q, R_0)$.
\end{ipotesi}

Under the above assumptions we show now that the height of $T$ in $\bC_{4R_0}$ is also under control.

\begin{lemma}\label{l:hardtSimonHeight}
There are constants $\varepsilon_{CM}, C$ depending on $Q,n$ and $R_0$ such that, if Assumption \ref{ass:cm} holds, then for all $p \in \Gamma$ and $r>0$ such that $\bC_{5r}(p, V_0) \subset \bC_{5R_0}$, we have
\begin{equation}\label{e:height-estimate}
\bh (T, \bC_{4r}(p, V_0)) \leq C r (\bE (T, \bC_{5r}(p, V_0)) + r \bA)^{\frac{1}{2}}\, .
\end{equation}
\end{lemma}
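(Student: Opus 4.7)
After translating $p$ to the origin and rescaling by $1/r$ (which leaves $\bE$ unchanged while replacing $\bA$ by $r\bA$), it suffices to prove
\[
\bh(T,\bC_4(0,V_0))\leq C(E+\bA)^{1/2},
\]
where, in the rescaled coordinates, $E=\bE(T,\bC_5(0,V_0))$ and $\bA$ denotes the rescaled curvature bound (both small under Assumption \ref{ass:cm}). The plan is to fix an arbitrary $y\in\supp T\cap\bC_4(0,V_0)$, set $h(y):=|\bp^\perp_{V_0}(y)|$, and bound $h(y)$ by splitting into three cases depending on whether $y$ lies on $\Gamma$ and, if not, on the size of $\mathrm{dist}(y,\Gamma)$ compared to $h(y)$.

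\emph{Case A $(y\in\Gamma)$.} Writing $\Gamma\cap\bC_5$ as the graph $\{(t,\psi_1(t),\bar\psi(t))\}$ of Remark \ref{r:little-psi}, one has $h(y)=|\bar\psi(t_y)-\bar\psi(t_p)|$, and a Taylor expansion at $t_p$ combined with $|t_y-t_p|\leq 4$ gives
\[
h(y)\leq |\dot{\bar\psi}(t_p)|\,|t_y-t_p|+\tfrac12\bA(t_y-t_p)^2\leq C(\theta_p+\bA),
\]
where $\theta_p:=|\dot{\bar\psi}(t_p)|$ measures the tilt of $T_p\Gamma$ relative to $V_0$. Since near any point of $\Gamma$ the tangent $\vec T$ must contain the tangent line to $\Gamma$, the first variation identity \eqref{e:first_var} and an integration of $|\vec T-\vec V_0|^2$ over a tubular neighborhood of $\Gamma$ in $\bC_5$ yield $\theta_p^2\leq CE$, hence $h(y)\leq C(E+\bA)^{1/2}$.

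\emph{Case B $(y\not\in\Gamma,\ \mathrm{dist}(y,\Gamma)\leq h(y)/2)$.} Pick $z\in\Gamma$ realizing the distance; applying Case A to $z$ and using the triangle inequality $h(y)\leq |y-z|+h(z)\leq h(y)/2+C(E+\bA)^{1/2}$ produces $h(y)\leq C(E+\bA)^{1/2}$.

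\emph{Case C $(y\not\in\Gamma,\ \mathrm{dist}(y,\Gamma)>h(y)/2)$.} The ball $\bB_{h(y)/4}(y)$ is disjoint from $\Gamma$, so $T\res \bB_{h(y)/4}(y)$ is interior area minimizing; the projection $\bp_\sharp T\res \bC_5(0,V_0)=Q\a{D}$ shows that on this ball $T$ covers the base disk with an integer multiplicity, and the interior monotonicity formula at $y$ gives a matching lower mass bound. The classical Hardt--Simon height estimate, applied in $\bB_{h(y)/4}(y)$, then yields $h(y)^2\leq CE$.

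Taking the supremum over $y$ and undoing the rescaling produces the claimed inequality.

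The principal obstacle is Case C, which requires the \emph{sharp} exponent $1/2$ in the interior height--excess trade-off (as opposed to the weaker $1/(2Q)$ of Almgren's general height bound in higher dimensions). This sharp exponent is available here because, in dimension $2$, the interior regularity theory of De Lellis--Spadaro guarantees that an area-minimizing $2$-current has only isolated singularities and decomposes, away from them, into a finite union of smooth minimal graphs; the single-sheet Hardt--Simon estimate applied to each graph, together with the flatness of the tangent cone $Q\a{V_0}$ at $0$ which keeps all sheets close to $V_0$, delivers the required $h^2\leq CE$.
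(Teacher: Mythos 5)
The overall strategy — rescale to unit scale and bound $h(y):=|\bp^\perp_{V_0}(y)|$ by a case analysis on $\dist(y,\Gamma)$ — is reasonable, and your scaling bookkeeping is correct. Cases A and B work, though your justification in Case A is off: the claim that $\theta_p^2\leq CE$ via the first variation and a tubular integration of the excess is not sound as stated (tilt at a single point is not controlled by an integral excess without a quantitative tilt-excess estimate, which you are not invoking). Fortunately you do not need it: Remark \ref{r:little-psi} gives $|\dot{\bar\psi}(t)|\leq C\bA|t|$ on the whole cylinder, so $\theta_p\leq C R_0\bA\leq C\bA^{1/2}$ once $\bA\leq 1$, and Case A follows from the curvature bound alone.

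The gap is in Case C, which is the heart of the lemma. You invoke ``the classical Hardt--Simon height estimate'' in $\bB_{h(y)/4}(y)$ and argue that the multiplicity-$Q$ version follows because, by interior regularity, $T$ decomposes away from isolated singularities into smooth graphs ``kept close to $V_0$ by the flatness of the tangent cone at $0$.'' This last step is circular: that the sheets are close to $V_0$ at the point $y$ is exactly what the height bound asserts, not something you may assume. The tangent cone at $0$ being $Q\a{V_0}$ controls the current only infinitesimally at $0$; it says nothing about heights at a point $y$ at unit distance from $0$. Moreover, the Hardt--Simon estimate is a multiplicity-one statement, and to apply it sheet by sheet you would need the current near $y$ to already be a union of graphs of small slope — i.e.\ you would need a small-excess hypothesis at $y$, which is not available without an a priori height bound. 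There is a genuine multiplicity-$Q$ interior height bound for $2$-dimensional area-minimizing currents (essentially \cite[Theorem A.1]{DS3}), but the statement you need is the lemma itself restricted to interior balls; citing it would not be circular, but your proposed derivation of it is.

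The paper's proof avoids the case split entirely: it observes that the component functions of $\bp^\perp_{V_0}(\cdot-p)$ are nearly subharmonic with respect to $\|T\|$, with the boundary causing no trouble because the test functions in the Moser iteration can be chosen to vanish near $\Gamma$. A De Giorgi--Nash--Moser iteration then gives the sup bound $\sup_{\bC_{4r}}|\bp^\perp_{V_0}(z-p)|^2\leq Cr^{-2}\int_{\bC_{9r/2}}|\bp^\perp_{V_0}(z-p)|^2\,d\|T\| + C\bA^2 r^4$, and a Caccioppoli-type estimate (derived from the first variation, Theorem \ref{thm:allard}) bounds the $L^2$ integral by $C\bE(T,\bC_{5r})r^4 + C\bA r^5$. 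Both steps are taken from \cite{DDHM} and apply verbatim, which is why the paper's proof is two short paragraphs. Your approach could be repaired by replacing Case C with a citation of the interior height bound and making Case A rely only on the curvature, but as written it does not prove the estimate.
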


\begin{proof}
We divide the proof into two steps.

\textit{Step 1:} $ \displaystyle \sup_{z \in \supp(T) \cap \bC_{4r}(p, V_0)} | \bp^\perp_{V_0}(z-p)|^2 \leq C r^{-2} \int_{\bC_{9r/2}(p, V_0)} | \bp^\perp_{V_0}(z-p)|^2 d \|T\|(z) + C_0 \bA^2 r^4$.

This is shown in \cite[Lemma 6.6]{DDHM} and carries over word by word to our setting as the only part where the stationarity of the associated integral varifold is needed, is for the harmonicity of the coordinate functions. This however is true, as we test with functions which are supported away from the boundary of $T$. We use this to apply a Moser iteration scheme and estimate the $L^\infty$ norm by the limsup of the $L^p$ norms as $p \to \infty$.

\textit{Step 2:} $ \displaystyle r^{-2} \int_{\bC_{9r/2}(p, V_0)} | \bp^\perp_{V_0}(z-p)|^2 d \|T\|(z) \leq C \ \bE (T, \bC_{5r}(p, V_0)) r^2 + C \bA r^3$.

Also for this, the proof of \cite[Lemma 6.7]{DDHM} carries over as the difference to our situation is a factor $Q$ in the monotonicity formula (Theorem \ref{thm:allard}). From there, we estimate the remainder term by $r^2(\bE (T, \bC_{5r}(0, V_0))+\bA)$.
\end{proof}


\subsection{Whitney decomposition}
We specify next some notation which will be recurrent when dealing with squares inside $V_0$.
For each $j\in \N$, $\sC_j$ denotes the family of closed squares $L$ of $V_0$ of the form 
\begin{equation}\label{e:cube_def}
[a_1, a_1+2\ell] \times [a_2, a_2+ 2\ell] \times \{0\}\subset V_0\, 
\end{equation}
which intersect $D$,
where $2\,\ell = 2^{1-j} =: 2\,\ell (L)$ is the side-length of the square, 
$a_i\in 2^{1-j}\Z$ $\forall i$ and we require in
addition $-4 \leq a_i \leq a_i+2\ell \leq 4$. 
To avoid cumbersome notation, we will usually drop the factor $\{0\}$ in \eqref{e:cube_def} and treat each squares, its subsets and its points as subsets and elements of $\mathbb R^2$. Thus, for the {\em center $x_L$ of $L$} we will use the notation $x_L=(a_1+\ell, a_2+\ell)$, although the precise one is $(a_1+\ell, a_2+\ell, 0, \ldots , 0)$.
Next we set $\sC := \bigcup_{j\in \N} \sC_j$. 
If $H$ and $L$ are two squares in $\sC$ with $H\subset L$, then we call $L$ an \textbf{{\em ancestor}} of $H$ and $H$ a \textbf{{\em descendant}} of $L$. When in addition $\ell (L) = 2\ell (H)$, $H$ is \textbf{{\em a child}} of $L$ and $L$ \textbf{{\em the parent}} of $H$. Moreover, if $H \cap L \neq \emptyset$ but they are not contained in each other, we call them \textbf{{\em neighbours}}.

\begin{definition}\label{e:whitney} A \textbf{\emph{Whitney decomposition of $\overline{D}\cap [-4,4]^2\subset V_0$}} consists of a closed set $\bDel\subset [-4,4]^2\cap \overline{D}$ and a family $\mathscr{W}\subset \sC$ satisfying the following properties:
\begin{itemize}
\item[(w1)] $\bDel \cup \bigcup_{L\in \mathscr{W}} L\cap \overline{D} = [-4,4]^2\cap\overline{D}$ and $\bDel$ does not intersect any element of $\mathscr{W}$;
\item[(w2)] the interiors of any pair of distinct squares $L_1, L_2\in \mathscr{W}$ are disjoint;
\item[(w3)] if $L_1, L_2\in \mathscr{W}$ have nonempty intersection, then $\frac{1}{2}\ell (L_1) \leq \ell (L_2) \leq 2\, \ell (L_1)$.
\end{itemize}
\end{definition}

\begin{remark}
Because of (w1) we will assume that any $L\in \sW$ intersects $\overline{D}$.
\end{remark}

Observe that (w1) - (w3) imply 
\begin{equation}\label{e:separazione}
{\rm sep}\, (\bDel, L) := \inf \{ |x-y|: x\in L, y\in \bDel\} \geq 2\ell (L)  \quad\mbox{for every $L\in \mathscr{W}$,}
\end{equation}
since there is an infinite chain of neighbouring squares $\{L_i\}_{i\in \N}$ with $L_0=L$, $\dist(\bDel, L_i) \to 0$ and $\ell(L_i) \geq 2 \ell(L_{i+1})$ for all $i$.
However, we do {\em not} require any inequality of the form 
${\rm sep}\, (\bDel, L) \leq C \ell (L)$, although this would be customary for what is commonly 
called a Whitney decomposition in the literature.

\begin{ipotesi}\label{ass:hierarchy} 
In the rest of this section we will use several different parameters:
\begin{itemize}
\item[(a)] $\delta_1$ and $\beta_1$ are two small geometric constants which depends only on $Q$, $n$, the constant $\gamma_1$ of Theorem \ref{t:strong_Lipschitz}, in fact they will be chosen smaller than $\frac{\gamma_1}{8}$ and $\delta_1 \leq \frac{\beta_1}{2}$; 
\item[(b)] $M_0$ is a large geometric constant which depends only on $\delta_1$, while $N_0 \geq \frac{ \ln(132 \sqrt 2)}{\ln(2)}$ is a large natural number which will be chosen depending on $\beta_1, \delta_1$, and $M_0$;
\item[(c)] $C^\flat_e$ is a large constant $C^\flat_e (\beta_1, \delta_1, M_0, N_0)$, while $C^\natural_e$ is larger and depends also on $C^\flat_e$;
\item[(d)] $C_h$ is large and depends on $\beta_1, \delta_1, M_0, N_0, C^\flat_e$ and $C^\natural_e$;
\item[(e)] the small threshold $\varepsilon_{CM}$ is the last to be chosen, it depends on all the previous parameters and also on the constant $\varepsilon_A$ of Theorem \ref{t:strong_Lipschitz}.
\end{itemize}
\end{ipotesi}

\begin{definition}\label{d:squares_and_balls}
For each square $L\in \mathscr{C}$ we set $r_L := \sqrt{2} M_0 \ell (L)$ and we say that $L$ is an \textbf{{\em interior square}} if $\dist (x_L, \gamma) \geq 64 r_L$, otherwise we say that $L$ is a \textbf{{\em boundary square}} and we use, respectively, the notation $\mathscr{C}^\natural$ for the interior squares contained in $D$ and $\mathscr{C}^\flat$ for the boundary squares. Next, we define a corresponding $(n+2)$-dimensional balls $\bB_L$, resp. $\bB^\flat_L$, for such $L$'s:
\begin{itemize}
    \item[(a)] If $L\in \mathscr{C}^\natural$, we pick a point $p_L = (x_L, y_L)\in \supp (T) \cap (\{x_L\}\times \mathbb R^n)$ and we set $\bB_L := \bB_{64 r_L} (p_L)$;
    \item[(b)] If $L\in \mathscr{C}^\flat$, we pick $x^\flat_L = (t, \psi_1 (t))\in \gamma$ such that $\dist (x_L, \gamma)= |x^\flat_L-x_L|$, define $p^\flat_L = (t, \psi (t))\in \Gamma\cap (\{x_L^\flat\}\times \mathbb R^n)$ and set $\bB^\flat_L = \bB_{2^7 64 r_L} (p^\flat_L)$.
\end{itemize}
\end{definition}

We are now ready to prescribe $N_0$: we require the inequality
\begin{equation}\label{e:def_N_0}
2^7 64 r_L \le 2^7 64 \sqrt{2} M_0 2^{-{N_0}}\leq 1\, ,
\end{equation}
so that, in particular, all the balls $\bB_L$ and $\bB^\flat_L$ considered above are contained in the cylinder $\bC_{4R_0}$.

The following remark will be useful in the sequel.

\begin{remark}\label{r:good-children}
If $L\in \mathscr{C}^\flat$ and $J$ is the parent of $L$, then $J\in \mathscr{C}^\flat$, while if $L\in \mathscr{C}^\natural$, then every child of $L$ is an element of $\mathscr{C}^\natural$. In fact, if $H$ and $L$ are two squares with nonempty intersection, $\ell (H)<\ell (L)$ and $H$ is a boundary cube, then necessarily $L$ is a boundary cube too.
\end{remark}

\begin{remark}
Fix $L\in \mathscr{C}^\flat$ and subdivide it into the canonical four squares $M$ with half the sidelength. For $M$ any of the following three cases can occur: $M$ might be a boundary square, an interior square, or might simply not belong to $\mathscr{C}^\flat\cup \mathscr{C}^\natural$ (i.e. $M\cap \overline{D} = \emptyset$). However, because of the enlarged radius for boundary squares, it still holds that the ball of a child is contained in the ball of its parent (compare to Proposition  \ref{p:tiltingPlanes}$(i)$). Moreover, $\bB_L^\flat \supset L$ for any boundary square $L$.
\end{remark}

We are now ready to define the refining procedure leading to the desired Whitney decomposition.

\begin{definition}\label{d:refining_procedure}
First of all we set $\bmo:= \bE (T, \bC_{5R_0}) + \|\psi\|^2_{C^{3,\alpha} (]-5R_0, 5R_0[)}$.
We start with all $L\in \sC^\flat\cup \sC^\sharp$ with $\ell (L) = 2^{-N_0}$ and we assign all of them to $\sS$. Next, inductively, for each $j>N_0$ and each $L\in \sC^\flat_j\cup \sC^\natural_j$ such that its parent belongs to $\sS$ we assign to $\sS$ or to $\sW = \sW^e\cup \sW^h\cup \sW^n$ in the following way:
\begin{itemize}
    \item[(EX)] $L\in \sW^e$ if $\bE (T, \bB_L) > C^\natural_e \bmo \ell (L)^{2-2\delta_1}$, resp. if $\bE^\flat (T, \bB^\flat_L) > C^\flat_e \bmo \ell (L)^{2-2\delta_1}$;
    \item[(HT)] $L\in \sW^h$ if $L\not\in \sW^e$ and
    $\bh (T, \bB_L) \geq C_h \bmo^{\frac{1}{4}} \ell (L)^{1+\beta_1}$, resp. $\bh (T, \bB^\flat_L) \geq C_h \bmo^{\frac{1}{4}} \ell (L)^{1+\beta_1}$;
    \item[(NN)] $L\in \sW^n$ if $L\not\in \sW^h\cup \sW^e$ but there is a $J\in \sW$ such that $\ell (J)=2\ell (L)$ and $L\cap J\neq \emptyset$;
    \item[(S)] $L\in \sS$ if none of three conditions above are satisfied. 
\end{itemize}
We denote by $\sC^\flat_j:=\sC^\flat\cap\sC_j$, $\sC^\sharp_j:=\sC^\sharp\cap\sC_j$, $\sS_j := \sS \cap \sC_j$, $\sW_j := \sW \cap \sC_j$, $\sW^e_j := \sW^e \cap \sC_j$, $\sW^h_j := \sW^h \cap \sC_j$ and $\sW^n_j := \sW^n \cap \sC_j$.
Finally, we set
\begin{equation}
\bDel := ([-4,4]^2\cap \overline{D}) \setminus \bigcup_{L\in \sW} L 
= \bigcap_{j\geq N_0} \bigcup_{L\in \sS_j} L\, .
\end{equation}
\end{definition}

A simple consequence of our refining procedure is the following proposition which we will prove in the next section.

\begin{proposition}\label{p:Whitney}
Let $V_0, Q,T$, and $\Gamma$ be as in Assumption \ref{ass:cm} and assume the parameter $N_0$ satisfies \eqref{e:def_N_0}. Then $(\bDel, \sW)$ is a Whitney decomposition of $\overline{D}\cap [-4,4]^2$. Moreover, for any choice of $M_0$ and $N_0$, there is $C^\star (M_0, N_0)$ such that, if $C^\flat_e$, and 
$C^\natural_e/ C^\flat_e$, $C_h/C^\natural_e$,
are larger than $C^\star$, then
\begin{itemize}
\item[(a)] $\sW_{N_0} = \emptyset$;
\item[(b)] if $L\in \sC^\natural\cap \sW^e$ then the parent of $L$ belongs to $\sC^\natural$. 
\end{itemize}
Moreover, the following estimates hold for some geometric constant $C$ depending on $\beta_1$ and $\delta_1$, provided $\varepsilon_{CM}$ is sufficiently small (depending on all the previous parameters as detailed in Assumption \ref{ass:hierarchy}):
\begin{align}
\bE^\flat (T, \bB^\flat_L) \leq C C^\flat_e \bmo \ell (L)^{2-2\delta_1},
\;&\mbox{and}\; \bh (T, \bB^\flat_L) \leq C C_h \bmo^{\frac{1}{4}} \ell (L)^{1+\beta_1},\; \forall L\in \sW\cap \sC^\flat\, ,\label{e:control-in-W}\\
\bE (T, \bB_L) \leq C C^\natural_e \bmo \ell (L)^{2-2\delta_1}
\;&\mbox{and}\; \bh (T, \bB_L) \leq C C_h \bmo^{\frac{1}{4}} \ell (L)^{1+\beta_1},\; \forall L\in \sW\cap \sC^\natural\, .\label{e:control-in-W-2}
\end{align}
\end{proposition}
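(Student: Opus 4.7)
The plan is to verify the three Whitney axioms (w1)--(w3), then the two structural claims (a)--(b), and finally the bounds \eqref{e:control-in-W}--\eqref{e:control-in-W-2}. Throughout, the key observation is that whenever $L \in \sW$ has parent $J$, then by construction $J \in \sS$, so none of the stopping conditions (EX), (HT), (NN) was triggered at $J$; hence excess and height bounds \emph{dual} to (EX) and (HT) hold at the parent scale, and can be pushed down to the child $L$ by simple ball inclusions. I would therefore carry out this inheritance argument in a uniform way.

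First I would establish the Whitney axioms. Properties (w1) and (w2) are immediate from the definition of $\bDel$ as the complement of $\bigcup \sW$ and from the dyadic structure of $\sC$. For (w3), I would argue by contradiction: if $L_1, L_2 \in \sW$ intersect but $\ell(L_1) < \ell(L_2)/2$, then walking up the chain of strict ancestors of $L_1$ one meets a first ancestor $H$ of sidelength $\ell(L_2)/2$ lying in $\sW \cup \sS$; $H$ intersects $L_2$ and $\ell(H) = 2^{-1} \ell(L_2)$, so the parent of $L_1$'s proximal ancestors would have been caught by (NN) before any of $L_1$'s ancestors could be refined, contradicting the fact that every square strictly between $L_1$ and $H$ lies in $\sS$.

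For part (a), I would check that at the initial scale $j = N_0$ the stopping conditions (EX) and (HT) do not trigger, so that the assignment of every level-$N_0$ square to $\sS$ is compatible with the refinement procedure. If $\ell(L) = 2^{-N_0}$, then the balls $\bB_L, \bB^\flat_L$ have radii bounded by a constant depending only on $M_0, N_0$ (cf.\ \eqref{e:def_N_0}). The monotonicity formula (Theorem \ref{thm:allard}) together with $\bmo \leq 2\varepsilon_{CM}$ bounds both $\bE(T, \bB_L)$ and $\bE^\flat(T, \bB_L^\flat)$ by $C(M_0, N_0)\bmo$, and Lemma \ref{l:hardtSimonHeight} bounds the corresponding heights by $C(M_0, N_0)\bmo^{1/2}$. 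Since the right-hand sides of (EX)--(HT) are proportional to $\ell(L)^{2-2\delta_1} = 2^{-N_0(2-2\delta_1)}$, fixed after the parameters $M_0, N_0, C_e^\flat, C_e^\natural, C_h$, a final choice of $\varepsilon_{CM}$ small enough rules the stopping conditions out at level $N_0$.

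For part (b), assume $L \in \sC^\natural \cap \sW^e$ and let $J$ be its parent. By Remark \ref{r:good-children}, either $J \in \sC^\natural$ or $J \in \sC^\flat$. In the latter case, using $r_J = 2 r_L$, $|x_J - x_L| \leq \ell(L)/2$, the height bound at $J$ coming from the failure of (HT), and the enlargement factor $2^7$ built into the radius of $\bB^\flat_J$, one verifies $\bB_L \subset \bB^\flat_J$. Since the (EX) condition failed at $J \in \sS$, this inclusion gives
\begin{equation*}
\bE(T, \bB_L) \;\leq\; \Bigl(\tfrac{2^7 \cdot 64\, r_J}{64\, r_L}\Bigr)^{2}\, \bE^\flat(T, \bB^\flat_J) \;\leq\; C^{\star\star}(M_0)\, C_e^\flat\, \bmo\, \ell(L)^{2-2\delta_1},
\end{equation*}
contradicting $L \in \sW^e$ once $C_e^\natural > C^{\star\star}(M_0)\, C_e^\flat$. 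The bounds \eqref{e:control-in-W}--\eqref{e:control-in-W-2} then follow by the same inheritance: for $L \in \sW$ with parent $J \in \sS$, the child ball is contained in the parent ball with a ratio of radii bounded by a geometric constant, which propagates the failure of (EX) and (HT) at $J$ to the stated bounds at $L$ with a constant $C$ depending only on $\beta_1$ and $\delta_1$. The main technical obstacle I foresee is the geometric inclusion $\bB_L \subset \bB^\flat_J$ in part (b): since $p_L$ lies on $\supp T$ while $p_J^\flat$ lies on $\Gamma$, the vertical component of $p_L - p_J^\flat$ must be controlled by the height bound at $J$, and the balancing of this height with the factor $2^7$ in $\bB^\flat_J$ is precisely what forces the hierarchy $C_e^\natural \gg C_e^\flat \gg 1$ in Assumption \ref{ass:hierarchy}.
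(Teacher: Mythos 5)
Your overall architecture (inherit bounds from the parent square, which lies in $\sS$) is the right idea and matches the paper, but two steps are not sound as written.

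First, for part (a) you conclude by saying ``a final choice of $\varepsilon_{CM}$ small enough rules the stopping conditions out at level $N_0$.'' This cannot work: the (EX) condition compares $\bE^\Box (T,\bB^\Box_L)$ against $C^\Box_e\,\bmo\,\ell(L)^{2-2\delta_1}$, and your upper bound $\bE^\Box(T,\bB^\Box_L)\le C(M_0,N_0)\bmo$ has \emph{the same} linear dependence on $\bmo$ on both sides, so the comparison is independent of $\varepsilon_{CM}$ and can only be won by taking $C^\flat_e,C^\natural_e$ large (depending on $R_0,N_0$). The same is true for (HT): the correct upper bound is $\bh(T,\bB_L^\Box)\le C(R_0,N_0)\bmo^{\sfrac14}$ (note: $\bmo^{\sfrac14}$, not $\bmo^{\sfrac12}$ -- the $\bA$-contribution in Lemma~\ref{l:hardtSimonHeight} scales as $\bA\le\bmo^{\sfrac12}$ and hence contributes at order $\bmo^{\sfrac14}$), and (HT) tests against $C_h\,\bmo^{\sfrac14}\ell(L)^{1+\beta_1}$, so again both sides carry the \emph{same} power of $\bmo$ and the conclusion hinges on choosing $C_h$ large, not $\varepsilon_{CM}$ small. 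This is exactly what the statement of the proposition itself anticipates: all of (a), (b), and the estimates are conditioned on $C^\flat_e$, $C^\natural_e/C^\flat_e$, $C_h/C^\natural_e$ exceeding some $C^\star(M_0,N_0)$, and the role of small $\varepsilon_{CM}$ is elsewhere (ensuring the balls stay inside $\bB_{4R_0}$, etc.).

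Second, your argument for \eqref{e:control-in-W}--\eqref{e:control-in-W-2} omits the essential plane-tilting step. The quantities $\bE^\Box(T,\bB^\Box_L)$ and $\bh(T,\bB^\Box_L)$ are computed with respect to the \emph{optimal plane at scale $L$}, namely $V_L$, while the parent's bounds are with respect to $V_J$. The ball inclusion $\bB^\Box_L\subset\bB^\Box_J$ alone gives you an estimate in terms of the plane $V_J$; to convert to $V_L$ you must control $|V_L-V_J|$, which requires Proposition~\ref{p:tiltingPlanes} (in particular part (ii), $|V_L-V_J|\le \overline C\bmo^{\sfrac12}\ell(J)^{1-\delta_1}$). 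The paper's proof invokes this explicitly (e.g.\ $\bh(T,\bB_L)\le\bh(T,\bB_H^\Box)+\overline C\,r_L|V_L-V_H^\Box|$). Your phrase ``propagates the failure of (EX) and (HT) at $J$ to the stated bounds at $L$ with a constant $C$ depending only on $\beta_1$ and $\delta_1$'' silently performs this change of plane, which is the actual content of the estimate. Incorporating the tilting lemma would close this gap; otherwise the argument for the height estimates is incomplete.
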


\subsection{Construction of the center manifold} First of all for each $\bB_L$ and $\bB^\flat_L$, we let $V_L$ be the choice of optimal plane for the excess and the height in the sense of Definition \ref{d:optimal_planes}: note that for boundary squares, namely in $\bB^\flat_L$, the plane $V_L$ optimizes the excess $\bE^\flat$, and thus it is constrained to contain the line $T_{p^\flat_L} \Gamma$. The following key lemma allows us to apply Theorem \ref{t:strong_Lipschitz} (and its interior version \cite[Theorem 2.4]{DS3}) to corresponding cylinders.

\begin{lemma} For any choice of the other parameters, if $\varepsilon_{CM}$ is sufficiently small, the following holds for every $L\in \sS\cup \sW$.
\begin{itemize}
    \item[(a)] If $L\in \sC^\natural$, then $T$ satisfies the assumptions of \cite[Theorem 2.4]{DS3} in $\bC_{32 r_L} (p_L, V_L)$.
    \item[(b)] If $L\in \sC^\flat$, then $T$ satisfies the assumptions of Theorem \ref{t:strong_Lipschitz} in $\bC_{2^7 32 r_L} (p^\flat_L, V_L)$.
\end{itemize}
The corresponding $Q$-valued strong Lipschitz approximations will be denoted by $f_L$ and will be called $V_L$-approximations. 
\end{lemma}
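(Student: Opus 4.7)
The plan is to verify, in each case, the three ingredients needed to apply the relevant approximation theorem: smallness of the cylindrical excess relative to $V_L$, the projection identity $\bp_{V_L,\sharp} T = Q\a{\cdot}$ on the base, and, in case (b), the facts that $\bp_{V_L}(\Gamma)$ bisects the base disk with $T_{p^\flat_L}\Gamma \subset V_L$ and that the rescaled curvature of $\Gamma$ is $\leq 1$. All three will follow by combining the excess and height bounds of Proposition \ref{p:Whitney} with a standard tilting estimate and the monotonicity formula.

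First I would carry out a tilting estimate. By the optimality of $V_L$ and the bounds \eqref{e:control-in-W}--\eqref{e:control-in-W-2}, a standard argument (analogous to \cite[Proposition A.4]{DS4} or \cite[Proposition 5.2]{DDHM}) gives that the angle between $V_L$ and $V_0$ is at most $C\sqrt{C^\natural_e\,\bmo}$, which is tiny when $\varepsilon_{CM}$ is small. Together with the height bound $\bh(T,\bB_L) \leq C\,C_h\,\bmo^{1/4}\,\ell(L)^{1+\beta_1}$ this confines $\supp(T)\cap\bB_L$ (resp.\ $\cap\,\bB^\flat_L$) to a thin slab around $p_L + V_L$ (resp.\ $p^\flat_L + V_L$). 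Consequently the cylinder $\bC_{32 r_L}(p_L,V_L)$ is contained in $\bB_L$, the cylinder $\bC_{2^{12} r_L}(p^\flat_L,V_L)$ is contained in $\bB^\flat_L$, and the cylindrical excess in each is controlled by the spherical excess:
\[
\bE(T, \bC_{32 r_L}(p_L,V_L)) \leq C\,C^\natural_e\,\bmo\,\ell(L)^{2-2\delta_1} \leq \varepsilon_A,
\]
provided $\varepsilon_{CM}$ is chosen small; the analogous bound holds in the boundary case.

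Next I would verify the projection identities. For $L\in\sC^\natural$, the interiority condition $\dist(x_L,\gamma)\geq 64 r_L$ together with the tilt and height bounds places $\bC_{32 r_L}(p_L,V_L)$ at a definite distance from $\Gamma$, so $\partial(T\res\bC_{32 r_L}(p_L,V_L))=0$. Since $Q\a{V_0}$ is the unique tangent cone at $0$, upper semicontinuity of the density forces $\Theta(T,\cdot) = Q$ at every interior point of $\supp(T)$ close enough to $0$; the standard ``no holes'' argument of \cite[Proposition 3.2]{DS3} based on monotonicity and small excess then yields $\bp_{V_L,\sharp}(T\res\bC_{32 r_L}(p_L,V_L)) = Q\a{B_{32 r_L}(\bp_{V_L}(p_L),V_L)}$. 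For $L\in\sC^\flat$, the optimality of $V_L$ for $\bE^\flat$ forces $T_{p^\flat_L}\Gamma\subset V_L$; combined with the $C^{3,\alpha}$ regularity of $\Gamma$ and the bound $\bA\leq 1$, this implies that $\bp_{V_L}(\Gamma)$ is a $C^{3,\alpha}$ curve inside the base disk, graphical over $T_{p^\flat_L}\Gamma$, which splits the disk into two open components $D^\pm_L$. Running the no-holes argument on each side, and using on one side that $\bp_{V_0,\sharp}T = Q\a{D}$, one concludes $\bp_{V_L,\sharp}(T\res\bC_{2^{12} r_L}(p^\flat_L,V_L)) = Q\a{D^+_L}$, where $D^+_L$ is the component on the same side of $\bp_{V_L}(\Gamma)$ as $\bp_{V_L}(D)$.

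The hardest point will be the boundary projection identity, where one must simultaneously control the tilt and the deviation of $\bp_{V_L}(\Gamma)$ from $T_{p^\flat_L}\Gamma$ to ensure that $\bp_{V_L}|_\Gamma$ is an injective $C^{3,\alpha}$ map onto its image; $\bp_{V_L}(\Gamma)$ could in principle fold back, but the rescaled curvature bound $r_L\bA\leq r_L\bmo^{1/2}\leq\varepsilon_{CM}^{1/2}\ll 1$ excludes this. The same bound also shows that the curvature assumption $\|\kappa\|_\infty + \|\dot\nu\|_\infty\leq 1$ of Assumption \ref{Ass:app} is preserved after rescaling to unit scale. With these three ingredients in place, the hypotheses of Theorem \ref{t:strong_Lipschitz} and of \cite[Theorem 2.4]{DS3} are satisfied, and the corresponding $V_L$-approximations $f_L$ exist with all stated properties.
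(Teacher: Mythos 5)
The paper does not actually spell out a proof of this lemma; it is stated after Definition~\ref{d:squares_and_balls} and handled implicitly as the special case $H=L$ of Proposition~\ref{p:f-well-defined}, whose proof is a bare citation of \cite[Proposition 4.2]{DS4} and \cite[Proposition 8.25]{DDHM}. Your plan --- control the tilt $|V_L-V_0|$, use the height bound to show $\supp(T)\cap\bC\subset\bB_L^\Box$ so that the cylindrical excess is dominated by the spherical one, verify the projection identity by constancy, and in the boundary case additionally check that $\bp_{V_L}(\Gamma)$ is a non--self--intersecting graph over $T_{p^\flat_L}\Gamma$ with small rescaled curvature --- is the same approach those references carry out, and you correctly use the input from Proposition~\ref{p:Whitney} and Proposition~\ref{p:tiltingPlanes}. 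One small point: the bounds \eqref{e:control-in-W}--\eqref{e:control-in-W-2} are stated for $L\in\sW$, but for $L\in\sS$ the excess and height bounds hold a fortiori (failure of the stopping conditions), so your use is fine. Also, you write that the cylinder is ``contained in'' $\bB_L$; what you need and what you actually proved is $\supp(T)\cap\bC_{32r_L}(p_L,V_L)\subset\bB_L$, since the cylinder as a set is unbounded in the $V_L^\perp$ directions.

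There is, however, a genuine error in your justification of the projection identity for interior cubes. You assert that ``upper semicontinuity of the density forces $\Theta(T,\cdot)=Q$ at every interior point of $\supp(T)$ close enough to $0$.'' This is false. The boundary density at $0$ is $\Theta(T,0)=Q/2$, and at regular interior points near a flat boundary point the density is a positive integer which is typically $1$ (a smooth multiplicity-one sheet), never forced to be $Q$; upper semicontinuity does not propagate from a density-$Q/2$ boundary point to a density-$Q$ interior point. Fortunately, the density claim is not needed. The correct argument is: (1) since $\supp(T)\cap\bC_{32r_L}(p_L,V_L)$ stays away from $\Gamma$, the current has no boundary in a slightly smaller concentric cylinder, so by the constancy theorem $\bp_{V_L,\sharp}(T\res\bC)=k\a{B}$ for some integer $k$; (2) $k$ is determined by comparing with the known identity $\bp_{V_0,\sharp}T\res\bC_{5R_0}=Q\a{D}$ together with the smallness of $|V_L-V_0|$ and the height bound (for instance, by rotating continuously from $V_0$ to $V_L$ and noting the slice multiplicity cannot jump). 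The citation of \cite[Proposition 3.2]{DS3} as a ``no-holes'' lemma is also misplaced: that is the Lipschitz approximation, not the constancy-based argument you want. Replace the density step with the constancy-plus-continuity argument and the proof is sound.
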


Given a square $L\in \sC^\flat$ which belongs to $\sS\cup \sW$, we denote by $D_L\subset B_{2^7 24 r_L} (p^\flat_L, V^\flat_L)$ the domain of the function $f_L$, which coincides with the orthogonal projection on $p^\flat_L+V^\flat_L$ of $\supp (T) \cap \bC_{2^7 24 r_L} (p^\flat_L, V^\flat_L)$. Note in particular that $\partial D_L \cap B_{2^7 24 r_L} (p^\flat_L, V^\flat_L)$ is the projection of $\Gamma\cap \bC_{2^7 24 r_L} (p^\flat_L, V^\flat_L)$ onto $p^\flat_L + V^\flat_L$, which we will denote by $\gamma_L$. Likewise, we denote by $g_L$ the function over $\gamma_L$ whose graph gives $\Gamma\cap \bC_{2^7 24 r_L} (p^\flat_L, V^\flat_L)$. In particular, Theorem \ref{t:strong_Lipschitz} implies that $f_L|_{\gamma_L} = Q \a{g_L}$.
We now regularize the averages $\etaa\circ f_L$ to suitable harmonic functions $h_L$ in the following fashion.

\begin{definition}\label{d:tilted}
We denote by $h_L$ the harmonic function on $B_{16 r_L} (p_L, V_L)$, resp. $D_L \cap B_{2^7 16 r_L} (p^\flat_L, V_L)$, for $L\in \sC^\natural$, resp. $L\in \sC^\flat$, such that the boundary value of $h_L$ on the respective domain is given by $\etaa\circ f_L$ (in particular it coincides with $g_L$ on $\gamma_L$). $h_L$ will be called \textbf{\em {tilted harmonic interpolating function}}.
\end{definition}

In order to complete the description of our algorithm we need a second important technical lemma.

\begin{lemma}\label{l:u-well-defined}
Consider $L\in \sS\cup \sW$. For every $L\in \sC^\flat$, resp. $L\in \sC^\natural$, there is a smooth function $u_L: D\cap B_{2^7 8r_L} (\p_0 (p^\flat_L), V_0)\to V_0^\perp$, resp. $u_L: B_{8r_L} (\p_0 (p_L), V_0) \to V_0^\perp$, such that
\begin{align}
\bG_{u_L} \res \bC_{8r_L} (p^\flat_L, V_0) &= \bG_{h_L} \res  \bC_{8r_L} (p^\flat_L, V_0),
\qquad \mbox{resp.}\\
\bG_{u_L} \res \bC_{8r_L} (p_L, V_0) &= \bG_{h_L} \res  \bC_{8r_L} (p_L, V_0).
\end{align}
The function $u_L$ will be called \textbf{{\em interpolating function}}. 
\end{lemma}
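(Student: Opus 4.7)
The plan is to interpret the graph of $h_L$ (which lives over a disk in the tilted plane $V_L$) as a smooth graph over a subset of $V_0$ via a standard change-of-variables, using the fact that $V_L$ is very close to $V_0$ and that $h_L$ has small gradient. Set $\bar h_L(x):=(x,h_L(x))$ for $x\in V_L$. I will consider the map $\Psi:=\bp_{V_0}\circ\bar h_L$ sending $V_L$ into $V_0$, and show that (up to a suitable choice of base points) it is a diffeomorphism of a neighborhood of $\p_{V_L}(p_L)$ (resp.\ $\p_{V_L}(p_L^\flat)$) onto a neighborhood containing the required disk in $V_0$. Then I will define
\[
u_L(y):=\bp_{V_0^\perp}\bigl(\bar h_L(\Psi^{-1}(y))\bigr),
\]
so that the graph of $u_L$ over the image disk coincides with the portion of $\bG_{h_L}$ lying inside the vertical cylinder, which is precisely the required identity of currents.

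To make $\Psi$ a diffeomorphism it suffices to show $\|D\Psi-\mathrm{Id}_{V_0}\|\le 1/2$. Writing $\Psi$ as a composition of the inclusion $V_L\hookrightarrow V_L\oplus V_L^\perp$ composed with the graph map and then the linear projection, a direct computation gives $\|D\Psi-\mathrm{Id}\|\le C(|V_L-V_0|+\Lip(h_L))$. The tilting $|V_L-V_0|$ is controlled by the excess bounds in Proposition \ref{p:Whitney} (via the standard tilting estimate relating optimal excess planes) together with the $C^2$-bound on $\Gamma$, while $\Lip(h_L)$ is controlled by standard interior/boundary elliptic estimates for the harmonic extension of $\etaa\circ f_L$, using the Lipschitz bound \eqref{e:strong_Lip} for $f_L$. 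Because all these quantities are controlled by a small power of $\bmo+\ell(L)$, choosing $\varepsilon_{CM}$ sufficiently small reduces the right-hand side below $1/2$. The radius ratios $8r_L$ vs.\ $16 r_L$ (and $2^7 8 r_L$ vs.\ $2^7 16 r_L$) are comfortably large enough to guarantee that $\Psi^{-1}$ is defined on $B_{8r_L}(\bp_0(p_L),V_0)$, resp.\ $B_{2^7 8 r_L}(\bp_0(p_L^\flat),V_0)$; this relies on the height bounds $\bh(T,\bB_L)$, $\bh(T,\bB_L^\flat)$ provided again by Proposition \ref{p:Whitney}, which ensure that the graph of $h_L$ is contained in a thin slab around $p_L+V_L$.

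The one genuine subtlety is the boundary case $L\in\sC^\flat$: we need to verify that $\Psi$ maps $\gamma_L\cap B_{2^7 16 r_L}(p_L^\flat,V_L)$ onto the piece of $\gamma=\bp(\Gamma)$ inside $B_{2^7 8 r_L}(\bp_0(p_L^\flat),V_0)\cap D$, so that the domain of $u_L$ is exactly the correct subset of $V_0$. This is however automatic: by construction $\bar h_L(\gamma_L)=\Gamma$ locally (since $h_L$ coincides with $g_L$ on $\gamma_L$ by Definition \ref{d:tilted}), hence $\Psi(\gamma_L)=\bp_{V_0}(\Gamma)=\gamma$ locally, and by the diffeomorphism property $\Psi$ maps the ``$D$-side'' of $\gamma_L$ to the ``$D$-side'' of $\gamma$. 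Smoothness of $u_L$ up to the boundary then follows from smoothness of $h_L$ (harmonic with $C^{3,\alpha}$ boundary datum $g_L$, since $\Gamma$ is $C^{3,\alpha}$) together with smoothness of $\Psi^{-1}$.

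The main potential obstacle is keeping track of the quantitative constants: one must verify that the smallness of $|V_L-V_0|$ and $\Lip(h_L)$ is actually better than the geometric factor relating the two radii $8r_L$ and $16 r_L$, so that the inverse $\Psi^{-1}$ reaches the whole disk of radius $8r_L$. This is a routine chase through the parameter hierarchy of Assumption \ref{ass:hierarchy}, using that the tilting estimate yields a bound $|V_L-V_0|\le C\bmo^{1/2}\ell(L)^{1-\delta_1}$ which is dwarfed by the safety factor of $2$ between the two radii. I do not expect genuine technical difficulty beyond this bookkeeping.
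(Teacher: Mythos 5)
Your proposal is correct and takes essentially the same approach as the paper's (implicit) one. The paper does not spell out a proof of Lemma \ref{l:u-well-defined} but instead notes it is a special case of Lemma \ref{l:u-well-defined-2}, and the underlying argument is exactly the reparametrization-by-projection device borrowed from \cite[Lemma B.1]{DS4} and \cite{DDHM}: one shows $\Psi:=\bp_{V_0}\circ\bar h_L$ is a diffeomorphism onto an image that covers the required disk, with the quantitative input being the tilting bound $|V_L-V_0|\le C\bmo^{1/2}$ from Proposition \ref{p:tiltingPlanes}(iii), the gradient bound on $h_L$ from Proposition \ref{pr:elliptic_regularization}, and the height bound ensuring the graph stays in a thin slab; your handling of the boundary case via the identity $h_L|_{\gamma_L}=g_L$ (so $\bar h_L(\gamma_L)=\Gamma$ and $\Psi(\gamma_L)=\gamma$) is precisely the extra observation needed there, and the factor-of-$2$ slack between the radii $8r_L$ and $16r_L$ (resp.\ $2^7 8r_L$ and $2^7 16r_L$) is, as you say, what absorbs the small constants.
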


The center manifold is the result of gluing the interpolating functions appropriately. To that we fix a bump function $\vartheta\in C^\infty_c ((-\frac{3}{2}, \frac{3}{2})^2)$ which is identically $1$ on $[-1,1]^2$ and define
\[
\vartheta_L (x) := \vartheta \left(\frac{x-x_L}{\ell (L)}\right)\, .
\]
Hence, for any fixed $j\geq N_0$ we define 
\begin{equation}\label{e:def-Pj}
\sP^j:= \sS_j \cup \bigcup_{i\leq j} \sW_i
\end{equation}
and the following function $\phii_j$, defined over $D\cap [-4,4]^2\subset V_0$ and taking values in $V_0^\perp$
\begin{equation}
\phii_j (x) := \frac{\sum_{L\in \sP^j} \vartheta_L (x) u_L (x)}{\sum_{H\in \sP^j} \vartheta_H (x)}\, .    
\end{equation}
The center manifold is the graph of the function $\phii$ which is the limit of $\phii_j$ as explained in the statement of the next theorem. 

\begin{theorem}[Center manifold]\label{t:cm}
Let $T$ be as in Assumption \ref{ass:cm} and assume that the parameters satisfy the conditions of Assumption \ref{ass:hierarchy}. Then there is a positive $\omega$ (depending only on $\delta_1$ and $\beta_1$), with the following properties:
\begin{itemize}
    \item[(a)] $\phii_j|_\gamma =g$ for every $j$;
    \item[(b)] $\|\phii_j\|_{C^{3,\omega}}\leq C \bmo^{\frac{1}{2}}$ for some constant $C$ which depends on $\beta_1, \delta_1, M_0, N_0, C^\natural_e,C^\flat_e$, and $C_h$, but not on $\varepsilon_{CM}$;
    \item[(c)] For every $k,k'\geq j+2$, $\phii_k = \phii_{k'}$ on every cube $L\in \sW_j$;
    \item[(d)] $\phii_j$ converges uniformly to a $C^{3,\omega}$ function $\phii$.
\end{itemize}
\end{theorem}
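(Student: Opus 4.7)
The plan is to establish the theorem via the standard patching construction that goes back to Almgren, in the refined form of \cite{DS4} with the boundary adaptations of \cite{DDHM}. I would divide the work into four ordered building blocks, which will be carried out in detail in Sections \ref{s:tilting}--\ref{s:cm-final}.

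First, I would prove tilting estimates comparing optimal planes of nearby or nested squares. Using the stopping conditions (EX), (HT), (NN) together with the a priori bounds \eqref{e:control-in-W}--\eqref{e:control-in-W-2} along the stopping ancestors, plus the monotonicity formula of Theorem \ref{thm:allard}, I expect to obtain
\[
|V_L - V_H| \;\leq\; C \bmo^{\sfrac12}\ell(L)^{1-\delta_1}
\]
for every pair $L,H\in \sP^j$ with $L\cap H\neq\emptyset$ and comparable side-length, and analogously $|V_L-V_0|\leq C\bmo^{\sfrac12}\ell(L)^{1-\delta_1}$. The boundary version uses Theorem \ref{t:decay}, as in \cite[Section 7]{DDHM}, and exploits the fact that $V_L$ for $L\in\sC^\flat$ is constrained to contain $T_{p_L^\flat}\Gamma$. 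Second, I would use Theorem \ref{t:strong_Lipschitz} together with Theorem \ref{t:o(E)} to show that $\etaa\circ f_L$ is $L^2$-close to $h_L$, and then apply Schauder estimates to obtain $\|u_L\|_{C^{3,\omega}}\leq C\bmo^{\sfrac12}$ on the relevant disc, with the pointwise refinement
\[
\ell(L)^{k}\,|D^k u_L(x)|\leq C\bmo^{\sfrac12}\ell(L)^{1+\beta_1-k\delta_1},\qquad 0\leq k\leq 3;
\]
rewriting $h_L$ as $u_L$ over $V_0$ via Lemma \ref{l:u-well-defined} costs only a $C^{3,\omega}$ change of variables controlled by the previous tilting estimate. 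In the boundary case the $C^{3,\alpha_0}$ regularity of $\Gamma$ (hence of $g$) ensures that the harmonic extension $h_L$ retains the right regularity up to $\gamma_L$.

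Third, combining the previous two steps with the fact that $\etaa\circ f_L$ and $\etaa\circ f_H$ are both close in $L^2$ to the slab-average of $T$ on the overlap of $\bB_L$ and $\bB_H$, I would prove the workhorse comparison estimate
\[
\|u_L-u_H\|_{C^3(L\cap H)}\;\leq\;C\bmo^{\sfrac12}\ell(L)^{1+\beta_1-3\delta_1}
\]
whenever $L,H$ are stopped or stopping ancestors with nonempty intersection and comparable sidelengths. With this in hand, the conclusions of the theorem follow formally. Statement (a) is immediate because, for $x\in\gamma$, only boundary squares contribute to $\phii_j(x)$ and each corresponding $u_L$ equals $g$ on $\gamma$ by Lemma \ref{l:u-well-defined} applied to $h_L$, whose trace on $\gamma_L$ is $g$. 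Statement (c) follows from the Whitney property (w3): on a fixed $L\in\sW_j$ the sum defining $\phii_k$ only sees squares $H$ with $\ell(H)\in\{\tfrac12\ell(L),\ell(L),2\ell(L)\}$, and for $k\geq j+2$ no such $H$ gets further refined because each is contained in the neighbourhood-closure of the stopping family; thus the formula stabilizes. Statement (b) is a direct computation: differentiating $\phii_j$ produces factors $\ell(L)^{-k}$ from $\vartheta_L$, which are absorbed by rewriting $u_L-u_H$ via step three before distributing derivatives, and the denominator $\sum_H\vartheta_H$ is uniformly bounded away from $0$ and in $C^3$ by the finite-overlap property of $\{\vartheta_L\}_{L\in\sP^j}$. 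Finally, (d) follows from (b) and (c) by an Ascoli--Arzel\`a argument together with uniform $C^{3,\omega}$ Cauchy bounds on compact subsets of $[-4,4]^2\cap \overline{D}\setminus\bDel$, combined with a separate extension argument along $\bDel$ which uses the fact that $\phii_j$ and all its derivatives up to order three are uniformly controlled on any neighbourhood of $\bDel$ by the smallness of $\ell(L)$ for $L\in\sW$ approaching $\bDel$.

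The main obstacle is the third step, the $C^3$ comparison of two interpolating functions on their overlap. The algebraically demanding case is the \emph{transition} between regimes, where a boundary square $L\in\sC^\flat$ sits next to an interior square $H\in\sC^\natural$: here $V_L$ is forced to contain $T_{p^\flat_L}\Gamma$ while $V_H$ is free, so the tilting is not symmetric, and one must use the height bound for the boundary square together with the $C^{2,\alpha_0}$ regularity of $\Gamma$ to relocate the curve with respect to $V_H$ and close the estimate. All other geometric cases (both boundary, both interior, parent-child within the same regime) are known variants of the estimates in \cite{DS4} and \cite{DDHM} and will follow once the transition case is handled; the remaining bookkeeping, namely turning the $L^2$-closeness of the $\etaa\circ f_L$'s into $C^3$-closeness of the $u_L$'s via harmonic elliptic regularity and interpolation, is routine.
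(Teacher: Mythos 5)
Your overall scaffolding (tilting of optimal planes, elliptic regularity of the interpolating functions, $C^3$ comparison across overlapping patches, patching via $\{\vartheta_L\}$) is the same as the paper's, and your treatment of (a), (c), (d) is on the right track. However, your second step has a genuine gap.

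You claim that Schauder estimates yield $\|u_L\|_{C^{3,\omega}}\leq C\bmo^{\sfrac12}$ directly, having only established $L^2$-closeness of $\etaa\circ f_L$ to $h_L$ from Theorem \ref{t:o(E)}. This cannot work as stated. The only a priori input on $h_L$ from Theorem \ref{t:strong_Lipschitz} is a Dirichlet bound of order $\bmo\,\ell(L)^{4-2\delta_1}$ on a disc of radius $\sim\ell(L)$, and interior harmonic regularity then gives $|D^k h_L|\lesssim\bmo^{\sfrac12}\ell(L)^{2-\delta_1-k}$; for $k=3$ this is $\bmo^{\sfrac12}\ell(L)^{-1-\delta_1}$, which blows up as $\ell(L)\downarrow 0$ and is therefore not a bound that is uniform over the Whitney decomposition. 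Indeed your own ``pointwise refinement''
$\ell(L)^{k}|D^k u_L|\leq C\bmo^{\sfrac12}\ell(L)^{1+\beta_1-k\delta_1}$
for $k=3$ is $|D^3 u_L|\lesssim \bmo^{\sfrac12}\ell(L)^{\beta_1-3\delta_1-2}$, which is also divergent, so the two sentences of your second step are inconsistent.

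The uniform $C^{3,\omega}$ estimate on $u_H$ is actually obtained in Proposition \ref{p:block_estimates}(a) by telescoping along the family tree $H=H_i\subset H_{i-1}\subset\cdots\subset H_{N_0}$: one writes $h_H=h_{HH_{N_0}}+\sum_j (h_{HH_j}-h_{HH_{j-1}})$ and exploits that each consecutive difference is $L^1$-small with a \emph{quantitative} rate, namely of order $\bmo\,\ell(H_{j-1})^{5+\beta_1}$, which after harmonic interior regularity makes the $C^{3}$ contribution of each telescope step of order $\bmo\,\ell(H_{j-1})^{\beta_1}$ — a summable geometric series. This quantitative $L^1$ estimate on $\|h_{HH_j}-\etaa\circ f_{HH_j}\|_{L^1}$ (Proposition \ref{pr:elliptic_regularization}, estimates \eqref{e:L1_interior}–\eqref{e:L1_boundary}) is \emph{not} a consequence of Theorem \ref{t:o(E)}, which is a purely qualitative ``$o(E)$'' approximation: the $\eta$ there is fixed before $\ell(L)$, so it can never produce a decay faster than $\bmo\,\ell(L)^{4-2\delta_1}$ for the Dirichlet discrepancy. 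The correct mechanism is the first-variation identity $\delta T(\chi)=0$ combined with the Taylor expansion of the mass of the graph, which shows that $\etaa\circ f_{HL}$ is almost-harmonic in the weak sense $\bigl|\int D(\etaa\circ f_{HL}):D\zeta\bigr|\leq C\bmo\,r_L^{4+\beta_1}\|D\zeta\|_0$ (estimate \eqref{e:weak_ell_1}). Without substituting this first-variation argument for your use of Theorem \ref{t:o(E)}, step two does not close, and with it the subsequent $C^3$ cross-patch comparison in your third step would also have the wrong rate.
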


\begin{definition}
The graph of the function $\phii$ will be called \textbf{\em {center manifold}} and denoted by $\cM$. We will define $\Phii (x):= (x, \phii (x))$ as the graphical parametrization of $\cM$ over $[-4,4]^2 \cap \bar D$. The set $\Phii (\bDel)$ will be called the \textbf{\em { contact set}}, while for every $L\in \sW$ the corresponding $\cL := \Phii (L\cap D)$ will be called \textbf{\em {Whitney region}}.
\end{definition}

\subsection{The $\cM$-normal approximation and related estimates}

In what follows we assume that the conclusions of Theorem \ref{t:cm} apply.  
For any Borel set $\cV\subset \cM$ we will denote 
by $|\cV|$ its $\cH^2$-measure and will write $\int_\cV f$ for the integral of $f$
with respect to $\cH^2$. 
$\cB_r (q)$ denotes the geodesic balls in $\cM$. Moreover, we refer to \cite{DS2}
for all the relevant notation pertaining to the differentiation of (multiple valued)
maps defined on $\cM$, induced currents, differential geometric tensors and so on.

\begin{ipotesi}\label{intorno_proiezione}
We fix the following notation and assumptions.
\begin{itemize}
\item[(U)] $\bU :=\big\{x+y: x\in \cM, |y|<1, \mbox{\; and \;}
y\perp \cM\big\}$.
\item[(P)] $\p: \bU \to \cM$ is the map defined by $(x+y)\mapsto x$.
\item[(R)] For any choice of the other parameters, we assume $\eps_{CM}$ to be so small that
$\p$ extends to $C^{2, \kappa}(\bar\bU)$ and
$\p^{-1} (y) = y + \overline{B_1 (0, (T_y \cM)^\perp)}$ for every $y\in \cM$.
\item[(L)] We denote by $\partial_l \bU := \p^{-1} (\de \cM)$ 
the \textbf{\em {lateral boundary}} of $\bU$.
\end{itemize}
\end{ipotesi}

The following is then a corollary of Theorem \ref{t:cm} and the construction algorithm.

\begin{corollary}\label{c:cover}
Under the hypotheses of Theorem \ref{t:cm} and of Assumption \ref{intorno_proiezione}
we have:
\begin{itemize}
\item[(i)] $\supp (\partial (T\res \bU)) \subset \partial_l \bU$, 
$\supp (T\res [-\frac{7}{2}, \frac{7}{2}]^2 \times \R^n) \subset \bU$ 
and $\p_\sharp (T\res \bU) = Q \a{\cM}$;
\item[(ii)] $\supp (\langle T, \p, \Phii (q)\rangle) \subset 
\big\{y\, : |\Phii (q)-y|\leq C \bmo^{\sfrac{1}{4}} 
\ell (L)^{1+\beta_1}\big\}$ for every $q\in L\in \sW$, where $C$ depends on all the parameters except $\eps_{CM}$;
\item[(iii)]  $\langle T, \p, p\rangle = Q \a{p}$ for every $p\in \Phii (\bDel) \cup (\Gamma\cap \partial \cM)$.
\end{itemize}
\end{corollary}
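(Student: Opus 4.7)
The plan is to prove the three assertions in order, using only the estimates recorded in Proposition~\ref{p:Whitney}, the $C^{3,\omega}$ regularity of $\phii$ given by Theorem~\ref{t:cm}, and standard slicing for integral currents.

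\textbf{Part (i).} For the boundary inclusion, observe that by Theorem~\ref{t:cm}(a) the function $\phii$ agrees with $g$ on $\gamma=\bp(\Gamma)$, so $\Gamma\subset\partial\cM\subset \partial_l\bU$. Since $(\partial T)\res U=Q\a{\Gamma}$, the restriction to $\bU$ is supported in $\Gamma\subset \partial_l\bU$. For the second inclusion it suffices to show that every $p\in\supp(T)\cap([-\tfrac{7}{2},\tfrac{7}{2}]^2\times\R^n)$ lies within distance $1$ from $\cM$. Given such a $p=(x,z)$, locate a Whitney square $L\in\sW$ with $x\in L$ (or use continuity if $x\in\bDel$): the height estimate \eqref{e:control-in-W}--\eqref{e:control-in-W-2} on $\bB_L$ (resp.\ $\bB_L^\flat$) combined with Lemma~\ref{l:hardtSimonHeight} gives $|z-\phii(x)|\leq C\bmo^{1/4}\ell(L)^{1+\beta_1}\ll 1$ once $\varepsilon_{CM}$ is small. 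Finally, the pushforward identity $\bp_\sharp(T\res\bU)=Q\a{\cM}$ is obtained via the homotopy relating $\bp_{V_0}$ to $\bp$: both factor through $\Phii$ on $\cM$, so
\[
\bp_\sharp(T\res\bU)=\Phii_\sharp(\bp_{V_0\sharp}(T\res\bU))=\Phii_\sharp(Q\a{D}\res \bp_{V_0}(\bU))=Q\a{\cM},
\]
using Assumption~\ref{Ass:app}(iii) together with the fact that $\partial(T\res\bU)$ is supported on $\partial_l\bU$, so the constancy lemma applies on $\cM\setminus\partial\cM$.

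\textbf{Part (ii).} Fix $q\in L\in \sW$. Any $y\in\supp\langle T,\bp,\Phii(q)\rangle$ lies in $\supp(T)\cap\bp^{-1}(\Phii(q))$; in particular $y\in\bB_L$ (resp.\ $\bB_L^\flat$) by the choice of the ball radii and the $C^{3,\omega}$ closeness of $\cM$ to the optimal plane $V_L$. The height bound in \eqref{e:control-in-W}--\eqref{e:control-in-W-2} and the fact that $\Phii(q)\in\cM$ then yield
\[
|y-\Phii(q)|\leq \bh(T,\bB_L)+|\Phii(q)-\bp_{V_L}(y)|\leq C\bmo^{1/4}\ell(L)^{1+\beta_1}.
\]

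\textbf{Part (iii).} For $p=\Phii(x)$ with $x\in\bDel$, by construction the cube of side $2^{-j}$ containing $x$ lies in $\sS_j$ for every $j\geq N_0$, so none of the stopping conditions (EX),(HT),(NN) are triggered. Consequently $\bE$ and $\bh$ are controlled as in \eqref{e:control-in-W-2} at arbitrarily small scales around $p$. This forces the unique tangent cone at $p$ (via Theorems~\ref{thm:ex_tc} and \ref{t:decay}) to be $Q\a{T_p\cM}$, hence $\Theta(T,p)=Q$. The slice $\langle T,\bp,p\rangle$ is then a $0$-current supported at $p$ with multiplicity equal to that density (by the slicing formula, cf.\ part~(i) and the structure of $\bp^{-1}(p)$). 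A similar argument handles $p\in\Gamma\cap\partial\cM$: there the density equals $\tfrac{Q}{2}$, the unique tangent cone is a half-plane with boundary $T_p\Gamma$ (Theorem~\ref{t:classification_cones}, since we work in the flat setting of Assumption~\ref{a:main-local-2}), and this half-plane agrees with $T_p\cM$ because the boundary squares accumulating at $p$ satisfy the excess bound \eqref{e:control-in-W}; the boundary slice formula then yields $\langle T,\bp,p\rangle=Q\a{p}$.

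\textbf{Main obstacle.} The delicate point is in (iii): verifying that the tangent plane (or half-plane) of $\cM$ at a contact point genuinely coincides with the plane supporting the tangent cone of $T$. This requires propagating the bounds through the interpolation defining $\phii_j$ in order to control the average of the $u_L$'s as $\ell(L)\to 0$ along a chain in $\sS$; the $C^{3,\omega}$ estimate of Theorem~\ref{t:cm}(b), together with the decay rate in Theorem~\ref{t:decay}, is exactly what makes both the interior and boundary cases work.
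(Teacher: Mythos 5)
Your plan diverges substantially from what the paper actually does, and at least one step as written is incorrect.

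The paper's proof is a reduction to the interior statement \cite[Corollary 2.2]{DS4}: one extends $\phii$ to a map $\tilde\phii$ on all of $[-4,4]^2$, defines the boundaryless current $\tilde T := T + Q\,\bG_{\tilde\phii|_{[-4,4]^2\setminus D}}$, and observes that the interior argument uses only the height/tilting estimates and the constancy theorem, neither of which require area-minimality of $\tilde T$. This extension trick is what makes $\partial\tilde T = 0$ and hence renders the constancy theorem applicable to the \emph{closed} manifold $\cM\cup\bG_{\tilde\phii}$ without any boundary book-keeping. Your proposal tries to argue directly, which is a legitimate alternative strategy but runs into two concrete problems.

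First, the displayed chain $\bp_\sharp(T\res\bU)=\Phii_\sharp(\bp_{V_0\,\sharp}(T\res\bU))=\cdots$ is false at the first equality: $\bp$ is the nearest-point projection onto $\cM$, while $\Phii\circ\bp_{V_0}$ is the ``vertical'' projection onto the graph of $\phii$; these are distinct maps on $\bU\setminus\cM$, and the pushforwards agree only after a homotopy argument that produces an extra boundary term coming from $\partial_l\bU$. Controlling that term (via the constancy theorem on $\cM\setminus\partial\cM$ and the boundary support condition of (i)) is exactly the work the paper outsources to \cite{DS4} once $\tilde T$ has empty boundary; you cannot simply write an equality.

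Second, in (iii) the boundary case $p\in\Gamma\cap\partial\cM$ is genuinely subtle. At such $p$ the density of $T$ is $\tfrac{Q}{2}$, not $Q$, and the fiber $\p^{-1}(p)$ lies entirely in $\partial_l\bU$. To conclude that $\langle T,\p,p\rangle = Q\a{p}$ one needs more than ``the tangent half-plane of $T$ agrees with $T_p\cM$''; one must actually track the multiplicity carried by the slice through a continuity-of-the-slice argument from nearby interior contact points (or, as the paper does, move the whole issue into the interior by reflecting/extending the current so that $p$ becomes an interior contact point of $\tilde T$ where the density is $Q$). As stated, your argument asserts the conclusion without establishing it, and the parenthetical ``by the slicing formula, cf.\ part (i)'' is exactly the step that needs justification. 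The extension-of-the-current device in the paper is not a cosmetic choice: it is the mechanism that turns the boundary statement into an interior one to which the standard slicing/constancy machinery applies.
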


The main reason for introducing the center manifold of Theorem \ref{t:cm} is that we are able to pair it with a good approximating map defined on it.

\begin{definition}[$\cM$-normal approximation]\label{d:app}
An \textbf{\em {$\cM$-normal approximation} of $T$} is given by a pair $(\cK, F)$ such that
\begin{itemize}
\item[(A1)] $F: \cM\to \Iq (\bU)$ is Lipschitz (with respect to the geodesic distance on $\cM$) and takes the special form 
$F (x) = \sum_i \a{x+N_i (x)}$, with $N_i (x)\perp T_x \cM$.
\item[(A2)] $\cK\subset \cM$ is closed and $\bT_F \res \p^{-1} (\cK) = T \res \p^{-1} (\cK)$.
\item[(A3)] $\cK$ contains $\Phii \big(\bDel\cap [-\frac{7}{2}, \frac{7}{2}]^2\big)$ and $\Gamma \cap \Phii (\bar D\cap [-\frac{7}{2}, \frac{7}{2}]^2)$, and on the latter two sets the map $N$ equals $Q\a{0}$.
\end{itemize}
The map $N = \sum_i \a{N_i}:\cM \to \Iq (\R^{2+n})$ is \textbf{\em {the normal part of $F$}}.
\end{definition}

\begin{theorem}[Existence and local estimates for the $\cM$-normal approximation]\label{t:normal-approx}
Let $\gamma_2 := \frac{\gamma_1}{4}$, with $\gamma_1$ the constant
of Theorem \ref{t:strong_Lipschitz}.
Under the hypotheses of Theorem \ref{t:cm} and Assumption~\ref{intorno_proiezione},
if $\eps_{CM}$ is suitably small (depending upon all other parameters but not the current $T$), then
there is an $\cM$-normal approximation $(\cK, F)$ such that
the following estimates hold on every Whitney region $\cL$ associated to
a cube $L\in \sW$, with constants $C = C(\beta_1, \delta_1, M_0, N_0, C^\natural_e, C^\flat_e, C_h)>0:$
\begin{gather}
\Lip (N|
_\cL) \leq C \bmo^{\gamma_2} \ell (L)^{\gamma_2} \quad\mbox{and}\quad  \|N|
_\cL\|_{C^0}\leq C \bmo^{\sfrac{1}{4}} \ell (L)^{1+\beta_1},\label{e:Lip_regional}\\
|\cL\setminus \cK| + \|\bT_F - T\| (\p^{-1} (\cL)) \leq C \bmo^{1+\gamma_2} \ell (L)^{4+\gamma_2},\label{e:err_regional}\\
\int_{\cL} |DN|^2 \leq C \bmo \,\ell (L)^{4-2\delta_1}\, .\label{e:Dir_regional}
\end{gather}
Moreover, for any $a>0$ and any Borel $\cV\subset \cL$, we have $($for $C=C(\beta_1, \delta_1, M_0, N_0, C^\flat_e, C^\natural_e, C_h)$$)$
\begin{equation}\label{e:av_region}
\int_\cV |\etaa\circ N| \leq 
{ C \bmo \left(\ell (L)^{5+\sfrac{\beta_1}{3}} + a\,\ell (L)^{2+\sfrac{\gamma_2}{2}}|\cV|\right)}  + \frac{C}{a} 
\int_\cV \cG \big(N, Q \a{\etaa\circ N}\big)^{2+\gamma_2}\, .
\end{equation} 
\end{theorem}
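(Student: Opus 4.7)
\medskip

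\noindent\textbf{Proof plan for Theorem \ref{t:normal-approx}.}

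The plan is to follow the scheme established in \cite{DS4} for the interior case and adapted in \cite{DDHM} to the higher-dimensional boundary, with the necessary modifications for the $2$-dimensional boundary setting and for ensuring $N\equiv Q\a{0}$ on $\Gamma$. First I would build the approximation locally on each Whitney region. For every $L\in\sW$, Theorem \ref{t:strong_Lipschitz} (if $L\in\sC^\flat$) or \cite[Theorem 2.4]{DS3} (if $L\in\sC^\natural$) applied in the cylinder of radius $2^7\cdot 32\,r_L$ (resp.\ $32\,r_L$) over $V_L$ produces a $Q$-valued map $f_L$ with $\Lip(f_L)\lesssim \bE^{\gamma_1}$, a control of the oscillation by the height $\bh$, and a bounded exceptional set. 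Using the Whitney estimates \eqref{e:control-in-W}--\eqref{e:control-in-W-2} these become $\Lip(f_L)\le C\bmo^{\gamma_2}\ell(L)^{\gamma_2}$, $\osc(f_L)\le C\bmo^{1/4}\ell(L)^{1+\beta_1}$ and error bounds of order $\bmo^{1+\gamma_2}\ell(L)^{4+\gamma_2}$. Moreover, for $L\in\sC^\flat$, the boundary condition $f_L\vert_{\gamma_L}=Q\a{g_L}$ holds, where $g_L$ parametrizes $\Gamma$ over $\gamma_L$.

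Next I would transfer $f_L$ into a section of the normal bundle of $\cM$. Since the plane $V_L$ is within $C\bmo^{1/2}\ell(L)^{1-\delta_1}$ of $T_{\Phii(x_L)}\cM$ (a tilt estimate analogous to \cite[Proposition 5.2]{DDHM}, which follows from the optimality of $V_L$, the $C^{3,\omega}$ bound in Theorem \ref{t:cm}, and the construction of $\phii_j$ from the $h_L$'s), for $\eps_{CM}$ small enough the projection $\p:\bU\to\cM$ restricted to $\bG_{f_L}\cap\p^{-1}(\cU_L)$ is a $Q$-to-one covering onto a neighbourhood $\cU_L\subset\cM$ of the Whitney region $\cL$; decomposing it gives a multivalued map $N_L:\cU_L\to\Iq(\R^{2+n})$ with $N_L(x)\perp T_x\cM$ and $\bT_{N_L}=T$ on the good set. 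For $L\in\sC^\flat$, the identity $f_L\vert_{\gamma_L}=Q\a{g_L}$ and $\Gamma\subset\cM$ imply $N_L\equiv Q\a{0}$ on $\Gamma\cap\cU_L$.

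Then I would glue these local pieces by an Almgren-style multivalued partition of unity, as in \cite[Section 7]{DS4} and \cite[Section 8]{DDHM}, using the interpolation Lemma \ref{l:InterpolationSimpler} on overlapping Whitney regions and the bounded-overlap property of $\{\bB_L,\bB^\flat_L\}$. This yields a global $\cM$-normal approximation $F(x)=\sum_i\a{x+N_i(x)}$ together with a closed set $\cK\supset\Phii(\bDel\cap[-7/2,7/2]^2)\cup(\Gamma\cap\Phii([-7/2,7/2]^2\cap\bar D))$, verifying (A1)--(A3). Because the patching is carried out via interpolation lemmas that preserve the zero boundary condition on $\cH_r$-type horizontal slices (as in Proposition \ref{p:competitor} and Lemma \ref{l:InterpolationSimpler}), the condition $N\equiv Q\a{0}$ on $\Gamma$ is preserved globally. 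The estimates \eqref{e:Lip_regional}, \eqref{e:err_regional}, \eqref{e:Dir_regional} are then standard consequences of the local bounds, the Taylor expansion of the mass of graphs, and the error introduced by the gluing, which at each junction is controlled by the parent-child energy comparison following from \eqref{e:control-in-W}--\eqref{e:control-in-W-2}.

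The main obstacle, and the novel part compared to \cite{DS4,DDHM}, is establishing \eqref{e:av_region}: morally, the center manifold was constructed to match the regularized average $h_L$ of $\etaa\circ f_L$, so $\etaa\circ N$ at a point of $\cL$ is of order $\bmo^{1/2}\ell(L)^{3+\beta_1}$ up to higher-order corrections coming from the change of parametrization and the interpolation. To quantify this I would write, on each $\cL$,
\[
\etaa\circ N = (\etaa\circ N - \etaa\circ N_L) + (\etaa\circ f_L - h_L)\!\circ\!\Psi_L + (h_L\circ\Psi_L - \phii)\vert_\cM,
\]
and estimate the first term by the gluing error, the third by the center-manifold construction (which gives $\|(h_L\circ\Psi_L - \phii)\vert_\cL\|_{\infty}\lesssim\bmo^{1/2}\ell(L)^{3+\beta_1/3}$ on Whitney squares, see Theorem \ref{t:cm}(b)), and the second via the harmonicity of $h_L$ together with Theorem \ref{t:L-p} applied to $f_L\oplus(-\etaa\circ f_L)$: arguing as in \cite[Proposition 5.2]{DS4}, the $L^{2+\gamma_2}$-higher integrability of $Df_L$ on the good set, combined with Young's inequality with the splitting parameter $a$, converts the pointwise control into the weighted bound \eqref{e:av_region}. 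The term $\cG(N,Q\a{\etaa\circ N})^{2+\gamma_2}$ appears precisely because it encodes the deviation of $N$ from being a single (averaged) sheet, which is what one must compare against after subtracting the harmonic regularization.
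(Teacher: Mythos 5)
Your route is a genuine, workable alternative, but it differs from the paper's proof. You build the normal approximation directly: reparametrize each local strong Lipschitz approximation $f_L$ over the curved center manifold $\cM$ via the projection $\p$, then glue the local normal sections $N_L$ across overlapping Whitney regions, relying on the boundary condition $f_L|_{\gamma_L}=Q\a{g_L}$ and $\Gamma\subset\cM$ to force $N\equiv Q\a{0}$ on $\Gamma$. The paper instead uses an extension trick. It extends $\phii$ to a $C^{3,\omega}$ map $\tilde{\phii}$ on all of $[-4,4]^2$, replaces $T$ by the boundaryless current $\tilde T := T + Q\,\bG_{\tilde{\phii}|_{[-4,4]^2\setminus D}}$, and observes that the interior construction of \cite[Section~6.2]{DS4} applies verbatim to $\tilde T$ because that construction never invokes area-minimality, only the tilting and height estimates of Proposition~\ref{p:tiltingPlanes} (which extend trivially to $\tilde T$) and the constancy theorem (valid since $\partial\tilde T=0$). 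For boundary squares the paper also extends $f_L$ by $Q\a{\tilde{\phii}}$ outside $D_L$, and then reads off $\Gamma\subset\cK$ and $N\equiv Q\a{0}$ on $\Gamma$ a posteriori from the agreement of $f_M$ with $g_M$ over $\gamma_M$. The gain is economy: all of \eqref{e:Lip_regional}--\eqref{e:av_region} become literal citations of the interior theorem rather than adaptations carried out afresh in a domain-with-boundary.

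Two inaccuracies in your sketch are worth flagging. First, ``Almgren-style multivalued partition of unity'' is misleading terminology: one cannot take convex combinations of $Q$-valued sections, and the mechanism in \cite[Section~6.2]{DS4} is that reparametrizations of $f_L$ and $f_M$ from neighbouring Whitney regions agree on a large common good set (because both coincide with $T$ there), so the discrepancy is controlled by a Lipschitz-extension argument; Lemma~\ref{l:InterpolationSimpler} plays no role at this stage (the paper uses it only for the harmonic competitor construction). Second, for \eqref{e:av_region} your decomposition of $\etaa\circ N$ into a gluing error, $\etaa\circ f_L - h_L$, and $h_L - \phii$ is the right starting point, and Proposition~\ref{pr:elliptic_regularization} together with Theorem~\ref{t:cm} does control the latter two pieces; but producing the term $\frac{C}{a}\int_\cV\cG(N,Q\a{\etaa\circ N})^{2+\gamma_2}$ requires the quantitative Taylor expansion of the barycenter of the normal approximation from \cite[Section~6.2]{DS4}, which your plan only gestures at and which the paper's extension trick again absorbs into a citation.
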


From \eqref{e:Lip_regional} - \eqref{e:Dir_regional} it is not difficult to infer analogous ``global versions'' of the estimates.

\begin{corollary}[Global estimates for the $\cM$-normal approximation]\label{c:globali} Let $\cM'$ be
the domain $\Phii \big(D \cap [-\frac{7}{2}, \frac{7}{2}]^2\big)$ and $N$ the map of Theorem \ref{t:normal-approx}. Then,  (again with $C = C(\beta_1, \delta_1, M_0, N_0, C^\natural_e, C^\flat_e, C_h)$)
\begin{gather}
\Lip (N|_{\cM'}) \leq C \bmo^{\gamma_2} \quad\mbox{and}\quad \|N|_{\cM'}\|_{C^0}
\leq C \bmo^{\sfrac{1}{4}},\label{e:global_Lip}\\ 
|\cM'\setminus \cK| + \|\bT_F - T\| (\p^{-1} (\cM')) \leq C \bmo^{1+\gamma_2},\label{e:global_masserr}\\
\int_{\cM'} |DN|^2 \leq C \bmo\, .\label{e:global_Dir}
\end{gather}
In addition, since $N= Q\a{0}$ on $\bGam\cap \cM'$, we also get
\begin{equation}\label{e:global_L2}
\int_{\cM'} |N|^2 \leq C \bmo\, .
\end{equation}
\end{corollary}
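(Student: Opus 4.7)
The plan is to deduce each global estimate from the corresponding local estimate of Theorem \ref{t:normal-approx} by summing over the Whitney decomposition, exploiting that $\cM'$ is covered by the Whitney regions $\{\cL\}_{L\in\sW}$ together with $\Phii(\bDel\cap[-\tfrac{7}{2},\tfrac{7}{2}]^2)$ and $\Gamma\cap\partial\cM\cap\cM'$, and that $N\equiv Q\a{0}$ on the latter two sets by property (A3) of Definition \ref{d:app}. The arithmetic key is that for any exponent $\sigma\ge 0$ we have $\sum_{L\in\sW}\ell(L)^{4+\sigma}\le\sum_{L\in\sW}\ell(L)^{2}\le C|\cM'|\le C$, because $\ell(L)\le 2^{1-N_0}\le 1$ and the squares are pairwise (essentially) disjoint with union contained in $[-4,4]^{2}\cap\overline{D}$.

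For the two integral estimates \eqref{e:global_masserr} and \eqref{e:global_Dir}, I would sum \eqref{e:err_regional} and \eqref{e:Dir_regional} over $L\in\sW$; the contact set contributes $0$ to $|\cM'\setminus\cK|$ and to $\|\bT_F-T\|$ because it is contained in $\cK$ by (A2)-(A3), and contributes $0$ to $\int|DN|^{2}$ because $N\equiv Q\a{0}$ there. Using $\ell(L)^{4+\gamma_2}\le\ell(L)^{2}$ and $\ell(L)^{4-2\delta_1}\le\ell(L)^{2}$, the two series sum to at most $C$, yielding the desired bounds.

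For \eqref{e:global_Lip}, the $C^{0}$ bound is immediate from the local $C^{0}$ estimate in \eqref{e:Lip_regional} and $\ell(L)^{1+\beta_1}\le 1$. The Lipschitz bound is obtained by a patching argument distinguishing three cases: (i) for $x,y$ in the same $\cL$, the local estimate $\Lip(N|_{\cL})\le C\bmo^{\gamma_2}\ell(L)^{\gamma_2}\le C\bmo^{\gamma_2}$ suffices; (ii) for $x\in\cL$, $y\in\cL'$ with $L,L'\in\sW$ neighbours, property (w3) gives $\ell(L)\sim\ell(L')$, and one interpolates along a point of $\cL\cap\cL'$; (iii) for $x\in\cL$ and $y$ in the contact set $\Phii(\bDel)$ or in $\Gamma\cap\partial\cM$, one uses $N(y)=Q\a{0}$, the $C^{0}$ estimate $\cG(N(x),Q\a{0})\le C\bmo^{1/4}\ell(L)^{1+\beta_1}$, and the separation estimate $d(x,y)\ge c\,\ell(L)$ coming from \eqref{e:separazione}, to get
\[
\cG(N(x),N(y))\le C\bmo^{1/4}\ell(L)^{\beta_1}\,d(x,y)\le C\bmo^{\gamma_2}d(x,y),
\]
where the last inequality uses $\gamma_2=\gamma_1/4\le 1/4$ and $\bmo\le\varepsilon_{CM}\le 1$, so $\bmo^{1/4}\le\bmo^{\gamma_2}$ and $\ell(L)^{\beta_1}\le 1$.

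Finally, \eqref{e:global_L2} follows from a multivalued Poincar\'e inequality applied on $\cM'$: the map $N$ is Lipschitz on the smooth $C^{3,\omega}$ surface with boundary $\cM'$ (by \eqref{e:global_Lip} and Theorem \ref{t:cm}), vanishes identically on the relatively open portion $\Gamma\cap\partial\cM\cap\cM'$ of $\partial\cM'$, so the $\Iqs$-valued Poincar\'e inequality \cite[Proposition 4.3]{DS1}, together with \eqref{e:global_Dir}, gives $\int_{\cM'}|N|^{2}\le C\int_{\cM'}|DN|^{2}\le C\bmo$. The only mildly delicate point is the Lipschitz patching in case (iii), but it is purely a matter of book-keeping the parameter inequalities; no new analytic input beyond Theorem \ref{t:normal-approx} and the Whitney structure is required.
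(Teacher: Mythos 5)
Your proposal is correct and follows the route the paper intends: the paper gives no detailed argument for this corollary, merely remarking that the global estimates follow from the local ones by summation (using $\sum_{L\in\sW}\ell(L)^2\le C$) and that \eqref{e:global_L2} follows since $N=Q\a{0}$ on $\Gamma\cap\cM'$. Your write-up supplies exactly the missing details, and the integral estimates \eqref{e:global_masserr}, \eqref{e:global_Dir} and the two estimates in \eqref{e:global_Lip} are all handled correctly.

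One minor incompleteness: your case analysis (i)--(iii) for the Lipschitz bound does not explicitly cover pairs $x\in\cL$, $y\in\cL'$ with $L,L'\in\sW$ \emph{non}-neighbouring, and pairs with both $x,y$ in the contact set. The latter is trivial ($N$ constant there), and the former is handled by the standard chain argument: the geodesic from $x$ to $y$ in $\cM'$ passes through finitely many Whitney regions (each carrying a local Lipschitz constant $\le C\bmo^{\gamma_2}\ell(L)^{\gamma_2}\le C\bmo^{\gamma_2}$) and portions of the contact set (where $N$ is constant), so the total variation of $N$ along the geodesic is at most $C\bmo^{\gamma_2}\,d(x,y)$. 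This subsumes all three of your cases and is cleaner than invoking the separation estimate \eqref{e:separazione}. For \eqref{e:global_L2}, you could also bypass the $Q$-valued Poincar\'e inequality: the scalar function $|N|=\cG(N,Q\a{0})$ lies in $W^{1,2}(\cM')$ with $|D|N||\le|DN|$ a.e.\ and vanishes on $\Gamma\cap\cM'$, a set of positive $\cH^1$ measure, so the classical Friedrichs--Poincar\'e inequality directly gives $\int_{\cM'}|N|^2\le C\int_{\cM'}|DN|^2\le C\bmo$.
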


\subsection{Additional $L^1$ estimate} While the estimates claimed so far have all appropriate counterparts in the papers \cite{DS4} and \cite{DDHM}, we will need an additional important estimate which is noticed here for the first time, even though it is still a consequence of the same arguments leading to Theorem \ref{t:cm} and Theorem \ref{t:normal-approx}. 

\begin{proposition}\label{p:additional}
Consider the function $f:B_3 \to \mathcal{A}_Q (\mathbb R^n)$ with the property that $\bG_f = \bT_F \res \bC_3$. For every $L\in \sW^e$ we then have the estimate
\begin{equation}\label{e:compare-with-average}
\|\phii - \etaa\circ f\|_{L^1 (L)} \leq C \bmo^{3/4} \ell (L)^4
\end{equation}
and in particular, as long as $r\leq 3$ is a radius such that $\ell (L)\leq r$ for every $L\in \sW$ with $L\cap B_r \neq \emptyset$, we have the estimate
\begin{equation}\label{e:compare-with-average-2}
\|\phii - \etaa\circ f\|_{L^1 (B_r)} \leq C \bmo^{3/4} r^4\, .
\end{equation}
\end{proposition}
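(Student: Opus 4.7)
The plan is to compare $\phii$ and $\etaa\circ f$ on each cube $L\in\sW^e$ through the structure of the normal approximation $F$. For a.e.\ $y\in L$ on the ``good'' subset $G\subset L$ where all the $Q$ sheets of $T$ above $y$ lie in $\cK$ (so that $\bT_F=T$ there), the $j$-th sheet has the form $\Phii(\xi_j)+N_{i_j}(\Phii(\xi_j))$, with $\xi_j\in V_0$ determined by $\xi_j+(N_{i_j}(\Phii(\xi_j)))_{V_0}=y$. Reading off the $V_0^\perp$-components and averaging over $j$ gives
\[
\etaa\circ f(y)-\phii(y) \;=\;\tfrac{1}{Q}\sum_j\bigl(\phii(\xi_j)-\phii(y)\bigr)\;+\;\tfrac{1}{Q}\sum_j\bigl(N_{i_j}(\Phii(\xi_j))\bigr)_{V_0^\perp}.
\]
A Taylor expansion of $\phii$ around $y$, together with $|\xi_j-y|\leq\|N\|_\infty\leq C\bmo^{1/4}\ell(L)^{1+\beta_1}$ and $\|D^2\phii\|_\infty\leq C\bmo^{1/2}$, identifies the first sum with $-D\phii(y)\cdot(\bar N(y))_{V_0}+O(\bmo\,\ell(L)^{2+2\beta_1})$, where $\bar N(y):=Q^{-1}\sum_j N_{i_j}(\Phii(\xi_j))$. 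Since $(i_j)_{j=1}^Q$ is a permutation of $(1,\ldots,Q)$ and each $\xi_j$ lies within $C\bmo^{1/4}\ell(L)^{1+\beta_1}$ of $y$, one has $\bar N(y)=\etaa\circ N(\Phii(y))$ up to a Lipschitz mismatch bounded by $\Lip(N)\cdot\|N\|_\infty$, whose $L^1(L)$-contribution is absorbed using \eqref{e:Lip_regional} and Cauchy--Schwarz.

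Integrating over $L$ reduces the main estimate to controlling $(1+\|D\phii\|_\infty)\int_\cL|\etaa\circ N|\,d\cH^2$. I apply \eqref{e:av_region} with $\cV=\cL$ and an optimised value $a=\bmo^{\gamma_2/8}\ell(L)^{1-\delta_1}$, together with the interpolation estimate
\[
\int_\cL\cG\bigl(N,Q\a{\etaa\circ N}\bigr)^{2+\gamma_2}\;\leq\;C\,\|N\|_\infty^{\gamma_2}\int_\cL\cG\bigl(N,Q\a{\etaa\circ N}\bigr)^2\;\leq\;C\bmo^{1+\gamma_2/4}\,\ell(L)^{6-2\delta_1+\gamma_2(1+\beta_1)},
\]
obtained by combining the $L^\infty$ bound from \eqref{e:Lip_regional} with the Poincar\'e inequality on $N-\etaa\circ N$ (which vanishes in average) and the Dirichlet bound \eqref{e:Dir_regional}. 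Each of the three summands on the right-hand side of \eqref{e:av_region} is then bounded by $C\bmo^{3/4}\ell(L)^4$ via the elementary comparisons $\bmo\,\ell(L)^{5+\beta_1/3}\leq\bmo^{3/4}\ell(L)^4$ and $\bmo^{1+\gamma_2/8}\ell(L)^{5-\delta_1+O(\gamma_2)}\leq\bmo^{3/4}\ell(L)^4$, valid since $\bmo,\ell(L)\leq 1$. The Taylor remainder, the Lipschitz mismatch in $\bar N$, and the contribution of $L\setminus G$ (whose measure is at most $C\bmo^{1+\gamma_2}\ell(L)^{4+\gamma_2}$ by \eqref{e:err_regional} while $|\phii-\etaa\circ f|\leq C\bmo^{1/4}$ pointwise) are all of strictly smaller order and are absorbed.

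For \eqref{e:compare-with-average-2}, since the preceding argument uses only the universal Whitney-region estimates \eqref{e:Lip_regional}--\eqref{e:av_region}, the bound \eqref{e:compare-with-average} in fact holds for every $L\in\sW$. On the contact set $\bDel\cap B_r$ one has $\phii=\etaa\circ f$ a.e.\ by Corollary \ref{c:cover}(iii). Summing over the Whitney cubes intersecting $B_r$ and invoking the packing estimate $\sum_{L\cap B_r\neq\emptyset}\ell(L)^4\leq r^2\sum\ell(L)^2\leq Cr^4$, valid thanks to the hypothesis $\ell(L)\leq r$ for every such $L$, yields the global $L^1$ bound. The chief obstacle is extracting the sharp power $\bmo^{3/4}$: a purely $L^\infty$ bound on $\phii-\etaa\circ f$ yields only $\bmo^{1/4}\ell(L)^{3+\beta_1}$ in $L^1$, and the improvement to $\bmo^{3/4}\ell(L)^4$ rests on the nontrivial cancellation encoded in the near-vanishing of $\etaa\circ N$ (so that the center manifold approximates the \emph{average} of $T$), multiplied by the smallness of $D\phii$ coming from the $C^{3,\omega}$-regularity of $\cM$.
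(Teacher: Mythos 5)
Your proposal takes a genuinely different route from the paper. The paper reduces \eqref{e:compare-with-average} to the elliptic $L^1$ estimate of Proposition~\ref{pr:elliptic_regularization}, namely $\|h_{H}-\etaa\circ f_{H}\|_{L^1}\leq C\bmo\,\ell(H)^{5+\beta_1}$ (which earns its full power of $\bmo$ from the first-variation identity $\delta T(X)=0$), then invokes Proposition~\ref{p:block_estimates}(f) to swap $\phii$ for $u_H$ and a DS4 change-of-coordinates bound to pass from $V_H$ to $V_0$. You instead attempt a direct pointwise Taylor comparison and control the leading term through the $\etaa\circ N$ estimate \eqref{e:av_region}. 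That is a reasonable idea in spirit, but as written there are two genuine gaps in the error analysis, and they are precisely the places where the paper's route pays off.

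First, the invoked ``Poincar\'e inequality on $N-\etaa\circ N$'' is not a valid multi-valued Poincar\'e. The standard $Q$-valued Poincar\'e (\cite[Proposition 2.12]{DS1}) yields a center $\bar y\in\Iqs$ that in general differs from $Q\a{\etaa\circ N}$; for instance if $N$ consists of two constant, opposite sheets then $\etaa\circ N\equiv 0$ and $DN\equiv 0$, yet $\cG(N,Q\a{\etaa\circ N})$ is a nonzero constant. This particular gap is repairable: using the trivial bound $\cG(N,Q\a{\etaa\circ N})\leq|N|\leq C\bmo^{\sfrac14}\ell^{1+\beta_1}$ and re-optimising $a$ in \eqref{e:av_region} does recover $C\bmo^{\sfrac34+\gamma_2/8}\ell(L)^{4+\mathrm{pos}}$.

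Second, and more seriously, your bound for the ``Lipschitz mismatch'' between $\tfrac1Q\sum_j N_{i_j}(\Phii(\xi_j))$ and $\etaa\circ N(\Phii(y))$ does not close. Bounding the pointwise mismatch by $\Lip(N)\,\|N\|_{C^0}$ and integrating over $L$ gives
\[
C\Lip(N)\,\|N\|_{C^0}\,\ell(L)^2\;\leq\;C\,\bmo^{\sfrac14+\gamma_2}\,\ell(L)^{3+\gamma_2+\beta_1}\,,
\]
and comparing this to the target one needs $\bmo^{\gamma_2-\sfrac12}\,\ell(L)^{\gamma_2+\beta_1-1}\leq C$, which fails since $\gamma_2=\gamma_1/4<\sfrac12$ and $\gamma_2+\beta_1<1$: both exponents are negative, so the purported error term dominates $\bmo^{\sfrac34}\ell(L)^4$ for small $\bmo$ or small $\ell(L)$. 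To make this term genuinely lower order one would have to replace $\Lip(N)$ by the integral $\int_{L}|DN|$ and Cauchy--Schwarz against $\int_{L}|DN|^2\leq C\bmo\ell^{4-2\delta_1}$ (giving $C\bmo^{\sfrac34}\ell^{4+\beta_1-\delta_1}$, which does work since $\delta_1\leq\beta_1/2$), but this requires an honest Fubini/telescoping argument over the segments $[y,\xi_j]$ together with a careful bookkeeping of the selection $(i_j)$ dictated by the slicing of $\bT_F$, which your write-up does not supply. The paper avoids both subtleties by passing through the tilted interpolating function, where the elliptic $L^1$ estimate has already absorbed all the Lipschitz-mismatch bookkeeping.
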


\section{Tilting of optimal planes}\label{s:tilting}
We estimate the changes of excess and height when tilting the reference planes of nearby squares.

\begin{proposition}[Tilting of optimal planes]\label{p:tiltingPlanes}
    Let $Q$, $T$ and $\Gamma$ be as in Assumption \ref{ass:cm} and recall the parameters of Assumption \ref{ass:hierarchy}. There are constants $\overline C= \overline C(\beta_1, \delta_1, M_0, N_0,$ $C_e^\natural, C_e^\flat) >0$ and $C= C(\beta_1, \delta_1, M_0, N_0, C_e^\natural, C_e^\flat, C_h)>0$ such that, if $\varepsilon_{CM}=\varepsilon_{CM}(Q, n, R_0, C_h)$ $>0$ is small enough, for any $H, L \in \sS \cup \sW$ with $H$ being equal or a descendant of $L$ we have
    \begin{enumerate}
        \item[$(i)$] $\bB^\Box_H \subset \bB^\Box_L \subset \bB_{4R_0}$,
        \item[$(ii)$] $|V_H-V_L| \leq \overline C \bmo^{\sfrac12} \ell(L)^{1- \delta_1}$,
        \item[$(iii)$] $|V_H-V_0| \leq \overline C \bmo^{\sfrac12}$,
        \item[$(iv)^{\natural}$] if $H \in \sC^\natural$, then\\ $\bh(T, \bC_{36r_H}(p_H, V_0)) \leq C \bmo^{1/4} \ell(H)$ and $\ \supp(T) \cap \bC_{36r_H}(p_H, V_0) \subset \bB_H$,
        \item[$(iv)^{\flat}$] if $H \in \sC^\flat$, then\\ $\bh(T, \bC_{2^7 36r_H}(p^{\flat}_H, V_0)) \leq C \bmo^{1/4} \ell(H)$ and $\ \supp(T) \cap \bC_{2^7 36r_H}(p^{\flat}_H, V_0) \subset \bB_H$,
        \item[$(v)^{\natural}$] if $H, L \in \sC^\natural$, then \\$\bh(T, \bC_{36r_L}(p_L, V_H)) \leq C \bmo^{1/4} \ell(L)^{1+\beta_1}$ and $\ \supp(T) \cap \bC_{36r_L}(p, V_H) \subset \bB_L$,
        \item[$(v)^{\flat}$] if $L \in \sC^\flat$, then \\$\bh(T, \bC_{2^7 36r_L}(p^{\flat}_L, V_H)) \leq C \bmo^{1/4} \ell(L)^{1+\beta_1}$ and $\ \supp(T) \cap \bC_{2^7 36r_L}(p^{\flat}_L, V_H) \subset \bB_L$.
    \end{enumerate}
    where $\Box= \ $ or $\Box= \flat$ depending on whether the square is a boundary square or not.
    Moreover, $(ii)-(v)$ also hold if $H$ and $L$ are neighbours with $\frac12 \ell(L) \leq \ell(H) \leq \ell(L)$.
\end{proposition}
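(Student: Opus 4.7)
The plan is to establish these estimates in the order $(i) \to (ii) \to (iii) \to (iv) \to (v)$, with each step feeding into the next, and to deal with the neighbour case last by routing it through a common ancestor.

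For $(i)$, I would treat ancestor–descendant and cross–type cases uniformly. The centers satisfy $|x_H - x_L| \leq \sqrt{2}\,\ell(L)$, and the base points $p_H, p_L$ (resp.\ $p_H^\flat, p_L^\flat$) differ from these centers only in $V_0^\perp$, by quantities controlled by Lemma~\ref{l:hardtSimonHeight} applied at scale $r_L$. Since $r_H \leq r_L/2$, a short triangle-inequality computation, using that the radius inflation factors $64$ (interior) and $2^7\cdot 64$ (boundary) both exceed twice the scale ratio after accounting for the $M_0^{-1}$ slack in $|x_H - x_L|/r_L$, gives $\bB_H^\Box \subset \bB_L^\Box$. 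Containment in $\bB_{4R_0}$ follows because $|x_L| \leq 4\sqrt{2}$, the height of $p_L^\flat$ over $V_0$ is $O(\bmo^{1/4})$, and the ball radius is at most $2^7\cdot 64\cdot\sqrt{2}\,M_0\cdot 2^{-N_0} \leq 1$ by \eqref{e:def_N_0}.

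For $(ii)$, the core step is the parent–child tilt estimate. Given $L' \subset L$ a child, by $(i)$ we have $\bB_{L'}^\Box \subset \bB_L^\Box$, and since the ratio of the two radii is either $2$ (same type) or at most $256$ (when a boundary parent has an interior child, allowed by Remark~\ref{r:good-children}), a volume comparison yields $\bE(T, \bB_{L'}^\Box, V_L) \leq C\,\bE(T, \bB_L^\Box, V_L)$. Combining with $\bE(T, \bB_{L'}^\Box, V_{L'}) \leq \bE(T, \bB_{L'}^\Box, V_L)$ (optimality of $V_{L'}$; in the boundary–boundary case the constraint planes $T_{p_L^\flat}\Gamma$ and $T_{p_{L'}^\flat}\Gamma$ differ by $O(\bA\,\ell(L)) = O(\bmo^{1/2}\ell(L))$, which is absorbable) and the standard tilt inequality
\[
|V_L - V_{L'}|^2\,\|T\|(\bB_{L'}^\Box) \leq C\int_{\bB_{L'}^\Box}\bigl(|\vec T - \vec V_L|^2 + |\vec T - \vec V_{L'}|^2\bigr)\,d\|T\|,
\]
together with $\|T\|(\bB_{L'}^\Box) \gtrsim r_{L'}^2$ and the bounds \eqref{e:control-in-W}–\eqref{e:control-in-W-2}, I obtain $|V_L - V_{L'}| \leq C\,\bmo^{1/2}\ell(L)^{1-\delta_1}$. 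Telescoping through the generational chain from $L$ to $H$ gives a geometric series in $2^{-(1-\delta_1)}$, yielding $(ii)$.

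For $(iii)$, I would iterate $(ii)$ up to a root square $L^0$ with $\ell(L^0) = 2^{-N_0}$, losing only a factor $(1 - 2^{-(1-\delta_1)})^{-1}$. To compare $V_{L^0}$ with $V_0$, I would use that $\bB_{L^0}^\Box \subset \bC_{5R_0}$ with controlled volume ratio, so $\bE(T, \bB_{L^0}^\Box, V_0) \leq C(R_0,M_0,N_0)\,\bmo$, and apply the tilt inequality once more to get $|V_{L^0} - V_0| \leq \bar C\,\bmo^{1/2}$. Then $(iv)$ and $(v)$ follow from the tilt bounds together with the height estimates in balls: first, $(iii)$ (resp.\ $(ii)$) implies that the cylinder $\bC_{36r_H}(p_H, V_0)$ (resp.\ $\bC_{36r_L}(p_L, V_H)$) differs from the ball $\bB_H$ (resp.\ $\bB_L$) by a set in whose support the tilt-driven correction $|V_0 - V_H|\cdot\diam$ is negligible compared to $64r_H - 36 r_H$, so $\supp(T)$ in the cylinder lies inside the ball; second, the height w.r.t.\ $V_0$ or $V_H$ differs from the height w.r.t.\ $V_L$ by at most $|V_L - V_0|\cdot\diam \lesssim \bmo^{1/2}\ell(L) \cdot r_L \leq C\bmo^{1/4}\ell(L)^{1+\beta_1}$ (absorbing powers via $\ell(L) \leq 1$), so the Whitney height bound propagates. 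The neighbour case with $\tfrac12\ell(L) \leq \ell(H) \leq \ell(L)$ is handled by replacing the common-ancestor telescoping with a direct two-step comparison through their nearest mutual ancestor, at cost of a geometric constant.

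The step I expect to be the main obstacle is the tilt lemma in the boundary setting, where $V_L$ is constrained to contain $T_{p_L^\flat}\Gamma$: the quadratic estimate must respect this constraint while the constraint itself varies with $L$. Proving $|V_L - V_{L'}|^2 \lesssim \bE^\flat(T,\bB_L^\flat) + \bE^\flat(T,\bB_{L'}^\flat) + \bA^2\ell(L)^2$ and absorbing the last term into $\bmo\,\ell(L)^{2-2\delta_1}$ requires carefully extracting the contribution of the unconstrained component of $V_L - V_{L'}$ in the orthogonal complement of the common (approximate) tangent line to $\Gamma$, and showing that the constrained component inherits its smallness from the $C^{3,\alpha_0}$ regularity of $\Gamma$ via $\bA^2 \leq \bmo$.
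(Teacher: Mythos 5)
Your strategy — $(i)\to(ii)\to(iii)\to(iv)\to(v)$, a tilt estimate comparing parent and child, telescoping (a geometric series in $2^{-(1-\delta_1)}$), and handling neighbours through a common ancestor — coincides with the paper's proof, which is organized as an induction on the generation number $i = -\log_2\ell(H)$ with base case $i=N_0$ and $H=L$. Your flag of the subtle point in $(ii)$ — that in the boundary case $V_{L'}$ and $V_L$ minimize over different constraint sets because $T_{p_{L'}^\flat}\Gamma \neq T_{p_L^\flat}\Gamma$, and that the discrepancy must be absorbed via $\bA^2\leq\bmo$ — is well placed; the paper glosses over this when it writes $\bE^\Box(T,\bB_H^\Box)\leq\bE(T,\bB_H^\Box,V_{H_i})$, which tacitly treats the parent's plane as admissible in the child's constrained minimization.

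One genuine issue: you cite the bounds \eqref{e:control-in-W}--\eqref{e:control-in-W-2} to supply the excess control $\bE^\Box(T,\bB_L^\Box)\leq C\, C_e^\Box\bmo\ell(L)^{2-2\delta_1}$. Those estimates are part of Proposition \ref{p:Whitney}, whose proof itself relies on parts $(i)$ and $(ii)$ of the very proposition you are proving, so as stated this is circular. The fix is that your telescoping only ever needs the excess bound on the \emph{parent} square, and every parent of a square in $\sS\cup\sW$ belongs to $\sS$ by construction of the refining procedure; for $\sS$ squares the bound $\bE^\Box(T,\bB_L^\Box)\leq C_e^\Box\bmo\ell(L)^{2-2\delta_1}$ is simply the negation of condition (EX) in Definition \ref{d:refining_procedure} and requires no further argument. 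Citing the definition rather than \eqref{e:control-in-W}--\eqref{e:control-in-W-2} removes the circularity, exactly as the paper does when it writes ``since $H_i\in\sS_i$.''

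A smaller imprecision: in your sketch of $(iv)/(v)$ you bound the tilt correction to the height as $|V_L-V_0|\cdot\diam\lesssim\bmo^{1/2}\ell(L)\cdot r_L$, which does not match $(iii)$ (which only gives $|V_L-V_0|\lesssim\bmo^{1/2}$ with no extra $\ell(L)$). For $(iv)$ the weaker bound $\bmo^{1/2}r_H\lesssim\bmo^{1/4}\ell(H)$ suffices, but for $(v)$ you need the stronger input from $(ii)$, namely $|V_H-V_L|\lesssim\bmo^{1/2}\ell(L)^{1-\delta_1}$, giving $r_L|V_H-V_L|\lesssim\bmo^{1/2}\ell(L)^{2-\delta_1}\lesssim\bmo^{1/4}\ell(L)^{1+\beta_1}$ (using $\delta_1+\beta_1<1$). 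Also note that establishing the inclusion $\supp T\cap\bC_{36r_L}(\cdot,V_H)\subset\bB_L$ is itself inductive: you need the inclusion at the parent's scale to legitimately invoke the height estimate at the current scale, since Lemma~\ref{l:hardtSimonHeight} requires control on a slightly larger cylinder. The paper handles $(v)$ with a secondary induction on the ancestor chain for exactly this reason, and ``the Whitney height bound propagates'' in your sketch should be unpacked to make that bootstrap explicit.
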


\begin{proof}
    In this proof we will use mainly the following two estimates.
    \begin{align*}
        \bE(T, \bB_r(p),V) &= (2\pi r^2)^{-1} \int_{\bB_r(p)} |\overset{\rightarrow}{T}(x)-\overset{\rightarrow}{ V}|^2 d\|T\| (x)\\
        &\le 2 (2\pi r^2)^{-1} \int_{\bB_r(p)} |\overset{\rightarrow}{T}(x)-\overset{\rightarrow}{ W}|^2 d\|T\| (x) + C|V-W|^2\\
        &= 2\bE(T, \bB_r(p),W) + C|V-W|^2,\\
        \bh(T,\bC_r(p, V),V') &\stackrel{\eqref{e:height-estimate}}{\le} \bh(T,\bC_r(p, V),W) +Cr|V'-W|,
    \end{align*}
    where in the first one we used the monotonicity formula of Theorem \ref{thm:allard} to see that the mass of a ball is comparable to $r^2$ and in the second one we used the height estimate \eqref{e:height-estimate} of Lemma \ref{l:hardtSimonHeight}.
    
    We argue by induction on $i= -\log_2(\ell(H))$. The base step is when $i=N_0$ and $H=L$ while we pass to children squares in the induction step. By the choice of $M_0$ and $N_0$, we notice that there are no squares with side length $2^{-N_0}$ in $\sW$. 
    
    The second inclusion of $(i)$, we already observed in \eqref{e:def_N_0} while the first inclusion of $(i)$ and the inequality in $(ii)$ is redundant for $H=L$. Thus, we show now $(iii)$. We use $(i)$, the optimality of $V_H$, the monotonicity formula of Theorem \ref{thm:allard} and the definition of $\bmo$ to deduce 
    \begin{align}\label{e:baseVH}
        |V _H-V_0|^2 &\leq \overline Cr_H^{-2} \int_{\bB_H^\Box} |\vec{T}- \vec{V}_H|^2 d\|T\| (x) + \overline Cr_H^{-2} \int_{\bB_H^\Box} |\vec{T}-\vec{ V}_0|^2 d\|T\| (x) \notag\\
        &\leq 2 \overline C\bE(T,\bB_H^\Box, V_0) \leq \overline C \bE(T,\bB_{5R_0}, V_0) \leq \overline C \bmo.
    \end{align}
    For $(iv)$ we use the height estimate \eqref{e:height-estimate} of Lemma \ref{l:hardtSimonHeight}. Notice that $\bC_{  36r_H}(p_H^\Box, V_0) \subset \bC_{4R_0}(0,V_0) $ and hence,
    \begin{align*}
        \bh(T, \bC_{  36r_H}(p_H^\Box, V_0))
        \leq \bh (T, \bC_{4R_0}(0, V_0)) 
        \leq \overline C \bmo^{1/4} = \overline C \bmo^{1/4} \ell(H).
    \end{align*}
    Then also the inclusion $\supp(T) \cap \bC_{  36r_H}(p^{\Box}_H, V_0) \subset \bB_H^\Box$ holds, as long as $\varepsilon_{CM}$ is small enough.
    For $(v)$ we observe that as $\bB_H^\Box \subset \bC_{4R_0}(0, V_0)$ we can estimate 
    \[ |p_H^\Box|^2 \leq 9R_0^2 + \bh(T,\bC(4R_0, V_0))^2 \leq 9R_0^2 +C \bmo. \]
    Thus if $\varepsilon_{CM}$ (and thus $\bmo$) is small enough, then $\bC_{  36r_H}(p_H^\Box, V_H ) \cap \bB_{4R_0} \subset \bC_{4R_0}(0, V_0)$. Hence, also $\supp(T) \cap \bC_{  36r_H}(p_H^\Box, V_H ) \subset \bC_{4R_0}(0, V_0)$ and we can estimate
    \begin{align*}
        \bh(T,\bC_{  36r_H}(p_H^\Box, V_H )) 
        &\leq \bh(T,\bC_{4R_0}(0, V_0)) + \overline C|V_H -V_0|\\
        &\leq \overline C \bmo^{1/4}
        = \overline C \bmo^{1/4} \ell(H)^{1+\beta_1}, 
    \end{align*}
    where we used $(iii)$ and $(iv)$.
    
    \begin{figure}[htp]
    \centering
    \includegraphics[width=14cm]{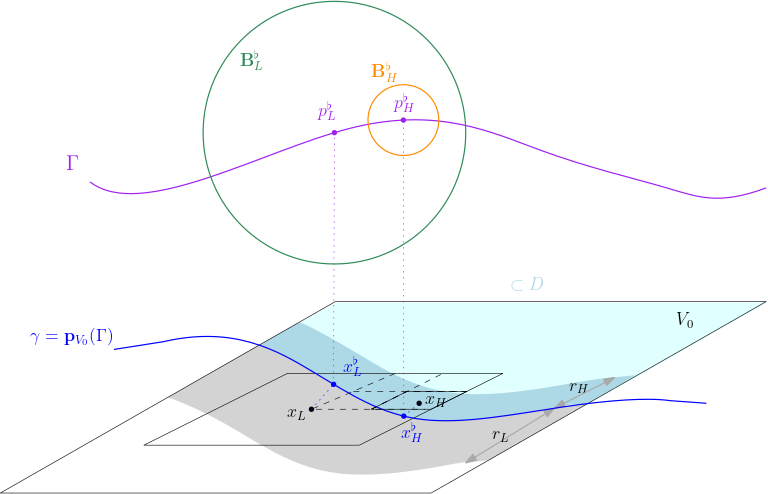}
    \caption{An illustration of the various relevant points in the Whitney square.}
\end{figure}
    
    Induction step: $H \in \sS_{i+1} \cup \sW_{i+1}$ for some $i \geq N_0$. Thus there is a chain of squares such that $H_{i+1} := H \subset H_i \subset \cdots \subset H_{N_0}$ with $H_j \in \sS_j$ for each $j\leq i$. Assume the validity of $(i)-(v)$ for $H_l$ and $H_{k}$ with $N_0 \leq l \leq k \leq i$. We want to show $(i)-(v)$ for $H=H_{i+1}$ and $L=H_j$ with $N_0 \leq j \leq i$. For $(i)$, we notice that it is enough to show the inclusion for $j=i$. Then we have $|x_{H_i}-x_H| \leq \sqrt 2 \ell(H_i)$ and hence, if $\eps_{CM}$ is small enough, we use the induction hypothesis for $(iv)$ to estimate 
    \begin{align*}
        |p^\Box_{H_i}-p_H^\Box|^2 &\leq (\sqrt 2\ell(H_i) + 96r_{H_i})^2 +  \bh(T, \bC_{2r_{H_i}}(p_{H_i}^\Box, V_0))^2\\
        &\leq \ell(H_i)^2 (\sqrt2(1+96M_0))^2 + C \bmo^{1/2} \ell(H_i)^2
        \leq 2^{16} M_0^2 \ell(H_i)^2.
    \end{align*}
    Now we check that $\bB_H^\Box \subset \bB^\Box_{H_i}$. Indeed, we have
    \begin{align*}
        2^7 64r_H +|p^\Box_{H_i}-p_H^\Box| 
    &\leq 2^7 32 \sqrt2 M_0 \ell(H_i) + 2^8 M_0 \ell(H_i) \\
    &\leq 2^7 32 \sqrt2 M_0 \ell(H_i) +2^7 32 \sqrt2 M_0 \ell(H_i)
    = 2^7 64 r_{H_i}.
    \end{align*}
    For $(ii)$, we first show the special case where $j=i$. We notice that by $(i)$, the fact that $2r_H = r_{H_i}$ and $H_i \in \sS_i$, we have
    \begin{align*}
        |V_H  - V _{H_i}|^2 &\leq \overline C \frac{r_H^2}{\|T\|(\bB_H^\Box)} (\bE^\Box(T, \bB_H^\Box) + \bE^\Box(T, \bB^\Box_{H_i})) \qquad \textnormal{(monotonicity formula)}\\
        &\leq \overline C( \bE(T, \bB_H^\Box, V _{H_i}) + \bE^\Box(T, \bB^\Box_{H_i}))
        \leq 2 \overline C \bE^\Box(T, \bB^\Box_{H_i})
        \leq \overline CC_e^\Box \bmo \ell(H)^{2-2\delta_1}.
    \end{align*}
    Now for a general $j \in \{N_0, \dots, i \}$, we use the geometric series to conclude 
    \begin{align*}
        |V_H  - V _{H_j}| \leq \sum_{l=j}^{i}|V_{H_{l+1}}  - V _{H_l}|
        &\leq \overline CC_e^\Box \bmo \sum_{l=j}^{i} \ell(H_l)^{1-\delta_1}\\
        &\leq \overline CC_e^\Box \bmo \sum_{l=j}^{\infty} (2^{-l+j}\ell(H_j))^{1-\delta_1}
        \leq \overline CC_e^\Box \bmo \ell(H_j)^{1-\delta_1}.
    \end{align*}
    $(iii)$ follows by $(ii)$ and \eqref{e:baseVH}.
    To prove $(iv)^\natural$, we observe that by the induction hypothesis, we already know $\supp(T) \cap \bC_{36r_{H_i}}(p^\Box_{H_i}, V _{H_i}) \subset \bB^\Box_{H_i}$. Now we want to see that $\bC_{36r_H}(p_H^\Box, V_0) \subset \bC_{36r_{H_i}}(p^\Box_{H_i}, V_0)$.
    In case where $H_i \in \sC^\natural$, we have $|x_H-x_{H_i}| \leq \sqrt2 \ell(H_i)$, hence
    \[  36 r_H + |x_H-x_{H_i}| \leq 36 r_{H_i} .\]
    On the other hand, if $H_i \in \sC^\flat$, then we recall $|p_H-p^\flat_{H_i}| \leq 2^8 M_0 \ell(H_i)$ which implies 
    \[  36 r_H + |x_H-x^\flat_{H_i}| \leq 36 r_H + |p_H-p^\flat_{H_i}| \leq 2^736 r_{H_i} .\]
    Thus the desired inclusion of the cylinders holds.
    We deduce
    \begin{align*}
        \bh(T, \bC_{36r_H}(p_H, V_0)) \leq \bh(T, \bB^\Box_{H_i}, V_0)
        &\leq \bh(T, \bB^\Box_{H_i}) + \overline C r_{H_i}|V _{H_i} -V_0|\\
        &\leq C_h \bmo^{1/4} \ell(H_i)^{1+\beta_1} +\overline C\ell(H_i)\bmo^{1/2}
        \leq \overline C C_h\ell(H_i)\bmo^{\sfrac14},
    \end{align*}
    where we used the induction hypothesis and that $H_i \in \sS_i$. 
    The previous estimate shows also that $\supp(T) \cap \bC_{36r_H}(p_H, V_0) \subset \bB_H $ assuming that $\varepsilon_{CM}$ is small enough. 
    The proof of $(iv)^\flat$ is analogous because if $H \in \sC^\flat$, then also $H_i \in \sC^\flat$ and so as before
    \[ 2^7 36 r_H + |x_H^\flat-x^\flat_{H_i}| \leq 2^7 36 r_H + |p_H^\flat-p^\flat_{H_i}| \leq 2^736 r_{H_i} .\]
    Now we show $(v)^\natural, (v)^\flat$ for $H=H_{i+1}$ and $L=H_j$ for some $j \in \{N_0, \dots, i\}$ by induction on $j$. For $j=N_0$, we use the estimate on $|V_H -V_{H_{N_0}}|$ to deduce
    \[ \Big( \bC_{2^7 36r_{H_{N_0}}}(p^\Box_{H_{N_0}}, V _{H}) \cap \bB_{4R_0} \Big)
    \subset \Big( \bC_{2^7 36r_{H_{N_0}}}(p^\Box_{H_{N_0}}, V _{H_{N_0}}) \cap  \bB_{5R_0} \Big)
    \subset \bC_{4R_0}(0, V_0) \]
    provided that $\varepsilon_{CM}$ is small enough. Therefore, we have
    \begin{align*}
        \bh(T, \bC_{ 2^7 36r_{H_{N_0}}}(p^\Box_{H_{N_0}}, V_{H} ))
        \leq \bh(T, \bC_{4R_0}(0, V_0)) + C|V_H -V_0|
        \leq \overline C \bmo^{\sfrac12}.
    \end{align*}
    Again if $\varepsilon_{CM}$ is small, this also implies that $\supp(T) \cap \bC_{ 2^7 36r_{H_{N_0}}}(p^\Box_{H_{N_0}}, V_{H} )) \subset \bB^\Box_{H_{N_0}}$.
    Now assume that $(v)^\natural, (v)^\flat$ hold for some $j \geq N_0$ and denote $L=H_{j+1}$. 
    We first consider the case where $L \in \sC^\natural$. Then its parent $H_j$ is still unknown, but in any case, $\bB_L \subset \bB_{H_j}^\Box$ and thus, $\bC_{36r_L}(p_L, V_H) \subset \bC_{36r_{H_{j}}}(p_{H_{j}}, V_H)$ or $\bC_{36r_L}(p_L, V_H) \subset \bC_{2^7 36r_{H_{j}}}(p^\flat_{H_{j}}, V_H)$ respectively. Using the induction hypothesis, we find $\bh(T, \bC_{36 r_{H_{j}}}(p_{H_{j}}, V_{H})) \leq \bh(T, \bB_{H_j}, V_H)$ or $\bh(T, \bC_{2^7 36 r_{H_{j}}}(p^\flat_{H_{j}}, V_{H})) \leq \bh(T, \bB_{H_j}^\flat, V_H)$ respectively. Moreover, using $(ii)$, we deduce
    \begin{align*}
       \bh(T, \bB_{H_j}^\Box, V_H)
        &\leq \bh(T, \bB_{H_j}^\Box)+ \overline C r_{H_{j}} |V_{H} -V_{H_j}|\\
        &\leq \overline C C_h \bmo^{\sfrac14} \ell(H_j)^{1+ \beta_1} + \overline C \bmo^{\sfrac12}\ell(H_j)^{2-\delta_1} 
        \leq \overline C C_h \bmo^{\sfrac14} \ell(H_j).
    \end{align*}
    Thus, we have also  $\supp(T) \cap \bC_{  36r_{L}}(p_{L}, V_{H} )) \subset \bB_{L}$ and finally
    \begin{align*}
        \bh(T, \bC_{  36r_{L}}(p, V_{H}))
        \leq \bh(T, \bB_{L})+ \overline C r_{L} |V_{H} -V_{L}|
        \leq \overline C C_h \bmo^{\sfrac14} \ell(L)^{1+ \beta_1}.
    \end{align*}
    On the other hand, if $L \in \sC^\flat$, then also $H_j \in \sC^\flat$ and we can perform the same argument since $ \bB_L^\flat \subset \bB_{H_j}^\flat$ and $\bC_{2^7 36r_L}(p_L^\flat, V_H) \subset \bC_{2^7 36r_{H_{j}}}(p^\flat_{H_{j}}, V_H)$. This shows both $(v)^\natural$ and $(v)^\flat$.

    For neighbor squares, the argument works exactly the same as everything follows from the smallness of $|p^\Box_L - p^\Box_H|$ and the fact that $\bB_L^\Box \cup \bB_H^\Box \subset \bB_J^\Box$, where $J$ is the parent of $L$.
    
\end{proof}

Very similarly we now prove the excess estimates using the fact, that the parent of any square belongs to $\sS$.

\begin{proof}[Proof of Proposition \ref{p:Whitney}]
    For squares $L$ of side length $2^{-N_0}$, we know by Proposition \ref{p:tiltingPlanes} $(i)$ that $\bB_L^\Box \subset \bB_{4R_0}$ and so we can choose $C_e^\natural$ and $C_e^\flat$ large enough such that
    \[ \bE^\Box(T, \bB_L) \leq C(R_0, N_0) \bE(T, \bB_{4R_0}, V_0)
    \leq C(R_0, N_0)\bmo \leq C_e^\Box \bmo \ell(L)^{2-2\delta_1}.\] Hence, $L \notin \sW^e$. Similarly we see that $L \notin \sW^h$. Indeed, we use Proposition \ref{p:tiltingPlanes} $(ii)$ and the height estimate of Lemma \ref{l:hardtSimonHeight}
    \[ \bh(T, \bB_L^\Box) \leq  \bh(T, \bB_{4R_0}, V_0) + C(R_0,n,Q) |V_L^\Box -V_0| \leq C(R_0,n,Q) \bmo^{1/4}.\]
    Thus, we can choose $C_h$ large enough such that $\bh(T, \bB_L^\Box) \leq C_h \bmo^{1/4} \ell(L)^{1+\beta_1}$. This shows (a).
    
    We claim that (b) holds as long as $C_e^\natural \geq 16 C_e^\flat$. Let $L \in \sC^\natural$ and assume its parent $H \in \sC^\flat$. We want to show that $L \notin \sW^e$. Recall that $|p_L-p_H^\flat| \leq 2^8 M_0\ell(L)$ and thus $\bB_L \subset \bB_H^\flat$. Moreover, as $H$ is a parent, it belongs to $\sS$, thus
    \[ \bE^\flat(T, \bB^\flat_H) \leq C_e^\flat \bmo \ell(H)^{2-2\delta_1}. \]
    This then implies
    \[ \bE(T, \bB_L) \leq \bE(T, \bB_L, V_H ) \leq
    4 \bE^\flat(T, \bB^\flat_H) \leq 16 C_e^\flat \bmo \ell(L)^{2-2\delta_1}. \]
    Now let $L \in \sW \cap \sC^\flat$ and denote by $H \in \sS$ the parent of $L$. As $L$ is a boundary square, so is $H$. By Proposition \ref{p:tiltingPlanes} $(i)$ and $(ii)$, we know that $\bB_L^\flat \subset \bB_H^\flat$ and 
    \begin{align*}
        \bE^\flat(T, \bB_L^\flat) &\leq 4\bE^\flat(T, \bB_H^\flat) \leq C C_e^\flat \bmo \ell(L)^{2-2\delta_1},\\
        \bh(T, \bB_L^\flat) &\leq \bh(T, \bB_H^\flat) + \overline C r_L |V _L-V _H|
        \leq C C_h \bmo^{1/4} \ell(L)^{1+\beta_1}.
    \end{align*}
    On the other hand, for $L \in \sW \cap \sC^\natural$, the parent $H$ of $L$ could be either a  boundary square or an interior square. So we estimate
    \begin{align*}
        \bE(T, \bB_L) &\leq 4\bE^\Box(T, \bB_H^\Box) \leq C (C_e^\flat +C_e^\natural) \bmo \ell(L)^{2-2\delta_1},\\
        \bh(T, \bB_L) &\leq \bh(T, \bB_H^\Box) + \overline C r_L |V_L-V^\Box_H|
        \leq C C_h \bmo^{1/4} \ell(L)^{1+\beta_1}.
    \end{align*}
\end{proof}

\section{Estimates on the interpolating functions}\label{s:interpolating}
We notice that our construction fulfills the estimates needed for the strong Lipschitz approximation.
\begin{proposition}\label{p:f-well-defined}
    Suppose that Assumption \ref{ass:cm} holds true, recall the constants from Assumption \ref{ass:hierarchy} and assume that $\varepsilon_{CM}$ is small enough. Let either $H, L \in \sS \cup \sW$ be neighbors with $\frac{1}{2}\ell (L) \leq \ell (H)\leq \ell (L)$ or let $H$ be a descendant of $L$. Then we have
    \begin{align*}\label{e:TiltedSupportEstimates}
    \supp(T) \cap \bC_{32 r_L}(p_L, V_H) &\subset \bB_L, \quad \textnormal{if } L \in \sC^\natural,\\
    \supp(T) \cap \bC_{2^7 32 r_L}(p_L^\flat, V_H) &\subset \bB_L^\flat, \quad \textnormal{if } L \in \sC^\flat,
    \end{align*}
    and \cite[Theorem 2.4]{DS3} can be applied to $T$ in the cylinder $\bC_{32 r_L}(p_L, V_H)$ and Theorem \ref{t:strong_Lipschitz} in $\bC_{2^7 32 r_L}(p_L^\flat, V_H)$ respectively. The resulting strong Lipschitz approximation we call $f_{HL}$.
    \end{proposition}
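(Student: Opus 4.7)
The plan is to reduce everything to estimates that have already been established in Proposition \ref{p:tiltingPlanes} and Proposition \ref{p:Whitney}. First I would dispatch the two support inclusions: both cases of Proposition \ref{p:tiltingPlanes}$(v)$ cover not only the descendant case but also (as the final clause of that proposition explicitly states) neighbors $H, L$ with $\frac{1}{2}\ell(L)\leq \ell(H)\leq \ell(L)$, and since $32<36$ we obtain immediately
\[
\supp(T)\cap \bC_{32 r_L}(p_L,V_H)\subset \bB_L\quad \text{if } L\in\sC^\natural,
\]
and the analogous inclusion in $\bB_L^\flat$ when $L\in \sC^\flat$.

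The substantive work is verifying the hypotheses of \cite[Theorem 2.4]{DS3} in the interior case and of Theorem \ref{t:strong_Lipschitz} in the boundary case. The excess with respect to the tilted plane $V_H$ is handled by the elementary inequality
\[
\bE(T,\bC_{32 r_L}(p_L^\Box,V_H))\leq 2\,\bE(T,\bC_{32 r_L}(p_L^\Box,V_L))+C|V_H-V_L|^2,
\]
where the mass of $T$ in the cylinder is controlled by a multiple of $r_L^2$ via Allard's monotonicity formula (Theorem \ref{thm:allard}). Because the support inclusion places $\supp T\cap \bC_{32 r_L}(p_L^\Box,V_L)$ inside $\bB_L^\Box$, the first summand is bounded by $C\bE^\Box(T,\bB_L^\Box)$; this is controlled by \eqref{e:control-in-W}--\eqref{e:control-in-W-2} if $L\in\sW$, and directly by the refining criterion $\neg$(EX) if $L\in\sS$. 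Proposition \ref{p:tiltingPlanes}$(ii)$ together with $(iii)$ bounds the tilting term by $C\bmo\ell(L)^{2-2\delta_1}$. Adding the two contributions yields an excess bound $\leq C\,C_e^\Box\bmo\ell(L)^{2-2\delta_1}$, which is much smaller than $\varepsilon_A$ once $\varepsilon_{CM}$ is small, and Lemma \ref{l:hardtSimonHeight} converts this into the required height bound. In the interior case $L\in \sC^\natural$ we moreover have $\dist(x_L,\gamma)\geq 64 r_L$, so (using $|V_H-V_0|\ll 1$ and the height control) the cylinder $\bC_{32 r_L}(p_L,V_H)$ is disjoint from $\Gamma$; thus $\partial(T\res \bC_{32 r_L}(p_L,V_H))=0$ and, by the usual projection argument applied to currents very close to $Q\a{V_H}$, $\p_{V_H,\sharp}(T\res \bC_{32 r_L}(p_L,V_H))=Q\a{B_{32 r_L}(\p_{V_H}(p_L),V_H)}$, which are the interior analogues of Assumption \ref{Ass:app}(ii)-(iii).

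The main obstacle is the boundary case $L\in \sC^\flat$: we have to verify that the orthogonal projection $\gamma_{L,H}:=\p_{V_H}(\Gamma\cap \bC_{2^7 32 r_L}(p_L^\flat,V_H))$ is a $C^{3,\alpha_0}$ arc dividing $B_{2^7 32 r_L}(\p_{V_H}(p_L^\flat),V_H)$ into two components and that $\p_{V_H,\sharp}(T\res \bC_{2^7 32 r_L}(p_L^\flat,V_H))=Q\a{D_{L,H}}$ for the correct component, with the rescaled curvature bound $(2^7 32 r_L)\bA_{\gamma_{L,H}}\leq 1$. For this I would combine three ingredients: (i) by construction of the optimal plane, $V_L$ contains $T_{p_L^\flat}\Gamma$; (ii) Proposition \ref{p:tiltingPlanes}$(ii)$ gives $|V_H-V_L|\leq \overline C\bmo^{1/2}\ell(L)^{1-\delta_1}$, so the projection of $T_{p_L^\flat}\Gamma$ onto $V_H$ is nearly a full direction in $V_H$; (iii) $\Gamma$ is a $C^{3,\alpha_0}$ graph over $T_0\Gamma$ with second fundamental form bounded by $\bmo^{1/2}\leq \varepsilon_{CM}^{1/2}$. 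These combine to express $\Gamma\cap \bC_{2^7 32 r_L}(p_L^\flat,V_H)$ as a graph of a $C^{3,\alpha_0}$ function over $V_H\cap T_0\Gamma$-type direction, whose rescaled curvature is of order $(r_L)^2\bmo^{1/2}\ll 1$. Then, using the height estimate and the $\p_{V_0,\sharp}T=Q\a{D}$ condition of Assumption \ref{ass:cm}, a standard topological argument (the support of $T$ is a small vertical perturbation over $V_H$ of a graph lying on one side of $\gamma_{L,H}$) identifies the correct component $D_{L,H}$ and shows $\p_{V_H,\sharp}T\res \bC=Q\a{D_{L,H}}$. With all assumptions of Theorem \ref{t:strong_Lipschitz} verified, its application produces the $V_H$-approximation $f_{HL}$.
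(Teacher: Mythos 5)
Your approach is essentially correct and matches what the paper intends: the paper itself only says the proof is ``completely analogous to \cite[Proposition 4.2]{DS4} for interior squares and to \cite[Proposition 8.25]{DDHM} for boundary squares,'' so there is no in-text argument to compare against line by line. Your reconstruction of the argument from Proposition \ref{p:tiltingPlanes} is the right route. In particular, the observation that the support inclusions follow immediately from $(v)^\natural$, $(v)^\flat$ because $32<36$, and that the final clause of Proposition \ref{p:tiltingPlanes} explicitly covers the neighbour case with $\frac12\ell(L)\leq\ell(H)\leq\ell(L)$, is exactly right. You have also correctly identified that, after the support inclusions, the only substantive issue is verifying the hypotheses of \cite[Theorem 2.4]{DS3}, resp.\ Theorem \ref{t:strong_Lipschitz}, and that the delicate step in the boundary case is showing that the projection of $\Gamma$ onto $p^\flat_L+V_H$ is a $C^{3,\alpha_0}$ arc partitioning the disc, with the correct component carrying the projected multiplicity.

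Two small imprecisions, neither of which breaks the argument. First, the inequality $\bE(T,\bC_{32 r_L}(p_L^\Box,V_H))\leq 2\,\bE(T,\bC_{32 r_L}(p_L^\Box,V_L))+C|V_H-V_L|^2$ as written compares excesses over \emph{different} cylinders (the cylinder is part of the data in the notation $\bE(T,\bC_r(x,V))=\bE(T,\bC_r(x,V),V)$). What you actually want is the trivial comparison of the integrands $|\vec T-\vec V_H|^2$ and $|\vec T-\vec V_L|^2$ over the \emph{same} set --- and the support inclusion into $\bB^\Box_L$ is precisely what lets you replace either cylinder by the ball and hence make the comparison legitimate; you should say this explicitly. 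Second, the rescaled curvature of the projected boundary curve should scale like $r_L\,\kappa \lesssim r_L\,\bmo^{1/2}$, not $(r_L)^2\,\bmo^{1/2}$; the conclusion that it is $\ll 1$ once $\varepsilon_{CM}$ is small is of course unchanged. You could also avoid invoking Lemma \ref{l:hardtSimonHeight} anew, since the needed height bound $\bh(T,\bC_{36 r_L}(\cdot,V_H))\leq C\bmo^{1/4}\ell(L)^{1+\beta_1}$ is already contained in the statement of Proposition \ref{p:tiltingPlanes}$(v)$.
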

\begin{proof}
    The proof of Proposition \ref{e:TiltedSupportEstimates} is completely analogous to \cite[Proposition 4.2]{DS4} for interior squares and to \cite[Proposition 8.25]{DDHM} for boundary squares. 
\end{proof}
\begin{remark}
Observe that if $\ell (H) < \ell (L)$ and $H$ is a boundary square, then $L$ is necessarily also a boundary square, since either $H$ and $L$ are  neighbors or $H\subset L$. When $\ell(H) = \ell (L)$, in case $H$ is a boundary square and $L$ is an interior square, we can simply swap their roles. In particular, without loss of generality, we will in the sequel ignore the case in which $H$ is a boundary square and $L$ is an interior square. 
\end{remark}

\begin{definition} We denote by $f_{HL}$ the strong Lipschitz approximation produced by Proposition \ref{e:TiltedSupportEstimates}. We will however consider the domain of the function $f_{HL}$ a subset of $p_H+ V_H$, resp. $p_H^\flat+V_H$. More precisely, for interior squares the domain is $\bC_{24 r_L} (p_L, V_H) \cap (p_H+V_H)$, while for boundary squares it is $D_{HL}:= D_H \cap \bC_{2^7 24r_L} (p^\flat_L, V_H)$, where we recall that $D_H$ is the projection on $p^\flat_H+ V_H$ of $\supp (T)$. Observe that  $\bC_{24r_L} (p_L, V_H)\cap (p_H+V_H)$ and $\bC_{2^7 24r_L} (p_L^\flat, V_H)\cap (p_H^\Box+V_H)$ are discs, whose centers are given by
  \begin{align}
  p_{HL} &:= p_H + \mathbf{p}_{V_H} (p_L), \qquad \text{resp.}\\
  p^\flat_{HL} &:= p^\Box_H + \mathbf{p}_{V_H} (p_L^\flat).\,
  \end{align}
  (Note that, when $L$ is a boundary square, $H$ might be a boundary square but it might also be an interior square). 
 \end{definition} 
\begin{definition}\label{d:tilted-2}
We then let $h_{HL}$ be the harmonic function on $B_{16 r_L} (p_{HL}, V_H)$, resp. $D_H \cap \bC_{2^7 16 r_L} (p^\flat_{HL}, V _H)$, such that the boundary value of $h_{HL}$ on the respective domain is given by $\etaa\circ f_{HL}$, in particular it coincides with $g_H$ on $\gamma_H$. $h_{HL}$ will be called the \textbf{\em {$(H,L)$-tilted harmonic interpolating function}}.
\end{definition}

Lemma \ref{l:u-well-defined} will then be a particular case of the following more general lemma.

\begin{lemma}\label{l:u-well-defined-2}
Consider $H$ and $L$ as in Proposition \ref{p:f-well-defined}. Then there is a smooth function $u_{HL}: D\cap B_{2^7 8r_L} (\p_0 (p^\flat_L), V_0)\to V_0^\perp$, resp. $u_{HL}: B_{8r_L} (\p_0 (p_L), V_0) \to V_0^\perp$, such that
\begin{align}
\bG_{u_{HL}} \res \bC_{8r_L} (p_L, V_0) &= \bG_{h_{HL}} \res  \bC_{8r_L} (p_L, V_0),\\
\bG_{u_{HL}} \res \bC_{2^7 8r_L} (p^\flat_L, V_0) &= \bG_{h_{HL}} \res  \bC_{2^7 8r_L} (p^\flat_L, V_0), \quad \textnormal{respectively}.
\end{align}
The function $u_{HL}$ will be called \textbf{{\em interpolating function}}. 
\end{lemma}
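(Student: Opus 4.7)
The plan is to reparametrize the graph of the single-valued harmonic map $h_{HL}$ (whose domain sits inside $p_H + V_H$ or $p^\flat_H + V_H$) as the graph of a smooth function over $V_0$. The underlying surface in $\mathbb{R}^{2+n}$ is the image of $\Psi_{HL}(x) := x + h_{HL}(x)$; we simply want to rewrite it as $\{y + u_{HL}(y): y \in V_0\}$ in the relevant cylinder. To this end, consider the composition $\pi := \mathbf{p}_{V_0}\circ \Psi_{HL}$. Its differential at a point $x$ is $\mathbf{p}_{V_0}|_{V_H} + \mathbf{p}_{V_0}\circ Dh_{HL}$; by Proposition \ref{p:tiltingPlanes}(iii) the linear map $\mathbf{p}_{V_0}|_{V_H}\colon V_H\to V_0$ differs from an isometry by $O(\bmo^{1/2})$, while standard interior estimates for harmonic functions combined with the Lipschitz bound $\Lip(f_{HL})\leq C(\bE + \bA^2)^{\gamma_1}$ from Theorem \ref{t:strong_Lipschitz} and the maximum principle applied to the harmonic extension give $\|Dh_{HL}\|_{C^0}\leq C\bmo^{\gamma_1}$ on a slightly smaller concentric ball. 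For $\varepsilon_{CM}$ small enough, $D\pi$ is therefore a small perturbation of an isometry and $\pi$ is a local diffeomorphism.

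I would then show that $\pi$ is globally injective on the domain of $h_{HL}$ (by the usual monotonicity argument: the straight-line homotopy to the identity stays invertible) and that its image contains the required disc in $V_0$. Concretely, the center $p_{HL}$ (resp.~$p^\flat_{HL}$) is mapped by $\Psi_{HL}$ to a point at distance $O(\bmo^{1/2} r_L)$ from $p_L$ (resp.~$p^\flat_L$), and on the boundary circle $\partial B_{16r_L}(p_{HL}, V_H)$ the projection $\pi$ stays within distance $O(\bmo^{1/2} r_L)$ of a circle of radius $\geq 15 r_L$ around $\mathbf{p}_0(p_L)$. A degree argument then shows that $B_{8r_L}(\mathbf{p}_0(p_L), V_0)$ (resp.~$D\cap B_{2^7 8r_L}(\mathbf{p}_0(p^\flat_L), V_0)$) lies inside $\pi$'s image, and so $u_{HL}(y) := \mathbf{p}_{V_0^\perp}(\Psi_{HL}(\pi^{-1}(y)))$ is well-defined and smooth. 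In the boundary case one furthermore checks that $\pi$ maps $\gamma_H$ onto $\gamma$ (since the graph of $g_H$ over $\gamma_H$ is exactly $\Gamma\cap \bC_{2^724r_L}(p^\flat_L, V_H)$, and this is also the graph of $g$ over $\gamma$), so that the domain of definition of $u_{HL}$ is precisely the piece of $D$ claimed.

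Finally, the current identity $\bG_{u_{HL}}\res \bC = \bG_{h_{HL}}\res \bC$ (for either cylinder $\bC$) is immediate: both currents are the integer multiplicity one rectifiable currents carried by the same smooth 2-dimensional surface $\Psi_{HL}(\text{dom}(h_{HL}))\cap \bC$, and the reparametrization $\pi^{-1}$ is a diffeomorphism close to the identity, hence orientation-preserving. Thus the orientations induced by the two graphical parametrizations agree, and the two currents coincide.

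The main technical obstacle is the quantitative control ensuring $\pi$ is a diffeomorphism \emph{and} that its image covers the prescribed smaller disc: one must balance the tilt $|V_H - V_0|$ (which grows with the geodesic distance from the top cube in the refinement tree) against the radii $r_L$ and the smallness of $\bmo$, exactly as is done in the analogous step for interior squares in \cite{DS4} and for boundary squares in \cite{DDHM}. Once $\varepsilon_{CM}$ is chosen small with respect to all the constants in Assumption \ref{ass:hierarchy}, the above argument goes through uniformly in $H$ and $L$.
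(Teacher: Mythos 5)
The paper itself offers no proof of Lemma \ref{l:u-well-defined-2}: it is treated as a standard ``re-graphing'' statement, of the type codified in \cite[Lemma B.1]{DS4} and \cite[Lemma B.1]{DS3}, to which the paper silently appeals (and to which it explicitly refers in the proof of Proposition \ref{p:block_estimates}). Your sketch reconstructs exactly that classical argument (project $\Psi_{HL}=\mathrm{id}+h_{HL}$ onto $V_0$, check that $D(\mathbf p_{V_0}\circ\Psi_{HL})$ is a small perturbation of an isometry, invert and re-graph, then identify the two currents as multiplicity-one integral currents supported on the same oriented surface), so the overall route is the same as the one the paper intends, not a genuinely different one.

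One small inaccuracy worth correcting. You justify the smallness of $\|Dh_{HL}\|_{C^0}$ by ``the maximum principle applied to the harmonic extension,'' but the maximum principle does not control gradients. The mechanism the paper actually uses is the harmonicity of $h_{HL}$ plus a mean-value/Schauder interior estimate applied to the Dirichlet energy of $\etaa\circ f_{HL}$; this is precisely Proposition \ref{pr:elliptic_regularization}, estimates \eqref{e:lip_interior}--\eqref{e:lip_boundary}, which give $\|Dh_{HL}\|_{L^\infty}\le C\bmo^{1/2} r_L^{1-\delta_1}$ (and in particular a bound of the form $C\bmo^{1/2}$, not $C\bmo^{\gamma_1}$ as you wrote; the $\gamma_1$-exponent is the Lipschitz bound on the \emph{multivalued} approximation $f_{HL}$, not on its harmonic average). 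With that correction, combined with the tilt bound $|V_H-V_0|\le \overline C\bmo^{1/2}$ from Proposition \ref{p:tiltingPlanes}(iii), the rest of your diffeomorphism/degree argument and the boundary check $\pi(\gamma_H)=\gamma$ go through as stated.
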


\subsection{Linearization and first estimates on $h_{HL}$}
\begin{proposition}\label{pr:elliptic_regularization}
Under the Assumptions of Proposition \ref{p:f-well-defined} the following estimates hold for every pair of squares $H$ and $L$ as in Proposition \ref{p:f-well-defined}. First of all
\begin{align}
&\int D (\etaa\circ f_{HL}): D\zeta \leq C \bmo r_L^{4+\beta_1} \|D \zeta\|_0,\label{e:weak_ell_1}
\end{align}
for every function $\zeta$ in $C_c^\infty (B_{8r_L} (p_{HL}, V_H), V_H^\perp)$, resp. $C^\infty_c (D_H\cap B_{2^7 8r_L} (p^\flat_{HL}, V_H), V_H^\perp)$, depending on whether $L\in \sC^\natural$ or $L\in \sC^\flat$. 
Moreover,
\begin{eqnarray}
\|h_{HL} - \etaa\circ f_{HL}\|_{L^1 (B_{8r_L} (p_{HL}, V_H))} & \leq C \bmo r_L^{5+\beta_1},\qquad
&\mbox{if $L\in\mathscr{C}^\natural$;}\label{e:L1_interior}\\
\|h_{HL} - \etaa\circ f_{HL}\|_{L^1 (D_H \cap B_{2^7 8 r_L} (p^\flat_{HL}, V_H))} & \leq C \bmo r_L^{5+\beta_1},\qquad
&\mbox{if $L\in\mathscr{C}^\flat$;}\label{e:L1_boundary}\\
\|D h_{HL}\|_{L^\infty (B_{7r_L} (p_{HL}, V_H))} & \leq C \bmo^{\frac12} r_L^{1-\delta_1},\qquad
&\mbox{if $L\in\mathscr{C}^\natural$;}\label{e:lip_interior}\\
\|D h_{HL}\|_{L^\infty (D_H \cap B_{2^7 7 r_L} (p^\flat_{HL}, V_H))} &\leq C \bmo^{\frac12} r_L^{1-\delta_1}, \qquad
&\mbox{if $L\in\mathscr{C}^\flat$.}
\label{e:lip_boundary}
\end{eqnarray}
\end{proposition}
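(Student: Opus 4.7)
The plan is to establish the three estimates in sequence, with the first feeding into the second.

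\textbf{Weak harmonicity \eqref{e:weak_ell_1}.} I would compare the first variations of $T$ and of $\bG_{f_{HL}}$ against the vector field $X := \zeta$, with $\zeta$ in the prescribed $C^\infty_c$ class of $V_H^\perp$-valued test functions. Since $\zeta$ vanishes on the full boundary of its domain -- including $\gamma_H$ when $L\in\sC^\flat$ -- the field $X$ vanishes along $\Gamma$, and Theorem~\ref{thm:allard} yields $\delta T(X) = 0$. Letting $S := T - \bG_{f_{HL}}$, which is boundaryless in the reference cylinder because both currents share the boundary $Q\a{\Gamma}$, one has $|\delta \bG_{f_{HL}}(X)| = |\delta S(X)| \leq \|DX\|_0 \mass(S) \leq C\bmo^{1+\gamma_1} r_L^{4+2\gamma_1} \|D\zeta\|_0$ by Theorem~\ref{t:strong_Lipschitz}. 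Expanding $\delta \bG_{f_{HL}}(X)$ via the Taylor expansion of the graphical mass in Proposition~\ref{p:TaylorExpansionMassGraph} produces the linear term $Q\int D(\etaa\circ f_{HL}):D\zeta$ plus a cubic remainder bounded by $\|D\zeta\|_0 \Lip(f_{HL}) \int|Df_{HL}|^2$. Inserting $\Lip(f_{HL}) \leq C(\bmo\, r_L^{2-2\delta_1})^{\gamma_1}$ and $\int|Df_{HL}|^2 \leq C\bmo\, r_L^{4-2\delta_1}$, which follow from Theorem~\ref{t:strong_Lipschitz} combined with Proposition~\ref{p:Whitney} and Proposition~\ref{p:tiltingPlanes}, the hierarchy $\delta_1 \leq \beta_1/2 \leq \gamma_1/16$ of Assumption~\ref{ass:hierarchy} makes both errors $\leq C\bmo\, r_L^{4+\beta_1}\|D\zeta\|_0$, establishing \eqref{e:weak_ell_1} (the factor $Q$ being absorbed into the constant).

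\textbf{$L^1$-closeness \eqref{e:L1_interior}--\eqref{e:L1_boundary}.} Setting $w := h_{HL} - \etaa\circ f_{HL}$ on the respective domain $\Omega$ (a disc in the interior case, a half-disc bounded by $\gamma_H$ in the boundary case), we have $w|_{\partial\Omega}=0$, and harmonicity of $h_{HL}$ gives $\int_\Omega Dw\cdot D\varphi = -\int_\Omega D(\etaa\circ f_{HL})\cdot D\varphi$ for every $\varphi\in H^1_0(\Omega, V_H^\perp)$. Arguing component by component, for any scalar $\psi\in L^\infty(\Omega)$ with $\|\psi\|_\infty \leq 1$ I would solve $-\Delta\varphi = \psi$ with $\varphi|_{\partial\Omega}=0$, test with $\zeta = \varphi\, e_i$, and use \eqref{e:weak_ell_1} together with the standard 2D Green's-function gradient bound $\|D\varphi\|_\infty \leq C r_L \|\psi\|_\infty$ -- valid on the disc and, after straightening $\gamma_H$ via Lemma~\ref{l:change-of-variable}, on the half-disc as well. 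Taking the supremum over $\psi$ yields $\|w\|_{L^1(\Omega)} \leq C\bmo\, r_L^{5+\beta_1}$.

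\textbf{Gradient estimates \eqref{e:lip_interior}--\eqref{e:lip_boundary}.} Dirichlet-energy minimality of $h_{HL}$ against $\etaa\circ f_{HL}$ yields $\int_\Omega|Dh_{HL}|^2 \leq \int_\Omega|D(\etaa\circ f_{HL})|^2 \leq Q^{-1}\int_\Omega|Df_{HL}|^2 \leq C\bmo\, r_L^{4-2\delta_1}$, again via Theorem~\ref{t:strong_Lipschitz}, Proposition~\ref{p:Whitney}, and Proposition~\ref{p:tiltingPlanes}. For interior squares $Dh_{HL}$ is itself harmonic, so the mean-value property gives $|Dh_{HL}(x_0)| \leq Cr_L^{-1}\|Dh_{HL}\|_{L^2(B_{r_L}(x_0))} \leq C\bmo^{\sfrac12} r_L^{1-\delta_1}$ throughout $B_{7r_L}(p_{HL}, V_H)$. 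For boundary squares I would straighten $\gamma_H$ via Lemma~\ref{l:change-of-variable} -- an $O(\bA)$-perturbation of the identity in $C^1$ -- and decompose $h_{HL} = h_1 + h_2$, where $h_1$ vanishes on the straightened boundary and $h_2$ is the harmonic extension of $g_H$ (which has $\|g_H\|_{C^{2,\alpha}} \leq C\bA\, r_L^2$, hence $\|Dg_H\|_\infty \leq C\bA\, r_L$). The bulk $L^2$ bound combined with odd reflection across the straight boundary reduces $h_1$ to the interior argument, while standard boundary Schauder estimates bound $\|Dh_2\|_\infty \leq C\bA\, r_L \leq C\bmo^{\sfrac12} r_L^{1-\delta_1}$ (using $\bA\leq\bmo^{\sfrac12}$ and $r_L\leq r_L^{1-\delta_1}$).

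\textbf{Main obstacle.} The subtlest point is the boundary case of the gradient estimate: after the change of variables of Lemma~\ref{l:change-of-variable} the Laplacian transforms into a small elliptic perturbation, and one must verify that the resulting $L^2$--$L^\infty$ gradient bound up to the newly-flat boundary holds with constants independent of the cube $L$ -- uniformly in its position along $\Gamma$ and its size $r_L$. This ultimately reduces, after careful bookkeeping of the smoothness of the straightening and of the boundary data, to standard Schauder estimates for harmonic functions on half-discs with small-norm $C^{2,\alpha}$ boundary data.
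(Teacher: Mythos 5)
Your proofs of \eqref{e:weak_ell_1} and of the $L^1$ bounds \eqref{e:L1_interior}--\eqref{e:L1_boundary} follow the same route as the paper: compare $\delta T$ and $\delta \bG_{f_{HL}}$ against the vertical field $(0,\zeta(x))$ (noting that $\delta T$ vanishes because the field vanishes on $\Gamma$), use the mass error of Theorem~\ref{t:strong_Lipschitz} together with the Taylor expansion of $\delta\bG_{f_{HL}}$, and then dualize against the solution $P(w)$ of the Dirichlet problem $\Delta\zeta=w$, $\zeta|_{\partial\Omega}=0$, with the standard gradient bound $\|DP(w)\|_{L^\infty}\le Cr_L\|w\|_{L^\infty}$. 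The paper additionally observes that \eqref{e:weak_ell_1} extends by density to $W^{1,2}_0$ test functions, which is the step that allows you to use $P(w)$ in place of a $C^\infty_c$ test function; you should make this explicit. Your interior gradient estimate \eqref{e:lip_interior} is the same mean-value argument.

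For the boundary gradient estimate \eqref{e:lip_boundary} you diverge from the paper, and here your route is both heavier and slightly off. The paper applies a boundary Schauder estimate directly to the harmonic function $h_{HL}$ on the curved domain, giving $\|Dh_{HL}\|_\infty\le Cr_L^{-2}\int|Dh_{HL}|+C(\|Dg_H\|_0+r_L^{-\alpha}[Dg_H]_\alpha)$, and then estimates $|Dg_H(p^\flat_{HL})|\le C\bmo^{1/2}r_L^{1-\delta_1}$ by the tilt bound $|V_L-V_H|\le C\bmo^{1/2}r_L^{1-\delta_1}$ from Proposition~\ref{p:tiltingPlanes}(ii), plus the Taylor term $C\bA r_L$ for the oscillation of $Dg_H$. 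Your alternative of straightening via Lemma~\ref{l:change-of-variable} and decomposing $h_{HL}=h_1+h_2$ works in spirit, but the odd-reflection step for $h_1$ does not directly apply: after straightening, $h_{HL}$ solves a divergence-form equation with coefficients $O(\bA)$-close to the identity but not symmetric under reflection across the straightened boundary; writing $h_2$ as the genuine harmonic extension of the boundary data then leaves $h_1$ solving the perturbed operator with a small source, so one still needs a boundary gradient estimate for the perturbed elliptic operator rather than a clean interior reduction via odd reflection. This is not a fatal gap (such estimates are standard and your own concluding sentence acknowledges this reduction), but the direct Schauder-on-the-curved-domain argument in the paper avoids it entirely. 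Your claimed bound $\|Dg_H\|_\infty\le C\bA r_L$ is in fact correct and slightly sharper than the paper's tilt-based $C\bmo^{1/2}r_L^{1-\delta_1}$, since $Dg_H(p^\flat_H)=0$ and $p^\flat_H$ is within $O(r_L)$ of the domain; either bound suffices for \eqref{e:lip_boundary}.
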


\begin{proof} {\bf Proof of \eqref{e:weak_ell_1}.} Without loss of generality consider a system of coordinates $(x,y)$ with the property that $p^\Box_{HL}$ is the origin, $(x,0)\in V_H$ and $(0,y)\in V_H^\perp$. Fix $\zeta$ as in the statement of the proposition and in the cylinder $\bC \in \{ \bC_{32r_L} (p_{HL}, V_H), \bC_{2^7 32r_L} (p_{HL}^\flat, V_H)\}$ we consider the vector field $\chi (x,y) = (0, \zeta (x))$. Observe that, by assumption, the vector field vanishes on $\Gamma$. Observe that, though $\chi$ is not compactly supported, since the height of the current in the cylinder $\bC$ is bounded, we can multiply $\chi$ by a cut-off function in the variable $y$ but keeping its values the same on $\supp (T)$. The latter vector field is a valid first variation for the area-minimizing current $T$ and thus we have $\delta T (\chi) =0$. Thus we can use Theorem \ref{t:strong_Lipschitz} and Proposition \ref{p:tiltingPlanes} to estimate
\begin{align*}
|\delta \bG_{f_{HL}} (\chi)| &= |\delta (T-\bG_{f_{HL}}) (\chi)| \leq \|D\zeta\|_0 \|T- \bG_{f_{HL}}\| (\bC)\nonumber\\
&\leq C \|D\zeta\|_0 r_L^2 (\bE^\Box (T, \bC, V_H)+\bA^2r_L^2)^{1+\gamma_1}\\
&\leq C \|D\zeta\|_0 r_L^2 (\bE^\Box (T, \bB^\Box_L)+|V_H-V_L|^2 + \bA^2r_L^2)^{1+\gamma_1}\nonumber\\
&\leq C \|D\zeta\|_0 r_L^2 (\bmo r_L^{2-2\delta_1})^{1+\gamma_1} \leq C \|D\zeta\|_0 \bmo r_L^{4+\beta_1}\, ,
\end{align*}
provided $\delta_1$ and $\beta_1$ are chosen small enough to satisfy $(2-2\delta_1)(1+\gamma_1)\geq 2+\beta_1$. 

Next we use the Taylor expansion \cite[Theorem 4.1]{DS2} to estimate
\begin{align*}
&\left|\delta \bG_{f_{HL}} (\chi) - Q \int \etaa\circ Df_{HL}:D\zeta\right| \leq C\|D\zeta\|_0 \int |Df_{HL}|^3\\
\leq &C \|D\zeta\|_0 \Lip (f_{HL})
\int |Df_{HL}|^2\\
\leq & C \|D\zeta\|_0 (\bE^\Box (T,\bC, V_H) + \bA^2r_L^2)^{\gamma_1} r_L^2 (\bE^\Box (T,\bC, V_H) + \bA^2r_L^2)\\
\leq & C \|D\zeta\|_0 r_L^2 (\bmo r_L^{2-2\delta_1})^{1+\gamma_1}\, .
\end{align*}

\medskip

{\bf Proof of \eqref{e:L1_interior}-\eqref{e:L1_boundary}.} Consider $v:= h_{HL} - \etaa\circ f_{HL}$ on its respective domain $\Omega$ which equals either $B_{8r_L} (p_{HL}, V_H)$ or $D_H\cap B_{2^7 8r_L} (p^\flat_{HL}, V_H)$. Observe that $v$ vanishes on the boundary of $\Omega$. For every $w\in L^2$ we denote by $\zeta= P (w)$ the unique solution of $\Delta \zeta = w$ in $\Omega$ with $\zeta|_{\partial \Omega}=0$, which is an element of the Sobolev space $W^{1,2}_0 (\Omega)$. Next notice that by a simple density argument, the estimate \eqref{e:weak_ell_1} remains valid for any test function $\zeta\in W^{1,2}_0$ and recall also the standard estimate
\[
\|D (P(w))\|_0 \leq C r \|w\|_0\, .
\]
Therefore we can write
\begin{align*}
\|v\|_{L^1} &= \sup_{w:\|w\|_0\leq 1} \int_\Omega v\cdot w = \sup_{w:\|w\|_0\leq 1}      \int_\Omega v\cdot \Delta (P(w))\\
&= \sup_{w:\|w\|_0\leq 1}  \left(- \int_\Omega Dv: D (P(w))\right) = \sup_{w:\|w\|_0\leq 1} \int_{\Omega} D \etaa\circ f_{HL} : D (P(w))\\
&\leq C \sup_{w:\|w\|_0\leq 1} \bmo r_L^{4+\beta_1} \|D P (w)\|_0 \leq C \bmo r_L^{5+\beta_1}\, .
\end{align*}

\medskip

{\bf Proof of \eqref{e:lip_interior}.} Using the mean-value inequality for harmonic functions we simply get
\begin{align*}
\|D h_{HL}\|_{L^\infty (B_{7r_L} (p_{HL}, V_H))} &\leq \frac{C}{r_L^2} \int_{B_{8r_L} (p_{HL}, V_H)} |D h_{HL}|\\
&\leq \frac{C}{r_L} \left(\int_{B_{8r_L} (p_{HL}, V_H)} |D h_{HL}|^2\right)^{\sfrac{1}{2}}\\
&\leq  \frac{C}{r_L} \left(\int_{B_{8r_L} (p_{HL}, V_H)} |D \etaa\circ f_{HL}|^2\right)^{\sfrac{1}{2}}\\
&\leq  \frac{C}{r_L} \left(r_L^2 (\bE (T, \bC, V_H)+\bA^2r_L^2)\right)^{\frac{1}{2}} \leq C\bmo^{\frac{1}{2}} r_L^{1-\delta_1}\, .
\end{align*}

\medskip

{\bf Proof of \eqref{e:lip_boundary}.} Using standard Schauder estimates for harmonic functions, we get
\begin{align*}
\|Dh_{HL}\|_{L^\infty (D_H\cap B_{2^7 7r_L} (p_{HL}^\flat, V_H))} \leq \frac{C}{r_L^2} \int_{D_H\cap B_{2^7 8r_L} (p_{HL}^\flat, V_H)} |Dh_{HL}|
+ C (\|D g_H\|_0 + r_L^{-\alpha} [g_H]_\alpha)\, , 
\end{align*}
where we recall that $g_H: \partial D_H\cap B_{2^7 8r_L} (p_{HL}^\flat, V_H)$ is the graphical parametrization of our boundary curve $\Gamma$ and $\alpha$ is a positive number smaller than $1$, to be chosen later. The first summand on the right hand side is estimated as in the proof above of \eqref{e:lip_interior}. As for the second summand, recall that $T_{p^\flat_L} \Gamma$ is contained in the plane $V_L$ and that $|V_L-V_H|\leq C \bmo^{\sfrac{1}{2}} r_L^{1-\delta_1}$. This implies that 
\[
|D g_H (p^\flat_{HL})|\leq C \bmo^{\sfrac{1}{2}} r_L^{1-\delta_1}\, .
\]
In particular we have 
\[
\|Dg_H\|_{L^\infty (\partial D_H \cap B_{2^7 8r_L} (p_{HL}^\flat, V_H))} \leq |D g_H (p^\flat_{HL})| + C \bA r_L
\leq C \bmo^{\sfrac{1}{2}} r_L^{1-\delta_1}\, .
\]
On the other hand,
\[
r_L^{-\alpha} [g_H]_\alpha \leq C r_L^{1-2\alpha} \bA \leq C \bmo^{\sfrac{1}{2}} r_L^{1-2\alpha}\, ,
\]
and thus it suffices to choose $2\alpha<\delta_1$.
\end{proof}

\subsection{Tilted estimate} We follow here \cite[Section 8.5]{DDHM} almost verbatim to establish a suitable comparison between tilted interpolating functions which are defined in different system of coordinates.

\begin{definition}
Four cubes $H,J,L,M\in \mathscr{C}$ make a \emph{distant relation} between $H$ and $L$ if $J,M$ are neighbors (possibly the same cube) with same side length and $H$ and $L$ are descendants respectively of $J$ and $M$. 
\end{definition}

\begin{lemma}[Tilted $L^1$ estimate]\label{lem:tilted_L1}
Under the Assumptions of Theorem \ref{t:cm} the following holds for every quadruple $H,J,L$ and $M$ in $\sS\cup \sW$ which makes a distant relation between $H$ and $L$.
\begin{itemize}
\item If $J\in \mathscr{C}^\natural$, then there is a map $\hat{h}_{LM} : B_{4r_J} (p_{HJ}, V_H) \to V_H^\perp$ such that
\[
\bG_{\hat h_{LM}} = \bG_{h_{LM}} \res \bC_{4r_J} (p_{HJ}, V_H)
\]
and
\begin{equation}\label{e:tilted_L1_int}
\|h_{HJ}- \hat h_{LM}\|_{L^1 (B_{2r_J} (p_{HJ}, V_H))} \leq C \bmo \ell (J)^{5+\beta_1/2}\, .
\end{equation}
\item If both $J$ and $M$ belong to $\mathscr{C}^\flat$, then there is a map $\hat{h}_{LM} : D_{HJ} \cap B_{2^7 4r_J} (p^\flat_{HJ}, V_H) \to V_H^\perp$ such that
\[
\bG_{\hat h_{LM}} = \bG_{h_{LM}} \res \bC_{2^7 4 r_J} (p^\flat_{HJ}, V_H)
\]
and
\begin{equation}\label{e:tilted_L1_b}
\|h_{HJ}- \hat h_{LM}\|_{L^1 (D_{HJ} \cap B_{2^7 2 r_J} (p^\flat_{HJ}, V_H))} \leq C \bmo \ell (J)^{5+\beta_1/2}\, .
\end{equation}
\end{itemize}
\end{lemma}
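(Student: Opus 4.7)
My plan is to compare $h_{HJ}$ and $\hat h_{LM}$ by interposing the averages of the two strong Lipschitz approximations and exploiting that both approximations represent the same current $T$ on very large subsets. First I verify that $\hat h_{LM}$ is well-defined on the cylinder in question: by Proposition \ref{p:tiltingPlanes}(ii), since $J,M$ are neighbors of equal side length and $H,L$ are respectively descendants of $J,M$, we have $|V_H-V_L|\leq C\bmo^{\sfrac12}\ell(J)^{1-\delta_1}$. Combined with the Lipschitz bound $\|Dh_{LM}\|_0\leq C\bmo^{\sfrac12} r_J^{1-\delta_1}$ from \eqref{e:lip_interior}--\eqref{e:lip_boundary} (using $r_M=r_J$), this is more than enough to ensure that the graph of $h_{LM}$ over $V_L$ remains graphical over $V_H$ throughout $\bC_{4r_J}(p_{HJ},V_H)$ (resp.\ its boundary analog), so $\hat h_{LM}$ is well-defined. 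The same argument produces a reprojection $\widehat{\etaa\circ f_{LM}}$ of the average of $f_{LM}$, using the much smaller Lipschitz bound $\Lip(\etaa\circ f_{LM})\leq\Lip(f_{LM})\leq C(\bmo+\bA^2r_J^2)^{\gamma_1}$ from Theorem \ref{t:strong_Lipschitz}.

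I then apply the triangle inequality
\[
\|h_{HJ}-\hat h_{LM}\|_{L^1} \leq \|h_{HJ}-\etaa\circ f_{HJ}\|_{L^1}+\|\etaa\circ f_{HJ}-\widehat{\etaa\circ f_{LM}}\|_{L^1}+\|\widehat{\etaa\circ f_{LM}}-\hat h_{LM}\|_{L^1}.
\]
The first term is bounded by $C\bmo\,r_J^{5+\beta_1}$ directly from \eqref{e:L1_interior}/\eqref{e:L1_boundary}. The third term is controlled by the same estimate applied to $(h_{LM},\etaa\circ f_{LM})$ on the cylinder over $V_L$, pulled back through the change of variables between $V_L$ and $V_H$: the Jacobian of this change is $1+O(\bmo\ell(J)^{2-2\delta_1})$ and the reprojected ``vertical'' difference agrees, up to the same factor, with the original one, so the bound $C\bmo\,r_J^{5+\beta_1}$ persists.

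For the middle term, the key observation is that on the subset $K$ of the domain where both $\bG_{f_{HJ}}$ and $\bG_{f_{LM}}$ coincide with $T$, the unordered $Q$-tuples obtained from $f_{HJ}(x)$ (viewed as points in $\R^{2+n}$) and from $f_{LM}(y)$ (with $y$ the $V_L$-projection of any point in the fiber over $x$) describe the same portion of $\supp T$. Their centers of mass therefore coincide in $\R^{2+n}$, yielding $x+\etaa\circ f_{HJ}(x)=y+\etaa\circ f_{LM}(y)$, i.e.\ $\etaa\circ f_{HJ}\equiv\widehat{\etaa\circ f_{LM}}$ identically on $K$. By \eqref{e:strongKestimate} applied to both approximations, the complement satisfies $|\text{bad set}|\leq C\bmo^{1+\gamma_1}r_J^{4+\gamma_1}$, and on this bad set the oscillation of each average is bounded by $C\bmo^{\sfrac14}r_J^{1+\beta_1}$ via \eqref{e:control-in-W}--\eqref{e:control-in-W-2}. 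Hence the middle term is bounded by $C\bmo^{\sfrac54+\gamma_1}r_J^{5+\beta_1+\gamma_1}$.

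Summing the three contributions and substituting $r_J=\sqrt 2\,M_0\,\ell(J)$, each is dominated by $C\bmo\,\ell(J)^{5+\beta_1/2}$ provided $\varepsilon_{CM}$ is small enough (using $\bmo\leq\varepsilon_{CM}$ and $\beta_1<\gamma_1/8$), which is the required bound. The principal obstacle to overcome cleanly is the bookkeeping in the change of coordinates: ensuring the reprojected graphs stay graphical on the required domain, that the Jacobian errors are genuinely lower order, and, in the boundary case, that the reparameterization $V_L\to V_H$ respects the common Dirichlet trace built from $\Gamma$. This last point is free once one observes that $\Gamma$ is already graphical over both $V_H$ and $V_L$ in the relevant cylinders (since $T_{p^\flat_L}\Gamma\subset V_L$ and $|V_H-V_L|$ is small), so $\gamma_L$ and $\gamma_H$ correspond under the reprojection with a negligible error.
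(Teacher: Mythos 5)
The middle term of your triangle inequality contains a genuine error. You claim that on the good set $K$ where both $\bG_{f_{HJ}}$ and $\bG_{f_{LM}}$ coincide with $T$, one has $\etaa\circ f_{HJ}\equiv\widehat{\etaa\circ f_{LM}}$ because ``the unordered $Q$-tuples \dots describe the same portion of $\supp T$'' and ``their centers of mass therefore coincide.'' This is false: the two averages are taken over \emph{different} fibers. The $Q$ points $p_1,\dots,p_Q$ over $x$ lie in $\{x\}+V_H^\perp$, while the $Q$ points over any putative $y$ lie in $\{y\}+V_L^\perp$; since $V_H\neq V_L$ these affine lines intersect $\supp T$ in two \emph{different} collections of $Q$ points (moreover your ``$y$ the $V_L$-projection of any point in the fiber over $x$'' is ill-defined, as the $Q$ points of the fiber have $Q$ distinct $V_L$-projections). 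Pairing the sheets, each $|p_i-q_{\sigma(i)}|$ is of order $|V_H-V_L|\cdot\bh\lesssim\bmo^{3/4}\ell(J)^{2+\beta_1-\delta_1}$, so the pointwise deviation of the reprojected averages on $K$ is of the same order, not zero. Integrating over a region of area $\sim r_J^2$ yields an $L^1$ contribution $\sim\bmo^{3/4}\ell(J)^{4+\beta_1-\delta_1}$, which is \emph{larger} than the target $\bmo\,\ell(J)^{5+\beta_1/2}$ by a factor of roughly $\bmo^{-1/4}\ell(J)^{-1+\delta_1-\beta_1/2}$ (note $\delta_1\le\beta_1/2$, so the exponent of $\ell(J)$ is negative). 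Your bound $C\bmo^{5/4+\gamma_1}r_J^{5+\beta_1+\gamma_1}$ for the middle term is therefore unjustified; absorbing the tilt-induced discrepancy in the averaging direction is precisely the delicate point, and requires the more involved argument of \cite[Lemma 5.6]{DS4} (equivalently \cite[Lemma 8.31]{DDHM}), which the paper invokes rather than reproves. The decomposition of the first and third terms via \eqref{e:L1_interior}--\eqref{e:L1_boundary} is sound, and verifying that $\hat h_{LM}$ is well-defined via the tilting and Lipschitz bounds is correctly done.
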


The proof follows verbatim the arguments given in \cite[Section 8.5]{DDHM}. The only difference is the absence of the ``ambient Riemannian'' manifold which in \cite[Lemma 8.31]{DDHM} is the graph of a function $\Psi$. The case needed for our arguments is the clearly simpler situation in which the linear subspaces $\varpi$ and $\bar\varpi$ in \cite[Lemma 8.31]{DDHM} are given by the trivial subspace $\{0\}$. The proof of this version of the lemma (which is in fact \cite[Lemma 5.6]{DS4}) is even less complicated. However there is a direct way to conclude it directly from the more general statement of \cite[Lemma 8.31]{DDHM}: we can consider $\mathbb R^{2+n}$ as a subspace of $\mathbb R^{2+n+1}$ and apply \cite[Lemma 8.31]{DDHM} to a generic choice of $\varkappa, \bar\varkappa, \pi, \bar \pi$ and the specific choice of $\varpi = \bar\varpi = \{0\}\times \mathbb{R}$ and $\Psi = \bar \Psi : \pi\times \varkappa = \bar\pi\times \bar \varkappa \to \varpi=\bar\varpi$ given by the trivial map $\Psi \equiv 0$.

\section{Final estimates and proof of Theorem \ref{t:cm}}\label{s:cm-final}

\begin{proposition}\label{p:block_estimates}
There is a constant $\omega$ depending upon $\delta_1$ and $\beta_1$ such that,
under the assumptions of Theorem \ref{t:cm}, the following holds for every pair of squares $H, L \in \mathscr{P}^j$ (cf. \eqref{e:def-Pj}).
\begin{itemize}
\item[(a)] $\|u_H\|_{C^{3,\omega} (B_{4r_H} (x_H)}\leq C\bmo^{1/2}$, resp. $\|u_H\|_{C^{3,\omega}(D\cap B_{2^7 4r_H} (x^\flat_H))}\leq C\bmo^{1/2}$, for $H\in \mathscr{C}^\natural$, resp. $H\in \mathscr{C}^\flat$;
\item[(b)] If $H$ and $L$ are neighbors then for any $i\in \{0, 1,2,3\}$, we have
\begin{align}
&\|u_H- u_L\|_{C^i (B_{r_H} (x_H))} \leq C \bmo^{1/2} \ell (H)^{3+\omega-i}
\qquad \mbox{when $H\in \mathscr{C}^\natural$,}\\
&\|u_H- u_L\|_{C^i (D\cap B_{2^7 r_H} (x^\flat_H))} \leq C \bmo^{1/2} \ell (H)^{3+\omega-i} \qquad \mbox{when $H, L\in \mathscr{C}^\flat$;}
\end{align}
\item[(c)] $|D^3 u_H (x_H^\Box) - D^3 u_L (x^\Box_L)|\leq C \bmo^{1/2} |x^\square_H - x^\square_L|^\omega$, where $\square = \phantom{\flat}$ if the corresponding square is a non-boundary square and $\square = \flat$ if it is a boundary square;
\item[(d)] if $H\in \mathscr{C}^\natural$, then $\|u_H - \bp_{V_0}^\perp (p_H)\|_{C^0 (B_{4r_H} (x_H))} \leq C \bmo^{1/2} \ell (H)$ and if $H\in \mathscr{C}^\flat$, then $u_H |_{\partial D \cap B_{2^7 4r_H} (x_H^\flat))} = g$ ;
\item[(e)] $|V_H - T_{(x, u_H (x))} \bG_{u_H} |\leq C\bmo^{1/2} \ell (H)^{1-\delta_1}$ for every $x\in B_{4r_H} (x_H)$, resp. $x\in D\cap B_{2^7 4r_H} (x^\flat_H)$;
\item[(f)] If $H'$ is the square concentric to $H\in \sW_j$ with $\ell (H') = \frac{9}{8} \ell (H)$, then
\begin{equation}
\|\phii_i - u_H\|_{L^1 (H')}\leq C \bmo \ell (H)^{5+\beta_1/2}\qquad \forall i\geq j+1\, . 
\end{equation}
\end{itemize}
\end{proposition}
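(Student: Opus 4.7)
The plan is to bootstrap from the $L^1$-level estimates of Proposition \ref{pr:elliptic_regularization} and Lemma \ref{lem:tilted_L1} to pointwise $C^i$ bounds by exploiting the harmonicity of the interpolating maps $h_{HL}$, and then to transport everything through the change of coordinates from $V_H$ to $V_0$ provided by Lemma \ref{l:u-well-defined-2}. Throughout, the tilting control $|V_H-V_L|\le \overline{C}\,\bmo^{1/2}\ell(L)^{1-\delta_1}$ of Proposition \ref{p:tiltingPlanes}(ii) will be the source of the exponent $\omega$.

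First I would dispose of (a), (e) and (d). Since $h_{HH}$ is harmonic on its domain, interior mean-value estimates (when $H\in\sC^\natural$) and Schauder boundary estimates (when $H\in\sC^\flat$, using that the Dirichlet datum $g_H$ is a piece of the $C^{3,\alpha_0}$ curve $\Gamma$) combined with \eqref{e:lip_interior}-\eqref{e:lip_boundary} yield $\|h_{HH}\|_{C^{3,\omega}}\le C\bmo^{1/2}$ on a slightly shrunk domain. Passing to $u_H$ through the rotation from $V_H$ to $V_0$ costs at most a factor $1+O(\bmo^{1/2})$ at each derivative order, giving (a). Estimate (e) is then immediate from $Du_H=O(\bmo^{1/2}\ell(H)^{1-\delta_1})$ combined with Proposition \ref{p:tiltingPlanes}(ii). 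For (d), when $H\in\sC^\flat$ the identity $u_H|_{\partial D\cap\ldots}=g$ is forced by $h_{HH}|_{\gamma_H}=g_H$; for $H\in\sC^\natural$, the $C^0$ bound follows from $u_H(x)-\mathbf{p}_{V_0}^\perp(p_H)=(u_H(x)-u_H(x_H))+(u_H(x_H)-\mathbf{p}_{V_0}^\perp(p_H))$, estimating the first term by (a) times $\mathrm{diam}(B_{4r_H})\sim\ell(H)$, and the second by the height bound $\bh(T,\bB_H)\le C\bmo^{1/4}\ell(H)^{1+\beta_1}$ of Proposition \ref{p:tiltingPlanes}(iv).

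For (b) and (c), I would apply Lemma \ref{lem:tilted_L1} to the quadruple $(H,H,L,L)$ when $H$ and $L$ are neighbours, producing $\hat h_{LL}$ on (a subset of) the domain of $h_{HH}$ with $\|h_{HH}-\hat h_{LL}\|_{L^1}\le C\bmo\,\ell(H)^{5+\beta_1/2}$. Since both maps are harmonic with $C^{3,\alpha_0}$-close Dirichlet data on $\gamma_H$ (the boundary data of $\hat h_{LL}$ and $h_{HH}$ both parametrize $\Gamma$, but in different coordinate systems differing by $|V_H-V_L|\le C\bmo^{1/2}\ell(H)^{1-\delta_1}$), the difference satisfies harmonic-type elliptic estimates. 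Scaling mean-value inequalities (interior) or boundary Schauder estimates gives, for $0\le i\le 3$,
\[
\|h_{HH}-\hat h_{LL}\|_{C^i}\le C\,r_H^{-(2+i)}\|h_{HH}-\hat h_{LL}\|_{L^1}+C\bmo^{1/2}\ell(H)^{1-\delta_1} \le C\bmo^{1/2}\,\ell(H)^{3+\omega-i}
\]
for a suitable $\omega(\beta_1,\delta_1)>0$. Transporting $\hat h_{LL}$ into the $V_0$-coordinates produces $u_L$, and the same bound survives (the rotation costing an extra $O(\bmo^{1/2}\ell(H)^{1-\delta_1})$ per derivative), giving (b). Statement (c) follows by applying (b) with $i=3$ along a chain of neighbouring ancestors connecting $H$ to $L$, whose lengths form a geometric series in $\ell$. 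Finally, for (f), write
\[
\phii_i(x)-u_H(x)=\frac{\sum_{L\in\sP^i}\vartheta_L(x)\bigl(u_L(x)-u_H(x)\bigr)}{\sum_{L'\in\sP^i}\vartheta_{L'}(x)};
\]
only squares $L$ with $L\cap H'\neq\emptyset$ contribute, and property (w3) of Definition \ref{e:whitney} then forces $\ell(L)\sim\ell(H)$, so the sum contains a bounded number of terms. Applying (b) with $i=0$ to each such pair and integrating over $H'$ yields the desired $L^1$ bound.

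The main obstacle will be the passage in (b) from the $L^1$ tilted estimate to $C^i$ estimates at the boundary: one must verify that, after the rotation from $V_L$ to $V_H$, the boundary data of $\hat h_{LL}$ and $h_{HH}$ on $\gamma_H$ are $C^{3,\alpha}$-close with rate $\bmo^{1/2}\ell(H)^{1-\delta_1}$ (inherited from $|V_H-V_L|$), so that the boundary Schauder estimates for the harmonic difference produce a clean decay in $\ell(H)$ at all derivative orders. This is precisely what fixes the exponent $\omega$ and forces the constraints $\delta_1\le\beta_1/2$ and $\omega\le\beta_1/4$ already recorded in Assumption \ref{ass:hierarchy}.
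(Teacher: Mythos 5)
Your approach to part (a) has a genuine gap. You propose to get the $C^{3,\omega}$ bound on $h_{HH}$ directly from \eqref{e:lip_interior}--\eqref{e:lip_boundary} via interior mean-value/Schauder estimates. But $h_{HH}$ is harmonic on a ball of radius $\sim r_H$, and the gradient bound from \eqref{e:lip_interior} is $\|Dh_{HH}\|_{L^\infty(B_{7r_H})}\le C\bmo^{1/2}r_H^{1-\delta_1}$; standard derivative estimates for harmonic functions then give $\|D^3 h_{HH}\|_{L^\infty(B_{4r_H})}\le C\,r_H^{-2}\cdot\bmo^{1/2}r_H^{1-\delta_1}=C\bmo^{1/2}r_H^{-1-\delta_1}$, which blows up as $r_H\to 0$. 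So there is no way to extract a bound that is \emph{uniform} in $\ell(H)$ by looking at $h_{HH}$ alone. The paper's proof instead telescopes along the family tree $H=H_i\subset H_{i-1}\subset\cdots\subset H_{N_0}$ and writes $h_H=h_{HH_{N_0}}+\sum_{j}(h_{HH_j}-h_{HH_{j-1}})$. Each difference $h_{HH_j}-h_{HH_{j-1}}$ is harmonic on a region of size $\sim r_{H_{j-1}}$ with $\|h_{HH_j}-h_{HH_{j-1}}\|_{L^1}\le C\bmo\,r_{H_{j-1}}^{5+\beta_1}$ (proved by comparing the two averages on the overlap $K$-set and using \eqref{e:L1_interior}--\eqref{e:L1_boundary}), so its $C^{3,\omega}$ norm is $\lesssim \bmo\,r_{H_{j-1}}^{\beta_1-\omega}$, and the sum is a convergent geometric series; the base term $h_{HH_{N_0}}$ lives at scale $\sim 1$ and has $C^{3,\omega}$ norm $\lesssim\bmo^{1/2}$ directly. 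Without the telescoping, (a) cannot be proved this way, and since your (b), (c), (d) all lean on (a), this breaks the whole chain.

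A second, more localized error is your deduction of (e) from ``$Du_H=O(\bmo^{1/2}\ell(H)^{1-\delta_1})$''. That bound is \emph{false} for $Du_H$: since $u_H$ is the graph over $V_0$ while $h_{HH}$ is over $V_H$, one has $|Du_H|\gtrsim|V_H-V_0|$, which by Proposition \ref{p:tiltingPlanes}(iii) is only $O(\bmo^{1/2})$. The bound $O(\bmo^{1/2}\ell(H)^{1-\delta_1})$ holds for $Dh_{HH}$ by \eqref{e:lip_interior}--\eqref{e:lip_boundary}; since $\bG_{u_H}=\bG_{h_{HH}}$ is the \emph{same} surface, the tangent plane $T_{(x,u_H(x))}\bG_{u_H}$ equals the tangent plane to $\bG_{h_{HH}}$, and its deviation from $V_H$ is controlled by $|Dh_{HH}|$, not $|Du_H|$. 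That is the correct route to (e); your statement, as written, compares $T\bG_{u_H}$ with $V_0$ rather than with $V_H$.

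The remaining items are in the right spirit. For (b), your interpolation between the tilted $L^1$ estimate of Lemma \ref{lem:tilted_L1} and the $C^{3,\omega}$ bound from (a) matches the paper's use of \cite[Lemma C.2]{DS3}. For (c) you should also treat the case $|x_H^\Box-x_L^\Box|\gtrsim 2^{-N_0}$ directly from (a), since your chain argument presupposes that $H$ and $L$ admit close common ancestors; for well-separated squares the statement is just a crude consequence of the global $C^{3,\omega}$ bound. Parts (d) and (f) as you outline them are essentially the paper's argument.
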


\subsection{Proof of Proposition \ref{p:block_estimates}}
\begin{proof} 
We follow the proof of \cite[Proposition 8.32]{DDHM} and often we drop here for simplicity the domains where we estimate the norm in.
    \item[(a)]  By \cite[Lemma B.1]{DS3}, it is enough to make the estimates on $h_H$ instead of $u_H$. Fix any square $H \in \sP^j$ and consider the family tree $H=H_i \subset H_{i-1} \subset \cdots \subset H_{N_0}$. We estimate
    \[ \| h_H\|_{C^{3, \omega}} \leq \sum_{j=N_0+1}^i \| h_{HH_j} -h_{HH_{j-1}}\|_{C^{3, \omega}} +  \| h_{HH_{N_0}} \|_{C^{3, \omega}} \, .
   \]
   As these are all harmonic functions, by the mean value property, it is enough to estimate the $L^1$ norms. Again using the harmonicity we see that
   \[  \| h_{HH_j} -h_{HH_{j-1}}\|_{L^1(\Omega_j)} 
   \leq \| \etaa \circ f_{HH_j} -\etaa \circ f_{HH_{j-1}}\|_{L^1(\Omega_j)} + C \bmo r_{H_{j-1}}^{5+ \beta_1}, 
   \]
   where $\Omega_j$ either is $B_{7r_{H_j}}(p_{H_j}, V_H)$ if $H_j \in \sC^\natural$ or $D_H \cap B_{2^7 7r_{H_j}}(p^\flat_{H_j}, V_H)$ if $H_j \in \sC^\flat$. Using Theorem \ref{t:strong_Lipschitz}, we see that both $f_{HH_{j}}$ and $f_{HH_{j-1}}$ describe $\supp(T)$ on a large set $K$, thus their average agree on $K$. Together with the oscillation estimate we then deduce
   \begin{align*}
       \| \etaa \circ f_{HH_j} -\etaa \circ f_{HH_{j-1}}\|_{L^1(\Omega_j)} 
       &\leq C  \ell(H_{j-1})^2 \left( \bmo \ell(H_{j-1})^{2-2\delta_1} \right)^{1+ \gamma_1} \bmo^{\sfrac14}\ell(H_{j-1})^{1+\beta_1}\\
        &\leq C \bmo \ell(H_{j-1})^{5+\beta_1} \,.
   \end{align*}
   For $\| h_{HH_{N_0}} \|_{C^{3, \omega}}$ we argue similarly and use Proposition \ref{pr:elliptic_regularization}.
   
   \item[(b)] By \cite[Lemma C.2]{DS3}, we have
   \[ \|D^j(u_H-u_L) \|_{C^0} \leq C Cr_L^{-2-j}\|u_H-u_L \|_{L^1} + Cr_L^{3+ \omega -j}\|D^3(u_H-u_L) \|_{C^\omega}.
   \]
   The second term is already bounded in (a), thus we are left with showing the $L^1$ estimate. To do so, we again use \cite[Lemma B.1]{DS3} to replace $u_L$ and $u_H$ with functions which have the same graph. It is enough to notice that, by Lemma \ref{lem:tilted_L1}
   \[ \|h_H-\hat h_L \|_{L^1} \leq C \bmo^{\sfrac12} \ell(H)^{5+ \delta_1/2}.
   \]
   \item[(c)] Let $H, L \in \mathscr{P}^j$. In case that $|x_H-x_L| \geq 2^{-N_0}$, the statement follows from (a). Otherwise, we can find ancestors $J, M$ such that $H, L$ are in a distant relation where $\ell(J)= \ell(M)$ is comparable to  $|x_H^\Box-x_L^\Box|$. Then we estimate
   \begin{align*}
       |D^3 u_H (x_H^\Box) - D^3 u_L (x^\Box_L)|
       &\leq |D^3 u_H (x_H^\Box) - D^3 u_{HJ} (x^\Box_J)| + |D^3 u_{LM} (x_M^\Box) - D^3 u_L (x^\Box_L)|\\
       &\quad + |D^3 u_{HJ} (x_J^\Box) - D^3 u_{LM} (x^\Box_M)|.
   \end{align*}
   The bound on the last term is already shown in (b), while for the first two we argue similarly as before. Consider the family tree $H \subset H_{i-1} \subset \cdots \subset J$. By the previous arguments, we deduce
   \[ \| u_{HH_i} - u_{HH_{i-1}} \|_{C^3} \leq C \bmo^{\sfrac12} \ell(H_{i-1})^\omega.
   \]
   \item[(d)] The claim is obvious by construction for boundary cubes. For non-boundary cubes, consider that the height bound for $T$ and the Lipschitz regularity for $f_{H}$ give that $$\left\| \mathbf{p}_{V_{H}}^{\perp}\left(p_{H}\right)-\boldsymbol{\eta} \circ f_{H}\right\|_{\infty} \leq C \bmo^{1/4} \ell(H).$$ We also get
$\left\|\mathbf{p}_{V_{H}}^{\perp} \left(p_{H}\right)-\etaa \circ f_H \right\|_{\infty} \leq C \bmo^{1/4} \ell(H)$. On the other hand the Lipschitz
regularity of the tilted $H$ -interpolating function $h_{H}$ and the $L^{1}$ estimate on $h_{H}-\etaa \circ f_H$ easily gives $\left\|\mathbf{p}_{V_{H}}^{\perp}\left(p_{H}\right)-h_{H}\right\|_{\infty} \leq C \bmo^{1/4} \ell(H)$. The
estimate claimed in (d) follows then from \cite[Lemma B.1]{DS4}.
   \item[(e)] follows from the estimates on $D \overline h_{HL}$ of Lemma \ref{lem:tilted_L1}.
   \item[(f)] By definition of $\phii_j$, it is enough to estimate that for $L$ a neighbour square of $H$, we have
   \[ \|u_H -u_L \|_{L^1} \leq C \bmo \ell(H)^{5 +\delta_1/2}.\]
    
\end{proof}

\subsection{Proof of Theorem \ref{t:cm}}
\begin{proof}
    (a) is an immediate consequence of the definition of 
    $\phii_j$ and the fact that $u_L$ satisfies the correct boundary condition (for $L \in \sC^\flat$).
    (b) follows exactly as in the proof of \cite[Theorem 1.17]{DS3} and from Proposition \ref{p:block_estimates}. In fact, we are in the simpler situation where our "ambient manifold" is just $\mathbb R^{n+2}$ and thus, we can choose $\Psi \equiv 0$. (c) and (d) are consequences of (b).
\end{proof}

\subsection{Proof of Corollary \ref{c:cover} and Theorem \ref{t:normal-approx}}
\begin{proof}
    We extend $\phii$ to all of $[-4,4]^2$ changing the $C^{3, \omega}$-norm only by geometric constant and call this extension $\tilde \phii$. Then consider
    \[ \tilde T := T + Q \cdot \bG_{\tilde \phii |_{[-4,4]^2 \setminus D}}. \]
    Then as $\partial \mathcal M = \Gamma$, so $\partial \tilde T = 0$. We cannot directly apply the corresponding interior paper, \cite[ Corollary 2.2]{DS4}, to $\tilde T$ because the latter is not area-minimizing. However, the argument given in \cite[Proof of Corollary 2.2]{DS4} does not use the area-minimizing assumption. It uses only the height estimates of Proposition \ref{p:tiltingPlanes} (which can be trivially extended to $\tilde{T}$ since the portion added to $T$ is regular) and the constancy theorem (which is valid in our case, since $\tilde{T}$ has no boundary).
    
    As for the existence and estimates on the normal approximation, we also can follow the same argument as in \cite[Section 6.2]{DS4} substituting the current $\tilde{T}$ to the current $T$ in there and the map $\tilde \phii$ to the map $\phii$ in there. First of all notice that the extension is done locally on each square and the ones surrounding it, and thus, even though the union of the squares in our $\sW$ and the set $\mathbf{\Delta}$ does not cover $[-4,4]^2$, this does not prevent us from applying the same procedure. Next, the construction algorithm and the estimates performed in \cite[Section 6.2]{DS4} depend only on the following two facts:
    \begin{itemize}
        \item[(a)] The map $\phii$ in \cite[Section 6.2]{DS4} has, on every $L\in \sW$, the same control on the $C^{3,\omega}$ norm that we have for the map $\tilde\phii$ (up to a constant).
        \item[(b)] For each square $L\in \sW$ (which in the case of \cite[Section 6.2]{DS4} corresponds to an interior square for us) we have a Lipschitz approximation $f_L$ of the current $T\res \bC_{8r_L} (p_L, V_L)$, which in turn coincides with the current $T$ on a set $K_L\times V_L^\perp$, where $|B_{8r_L}\setminus K_L|$ is small and the Lipschitz constant and the height of $f_L$ are both suitably small too. This is literally the case with the very same estimates for our interior squares, because $\tilde T\res \bC_{8r_L} (p_L, V_L) = T \res \bC_{8r_L} (p_L, V_L)$. In the case of boundary squares, we apply Theorem \ref{t:strong_Lipschitz} and we extend the corresponding $f_L$ to a map $\tilde{F}_L$ on the whole disk $B_{2^7 8r_L} (p^\flat_L, V_L)$ by setting it equal to $Q$ copies of the graph of $\tilde \phii$ outside of the domain $D_L\cap B_{2^7 8r_L} (p^\flat_L, V_L)$. We then notice that such extension satisfies the same estimates on the Lipschitz constant and the height. Moreover, over the new region, by construction the extension coincides with the current $T$. Hence, if we denote by $\tilde{K}_L$ the complement of the projection on $V_L$ of the difference set $\supp (\tilde T) \Delta \supp (\bG_L (f_L))$, then 
        \[
        B_{2^7 8r_L} (p^\flat_L, V_L) \setminus \tilde{K}_L = (B_{2^7 8r_L} (p^\flat_L, V_L) \cap D_L) \setminus K_L\, .
        \]
        In particular $|B_{2^7 8r_L} (p^\flat_L, V_L) \setminus \tilde{K}_L|$ has the desired estimate.
    \end{itemize}
    Finally, observe the following. By the construction of \cite[Section 6.2]{DS4} we have a specific description of the set $\mathcal{K}$ consistsing of those points $p$ in the center manifold for which we know that the slice $\langle T, \mathbf{p}, p \rangle$ coincides with the slice of the multivalued approximation, namely $\sum_i \a{F_i (p)}$. First of all, $\mathcal{K}$ contains $\Phii (\mathbf{\Delta})$. Secondly, for every Whitney region $\cL$ corresponding to some square $L\in \sW$, $\mathcal{K}\cap \cL$ is defined in the following fashion. First of all, we denote by $\mathscr{D} (L)$ the family of squares $M\in \sW$ which have nonempty intersection with $L$ (i.e. its neighbors), hence we consider in each $\bC_M := \bC_{8r_M} (p_M, V_M)$, resp. $\bC_M:= \bC_{2^7 8 r_M} (p_M, V_M)$, the corresponding Lipschitz approximation $f_L$ and define
    \[
    \mathcal{K}\cap \cL := \bigcap_{M\in \mathscr{D} (L)} \mathbf{p} (\supp (T)\cap {\rm gr}\, (f_M))\, .
    \]
    Since for boundary cubes $\Gamma\cap \bC_M\subset \supp (T)\cap {\rm gr}\, (f_M)$, we conclude that $\Gamma\cap \cL \subset \mathcal{K}$. On the other hand every point of $\Gamma \cap \cM$ which does not belong to some Whitney region is necessarily contained in the contact set $\Phii (\mathbf{\Delta})$. Thus we conclude that $\Gamma \subset \mathcal{K}$. Observe, moreover, that by construction the map $N$ vanishes identically on the contact set, while we also know that for each $f_M$ as above $f_M$ coincides with the function $g_M$ on $\mathbf{p}_{V_M} (\Gamma)$. In particular this implies that $N$ vanishes identically on the intersection of $\Gamma$ with any Whitney region.
\end{proof}

\subsection{Proof of Proposition \ref{p:additional}} 
\eqref{e:compare-with-average-2} is an ovious consequence of \eqref{e:compare-with-average} since on the complement of the squares $L\in \sW^e$ the two functions $\phii$ and $f$ coincide.

We now turn to \eqref{e:compare-with-average}
Observe next that, by Proposition \ref{p:block_estimates}(f), it suffices to show the claim for the function $u_H$ in place of $\phii$. Observe also that we already know from the above argument that, if we replace $u_H$ with the tilted interpolating function $h_H$ and $f$ with the Lipschitz approximation $f_H = f_{HH}$, the estimate holds, as it is in fact just a special case of \eqref{e:L1_boundary} and \eqref{e:L1_interior}. Fix now a point $x\in H$ and the corresponding point let $y (x) := \bp_{V_H} (u_H (x))$ be the corresponding projection on the plane $V_H$. We can use \cite[(5.4)]{DS4} (where we identify the manifold $\cM$ in there with the affine plane $V_H + \phii (p)$) to compute
\[
|\etaa \circ f (x) - u_H (x)|\leq 
C |\etaa\circ f_H (y) - h_H (y)| + C |V_H - V_0| \Lip (f) \bh (T, \bB_H)\, .
\]
In particular we conclude
\[
|\etaa \circ f (x) - u_H (x)|\leq C |\etaa \circ f_H (y) - h_H (y)| + C \bmo^{1/2} \ell (H)^{1-\delta_1} \bmo^{\gamma_2} \ell (L)^{\gamma_2} \bmo^{1/4} \ell (L)^{1+\beta_1}\, .
\]
Observing that $x\mapsto y (x)$ is a Lipschitz function with Lipschitz constant bounded by $|D\phii|$, i.e. by $C \bmo^{1/2}$ and integrating in $x$, we easily conclude the claimed estimate.

\section{Local lower bounds for the Dirichlet energy and the $L^2$ norm of $N$}\label{s:stopping-scales}

As in \cite[Section 3]{DS4} the aim of this section is to conclude suitable lower bounds for $\int |DN|^2$ and $|N|$ over regions of the center manifold which are close (and sizable) enough to some Whitney region $\mathcal{L}$. Depending on the reason why the refinement was stopped, we will either bound $|N|$ from below in terms of $\ell (L)^{1+\beta_1}$ or we will bound $\int |DN|^2$ from below in terms of the excess of the current in $\bB_L$

\subsection{Lower bound on $|N|$} We start with the following conclusion.

\begin{proposition}[Separation because of the height]\label{p:separation_height}
If $L\in \sW^h$ then $L$ is necessarily an interior square. Moreover, there is constant
$\tilde C>0$ depending on $M_0$ such that whenever $(C_h)^4 \geq \tilde C C_e^\natural$ and $\eps_{CM}>0$ is small enough, then every $L \in \sW^h$ fulfills
    \begin{enumerate}
        \item[(S1)] $\Theta(T, p) \leq Q - \frac12$ for all $p \in \bB_{16r_L}(p_L)$,
        \item[(S2)] $L \cap H = \emptyset$ for all $H \in \sW^n$ with $\ell(H) \leq \frac12 \ell(L)$,
        \item[(S3)] $\cG(N(x), Q \a{\etaa \circ N(x)}) \geq \frac14 C_h \bmo^{\sfrac14} \ell(L)^{1+ \beta_1}$ for all $x \in \Phii(B_{2 \sqrt2 \ell(L)}(x_L))$.
    \end{enumerate}
\end{proposition}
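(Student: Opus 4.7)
My plan is to proceed in four steps, modelled on the interior case \cite[Proposition 3.1]{DS4} after first reducing to it.

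First, I would rule out the possibility $L \in \sC^\flat$. Its parent $J$ lies in $\sS$, so by Proposition \ref{p:tiltingPlanes} we have $\bE^\flat(T, \bB^\flat_L) \leq C C^\flat_e \bmo\, \ell(L)^{2-2\delta_1}$, and a tilted version of Lemma \ref{l:hardtSimonHeight} (valid with the plane $V_L$ containing $T_{p^\flat_L}\Gamma$, since the Moser iteration there only uses harmonicity of the coordinate functions away from $\Gamma$) gives $\bh(T, \bB^\flat_L) \leq C M_0 (C^\flat_e)^{1/2} \bmo^{1/2} \ell(L)^{2-\delta_1}$. Comparing with the defining inequality $\bh(T, \bB^\flat_L) \geq C_h \bmo^{1/4} \ell(L)^{1+\beta_1}$ of $\sW^h$ forces $\bmo^{1/4} \geq c\, C_h (C^\flat_e)^{-1/2} M_0^{-1}\, \ell(L)^{\delta_1+\beta_1-1}$, which fails for $\eps_{CM}$ small enough since $\delta_1+\beta_1 < 1$ and $\ell(L) \leq 1$.

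Second, with $L \in \sC^\natural \cap \sW^h$ I would exploit the competition between $\bE(T, \bB_L) \leq C C^\natural_e \bmo \ell(L)^{2-2\delta_1}$ and $\bh(T, \bB_L) \geq C_h \bmo^{1/4} \ell(L)^{1+\beta_1}$: once $(C_h)^4 \geq \tilde C C^\natural_e$, the height strictly exceeds the bound one would obtain for a single-sheeted current of that excess, so the support of $T$ in $\bB_L$ cannot sit inside the graph of a single-valued function over $V_L$. Running the $E^\gamma$-Lipschitz approximation of \cite[Theorem 2.4]{DS3} in $\bC_{32 r_L}(p_L, V_L)$ and using that $\bG_f$ approximates $T$ outside a small set, one obtains a splitting of the values of $f$ into two clusters separated by $\geq c\, C_h \bmo^{1/4}\ell(L)^{1+\beta_1}$ on a subset of positive measure; slicing transversally at a height between the two clusters then produces a decomposition $T = T_1 + T_2$ on a cylinder $\bC_{17 r_L}(p_L, V_L)$ with each $T_i$ area-minimizing, $\partial T_i = 0$ there, and multiplicities $Q_1, Q_2 \geq 1$ summing to $Q$.

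Third, I would derive (S1), (S2), (S3) from this splitting. For (S1), applying monotonicity separately to each $T_i$ and using the separation of supports forces, at every $p \in \bB_{16 r_L}(p_L)$, at most one of $\Theta(T_i, p)$ to attain its maximal value $Q_i$, giving $\Theta(T, p) \leq Q - \tfrac{1}{2}$. For (S2), since each $T_i$ has strictly smaller multiplicity, interior Allard regularity (or an induction on $Q$ via Theorem \ref{t:main-local}) makes each $T_i$ smooth on a slightly smaller ball, so $\bE(T, \bB_H)$ decays polynomially for every $H$ centred in $\bB_{16 r_L}(p_L)$ with $\ell(H) \leq \ell(L)/2$, well below the stopping threshold, so no such $H$ can belong to $\sW^n$. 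For (S3), the averaged center manifold $\phii$ (with $C^0$-bound $C \bmo^{1/2}$ from Theorem \ref{t:cm}(b)) cannot simultaneously approximate both separated sheets, so over the Whitney region $\Phii(B_{2\sqrt{2}\ell(L)}(x_L))$ the values of $N$ must land on both sheets, yielding $\cG(N, Q\a{\etaa\circ N}) \geq \tfrac{1}{4} C_h \bmo^{1/4}\ell(L)^{1+\beta_1}$ by a direct triangle-inequality computation against the separation distance.

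The main obstacle I anticipate is the rigorous extraction of the current decomposition $T = T_1 + T_2$ in Step 2: the Lipschitz approximation only splits pointwise, and promoting this into a genuine splitting of $T$ as integer currents with vanishing boundary inside a full cylinder requires either a dedicated ``splitting before tilting'' lemma adapted to our $V_L$-cylinders, or a careful slicing argument that uses the height control to locate a hyperplane whose slice separates the sheets cleanly. Once this decomposition is in hand, Steps 3 and 4 become essentially bookkeeping.
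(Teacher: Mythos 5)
Your Step 1 (ruling out boundary squares for $\sW^h$) is essentially the paper's proof: one rotates to $V_L$ coordinates (equivalently, notes that Lemma~\ref{l:hardtSimonHeight} applies with any plane containing $T_{p^\flat_L}\Gamma$), applies the excess bound inherited from the parent in $\sS$, and observes the resulting height bound sits below the stopping threshold once $\eps_{CM}$ is small. A minor numerical point: keeping the $r\bA$ term in Lemma~\ref{l:hardtSimonHeight} actually gives $\bh(T,\bB^\flat_L)\leq C\bmo^{1/2}\ell(L)^{3/2}$ rather than $\ell(L)^{2-\delta_1}$, but since $\beta_1<\tfrac12$ the contradiction with $C_h\bmo^{1/4}\ell(L)^{1+\beta_1}$ still holds. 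For (S1)--(S3) the paper simply cites the interior theory of \cite[Section 3]{DS4} --- valid verbatim once $L$ is interior, since $\bB_L$ is disjoint from $\Gamma$ --- and your Steps 2--4 are a reconstruction of that cited argument; you correctly identify the extraction of a genuine current splitting (the ``splitting before tilting'' machinery) as its technical heart, which is precisely what the reference supplies.
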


\begin{proof}
    We only have to prove that $L \in \sC^\natural$ as the rest follows from the interior theory in \cite[Section 3]{DS4}. We show that any boundary square $H$ which did not stop because of the excess, also did not stop because of the height. Fix such an $H \in \sC^\flat \setminus \sW^e$. Then we know that its parent $M \in \sC^\flat \cap \sS$ satisfies
    \[ \bE(T, \bB_M^\flat) \leq C_e^\flat \bmo \ell(H)^{2-2\delta_1}  \]
    and we want to show that
    \[ \bh(T, \bB_H^\flat) \leq C_h \bmo^{\sfrac14} \ell(H)^{1+\beta_1} . \]
    To do so, we apply the height bound of Lemma \ref{l:hardtSimonHeight} to a suitable rotated current $\tilde T := O_\sharp T$, where $O$ is a rotation which maps $V_0$ onto $V_H$. Notice that the proof of this lemma is based on the first variation and thus on the minimality of $T$. As $\tilde T$ is area minimizing (with respect to the tilted boundary $O (\Gamma)$), we can directly deduce
    \begin{align*}
        \bh(T, \bB_H^\flat) \leq \bh(T, \bC_{2^7 64 r_H}(p_H^\flat, V_H))
        &\leq Cr_H \big(\bE(T,\bC_{2^7 80 r_H}(p_H^\flat, V_H) + \bA r_H \big)^{\sfrac 12}\\
        &\leq Cr_H \big(\bE(T,\bB_M^\flat) + C |V_M-V_H|^2 + \bA r_H \big)^{\sfrac 12}\\
        &\leq C \bmo^{\sfrac12} r_H^{\sfrac32}\\
        &\leq C_h \bmo^{\sfrac14} \ell(H)^{1+\beta_1},
    \end{align*}
    where we also used Proposition \ref{p:tiltingPlanes} and the sufficient small choice of $\eps_{CM}$.
\end{proof}

A simple corollary of the above proposition is that if a square stopped because of the neighbor condition, then this originated from a larger nearby square which stopped because of the excess.

\begin{corollary}\label{c:chain_of_influence}
    For every $H \in \sW^n$, there is a chain of squares $L_0, L_1,  \dots, L_j = H$ such that
    \begin{enumerate}
        \item[(a)] $L_i \in \sW^n$ for all $i>0$ and $L_0 \in \sW^e$,
        \item[(b)] they are all neighbors, i.e. $L_i \cap L_{i-1} \neq \emptyset$ and $\ell(L_i) = \frac12 \ell(L_{i-1})$.
    \end{enumerate}
    In particular, $H \subset B_{3\sqrt2 \ell(L_0)}(x_{L_0}, V_0)$.
\end{corollary}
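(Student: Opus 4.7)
The plan is reverse induction: starting at $H$, repeatedly invoke the definition of $\sW^n$ to produce a doubled-sidelength neighbor in $\sW$, and use Proposition \ref{p:separation_height}(S2) to rule out that this neighbor lies in $\sW^h$. Eventually the chain must hit $\sW^e$ because sidelengths double at each step and are bounded above thanks to Proposition \ref{p:Whitney}(a).

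Concretely, set $L_j := H$. By condition (NN) in Definition \ref{d:refining_procedure}, there exists $L_{j-1} \in \sW$ with $\ell(L_{j-1}) = 2\ell(L_j)$ and $L_j \cap L_{j-1} \neq \emptyset$. If $L_{j-1}$ were in $\sW^h$, then, since $L_j \in \sW^n$ and $\ell(L_j) = \tfrac{1}{2}\ell(L_{j-1})$, conclusion (S2) of Proposition \ref{p:separation_height} (with $L_{j-1}$ in the role of $L$ and $L_j$ in the role of $H$) would force $L_j \cap L_{j-1} = \emptyset$, a contradiction. Hence $L_{j-1} \in \sW^e \cup \sW^n$. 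If $L_{j-1} \in \sW^e$ the construction terminates with $L_0 := L_{j-1}$; otherwise $L_{j-1} \in \sW^n$ and we iterate the same argument with $L_{j-1}$ in place of $L_j$. The procedure cannot continue indefinitely because the sidelengths form a geometric progression doubling at each step, while Proposition \ref{p:Whitney}(a) ensures that no element of $\sW$ has sidelength exceeding $2^{-N_0-1}$. This produces the chain $L_0, \ldots, L_j = H$ satisfying (a) and (b).

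For the inclusion, I would note that two intersecting dyadic squares $L_{i-1}$, $L_i$ with $\ell(L_i) = \tfrac{1}{2}\ell(L_{i-1})$ share at least one corner, so their centers are at distance at most $\sqrt{2}\ell(L_{i-1}) + \sqrt{2}\ell(L_i)$ (sum of the half-diagonals). Summing the telescoping geometric series and using $\ell(L_i) = 2^{-i}\ell(L_0)$ yields
\[
|x_H - x_{L_0}| \;\le\; \sqrt{2}\sum_{i=1}^{j}\bigl(\ell(L_{i-1})+\ell(L_i)\bigr) \;=\; \sqrt{2}\,\ell(L_0)\bigl(3 - 3\cdot 2^{-j}\bigr).
\]
Adding the half-diagonal $\sqrt{2}\ell(H) = \sqrt{2}\cdot 2^{-j}\ell(L_0)$ controlling the displacement inside $H$ gives $|x - x_{L_0}| \le \sqrt{2}\,\ell(L_0)\bigl(3 - 2\cdot 2^{-j}\bigr) < 3\sqrt{2}\,\ell(L_0)$ for every $x \in H$, which is precisely the claimed inclusion in $V_0$.

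The only step requiring any thought is the exclusion of $\sW^h$ as the target of each upward move, but this is immediate from (S2). Everything else is a direct geometric-series computation; I do not foresee a genuine obstacle.
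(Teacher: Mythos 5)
Your proof is correct and matches what the paper intends by calling this ``a simple corollary of the above proposition'': use the (NN) stopping condition to climb to a doubled-sidelength neighbor in $\sW$, invoke (S2) of Proposition~\ref{p:separation_height} to rule out that it lies in $\sW^h$, and stop the (necessarily finite, since Whitney sidelengths are bounded by $2^{-N_0-1}$) iteration once the chain hits $\sW^e$. The geometric-series bound on the centers and the half-diagonal adjustment giving the factor $3\sqrt{2}$ are likewise exactly the computation the authors suppressed.
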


Accordingly, we can collect all the squares $H$ which have such a chain relating $H$ to a specific square $L \in \sW^e$. The latter square is not necessarily unique, but it will be convenient to fix a consistent choice of $L$. 

\begin{definition}[Domains of influence]
    First, let us fix an ordering $\{J_i\}_{i \in \N}$ of $\sW^e$ such that the side length is non-increasing. For $J_0$, we define its domain of influence by
    \[ \sW^n(J_0) := \{ H \in \sW^n: \textnormal{there is a chain as in Corollary } \ref{c:chain_of_influence} \textnormal{ with } L_0=J_0 \textnormal{ and } L_j=H \}.
    \]
    Inductively, we define for $k>0$ the domain of influence $ \sW^n(J_k)$ of $J_k$ by all $H \in \sW^n  \setminus \bigcup_{i<k} \sW^n(J_i)$ which have a chain as in Corollary \ref{c:chain_of_influence} with $L_0=J_k$ and $L_j=H$. As it is easy to check using Corollary \ref{c:chain_of_influence} we have  $\sW^n=\mathring{\bigcup}_{k\in\N}\sW^n(J_k)$.
\end{definition}

\subsection{Lower bound on the Dirichlet energy} Having handled the case of ``height stopped'' squares we turn to squares which were stopped because they exceed the excess bound.

\begin{proposition}(Splitting)\label{p:splitting}
There are constants $C_1 (\delta_1), C_2 (M_0, \delta_1)$, $C_3 (M_0, \delta_1)$ such that, if $M_0 \geq C_1 ( \delta_1)$, $C^\natural_e \geq C_2 (M_0, \delta_1)$, $C^\flat_e \geq C_3 (M_0, \delta_1)$, if
the hypotheses of Theorem~\ref{t:normal-approx} hold and if $\eps_{CM}$ is chosen sufficiently small,
then the following holds. If $L\in \sW^e$, $q\in V_0$ with $\dist (L, q) \leq 4\sqrt{2} \,\ell (L)$, $B_{\ell (L)/4} (q, V_0)\subset D$ and $\Omega = \Phii (B_{\ell (L)/4} (q, V_0))$, then (with $C, C_4 = C(\beta_1, \delta_1, M_0, N_0, C^\natural_e, C^\flat_e, C_h)$):
\begin{align}
&C_e^\Box \bmo \ell(L)^{4-2\delta_1} \leq \ell (L)^2 \bE (T, \bB^\Box_L) \leq C \int_\Omega |DN|^2\, ,\label{e:split_1}\\
&\int_{\cL} |DN|^2 \leq C \ell (L)^2 \bE (T, \B^\Box_L) \leq C_4 \ell (L)^{-2} \int_\Omega |N|^2\, . \label{e:split_2}
\end{align}
\end{proposition}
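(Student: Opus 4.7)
\medskip

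\noindent\textbf{Proof plan.} The statement packages four inequalities, and the two outer ones are essentially bookkeeping. The leftmost inequality of \eqref{e:split_1} is just the stopping criterion (EX) of Definition \ref{d:refining_procedure} multiplied by $\ell(L)^2$, since $L\in\sW^e$ forces $\bE^\Box(T,\bB^\Box_L)> C^\Box_e \bmo \ell(L)^{2-2\delta_1}$. The leftmost inequality of \eqref{e:split_2} combines this lower bound on $\ell(L)^2\bE(T,\bB^\Box_L)$ with the Dirichlet bound \eqref{e:Dir_regional} of Theorem \ref{t:normal-approx}, namely $\int_\cL|DN|^2\leq C\bmo \ell(L)^{4-2\delta_1}$; upon choosing $C^\Box_e$ large enough, the ratio becomes a constant depending only on the parameters of Assumption \ref{ass:hierarchy}. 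So the substantive content is the two middle inequalities.

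\medskip

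\noindent For the bound $\ell(L)^2 \bE(T,\bB^\Box_L)\leq C\int_\Omega |DN|^2$ I would proceed in three steps. First, using Proposition \ref{p:tiltingPlanes}(iv)--(v) together with $|V_L-V_0|\leq \bar C\bmo^{1/2}$, the ball excess $\bE(T,\bB^\Box_L)$ is comparable (up to an additive $\bA^2\ell(L)^2$ error absorbable into $\bmo\ell(L)^{2-2\delta_1}$) to the cylindrical excess $\bE^\Box(T,\bC_{c\ell(L)}(p^\Box_L,V_L))$. Second, the strong Lipschitz approximation $f_L$ of Theorem \ref{t:strong_Lipschitz} (or its interior counterpart \cite[Thm.\ 2.4]{DS3}) and Taylor expansion \cite[Rem.\ 5.4]{DS3} identify this cylindrical excess with $\frac{1}{2\pi r_L^2}\int|Df_L|^2$ up to an error $o(\bmo\ell(L)^{2-2\delta_1})$, and \cite[Lemma B.1]{DS4} (together with its boundary version already used in Section \ref{s:interpolating}) matches $\int|Df_L|^2$ with $\int|DN|^2$ over the corresponding region $\widetilde{\cL}$ of the center manifold covering $\bB^\Box_L$. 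Third, the key step: I would exploit the fact that $N$ is an almost Dirichlet minimizer (the transplant to $\cM$ of Theorem \ref{t:o(E)}; see the interior analogues in \cite{DS4}) to deduce that $|DN|^2$ cannot concentrate away from the small set $\Omega$. Concretely, the disc $B_{\ell(L)/4}(q,V_0)$ is at distance $\leq 4\sqrt 2\ell(L)$ from $L$ and thus sits in $\widetilde{\cL}$; by an interior Campanato/mean-value comparison for almost harmonic multi-functions one gets $\int_{\widetilde{\cL}}|DN|^2\leq C\int_\Omega|DN|^2$ at the price of higher order errors in $\bmo\ell(L)^{4-2\delta_1}$. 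Chaining the three steps produces the claimed inequality.

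\medskip

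\noindent For the bound $\ell(L)^2\bE(T,\bB^\Box_L)\leq C_4 \ell(L)^{-2}\int_\Omega |N|^2$, thanks to what has just been proven it is enough to establish the Caccioppoli (reverse Poincar\'e) estimate
\[
\int_\Omega|DN|^2\ \leq\ \frac{C}{\ell(L)^2}\int_{\Omega'}|N|^2
\]
on a slightly enlarged $\Omega'$ still obeying the hypotheses of Paragraph 2. This is a standard test-function manipulation: pick a Lipschitz cut-off $\chi$ on $\cM$ with $\chi\equiv 1$ on $\Omega$, $\operatorname{supp}\chi\subset\Omega'$ and $|\nabla\chi|\leq C/\ell(L)$, and use the almost Dirichlet minimality of $N$ (Theorem \ref{t:o(E)} pulled back to $\cM$) against the competitor $\sum_i\a{(1-\chi^2)N_i+\chi^2\,\etaa\circ N}$, which still has boundary trace $Q\a{0}$ on $\Gamma\cap\cM$ since $N$ does (Definition \ref{d:app}(A3)). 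Expansion of the Dirichlet energy gives $\int\chi^2|DN-D(\etaa\circ N)|^2\leq C\int|N-\etaa\circ N|^2|\nabla\chi|^2$ plus an error of order $\bmo^{1+\gamma_2}\ell(L)^{4+\gamma_2}$; the analogous estimate on $\etaa\circ N$ (a classical almost-harmonic function) is simpler, and both errors are of strictly higher order in $\bmo$ than the main term $\bmo\ell(L)^{4-2\delta_1}$, so they can be absorbed once $\varepsilon_{CM}$ is small.

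\medskip

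\noindent\textbf{Main obstacle.} The bulk of the technical work, compared to the interior case of \cite[Prop.\ 3.4]{DS4}, is to carry the above arguments through when $L\in\sC^\flat$. Both the competitor construction in Paragraph 2 (and the associated Taylor expansion / Lipschitz comparison) and the cut-off test map in the Caccioppoli inequality of Paragraph 3 must be compatible with the prescribed boundary $\partial T=Q\a{\Gamma}$. The first difficulty is handled using the strong \emph{boundary} Lipschitz approximation of Theorem \ref{t:strong_Lipschitz} (which already carries the correct boundary datum $Q\a{g}$) together with the tilted interpolations of Section \ref{s:interpolating} to transfer the comparison onto $\cM$. The second is handled because $N\equiv Q\a{0}$ on $\Gamma\cap\cM$ by Definition \ref{d:app}(A3), so any cut-off competitor of the form $\sum_i\a{(1-\chi^2)N_i+\chi^2\,\etaa\circ N}$ automatically preserves the boundary trace. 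Finally the requirement $B_{\ell(L)/4}(q,V_0)\subset D$ in the statement is exactly the compatibility condition that guarantees $\Omega$ sits inside the graphical chart of $\cM$; together with $\dist(L,q)\leq 4\sqrt 2\ell(L)$ the set of admissible $q$ is a disc of comparable size, which will be enough for the applications of this proposition in Section \ref{s:flattening}.
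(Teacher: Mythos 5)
Your handling of the two ``outer'' inequalities is fine: the leftmost one of \eqref{e:split_1} is the stopping criterion (EX), and the leftmost one of \eqref{e:split_2} is \eqref{e:Dir_regional} combined with the stopping criterion. But for the two middle inequalities your proposed mechanism has a genuine gap, in two places, and the paper's actual route is different.

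First, the step that ``\cite[Lemma B.1]{DS4}\ matches $\int|Df_L|^2$ with $\int|DN|^2$'' is not correct. Lemma B.1 is a change-of-chart result; what it buys you is that $\int|DN|^2$ is comparable to $\int|D\bar f_L|^2$, where $\bar f_L := f_L\oplus(-\etaa\circ f_L)$, because the center manifold is by construction a regularization of the average $\etaa\circ f_L$ and absorbs the average component. In particular $\int|Df_L|^2 - Q\int|D\etaa\circ f_L|^2 = \int|D\bar f_L|^2$, and the missing piece $Q\int|D\etaa\circ f_L|^2$ can be large: nothing a priori prevents the current from looking locally like $Q$ copies of one tilted sheet, in which case $\bE(T,\bB^\Box_L)$ is of order $\int|D\etaa\circ f_L|^2$ while $\bar f_L$ (hence $N$) is essentially zero. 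The estimate $\ell(L)^2\bE(T,\bB^\Box_L)\le C\int_\Omega|DN|^2$ is therefore \emph{not} a Taylor/reparametrization bookkeeping step --- it is equivalent to showing that the ``splitting'' energy $\int|D\bar f_L|^2$ is a definite fraction of the full energy $\int|Df_L|^2$, and the \emph{only} source of that information is the fact that $L\in\sW^e$ while its ancestors did not satisfy the excess stopping condition.

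Second, the proposed ``interior Campanato/mean-value comparison for almost harmonic multi-functions'' giving $\int_{\widetilde\cL}|DN|^2\le C\int_\Omega|DN|^2$ does not exist as stated. A harmonic (or Dir-minimizing) map can concentrate all its Dirichlet energy near the edge of a ball and have arbitrarily little in a fixed interior sub-disc; mean-value/Campanato inequalities give decay upper bounds, never quantitative lower bounds on the energy in a sub-ball. The quantitative lower bound needed here requires the \emph{unique continuation} property of Dir-minimizers --- which is only qualitative --- promoted to a quantitative estimate by a compactness (blow-up) argument. Concretely, the paper reduces the claim (for boundary squares) to a statement about $I$-homogeneity-free Dir-minimizers $w$ on half-discs vanishing on the flat part of the boundary, and proves three facts: the algebraic identity of Lemma \ref{l:computation_dirichlet} (which translates the excess-stopping ratio between $L$ and its ancestor $J$ into a lower bound on $\int\cG(Dw,Q\a{D(\etaa\circ w)(0)})^2/\D(w)$); the unique continuation Lemma \ref{l:unique_continuation}; and the resulting decay proposition. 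The passage from $T$ to the Dir-minimizer $w$ is the harmonic approximation Theorem \ref{t:o(E)}, used inside a contradiction argument exactly as in \cite[Section 7.3]{DS4}. For the $L^2$ estimate in \eqref{e:split_2} the same decay proposition already produces the lower bound $\D(\bar w)\le\frac{1}{\delta r^2}\int_{B_s(q)}|\bar w|^2$ on any sub-disc, so no separate Caccioppoli argument is needed; your reverse-Poincar\'e estimate is correct in spirit but would give a bound by $\int|N|^2$ over a \emph{larger} region, which is the wrong direction, whereas the point of unique continuation is precisely to localize the $L^2$ norm to the small $\Omega$.

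In short: your proof collapses exactly the step that makes the proposition nontrivial (pushing the energy lower bound into the small ball $\Omega$ far from where the excess condition is stated), and the tool you invoke for it does not do that job. The argument needs the blow-up to Dir-minimizers, the identity of Lemma \ref{l:computation_dirichlet} to convert the excess stopping into a ``splitting fraction'' hypothesis, and unique continuation to propagate the splitting energy into $\Omega$.
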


Before coming to the proof of the Proposition, let us first observe an important point. Fix $L$ as in the statement of the Proposition and consider its parent $H$ and its ancestor $J$ $6$ generations before. If $L$ is a boundary square, then $H$ and $J$ are both boundary squares. On the other hand, if $L$ is an interior square, since $C_e^\natural$ is chosen much larger than $C_e^\flat$, we can ensure that both $L$ and $J$ are also interior squares. Indeed, when $\bB_L\subset \bB^\flat_J$ and $J\not\in \sW^e$, we have the obvious estimate
\[
\bE (T, \bB_L) \leq 2^{26} \bE (T, \bB^\flat_J)
\leq 2^{26} C^\flat_e \bmo \ell (J)^{2-2\delta_1}
\leq 2^{38} C^\flat_e \bmo \ell (L)^{2-2\delta_1}\, ,
\]
which therefore, by choosing $C^\natural_e \geq 2^{38} C^\flat_e$ implies that $L$ does not satisfy the excess stopping condition. 

Hence we can invoke \cite[Proposition 3.4]{DS4} to cover the case in which $L\in \sW^e \cap \sC^\natural$, since the proof given in \cite[Section 7.3]{DS4} just uses the fact that all squares $L, H$ and $J$ are interior squares (i.e. the repsective balls $\bB_L, \bB_H$, and $\bB_J$ do not intersect the boundary $\Gamma$). We are thus left to handle the case in which $L$ (and therefore also $H$ and $J$) are boundary squares.   

To do so, we need analogues of three lemmas from \cite{DS4}.
\begin{lemma}\label{l:computation_dirichlet}
    Let $B^+ \subset \R^2$ be a half ball centered at the origin and $w \in W^{1,2}(B^+, \cA_Q(\R^n))$ be Dir-minimizing with $w= Q \a{0}$ on $B^+ \cap (\R \times \{0\})$. Denoting $\bar w := w \oplus (-\etaa \circ w) = \sum_i \a{w_i - \etaa \circ w} $ and $u:= \etaa \circ w$, we have
    \begin{align*}
        Q \int_{B^+} | Du - Du(0)|^2 = \int_{B^+} \cG(Dw, Q \a{Du(0)})^2 - \D(\bar w, B^+).
    \end{align*}
\end{lemma}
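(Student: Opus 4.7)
The statement is a purely algebraic identity, and I would prove it by a direct pointwise computation followed by integration. The only mild subtlety is verifying that $Du(0)$ is well-defined, since $0$ lies on $\partial B^+$.

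First, I would dispatch the well-definedness issue. Since $w$ is $\D$-minimizing, $u = \etaa \circ w$ is a classical harmonic function on $B^+$ (this is standard, e.g.\ it follows from the fact that $\etaa \circ w$ minimizes the classical Dirichlet energy among its own boundary data). The assumption $w|_{B^+\cap(\R\times\{0\})} = Q\a{0}$ forces $u \equiv 0$ on $B^+ \cap (\R \times \{0\})$, so $u$ extends by odd reflection across this flat piece to a harmonic function on the whole ball, and in particular $Du(0)$ makes sense.

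Second, I would observe the tautological identity valid whenever the second argument of $\cG$ is a $Q$-fold copy of a single point: since there is only one pairing to minimize over,
\[
\cG\bigl(Dw(x),\, Q\a{Du(0)}\bigr)^2 \;=\; \sum_i \bigl|Dw_i(x) - Du(0)\bigr|^2.
\]

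Third, I would use the decomposition $w_i = \bar w_i + u$, hence $Dw_i = D\bar w_i + Du$, to expand pointwise
\[
\sum_i |Dw_i - Du(0)|^2 \;=\; \sum_i |D\bar w_i|^2 \;+\; 2 \Bigl(\sum_i D\bar w_i\Bigr):\bigl(Du - Du(0)\bigr) \;+\; Q\,|Du - Du(0)|^2.
\]
The cross term vanishes identically because $\sum_i \bar w_i = \sum_i w_i - Q u \equiv 0$ by definition of $u$, and therefore $\sum_i D\bar w_i \equiv 0$. What remains is the pointwise identity
\[
\sum_i |Dw_i - Du(0)|^2 \;=\; |D\bar w|^2 \;+\; Q\,|Du - Du(0)|^2,
\]
and integrating over $B^+$ yields the claim after rearrangement.

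There is no substantive obstacle here: the result is essentially the Pythagoras identity associated to the orthogonal splitting $\D(w) = Q\D(u) + \D(\bar w)$ applied after centering the derivative at the constant value $Du(0)$. Neither the minimality of $w$ nor the boundary condition plays any role in the identity itself; both enter only to guarantee that $Du(0)$ is meaningful.
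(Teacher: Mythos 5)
Your proof is correct, and it takes a genuinely different route from the one in the paper. The paper first extends $w$ oddly to the full ball $B$, so that $u=\etaa\circ w$ becomes harmonic on all of $B$; it then expands $\cG(Dw,Q\a{Du(0)})^2$ and $|Du-Du(0)|^2$ around the fixed matrix $Du(0)$, producing cross terms $\int_B Du\cdot Du(0)$, and uses the mean value property of the (reflected) harmonic $u$ to evaluate $\int_B Du = |B|\,Du(0)$; the result on $B^+$ is then obtained by evenness of all the integrands with respect to reflection. You instead expand $Dw_i-Du(0)$ as $D\bar w_i + (Du-Du(0))$, so the cross term is $2\big(\sum_i D\bar w_i\big):(Du-Du(0))$, which vanishes \emph{pointwise} because $\sum_i \bar w_i\equiv 0$ a.e. This is both shorter and stronger: there is no need to reflect, no invocation of the mean value property, and no appeal to the evenness of the integrands; moreover, as you point out, your pointwise identity
\[
\cG\big(Dw(x),Q\a{A}\big)^2 = |D\bar w(x)|^2 + Q\,|Du(x)-A|^2
\]
holds for \emph{any} constant matrix $A$, so harmonicity of $u$ (hence minimality of $w$) enters only through the well-definedness of $Du(0)$, which you justify exactly as the paper does (odd reflection across the flat boundary). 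The paper's computation has the small virtue of exhibiting the link to the orthogonal splitting $\D(w)=Q\D(u)+\D(\bar w)$ quite explicitly, which it reuses elsewhere, but your version localizes the same orthogonality to a single pointwise cancellation.
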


\begin{proof}
    We extend $w$ in an odd way to all of the ball $B$. Notice that then also the extension of $u$ is harmonic in all of $B$. Now
   the proof is the same as in \cite[Lemma 7.3]{DS4}, but we repeat it here anyway. First notice, that $u$ is a classical harmonic function and in particular, fulfills the mean value property. We use it to deduce
    \begin{equation}\label{e:harmonic_deviation}
    \begin{split}
        Q \int_B |Du-Du(0)|^2 &= Q \int_B \left( |Du|^2 + |Du(0)|^2 - 2 Du \cdot Du(0) \right)\\
        &= Q \int_B |Du|^2 +Q|B||Du(0)|^2 -2Q \left( \int_B Du \right) \cdot Du(0)\\
        &= Q \int_B |Du|^2 -Q|B||Du(0)|^2.
    \end{split}
    \end{equation}
    Similarly we compute
    \begin{equation}\label{e:dir_energy_split}
    \begin{split}
        Q \int_B |Dw|^2 &= \sum_i \int_B |Dw_i|^2
        = \sum_i \int_B \left(|Dw_i - Du(0)|^2 -|Du(0)|^2 + 2Dw_i \cdot Du(0) \right)\\
        &= \int_B \cG(Dw, Q \a{Du(0)})^2 -Q |B||Du(0)|^2 + 2 Q \left( \int_B  \frac 1Q\sum_i Dw_i \right) \cdot Du(0)\\
        &=\int_B \cG(Dw, Q \a{Du(0)})^2 + Q |B||Du(0)|^2 .
    \end{split}
    \end{equation}
    Last we split the Dirichlet energy of $w$ into the average and the average-free part (as already observed in \eqref{e:DirEnergyForMeanTranslated}).
    \begin{equation}\label{e:dir_energy_average}
    \begin{split}
        \int_B |D\bar w|^2 &= \sum_i \int_B |Dw_i - Du|^2
        = \sum_i \int_B \left( |Dw_i|^2 + |Du|^2 -2 Dw_i \cdot Du \right)\\
        &= \int_B |Dw|^2 + Q \int_B |Du|^2 -2Q \int_B \left( \frac 1Q Dw_i \right)\cdot Du\\
        &=\int_B |Dw|^2 - Q \int_B |Du|^2.
    \end{split}
    \end{equation}
    The three identities \eqref{e:harmonic_deviation}, \eqref{e:dir_energy_split}, \eqref{e:dir_energy_average} and dividing everything by $2$ conclude the lemma.
\end{proof}

An other important ingredient is the unique continuation for Dir-minimizers (compare to \cite[Lemma 7.1]{DS4}).

\begin{lemma}[Unique Continuation for Dir-minimizers]\label{l:unique_continuation}
    For every $0< \eta <1$ and $c>0$, there is a $\delta >0$ such that whenever $B_{2r}^+ \subset V_0$ is the half ball and $w: B_{2r}^+ \to \cA_Q(\R^n)$ is Dir-minimizing with $w= Q \a{0}$ on $B_{2r}^+ \cap (\R \times \{0\})$, $\D(w, B^+_{2r})=1$, and $\D(w, B_r^+) \geq c$, then
    \[ \D(w, B_s(q)) \geq \delta \qquad \textnormal{ for every } B_s(q) \subset B^+_{2r} \textnormal{ with } s \geq \eta r.
    \]
\end{lemma}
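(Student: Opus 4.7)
The plan is to argue by contradiction and compactness, ultimately reducing to the interior unique continuation property for $Q$-valued Dir-minimizers via an odd reflection across the horizontal boundary. Suppose the statement fails. By scaling we may take $r=1$ and obtain a sequence of Dir-minimizers $w_k : B_2^+ \to \mathcal{A}_Q(\mathbb{R}^n)$ with $w_k = Q\a{0}$ on $B_2^+ \cap (\mathbb{R} \times \{0\})$, with $\D(w_k, B_2^+)=1$, $\D(w_k, B_1^+) \geq c$, and with balls $B_{s_k}(q_k) \subset B_2^+$ satisfying $s_k \geq \eta$ but $\D(w_k, B_{s_k}(q_k)) \to 0$. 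By the boundary H\"older estimate of \cite{DS1,Jonas} combined with the uniform Dirichlet bound, Arzel\`a--Ascoli provides (after a diagonal subsequence) a limit $w_\infty$ to which $w_k$ converges uniformly on compact subsets of $\overline{B_2^+}$ and strongly in $W^{1,2}_{loc}$. The standard closure theorem for Dir-minimizers with prescribed boundary values (cf.\ \cite{DS1}) implies $w_\infty$ is Dir-minimizing on $B_2^+$, with $w_\infty = Q\a{0}$ on $\mathbb{R}\times\{0\}$ and $\D(w_\infty, B_1^+) \geq c$. Passing to a further subsequence, $q_k \to q$ and $s_k \to s \geq \eta$, so that $\D(w_\infty, B_{s'}(q)) = 0$ for every $s' < s$.

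Because its Dirichlet energy vanishes on the open set $B_{s'}(q)$, the limit $w_\infty$ is constant there: writing $w_\infty = \sum_i \a{(w_\infty)_i}$ locally, both the classical harmonic function $\etaa \circ w_\infty$ is identically equal to some $p \in \mathbb{R}^n$ on $B_{s'}(q)$, and the dispersion $\bar w := w_\infty \oplus (-\etaa \circ w_\infty)$ satisfies $\bar w \equiv Q\a{0}$ there. Now $\etaa \circ w_\infty$ is harmonic in $B_2^+$ with zero trace on the horizontal segment and equals $p$ on $B_{s'}(q)$; Schwarz odd reflection across the segment extends it to a harmonic function on the full ball $B_2$ that coincides with $p$ on $B_{s'}(q)$. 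The classical identity theorem for harmonic functions forces this extension to be identically $p$, and the vanishing trace on the segment then forces $p=0$. Hence $\etaa \circ w_\infty \equiv 0$, and $\bar w = w_\infty$ on the whole upper half-ball.

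It remains to derive a contradiction from the fact that $\bar w$ is a Dir-minimizer on $B_2^+$ with $\etaa \circ \bar w = 0$, vanishing on $\mathbb{R}\times\{0\}$ and on the interior ball $B_{s'}(q)$. Define an extension $\tilde w : B_2 \to \mathcal{A}_Q(\mathbb{R}^n)$ by odd reflection across $\mathbb{R}\times\{0\}$, namely $\tilde w(x_1, x_2) := \sum_i \a{-\bar w_i(x_1, -x_2)}$ for $x_2 < 0$; the vanishing trace ensures this is a well-defined $W^{1,2}$ multi-valued map on $B_2$. A standard competitor argument shows that $\tilde w$ is Dir-minimizing on $B_2$: any competitor on $B_2$ splits into its traces on the two halves, and Dir-minimality of $\bar w$ on each half (the lower half being an isometric copy of the upper) prevents any decrease of energy. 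Since $\tilde w$ vanishes identically on the open ball $B_{s'}(q) \subset B_2$, the interior unique continuation property for $Q$-valued Dir-minimizers, established via the frequency function in \cite{DS1} and recorded as \cite[Lemma 7.1]{DS4}, forces $\tilde w \equiv Q\a{0}$ on all of $B_2$. Therefore $w_\infty \equiv Q\a{0}$ on $B_2^+$, contradicting $\D(w_\infty, B_1^+) \geq c > 0$.

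The main obstacle I anticipate is the rigorous verification that the odd reflection of a $Q$-valued Dir-minimizer with vanishing boundary trace is itself Dir-minimizing on the full ball. While this is intuitively clear and essentially follows from the fact that any competitor on $B_2$ induces independent competitors on the two halves whose reflected copies can be compared with $\bar w$, the multi-valued formalism and the non-uniqueness of selections require some care; once this reflection step is justified, the remainder of the argument combines only routine compactness for Dir-minimizers with the classical identity theorem for scalar harmonic functions.
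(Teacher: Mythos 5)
Your overall strategy — argue by contradiction, pass to a compactness limit $w_\infty$ which is Dir-minimizing, vanishes on an interior ball, and has $\D(w_\infty,B_1^+)\geq c$, then invoke unique continuation — is exactly what the paper does (it cites the qualitative step (UC) from \cite[Lemma 7.1]{DS4} for the vanishing, and a blow-up argument for the quantitative estimate). The odd-reflection step, however, is both unnecessary and not correctly justified.

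It is unnecessary because the ball $B_{s'}(q)$ on which $w_\infty$ vanishes is contained in the \emph{open} half-disk $B_{2r}^+$, which is a connected open subset of $\mathbb R^2$. The map $w_\infty$, being a boundary Dir-minimizer, is \emph{a fortiori} an interior Dir-minimizer on $B_{2r}^+$ (interior competitors, compactly supported away from $\{x_2=0\}\cup\partial B_{2r}$, are admissible and give no energy decrease). Hence the interior unique continuation (UC) of \cite[Lemma 7.1]{DS4} applies verbatim on $B_{2r}^+$ and yields $\D(w_\infty,B_{2r}^+)=0$, contradicting $\D(w_\infty,B_1^+)\geq c>0$. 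No reflection, and no prior reduction to $\etaa\circ w_\infty\equiv 0$, is needed.

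It is not correctly justified because the argument ``any competitor on $B_2$ splits into its traces on the two halves, and Dir-minimality of $\bar w$ on each half prevents any decrease of energy'' has a gap: a competitor $v$ on $B_2$ agreeing with $\tilde w$ on $\partial B_2$ need \emph{not} satisfy $v|_{\{x_2=0\}}=Q\a{0}$, so its restrictions $v|_{B_2^\pm}$ are not admissible competitors for the boundary-value problem that $\bar w$ minimizes (Assumption \ref{a:multi-valued} requires competitors to vanish on $\partial D\cap U$). For scalar harmonic functions odd reflection works because it produces a harmonic function (Schwarz reflection), but for $Q$-valued Dir-minimizers ``Dir-minimizing'' is not equivalent to sheetwise harmonicity, and no symmetrization of multi-valued competitors is available to close this gap. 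You flagged the issue yourself; the cleanest resolution is simply to drop the reflection entirely and apply interior (UC) directly to $B_{2r}^+$.
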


\begin{proof}
    The qualitative statement (UC) of the proof of \cite[Lemma 7.1]{DS4} applies directly to our situation while the quantitative statement follows from a blow-up argument that goes analogously for us as $ B_s(q) \subset B^+_{2r}$.
\end{proof}

The previous two lemmas imply the following energy decay for Dir-minimizers (compare to \cite[Proposition 7.2]{DS4}) which itself implies the Proposition \ref{p:splitting}. First fix a number $\lambda >0$ such that
\[ (1+\lambda)^4 < 2^{\delta_1}. \]

\begin{proposition}[Decay estimate for Dir-minimizers]
    For any $\eta >0$ there is a $\delta>0$ such that whenever $B_{2r}^+ \subset V_0$ is the half ball and $w: B_{2r}^+ \to \cA_Q(\R^n)$ is Dir-minimizing with $w= Q \a{0}$ on $B_{2r}^+ \cap (\R \times \{0\})$ and satisfies
    \begin{align*}
        \int_{B^+_{(1+\lambda)r}} \cG(Dw, Q \a{D(\etaa \circ w)(0)})^2
        \geq 2^{\delta_1-4} \D(w, B_{2r}^+),
    \end{align*}
    then we have for any $B_s(q) \subset B_{2r}^+$ with $s \geq \eta r$
    \begin{align*}
        \delta \ \D \left(w, B^+_{(1+\lambda)r} \right)
        \leq \D \left(\bar w, B^+_{(1+\lambda)r} \right)
        \leq \frac{1}{\delta r^2} \int_{B_s(q)} |\bar w|^2.
    \end{align*}
    Here we used again the notation $\bar w := w \oplus (-\etaa \circ w) = \sum_i \a{w_i - \etaa \circ w} $.
\end{proposition}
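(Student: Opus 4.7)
The plan is to prove the two inequalities separately via quite different arguments: the left inequality $\delta\, \D(w, B^+_{(1+\lambda)r}) \leq \D(\bar w, B^+_{(1+\lambda)r})$ by a blow-up/contradiction argument exploiting Lemma \ref{l:computation_dirichlet} and an explicit spherical-harmonic calculation, and the right inequality $\D(\bar w, B^+_{(1+\lambda)r}) \leq \frac{1}{\delta r^2} \int_{B_s(q)} |\bar w|^2$ by applying the unique continuation Lemma \ref{l:unique_continuation} to $\bar w$ together with a Caccioppoli estimate.

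For the first inequality I would assume by contradiction the existence of a sequence of Dir-minimizers $\{w_k\}$ on $B_{2r}^+$ satisfying the standing hypothesis but with $\D(\bar w_k, B^+_{(1+\lambda)r}) / \D(w_k, B^+_{(1+\lambda)r}) \to 0$. After normalizing $\D(w_k, B^+_{2r}) = 1$, compactness for Dir-minimizers with fixed boundary data on the horizontal diameter yields (up to a subsequence) strong $W^{1,2}$ convergence on $B^+_{(1+\lambda)r}$ to $w_\infty = Q \a{u_\infty}$, where $u_\infty$ is a classical harmonic function on $B^+_{2r}$ vanishing on $\{x_2 = 0\}$ and satisfying $Q\, \D(u_\infty, B^+_{2r}) \leq 1$ by weak lower semicontinuity. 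Lemma \ref{l:computation_dirichlet} rewrites the hypothesis as
\[
\D(\bar w_k, B^+_{(1+\lambda)r}) + Q \int_{B^+_{(1+\lambda)r}} |Du_k - Du_k(0)|^2 \;\geq\; 2^{\delta_1-4} \D(w_k, B^+_{2r}) = 2^{\delta_1-4},
\]
and passing to the limit forces $Q \int_{B^+_{(1+\lambda)r}} |Du_\infty - Du_\infty(0)|^2 \geq 2^{\delta_1-4}$. Odd-reflecting each component of $u_\infty$ across $\{x_2 = 0\}$ produces a harmonic function on $B_{2r}$, whose spherical-harmonic expansion retains only the sine modes $r^k \sin(k\theta)$. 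The $k=1$ mode contributes exactly $Du_\infty(0)$ and is therefore killed in $Du_\infty - Du_\infty(0)$, leaving only $k \geq 2$ modes, and orthogonality plus direct computation give
\[
\int_{B^+_{(1+\lambda)r}} |Du_\infty - Du_\infty(0)|^2 \;\leq\; \left(\frac{1+\lambda}{2}\right)^4 \D(u_\infty, B^+_{2r}) = \frac{(1+\lambda)^4}{16}\, \D(u_\infty, B^+_{2r}).
\]
Combined with $Q\, \D(u_\infty, B^+_{2r}) \leq 1$ this yields $Q \int |Du_\infty - Du_\infty(0)|^2 \leq \frac{(1+\lambda)^4}{16} < \frac{2^{\delta_1}}{16} = 2^{\delta_1 - 4}$, where the strict inequality is exactly the defining condition $(1+\lambda)^4 < 2^{\delta_1}$ of $\lambda$, giving the desired contradiction.

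For the second inequality I would first observe that $\bar w$ is itself Dir-minimizing on $B_{2r}^+$ with boundary condition $Q\a{0}$ on $\{x_2=0\}$: given any admissible competitor $v$ for $\bar w$, the function $\tilde v := v \oplus (\etaa \circ w)$ is an admissible competitor for $w$ (using $\etaa \circ w|_{\{x_2 = 0\}} = 0$), and the splitting $\D(\cdot) = \D(\overline{\,\cdot\,}) + Q\, \D(\etaa \circ \cdot)$ reduces minimality of $w$ to minimality of $\bar w$. The first inequality combined with the standing hypothesis bounds $\D(\bar w, B^+_{(1+\lambda)r})$ below by a definite fraction of $\D(w, B^+_{2r})$, and frequency-function monotonicity for Dir-minimizers at the boundary then supplies the non-degeneracy bound $\D(\bar w, B^+_{\rho_1}) \geq c\, \D(\bar w, B^+_{\rho_2})$ needed to invoke Lemma \ref{l:unique_continuation} on $\bar w$; applied to $B_{s/2}(q) \subset B^+_{2r}$ with $s/2 \geq \eta r / 2$ this yields $\D(\bar w, B_{s/2}(q)) \geq \tilde\delta\, \D(\bar w, B^+_{(1+\lambda)r})$. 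Coupling this with the standard Caccioppoli estimate $\D(\bar w, B_{s/2}(q)) \leq \frac{C}{s^2} \int_{B_s(q)} |\bar w|^2$ for $Q$-valued Dir-minimizers with zero average and using $s \geq \eta r$ gives the claim after adjusting $\delta$.

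I expect the main technical obstacle to be establishing the non-degeneracy $\D(\bar w, B^+_\rho) \geq c\, \D(\bar w, B^+_{(1+\lambda)r})$ needed to invoke Lemma \ref{l:unique_continuation}: the presence of the horizontal boundary condition prevents a completely black-box appeal to interior frequency monotonicity and requires either the boundary frequency function of \cite{DDHM} or an auxiliary blow-up argument exploiting the strong unique continuation of Dir-minimizers vanishing on a diameter. A secondary subtlety is guaranteeing that the strong $W^{1,2}$ convergence of the blow-up sequence $w_k$ extends up to the horizontal diameter; this is standard for Dir-minimizers with uniformly fixed Dirichlet data there, but must be applied with care so that $\D(w_k, B^+_{(1+\lambda)r})$ and the integrals involving $Du_k(0)$ pass cleanly to the limit.
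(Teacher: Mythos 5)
Your overall route matches the paper's: the paper simply follows DS4's Proposition 7.2 verbatim, which is the same compactness scheme you outline, with Lemma~\ref{l:computation_dirichlet} and Lemma~\ref{l:unique_continuation} as the two inputs, and your odd-reflection plus spherical-harmonic expansion is exactly the ``classical decay for harmonic functions'' the paper cites to disprove the contradicting inequality. In fact the blow-up is not even needed for the first inequality: since $u=\etaa\circ w$ is harmonic and vanishes on the flat part of the boundary, Lemma~\ref{l:computation_dirichlet}, the standing hypothesis, and your decay computation give directly
\[
\D\big(\bar w, B^+_{(1+\lambda)r}\big) \;\geq\; 2^{\delta_1-4}\D\big(w,B^+_{2r}\big) - Q\int_{B^+_{(1+\lambda)r}}|Du-Du(0)|^2 \;\geq\; \Big(2^{\delta_1-4}-\tfrac{(1+\lambda)^4}{16}\Big)\D\big(w,B^+_{2r}\big),
\]
and the coefficient is positive precisely because $(1+\lambda)^4<2^{\delta_1}$.

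The genuine gap is in your handling of the second inequality. Frequency-function monotonicity does \emph{not} supply a non-degeneracy bound $\D(\bar w,B^+_{\rho_1})\geq c\,\D(\bar w,B^+_{\rho_2})$ for $\rho_1<\rho_2$: monotonicity of the frequency only controls the doubling ratio through an a priori upper bound on the frequency itself, and a Dir-minimizer on $B^+_{2r}$ with the $Q\a{0}$ boundary condition can have arbitrarily high frequency, making $\D(B^+_{\rho_1})/\D(B^+_{\rho_2})$ as small as one likes. Neither a boundary frequency function nor a separate blow-up is needed here: the displayed chain already yields $\D(\bar w,B^+_{(1+\lambda)r})\geq c_0\,\D(w,B^+_{2r})\geq c_0\,\D(\bar w,B^+_{2r})$ with an absolute $c_0>0$, and this is exactly the hypothesis Lemma~\ref{l:unique_continuation} wants (up to the harmless replacement of the inner radius $r$ in its statement by $(1+\lambda)r$, which the compactness proof of that lemma accommodates at no cost). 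From there, your observation that $\bar w$ is Dir-minimizing (correct, once one first reduces to competitors with vanishing average) together with the Caccioppoli estimate against $Q\a{0}$ closes the argument as you wrote. In short, the obstacle you flagged is real as a step to be supplied, but you looked for it in the wrong place: the non-degeneracy is a consequence of the standing hypothesis itself, not of any independent frequency argument.
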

\begin{proof}
    We follow word by word the proof of \cite[Proposition 7.2]{DS4} using Lemma \ref{l:unique_continuation} and Lemma \ref{l:computation_dirichlet} instead of \cite[Lemma 7.1]{DS4} and \cite[Lemma 7.3]{DS4}. We reach the contradicting inequality
    \[ \int_{B^+_{1+\lambda}} |Du-Du(0)|^2 \geq 2^{\delta_1 -4} \int_{B^+_2} |Du|^2
    \]
    which is false as one can see by reflecting such that $u$ stays harmonic and then using the classical decay for harmonic functions.
\end{proof}

\section{Frequency function and monotonicity}\label{s:blow-up}

In this section we take a further crucial step towards the proof of Theorem \ref{t:flat-points}. We recall our key Assumption \ref{a:main-local-2} and we add a further one on the smallness of the excess. Before doing that, we observe a corollary of the decay estimate in Theorem \ref{t:decay}.

\begin{corollary}\label{c:cylindrical_excess_decay}
Let $T$ and $\Gamma$ be as in Assumption \ref{a:main-local} and assume that $0\in \Gamma$ is a flat point and that $Q\a{V}$ is the unique tangent cone to $T$ at $0$. Then there is a geometric constant $\kappa>0$ and constants $C$ and $r_0>0$ (depending on $\Gamma$ and $T$) such that
\begin{equation}\label{e:cylindrical_excess_decay}
\bE (T, \bC_{r}) \leq C r^{4\kappa} \qquad \forall r\leq r_0\, .
\end{equation}
\end{corollary}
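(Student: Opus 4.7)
The plan is to derive the polynomial decay of the cylindrical excess from the polynomial decay of the spherical excess and the Hausdorff distance to the tangent cone, both of which are delivered by Theorem~\ref{t:decay}. First, I would fix $V$ as the 2-plane of the unique tangent cone $Q\a{V}$ and note that, because the cone is flat, $e(0,\rho)\to 0$ as $\rho\downarrow 0$; consequently there is $r_0>0$ such that $e(0,r_0)\leq \varepsilon_0^2$, where $\varepsilon_0$ is the constant of Theorem~\ref{t:decay}. Applying that theorem at $p=0$ with reference radius $r=r_0$, the conclusions \eqref{e:decay}--\eqref{e:Hausdorff-distance-estimate} yield, for every $\rho\leq r_0$,
\[
e(0,\rho)\leq C\rho^{2\alpha}, \qquad \dist_H\!\bigl(\supp(T_{0,\rho})\cap \overline{\bB_1},\, V\cap \overline{\bB_1}\bigr) \leq C\rho^{\alpha}.
\]
Unscaling the second inequality gives the height bound $\bh(T,\bB_\rho,V)\leq C\rho^{1+\alpha}$ for every $\rho\leq r_0$.

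The second step is the geometric conversion to the cylindrical object. The above height bound implies that $\supp(T)\cap \bC_r \subset \bB_{r(1+Cr^{2\alpha})}$ for all sufficiently small $r$, since a point in $\bC_r$ at distance at most $Cr^{1+\alpha}$ from $V$ has Euclidean norm at most $r\sqrt{1+C^2r^{2\alpha}}\leq r(1+Cr^{2\alpha})$. Moreover, the Hausdorff proximity of $\supp(T_{0,\rho})$ to a \emph{single} half-plane $V^+$ (one of the two half-planes with $\partial V^+ = T_0\Gamma$) together with the Constancy Lemma applied to the integer rectifiable current $\bp_{V,\sharp}(T\res\bC_r)$ on each connected component of $B_r(0,V)\setminus \bp_V(\Gamma)$ forces
\[
\bp_{V,\sharp}(T\res\bC_r) = Q\a{D_r}
\]
for every sufficiently small $r$, where $D_r$ is the component of $B_r(0,V)\setminus \bp_V(\Gamma)$ on the $V^+$ side. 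This in turn yields the exact identity
\[
\bE(T,\bC_r,V) = \frac{\|T\|(\bC_r) - Q|D_r|}{\pi r^2},
\]
since $\|T\|(\bC_r) - Q|D_r| = \int(1-\vec T\cdot \vec V)\,d\|T\| = \tfrac{1}{2}\int|\vec T-\vec V|^2\,d\|T\|$.

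Combining the ingredients, I estimate
\[
\|T\|(\bC_r)\leq \|T\|\bigl(\bB_{r(1+Cr^{2\alpha})}\bigr) = \pi r^2(1+Cr^{2\alpha})^2\Bigl(\tfrac{Q}{2}+e\bigl(0, r(1+Cr^{2\alpha})\bigr)\Bigr),
\]
and, using the $C^2$ regularity of $\Gamma$ together with $T_0\Gamma\subset V$, I obtain $|D_r| = \tfrac{\pi r^2}{2} + O(\bA r^3)$. Plugging these into the identity for $\bE(T,\bC_r)$ and using $e(0,\rho)\leq C\rho^{2\alpha}$ from Step~1, I get
\[
\bE(T,\bC_r) \leq C\, e\bigl(0,r(1+Cr^{2\alpha})\bigr) + C r^{2\alpha} + C\bA r \leq C' r^{2\alpha}
\]
for all $r\leq r_0$ (after shrinking $r_0$ if necessary). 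Setting $\kappa := \alpha/2$ concludes the proof.

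The main technical obstacle in executing this plan is the projection identity $\bp_{V,\sharp}(T\res\bC_r) = Q\a{D_r}$: this requires that the support of $T$ be trapped in a cylindrical neighborhood of $V^+$ only (never reaching the opposite half-plane), which is precisely what the Hausdorff estimate delivers once $r$ is small enough, and then the constancy theorem pins down the multiplicity as $Q$ by matching the total boundary $\partial T = Q\a{\Gamma}$.
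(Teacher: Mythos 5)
Your plan is correct and is essentially the argument the paper has in mind when it states the Corollary as an immediate consequence of Theorem~\ref{t:decay} (no separate proof is given in the paper). Part (a) of Theorem~\ref{t:decay} gives spherical excess decay $e(0,\rho)\leq C\rho^{2\alpha}$, part (c) gives Hausdorff convergence to the half-plane $V^+$, the projection identity reduces the cylindrical excess to the mass defect $\|T\|(\bC_r)-Q|D_r|$, and combining yields $\bE(T,\bC_r)\leq Cr^{2\alpha}$ with $\kappa=\alpha/2$; this is the intended route and the exponent bookkeeping is right.

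One step of your Step~2 is stated more abruptly than it should be. The height bound $\bh(T,\bB_\rho,V)\leq C\rho^{1+\alpha}$ that you extract from the Hausdorff estimate is a statement about \emph{balls}, so the inclusion $\supp(T)\cap\bC_r\subset\bB_{r(1+Cr^{2\alpha})}$ does not follow from it directly: a point $z\in\supp(T)\cap\bC_r$ is not a priori inside any $\bB_\rho$ with $\rho\sim r$, so you cannot yet invoke the height bound to control $|\bp_V^\perp(z)|$. You first need a crude confinement $\supp(T)\cap\bC_r\subset\bB_{Cr}$ for $r$ small. This is available and cheap in the setting of the Corollary: since $0$ is a flat point with unique tangent cone $Q\a{V}$, Theorem~\ref{t:sandwich} gives, for any $\theta>0$, a radius $\rho$ with $\supp(T)\cap\bB_\rho\subset\bigcap_{q\in\Gamma\cap\bB_\rho}(q+W(T_q\Gamma,n(q),\theta))$; taking $\theta<\pi/4$ makes the wedge a thin sector around $V^+$, which forces $|\bp_V^\perp(z)|\leq C(\theta)|\bp_V(z)|$ and hence $\supp(T)\cap\bC_r\subset\bB_{3r}$ once $r\leq\rho/3$. (Alternatively one can bootstrap with the Hausdorff estimate itself.) After this preliminary inclusion, the height bound applied at scale $3r$ gives exactly the refined inclusion you want, and the rest of your computation goes through. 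Your treatment of the Constancy Lemma in Step~4 already correctly identifies that the slab/wedge confinement is what pins down $k_2=0$, $k_1=Q$; the same confinement is what is needed in Step~2. In short: the scheme is right, and the one assertion to tighten is the passage from ball-height to cylinder-height, which Theorem~\ref{t:sandwich} supplies.
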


Thus, upon rescaling the current appropriately, if $0$ is a flat point we can assume, without loss of generality, the following.

\begin{ipotesi}\label{a:main-local-3}
Let $T$ and $\Gamma$ be as in Assumption \ref{a:main-local}.
$0\in \Gamma$ is a flat point, $Q\a{V}$ is the unique tangent cone to $T$ at $0$, we let $n$ be as in \eqref{e:new-n} and assume that \eqref{e:new-wedge} holds.
In addition we assume to have fixed a choice of the parameters so that Theorem \ref{t:cm} and Theorem \ref{t:normal-approx} hold and that
\begin{equation}\label{e:CM-applies}
\bE (T, \bC_{4R_0 \rho}) + \bA^2 \rho^2 \leq \varepsilon_{CM} \rho^{2\kappa}\, \qquad \forall \rho \leq 1\, .
\end{equation}
\end{ipotesi}
Observe that, by \eqref{e:CM-applies}, we conclude that both Theorem \ref{t:cm} and Theorem \ref{t:normal-approx} can be applied to the current $T_{0,\rho}$ whenever $\rho\leq 1$.

\subsection{Intervals of flattening}\label{s:flattening} We start defining a decreasing set of radii $\{t_1> t_2 > \ldots \}\subset (0,1]$, which at the moment can be both finite and infinite: in the first case one $t_N$ will be equal to $0$, while in the second case all $t_k$'s are positive and $t_k\downarrow 0$. 

$t_1$ is defined to be equal to $2$. We then let $\bar{\cM}_1 = \cM_1$ be the center manifold and $\bar N_1 = N_1$ the corresponding normal approximation which results after we apply Theorem \ref{t:cm} and Theorem \ref{t:normal-approx} to the current $T$. Moreover we let $\sW^{(1)}$ be the squares of the Whitney decomposition described in Definition \ref{d:refining_procedure}. We then distinguish two cases:
\begin{itemize}
    \item[(Stop)] There is a square $H\in \sW^{(1)}$ such that 
    \begin{equation}\label{e:large}
    \dist (0, H) \leq 64 \sqrt{2} \ell (H)\, .
    \end{equation}
    \item[(Go)] There is no such square.
\end{itemize}
Notice that every such square $H$ satisfying \eqref{e:large} is a boundary square. In the first case we select an $H$ as in (Stop) which has maximal sidelength and we define $\bar t_2 := 66 \sqrt{2} \ell (H)$ and $t_2 := t_1 \bar t_2 = 132 \sqrt 2 \ell(H)$. Otherwise we define $t_2 = 0$. Observe that 
\begin{equation}\label{e:t2-over-t1}
\frac{t_2}{t_1} \leq 66\sqrt{2} 2^{-N_0}
\end{equation}

Before proceeding further, we record an important consequence of the Whitney decomposition:

\begin{corollary}\label{c:stop=excess}
If (Stop) holds, then the square $H$ of maximal sidelength that satisfies \eqref{e:large} must be an element of $\sW^e$, i.e. it violates the excess condition
\end{corollary}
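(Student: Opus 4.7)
\medskip

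The plan is to proceed by elimination: since $H \in \sW = \sW^e \cup \sW^h \cup \sW^n$, it suffices to exclude memberships in $\sW^h$ and $\sW^n$. The observation already made in the excerpt (right after the statement of (Stop)) that every $H$ satisfying \eqref{e:large} must be a boundary square will be the starting point: indeed, since $0\in\Gamma$ implies $0\in\gamma=\p(\Gamma)$, one gets
\[
\dist(x_H,\gamma)\le|x_H|\le \dist(0,H)+\tfrac{1}{2}\diam(H)\le 64\sqrt{2}\,\ell(H)+\sqrt{2}\,\ell(H)<64 r_H,
\]
using that $r_H=\sqrt{2}M_0\ell(H)$ with $M_0$ large, so $H\in\sC^\flat$ by Definition \ref{d:squares_and_balls}.

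Next, I would rule out $H\in \sW^h$: by the first sentence of Proposition \ref{p:separation_height}, every square in $\sW^h$ is an interior square, so this case is incompatible with $H\in\sC^\flat$ established above.

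To rule out $H\in\sW^n$, suppose for contradiction it were; then by Definition \ref{d:refining_procedure}(NN) there is a neighbor $J\in\sW$ with $\ell(J)=2\ell(H)$ and $H\cap J\ne\emptyset$. Then
\[
\dist(0,J)\le \dist(0,H)+\diam(H)\le 64\sqrt{2}\,\ell(H)+2\sqrt{2}\,\ell(H)=33\sqrt{2}\,\ell(J)\le 64\sqrt{2}\,\ell(J),
\]
so $J$ also satisfies \eqref{e:large}; since $\ell(J)>\ell(H)$, this contradicts the maximality of $H$ in the choice made in (Stop). Hence $H\in\sW^e$, which is the desired conclusion. No single step looks like a serious obstacle here — the argument is essentially a bookkeeping exercise combining the definitions of the classes $\sW^e,\sW^h,\sW^n$ with the height-exclusion from Proposition \ref{p:separation_height} and the maximality of $H$.
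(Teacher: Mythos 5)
Your proof is correct and follows essentially the same approach as the paper: you rule out $\sW^n$ by producing a larger neighbor that also satisfies \eqref{e:large} (violating maximality), and rule out $\sW^h$ by noting that \eqref{e:large} forces $H$ to be a boundary square while $\sW^h$ contains only interior squares (Proposition \ref{p:separation_height}). You have merely made explicit the elementary distance computations that the paper leaves to the reader.
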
 
\begin{proof}
Observe that if $H$ is an (NN) square, then there is a neighboring square $H'$ of double sidelength which also belongs to $\sW$ and it is easy to see that the latter satisfies \eqref{e:large} too, violating the maximilaity of $H$. Note next that \eqref{e:large} implies that $H$ is a boundary square, and as such it cannot belong to $\sW^h$. 
\end{proof}

In case $t_2>0$ we then apply Theorem \ref{t:cm} and Theorem \ref{t:normal-approx} to $T_{0, t_2}$ and let $\bar{\cM}_2$ and $\bar{N}_2$ be the corresponding objects. The pair $(\cM_2, N_2)$ will be derived by scaling back the objects at scale $t_2$, namely
\begin{align}
\cM_2 &= \left\{t_2 q : q \in \bar{\cM}_2\right\},\\
N_2 (q) &= t_2 \bar N_2 \left(\frac{q}{t_2}\right)\, .
\end{align}
We then apply the procedure above to $\bar\cM_2$ in place of $\bar\cM_1$ and determine $\bar t_3$ analogously, while we set $t_3 := t_2 \bar t_3$.

We proceed inductively and define $\bar{\cM}_k, \cM_k, \bar{N}_k, N_k$, $\bar t_k$, and $t_k:= t_{k-1}\bar t_k$: the procedure stops when one $t_k$ equals $0$, otherwise goes indefinitely. Observe that for every $k$ we have the estimate
\begin{equation}\label{e:t_k-over-t_k-1}
\frac{t_k}{t_{k-1}} \leq 66 \sqrt{2} 2^{-N_0}.    
\end{equation}

\subsection{Frequency function} Observe that the conclusion of Theorem \ref{t:flat-points} is equivalent to $T$ coinciding with $Q\a{\cM_k}$ for some $k$ in a neighborhood of the origin. A simple corollary of the interior regularity is in fact the following

\begin{corollary}
If $N_k\equiv Q \a{0}$ on some nontrivial open subset of $\cM_k$, then $T = Q \a{\cM_k}$ in a neighborhood of $0$ and in particular Theorem \ref{t:flat-points} holds.
\end{corollary}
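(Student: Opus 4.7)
The plan is to argue in two stages: first convert the hypothesis ``$N_k \equiv Q\a{0}$ on an open subset $U \subset \cM_k$'' into an honest identity $T = Q\a{\cM_k}$ on an open region of $\R^{2+n}$; and then propagate this identity, via unique continuation for minimal surfaces, to a full neighborhood of $0$.

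\medskip

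For the first stage, observe that on $U$ the associated multivalued map takes the form $F(x) = \sum_i \a{x + N_i(x)} = Q\a{x}$, so the graph current satisfies $\bT_F \res \p^{-1}(U) = Q\a{\cM_k \cap U}$; by (A2) of Definition \ref{d:app} one then has $T = Q\a{\cM_k}$ on $\p^{-1}(\cK \cap U)$. To upgrade this to an open identity I would show that a nonempty open subset $U' \subset U$ lies entirely in the contact set $\Phii(\bDel)$. Indeed, any Whitney square $L \in \sW^e$ intersecting such a region would, via Proposition \ref{p:splitting}, force $\int_{\cL}|DN|^2 > 0$, while any $L \in \sW^h$ would, via Proposition \ref{p:separation_height}(S3), force $\cG(N, Q\a{\etaa\circ N})$ to be bounded away from zero on a nontrivial neighborhood of $\Phii(x_L)$; Corollary \ref{c:chain_of_influence} reduces $\sW^n$ to the preceding two cases. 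Each option contradicts $N \equiv Q\a{0}$. Once $U' \subset \Phii(\bDel)$, Corollary \ref{c:cover}(iii) gives $\langle T, \p, p\rangle = Q\a{p}$ for every $p \in U'$, and combined with $\p_\sharp (T \res \p^{-1}(U')) = Q\a{U'}$ (Corollary \ref{c:cover}(i)), a standard slicing argument identifies $T \res \p^{-1}(U')$ with $Q\a{\cM_k \cap U'}$.

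\medskip

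For the second stage, I would introduce
\begin{equation*}
A := \bigl\{q \in \cM_k \setminus \Gamma \;:\; T = Q\a{\cM_k} \text{ on some neighborhood of } q\bigr\}.
\end{equation*}
By construction $A$ is open, and it is nonempty since $\Gamma$ has empty interior in $\cM_k$ and $U'$ from stage one therefore contains interior points. On $A$, the identity $T = Q\a{\cM_k}$ combined with the area-minimality of $T$ forces $\cM_k$ to be a smooth minimal surface (hence analytic) there. I next show that $A$ is relatively closed in $\cM_k \setminus \Gamma$: at a limit point $q$, upper-semicontinuity of the density together with $\Theta(T,\cdot) \equiv Q$ on $A$ yields $\Theta(T,q) = Q$ and a flat tangent cone $Q\a{T_q\cM_k}$, so interior $\varepsilon$-regularity (Allard's theorem) represents $T$ as a smooth $Q$-valued minimal graph on a ball $B$ around $q$; its sheets agree with $\cM_k$ on the open dense subset $A \cap B$, and unique continuation for the minimal surface system forces agreement throughout $B$, whence $q \in A$. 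Since $\cM_k$ is the graph of $\phii$ over the connected half-disk $D \cap [-4,4]^2$, the set $\cM_k \setminus \Gamma$ is connected, so $A = \cM_k \setminus \Gamma$. Finally, $T - Q\a{\cM_k}$ is an integer rectifiable $2$-current with support contained in $\Gamma$, which has $\cH^2$-measure zero; this forces the difference to vanish in a neighborhood of $0$, giving Theorem \ref{t:flat-points} with $\Sigma = \cM_k$ locally.

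\medskip

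The main obstacle is the closure step in stage two: one must reconcile the $C^{3,\alpha}$ regularity of $\cM_k$ with the analytic minimal graph produced by $\varepsilon$-regularity at a limit point $q$ where $\cM_k$ is a priori only smooth, not analytic. The cleanest way to handle this is to work at the level of $\phii$: on the projection of $A$ the function $\phii$ solves the minimal surface equation, and continuity of the equation's coefficients, combined with standard analytic-regularity/unique-continuation theorems for elliptic systems, shows that $\phii$ persists as an analytic solution across $q$, making $\cM_k$ analytic in a full neighborhood of $q$ and enabling its identification with the sheets produced by Allard's theorem.
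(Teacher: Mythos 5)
Your overall strategy -- first produce an open subset of the contact set on which $T = Q\a{\cM_k}$ exactly, then propagate by interior unique continuation -- is the right kind of argument, and the reduction in stage one via (A2), Corollary \ref{c:cover}(i),(iii) and the slicing formula is sound once one knows $U' \subset \Phii(\bDel)$. The gap is in the exclusion of $\sW^e$ (and hence $\sW^n$) squares meeting $U$. You write that an $L\in\sW^e$ intersecting $U$ forces $\int_\cL|DN|^2>0$, but Proposition \ref{p:splitting} does not say this: \eqref{e:split_2} is an \emph{upper} bound on $\int_\cL|DN|^2$, and the lower bound \eqref{e:split_1} is on $\int_\Omega|DN|^2$ for $\Omega=\Phii\bigl(B_{\ell(L)/4}(q,V_0)\bigr)$, a ball of definite radius $\ell(L)/4$ situated within distance $4\sqrt2\,\ell(L)$ of $L$ and inside $D$. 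If $U$ is small compared to $\ell(L)$, no admissible $\Omega$ fits inside $U$ and $N\equiv Q\a{0}$ on $U$ yields no contradiction. (Your $\sW^h$ case \emph{is} complete, because (S3) is a pointwise bound valid on all of $\Phii(B_{2\sqrt2\ell(L)}(x_L))\supset\Phii(L\cap D)$.) To close the $\sW^e$ gap one needs a bound on $\ell(L)$ for squares meeting $U$; this is available when $U$ is a geodesic $d$-ball about $0$ because the stopping rule \eqref{e:large} for the intervals of flattening caps $\ell(L)$ near the origin, but you claim it for an arbitrary nontrivial open $U$, and there the argument as written breaks.

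The second issue is the invocation of Allard's $\varepsilon$-regularity at the boundary point $q$ of $A$. At $q$ one has $\Theta(T,q)=Q>1$, and Allard's theorem requires the density to lie within $\varepsilon$ of $1$; rescaling the varifold to $\frac1Q\|T\|$ does not help, since the rescaled varifold no longer satisfies the a.e.\ lower density bound $\geq 1$ that Allard requires. Regularity at a flat density-$Q$ tangent cone is precisely the content of the Almgren/De Lellis--Spadaro interior theory, and even there one only gets that $\sing(T)$ is discrete in dimension two: it is not automatic that $q$ is a regular point, and if $q\in\sing(T)$ one must argue through the punctured neighbourhood. You would also need to justify $\Theta(T,q)=Q$ exactly (upper semicontinuity gives only $\geq Q$; the reverse inequality needs the pushforward identity $\p_\sharp(T\res\bU)=Q\a{\cM}$ together with the monotonicity formula) and that $T$ has a flat tangent cone at $q$, which is not immediate from $T=Q\a{\cM_k}$ on the nearby open set $A$ alone. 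Your alternative suggestion -- working at the level of $\phii$ and unique continuation for the minimal surface system -- is the cleaner route, but as you yourself flag, the step from ``$\phii$ solves the system on $\p_{V_0}(A)$ and is $C^{3,\alpha}$ across $\partial A$'' to ``$\phii$ solves the system past $\partial A$'' is exactly the assertion to be proved, so more must be said there. The closing step, that an integer-rectifiable $2$-current supported in the $\cH^2$-null set $\Gamma$ vanishes, is correct.
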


We next consider a function $d$ which is $C^2$ in the punctured ball $\bB_1 (0)$, whose gradient $\nabla d$ is tangent to $\Gamma$ and such that (i)-(ii)-(iii) of Definition \ref{Def:distance} hold. Likewise we fix the function $\phi: [0, \infty) \to [0, \infty)$ given by
\[
\phi (t) := \left\{\begin{array}{ll}
1, & \mbox{if $t\in [0, \frac{1}{2}]$},\\
(1-2t), & \mbox{if $t\in [\frac{1}{2}, 1]$},\\
0, & \mbox{if $t\geq 1$}\, .
\end{array}\right.
\]
From now on we denote by $D$ the classical Euclidean differentiation of functions, tensors, and vector fields, which for objects defined on the manifold $\cM_k$ will mean that we compute derivatives along the tangents to the manifold. On the other hand we use the notation $\nabla_{\cM_k}$, $D^{\cM_k}$, and ${\rm div}_{\cM_k}$, respectively for the gradient, Levi-Civita connection, and divergence of (respectively), functions, tensors, and vector fields on $\cM_k$ understood as a Riemannian submanifold of the Euclidean space $\mathbb R^{2+n}$.

We then define
\begin{align}
D (r) &:= \int_{\cM_k} \phi \left(\frac{d (x)}{r}\right) |D N_k|^2 (x)\, d\cH^2 (x),
\qquad \mbox{if $r\in (t_{k+1}, t_k]$},\\
H (r) &:= - \int_{\cM_k} \phi' \left(\frac{d (x)}{r}\right) |\nabla_{\cM_k} d (x)|^2 \frac{|N_k (x)|^2}{d (x)}\, d \cH^2 (x),\qquad \mbox{if $r\in (t_{k+1}, t_k]$}.\\
S(r) &:= \int_{\cM_k} \phi \left(\frac{d (x)}{r}\right) |N_k (x)|^2\, d\cH^2 (x).
\end{align}
We are then ready to state our main estimate.

\begin{theorem}\label{t:frequency}
Let $T$ be as in Assumption \ref{a:main-local-3}. Either $T = Q \a{\cM_k}$ in a neighborhood of the origin for some $k$ (and in that case note that $t_{k+1}=0$), or else $H(r)>0$ and $D(r)>0$ for every $r$. In the latter case the function $I (r) := \frac{r D(r)}{H(r)}$ satisfies the following properties for some constants $C$ and $\tau>0$:
\begin{itemize}
    \item[(a)] For all $r>0$, we have
\begin{align}
I (r) &\geq C^{-1},\, \label{e:bound_from_below}
\end{align}
and
\begin{equation}\label{e:bloody-bound-on-the-D}
D(r) \leq C r^{2+\tau}\,.
\end{equation}
 \item[(b)] $I$ is continuous and differentiable on each open interval $(t_{k+1}, t_k)$ and moreover 
\begin{align}
\frac{d}{dr} \left(\log I (r) + C D(r)^\tau - C t_k^{2\tau-2} \frac{S(r)}{D(r)}\right) & \geq - C r^{\tau-1}
\qquad \mbox{for a.e. $r\in ]t_{k+1}, t_k[$.}\label{e:Gronwall}
\end{align}
\item[(c)]
At each $t_k$ the function $I$ has one-sided limits
 \begin{align*}
I (t_k^+) &= \lim_{t\downarrow t_k} I (t),\\
I (t_k^-) &= \lim_{t\uparrow t_k} I (t),\, 
 \end{align*}
 and moreover
 \begin{align}
\sum_k |I (t_k^+)- I (t_k^-)| < \infty. \label{e:jump_estimate}
\end{align}
\end{itemize}
\end{theorem}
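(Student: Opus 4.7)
For part (a), the decay bound \eqref{e:bloody-bound-on-the-D} combines the global Dirichlet energy estimate of Corollary \ref{c:globali}, which gives $\int_{\cM_k}|DN_k|^2 \leq C \bmo_k$, with the scale-wise control $\bmo_k \lesssim t_k^{4\kappa}$ furnished by \eqref{e:CM-applies} applied to the rescaled current at scale $t_k$; unwinding the $t_k$-rescaling built into the definition of $N_k$ converts this into the claimed polynomial decay. The lower bound $I(r) \geq C^{-1}$ rests on a boundary Poincar\'e inequality for $N_k$: since $N_k \equiv Q\a{0}$ on $\Gamma\cap \cM_k$ by (A3) of Definition \ref{d:app}, and $\nabla d$ is tangent to $\Gamma$ by the choice of $d$, one controls $S(r) \leq C r^2 D(r)$ up to harmless lower order errors, which in turn gives $H(r) \leq C r D(r)$ after integration of $|N_k|^2$ along level sets of $d$.

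For part (b), within a single interval $(t_{k+1}, t_k)$ the normal approximation $N_k$ is fixed and one follows the template of \cite[Section 4]{DDHM} and \cite[Section 3]{DS5}. The variational identities of Proposition \ref{Prop:H'D'} apply because both the outer variation $F_\varepsilon(x) := \sum_i \a{x + (1+\varepsilon \phi(d/r))N_i(x)}$ and the inner variation generated by $Y(x) = \phi(d/r)\, d(x) \nabla d/|\nabla d|^2$ preserve the condition $N_k\equiv Q\a{0}$ on $\Gamma \cap \cM_k$ (the latter because $Y$ is tangent to $\Gamma$), and yield admissible competitors for the minimality of $T$ via the map $F$ supplied by Theorem \ref{t:normal-approx}. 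Error terms split into three families: geometric errors involving the curvatures of $\cM_k$ and $\Gamma$, controlled by $\bmo_k \lesssim t_k^{2\tau}$; errors produced by the difference $\|\bT_F - T\|$ on $\cM_k \setminus \cK_k$, bounded by $\bmo_k^{1+\gamma_2} \lesssim t_k^{2\tau}$ via \eqref{e:global_masserr}; and concentration errors from the cutoff $\phi(d/r)$ scaling with $r$. Using $E(r)^2 \leq D(r) H(r)$ from Cauchy--Schwarz, the first two types assemble into the correction $t_k^{2\tau - 2} S(r)/D(r)$, while the third yields $D(r)^\tau$, and a standard Gr\"onwall manipulation produces \eqref{e:Gronwall}.

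The main obstacle is part (c), where the approximations $N_{k-1}$ and $N_k$ live over different center manifolds $\cM_{k-1}, \cM_k$ but represent the same current $T$ at the exchange scale $t_k$. The key is Proposition \ref{p:additional}: applying it to both center manifolds against a common strong Lipschitz approximation $f$ of $T$ in a cylinder of scale $\sim t_k$ gives
\[
\|\phii_{k-1} - \phii_k\|_{L^1(B_{t_k})} \leq C\,\bmo_{k-1}^{3/4}\,t_k^4,
\]
so by interpolation against the uniform $C^{3,\omega}$ bound of Theorem \ref{t:cm}, the two center manifolds are $C^2$-close at scale $t_k$ with a polynomial rate $t_k^\sigma$, and this closeness transfers to $W^{1,2}$ closeness of $N_{k-1}$ and $N_k$ modulo the translation $\phii_{k-1}-\phii_k$. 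The delicate point, which constitutes the genuine novelty, is that this translation must not contaminate the comparison of $D,H,S$: this is precisely where the built-in property that each normal approximation has near-zero average (property (ii) of the normal approximation, enforced by the construction algorithm of Section \ref{s:cm-final}) becomes indispensable, because for a zero-average $Q$-valued map the relevant integral functionals are insensitive to the addition of a single-valued function. One thereby obtains $|I(t_k^+) - I(t_k^-)| \leq C\, t_k^\sigma$, and since \eqref{e:t_k-over-t_k-1} forces $t_k$ to decrease at least geometrically, summing over $k$ yields \eqref{e:jump_estimate}. The principal difficulty will be the bookkeeping needed to convert the raw $L^1$ bound into the $W^{1,2}$ comparison of normal approximations while exploiting the stopping mechanism (Corollary \ref{c:stop=excess}) to absorb the translation into a controlled error.
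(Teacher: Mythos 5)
Your sketch tracks the paper's high-level architecture, but it has two genuine gaps and a few smaller misstatements.

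For \eqref{e:bloody-bound-on-the-D} the global estimate $\int_{\cM_k'}|DN_k|^2\leq C\bmo_k$ of Corollary~\ref{c:globali} is not enough. After scaling back by $t_k$ it yields $D(r)\leq C t_k^2\bmo_k\leq C t_k^{2+2\kappa}$, which is a \emph{constant} on $(t_{k+1},t_k]$. Since the ratio $t_{k+1}/t_k$ can be arbitrarily small (it is $132\sqrt 2\,\ell(H)$ for the stopping square $H$, which has no lower bound on its sidelength), this constant is not $\lesssim r^{2+\tau}$ near the left endpoint. You need the \emph{local} decay $\bar D_k(\rho)\leq C\bmo_k\,\rho^{4-2\delta_1}$ of \eqref{e:estimate-for-D}, obtained by summing \eqref{e:Dir_regional} over the stopped squares at distance $\lesssim\rho$ from the origin. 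Combined with \eqref{e:back-and-forth-1} and $r\leq t_k$ this gives $D(r)=t_k^2\bar D_k(t_k^{-1}r)\leq C t_k^{-2+2\kappa+2\delta_1}r^{4-2\delta_1}\leq C r^{2+2\kappa}$, and the radial factor is precisely what makes the estimate useful all the way down to $t_{k+1}$.

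For part (c), the claim that ``for a zero-average $Q$-valued map the relevant integral functionals are insensitive to the addition of a single-valued function'' is false. Even when $\etaa\circ N_k\equiv 0$ exactly, replacing $N_k$ by $N_k\oplus g$ changes the Dirichlet integrand by $+Q|Dg|^2\geq 0$, which is not a priori negligible; and in fact $\etaa\circ N_k$ is not zero but only controlled in $L^1$ via \eqref{e:av_region}. The paper's actual mechanism (Propositions~\ref{p:change-of-center-manifolds} and \ref{p:change-of-center-manifolds2}) is to pin each of $D(N_{k-1},t_k)$, $D(N_k,t_k)$, $H(N_{k-1},t_k)$, $H(N_k,t_k)$ to the common quantity $t_k^2\bE(T,6t_k)$ resp.\ $t_k^3\bE(T,6t_k)$ and then show the pairwise differences are $\lesssim\bE(T,6t_k)^{1+\tau_1}$ smaller. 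This needs the stopping information $\bE(T,6t_k)\gtrsim\bmo(k)$ of \eqref{e:needed} (what you call Corollary~\ref{c:stop=excess}), a genuine reparametrization of $N_k$ onto $\cM_{k-1}$ (which is controlled via the Lipschitz bound for two-dimensional rotations, Lemma~\ref{l:2d_rotations}), a Gagliardo--Nirenberg interpolation to upgrade the $L^1/L^2$ closeness of $\phii_k,\phii_{k-1}$ from Proposition~\ref{p:additional} to $C^0$ and $C^1$ closeness, and a nontrivial argument (using \eqref{e:err_regional} and the stopping scale) to estimate $\|\phii_{k-1}\|_{C^0(B_{2t_k})}$ despite the unfavorable factor $(t_{k-1}/t_k)^{1-\delta_1}$ in the naive $\|D\phii_{k-1}\|_{C^0}$ bound. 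None of this collapses to a zero-average soft argument.

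Two smaller points on (b): the clean identities of Proposition~\ref{Prop:H'D'} are for honest Dir-minimizers, not for the normal approximation; what is available are Propositions~\ref{p:outer_variations} and \ref{p:inner_variations}, which carry explicit error families $\textnormal{Err}^o_j$, $\textnormal{Err}^i_j$. And the Cauchy--Schwarz inequality needed is $E(r)^2\leq G(r)H(r)$ (with $G$ as defined before Proposition~\ref{p:inner_variations}), not $E(r)^2\leq D(r)H(r)$; the latter only follows \emph{a posteriori} after establishing $E(r)\sim D(r)$ via \eqref{e:fifth-piece}.
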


We will prove (a) and (b) in Section \ref{s:proof_monotonicity_frequency} while we devote Section \ref{s:proof_jump_estimate} to show (c). An obvious corollary of Theorem \ref{t:frequency} is the following

\begin{corollary}\label{c:frequency}
Let $T$ be as in Assumption \ref{a:main-local-3}. Either $0$ is a regular point, or else $I(r)$ is well defined for every $r$ and the limit
\[
I_0 := \lim_{r\downarrow 0} I (r)
\]
exists, is finite and positive.
\end{corollary}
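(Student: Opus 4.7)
The plan is to derive the corollary directly from Theorem \ref{t:frequency}. The dichotomy in the conclusion mirrors that in the theorem: in the first alternative $T = Q\a{\cM_k}$ near $0$ for some $k$, and then $0$ is a regular boundary point by Definition \ref{d:regular_and_singular}. Assuming henceforth the second alternative, $I(r) = rD(r)/H(r)$ is well-defined on $(0,t_1]$. The bound $I(r) \geq C^{-1} > 0$ is exactly \eqref{e:bound_from_below}, so the work reduces to proving that $I$ stays bounded above and that $\lim_{r\downarrow 0} I(r)$ exists.

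First I would integrate \eqref{e:Gronwall} on each interval $(t_{k+1}, t_k)$ to deduce that
$$G_k(r) := \log I(r) + C D(r)^\tau - C t_k^{2\tau - 2}\,\tfrac{S(r)}{D(r)} + \tfrac{C}{\tau} r^\tau$$
is non-decreasing in $r$. All three correction terms are $O(t_k^\tau)$ on this interval: $D(r)^\tau = O(t_k^{(2+\tau)\tau})$ by \eqref{e:bloody-bound-on-the-D}; $r^\tau \leq t_k^\tau$; and $t_k^{2\tau-2} S(r)/D(r) = O(t_k^{2\tau})$ by a Poincar\'e-type bound $S(r) \leq Cr^2 D(r)$, which holds because $N_k$ vanishes on $\Gamma \cap \cM_k$. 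Writing $G_k = \log I + \epsilon_k$ with $|\epsilon_k| \leq C t_k^\tau$, the monotonicity of $G_k$ yields the two-sided bound
$$\log I(t_{k+1}^+) - 2C t_k^\tau \;\leq\; \log I(r) \;\leq\; \log I(t_k^-) + 2C t_k^\tau \qquad \forall\,r \in (t_{k+1}, t_k).$$

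Next I would chain these inequalities across consecutive intervals using the jump bound \eqref{e:jump_estimate}. Because $I$ is bounded below, $|\log I(t_k^+) - \log I(t_k^-)| \leq C\,|I(t_k^+) - I(t_k^-)|$, so the total of the log-jumps is finite; moreover, by \eqref{e:t_k-over-t_k-1} the radii $t_k$ decay at least geometrically, so $\sum_k t_k^\tau < \infty$. Iterating $\log I(t_{k+1}^+) \leq \log I(t_k^-) + 2C t_k^\tau$ together with $\log I(t_k^-) \leq \log I(t_k^+) + C\,|I(t_k^+) - I(t_k^-)|$, starting from the finite number $\log I(t_1^-)$, produces a uniform upper bound $\log I(r) \leq \log I(t_1^-) + C\sum_j \bigl( t_j^\tau + |I(t_j^+) - I(t_j^-)|\bigr) < \infty$.

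For the existence of the limit I would set $a_k := \log I(t_k^-)$ and $\delta_k := \log I(t_k^+) - \log I(t_k^-)$, so that the chained inequality reads $a_k \geq a_{k+1} + \delta_{k+1} - 2C t_k^\tau$. Defining the auxiliary sequence $\tilde a_k := a_k + \sum_{j \geq k}(2Ct_j^\tau + |\delta_{j+1}|)$, the above rearranges to $\tilde a_k \geq \tilde a_{k+1}$; hence $(\tilde a_k)$ is non-increasing and, being bounded below by the lower bound on $I$, converges. Since its tail correction tends to zero, $(a_k)$ itself converges to some $a_\infty \in \mathbb{R}$, and the within-interval sandwich forces $\log I(r) \to a_\infty$ as $r\downarrow 0$, giving $I_0 = e^{a_\infty} \in (0, +\infty)$. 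The main obstacle is essentially bookkeeping: the delicate ingredient is the Poincar\'e-type bound controlling $S(r)/D(r)$ uniformly on each interval, which requires both the vanishing of $N_k$ on $\Gamma\cap\cM_k$ and the geometry of the center manifold; once this is granted (it is in any case implicit in the derivation of \eqref{e:Gronwall} itself), all remaining steps are a standard almost-monotone-plus-summable-jumps convergence argument.
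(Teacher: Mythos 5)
Your proof is correct and follows essentially the same route as the paper: both start from the lower bound \eqref{e:bound_from_below}, the almost-monotonicity \eqref{e:Gronwall} with the correction terms controlled via \eqref{e:bloody-bound-on-the-D} and Lemma \ref{l:Poincare}, and the summable-jumps estimate \eqref{e:jump_estimate}. The paper packages the conclusion as a $BV$-compactness statement for the corrected function $f$, whereas you unpack it into an explicit monotone-plus-summable-perturbation argument on the sequence $\log I(t_k^-)$ — a presentational difference, not a mathematical one.
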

\begin{proof}
First of all observe that, since $I (r) \geq C^{-1}$, 
\[
f (r) := \log I (r) - C t_k^{2\tau -2} \frac{S(r)}{D(r)} + C D(r)^\tau  + C r^\tau\geq - \log C\, .
\]
We will also see below in Lemma \ref{l:Poincare} that $S(r) \leq C r^2 D(r)$. Hence, since the Lipschitz constant of $\log$ is bounded on $[C^{-1}, \infty[$, we infer
\begin{equation}\label{e:jump-bound-f}
|f (t_j^+)-f (t_j^-)|\leq C |I (t_j^+)-I (t_j^-)| + C (t_j^+)^{\tau}.
\end{equation}
Next we show that the two bounds \eqref{e:jump_estimate} and \eqref{e:Gronwall} imply that $f$ is bounded from above: considering $\rho\in ]0,1[$, we let $k$ the largest number such that $\rho < t_k$ and we can estimate
\begin{align*}
f (1) - f (\rho) &= \int_\rho^{t_k} f' + \sum_{j=1}^{k-1} \int_{t_{j+1}}^{t_j} f'
+ \sum_{j=2}^{k} (f (t_j^+) - f (t_j^-))
\end{align*}
which turns into
\begin{align*}
f (\rho) &\leq f (1) -\int_\rho^{t_k} f' - \sum_{j=1}^{k-1} \int_{t_{j+1}}^{t_j} f' - \sum_j |f (t_j^+)-f (t_j^-)|\\
&\leq f (1) + C \int_0^1 r^{\tau-1}\, dr + C \sum_j  |I (t_j^+)-I (t_j^-)| < \infty\, 
\end{align*}
(note that in the last line we have used \eqref{e:jump-bound-f}).

Next observe that the distributional derivative of $f$ consists of a nonnegative measure (on the union of the open intervals $(t_{k+1}, t_k)$ and a purely atomic Radon measure which has finite mass by \eqref{e:jump_estimate}. We thus conclude that the distributional derivative of $f$ is a Radon measure. Next fix any $\rho \leq 1$ and let $t_k$ be such that $2 t_{k+1} < \rho < 2 t_k$. We then have the bound
\begin{align*}
|Df| (]\rho,1[) &\leq Df (\rho, t_k^-) + \sum_{1\leq j\leq k-1} Df (]t_{j+1}^+, t_j^-[) + \sum_{2\leq j\leq k} |f (t_j^+)-f (t_j^-)|\\
&\leq 2  \sum_{j=1}^\infty |f (t_j^+)-f (t_j^-)| + \|f\|_\infty < \infty\, .
\end{align*}
Hence, letting $\rho$ go to $0$ we discover that $|Df| (]0,1[) < \infty$, that is $f\in BV (]0,1[)$. This in turn implies that $f$ is a function of bounded variation and hence that $\lim_{r\downarrow 0} f(r)$ exists and is finite. Observe, moreover that by \eqref{e:LimitOfS/D} we infer that
$f(r) - \log (I(r))$ converges to $0$ as $r\downarrow 0$. We thus conclude that 
\[
\lim_{r\downarrow 0} e^{f (r)} = \lim_{r\downarrow 0} I (r)
\]
exists, it is finite, and it is positive. 
\end{proof}
 
\section{Proof of Theorem \ref{t:frequency}: Part I}\label{s:proof_monotonicity_frequency}

\subsection{Proof of \eqref{e:bound_from_below}} The claim is simply equivalent to the existence of a constant $C$ such that $H (r)\leq C r D(r)$. The latter is a consequence of a Poincar\'e-type inequality which uses the fact that $N_k$ vanishes identically on the boundary curve $\Gamma$. The proof will be reduced to \cite[Proposition 9.4]{DDHM}. However, in order to make the latter reduction, we employ a device which will be used in several subsequent computations. Having fixed a positive $r$ different from any $t_j$ we let $k$ be such that
$t_{k+1} < r < t_k$ and we define the corresponding rescaled quantities $\bar {D}_k (t_k^{-1} r), \bar H_k (t_k^{-1} r), \bar S_k (t_k^{-1} r)$, and $\bar I_k (t_k^{-1} r)$. More precisely we define the function $d_k (x) := t_k^{-1} d (t_k x)$ and set 
\begin{align}
\bar D_k (\rho) &:= \int_{\bar \cM_k} \phi \left(\frac{d_k (x)}{\rho}\right) |D \bar N_k|^2 (x)\, d\cH^2 (x) \,,\\
\bar H_k (\rho) &:= - \int_{\bar \cM_k} \phi' \left(\frac{d_k (x)}{\rho}\right) |\nabla_{\bar \cM_k} d_k (x)|^2 \frac{|\bar N_k (x)|^2}{d_k (x)}\, d \cH^2 (x)\, ,\\
\bar S_k (\rho) &:= \int_{\bar \cM_k} \phi \left(\frac{d_k (x)}{\rho}\right) |\bar N_k (x)|^2\, d\cH^2 (x).
\end{align}
We then can immediately check the relations
\begin{align}
\bar D_k (t_k^{-1} r) &= t_k^{-2} D (r)\label{e:back-and-forth-1}\, ,\\
\bar H_k (t_k^{-1} r) &= t_k^{-3} H (r)\label{e:back-and-forth-2}\, ,\\
\bar S_k (t_k^{-1} r) &= t_k^{-4} S (r) \label{e:back-and-forth-3}\, ,\\
\bar S_k' (t_k^{-1} r) &= t_k^{-3} S' (r) \label{e:back-and-forth-4}\,,\\
\bar D_k' (t_k^{-1} r) &= t_k^{-1} D' (r)\, .\label{e:back-and-forth-5}
\end{align}

\begin{lemma}\label{l:Poincare}
There is a constant $C$ such that
\begin{align}
H(r) & \leq C r D(r)\,,\\
S' (r) & \leq C r D(r)\,,\\
S (r) & \leq C r^2 D(r)\label{e:LimitOfS/D}\, .
\end{align}
\end{lemma}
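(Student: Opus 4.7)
The strategy is to reduce all three inequalities to their uniform counterparts at unit scale for the rescaled normal approximation $\bar N_k$ on $\bar\cM_k$, where a boundary Poincaré-type estimate (in the spirit of \cite[Proposition 9.4]{DDHM}) applies. Fixing $r\in(t_{k+1},t_k]$ and setting $\rho:=t_k^{-1}r\in(\bar t_{k+1},1]$, the identities \eqref{e:back-and-forth-1}--\eqref{e:back-and-forth-5} translate the three claims into
\[
\bar H_k(\rho)\le C\rho\,\bar D_k(\rho),\qquad \bar S_k(\rho)\le C\rho^2\,\bar D_k(\rho),\qquad |\bar S_k'(\rho)|\le C\rho\,\bar D_k(\rho),
\]
to be proved with a constant independent of $k$ and of $\rho\le 1$.

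The first two inequalities would follow by a direct application of the boundary Poincaré estimate, whose key input is precisely the vanishing of $\bar N_k$ on the rescaled boundary arc $\Gamma_k=t_k^{-1}\Gamma$ (cf.\ (A3) of Definition \ref{d:app}). To invoke it with uniform constants I would need to check that its geometric hypotheses are satisfied uniformly in $k$: this is granted by the $C^{3,\omega}$ bound on the center manifold $\bar\cM_k$ from Theorem \ref{t:cm}, by the scale-invariance of the $C^{3,\alpha_0}$ bounds on $\Gamma_k$ (the curvature of the rescaled boundary is only smaller than the original), and by the scale-invariant nature of axioms (i)--(iii) in Definition \ref{Def:distance}.

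For the third inequality I would differentiate $\bar S_k$ under the integral sign to obtain
\[
\bar S_k'(\rho)=-\frac{1}{\rho^2}\int_{\bar\cM_k}\phi'\!\left(\frac{d_k}{\rho}\right)d_k\,|\bar N_k|^2\,d\cH^2.
\]
On the support of $\phi'(d_k/\rho)$ one has $d_k\in(\rho/2,\rho)$, and by (ii) of Definition \ref{Def:distance} also $|\nabla d_k|\ge 1/2$ there. Writing $d_k=d_k^2/d_k$ and bounding $d_k^2\le 4\rho^2|\nabla d_k|^2$ inside the integrand yields $|\phi'(d_k/\rho)|d_k|\bar N_k|^2\le 4\rho^2|\phi'(d_k/\rho)||\nabla d_k|^2|\bar N_k|^2/d_k$, which integrates to $|\bar S_k'(\rho)|\le 4\bar H_k(\rho)$; combined with the first bound this gives the third.

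The main obstacle I anticipate is the \emph{uniformity in $k$} of the Poincaré constant. Since the sequence of intervals of flattening can be infinite, one cannot afford the constant to degenerate as $k\to\infty$. The saving feature is Assumption \ref{a:main-local-3}: the starting excess $\bmo$ for the $k$-th center manifold tends to zero, so $\bar\cM_k$ becomes increasingly close to a flat disk and $\Gamma_k$ to a straight segment in $V_0$. Consequently all the inputs of the boundary Poincaré estimate (curvature of $\bar\cM_k$, curvature of $\Gamma_k$, $C^{2}$ norm of the graphing map for $\bar\cM_k$, and the distortion of $d_k$ away from the Euclidean distance) can be made smaller than any fixed threshold, and the Poincaré constant can be chosen uniform across all $k$.
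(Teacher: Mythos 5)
Your proposal follows essentially the same route as the paper: rescale via the identities \eqref{e:back-and-forth-1}--\eqref{e:back-and-forth-5} and invoke the boundary Poincar\'e estimate of \cite[Proposition 9.4]{DDHM} for the rescaled quantities, whose hypotheses hold uniformly in $k$ because the relevant geometric bounds on $\bar\cM_k$, $\Gamma_k$, and $d_k$ are controlled by the fixed parameters of the construction. The only (minor) departure is that you derive $\bar S_k'(\rho)\le 4\bar H_k(\rho)$ from a pointwise comparison of the integrands rather than citing \cite{DDHM} for that bound as well; this is a correct and clean shortcut, but not a different strategy.
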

\begin{proof}
We observe that the corresponding inequalities for $\bar D_k$, $\bar H_k$, $\bar S_k$, and $\bar S_k'$ 
follow from \cite[Proposition 9.4]{DDHM}, since the center manifold $\bar\cM_k$, the functions $d_k$, and $N_k$ satisfy the assumptions of the Proposition. 
\end{proof}

\subsection{Derivatives of $H$ and $D$} In order to prove \eqref{e:Gronwall} the first step consists in computing the derivatives of $H$ and $D$. In what follows we will use the usual convention of denoting by $O (g)$ any function $f$ of the real variable $r>0$ with the property that $|f(r)|\leq C g(r)$. Moreover, in order to avoid cumbersome notation, for $r\in (t_{k+1}, t_k]$ we will drop the subscript $\mathcal{M}_k$ from the gradient $\nabla_{\mathcal{M}_k}$ on the manifold.

\begin{proposition}\label{p:D'-and-H'}
Under the assumptions of Theorem \ref{t:frequency} we have, for every $r\in (t_{k+1}, t_k]$,
\begin{align}
D'(r) &= - \int \phi' \left(\frac{d(x)}{r}\right) \frac{d(x)}{r^2} |DN|^2,\\
H'(r) &= \left(\frac{1}{r} + O (1)\right) H(r) + 2 E(r),\label{e:derivative-of-H-bar}
\end{align}
and 
\begin{equation}\label{e:E(r)}
E(r) = - \frac{1}{r} \int \phi' \left(\frac{d(x)}{r}\right) \sum_i N_i (x) \cdot (DN_i (x)\nabla d (x))\, .
\end{equation}
\end{proposition}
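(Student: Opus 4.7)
The formula for $D'(r)$ is immediate: since the integrand in $D(r)$ depends on $r$ only through $\phi(d(x)/r)$, differentiating under the integral sign yields
\[
D'(r) = \int_{\cM_k} \phi'\!\left(\frac{d(x)}{r}\right) \cdot \left(-\frac{d(x)}{r^2}\right) |DN_k|^2(x)\, d\cH^2(x),
\]
which is the claimed identity. Observe that differentiation under the integral is justified since $\phi$ is Lipschitz and compactly supported in $[0,1)$, so the integrand is dominated uniformly in $r$ on compact subsets of $]t_{k+1},t_k[$.

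For $H'(r)$, the same differentiation gives
\[
H'(r) = \frac{1}{r^2}\int_{\cM_k} \phi''\!\left(\frac{d(x)}{r}\right) |\nabla d(x)|^2\, |N_k(x)|^2\, d\cH^2(x),
\]
and I will transform this via integration by parts on $\cM_k$. The key observation is
\[
\nabla_{\cM_k}\!\left(\phi'\!\left(\frac{d}{r}\right)\right) = \frac{1}{r}\, \phi''\!\left(\frac{d}{r}\right) \nabla_{\cM_k} d,
\]
so that
\[
H'(r) = \frac{1}{r}\int_{\cM_k} \nabla_{\cM_k}\!\left(\phi'\!\left(\frac{d}{r}\right)\right)\cdot \nabla_{\cM_k} d\; |N_k|^2\, d\cH^2.
\]
Now integrate by parts on the Riemannian manifold $\cM_k$, whose boundary is $\Gamma$. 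Since by Assumption (A3) of Definition \ref{d:app} we have $N_k \equiv Q\a{0}$ on $\Gamma$, the boundary contribution vanishes, and we obtain
\[
H'(r) = -\frac{1}{r}\int_{\cM_k} \phi'\!\left(\frac{d}{r}\right) \dv_{\cM_k}\!\left(|N_k|^2\, \nabla_{\cM_k} d\right) d\cH^2.
\]
Expanding the divergence gives two terms. The first,
\[
-\frac{1}{r}\int_{\cM_k}\phi'\!\left(\frac{d}{r}\right)\nabla_{\cM_k}(|N_k|^2)\cdot \nabla_{\cM_k} d\, d\cH^2,
\]
equals $2E(r)$ upon noting that $\nabla_{\cM_k}(|N_k|^2) = 2\sum_i (DN_k)_i \cdot (N_k)_i$, giving exactly formula \eqref{e:E(r)}.

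The second term,
\[
-\frac{1}{r}\int_{\cM_k} \phi'\!\left(\frac{d}{r}\right)|N_k|^2\, \Delta_{\cM_k} d\, d\cH^2,
\]
is to be compared with $\frac{1}{r}H(r) = -\frac{1}{r}\int \phi'(d/r) |\nabla_{\cM_k}d|^2 |N_k|^2/d$. It will suffice to show that
\[
\Delta_{\cM_k} d(x) = \frac{|\nabla_{\cM_k} d(x)|^2}{d(x)} + O(1)\qquad\text{uniformly on $\cM_k \cap \supp(\phi(d/r))$,}
\]
with the $O(1)$ depending on $d$ and $\cM_k$ but not on $\phi$ or $r$; since $\phi' \leq 0$, this immediately turns the second term into $(1/r + O(1))H(r)$ with $O(1)$-error controlled.

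The required pointwise estimate on $\Delta_{\cM_k} d$ is the main (technical) step. The plan is to combine three facts: first, property (iii) of Definition \ref{Def:distance} gives $D^2 d(x) = |x|^{-1}(\Id - |x|^{-2} x\otimes x) + O(1)$ in the ambient space; second, since $\cM_k$ is a $C^{3,\omega}$ graph with $\|\phii_k\|_{C^{3,\omega}} \leq C\bmo^{1/2}$ (Theorem \ref{t:cm}(b)), its second fundamental form is bounded, so $\Delta_{\cM_k} d = \mathrm{tr}_{T_x\cM_k}(D^2 d) + H_{\cM_k}\cdot \nabla^{\perp} d$ with the latter term bounded; third, tracing the ambient Hessian over the tangent plane of $\cM_k$ gives
\[
\mathrm{tr}_{T_x\cM_k}(D^2 d) = \frac{2}{|x|} - \frac{|\bp_{T_x\cM_k}(x)|^2}{|x|^3} + O(1) = \frac{|\nabla_{\cM_k} d|^2}{d} + O(1),
\]
where the last equality uses property (ii) $\nabla d = x/|x| + O(|x|)$ to identify $|\nabla_{\cM_k} d|^2 = |\bp_{T_x\cM_k}(x/|x|)|^2 + O(|x|)$ and then $d(x) = |x| + O(|x|^2)$ from property (i). Assembling these expansions and absorbing all lower order terms into the $O(1)$ error yields the claimed identity \eqref{e:derivative-of-H-bar}. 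This technical pointwise estimate is essentially \cite[Proposition 4.18]{DDHM}, to which our setting reduces after rescaling to $\bar\cM_k$ via \eqref{e:back-and-forth-1}--\eqref{e:back-and-forth-5}; the argument there applies verbatim, the only new ingredient being the vanishing of boundary terms, which in our case follows from $N_k|_{\Gamma}=Q\a{0}$.
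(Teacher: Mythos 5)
The proposal follows the same route as the paper: differentiate $D$ under the integral sign; for $H'$ apply the chain rule, integrate by parts on $\cM_k$ (the boundary term drops because $N_k|_\Gamma = Q\a{0}$ by (A3) of Definition~\ref{d:app}), split the divergence into the $E$-term and the Laplacian term, and reduce the latter to the pointwise estimate $\Delta_{\cM_k} d = \tfrac1d + O(1)$ on the support of $\phi'(d/r)$. This is the same structure the paper uses (reducing to a claim about $\Delta_{\cM_k} d$ after invoking the analogue of \cite[Proposition 9.5]{DDHM}).

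There is, however, a genuine gap in your justification of the key pointwise estimate. You assert that ``since $\cM_k$ is a $C^{3,\omega}$ graph with $\|\phii_k\|_{C^{3,\omega}} \leq C\bmo^{1/2}$ (Theorem \ref{t:cm}(b)), its second fundamental form is bounded.'' Theorem~\ref{t:cm}(b) controls $\bar\phii_k$, i.e.\ the center manifold $\bar\cM_k$ of the \emph{rescaled} current $T_{0,t_k}$. After scaling back to $\cM_k = t_k\bar\cM_k$, the second fundamental form satisfies only $\|A_{\cM_k}\|_\infty \leq C t_k^{\kappa-1}$, which is \emph{not} uniformly bounded and in fact diverges as $k\to\infty$. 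Your argument that ``$H_{\cM_k}\cdot\nabla^\perp d$ is bounded'' therefore does not go through as stated. The paper's actual mechanism is that on $\supp\phi'(d/r)$ one has $|x|\leq Cr\leq Ct_k$, so the controlled quantity is the product $|x|\,\|A_{\cM_k}\|_\infty \leq C t_k^{\kappa}$; the blow-up of $\|A_{\cM_k}\|_\infty$ is compensated by the smallness of $|x|$, and this is precisely where the hypothesis $r\in(t_{k+1},t_k]$ enters. Replacing ``$\|A_{\cM_k}\|$ bounded'' by the correct product bound (and using, similarly, $|\nabla^\perp d|\leq C|x|\|A_{\cM_k}\|_\infty + C|x|$) is necessary to close the estimate. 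As a smaller remark: the relevant reference is \cite[Proposition 9.5]{DDHM} (center-manifold frequency), not \cite[Proposition 4.18]{DDHM} (the flat Dir-minimizing setting), which does not involve the second fundamental form at all and does not see this scaling issue.
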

\begin{proof} The first derivative is a straightforward computation. For the second, we can follow the computations of \cite[Proof of Proposition 9.5]{DDHM} to conclude that
\[
H' (r) = 2 E(r) - \frac{1}{r} \int \phi' \left(\frac{d(x)}{r}\right) \Delta_{\cM_k} d (x) |N|^2 (x)\, ,
\]
where $\Delta_{\mathcal{M}_k}$ is the Laplace-Beltrami operator on the manifold $\mathcal{M}_k$. Noticing that $\phi' \left(\frac{d(x)}{r}\right)$ vanishes unless $C^{-1} r \leq |x| \leq C r$, our claim will follow once we show that
\[
\Delta_{\mathcal{M}_k} d (x) = \frac1{d(x)} + O (1) = \frac{1}{d(x)} (1+ O (d (x)))\, . 
\]
In order to show the latter estimate, we fix a point $x\in \mathcal{M}_k$ and observe first that the second fundamental form of the center manifold $\bar{\mathcal{M}}_k$ is bounded by $C (\bE (T_{0, t_k}, 4 R_0)^{1/2} + \bA t_k)$, which in turn is bounded by $C t_k^{\kappa}$ for some positive $\kappa$. By rescaling, the second fundamental form $A_{\cM_k}$ of $\mathcal{M}_k$ enjoys the bound $\|A_{\cM_k}\|_\infty \leq C t_k^{\kappa-1}$. On the other hand, recalling that $|x|^{-1} |D d- D |x|| + |D^2 d - D^2 |x|| \leq C $ it is easy to see that 
\begin{align*}
\left|\Delta_{\mathcal{M}_k} d(x) -\frac1{d(x)}\right| 
&\leq \left|\Delta_{\mathcal{M}_k} |x| - \frac{1}{|x|}\right|  + C + C |x| \|A_{\cM_k}\|_\infty \\
&\leq C + C \|A_{\cM_k}\|_\infty \leq C t_k^\kappa + C \leq C\, .\qedhere
\end{align*}
\end{proof}

\subsection{First variations and approximate identities} We start by recalling that, since $T_{0, t_k}$ is area-minimizing and $\partial T_{0, 2 t_k} \res \bC_{4R_0} = Q \a{\Gamma_k}\res \bC_{4R_0}$, then $\delta T_{0, t_k} (X)=0$ for every $X$ which is tangent to $\Gamma$. In what follows we fix a $C^3$  extension $\tilde\phii_k$ of the function $\bar \phii_k$ to $[-4,4]^2\subset V$ (by increasing the $C^{3,\omega}$ estimate on $\phii_k$ by a constant factor) whose graph is the center manifold $\bar\cM_k$ and we denote by $\mathbf{p}_k$ the orthogonal projection onto the graph of $\tilde\phii_k$ (which is of course defined only in a suitable normal neighborhood of it). We then fix the two relevant vector fields with which we will test the stationarity condition:
\begin{align*}
X_o (p) &:= \phi \left( \frac{d_k (\mathbf{p}_k (p))}{r}\right) (p-\mathbf{p}_k (p)),\\
X_i (p) &:= -Y(\bp_k (p)) := - \frac{1}{2} \phi \left(\frac{d_k (\mathbf{p}_k (p)))}{r}\right) \frac{\nabla d_k^2}{|\nabla d_k|^2} (\mathbf{p}_k (p))
\end{align*}
(note that $\nabla$ means the gradient $\nabla_{\bar \cM_k}$ here).

Note that $X_i$ is tangent to both $\bar \cM_k$ and $\Gamma_k$. Moreover, in \cite[Sections 9.4 and 9.5]{DDHM}, the estimates are done separately on both sides of $\Gamma_k$. Thus, it applies to our situation directly with $\cM^+ = \bar \cM_k$. Note also that the fifth error terms vanish for us as our "ambient manifold" is $\R^{n+2}$. We summarize the statements here
and first define the following function
\[ \varphi_k (p):= \phi \left( \frac{d_k (\bp_k (p))}{r} \right).
\]
We also introduce the rescaled quantity 
\[
\bar E_k (\rho) := - \frac{1}{\rho} \int_{\bar \cM_k} \phi' \left(\frac{d_k (x)}{\rho}\right) \sum_i (\bar N_k)_i (x) \cdot (D(\bar N_k)_i (x)\nabla d_k (x))
\]
and record the corresponding relation with $E$, namely
\begin{equation}\label{e:back-and-forth-7}
\bar E_k (t_k^{-1} r) = t_k^{-2} E (r)\, .
\end{equation}
\begin{proposition}[Outer variations]\label{p:outer_variations}
    Let $A_k$ and $H_{\bar \cM_k}$ denote the second fundamental form and the man curvature of $\bar \cM_k$ respectively. Assume $\frac{t_{k+1}}{t_k} < r < 1$. Then we have
    \begin{align}
        |\bar D_k (r) - \bar E_k (r)| = \left| \int_{\bar \cM_k} \left(\varphi_k |D \bar N_k|^2 + \sum_i ((\bar N_k)_i \otimes D\varphi_k):D (\bar N_k)_i \right)  \right|
        \leq \sum_{j=1}^4 |\textnormal{Err}_j^o|,\label{e:outer-with-bar}
    \end{align}
    with
    \begin{align*}
        \textnormal{Err}_1^o &:=-Q\int_{\bar \cM_k} \varphi \langle H_{\bar \cM_k}, \etaa \circ \bar N_k \rangle,\\
        |\textnormal{Err}_2^o| &\leq C \int_{\bar \cM_k} |\varphi_k||A_k|^2|\bar N_k|^2,\\
        |\textnormal{Err}_3^o| &\leq C \int_\cM \left( |\varphi_k|(|D \bar N_k |^2|\bar N_k ||A_k|+|D\bar N_k|^4)+ |D\varphi_k|(|D \bar N_k |^3|\bar N_k| + |D \bar N_k||\bar N_k |^2|A_k|)\right),\\
        \textnormal{Err}_4^o &:= \delta \bT_{\bar F_k} (X_o) - \delta T_{0, t_k} (X_o) = \delta \bT_{\bar F_k} (X_o).
    \end{align*}
\end{proposition}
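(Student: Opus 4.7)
The starting point is the stationarity identity for the area-minimizing current: since $X_o$ vanishes identically on $\Gamma_k$ (because $\bp_k(p)=p$ for $p \in \Gamma_k \subset \bar\cM_k$, so $p - \bp_k(p)=0$ there) and $X_o$ is compactly supported in a neighborhood of $\bar\cM_k$ where $\bp_k$ is well defined, the vector field $X_o$ is an admissible test field with support away from the free part of $\partial T_{0,t_k}$. By the minimality of $T_{0,t_k}$, we have $\delta T_{0,t_k}(X_o)=0$, so
\[
\delta \bT_{\bar F_k}(X_o) = (\delta \bT_{\bar F_k} - \delta T_{0,t_k})(X_o)
\]
and the right-hand side will eventually be absorbed into $\mathrm{Err}_4^o$.

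I would next compute $\delta \bT_{\bar F_k}(X_o)$ by expanding the area of the multivalued graph of $\bar F_k$ over $\bar\cM_k$. The key observation is the tautology $X_o(x+(\bar N_k)_i(x))=\varphi_k(x)\,(\bar N_k)_i(x)$, which makes $X_o$ visibly purely normal to $\bar\cM_k$ along $\bar F_k$. Using the Taylor expansion of the area functional in \cite[Theorem 4.2]{DS2}, or its boundary version as developed in \cite[Sec.~9]{DDHM}, one writes
\[
\delta \bT_{\bar F_k}(X_o) = \int_{\bar\cM_k}\!\Bigl(\varphi_k|D\bar N_k|^2 + \sum_i (\bar N_k)_i\otimes D\varphi_k : D(\bar N_k)_i\Bigr) + \sum_{j=1}^3 \mathrm{Err}_j^o,
\]
where the left-hand side minus the leading quadratic bulk coincides, after an integration by parts of the second term against $\phi'(d_k/r)$ and using $\nabla d_k$ as the radial direction, exactly with $\bar D_k(r)-\bar E_k(r)$. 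Subtracting $\bar D_k(r)-\bar E_k(r)$ from both sides yields \eqref{e:outer-with-bar}.

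The error terms then have to be identified and estimated one by one following the same template as in \cite[Sec.~9.4]{DDHM}. $\mathrm{Err}_1^o$ is the genuinely first-order coupling between the mean curvature of $\bar\cM_k$ and the average of the normal field: this term arises because the projection of $X_o$ onto the tangent plane of the graph picks up a first-order contribution from the second fundamental form, but only the average $\etaa\circ\bar N_k$ survives since the sum $\sum_i (\bar N_k)_i$ factors out. $\mathrm{Err}_2^o$ collects the $|A_k|^2|\bar N_k|^2$ terms coming from the second-order Taylor expansion of the area of a graph over a curved submanifold (curvature of $\bar\cM_k$ interacting with a normal field of size $|\bar N_k|$, integrated against $\varphi_k$). $\mathrm{Err}_3^o$ gathers the genuinely higher-order remainders in the Taylor expansion (cubic and quartic in $D\bar N_k$, and cubic mixed terms coming from differentiating $\varphi_k$). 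Finally $\mathrm{Err}_4^o=\delta \bT_{\bar F_k}(X_o)-\delta T_{0,t_k}(X_o)=\delta \bT_{\bar F_k}(X_o)$ is the error due to the fact that $\bT_{\bar F_k}\neq T_{0,t_k}$ on the set where the Lipschitz approximation fails to describe the current.

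The main technical point is the identification of $\mathrm{Err}_1^o$: one must keep track, in the Taylor expansion of the area of $\bar F_k$ relative to $\bar\cM_k$, of the linear-in-$\bar N_k$ term which for a classical normal graph gives $H_{\bar\cM_k}\cdot\bar N$, and which in the multivalued setting becomes $H_{\bar\cM_k}\cdot\sum_i(\bar N_k)_i=Q\,H_{\bar\cM_k}\cdot(\etaa\circ \bar N_k)$. All remaining estimates are routine once this is isolated, and are dimensional adaptations of \cite[Sec.~9.4]{DDHM}, with the simplification that the ambient is Euclidean and the $\Psi$-terms there disappear.
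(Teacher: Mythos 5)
Your proposal mirrors the paper's own argument, which is a direct citation to \cite[Sections 9.4--9.5]{DDHM} together with the observations that $X_o$ vanishes on $\Gamma_k$ (hence is an admissible first variation so that $\delta T_{0,t_k}(X_o)=0$) and that the Euclidean ambient kills the fifth error term present in \cite{DDHM}; the error terms you describe are exactly the ones transplanted from there. One minor correction: the identity $\bar D_k(r)-\bar E_k(r)=\int_{\bar\cM_k}\bigl(\varphi_k|D\bar N_k|^2+\sum_i(\bar N_k)_i\otimes D\varphi_k:D(\bar N_k)_i\bigr)$ does not require any integration by parts --- it is a direct substitution of $D\varphi_k = r^{-1}\phi'(d_k/r)\nabla d_k$ on $\bar\cM_k$ into the definitions of $\bar D_k$ and $\bar E_k$.
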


For the inner variation, we introduce first a bit more of notation. First of all, we see $D(\bar N_k)_j$ as a map from $T\bar \cM_k$ to $\R^{n+2}$. Denoting the components of $(\bar N_k)_j$ by $(\bar N_k)_j=((\bar N_k)_j^1, \dots, (\bar N_k)_j^{n+2})$ and choosing a vector field $Z$ tangent to $\bar \cM_k$, we write
\[ D (\bar N_k)_j(Z) = (D_Z (\bar N_k)_j^1, \dots, D_Z (\bar N_k)_j^{n+2}). \]
Similarly, we have
\[ D (\bar N_k)_j D^{\bar \cM_k} Y(Z) = D(\bar N_k)_j (D^{\bar \cM_k} Y(Z)) = (D_{D^{\bar \cM_k} Y(Z)} (\bar N_k)_j^1, \dots, D_{D^{\bar \cM_k} Y(Z)} (\bar N_k)_j^{n+2} ). \] 
Thus, for the scalar product $D (\bar N_k)_j:D(\bar N_k)_j D^{\bar \cM_k} Y$, we choose an orthonormal frame $e_1, e_2$ of $T\bar \cM_k$ and express
\[ D(\bar N_k)_j:D (\bar N_k)_j D^{\bar \cM_k} Y = \sum_\ell \langle D_{e_\ell} (\bar N_k)_j, D_{D^{\bar \cM_k} Y(e_\ell)} (\bar N_k)_j \rangle
= \sum_{\ell, i}  D_{e_\ell} (\bar N_k)_j^i D_{D^{\bar \cM_k} Y(e_\ell)} (\bar N_k)_j^i . \]
We further introduce the quantity
\[
G(r) := -r^{-2} \int_{\cM_k} \phi \left( \frac dr \right) \frac{d}{|\nabla d|^2} \sum_j |D(N_k)_j \cdot \nabla d|^2
\]
and its correspoding rescaled version
\[
\bar G_k (\rho) = -\rho^{-2} \int_{\bar \cM_k} \phi \left( \frac{d_k}{\rho} \right) \frac{d_k}{|\nabla d_k|^2} \sum_j |D(\bar N_k)_j \cdot \nabla d_k|^2\, ,
\]
while we record the corresponding relation as in \eqref{e:back-and-forth-1}-\eqref{e:back-and-forth-5}:
\begin{equation}\label{e:back-and-forth-6}
\bar G_k (t_k^{-1} r) = t_k^{-1} G (r)\, .
\end{equation}

\begin{proposition}[Inner variations]\label{p:inner_variations}
Under the above assumptions we have
    \begin{align}
        &\left| \bar D_k'(r) - O(t_k^\kappa)\bar D_k (r) - 2\bar G_k(r) \right| \nonumber\\
        &= \frac 2r \left| \int_{\bar \cM_k} \left( \sum_j D (\bar N_k)_j:D (\bar N_k)_j D^{\bar\cM_k} Y  - \frac12 |D \bar N_k|^2 {\rm div}_{\bar \cM_k} Y \right) \right|
        \leq \frac 2r \sum_{j=1}^4 |\textnormal{Err}_j^i|,\label{e:inner-with-bar}
    \end{align}
    with
    \begin{align*}
        \textnormal{Err}_1^i &:= Q\int_{\bar \cM_k} \left( \langle H_{\bar \cM_k}, \etaa \circ \bar N_k \rangle \dv_{\bar \cM_k} Y + \langle D_Y H_{\bar \cM_k}, \etaa \circ \bar N_k \rangle \right),\\
        |\textnormal{Err}_2^i| &\leq C \int_{\bar \cM_k} |A_k|^2 \left(|DY||\bar N_k|^2 + |Y||\bar N_k||D \bar N_k| \right),\\
        |\textnormal{Err}_3^i| &\leq C \int_{\bar \cM_k} \left(|D\bar N_k|^2|Y||A_k|(|\bar N_k| + |D \bar N_k|) + |DY| (|A||\bar N_k|^2|D \bar N_k|+|D \bar N_k|^4) \right)\\
        \textnormal{Err}_4^i &:= \delta \bT_{\bar F_k} (X_i) - \delta T_{0, t_k} (X_i) = \delta \bT_{\bar F_k} (X_i).
    \end{align*}
\end{proposition}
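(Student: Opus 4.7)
The plan is to mirror \cite[Proposition 9.10]{DDHM} (the boundary analog of \cite[Proposition 3.1]{DS4}), with the simplification that our ambient space is Euclidean, which kills the ``fifth'' error term present in \cite{DDHM}. The whole argument hinges on the following observation, which is the first step: $X_i$ is tangent to $\Gamma_k$ along $\Gamma_k$. Indeed, by our special choice of the distance function $d$ in Definition \ref{Def:distance} (cf.\ \cite[Lemma 4.25]{DDHM}), $\nabla d_k$ is tangent to $\Gamma_k$ at every point of $\Gamma_k$, so $Y = \tfrac12\phi(d_k/r)\nabla d_k^2/|\nabla d_k|^2$ is tangent there as well; since $\bp_k$ fixes $\Gamma_k\subset\bar\cM_k$, also $X_i=-Y\circ\bp_k$ is tangent to $\Gamma_k$ on $\Gamma_k$. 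Because $T_{0,t_k}$ is area minimizing with $\partial T_{0,t_k}\res \bC_{4R_0} = Q\a{\Gamma_k}$ and $X_i$ preserves the boundary to first order, testing the first variation gives $\delta T_{0,t_k}(X_i)=0$. Hence the whole of $\delta\bT_{\bar F_k}(X_i)$ is the ``Err$_4^i$'' term of the statement.

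Next I would expand $\delta\bT_{\bar F_k}(X_i)$ via the Taylor expansion of the area of a multi-valued graph over a $C^{3,\omega}$ submanifold, as in \cite[Appendix A]{DS4} and \cite[Section 7]{DDHM} (adapting \cite[Theorem 4.2]{DS2}). This produces
\[
\delta\bT_{\bar F_k}(X_i) \;=\; Q\,\delta[\bar\cM_k](X_i) \;+\; \int_{\bar\cM_k}\!\left(\tfrac12|D\bar N_k|^2\,\mathrm{div}_{\bar\cM_k}Y - \sum_j D(\bar N_k)_j:D(\bar N_k)_j D^{\bar\cM_k}Y\right) \;+\; R,
\]
with a remainder $R$ whose integrands are bounded exactly by the majorants claimed for $\mathrm{Err}_2^i$ (quadratic in $|A_k|$ and $|\bar N_k|$, weighted by $|Y|$, $|DY|$) and $\mathrm{Err}_3^i$ (cubic/quartic in $|D\bar N_k|$, likewise weighted). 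The term $Q\delta[\bar\cM_k](X_i)$ is treated by the classical first variation formula for the smooth surface $\bar\cM_k$: writing $X_i=-Y\circ\bp_k$ and integrating by parts along $\bar\cM_k$, the boundary integral on $\Gamma_k$ vanishes (because $Y$ is tangent to $\Gamma_k$ there and $\etaa\circ\bar N_k\equiv 0$ on $\Gamma_k$ by Theorem \ref{t:normal-approx}(A3)), yielding exactly $\mathrm{Err}_1^i$.

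The core computation is to identify the principal integrand with $\tfrac r2(\bar D_k'(r)-2\bar G_k(r))$ modulo an error of size $O(t_k^\kappa)\bar D_k(r)$. Setting $W:=d_k\nabla d_k/|\nabla d_k|^2$ so that $Y=\phi(d_k/r)W$, I would use properties (i)--(iii) of $d$ from Definition \ref{Def:distance} together with the bound $\|A_{\bar\cM_k}\|_\infty \le C\bmo^{1/2}\le Ct_k^\kappa$ from Theorem \ref{t:cm} to compute
\begin{align*}
\mathrm{div}_{\bar\cM_k}Y &= \phi'\!\left(\tfrac{d_k}{r}\right)\tfrac{d_k}{r} + 2\phi\!\left(\tfrac{d_k}{r}\right) + O(t_k^\kappa)\phi\!\left(\tfrac{d_k}{r}\right),\\
D^{\bar\cM_k}Y &= \phi'\!\left(\tfrac{d_k}{r}\right)r^{-1}\nabla d_k\otimes W + \phi\!\left(\tfrac{d_k}{r}\right)\bigl(\mathrm{Id}+O(t_k^\kappa)\bigr).
\end{align*}
The $2\phi(d_k/r)|D\bar N_k|^2$ contributions cancel identically; using $\sum_\ell(\nabla d_k\cdot e_\ell)D_{e_\ell}(\bar N_k)_j = D_{\nabla d_k}(\bar N_k)_j$, the residual integrand reduces to
\[
\phi'\!\left(\tfrac{d_k}{r}\right)\!\tfrac{d_k}{r|\nabla d_k|^2}\sum_j|D_{\nabla d_k}(\bar N_k)_j|^2 \;-\; \tfrac12\phi'\!\left(\tfrac{d_k}{r}\right)\!\tfrac{d_k}{r}|D\bar N_k|^2 \;+\; O(t_k^\kappa)\phi\!\left(\tfrac{d_k}{r}\right)\!|D\bar N_k|^2.
\]
Multiplying by $2/r$ and integrating, the three pieces yield $-2\bar G_k(r)$, $\bar D_k'(r)$ (via Proposition \ref{p:D'-and-H'}), and an error of size $O(t_k^\kappa)\bar D_k(r)$; rearranging gives the stated identity. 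The main (and only real) obstacle is the careful bookkeeping of all second-order terms and the correct apportioning of sub-leading pieces among the $\mathrm{Err}_j^i$; but since the algebra proceeds term by term exactly as in \cite[Proposition 9.10]{DDHM}, no new ideas are required.
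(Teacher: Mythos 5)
Your overall plan is sound — follow \cite[Proposition 9.10]{DDHM} via the Taylor expansions of \cite[Theorems 4.2 \& 4.3]{DS2} — and you correctly locate the reason for the upgrade from $O(1)\bar D_k$ (as in \cite[(9.28)]{DDHM}) to $O(t_k^\kappa)\bar D_k$, namely the sharpened bound $\|A_{\bar\cM_k}\|_\infty\leq C\bmo^{1/2}\leq Ct_k^\kappa$ supplied by Theorem~\ref{t:cm}, together with the improved bound on the rescaled boundary curvature. The opening observation that $\nabla d_k$ is tangent to $\Gamma_k$, hence $X_i$ is an admissible variation and $\delta T_{0,t_k}(X_i)=0$, is exactly what justifies $\textnormal{Err}_4^i=\delta\bT_{\bar F_k}(X_i)$. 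Your computation of $\mathrm{div}_{\bar\cM_k}Y$ and $D^{\bar\cM_k}Y$ and the cancellation with $\bar D_k'(r)$ and $2\bar G_k(r)$ is the right endgame.

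There is, however, a genuine conceptual error in the accounting of $\textnormal{Err}_1^i$. You write the expansion with a piece $Q\,\delta\a{\bar\cM_k}(X_i)$ and then assert that, after the boundary integral on $\Gamma_k$ vanishes, this piece ``yields exactly $\textnormal{Err}_1^i$.'' But $X_i=-Y$ is tangent to $\bar\cM_k$, so
$\delta\a{\bar\cM_k}(X_i)=\int_{\bar\cM_k}\mathrm{div}_{\bar\cM_k}(-Y)=\int_{\partial\bar\cM_k}\langle -Y,\nu\rangle$, and this vanishes for precisely the reasons you cite ($Y$ tangent to $\Gamma_k$, $\phi$ vanishing on the outer part): so $Q\,\delta\a{\bar\cM_k}(X_i)=0$, not $\textnormal{Err}_1^i$. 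In particular, the condition $\etaa\circ\bar N_k\equiv Q\a{0}$ on $\Gamma_k$ plays no role in this computation; it cannot, since $\delta\a{\bar\cM_k}(X_i)$ does not see $\bar N_k$ at all. The term $\textnormal{Err}_1^i=Q\int_{\bar\cM_k}\bigl(\langle H_{\bar\cM_k},\etaa\circ\bar N_k\rangle\,\mathrm{div}_{\bar\cM_k}Y+\langle D_YH_{\bar\cM_k},\etaa\circ\bar N_k\rangle\bigr)$ is instead one of the curvature-coupled remainder pieces produced by the Taylor expansion of $\delta\bT_{\bar F_k}(X_i)$ itself (it is the inner-variation analogue of $\textnormal{Err}_1^o=-Q\int\varphi\langle H_{\bar\cM_k},\etaa\circ\bar N_k\rangle$ from Proposition~\ref{p:outer_variations}), and as such belongs in your remainder $R$ alongside $\textnormal{Err}_2^i$ and $\textnormal{Err}_3^i$. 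This mis-attribution does not derail the final estimate — the right error terms are all present once the Taylor expansion is written out correctly — but as stated it is not a proof of the equality with the sum $\sum_{j=1}^4|\textnormal{Err}_j^i|$.
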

\begin{proof}
The arguments for the proposition are the same as in \cite[Proposition 9.10]{DDHM} and indeed they are based on the Taylor expansions of \cite[Theorems 4.2 \& 4.3]{DS2}. However some more care is required because the term $O (t_k^{\kappa}) D(r)$ appears in the corresponding inequality (namely \cite[(9.28)]{DDHM} as $O(1) D(r)$. The reason for the improvement is based on the computations
\cite[(9.29)]{DDHM} and \cite[Lemma 9.2]{DDHM}: the improvement follows easily from the fact that:
\begin{itemize}
    \item The curvature of the rescaled boundary $\Gamma_k$ is bounded by $t_k$;
    \item The $C^{3}$ norm of the function
$\bar\phii_k$ (whose graph is the center manifold $\bar \cM_k$) is bounded by $(\bE (T_{0, t_k}, \bC_{4 R_0}) + \|\psi_k\|_{C^{3,\alpha_0}})^{1/2}$,
where $\psi_k$ is the function whose graph describes $\Gamma_k$; we thus have $\|\bar\phii_k\|_{C^3}\leq C t_k^\tau$.
\end{itemize}
\end{proof}

\subsection{Families of subregions for estimating the error terms}\label{s:subregions}
We want to estimate the error terms over the Whitney regions in order to use the separation estimate (Proposition \ref{p:separation_height}) and the splitting before tilting estimates (Proposition \ref{p:splitting}). To achieve this goal we goes along the same lines of \cite[Section 9.6]{DDHM} and apply the arguments of \cite[Section 9.6]{DDHM} to the current $T_{0,t_k}$ that gives rise to the center manifold $\bar{\mathscr{M}}_k$ . Notice that in each error term, there is the cut-off $\phi(d_k/r)$, thus it is enough to consider squares which intersect $\mathscr{B}^+_r := \{x \in V_0 \cap D: d_k (\bar \phii_k(x)) <r\}$. However, to sum the estimates over all squares, we prefer the regions over which we integrate to be disjoint. For this purpose, we define a Besicovitch-type covering.

From now on we fix all the constants from Assumption \ref{ass:hierarchy} and treat them as geometric constants. We are going to consider the Whitney decomposition and the corresponding family $\sW^e, \sW^h, \sW^n$ of squares whose definition is detailed in Section \ref{s:center-manifold}. Note that the construction is not applied to the current $T$ and the boundary $\Gamma$, but rather to the rescaled current $T_{0, t_k}$ and the rescaled boundary $\Gamma_k$. Note that the assumptions for the construction apply for each $k$. For our notation to be more precise we should add the dependence on $k$ of the various families $\sW$, however, since $k$ is fixed at this stage, in order to make our formulas simpler we drop such dependence. 

First we consider all squares which stopped for the excess or the height and which influence some square intersecting $\mathscr B_r^+$.
\begin{definition} We define the family $\cT$ to be
    \begin{align*}
    \cT &:= \left\{ L \in \sW^e \cup \sW^h: L \cap \mathscr B_r^+ \neq \emptyset \right\}\\
    &\qquad\qquad\cup \left\{ L \in \sW^e: \textnormal{ there is an } L' \in \sW^n(L) \textnormal{ such that } L' \cap \mathscr B_r^+ \neq \emptyset\right\}.
    \end{align*}
\end{definition}
Notice that because in a chain of squares in $\sW^n$, the sidelengths always double, we have for each $L\in \cT$
\[ \textnormal{sep}(L, \mathscr B_r^+) := \inf\{ |x-y|: x \in L, y\in \mathscr B_r^+\} 
\leq 3 \sqrt 2 \ell(L) .\]

To each such square $L \in \cT$, we associate a ball $B(L)$ which we call \emph{satellite ball}. Preferably this ball is contained in the square and with radius comparable to the sidelength. However, as not every square in $\cT$ is contained in $D$, we choose instead a nearby ball. Moreover we want that the concentric ball with twice the radius to be contained in $\mathscr B_r^+$. Notice that because of the intervals of flattening \eqref{e:large}, the largest square $L$ contributing to the center manifold and intersecting $\mathscr B_r^+$ satisfies $\ell(L) \leq \frac{1}{64\sqrt 2} r$.
\begin{itemize}
    \item If $B_{\ell(L)/2}(x_L) \subset \mathscr B_r^+$, we define $B(L):= B_{\ell(L)/4}(x_L)$. 
    \item If $B_{\ell(L)/2}(x_L) \nsubseteq \mathscr B_r^+$, we choose a point $y \in \partial \mathscr B_r^+$ minimizing the distance to $L$. Notice that the size length of the squares in the domain of influence of $L$ vary by a factor $2$, we have $|x_L -y| \leq 4 \sqrt2\ell(L)$.
    The center of the satellite ball we want to be a point inside $\mathscr B_r^+$ and close to $y$ (and thus close to $x_L$). Indeed, first notice that by the regularity assumption on $\Gamma_k$, $\bar \phii_k$ (Theorem \ref{t:cm}) and $d_k$ (Definition \ref{Def:distance}) there is a $C^1$-diffeomorphism $\Psi_r:\bar{B}_r^+ \to \bar{\mathscr  B}_r^+$ with $\|\Psi_r- \text{Id} \| \leq C \bmo^{\sfrac12}$. Moreover, we define for any $\ell < \frac r2$ the  vectorfield $n_\ell: \partial B_r^+ \to B_r^+$ describing $\partial \{y \in B_r^+: \dist(y, \partial B_r^+)>\ell \}$ by
    \begin{align*}
        n_\ell(x_1,x_2) :=
        \begin{cases}
            (x_1, \ell), &\textnormal{if } |x_1|<r-\ell,\ x_2=0\,, \\
            (r-\ell)(x_1, x_2), &\textnormal{if } x_2>\ell\,,\\
            (r-\ell, \ell), &\textnormal{if } \ell-r <x_1<r,\ x_2 \leq \ell\,,\\
            (-r+\ell, \ell), &\textnormal{if } -r<x_1<-r+\ell,\ x_2 \leq \ell.
        \end{cases}
    \end{align*}
    Notice that if $\eps_{CM}$ is small enough, we have for any $\ell < \frac r2$
    \[ B_{\ell/2}\big( \Psi_r(n_\ell(x)) \big) \subset
    \Psi_r\big( B_\ell(n_\ell(x)) \big) \subset \mathscr B_r^+.\]
    Thus for the $y \in B_{\ell(L)/2}(x_L) \cap \partial \mathscr B_r^+$, we define
    \[ q_L := \Psi_r \big( n_{\ell(L)/2}(\Psi_r^{-1}(y)) \big)\]
    and observe that
    \[ B(L) := B_{\ell(L)/4}(q_L) \subset \mathscr{B}_r^+\,. \]
\end{itemize}
By construction and the estimates on $d_k$, we have if $\eps_{CM}$ is small enough,
\begin{align*}
    |q_L -x_L| \leq 5\sqrt 2 \ell(L)
    \qquad \textnormal{ and thus } \qquad
    \dist(q_L, L) \leq 4\sqrt 2 \ell(L).
\end{align*}

From this family $\cT$, we now choose a maximal subfamily $\mathscr T$ for which the satellite balls are disjoint. Denote by $S:= \sup\{\ell(L): L \in \cT\}$. We define $\mathscr T_1 \subset \{L \in \cT: \frac12 S \leq \ell(L) \leq S\}$ to be a maximal subfamily for which the associated satellite balls are pairwise disjoint. We inductively define $\mathscr T_{k+1} \subset \{L \in \cT: 2^{-k-1} S \leq \ell(L) \leq 2^{-k}S\}$ to be a maximal subfamily such that all the satellite balls $B(L')$ with $L' \in \mathscr T_1 \cup \cdots \cup \mathscr T_k$ are pairwise disjoint. Finally we define $\mathscr T$ to be the union of all the $\mathscr T_k$. As we want to cover all of $\mathscr B_r^+$, we associate to each square in $L \in \mathscr T$ the nearby squares of $\cT$ whose satellite balls intersect $B(L)$ and the domain of influence $\sW^n(L)$. Indeed, by a standard covering argument, notice that if $H \in \cT$, then there is at least one square $L \in \mathscr T$ such that $\dist(H,L) \leq 20 \sqrt 2 \ell(L)$. We fix an arbitrary choice to partition $\cT$ into families $\cT(L)$ such that $L\in\mathscr T$, for any $H \in \cT(L)$ we have $\ell(H) \leq 2 \ell(L)$ and $\dist(H,L) \leq 20 \sqrt 2 \ell(L)$. Now we add the rest of $\mathscr B_r^+$ and define
\[ \sW(L) := \bigcup_{H \in \cT(L)} \sW^n(H) \cup \{H\}. \]
The associated Whitney regions will be called $\cU(L) \subset \cM$,
\[ \cU(L) := \bigcup_{H \in \sW(L)} \bar\Phii_k(H)\, , \]
where the map $\bar\Phii_k$ is the parametrization of the center manifold induced by $\bar\phii_k$, namely $\bar\Phii_k (x) = (x, \bar\phii_k (x))$.

For simplicity of notation, we enumerate $\mathscr T = \{L_i\}_i$ and denote
\begin{align*}
    \cB_r^+ &:= \bar \Phii_k (\mathscr B_r^+) = \bar \cM_k \cap \{d_k <r\} ,\\
    \cU_i &:= \cU(L_i) \cap \cB_r^+ ,\\
    \cB^i &:= \bar\Phii_k(B(L_i)),\\
    \ell_i &:= \ell(L_i).
\end{align*}

Notice that by construction, every satellite ball $B(L_i)$ has distance at least $\ell_i /4$ to $\partial \mathscr B_r^+$. In particular, there is a geometric constant $c>0$ such that
\begin{align*}
    c \frac{\ell_i}{r} \leq \inf_{\bp_k^{-1}(\cB^i)} \varphi_k = \inf_{\cB^i} \varphi_k. 
\end{align*}
As in \cite[Section 9.6.2]{DDHM}, we conclude that there is  a geometric constant $C>0$ such that
\begin{align}
    \sup_{\bp_k^{-1}(\cU_i)} \varphi_k = \sup_{\cU_i} \varphi_k 
    &\leq C \inf_{\bp_k^{-1}(\cU_i)} \varphi_k = C\inf_{\cU_i} \varphi_k, \label{e:sup_inf_phi}\\
    \sum_{H \in \sW(L_i)} \ell(H)^2 &\leq C \ell_i^2. \label{e:sum_length}
\end{align}
Applying the estimates of Theorem \ref{t:normal-approx} and Corollary \ref{c:cover}(ii) in each square of $\sW(L_i)$ and summing over them yields
\begin{align}
    \Lip (\bar N_k |_{\cU_i}) &\leq C \bmo^{\gamma_2} \ell_i^{\gamma_2}\,, \label{e:N_Lip_U} \\
    \|\bar N_k\|_{C^0(\cU_i)} + \sup_{\supp(T)\cap \bp^{-1}(\cU_i)} |\bp^\perp| &\leq C \bmo^{\sfrac{1}{4}} \ell_i^{1+\beta_1}\,,\\
    \|\bT_{\bar F_k} - T_{0, t_k}\| (\p_k^{-1} (\cU_i)) &\leq C \bmo^{1+\gamma_2} \ell_i^{4+\gamma_2} \,,\\
    \int_{\cU_i} |D \bar N_k|^2 &\leq C \bmo \,\ell_i^{4-2\delta_1}\,, \label{e:N_Dir_energy_U}\\
    \int_{\cU_i} |\etaa\circ \bar N_k| &\leq 
    C \bmo \ell_i^{4+\sfrac{\gamma_2}{2}} + C \int_{\cU_i} |\bar N_k|^{2+\gamma_2}\, .\label{e:etaN_U} 
\end{align}

On the other hand, we can use the the Separation Proposition \ref{p:separation_height}, the Splitting Proposition \ref{p:splitting} and the estimates \eqref{e:sup_inf_phi}, \eqref{e:sum_length} to deduce estimates on the normal approximation as stated in the next lemma.

\begin{lemma}
    Assume the assumption \ref{intorno_proiezione} holds. Then there is a geometric constant $C_0$ \footnote{Here and in the sequel we call a constant geometric if it depends only on $n, Q, N_0, M_0, C_e^\flat, C_e^\natural, C_h$ which we fixed.} such that
    \begin{align}
        \bmo \sum_i \left( \ell_i^{4+2\beta_1} ~ \inf_{\cB^i} \varphi_k \right) &\leq C_0 \bar D_k (r), \label{e:sum_ell_varphi}\\
        \bmo \sum_i \ell_i^{4+\beta_1} &\leq C_0 \int_{\cB_r^+} |D\bar N_k |^2
        \leq C_0(\bar D_k (r) + r\bar D_k '(r)).\label{e:sum_ell}
    \end{align}
    Moreover, we have
    \begin{align}\label{e:sup_ell}
        \bmo \sup_i \ell_i \leq C_0 (r\bar D_k (r))^{1/(5 + \beta_1)}
        \quad \textnormal{ and } \quad
        \bmo \sup_i \left( \ell_i \inf_{\cB^i} \varphi_k \right) \leq C_0 \bar D_k (r)^{1/(4 + \beta_1)} ,
    \end{align}
    and 
    \begin{equation}\label{e:estimate-for-D}
    \bar D_k (r) \leq C_0 \bmo r^{4-2\delta_1} \leq C_0 t_k^{2\kappa} r^{4-2\delta_1}\, .
    \end{equation}
\end{lemma}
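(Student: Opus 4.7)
The four displays split into three lower bounds (\eqref{e:sum_ell_varphi}, \eqref{e:sum_ell}, \eqref{e:sup_ell}) obtained by summing individual contributions indexed by $\mathscr T$, and the single upper bound \eqref{e:estimate-for-D} obtained by summing the Whitney-region estimates of Theorem \ref{t:normal-approx} dyadically. The plan for the lower bounds is to establish, for every $L_i\in\mathscr T$, the pointwise-in-$i$ inequality
\begin{equation}\label{e:ind-plan}
c\,\bmo\,\ell_i^{4+2\beta_1}\;\leq\;\int_{\cB^i}|D\bar N_k|^2\, ,
\end{equation}
together with its slightly stronger variant with exponent $4-2\delta_1$, and then to sum these using the pairwise disjointness of the satellite balls $\cB^i$.

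For \eqref{e:ind-plan} I would distinguish two cases. If $L_i\in\sW^e$, the satellite ball $B(L_i)$ has been constructed so that $q_{L_i}\in\mathscr B_r^+$, $B_{\ell_i/4}(q_{L_i})\subset D$ and $\dist(L_i,q_{L_i})\leq 4\sqrt2\,\ell_i$; Proposition \ref{p:splitting} is therefore directly applicable, giving
\[
c\,\bmo\,\ell_i^{4-2\delta_1}\;\leq\;\int_{\cB^i}|D\bar N_k|^2\, ,
\]
which implies \eqref{e:ind-plan} since $\ell_i\leq 1$ and $\beta_1+\delta_1>0$. If $L_i\in\sW^h$, Proposition \ref{p:separation_height} forces $L_i\in\sC^\natural$ (so the interior theory applies) and gives the pointwise separation $\cG(\bar N_k,Q\a{\etaa\circ\bar N_k})\geq c\,\bmo^{1/4}\ell_i^{1+\beta_1}$ on a region of area $\sim\ell_i^2$, hence $\int_{\cB^i}\cG(\bar N_k,Q\a{\etaa\circ\bar N_k})^2\geq c\,\bmo^{1/2}\ell_i^{4+2\beta_1}$. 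Arguing as in the interior case (compare with the proof of \cite[Proposition 3.5]{DS4}), one combines this with a Poincar\'e-type inequality for the zero-average $Q$-valued map $\bar N_k-Q\a{\etaa\circ\bar N_k}$ to deduce $\int_{\cB^i}|D\bar N_k|^2\geq c\,\bmo^{1/2}\ell_i^{2+2\beta_1}\geq c\,\bmo\,\ell_i^{4+2\beta_1}$, the last inequality using $\bmo\leq 1$ and $\ell_i\leq 1$.

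Once \eqref{e:ind-plan} is in hand, \eqref{e:sum_ell_varphi} follows by multiplying by $\inf_{\cB^i}\varphi_k$ and summing: the pairwise disjointness of the $\cB^i$'s and \eqref{e:sup_inf_phi} reduce the resulting sum to $C\int_{\bar\cM_k}\varphi_k|D\bar N_k|^2=C\bar D_k(r)$. For \eqref{e:sum_ell} one uses the stronger individual bound $\int_{\cB^i}|D\bar N_k|^2\geq c\bmo\ell_i^{4-2\delta_1}\geq c\bmo\ell_i^{4+\beta_1}$ (valid in both cases, again by $\ell_i\leq 1$), sums over $i$, and controls the total by $\sum_i\int_{\cB^i}|D\bar N_k|^2\leq\int_{\{d_k\leq r\}}|D\bar N_k|^2\leq C(\bar D_k(r)+r\bar D_k'(r))$, using that $\phi=1$ on $[0,1/2]$ and $-\phi'\geq 2$ on $[1/2,1]$. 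The two supremum estimates in \eqref{e:sup_ell} are then extracted from the per-$i$ version of \eqref{e:sum_ell_varphi}: inserting $\inf_{\cB^i}\varphi_k\geq c\,\ell_i/r$ produces $c\,\bmo\,\ell_i^{5+\beta_1}\leq r\,\bar D_k(r)$, and taking the supremum and $(5+\beta_1)$-th root, combined with $\bmo\leq 1$, proves the first; the second follows from the elementary inequality $(\ell_i\inf_{\cB^i}\varphi_k)^{4+\beta_1}\leq\ell_i^{4+\beta_1}\inf_{\cB^i}\varphi_k$, valid because $\inf_{\cB^i}\varphi_k\leq 1$.

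Finally, for the upper bound \eqref{e:estimate-for-D}, I would start from $\bar D_k(r)\leq\int_{\{d_k\leq r\}}|D\bar N_k|^2$, decompose into Whitney regions $\cL$ meeting $\mathscr B_r^+$, apply \eqref{e:N_Dir_energy_U} to each, and sum dyadically: for each scale $2^{-j}\leq r$ at most $C(r\cdot 2^j)^2$ squares are relevant, so $\bar D_k(r)\leq C\bmo r^2\sum_j 2^{-j(2-2\delta_1)}\leq C\bmo r^{4-2\delta_1}$. The second inequality of \eqref{e:estimate-for-D} is then just a rephrasing of \eqref{e:CM-applies}, which gives $\bmo\leq C\varepsilon_{CM} t_k^{2\kappa}$. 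The main obstacle in the entire proof is the $\sW^h$ case of \eqref{e:ind-plan}: Separation only yields a pointwise lower bound on $|\bar N_k|$, and the conversion into a lower bound on the Dirichlet energy requires a careful Poincar\'e-type inequality for zero-average $Q$-valued maps on the satellite region---this is precisely the point where the interior argument of \cite{DS4} enters most crucially.
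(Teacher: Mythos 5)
There is a genuine gap in your treatment of the $\sW^h$ case, and it is exactly at the point you identify as the ``main obstacle.'' You want to upgrade the separation estimate
\[
\int_{\cB^i}\cG\bigl(\bar N_k,Q\a{\etaa\circ\bar N_k}\bigr)^2\;\geq\;c\,\bmo^{\sfrac12}\ell_i^{4+2\beta_1}
\]
to a per-ball lower bound $\int_{\cB^i}|D\bar N_k|^2\geq c\,\bmo^{\sfrac12}\ell_i^{2+2\beta_1}$ via ``a Poincar\'e-type inequality for the zero-average $Q$-valued map $\bar N_k-Q\a{\etaa\circ\bar N_k}$.'' But the condition $\etaa\circ(\bar N_k - Q\a{\etaa\circ\bar N_k})\equiv 0$ is a \emph{pointwise} condition in the target $\mathbb R^n$ (the $Q$ sheets average to $0$ at each $x$), not a mean-zero condition over the domain $\cB^i$; it does not rule out the constant obstruction in a Poincar\'e inequality. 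Concretely, if $\bar N_k$ on $\cB^i$ were close to the constant multivalued map $\a{a}+\a{-a}$ with $|a|\sim\bmo^{\sfrac14}\ell_i^{1+\beta_1}$, then the separation hypothesis of Proposition \ref{p:separation_height}(S3) holds, $\cG(\bar N_k,Q\a{\etaa\circ\bar N_k})$ has exactly the right lower bound, and yet $|D\bar N_k|$ is as small as one likes on $\cB^i$. Since the satellite ball $\cB^i$ of an interior square need not meet $\Gamma_k$, no boundary vanishing is available on $\cB^i$ to exclude this scenario, so the per-ball inequality you want is simply false. The entire rationale for having a separate height-stopping condition $\sW^h$ in the refinement algorithm is precisely this: the current can look like well-separated nearly flat sheets there, with small local excess and hence small local Dirichlet energy of $N$.

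The paper sidesteps this by \emph{not} attempting a per-ball Dirichlet lower bound for $\sW^h$ squares. For $L_i\in\sW^h$ it records only the $L^2$ lower bound $\int_{\cB^i}|\bar N_k|^2\geq c_0\bmo^{\sfrac12}\ell_i^{4+2\beta_1}$, sums this with the Dirichlet lower bounds from the $\sW^e$ squares (Proposition \ref{p:splitting}) to obtain $\bmo\sum_i\ell_i^{4+2\beta_1}\inf_{\cB^i}\varphi_k\leq C\int_{\cB_r^+}(|\bar N_k|^2+\varphi_k|D\bar N_k|^2)$, and only at this \emph{global} stage converts $\int_{\cB_r^+}|\bar N_k|^2$ into Dirichlet energy using the Poincar\'e inequality of Lemma \ref{l:Poincare}, which is legitimate because $\bar N_k$ genuinely vanishes on $\Gamma_k$, a curve crossing the whole domain. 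Your $\sW^e$ treatment and the disjointness/$\sup$--$\inf$ bookkeeping are in line with the paper, and your dyadic summation for \eqref{e:estimate-for-D} is a perfectly reasonable (if slightly more explicit) version of the paper's one-line appeal to \eqref{e:N_Dir_energy_U}. The fix to your argument is to keep the $|\bar N_k|^2$ contribution from $\sW^h$ squares as is and invoke the global rescaled Poincar\'e estimate $\bar S_k(r)\leq Cr^2\bar D_k(r)$ at the end of the summation, rather than trying to localize Poincar\'e to each satellite ball.
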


\begin{proof}
    The proof goes completely analogous to the one of \cite[Lemma 9.13]{DDHM} and we summarize it here. Fix an $L_i \in \mathscr T$. If $L_i \in \sW^h$, it is an interior square and we can use Proposition \ref{p:separation_height} to deduce
    \begin{align}\label{e:N2_in_Bi}
        \int_{\cB^i} |\bar N_k|^2 \geq c_0 \bmo^{\sfrac 12} \ell^{4+2\beta_1}_i.
    \end{align}
    On the other hand, if $L_i \in \sW^e$, then $L_i$ can be either a boundary square or an interior square. However the satellite ball does not intersect the boundary and also we can apply Proposition \ref{p:splitting} in both situations. Thus, we have
    \begin{align}
        \int_{\cB^i} |D\bar N_k|^2 &\geq c_0 \bmo \ell_i^{4-2\delta_1},\label{e:DN2_in_Bi}\\
        \int_{\cB^i} \varphi |D \bar N_k|^2 &\geq c_0 \bmo \ell_i^{4-2\delta_1} ~ \inf_{\cB^i} \varphi_k. \label{e:phi_DN2_in_Bi}
    \end{align}
    Summing over all squares and using \eqref{e:N2_in_Bi}, \eqref{e:DN2_in_Bi} and \eqref{e:phi_DN2_in_Bi}, we conclude
    \begin{align*}
        \bmo \sum_i \ell_i^{4+2\beta_1} ~ \inf_{\cB^i} \varphi_k &\leq C_0 \int_{\cB^+_r} \left( |\bar N_k|^2 + \varphi_k |D\bar N_k|^2 \right),\\
        \bmo \sum_i \ell_i^{4+2\beta_1} \leq C_0 \int_{\cB^+_r} &\left( |\bar N_k|^2 + |D \bar N_k|^2 \right)
        \leq C_0 \int_{\cB^+_r} |D \bar N_k|^2 ,
    \end{align*}
    where we used the Poincar\'e inequality and the fact that $\bar N_k$ vanishes on $\Gamma_k$.
    We conclude by noticing that, as $\phi'=-2$ in $[\frac12, 1]$, we have
    \begin{align*}
        \int_{\{r/2 <d_k <r\}\cap \bar \cM_k} |D \bar N_k |^2 &\leq r \bar D_k'(r)\,,\\
        \int_{\{d_k <r/2\}\cap \bar \cM_k} |D \bar N_k|^2 &\leq \bar D_k (r)\,.
    \end{align*}
    \eqref{e:estimate-for-D} is a consequence of \eqref{e:N_Dir_energy_U}.
\end{proof}

We end this section with estimating the error terms (compare with \cite[Proposition 9.14]{DDHM}). 
\begin{proposition}\label{p:error_estimates}
    There are constants $C, \tau >0$ such that
    \begin{align}
        |\textnormal{Err}_1^o| + |\textnormal{Err}_3^o| + |\textnormal{Err}_4^o| &\leq C \bar D_k (r)^{1+\tau},\label{e:bound-outer-1} \\
        |\textnormal{Err}_2^o| &\leq C t_k^{2\kappa} \bar S_k (r) \leq C t_k^{2\kappa} r^2 \bar D_k (r)\label{e:bound-outer-2}
    \end{align}
    and
    \begin{align}
        |\textnormal{Err}_1^i| + |\textnormal{Err}_3^i| + |\textnormal{Err}_4^i| &\leq C \bar D_k (r)^{\tau} \big( \bar D_k (r) + r\bar D_k '(r) \big), \label{e:bound-inner-1}\\
        |\textnormal{Err}_2^i| &\leq C t_k^{2\kappa} r \bar D_k (r).\label{e:bound-inner-2}
    \end{align}
\end{proposition}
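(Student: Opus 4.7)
The proof of Proposition \ref{p:error_estimates} follows the same scheme as in \cite[Proposition 9.14]{DDHM}. The plan is to estimate each integrand on each Whitney region $\cU_i$ using the local estimates \eqref{e:N_Lip_U}--\eqref{e:etaN_U} for the normal approximation, and then to sum over $i$ by means of \eqref{e:sum_ell_varphi}--\eqref{e:sup_ell}. First I would observe that the contact set $\bar\Phii_k(\bDel)\cap \cB_r^+$ contributes zero to every error term: on it the slice $\langle T_{0,t_k}, \p_k, p\rangle$ equals $Q\a{p}$, so $\bar N_k$ and hence $D\bar N_k$ vanish $\cH^2$-a.e.\ there, while simultaneously $\bT_{\bar F_k}=T_{0,t_k}$, killing the contribution to $\textnormal{Err}_4^o$ and $\textnormal{Err}_4^i$ as well. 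The rest reduces to a sum of local estimates over the regions $\cU_i$.

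For the curvature errors I would exploit that $\|A_k\|_\infty + \|H_{\bar \cM_k}\|_\infty\leq C t_k^{\kappa}$, which follows from the $C^3$-estimate $\|\bar \phii_k\|_{C^3}\leq C t_k^\kappa$ that appeared in the proof of Proposition \ref{p:inner_variations}. Then $|\textnormal{Err}_2^o|\leq C t_k^{2\kappa}\int \varphi_k |\bar N_k|^2 = Ct_k^{2\kappa}\bar S_k(r)$, and Lemma \ref{l:Poincare} upgrades $\bar S_k(r)\leq Cr^2\bar D_k(r)$, proving \eqref{e:bound-outer-2}. For $|\textnormal{Err}_2^i|$ I would use $|DY|\leq C$, $|Y|\leq Cr$, Young's inequality $|\bar N_k||D\bar N_k|\leq r^{-1}|\bar N_k|^2+r|D\bar N_k|^2$ and Poincar\'e to land on $Ct_k^{2\kappa}r\bar D_k(r)$, giving \eqref{e:bound-inner-2}.

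For $\textnormal{Err}_3^o$ and $\textnormal{Err}_3^i$, on each $\cU_i$ I would factor out the $L^\infty$-bounds $\Lip(\bar N_k|_{\cU_i})\leq C\bmo^{\gamma_2}\ell_i^{\gamma_2}$ and $\|\bar N_k\|_{L^\infty(\cU_i)}\leq C\bmo^{1/4}\ell_i^{1+\beta_1}$ from \eqref{e:N_Lip_U}, together with $|D\varphi_k|\leq C/r$, so that each integrand is reduced to an extra factor of $|D\bar N_k|^2$. After summing and using \eqref{e:sup_ell} to convert $\bmo^{\gamma_2}(\sup_i\ell_i)^{\gamma_2}$ into a suitable power of $\bar D_k(r)$, one ends up with a factor $\bar D_k(r)^\tau$ multiplied by $\int_{\cB_r^+}|D\bar N_k|^2\leq \bar D_k(r)+r\bar D_k'(r)$ (the last inequality, that gives the $\bar D_k'$ term on the right-hand side, is needed only for the inner variation). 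For $\textnormal{Err}_1^o$ I would apply \eqref{e:etaN_U} on each $\cU_i$ with the choice $a=\bmo^{\gamma_2/2}\ell_i^{\gamma_2/2}$, use $|H_{\bar\cM_k}|\leq Ct_k^\kappa$ and the Lipschitz bound \eqref{e:N_Lip_U} to absorb $\cG(\bar N_k,Q\a{\etaa\circ \bar N_k})^{2+\gamma_2}$ into $\ell_i^{\gamma_2}|D\bar N_k|^2$; summing and invoking \eqref{e:sum_ell} gives a bound by $\bar D_k(r)^{1+\tau}$. Finally, for $\textnormal{Err}_4^o$ and $\textnormal{Err}_4^i$ I would use $\|X_o\|_\infty\leq C\bmo^{1/4}\ell_i^{1+\beta_1}$, $\|DX_o\|_\infty\leq C\bmo^{\gamma_2}\ell_i^{\gamma_2}$ on each $\cU_i$ and the analogous bounds for $X_i$, combined with the graph-to-current approximation error $\|\bT_{\bar F_k}-T_{0,t_k}\|(\p_k^{-1}(\cU_i))\leq C\bmo^{1+\gamma_2}\ell_i^{4+\gamma_2}$; summing and using \eqref{e:sum_ell}--\eqref{e:sup_ell} yields the desired bounds.

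The main technical obstacle is $\textnormal{Err}_4^o$: the vector field $X_o$ is only of size $\bmo^{1/4}\ell_i^{1+\beta_1}$ and the cutoff $\varphi_k$ is merely $O(1)$, so a naive estimate would give a non-super-quadratic prefactor. The delicate point is to absorb the geometric series $\sum_i \bmo^{1+\gamma_2}\ell_i^{4+\gamma_2}$ into a single $\bar D_k(r)^{1+\tau}$. This is precisely where the splitting-before-tilting estimate \eqref{e:sum_ell_varphi} becomes essential: it connects the purely geometric sum $\sum \ell_i^{4+2\beta_1}\inf_{\cB^i}\varphi_k$ to the Dirichlet energy of $\bar N_k$, and a careful interpolation between \eqref{e:sum_ell} and the trivial bound $\sum \ell_i^{4+\gamma_2}\leq (\sup_i\ell_i)^{\gamma_2-\beta_1}\sum\ell_i^{4+\beta_1}$ yields the required factor $\bar D_k(r)^\tau$ by \eqref{e:sup_ell}. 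An analogous argument settles $\textnormal{Err}_4^i$.
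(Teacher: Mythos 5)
Your proposal follows essentially the same scheme as the paper's proof, which itself is largely a citation to \cite[Proposition 9.14]{DDHM} with a short outline: estimate each integrand on the Whitney regions $\cU_i$ via \eqref{e:N_Lip_U}--\eqref{e:etaN_U}, sum using \eqref{e:sum_ell_varphi}--\eqref{e:sup_ell}, and use $\|A_k\|_\infty + \|H_{\bar \cM_k}\|_\infty \leq C t_k^{\kappa}$ to extract the $t_k^{2\kappa}$ factor in the curvature errors. The added observations about the contact set and the $\varphi_k$-localization match what the paper leaves implicit.

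One place where your sketch is looser than the statement it targets concerns the outer error estimate \eqref{e:bound-outer-1}. You write that after factoring out $L^\infty$-bounds one lands on $\bar D_k(r)^{\tau}\cdot\int_{\cB_r^+}|D\bar N_k|^2 \leq \bar D_k(r)^\tau(\bar D_k(r)+r\bar D_k'(r))$, and then assert parenthetically that the $\bar D_k'$ term is "needed only for the inner variation", without explaining why it can be dropped for the outer one. But $\textnormal{Err}_3^o$ contains genuine $|D\varphi_k|$ contributions supported on the annulus $\{r/2 < d_k < r\}$, where the cutoff $\varphi_k$ is not bounded below and whose energy is naturally controlled by $r\bar D_k'(r)$, not $\bar D_k(r)$; the same issue arises for the first-variation error $\textnormal{Err}_4^o$ through the $\phi'$ term in $DX_o$. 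Getting the clean bound $\bar D_k(r)^{1+\tau}$ there, as the proposition asserts, requires exploiting that those $\phi'$ pieces carry an extra factor $|p - \bp_k(p)|/r \leq C\bmo^{1/4}\ell_i^{1+\beta_1}/r$ (for $\textnormal{Err}_4^o$) or extra factors of $|\bar N_k|$, $|D\bar N_k|$ (for $\textnormal{Err}_3^o$), and is a detail your sketch — like the paper's — delegates to \cite{DDHM}. A similar remark applies to $\textnormal{Err}_2^i$: the naive $|DY|\leq C\mathbf{1}_{\cB_r^+}$ plus Poincar\'e gives $Ct_k^{2\kappa}r^2\int_{\cB_r^+}|D\bar N_k|^2$ rather than $Ct_k^{2\kappa}r\bar D_k(r)$; to reach the stated bound one should split $|DY|\leq C\varphi_k + C\mathbf{1}_{\{r/2<d_k<r\}}$ and control the annular piece by $r\bar H_k(r)\leq Cr^2\bar D_k(r)$ via Lemma \ref{l:Poincare}. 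These are refinements of bookkeeping, not conceptual gaps, and they do not change the verdict that your route is the paper's route.
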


\begin{proof}
    The detailed estimates can be found in the proof of \cite[Proposition 9.14]{DDHM}. Notice that as there it is done for either side of the boundary separately, and as we have the same estimates on $N$, it applies directly to our situation. The idea is as follows. First we notice that
    \[ |Y(p)| \leq \varphi(p) d_k (\bp_k (p))
    \quad \textnormal{ and } \quad
    |DY(p)| \leq C \mathbf{1}_{\cB_r^+}(\bp_k (p)).\]
    Then because of the Theorem \ref{t:cm}, both the second fundamental form and the mean curvature of $\bar \cM_k$ are bounded (and their derivatives) are bouned by $C t_k^\kappa$. The remaining terms in the errors can be split into the regions $\cU_j$ and then be estimated by powers of $\bmo$ and $\ell_j$ using \eqref{e:N_Lip_U} - \eqref{e:etaN_U}. Choosing $\tau \ll \delta_1$ and recalling that $\delta_1 \leq \beta_1 \leq \gamma_1/8$, we see that the powers are higher than what we need for \eqref{e:sum_ell_varphi} and \eqref{e:sum_ell}. Thus with \eqref{e:sup_ell} we gain the additional $\bar D_k (r)^\tau$.
    
    The only relevant difference in the estimates of \cite[Proposition 9.14]{DDHM} is in the terms $\textnormal{Err}_2^i$ and $\textnormal{E}_2^o$, where our estimates have an improved factor $C t_k^{2\kappa}$ in the right hand side. But this follows easily from the fact that in our case we take advantage of $\|A_k\|_\infty \leq C t_k^\kappa$, while in \cite[Proposition 9.14]{DDHM} the second fundamental form of the center manifold is only known to be bounded by a constant. 
\end{proof}

\subsection{Proof \eqref{e:bloody-bound-on-the-D} and \eqref{e:Gronwall}} In order to prove \eqref{e:bloody-bound-on-the-D} we exploit \eqref{e:back-and-forth-1} and \eqref{e:estimate-for-D}: we assume $t_{k+1} < r < t_k$ and estimate
\[
D (r) = t_k^2 \bar{D}_k (t_k^{-1} r) \leq C t_k^{2+2\kappa} (t_k^{-1} r)^{4-2\delta_1} \leq C r^{2+2\kappa}\, .
\]
In order to prove \eqref{e:Gronwall} we follow the computations of \cite[Section 9.1]{DDHM}, but in our setting some additional complications are created by the fact that we need to scale back our estimates for the rescaled quantities $\bar D_k$, $\bar H_k$, $\bar S_k$, $\bar G_k$, and $\bar{S}_k$. First of all we recall \eqref{e:derivative-of-H-bar}:
\begin{equation}\label{e:first-piece}
H' (r) = r^{-1} H (r) + 2 E(r) + O (1) H (r)\, .
\end{equation}
Next we combine \eqref{e:outer-with-bar}, \eqref{e:bound-outer-1}, and \eqref{e:bound-outer-2} to get 
\begin{equation}\label{e:bound-outer-3}
|\bar D_k (t_k^{-1} r) - \bar E_k (t_k^{-1} r)| \leq C \bar D_k (t_k^{-1} r)^{1+\tau} + C t_k^{2\tau} \bar S_k (t_k^{-1} r). 
\end{equation}
We next can use \eqref{e:back-and-forth-1}, \eqref{e:back-and-forth-3}, and \eqref{e:back-and-forth-7} to conclude
\begin{equation}\label{e:little-step}
|D (r) - E (r)|\leq C D (r) (t_k^{-2} D (r))^\tau + C t_k^{2\tau -2} S (r)\, .
\end{equation}
Next recall that $D(r) \leq C r^{2+2\kappa}$. Since $r\leq t_k$ we can write
\[
t_k^{-2} D(r) \leq C t_k^{-2} r^2 D(r)^{1-\sfrac{2}{(2+2\kappa)}} \leq C D (r)^{1-\sfrac{1}{(1+\kappa)}}\, .
\]
Thus, at the prize of choosing $\tau$ smaller, we can translate \eqref{e:little-step} into
\begin{equation}\label{e:second-piece}
|D(r) - E(r)| \leq C D(r)^{1+\tau} + C t_k^{2\tau -2} S(r)\, .
\end{equation}
The final ingredient is derived by first combining \eqref{e:inner-with-bar}, \eqref{e:bound-inner-1}, and \eqref{e:bound-inner-2} to get
\begin{align}
& |\bar D_k'(t_k^{-1} r) + O(t_k^{2\kappa}) \bar D_k (t_k^{-1} r) - \bar G_k ( t_k^{-1} r)|\nonumber\\
\leq & \frac{C}{t_k^{-1} r}\bar D_k (t_k^{-1} r)^\tau \left(\bar D_k (t_k^{-1} r) + t_k^{-1} r \bar D_k' (t_k^{-1} r)\right)
+ C t_k^{2\kappa} \bar D_k (t_k^{-1}r)\, ,\label{e:bound-inner-3}
\end{align}
which in turn, using \eqref{e:back-and-forth-1}, \eqref{e:back-and-forth-6}, and \eqref{e:back-and-forth-7} becomes
\begin{equation}\label{e:little-step-2}
|D' (r) + O (t_k^{2\kappa-1}) D (r) - 2 G (r)|\leq C (t_k^{-2} D (r))^\tau (r^{-1} D (r) + D' (r)) + C t_k^{2\kappa-1} D (r)\, .
\end{equation}
But then, arguing as for \eqref{e:second-piece} we can achieve
\begin{equation}\label{e:third-piece}
|D' (r) - 2 G(r)|\leq C t_k^{2\kappa-1} D(r) + C D(r)^\tau (r^{-1} D(r) + D' (r))\, .
\end{equation}

We are now ready to estimate $\frac{d}{dr} \log I(r)$. We start by writing
\begin{equation}\label{e:parziale-1}
\frac{d}{dr} \log I(r) = \frac{1}{r} + \frac{D'(r)}{D(r)} - \frac{H' (r)}{H(r)}\, .
\end{equation}
Hence, using \eqref{e:first-piece} we write
\begin{equation}\label{e:parziale-2}
\frac{d}{dr} \log I (r) \geq - C + \frac{D'(r)}{D(r)} - \frac{2E(r)}{H (r)}\, .
\end{equation}
Next recall \eqref{e:bloody-bound-on-the-D}
while Lemma \ref{l:Poincare} implies that for $\sigma\in]0,1[$ we have 
\begin{equation}\label{e:bound-on-the-S}
t_k^{2\sigma-2} S (r) \leq C r^2 t_k^{2\sigma-2} D (r) \leq C r^{2\sigma} D(r) \, .
\end{equation}
In combination with the last two bounds, \eqref{e:second-piece} becomes (after possibly choosing a new positive $\tau$)
\begin{equation}\label{e:fourt-piece}
|D (r) - E(r)|\leq C r^{\tau} D (r)\, ,
\end{equation}
which in turn implies 
\begin{equation}\label{e:fifth-piece}
\frac{D(r)}{2} \leq E (r) \leq 2 D(r)\, ,
\end{equation}
provided $r\le r_0$ is sufficiently small with $r_0>0$ depending only on $C$ and $\tau$.

By \eqref{e:fifth-piece} we can turn \eqref{e:second-piece} into
\begin{equation}\label{e:sixth-piece}
|E (r)^{-1} - D(r)^{-1}|\leq C D (r)^{\tau-1} + C t_k^{2\tau-2} \frac{S(r)}{D(r)^2}\, .
\end{equation}
Inserting the latter into \eqref{e:parziale-2} (and considering that $D' (r) \geq 0$) we then get
\begin{equation}\label{e:parziale-3}
\frac{d}{dr} \log (I(r)) \geq \frac{D'(r)}{E(r)} - \frac{2 E(r)}{H(r)} - C \frac{D' (r)}{D (r)^{1-\tau}} - 
C t_k^{2\kappa-2} \frac{S (r) D' (r)}{D(r)^2}-C\, .
\end{equation}
We can finally insert \eqref{e:third-piece} to achieve
\begin{align}
\frac{d}{dr} \log (I(r)) \geq & \frac{2G (r)}{E (r)} - \frac{2 E(r)}{H(r)} - C \frac{D(r)}{E(r)} \left(\frac{D(r)^\tau}{r} + 
\frac{D'(r)}{D(r)^{1-\tau}} + t_k^{2\tau-2}\right) \nonumber\\
& - C \frac{D' (r)}{D (r)^{1-\tau}} -
C t_k^{2\kappa-2} \frac{S (r) D' (r)}{D(r)^2}-C\, .
\end{align}
Next note that:
\begin{itemize}
    \item $G(r) H (r) \geq E(r)^2$, by Cauchy-Schwarz;
    \item $\frac{D(r)}{E(r)}\leq C$;
    \item $D (r) \leq C r^{2+2\kappa}$.
    \item We can rewrite $- \frac{S(r) D'(r)}{D(r)^2} = \frac{d}{dr} \frac{S(r)}{D(r)} - \frac{S'(r)}{D(r)}$, and it is easy to see that $S'$ is positive.
\end{itemize}
So, after possibly choosing $\tau$ smaller, yet positive, we achieve
\begin{equation}\label{e:parziale-4}
\frac{d}{dr} \left(\log I (r) + C D (r)^\tau - C t_k^{2\tau-2} \frac{S(r)}{D(r)}\right) \geq - C r^{\tau-1}\, .
\end{equation}
\section{Proof of Theorem \ref{t:frequency}: Part II}\label{s:proof_jump_estimate}

This section is devoted to prove \eqref{e:jump_estimate}. We observe that, by the continuity of the functions
\begin{align*}
t\mapsto H (N_k, t) \qquad \mbox{and} \qquad t \mapsto D (N_k, t)
\end{align*}
we have
\begin{align*}
I (t_k^+) = \frac{t_k D (N_{k-1}, t_k)}{H (N_{k-1}, t_k)} \qquad \mbox{and} \qquad
I (t_k^-) = \frac{t_k D (N_{k}, t_k)}{H (N_{k}, t_k)}\, .
\end{align*}

In order to simplify our notation we use the shortcut $\bE (T, r)$ for $\bE (T, \bB_r)$. We will show the following two propositions
\begin{proposition}\label{p:change-of-center-manifolds}
    There is a constant $C$ independent of $k$ such that, if $\varepsilon_{CM}$ is small enough then
    \begin{align}
    C^{-1} t_k^2  \bE (T, 6t_k) & \leq D (N_{k-1}, t_k) \leq C t_k^2 \bE (T, 6t_k) \label{e:D_Nk-1}\\
    C^{-1} t_k^2 \bE (T, 6t_k) & \leq D (N_k, t_k) \leq C t_k^2 \bE (T, 6t_k) \label{e:D_Nk}\\
    C^{-1} t_k^3 \bE (T, 6t_k) & \leq H (N_{k-1}, t_k) \leq C t_k^3 \bE (T, 6t_k)\label{e:H_Nk-1}\\
    C^{-1} t_k^3  \bE (T, 6t_k)& \leq H (N_k, t_k) \leq C t_k^3 \bE (T, 6t_k)\,.
    \label{e:H_Nk}
    \end{align}
\end{proposition}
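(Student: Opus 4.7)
The plan is to prove all four pairs of bounds by relating $D(N, t_k)$ and $H(N, t_k)$ (for $N \in \{N_{k-1}, N_k\}$) to the spherical excess $\bE(T, 6t_k)$, with the upper bounds coming from the local estimates of Theorem \ref{t:normal-approx} summed over Whitney regions together with the monotonicity of the excess, and the lower bounds extracted from the defining properties of the stopping scale $t_k$ via Proposition \ref{p:splitting}.

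For the upper bounds on $D$, summing the local Dirichlet estimates of Theorem \ref{t:normal-approx} over all Whitney squares of $\cM_{k-1}$ (resp.\ $\cM_k$) contained in $\{d < t_k\}$, and using $\sum \ell(L)^{4-2\delta_1} \leq t_k^{-2\delta_1} \sum \ell(L)^4 \leq t_k^{4-2\delta_1}$, one obtains $D(N, t_k) \leq C \bmo \, t_k^{4-2\delta_1}$, where $\bmo$ denotes the smallness parameter of the corresponding center-manifold construction. The stopping condition (Corollary \ref{c:stop=excess} applied at level $k-1$) combined with Theorem \ref{t:decay} comparing excesses at nearby scales then yields $\bmo \, t_k^{2-2\delta_1} \leq C \bE(T, 6t_k)$, producing the upper bounds in \eqref{e:D_Nk-1} and \eqref{e:D_Nk}. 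For the lower bounds on $D$, the stopping condition provides a Whitney square $H \in \sW^e$ of the decomposition for $\cM_{k-1}$ with $\ell(H) = t_k/(132\sqrt{2}\, t_{k-1})$ and
\begin{equation*}
\bE^\flat(T_{0,t_{k-1}}, \bB^\flat_H) > C^\flat_e\, \bmo \,\ell(H)^{2-2\delta_1}.
\end{equation*}
Proposition \ref{p:splitting} applied to $H$ gives $\int |D\bar N_{k-1}|^2 \gtrsim \ell(H)^2\, \bE^\flat(T_{0,t_{k-1}}, \bB^\flat_H)$ over a Whitney region contained in $\{d_{k-1} < t_k/t_{k-1}\}$; rescaling back and identifying $\bE^\flat(T_{0,t_{k-1}}, \bB^\flat_H)$ with $\bE(T, 6t_k)$ up to bounded factors (by monotonicity) yields $D(N_{k-1}, t_k) \gtrsim t_k^2\, \bE(T, 6t_k)$. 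The same strategy works for $D(N_k, t_k)$: the rescaled current $T_{0,t_k}$ at scale $\sim 1$ has excess $\bE(T_{0,t_k}, \bB_1) \sim \bE(T, 6t_k)$, which through the refinement algorithm forces the Whitney decomposition of $\cM_k$ to contain a square near the origin in $\sW^e$ at scale $\sim 1$, to which Proposition \ref{p:splitting} again applies.

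For the bounds on $H$, the upper bounds follow from the Poincar\'e inequality $H(N, r) \leq C r D(N, r)$ of Lemma \ref{l:Poincare} combined with the upper bounds on $D$. The lower bounds are the main technical obstacle, since $H(N, t_k)$ is supported in the annulus $\{t_k/2 < d < t_k\}$, while the stopping square identified above lies at distance $\leq t_k/2$ from the origin so its Whitney region does not directly contribute. The plan is to show, using the power-law excess decay of Theorem \ref{t:decay}, that a comparable portion of the mass excess of $T$ lies in the annular region: concretely $\bE(T, \bB_{t_k}) - (1/4)\bE(T, \bB_{t_k/2}) \gtrsim \bE(T, 6t_k)$. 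One then identifies Whitney squares of $\cM$ intersecting $\{t_k/2 < d < t_k\}$ whose associated balls carry a proportional amount of excess --- possibly grouping squares via the Besicovitch-type covering of Section \ref{s:subregions} --- and applies the finer version of Proposition \ref{p:splitting}, namely $\int_\Omega |N|^2 \gtrsim \ell^4 \bE$, to each. Summing yields $\int_{\{t_k/2 < d < t_k\}} |N|^2 \gtrsim t_k^4\, \bE(T, 6t_k)$ and hence $H(N, t_k) \gtrsim t_k^3\, \bE(T, 6t_k)$. The careful localization of the excess into the annulus and the correct identification of contributing Whitney squares is the delicate part of the argument.
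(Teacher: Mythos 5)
Your treatment of the upper bounds on $D$, the upper bounds on $H$ via the Poincar\'e inequality of Lemma \ref{l:Poincare}, and the lower bound on $D(N_{k-1}, t_k)$ via the stopping square of $\bar\cM_{k-1}$ are all aligned with the paper's argument. The problems are in the two remaining lower bounds, where you diverge from the paper and where the proposed arguments do not go through.

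For $D(N_k, t_k)$ from below, you assert that the size of the rescaled excess ``forces the Whitney decomposition of $\cM_k$ to contain a square near the origin in $\sW^e$ at scale $\sim 1$.'' This is false by construction: Proposition \ref{p:Whitney}(a) guarantees that \emph{no} square of sidelength $2^{-N_0}$ is in $\sW$, so any stopping square of $\bar\cM_k$ near the origin has sidelength $\lesssim 2^{-N_0}$, and the excess inequality one would extract from Proposition \ref{p:splitting} would only lower-bound $\int|D\bar N_k|^2$ by $\bmo(k)\,\ell^{4-2\delta_1}$ with $\ell \lesssim 2^{-N_0}$ --- not by $\bmo(k)\sim\bE(T,6t_k)$, which is what you need. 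For the lower bounds on $H$, your proposed route has two independent gaps. First, the claim $\bE(T,\bB_{t_k}) - \tfrac14\bE(T,\bB_{t_k/2}) \gtrsim \bE(T,6t_k)$ is not a consequence of Theorem \ref{t:decay}, which only gives \emph{upper} decay estimates: nothing there prevents the excess measure from concentrating in $\bB_{t_k/2}$, in which case the left-hand side is nearly zero. Second, and more fundamentally, even if the excess were concentrated in the annulus $\{t_k/2 < d < t_k\}$, Proposition \ref{p:splitting} only applies to squares $L\in\sW^e$; there is no reason for the Whitney squares of $\bar\cM_k$ meeting the annulus to be stopping squares, and without $L\in\sW^e$ the implication ``excess in $\bB_L^\Box$ $\Rightarrow$ lower bound on $\int_\Omega|N|^2$'' is unavailable (a region can carry mass-excess while the center manifold tracks it perfectly, so that $N$ remains tiny there). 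What the paper actually does is entirely different: it applies Proposition \ref{p:splitting} to the stopping square $L\in\sW^e$ of $\bar\cM_{k-1}$ (whose existence is guaranteed by the very definition of $t_k$), obtaining an $L^2$ lower bound on $\bar N_{k-1}$ on a ball $B_{\ell(L)/4}(z)$, and then \emph{transfers} that lower bound to a corresponding ball on $\bar\cM_k$ using the argument of Section 9 of \cite{DS4} (comparing consecutive normal approximations). Crucially, the freedom in the choice of $z$ within $\dist(L,z)\leq 4\sqrt 2\,\ell(L)$, together with the specific calibration $t_k = 66\sqrt2\,\ell(L)\,t_{k-1}$ in the definition of the flattening radii, is what allows the transferred ball to be placed in the annulus where $\phi'$ is nonvanishing. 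The cross-manifold transfer step, not present in your proposal, is the indispensable device here.
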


\begin{proposition}\label{p:change-of-center-manifolds2}
    There is a positive exponent $\tau_1$ independent of $k$ such that, if $\varepsilon_{CM}$ is small enough then
    \begin{align}
    |D (N_{k-1}, t_k) - &D (N_k, t_k)| \leq C t_k^2 \bE (T, 6t_k)^{1+\tau_1},
    \label{e:difference_D}\\
    |H (N_{k-1}, t_k) - &H (N_k, t_k)| \leq C t_k^3 \bE (T, 6t_k)^{1+\tau_1}. \label{e:difference_H}
    \end{align}
\end{proposition}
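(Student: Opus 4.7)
\medskip

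The strategy is to compare both $D(N_{k-1},t_k)$ and $D(N_k,t_k)$ (and the analogous $H$-quantities) to a common geometric quantity associated directly with the current $T$, so that the leading contributions cancel and only higher-order errors survive. Concretely, I would first use the Taylor expansion of the area functional (in the spirit of Proposition \ref{p:TaylorExpansionMassGraph} and \cite[Theorem 4.2]{DS2}), applied via the normal approximation estimates of Theorem \ref{t:normal-approx} and Corollary \ref{c:globali}, to write
$$
D(N_j,t_k) \;=\; 2\bigl[\|T\|(\Omega_j^\phi) - Q\,\mathcal{H}^2(\Omega_j^\phi\cap\cM_j)\bigr] + \mathrm{Err}_j, \qquad j\in\{k-1,k\},
$$
where $\Omega_j^\phi$ is the $\phi(d_j/t_k)$-weighted region above $\cM_j$ and $|\mathrm{Err}_j|\le C\,t_k^2\,\bE(T,6t_k)^{1+\gamma_2}$ by the mass discrepancy between $T$ and $\mathbf{T}_{F_j}$ together with the Lipschitz/height bounds on $N_j$. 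An analogous reduction for $H$ yields, modulo the same type of error, a spherical-shell integral of $|N_j|^2$ that depends only on $T$ via the center manifold.

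The heart of the argument is to control the difference of these geometric pieces between the two indices. For this I would first establish that, at scale $t_k$, both center manifolds are $C^{3,\omega}$ graphs of functions $\phi_{k-1},\phi_k$ over $V_0$ satisfying
$$
\|\phi_j\|_{C^{3,\omega}(B_{2t_k})}\le C\,t_k\,\bE(T,6t_k)^{1/2}
\quad\text{and}\quad
\|\phi_{k-1}-\phi_k\|_{C^3(B_{2t_k}\cap D)}\le C\,t_k\,\bE(T,6t_k)^{1/2+\tau'}
$$
for some universal $\tau'>0$. The improved exponent comes from the fact that both center manifolds are produced by the same algorithmic prescription (Definition \ref{d:refining_procedure}) applied to $T$ at the two nearby scales $t_{k-1}$ and $t_k$: the Whitney decompositions coincide on the coarse cubes near the origin, and the discrepancies between the respective interpolating families $\{u_L^{(k-1)}\}$ and $\{u_L^{(k)}\}$ are controlled dyadically by the splitting/separation estimates of Propositions \ref{p:splitting} and \ref{p:separation_height} together with \eqref{e:CM-applies}.

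Granted these two facts, I would introduce the nearest-point diffeomorphism $\Psi:\cM_k\to\cM_{k-1}$, which is $C^3$-close to the identity with a quantitative bound inherited from the previous paragraph. Pulling $N_k$ back to a multivalued normal field $\widetilde N_k$ on $\cM_{k-1}$, both $N_{k-1}$ and $\widetilde N_k$ describe the same current $T$ on the intersection of the respective contact sets $\mathcal{K}_{k-1}\cap\Psi(\mathcal{K}_k)$, whose complement has measure at most $C\,t_k^2\,\bE(T,6t_k)^{1+\gamma_2}$ by Corollary \ref{c:globali}. On this intersection one gets the pointwise bound
$$
\mathcal{G}\bigl(N_{k-1}(x),\widetilde N_k(x)\bigr)\;\le\;C\,|\phi_{k-1}(x)-\phi_k(x)|\;\le\;C\,t_k\,\bE(T,6t_k)^{1/2+\tau'},
$$
while on the exceptional complement the Lipschitz and height estimates of Corollary \ref{c:globali} control the contribution. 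After accounting for the Jacobian of $\Psi$ (which differs from $1$ by $O(\bE^{1/2+\tau'})$) and for the difference of the cutoffs $\phi(d_j/t_k)$ (again controlled by the $C^3$-closeness of the manifolds), all terms combine into a bound of the desired form $C\,t_k^2\,\bE(T,6t_k)^{1+\tau_1}$ for $\tau_1:=\min\{\tau',\gamma_2\}/2$; the $H$ estimate is obtained identically.

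The main obstacle is the improved closeness estimate with the strictly positive gain $\tau'$: without it the naive bound is only of the same order $t_k^2\bE(T,6t_k)$ as the leading terms themselves, and summability of \eqref{e:jump_estimate} fails. Establishing $\tau'>0$ requires a careful cube-by-cube comparison of the two tilted interpolating families at sidelengths comparable to $t_k$, exploiting that the coarse Whitney labels agree and that on fine cubes the splitting inequality \eqref{e:split_1} converts any excess-type discrepancy between the two constructions into a higher power of $\bE(T,6t_k)$. Once the individual differences $|I(t_k^+)-I(t_k^-)|\le C\,\bE(T,6t_k)^{\tau_1}$ are established from Propositions~\ref{p:change-of-center-manifolds} and \ref{p:change-of-center-manifolds2}, summability in \eqref{e:jump_estimate} follows because $\bE(T,6t_k)\le C\,t_k^{4\kappa}$ by Corollary \ref{c:cylindrical_excess_decay} and the ratios $t_k/t_{k-1}$ are bounded by $66\sqrt{2}\,2^{-N_0}<1/2$, so $\sum_k t_k^{4\kappa\tau_1}$ converges geometrically.
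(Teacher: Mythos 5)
Your high-level strategy — compare $N_{k-1}$ and $N_k$ by relating both center manifolds and exploiting that they approximate the same current on a large set — is correct, and your introduction of the nearest-point diffeomorphism mirrors the paper's use of the pullback $\hat N_k$ defined by $\bG_{\hat N_k}\res\bC_k = \bT_{F_k}\res\bC_k$. However, the proposal has a genuine gap at its central step.

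You claim a $C^3$-closeness estimate with a gain,
\[
\|\phi_{k-1}-\phi_k\|_{C^3(B_{2t_k}\cap D)}\le C\,t_k\,\bE(T,6t_k)^{1/2+\tau'},
\]
and you propose to obtain it by a ``cube-by-cube comparison'' exploiting that ``the coarse Whitney labels agree.'' Neither part of this is correct. The Whitney decompositions used for $\cM_{k-1}$ and $\cM_k$ are built for rescalings of $T$ at the two scales $t_{k-1}$ and $t_k$; they are genuinely different (indeed the whole point of the interval-of-flattening structure is that the first construction stops near the origin and must be restarted). There is no reason for the cube labels to agree, and in fact the paper never establishes a $C^3$ (or even $C^0$) gain: the estimates \eqref{e:phii_k-1_sup}--\eqref{e:D2phii_k_sup} give, after rescaling, exactly the generic bounds $\|\phii_j\|_{C^0}\le Ct_k\bE_k^{1/2}$, $\|D\phii_j\|_{C^0}\le C\bE_k^{1/2}$, $\|D^2\phii_j\|_{C^0}\le Ct_k^{-1}\bE_k^{1/2}$ with \emph{no} gain. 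The only improved estimate available is the $L^2$ gradient bound \eqref{e:additional-1000}, $\|D(\phii_k-\phii_{k-1})\|^2_{L^2(B_{2t_k})}\le C t_k^2\bE_k^{1+2\gamma_2}$, and this is weaker than what you assert. Critically, the mechanism producing that gain is completely different from the one you sketch: it proceeds by integration by parts against a cutoff (using the sup-norm bounds on $D\phii_j$ and $D^2\phii_j$ to pass to an $L^1$ norm of $\phii_k-\phii_{k-1}$), and then invokes the additional $L^1$ estimate of Proposition~\ref{p:additional} which controls $\|\phii_j-\etaa\circ f_j\|_{L^1}\le C\bmo^{3/4}\ell(L)^4$ on stopped cubes. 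The gain ultimately comes from the fact that the two strong Lipschitz approximations $f_k$ and $f_{k-1}$ coincide with $T$ (and hence with each other) outside a set of measure $\lesssim t_k^2\bE_k^{1+\gamma_2}$, so that $\etaa\circ f_k$ and $\etaa\circ f_{k-1}$ are close in $L^1$. That is a cancellation at the level of the Lipschitz approximations, not at the level of the Whitney algorithm.

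A second missing ingredient concerns the $H$-estimate. Your pointwise bound $\cG(N_{k-1}(x),\widetilde N_k(x))\le C|\phi_{k-1}(x)-\phi_k(x)|$ ignores that the two manifolds have slightly tilted tangent planes, so the slices of $T$ along the respective normals sit over different base points and along different directions. The paper handles this with Lemma~\ref{l:2d_rotations}: the discrepancy between slices $\langle\bT_F,\bp,0\rangle$ and $\langle\bT_F,\bp',0\rangle$ along nearby planes is bounded by $C\,\Lip(F)\,\|F\|_{C^0}\,(|V-V_0|+|V'-V_0|)$, and the factors $\Lip(F)\|F\|_{C^0}$ provide the extra smallness needed. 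Without such a slice-comparison lemma, the error from the tilting is of the same order as the quantities being compared and the desired gain is lost. In summary, your proposal identifies the right object to compare but asserts a closeness estimate stronger than what holds, attributes it to the wrong mechanism, and omits the 2d-rotations lemma needed for the $H$ part.
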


Observe that the estimates \eqref{e:D_Nk} (the second one), \eqref{e:H_Nk-1} (the first one), \eqref{e:H_Nk} (the first one), \eqref{e:difference_D}, and \eqref{e:difference_H} imply
\begin{equation}
    |I (t_k^+) - I (t_k^-)| \leq C \bE (T, 6t_k)^{\tau_1} \leq C t_k^{2\kappa \tau_1}\, .
\end{equation}

On the other hand, by the choice of $N_0$ in Assumption \ref{ass:hierarchy}, by \eqref{e:t_k-over-t_k-1}, we get $\frac{t_k}{t_{k-1}} \leq \frac{1}{2}$, which iterated implies $t_k \leq 2^{-k}$. We therefore get 
\begin{equation}
    |I (t_k^+) - I (t_k^-)| \leq C 2^{-2\kappa\tau_1 k}\, ,
\end{equation}
which clearly implies \eqref{e:jump_estimate}.

\begin{proof}[Proof of Proposition \ref{p:change-of-center-manifolds}]
	As the center manifold $\bar \cM_{k-1}$ stopped, and we are close to the boundary, it must have stopped for the excess and thus, there is a square $L \in \sW^e$ such that $c \frac{t_k}{t_{k-1}} \leq \ell(L) \leq C \frac{t_k}{t_{k-1}}$ (recall section \ref{s:flattening}). Looking at its ancestors (as we did in Proposition \ref{p:Whitney}), we notice
	\begin{align}\label{e:excess_m0k-1}
	\bE (T, \rho t_k) = \bE(T_{0, t_{k-1}}, \rho t_k/t_{k-1}) \leq C \bmo (k-1) \left(\rho \frac{t_k}{t_{k-1}}\right)^{2-2\delta_1}\, , 
	\end{align}
	for every $1\leq \rho \leq 5 R_0 \frac{t_{k-1}}{t_k}$ and some geometric constant $C$. Here we denote by $\bmo (k-1)$ and $\bmo (k)$ the two quantities
	\begin{align*}
	\bmo (k-1) &= \bE (T_{0,t_{k-1}}, \bC_{5R_0}) + \|\psi_{k-1}\|^2_{C^{3,\alpha} (]-5R_0, 5R_0[)},\\
	\bmo (k) &= \bE (T_{0, t_k}, \bC_{5R_0}) + \|\psi_k\|^2_{C^{3,\alpha} (]-5R_0, 5R_0[)}\, ,
	\end{align*}
	where $\psi_k$ and $\psi_{k-1}$ are the functions describing the rescaled boundaries $\Gamma_k$ and $\Gamma_{k-1}$. Observe that, since $\psi_k (0) = \psi_{k-1} (0) =0$ and $\psi'_k (0) = \psi'_{k-1} (0) = 0$, it can be readily checked that 
	\[
	\|\psi_k\|^2_{C^{3,\alpha} (]-5R_0, 5R_0[)}\leq 
	\frac{t_k^2}{t_{k-1}^2} \|\psi_{k-1}\|^2_{C^{3,\alpha} (]-5R_0, 5R_0[)}\, ,
	\]
	so that we have
	\begin{align}\label{e:m_0_comparison}
	\bmo (k) \leq \bE (T, \bC_{5R_0 t_k}) + \frac{t_k^2}{t_{k-1}^2} \bmo (k-1)
	\leq C \bmo (k-1) \left(\frac{t_k}{t_{k-1}}\right)^{2-2\delta_1},
	\end{align}
	where we also used \eqref{e:excess_m0k-1}.
	On the other hand, because of the stopping condition we also know that
	\begin{align}\label{e:excess_m0k-1-bis}
	\bE (T, 6 t_k) = \bE (T_{0, t_{k-1}}, 6 t_k/t_{k-1}) \geq C^{-1} \bmo (k-1) \left(\frac{t_k}{t_{k-1}}\right)^{2-2\delta_1}.
	\end{align}
	In particular, we infer by \eqref{e:m_0_comparison} that 
	\begin{equation}\label{e:needed}
	\bE (T, 6 t_k) \geq C^{-1} \bmo (k)\, .
	\end{equation}
	Observe now that for $D (\bar N_k, 1)$ we have the inequality
	\[
	D (\bar N_k, 1) \leq C \bmo (k)
	\]
	by construction of the center manifold (i.e. \eqref{e:global_Dir}). In turn, by rescaling, we can conclude
	\[
	D (N_k, t_k) = t_k^2 D (\bar N_k,1) \leq C t_k^2 \bmo (k) \leq C t_k^2 \bE (T, 6t_k)\, , 
	\]
	namely the first of the two inequalities in \eqref{e:wanted}.
	Then we observe that \eqref{e:D_Nk-1} and \eqref{e:H_Nk-1} follow from the Splitting Proposition \ref{p:splitting} applied to to the current $T_{0,t_k}$ which in turn produces the center manifold $\bar \cM_{k-1}$ and the normal approximation $\bar{N}_{k-1}$ as we are in the situation where the center manifold stopped. 
	Moreover, we recall that by the Poincar\'e inequality (as already observed in \eqref{e:bound_from_below} and proved in Section \ref{s:proof_monotonicity_frequency}), we have for any $r>0$
	\[H(N_k, r) \leq C r D(N_k, r)\,. \]
	Thus \eqref{e:H_Nk} and \eqref{e:D_Nk} follow once we have shown the following inequalities
	\begin{align}
	D (N_k, t_k) \leq C t_k^2 \bE (T, 6 t_k) 
	\leq C t_k^{-1} H (N_k, 6 t_k)\,. \label{e:wanted}
	\end{align}
	
	For the second inequality in \eqref{e:wanted} we adapt the proof of \cite[Proposition 3.7]{DS4} as the only difference to our situation is the cut-off function. We describe here the idea of the argument, the details can be read in \cite[Section 9]{DS4}. 
	Again recall the square $L\in \sW^e$ which stopped in the construction of $\bar \cM_{k-1}$ according to the argument above. By the splitting Proposition \ref{p:splitting}, we then have a nearby ball $B_{\ell/4}(z)$ not intersecting $\Gamma_{0, t_{k-1}}$ such that
	\[ \bmo (k-1) \left(\frac{t_k}{t_{k-1}}\right)^{6-2\delta_1} \leq C \int_{\bar \Phii_{k-1}(B_{\ell/4}(z))} |\bar N_{k-1}|^2 \,. \]
	The argument of \cite[Section 9]{DS4} provides now a similar bound for the ball $B' = 2 \frac{t_{k-1}}{t_k} B_{\ell/4} (z)$, which has radius comparable to $1$, in the center manifold $\bar \cM_k$. More precisely, since
	$\left(\frac{t_{k-1}}{t_k}\right)^4$ is exactly the scaling relating the $L^2$ norm on $B'$ and $B_{\ell/4} (z)$, while $\left(\frac{t_{k-1}}{t_k}\right)^{2-2\delta_1}$ is the scaling factor which makes $\bmo (k)$ and $\bmo (k-1)$ comparable, the corresponding estimate is given by
	\[
	\bmo (k) \leq C \int_{\bar \Phii_{k}(B')} |\bar N_k|^2\, .
	\]
	Applying the rescaling which relates $\bar \cM_{k}$ and $\cM_{k}$, we find a corresponding rescaled ball $B''$ (of radius comparable to $t_k$)  
	\[
	\bmo (k) t_k^4
	\leq C \int_{B''\cap \cM_k} |N_k|^2\, .
	\]
	Using that the center $z$ of the ball can be chosen arbitrarily as long as it is at a distance from $L$ compared to its diameter, we can ensure that $-d (p)^{-1} \phi' (t_k^{-1} d (p)) \geq c t_k^{-1}$ on $B''$ (for some positive geometric constant $c$). We thus get
	\[
	\bmo (k) t_k^3 \leq -\int_{B''\cap \cM_k} |N_k|^2 \frac{\phi' (t_k^{-1} d (p))}{d(p)} \leq C H (N_k, t_k)
	\,. \]
	However $\bE_k (T, 6t_k) \leq C \bmo (k)$, and we have thus completed the proof of the second inequality in \eqref{e:wanted}.
	
\end{proof}    

\begin{proof}[Proof of Proposition \ref{p:change-of-center-manifolds2}]
	Define for $p \in \cM_k$ the map $F_k(p) = \sum_i \a{p+(N_k)_i(p)}$ and for $q \in \cM_{k-1}$ the map $F_{k-1}(q) = \sum_i \a{q+(N_{k-1})_i(q)}$. Moreover denote by $\bE_k := \bE(T, 6t_k)$ and $\bC_k := \bC_{2t_k}(0, V_0)$. In order to compare $N_k$ and $N_{k-1}$, we first apply Theorem \ref{t:normal-approx} to the rescaled currents $T_{0, t_k}$ and $T_{0, t_{k-1}}$ to derive corresponding estimates for the normal approximations $\bar N_k$ and $\bar N_{k-1}$ of the currents on $\bar \cM_k$ and $\bar \cM_{k-1}$. We then scale them back to find corresponding estimates for $N_k$ and $N_{k-1}$. During this process we also observe that, by \eqref{e:excess_m0k-1} and \eqref{e:m_0_comparison}, we have 
	\begin{equation}\label{e:controls-on-the-m_0}
	\bmo (k) + \bmo (k-1) \left(\frac{t_k}{t_{k-1}}\right)^{2-2\delta_1}\leq C \bE_k\, .
	\end{equation}
	Moreover, we will prove later
	\begin{align}
		\|\phii_{k-1}\|_{C^0 (B_{2t_{k}})} &\leq C t_k \bE_k^{\sfrac{1}{2}} , \label{e:phii_k-1_sup}\\
		\|D\phii_{k-1}\|_{C^0 (B_{2t_k})} &\leq C \bE_k^{\sfrac{1}{2}}\, \label{e:Dphi_k}\\
		\|D^2 \phii_{k-1}\|_{C^0 (B_{\textcolor{red}{4}t_k})} &\leq C t_{k-1}^{-1} \bmo (k-1)^{\sfrac{1}{2}}\leq C t_k^{-1} \bE_k^{\sfrac{1}{2}}\, \label{e:second-derivative-good}\\
		\|\phii_k\|_{C^0 (B_{2t_k})} &\leq C t_k \bE_k^{\sfrac{1}{2}}, \label{e:phii_k_sup}\\
		\| D \phii_k\|_{C^0 (B_{\textcolor{red}{5}t_k})} &\leq C \bmo (k)^{\sfrac12} \leq C \bE_k^{\sfrac12}, \label{e:Dphii_k_supnorm}\\
		\| D^2 \phii_k \|_{C^0 (B_{\textcolor{red}{5}t_k})} &\leq C t_k^{-1} \bmo (k)^{\sfrac12} \leq C t_k^{-1} \bE_k^{\sfrac12}\, , \label{e:D2phii_k_sup}\\		
		\|D (\phii_k - \phii_{k-1})\|^2_{L^2 (B_{2t_k})} &\leq C t_k^2 \bE^{1+2\gamma_2}\, .
		\label{e:additional-1000}
	\end{align}
	In particular we get by \eqref{e:controls-on-the-m_0}, \eqref{e:global_Lip}, and \eqref{e:global_Dir} after rescaling back
	\begin{align}
	\Lip(N_k) + \Lip(N_{k-1}) &\leq C \bE_k^{\gamma_2}\,,\\
	\mass(\bT_{F_k} \res \bC_k - \bT_{F_{k-1}}\res \bC_k) 
	&\leq \mass(\bT_{F_k}\res \bC_k - T\res \bC_k) +\mass(T \res \bC_k- \bT_{F_{k-1}}\res \bC_k) \notag \\ 
	&\leq C t_k^2 \bE_k^{1+\gamma_2}\,.
	\end{align}
	Thus, we set $\hat N_k$ to be the $Q$-valued function defined on $\cM_{k-1}$ satisfying
	\[ \bG_{\hat N_k } \res \bC_k = \bT_{F_k} \res \bC_k = \bG_{N_k } \res \bC_k =: S\,, \]
	where with $\bG_{\hat N_k }$ we mean the current associated to the function $p \mapsto p + \hat N_k(p)$.
	By comparing $D(N_k, t_k)$ with $D(\hat N_k, t_k)$ and $H (N_k, t_k)$ with $H (\hat N_k, t_k)$ we make an additional error of size $t_k^2 \bE_k^{1+\gamma_2}$ and size $t_k^3 \bE_k^{1+\gamma_2}$ respectively. We will prove this later. With this aim in mind we change coordinates in the integrals of $D$ and $H$ to flat ones. 
	Denote by $\Phii_k(x):= (x, \phii_k(x))$ and $\Phii_{k-1}(x):= (x, \phii_{k-1}(x))$. We then estimate
	\begin{align*}
	\Big| D(N_k, t_k) - &\int |DN_k|^2(\Phii_k(x)) \phi \big(t_k^{-1} d(\Phii_k(x)) \big) dx \Big|\\
	&\leq C \int_{B_{2t_k}} |DN_k|^2(\Phii_k(x)) \phi \big(t_k^{-1} d(\Phii_k(x)) \big) \left| D \Phii_k(x) - (\Id,0) \right| dx\\
	&\leq C  \|D\phii_{k}\|_{C^0 (B_{2t_k})} \int |DN_k|^2(\Phii_k(x)) \phi \big(t_k^{-1} d (\Phii_k(x)) \big) J \Phii_k(x) dx\\
	&\leq C t_k^2 \bE_k^{\sfrac32} \,, 
	\end{align*}
	where we used \eqref{e:D_Nk} and \eqref{e:Dphii_k_supnorm} for the last inequality.
	Analogous estimates can be employed for $D (\hat N_k, t_k)$, $H (N_k, t_k)$, and $H (\hat N_k, t_k)$. 
	
	Therefore, it is enough to prove
	\begin{align}
	\left|\int |DN_k|^2\phi \big(t_k^{-1} d (\Phii_k(x)) \big) dx -\int |D\hat N_k|^2\phi(t_k^{-1} d (\Phii_{k-1}(x))) dx \right|
	&\leq C t_k^2 \bE_k^{1+\gamma_2}\,, \label{e:D_reparametrized}\\
	\left|\int |N_k|^2\frac{\phi'(t_k^{-1} d(\Phii_{k}(x)))}{d (\Phii_{k}(x))} dx - \int |\hat N_k|^2\frac{\phi'(t_k^{-1} d (\Phii_{k-1}(x)))}{d (\Phii_{k-1}(x))} dx \right|
	&\leq C t_k^3 \bE_k^{1+\gamma_2}\,.\label{e:H_reparametrized}
	\end{align}
	For \eqref{e:D_reparametrized}, notice that $N_k(p) = \sum_i \a{(F_k)_i(p)-p} $. Hence, each component of $N_k$ satisfies
	\[ |D (N_k)_i(\Phii_k(x))| \leq C\ |T_{(F_k)_i(x)} \bT_{F_k} - T_{\Phii_k(x)} \cM_k| \,.\]
	By the Lipschitz bound of $\phii_k$ \eqref{e:Dphii_k_supnorm} and of $F_k$, we thus have
	\begin{align*}
	\int |D N_k|^2 \phi \big(t_k^{-1} d (\Phii_k(x)) \big) &\leq C \int_\bC |\vec{S}(p) - \vec T_{\bp_k(p)} \cM_k|^2 \phi(t_k^{-1} d (\bp_k(p))) d \|S \|(p) + O(t_k^2\bE_k^{1+\gamma_2})\,,\\
	\int |D \hat N_k|^2 \phi(t_k^{-1} d(\Phii_{k-1}(x))) &\leq C \int_\bC |\vec S(p) - \vec T_{\bp_{k-1}(p)} \cM_{k-1}|^2 \phi(t_k^{-1} d (\bp_{k-1}(p))) d \|S \|(p) \\ &\quad+ O(t_k^2 \bE_k^{1+\gamma_2})\,,
	\end{align*}
	where we denoted by $\bp_k$ and $\bp_{k-1}$ the nearest point projection on $\cM_k$ and $\cM_{k-1}$ respectively, while $\bC$ is the vertical cylinder with base $B_{2t_k}$. As we have from Theorem \ref{t:cm} that $\| \phii_k - \phii_{k-1} \|_{C^2} \leq C t_k^{-1} \bE_k^{\sfrac12}$, by the Lipschitz bound of $\phi$, we deduce for any $p \in \supp(S)$ and $ q, q' \in \cM_k$,
	\begin{align*}
	|\phi(t_k^{-1} d (\bp_k(p)))-\phi(t_k^{-1} d (\bp_{k-1}(p))) | &\leq C\bE_k^{1/2}\,, \\
	|T_{q} \cM_k - T_{q'} \cM_k| &\leq Ct_k^{-1} \bE_k^{\sfrac12} |q-q'| \,.
	\end{align*}
	Hence, we have
	\begin{align*}
	\int_\bC |\vec S(p) - \vec T_{\bp_k(p)} \cM_k|^2 |\phi(t_k^{-1}
	&d (\bp_k(p)))-\phi(t_k^{-1} d(\bp_{k-1}(p)) | d \|S \|(p)
	\leq C t_k^2 \bE_k^{\sfrac32} \,,\\
	|T_{\bp_k(p)} \cM_k- T_{\bp_{k-1}(p)} \cM_{k-1}| 
	&\leq C | D \phii_k (\bp_{V_0}(\bp_k(p))) -  D \phii_{k-1} (\bp_{V_0}(\bp_{k-1}(p)))|\\
	&\leq C \bE_k + |D (\phii_k - \phii_{k-1})| (\bp_{V_0} (p))
	\end{align*}
	where we used \eqref{e:global_Lip} in the last inequality. We therefore can conclude
	\begin{align*}
	&\left|\int|DN_k|^2\phi \big(t_k^{-1} d(\Phii_k(x)) \big) dx -\int_{B_{2t_k}} |D\hat N_k|^2\phi(t_k^{-1} d(\Phii_{k-1}(x))) dx \right|\\
	&\quad \leq C t_k^2 \bE_k^{1+ \gamma_2} + C \int_\bC |\vec S(p) - \vec T_{\bp_k(p)} \cM_k |^2|\phi(t_k^{-1} d(\bp_k(p))) - \phi(t_k^{-1} d(\bp_{k-1}(p)))|\ d \|S\|\\
	&\qquad + C \int_\bC \left| |\vec S(p) - \vec T_{\bp_k(p)} \cM_k |^2 - |\vec S(p) - \vec T_{\bp_{k-1}(p)} \cM_{k-1} |^2\right| \phi(t_k^{-1} d(\bp_{k\textcolor{red}{-1}}(p)))\ d \|S\| \\
	&\quad \leq C t_k^2 \bE_k^{1+ \gamma_2} + C \int_\bC |\vec S(p) - \vec T_{\bp_k(p)} \cM_k | |\vec T_{\bp_{k}(p)} \cM_{k} - \vec T_{\bp_{k-1}(p)} \cM_{k-1} | \phi(t_k^{-1} d(\bp_k(p)))\ d \|S\| \\
	&\qquad + C \int_\bC |\vec S(p) - \vec T_{\bp_{k-1}(p)} \cM_{k-1} | |\vec T_{\bp_{k}(p)} \cM_{k} - \vec T_{\bp_{k-1}(p)} \cM_{k-1} | \phi(t_k^{-1} d(\bp_k(p)))\ d \|S\| \\
	&\quad \leq C t_k^2 \bE_k^{1+ \gamma_2} + C t_k \bE_k^{\sfrac12} \left( \int_\bC |\vec T_{\bp_{k}(p)} \cM_{k} - \vec T_{\bp_{k-1}(p)} \cM_{k-1} |^2 \phi(t_k^{-1} d(\bp_k(p)))\ d \|S\| \right)^\frac12\\
	&\quad \leq C t_k^2 \bE_k^{1+ \gamma_2} + C t_k \bE_k^{1/2} \left(\int_{B_{2t_k}} |D \phii_k - D \phii_{k-1}|^2\right)^{\frac{1}{2}}\\
	&\quad \leq C t_k^2 \bE_k^{1+ \gamma_2} \, ,
	\end{align*}
	where we used \eqref{e:additional-1000} for the last inequality.
	\medskip
	
	We finally turn to \eqref{e:H_reparametrized}. For $x \in V_0$, denote by $z_k :=(x, \phii_k(x))$ and $\hat z_k :=(x,\phii_{k-1}(x))$. Then we estimate
	\begin{align*}
	\left| |N_k|^2(z_k) - |\hat N_k|^2(\hat z_k) \right| 
	\leq |N_k|(z_k) \left| |N_k|(z_k) - |\hat N_k|(\hat z_k) \right| 
	+ |\hat N_k|(\hat z)\left| |N_k|(z_k) - |\hat N_k|(\hat z_k) \right| \,.
	\end{align*}
	Moreover, using Cauchy-Schwarz and the fact that the $L^2$ norm of $N_k$ and $\hat N_k$ is bounded by $t_k^2 \bE_k^{1/2}$, we have
	\begin{align}\label{e:normal_appr_squared}
	&\left|\int|N_k|^2\frac{\phi'(t_k^{-1} d(z_k))}{d( z_k)} dx - \int |\hat N_k|^2\frac{\phi'(d ((\hat z_k))}{d (\hat z_k)} dx \right| \notag \\
	&\qquad \leq C t_k\bE_k^{\sfrac12} \left( \int_{B_{2t_k}} \left| |N_k|(z_k) - |\hat N_k|(\hat z_k) \right|^2 dx \right)^\frac12 .
	\end{align}
	If we now define $p_i := (F_k)_i(x)$ and $q_i := (\hat F_k)_i(x) := \hat z_k + (\hat N_k)_i(\hat z_k)$, we have (up to reordering the indices)
	\begin{align*}
	|N_k|(z_k) = \left( \sum_i |p_i - z_k|^2 \right)^\frac12 ,
	\qquad
	|\hat N_k|(\hat z_k) = \left( \sum_i |q_i - \hat z_k|^2 \right)^\frac12.
	\end{align*}
	Now we use the triangle inequality to see
	\begin{align*}
	\left| |N_k|(z_k) - |\hat N_k|(\hat z_k) \right|^2 
	&= \left| \Big( \sum_i |p_i - z_k|^2 \Big)^\frac12 - \Big( \sum_i |q_i - \hat z_k|^2 \Big)^\frac12 \right|^2  \\
	&\leq C \sum_i |p_i - q_{\sigma(i)}|^2  + C |z_k-\hat{z}_k|^2\\
	&=C \cG \Big( \sum_i \a{p_i}, \sum_i \a{q_i} \Big)^2 + C |\phii_k (x)-\phii_{k-1} (x)|^2\, \, ,
	\end{align*}
	for $\sigma$ the permutation realizing the distance $\cG \Big( \sum_i \a{p_i}, \sum_i \a{q_i} \Big)$. 
	
	Note that, since $\phii_k$ and $\phii_{k-1}$ agree on the boundary $\bp_{V_0} (\Gamma)$, we can use \eqref{e:additional-1000} and the Poincar\'e inequality to conclude
	\begin{equation}\label{e:additional100}
	\|\phii_k - \phii_{k-1}\|_{L^2 (B_{2t_k})} \leq C t_k
	\|D\phii_k - D \phii_{k-1}\|_{L^2 (B_{2t_k})} \leq C
	t_k^3 \bE_k^{1/2+\gamma_2}\, .
	\end{equation}

	\begin{figure}[htp]
    \centering
    \includegraphics[width=10cm]{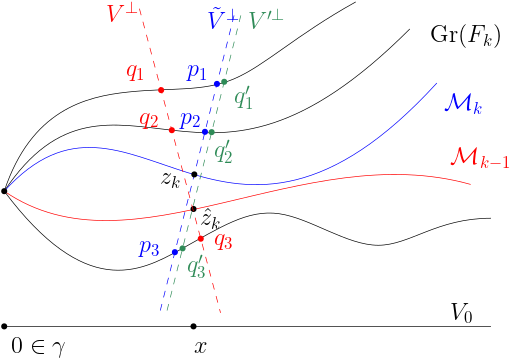}
    \caption{An illustration of how Lemma \ref{l:2d_rotations} is used.}
    \label{fig:whitney}
    \end{figure}
    
	To estimate further we split the distance $\cG \Big( \sum_i \a{p_i}, \sum_i \a{q_i} \Big)$  into a horizontal and vertical part in the following sense. We define $V:= \hat z_k + T_{\hat z_k} \cM_{k-1}$, $\tilde V := z_k + T_{z_k} \cM_k$, $V':= \hat z_k + T_{z_k} \cM_{k}$ and $\sum_i \a{q_i'} := \langle S, \bp_{V'}, 0 \rangle $. Observe that $V$ and $V'$ differ by a rotation, while $V'$ and $\tilde V$ are parallel. We then apply the Lemma \ref{l:2d_rotations} to the shifted situation where $ \hat z_k=0$ and deduce
	\begin{align*}
	\cG \Big( \sum_i \a{q_i}, \sum_i \a{q_i'} \Big) 
	&\leq C \Lip(F_k) \|N_k\|_{C^0} (|V-V_0| +|V'-V_0|)\\
	&\leq C \Lip(F_k) \|N_k\|_{C^0} (|D\phii_k| +|D\phii_{k-1}|)\\
	&\leq C t_k \bE_k^{\sfrac{3}{4}+\gamma_2}\,,
	\end{align*}
	where in the last inequality we used \eqref{e:Dphi_k} and \eqref{e:Dphii_k_supnorm}.
	In order to estimate $\cG \Big( \sum_i \a{p_i}, \sum_i \a{q_i'} \Big)$, 
	we call $f_{\tilde V}: T_{z_k} \cM_k \to \cA_Q(\R^{n})$ the function having the same graph as $F_k$ in $\bC_{2t_k}$. Observe that 
	\[ \left| T_{z_k} \cM_k -V_0 \right| \leq C t_k  \|D^2 \phii_k \|_{C^0} \leq C \bE_k^{\sfrac12}\]
	and by \cite[Proposition 5.2]{DS2}
	\[ \Lip( f_{\tilde V}) \leq C\bE^{\gamma_2}_k\,. \]
	Then we observe that $ \sum_i \a{p_i} = \sum_i \a{{f_{\tilde V}}_i(z_k)}$ and $\sum_i \a{q'_i} = \sum_i \a{ {f_{\tilde V}}_i(\bp_{T_p\cM_k}(\hat z_k))}$. Thus we have
	\begin{align*}
	\cG \Big( \sum_i \a{p_i}, \sum_i \a{q_i'} \Big)
	&\leq \Lip(f_{\tilde V}) |z_k-\bp_{T_{z_k}\cM_k}(\hat z_k)|\\
	&\leq \Lip(f_{\tilde V}) (||\phii_k||_{C^0} +||\phii_{k-1}||_{C^0})
	\leq C t_k \bE_k^{\sfrac12 + \gamma_2}.
	\end{align*}
	Squaring and integrating (and using \eqref{e:additional100}), we deduce 
	\[
	\int_{B_{2t_k}} \left||N_k| (z_k) - |\hat{N}_k| (\hat{z}_k)\right|^2 \leq C t_k^4 \bE_k^{1+2\gamma_2}\, .
	\]
	Inserting in \eqref{e:normal_appr_squared} we conclude
	\[
	\left|\int |N_k|^2\frac{\phi'(t_k^{-1} d (z_k)}{d (z_k)} dx - \int |\hat N_k|^2\frac{\phi'(t_k^{-1} d((\hat z))}{d (\hat z)} dx \right| \leq C t_k^3\bE^{1+\gamma_2}\, .
	\]
	
	\medskip
	
	It remains to prove \eqref{e:phii_k-1_sup}-\eqref{e:additional-1000}.
	
	\eqref{e:Dphii_k_supnorm} and \eqref{e:D2phii_k_sup} follow from Theorem \ref{t:cm} using a simple rescaling and \eqref{e:controls-on-the-m_0}.
	Next, for $\phii_{k-1}$ the estimate on the second derivative derived from Theorem \ref{t:cm} and \eqref{e:controls-on-the-m_0} is favourable, as it gives directly \eqref{e:second-derivative-good}. 
	However the estimate on the first derivative is not, as it would give 
	\begin{equation}\label{e:piccolo_problema-5}
	\|D \phii_{k-1}\|_{C^0 (B_{\textcolor{red}{5}t_k})} \leq C \bmo (k-1)^{\sfrac{1}{2}} 
	\leq C \left(\frac{t_{k-1}}{t_k}\right)^{1-\delta_1} \bE_k^{\sfrac{1}{2}}\, ,
	\end{equation}
	which is not good enough for our purposes.
	
	\underline{Proof of \eqref{e:phii_k-1_sup}, \eqref{e:Dphi_k}, and \eqref{e:phii_k_sup}}
	In order to gain a more favorable estimate for the first derivative (and the $C^0$ norm of $\phii_{k-1})$ we first observe that by Lemma \ref{l:hardtSimonHeight}
	\[
	\bh (T, \bC_{10t_k}(0, V_0)) \leq C \bE_k^{\sfrac{1}{2}} t_k\, .
	\]
	Arguing as in the proof of \eqref{e:H_Nk-1} it is not difficult to see that  
	\begin{equation}\label{e:piccolo_problema-1}
	\int_{\bC_{5t_k}(0, V_0) \cap \cM_{k-1}} |N_{k-1}|^2 \leq C \bE_k t_k^4\, .
	\end{equation}
	Since $\bT_{F_{k-1}}$ coincides with $\supp(T)$ on a large set we can also infer
	\begin{equation}\label{e:piccolo_problema}
	\int_{B_{5t_k}} |\phii_{k-1}|^2 \leq C 
	\bE_k t_k^4\, .
	\end{equation}
	In order to see the latter estimate, consider first a point $p\in \cM_{k-1}$ with the property that the support of $F_{k-1} (p)$ is a subset of the support of $T$. By the height bound we know that $\bh (T, \bC_{10t_k} (0, V_0))\leq C \bE_k^{1/2} t_k$. In particular, if we let $\bp_0^\perp$ be the projection on the orthogonal complement $V_0$, we conclude
	\[
	|\bp_0^\perp \circ F_{k-1}| (p) \leq C\bE_k^{1/2} t_k\, .
	\]
	Consider now that, if $x$ is such that $p= (x, \phii_{k-1} (x))$, since $F_k (p) = 
	\sum_i \a{F_k^i (p)} = \sum_i \a{N_k^i (p) + p}$, we get  
	\begin{align}
	|\phii_{k-1} (x)| &\leq |\bp_0^\perp \circ F_{k-1}| (x, \phii_{k-1} (x)) 
	+ |\bp_0^\perp \circ N_{k-1}| (x, \phii_{k-1} (x))\nonumber\\
	&\leq C \bE_k^{1/2} t_k
	+ |N_{k-1}| (x, \phii_{k-1} (x))\, .\label{e:piccolo_problema-2}
	\end{align}
	Let now $\mathcal{K}$ be the set of such points $p$ (i.e. for which the support of $F_k (p)$ is contained in the support of $T$) and define $K := \bp_0 (\mathcal{K})\cap B_{5t_k}$. Using the bounds \eqref{e:piccolo_problema-1} and \eqref{e:piccolo_problema-2} we easily obtain
	\begin{equation}\label{e:piccolo_problema-3}
	\int_K |\phii_{k-1} (x)|^2 \leq C \bE_k t_k^4\, .
	\end{equation}
	In order to estimate the integral on the remaining portion (i.e. on $B_{5t_k}\setminus K$), we apply \eqref{e:err_regional} to $\bar\cM_{k-1}$, sum over all the stopped squares in $ B_{5t_k}\setminus K$ (which by the stopping condition have side length comparable to $t_k/t_{k-1}$), scale it back to $\cM_{k-1}$ and deduce
	\begin{align}
	|B_{5t_k}\setminus K| &\leq \mathcal{H}^2 (\bB_{6t_k}\cap \cM_{k-1} \setminus \mathcal{K})
	\leq C (\bmo (k-1))^{1+\gamma_2} \left(\frac{t_k}{t_{k-1}}\right)^{4+\gamma_2} t_{k-1}^2\nonumber\\
	&\leq C \left(\frac{t_k}{t_{k-1}}\right)^{2+\gamma_2}\, .\label{e:piccolo_problema-6}
	\end{align}
	Then we observe that, by \eqref{e:piccolo_problema-3} and the classical Chebyshev inequality, there is at least one point $x\in B_{5t_k}$ where $|\phii_{k-1} (x)|\leq C \bE_k^{1/2} t_k$, and we use \eqref{e:piccolo_problema-5} to conclude that for all $y \in B_{5t_k}$ we have
	\begin{equation}\label{e:piccolo_problema-7}
	|\phii_{k-1} (y)|\leq C \bE_k^{\sfrac{1}{2}} t_k + C \bE_k^{\sfrac{1}{2}} \left(\frac{t_{k-1}}{t_k}\right)^{1-\delta_1}  |x-y| \leq C \bE_k^{\sfrac{1}{2}} \left(\frac{t_{k-1}}{t_k}\right)^{1-\delta_1}  t_k \, . 
	\end{equation}
	Putting together \eqref{e:piccolo_problema-3}, \eqref{e:piccolo_problema-6}, and \eqref{e:piccolo_problema-7}, we achieve
	\[
	\int_{B_{5t_k}} |\phii_{k-1}|^2 \leq C \bE_k t_k^4 + C \bE_k \left(\frac{t_{k}}{t_{k-1}}\right)^{2+\gamma_2 - 2 (1-\delta_1)} t_k^4\, . 
	\]
	Since $2+\gamma_2\geq 2-2\delta_1$ and $t_k \leq t_{k-1}$, the latter clearly implies \eqref{e:piccolo_problema}.
	
	We next use Gagliardo-Nirenberg interpolation inequality and from \eqref{e:piccolo_problema-1} and \eqref{e:second-derivative-good} we get \eqref{e:phii_k-1_sup} and \eqref{e:Dphi_k}, namely
	\begin{align*}
	\|\phii_{k-1}\|_{C^0 (B_{2t_{k}})} \leq C t_k \bE_k^{\sfrac{1}{2}} ,\qquad
	\|D\phii_{k-1}\|_{C^0 (B_{2t_k})} \leq C \bE_k^{\sfrac{1}{2}}\, . 
	\end{align*}
	We analogously conclude \eqref{e:phii_k_sup}.
	
	\underline{Proof of \eqref{e:additional-1000}}
	We wish to show that 
	\begin{equation*}
	    \|D (\phii_k - \phii_{k-1})\|^2_{L^2 (B_{2t_k})} 
	    \leq C t_k^2 \bE_k^{1+2\gamma_2}\, .
	\end{equation*}
	We choose a suitable cut-off function $\psi$ which equals $1$ on $B_{2t_k}$ and is compactly supported in $B_{3t_k}$ and write
	\begin{align*}
	    \int_{B_{2t_k}} |D (\phii_k-\phii_{k-1})|^2 \leq    
	    \int_{B_{3t_k}} |D(\phii_k - \phii_{k-1})|^2 \psi
	\end{align*}
	Integrating by parts, we can estimate
	\begin{align*}
    	\int |D(\phii_k - \phii_{k-1})|^2 \psi
	    = \int (\phii_k - \phii_{k-1}) \Delta (\phii_k - \phii_{k-1}) \psi + \int (\phii_k - \phii_{k-1}) \nabla (\phii_k - \phii_{k-1}) \cdot \nabla \psi\, .
	\end{align*}
	We next use that $\|\nabla \psi\| \leq C t_k^{-1}$, \eqref{e:Dphi_k},  \eqref{e:second-derivative-good}, \eqref{e:Dphii_k_supnorm}, and \eqref{e:D2phii_k_sup} to estimate
	\begin{equation}\label{e:additional-1001}
	    \int_{B_{2t_k}} |D (\phii_k - \phii_{k-1})|^2 
    	\leq C \bE_k^{1/2} t_k^{-1} \int_{B_{3t_k}} |\phii_k - \phii_{k-1}|\, .
	\end{equation}
	We next consider the multivalued functions $f_k$ and $f_{k-1}$ on $B_{3t_k}$ and taking values into $\mathcal{A}_Q (\mathbb R^n)$ with the properties that 
	\[
	\bG_{f_k} = \bT_{F_k} \res \bC_{0, 3t_k} \,, \qquad
	\bG_{f_{k-1}} = \bT_{F_{k-1}} \res \bC_{0, 3t_k}\, .
	\]
	Note that the values of $f_k$ and $f_{k-1}$ coincide except for a set of measure at most $t_k^2 \bE_k^{1+\gamma_2}$ (again we use Theorem \ref{t:normal-approx} and sum over the stopped squares). Moreover, because $\Lip (f_k), \Lip (f_{k-1}) \leq C \bE_k^{\gamma_2}$, we immediately draw the conclusion
	\[
	\int_{B_{3t_k}} |\etaa\circ f_k - \etaa \circ f_{k-1}|
	\leq \bE_k^{1+2\gamma_2} t_k^3\, .
	\]
	On the other hand, appealing to Proposition \ref{p:additional} (and rescaling appropriately) we get
	\begin{align*}
	\int_{B_{3t_k}} |\etaa\circ f_k - \phii_k| &\leq C \bE_k^{3/4} t_k^3\,, \\
	\int_{B_{3t_k}} |\etaa\circ f_{k-1} - \phii_{k-1}|
	&\leq C\left( \left(\frac{t_{k-1}}{t_k}\right)^{2-2\delta_1} \bE_k \right)^{3/4} \left(\frac{t_k}{t_{k-1}}\right)^4 t_{k-1}^3\, .
	\end{align*}
	While the first estimate is already suitable for our purposes, the second require some more care. We recall \eqref{e:m_0_comparison}
	to the effect that 
	\[
	\left(\frac{t_{k-1}}{t_k}\right)^{2-2\delta_1} \bE_k \leq
	\left(\frac{t_{k-1}}{t_k}\right)^{2-2\delta_1} \bmo(k) \leq C
	\]
	for a geometric constant $C$. Since $\frac{1}{2-2\delta_1} \geq \frac{3}{4}$, we can then estimate
	\[
	\int_{B_{3t_k}} |\etaa\circ f_{k-1} - \phii_{k-1}|\leq C
	\bE_k^{\frac34} t_k^3 \,.
	\]
	By possible choosing $\gamma_2$ sufficiently small we get 
	\[
	\int_{B_{3t_k}} |\phii_k - \phii_{k-1}|\leq C \bE_k^{1/2+2\gamma_2} t_k^3\, ,
	\]
	which, by \eqref{e:additional-1001}, gives \eqref{e:additional-1000}.
	
\end{proof}

\subsection{Lipschitz estimate using 2d-rotations}
\begin{lemma}\label{l:2d_rotations}
    There is a constant $c>0$ such that the following holds. Let $F: V_0 \to \cA_Q(\R^n)$ be a Lipschitz map with $\Lip(F)<c$, let $V$ and $V'$ be $2$-dimensional subspaces with $|V-V_0| +|V'-V_0|<c$ and denote by $\bp$ and $\bp'$ the orthogonal projection on $V$ and $V'$ respectively. Then for $P:= \langle \bT_F, \bp, 0 \rangle$ and $P':= \langle \bT_F, \bp', 0 \rangle$ it holds
    \begin{equation}\label{e:Lipschitz_2d_rotation}
        \cG(P, P') \leq C \, \Lip(F) \, \| F\|_{C^0} (|V-V_0| +|V'-V_0|)\,.
    \end{equation}
\end{lemma}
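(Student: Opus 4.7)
The plan is to analyze both slices explicitly via an implicit-function (Banach fixed-point) characterization of their sheets, then to compare them sheet-by-sheet. First I would fix coordinates so that $V_0 = \mathbb R^2 \times \{0\} \subset \mathbb R^{n+2}$. For tilts smaller than $c$ one can write $V$ and $V'$ as graphs of linear maps $L, L' \colon V_0 \to V_0^\perp$ with $\|L\|$ and $\|L'\|$ comparable to $|V-V_0|$ and $|V'-V_0|$ respectively. A direct calculation of the orthogonal projection then shows that a point $(x,y)\in V_0\times V_0^\perp$ lies in $\ker \bp$ if and only if $x = -L^{\mathsf T} y$, and analogously for $\bp'$.

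Next I would characterise the slice points. For each single-sheet graph current $[\![\mathrm{graph}(F_i)]\!]$ the slice at $0$ under $\bp$ is supported at a unique point $(x_i^V, F_i(x_i^V))$ with $x_i^V$ solving the implicit equation
\[
x_i^V = -L^{\mathsf T} F_i(x_i^V).
\]
Since $\|L\|\cdot\Lip(F) < c^2 < 1/2$, the right-hand side is a strict contraction on $V_0$; Banach's fixed-point theorem yields existence, uniqueness, and the bound $|x_i^V| \leq 2\|L\|\|F\|_{C^0}$. The analogous statement holds for $x_i^{V'}$, and both slices come with the canonical positive orientation, so $P = \sum_i \a{(x_i^V, F_i(x_i^V))}$ and $P' = \sum_i\a{(x_i^{V'}, F_i(x_i^{V'}))}$.

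To compare the slices I would pair the sheets by their common index $i$. Subtracting the two defining equations and using the Lipschitz bound on $F_i$ yields
\[
(1-\|L\|\Lip(F))\,|x_i^V - x_i^{V'}| \leq \|L - L'\|\,\|F\|_{C^0},
\]
hence $|x_i^V - x_i^{V'}| \leq C(|V-V_0|+|V'-V_0|)\|F\|_{C^0}$, while the vertical components satisfy $|F_i(x_i^V) - F_i(x_i^{V'})| \leq \Lip(F)\,|x_i^V - x_i^{V'}|$. Summing
\[
\cG(P,P')^2 \leq \sum_{i=1}^{Q} \bigl(|x_i^V - x_i^{V'}|^2 + |F_i(x_i^V) - F_i(x_i^{V'})|^2\bigr)
\]
and collecting the two previous estimates then produces the desired inequality, the Lipschitz factor being responsible for absorbing the horizontal displacement into the vertical error (up to a geometric constant $C$ depending only on $Q$).

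The only genuine delicacy is to verify that the identity permutation realises the $\cG$-infimum: this is automatic whenever the sheets $\{F_i(0)\}$ are pairwise distinct, since for small tilts the slice points remain within a single uniform neighbourhood of each $F_i(0)$; the coincident case is handled by a trivial continuity/symmetry argument, coincident sheets contributing equal slice points. Beyond this combinatorial check the proof is a mechanical fixed-point computation with no serious obstacle.
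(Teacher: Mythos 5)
Your approach is genuinely different from the paper's, and unfortunately it contains a gap that the paper's construction is specifically designed to avoid.

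Your argument presupposes a decomposition $\bT_F = \sum_i \a{\mathrm{graph}(F_i)}$ into single-valued Lipschitz sheets $F_i\colon V_0 \to \R^n$; the fixed-point iteration $x_i \mapsto -L^{\mathsf T}F_i(x_i)$, the Banach contraction estimate, and the "pair by common index" step all require these single-valued sheets to exist. But $Q$-valued Lipschitz maps on a two-dimensional domain do \emph{not} in general admit such a decomposition, because of monodromy around branch points. The standard example is a Lipschitz $2$-valued map whose two values swap as the argument loops around the origin (e.g. the two branches of $z \mapsto z^{3/2}$ on $\C \cong \R^2$, which is Lipschitz near $0$ but has no continuous, let alone Lipschitz, selection). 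Since the slice points live precisely near $0$, one cannot discard this configuration, and there is no single-valued $F_i$ to feed into the contraction. Note also that the "delicacy" you flag --- whether the identity permutation realizes the $\cG$-infimum --- is not really the issue: any pairing yields an upper bound for $\cG$, so optimality of the permutation is irrelevant; the problem is that without a selection the label $i$ is not even defined, so "pairing by index" has no meaning.

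This is exactly the obstruction that the paper's proof (following \cite[Lemma~D.1]{DS4}) is structured to circumvent. By decomposing the passage $V \to V'$ into finitely many $2$d rotations $R_j$ in planes $\langle e_j, e_j'\rangle$, the comparison between two consecutive slices $P_j, P_{j+1}$ is reduced to a situation where the slice points only move along a one-dimensional line $\{t v_j\} \subset V_0$. One can then restrict $F$ to that line and invoke the \emph{one-dimensional} Lipschitz selection principle \cite[Proposition~1.2]{DS1} --- which \emph{does} hold on intervals, with the selection preserving the a.e.\ bound $|DF_i^j| \le |DF|$ --- and only then runs the elementary comparison estimate. In short: the $2$d-rotation reduction is not a mere technical device to simplify the computation; it is what makes the sheet selection possible at all. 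A direct fixed-point argument on the full two-dimensional domain would need to supply an alternative justification for the sheet decomposition, and none is available.
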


\begin{proof}
    We use an argument already observed in more generality in \cite[Lemma D.1]{DS4}. However, we repeat here the parts needed for the previous lemma. First of all, we construct finitely many planes by using 2d-rotations that will allow us to reduce \eqref{e:Lipschitz_2d_rotation} to a one-dimensional situation. Recall the terminology: we say that $R \in \text{SO}(n+2)$ is a 2d-rotation if there are two orthonormal vectors $e_1$, $e_2$ and an angle $\theta$ such that
    \begin{align*}
    \begin{cases}
        R(e_1) &= \cos(\theta)e_1 + \sin(\theta)e_2\,, \\
        R(e_2) &= \cos(\theta)e_1 - \sin(\theta)e_2\,, \\
        R(v) &= v\,, \qquad \text{ for any } v \in \langle e_1, e_2 \rangle^\perp.
    \end{cases}
    \end{align*}
    Now let us denote by $W_1= V \cap V'$. If $\dim(W_1) =2$, then $V=V'$ and there is nothing to prove. Otherwise $\dim(W_1)<2= \dim(V) = \dim(V')$ and we can write
    \[ V= W_1 \oplus \hat V,  \qquad V' = W_1 \oplus \hat V',  \]
    for some subspaces $\hat V$ and $\hat V'$.
    Choose any unit vector $e_1 \in \hat V = V \cap W_1^\perp$ and define
    \[ e'_1 := \frac{\bp'(e_1)}{|\bp'(e_1)|} \in V' \cap W_1^\perp\,.\]
    Moreover, define $R_1$ to be the 2d-rotation mapping $e_1$ onto $ e'_1$ and 
    \begin{align*}
        V_2 &:= R_1(V)\,,\\
        W_2 &:= V_2 \cap V'\,.
    \end{align*}
    Notice that $W_1 \subset V_1$ is invariant under $R_1$, so clearly $W_1 = (W_1 \cap V') \subset (V_2 \cap V') = W_2$. Moreover, $e'_1 \in V_2 \cap V' $, and hence
    \[ W_2 \supset \langle W_1, e'_1 \rangle\,.\]
    As $e'_1 \perp W_1$, we have $\dim(W_2) \geq \dim(W_1)+1$.
    Now, if $\dim(W_2)=2$, then $V_2=R_1(V_1)=V'$ and we define $R_2$ to be the identity. Otherwise $\dim(W_2)=1$ and we can again find a unit vector $e_2 \in  V_2 \cap W_2^\perp$, define
    \[ e'_2 := \frac{\bp'(e_2)}{|\bp'(e_2)|} \in V' \cap W_1^\perp\,,\]
    and define $R_2$ to be the 2d-rotation mapping $e_2$ onto $e'_2$. As before, we denote by $V_3 := R_2(V_2)$ and observe that $W_3:= V_3 \cap V'$ has at least one dimension more than $W_2$. Thus, in both cases we have
    \[ V' = R_2 \circ R_1(V)\,. \]
    Next, denote by $V_1:=V$ and for $j \in \{1, 2,3\}$ the orthogonal projection onto $V_j$ by $\bp_j$ and $P_j := \langle \bT_F, \bp_j, 0 \rangle$. Notice that for $c>0$ small enough, $\supp(P_j)$ is a $Q$-valued point. We claim
    \begin{align*}
        \cG(P_j, P_{j+1}) \leq C \Lip(F) \, \| F\|_{C^0} (|V_j-V_0| +|V_{j+1}-V_0|)
    \end{align*}
    concluding the lemma as $|V_j-V_0| \leq |V-V'| + |V-V_0| \leq 2(|V-V_0| +|V'-V_0|)$ for every $j$.
   Indeed, for each $j$, fix a unit vector $v_j \in V_0$ such that
    \[ \langle e_j, e'_j \rangle \cap V_0 = \{t \cdot v_j: t \in \R \}\,. \]
    Then we can apply the selection principle \cite[Proposition 1.2]{DS1} to the map $F^j(t):= F(t v_j)$ to get a selection
    \[ F^j = \sum_i \a{F_i^j}\]
    for some Lipschitz functions $F_i^j: [-1,1] \to \R^n $ satisfying
    \begin{equation}\label{e:Lip_selection}
        |DF_i^j| \leq |DF| \leq \Lip(F) \qquad \text{a.e.}
    \end{equation}
    We therefore conclude the existence of points $s_1^j, \dots, s_Q^j, s_1^{j+1}, \dots, s_Q^{j+1} \in [-1,1]$ such that
    \begin{align*}
        \cG(P_j, P_{j+1}) &\leq \sum_i \left|F^j_i(s_i^j)-F^j_i(s_i^{j+1}) \right|\\
        &\leq \Lip(F) \sum_i \left|s_i^j-s_i^{j+1} \right|\\
        &\leq \Lip(F) \sum_i \left(|s_i^j| +|s_i^{j+1}| \right) \\
        &\leq Q C \, \Lip(F) \,\| F\|_{C^0} \left(|V_j-V_0| +|V_{j+1}-V_0| \right)\,,
    \end{align*}
    where we also have used \eqref{e:Lip_selection}.
\end{proof}

\section{Blow-up analysis and conclusion}\label{s:blowup_and_conclusion}
In this section we complete the proof of Theorem \ref{t:flat-points}, which in turn completes the proof of Theorem \ref{t:main}. We recall the $I_0$ from Corollary \ref{c:frequency}.
The main point is the following conclusion.

\begin{theorem}\label{t:blow-up}
Let $T$ be as in Assumption \ref{a:main-local-3} and assume that $0$ is not a regular point. Then $I_0 =1$ and for every $\varsigma>0$
\begin{equation}\label{e:slow-decay}
\lim_{r\downarrow 0} \frac{D(r)}{r^{2+\varsigma}}=\infty\, .
\end{equation}
\end{theorem}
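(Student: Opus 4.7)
The plan is to argue by contradiction. Assuming $0$ is not a regular point, Corollary \ref{c:frequency} ensures $H(r), D(r) > 0$ for all $r > 0$ and that the limit $I_0 := \lim_{r\downarrow 0} I(r) \in (0, \infty)$ exists. I split the proof into two parts: (A) identify $I_0 = 1$ by a blow-up argument and Theorem \ref{t:I=1}; (B) convert $I_0 = 1$ into \eqref{e:slow-decay} via the identities of Proposition \ref{p:D'-and-H'}.

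For (A), fix $r_\ell \downarrow 0$ and let $k = k(\ell)$ be the unique index with $r_\ell \in (t_{k+1}, t_k]$. Set
\[
\tilde N_\ell(y) := \frac{r_\ell\, N_k(r_\ell y)}{H(r_\ell)^{1/2}}\,,\qquad \tilde\cM_\ell := r_\ell^{-1}\cM_k\,.
\]
By Theorem \ref{t:cm} and \eqref{e:CM-applies}, the rescaled center manifolds $\tilde\cM_\ell$ converge in $C^{3,\omega}_{\mathrm{loc}}$ to the tangent half-plane $V^+$. The normalization, combined with $H(r) \le CrD(r)$ from \eqref{e:bound_from_below} and the global estimates of Corollary \ref{c:globali} rescaled through \eqref{e:back-and-forth-1}--\eqref{e:back-and-forth-3}, provides uniform $W^{1,2}_{\mathrm{loc}}$ bounds on $\tilde N_\ell$ as well as uniform Lipschitz bounds on compact subsets bounded away from $0$. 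Extracting a subsequence converging locally in $W^{1,2}$ and uniformly, we obtain a limit $u_\infty:V^+\to\cA_Q(\R^{2+n})$. I would then verify the four hypotheses of Theorem \ref{t:I=1}: (i) $u_\infty$ is $\D$-minimizing on $V^+$, as a consequence of the harmonic approximation Theorem \ref{t:o(E)} applied to the rescaled currents and a diagonal argument; (ii) $u_\infty|_{\partial V^+} \equiv Q\a{0}$, directly from (A3) of Definition \ref{d:app}; (iii) $\etaa\circ u_\infty\equiv 0$, because Proposition \ref{p:additional} bounds the $L^1$ deviation between the center-manifold graph and the actual average by $\bmo^{3/4} r^4$, which after normalization is of higher order than $\|\tilde N_\ell\|_{L^2}^2$; and (iv) $u_\infty$ is $I_0$-homogeneous, since the scaling invariance of the frequency gives $I(\tilde N_\ell,\rho) = I(N_k, r_\ell \rho) \to I_0$ for every fixed $\rho$, to which the standard Almgren rigidity for $\D$-minimizers of constant frequency applies (as in the proof of Theorem \ref{t:classification}). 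Theorem \ref{t:I=1} then forces either $u_\infty = Q\a{\etaa\circ u_\infty}\equiv Q\a{0}$ (ruled out by the normalization $\|u_\infty\|_{L^2}\sim 1$), or $I_0 = 1$.

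For (B), with $I_0 = 1$ we have $rD(r)/H(r) \to 1$, hence $D(r)/H(r) = 1/r + o(1/r)$. Proposition \ref{p:D'-and-H'} and the bound $|D(r) - E(r)|\le Cr^\tau D(r)$ from \eqref{e:fourt-piece} give
\[
(\log H)'(r) = \frac{1}{r} + \frac{2E(r)}{H(r)} + O(1) = \frac{3}{r} + o\!\left(\frac{1}{r}\right) + O(1)\,.
\]
Fix $\varsigma > 0$ and a small $r_0 > 0$ so that $(\log H)'(r) \le (3+\varsigma/2)/r + C$ on $(0, r_0)$. The jumps of $\log H$ at the exchange scales $t_k$ are controlled by Proposition \ref{p:change-of-center-manifolds2} via $|H(N_{k-1}, t_k) - H(N_k, t_k)| \le C t_k^3 \bE(T, 6t_k)^{1+\tau_1}$; combined with the lower bound \eqref{e:H_Nk} and the polynomial decay from Corollary \ref{c:cylindrical_excess_decay}, the relative jumps are summable and contribute a bounded correction. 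Integrating $(\log H)'$ on $(r, r_0)$ and adding the bounded jump contributions yields $H(r) \ge c_\varsigma r^{3+\varsigma/2}$, hence $D(r) \ge c_\varsigma' r^{2+\varsigma/2}$; this gives $D(r)/r^{2+\varsigma} \to \infty$ as $r\downarrow 0$. The main obstacle is step (iv) of (A): producing a genuinely $I_0$-homogeneous blow-up requires tracking the frequency function across the changes of center manifold at the exchange scales, combining the approximate monotonicity \eqref{e:Gronwall} of $I$ on each open interval with the summability of the jumps in \eqref{e:jump_estimate}, in a way that guarantees $I(\tilde N_\ell,\rho)$ stabilizes to $I_0$ uniformly in $\rho$ on compact subsets of $V^+$. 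Step (iii) is also delicate: Proposition \ref{p:additional} gives the correct scaling of the average, but one must couple it carefully with the Lipschitz bound and the normalization of $\tilde N_\ell$ to upgrade the $L^1$ estimate to the pointwise vanishing of $\etaa\circ u_\infty$ required by Theorem \ref{t:I=1}.
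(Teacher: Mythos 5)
Your step (A) follows essentially the same blow--up route as the paper's Theorem \ref{t:blow-up-2}: rescale the normal approximations (you normalize by $H(r_\ell)^{1/2}$, the paper by $D(s_j)^{1/2}$, which are interchangeable since $r D(r)/H(r)\to I_0>0$), pass to a $W^{1,2}$--strong limit, verify that the limit is a $\D$-minimizing, $I_0$-homogeneous map vanishing on $\partial V^+$ with zero average, and invoke Theorem \ref{t:I=1}. One local misattribution: to show $\etaa\circ u_\infty\equiv 0$ you cite Proposition \ref{p:additional}, which controls $\|\phii-\etaa\circ f\|_{L^1}$ (the discrepancy between the center manifold and the average of the \emph{Lipschitz} approximation). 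The estimate actually needed here is the one on $\int|\etaa\circ N|$ from \eqref{e:av_region}/\eqref{e:etaN_U}, since it is the average of the \emph{normal} approximation that must be shown negligible against $\|N\|^2_{L^2}$. The two quantities differ by tilt-of-tangent-plane corrections; Proposition \ref{p:additional} is the right tool for the jump estimate (Proposition \ref{p:change-of-center-manifolds2}), but not for this step. This is a citation error rather than a missing idea. You also flag (iv) correctly as delicate; the paper's resolution is that $I(\rho s_j)\to I_0$ for every $\rho$ because $I_0=\lim_{r\downarrow 0} I(r)$ exists, with the change of center manifold handled by the summability of the jumps, precisely as you suspect.

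Your step (B) is a genuinely different route from the paper's. The paper first proves Corollary \ref{c:frequency=1}, i.e.\ the discrete doubling statement $D(2r)/D(r)\to 4$, from the strong $W^{1,2}$ convergence of the blow--ups to a $1$-homogeneous minimizer, and then iterates $D(2^{-k})\geq 2^{-(2+\varsigma/2)}D(2^{-k+1})$. You instead integrate the differential inequality for $(\log H)'$ from Proposition \ref{p:D'-and-H'}, control the jump contributions via Proposition \ref{p:change-of-center-manifolds2} and \eqref{e:H_Nk}, and recover $D(r)$ from $H(r)$ using $I(r)\to 1$. Both arguments are sound, and yours is perhaps slightly more ``PDE-flavored,'' trading the blow--up compactness for a Gronwall-type integration of the monotonicity identity. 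The caveat is that your $\log H$ integration only needs the \emph{upper} bound $(\log H)'(r)\leq (3+\varsigma/2)/r + C$ for small $r$; this follows because $2E(r)/H(r)\leq (1+Cr^\tau)\cdot 2I(r)/r$ by \eqref{e:fourt-piece} and $I(r)\to 1$, which is worth making explicit. Finally, your opening phrase ``argue by contradiction'' is a slight mislabel of the structure --- the statement is a conditional and you prove it directly --- but this does not affect the mathematics.
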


The latter is in contradiction with the estimate \eqref{e:bloody-bound-on-the-D} (i.e. $D(r) \leq C r^{2+\tau}$) for some positive constant $\tau$ which depends on the exponent $\alpha$ of Theorem \ref{t:decay}. 

\subsection{Blow-up analysis} As already mentioned, Theorem \ref{t:blow-up} is reached through a suitable ``blow-up'' analysis. First of all, having fixed a sequence of $s_j\downarrow 0$ we define a suitable family of rescalings of the maps $N_k's$. First of all we choose any $k(j)$ with the property that
\begin{equation}\label{e:def_k(j)}
t_{k (j)+1}< s_j \leq t_{k (j)}\, .
\end{equation}
Next we define the exponential map $\ex_k: T_0 \cM_k \to \cM_k$ and we identify each tangent $T_0 \cM_k$ to $\mathbb R^2$ through a suitable rotation of the ambient Euclidean space which maps it onto $\mathbb R^2\times \{0\}$. 
We then consider the rescaled maps
\begin{equation}\label{e:rescaled-maps}
\tilde{N}_j (x) := \frac{N_{k(j)} (\ex_{k (j)} (s_j x))}{D (s_j)^{\sfrac 12}}\, .    
\end{equation}

The main conclusion of our blow-up analysis is the following

\begin{theorem}\label{t:blow-up-2}
Let $T$ be as in Assumption \ref{a:main-local-3} and assume that $0$ is not a regular point. Let $s_j\downarrow 0$ be an arbitrary vanishing sequence of positive radii, let $k(j)$ be an arbitrary choice of integers satisfying \eqref{e:def_k(j)} and let $\tilde{N}_j: B^+_1 \to \Iqs$, where $B^+_1 = B_1\cap \{(x_1, x_2): x_2\geq 0\}$. Then a subsequence, not relabeled, converges strongly in $W^{1,2} (B_1^+)$ to a map $\tilde N_\infty$ satisfying the following conditions:
\begin{itemize}
    \item[(i)] $\tilde{N}_\infty (x_1, 0) = Q \a{0}$ for all $x_1$;
    \item[(ii)] $\tilde{N}_\infty$ is Dir-minimizing;
    \item[(iii)] $\tilde{N}_\infty$ is $I_0$-homogeneous, where $I_0$ is the positive number in Corollary \ref{c:frequency}.
    \item[(iv)] $\etaa\circ \tilde{N}_\infty \equiv 0$;
    \item[(v)] $\int_{B_1^+} |D\tilde{N}_\infty|^2=1$.
\end{itemize}
In particular $I_0=1$.
\end{theorem}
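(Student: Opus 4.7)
The proof will be carried out in five steps, following the blow-up strategy of \cite{DS4} and \cite{DDHM}, with the crucial simplification that the choice of $k(j)$ with $t_{k(j)+1}< s_j\le t_{k(j)}$ lets us work with a \emph{single} center manifold $\cM_{k(j)}$ and its normal approximation $N_{k(j)}$ at each scale $s_j$, so that no ``change of sheet'' enters the rescaling procedure.

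\emph{Step 1: compactness.} By the very definition of $D(s_j)$, together with the fact that the $C^{3,\omega}$-norm of the graphical parametrization of $\cM_{k(j)}$ is $o(1)$ (so that $\ex_{k(j)}$ is, at scale $s_j$, close to the identity in a quantitative way), one obtains $\int_{B_1^+}|D\tilde N_j|^2 = 1+o(1)$. Combined with Lemma \ref{l:Poincare} and the vanishing of $N_{k(j)}$ on $\Gamma_{k(j)}$, this yields a uniform $W^{1,2}$-bound on $\tilde N_j$. Extracting a subsequence, $\tilde N_j\rightharpoonup\tilde N_\infty$ weakly in $W^{1,2}(B_1^+,\Iqs)$; the trace condition $\tilde N_\infty(x_1,0)=Q\a{0}$ (property (i)) follows from continuity of the trace and the fact that $\bp_0(\Gamma_{k(j)})$ converges to $\mathbb R\times\{0\}$.

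\emph{Step 2: strong $W^{1,2}$-convergence and Dir-minimality.} We consider the rescaled currents $(T_{0,s_j})$ together with the associated rescaled center manifolds and their normal approximations, each renormalized by the factor $D(s_j)^{-1/2}$ in the normal directions. By the global estimates \eqref{e:global_Lip}--\eqref{e:global_masserr} combined with the lower bound on $D(s_j)$ furnished by Proposition \ref{p:change-of-center-manifolds}, and the Taylor expansion of Proposition \ref{p:TaylorExpansionMassGraph}, the rescaled currents converge (in the flat topology, locally in $\bC_1$) to the graph of $\tilde N_\infty$. A standard competitor argument (as in \cite[Section 9]{DS4}): any strictly better Dir-competitor $w$ for $\tilde N_\infty$ in some $B_r^+$ could be patched and lifted to a competitor current for $T_{0,s_j}$ via the strong Lipschitz approximation (Theorem \ref{t:strong_Lipschitz}), contradicting the minimality of $T$; hence $\tilde N_\infty$ is Dir-minimizing (property (ii)). The same competitor construction, applied with $w=\tilde N_\infty$ itself, upgrades weak to strong $W^{1,2}$-convergence, yielding in particular property (v), $\int_{B_1^+}|D\tilde N_\infty|^2=1$.

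\emph{Step 3: $I_0$-homogeneity.} Denote by $I_{\tilde N_j}(\rho)$ the frequency of $\tilde N_j$ computed with respect to the rescaled cut-off $\phi$ and the rescaled distance $d_{s_j}(x):=s_j^{-1}d(s_jx)$. By scale invariance of the construction $I_{\tilde N_j}(\rho)=I(s_j\rho)$, which by Corollary \ref{c:frequency} converges to $I_0$ for every fixed $\rho\in(0,1)$. On the other hand, since $d_{s_j}\to|x|$ in $C^2_{\mathrm{loc}}$ and the rescaled center manifold becomes flat, the strong $W^{1,2}$-convergence of $\tilde N_j$ gives $I_{\tilde N_j}(\rho)\to I_{\tilde N_\infty}(\rho)$, the classical Almgren frequency of $\tilde N_\infty$ at scale $\rho$. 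Thus $I_{\tilde N_\infty}(\rho)\equiv I_0$ on $(0,1)$; the rigidity part of the Almgren monotonicity formula for Dir-minimizers with zero boundary data (as in the computations of Section \ref{s:multi-valued}) forces $\tilde N_\infty$ to be $I_0$-homogeneous, giving property (iii).

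\emph{Step 4: vanishing average and conclusion.} Proposition \ref{p:additional}, applied to each $\cM_{k(j)}$ at scale $s_j$ and combined with the lower bound $D(s_j)\ge C^{-1} s_j^2\,\bE(T,\bB_{6s_j})$ from Proposition \ref{p:change-of-center-manifolds}, shows that $D(s_j)^{-1/2}\,\etaa\circ N_{k(j)}\circ\ex_{k(j)}(s_j\,\cdot)\to 0$ in $L^1(B_1^+)$. Since $\etaa\circ\tilde N_\infty$ is a classical harmonic function on $B_1^+$ with zero trace on $\partial B_1^+\cap\{x_2=0\}$ and is the strong $W^{1,2}$-limit of $\etaa\circ\tilde N_j$, we conclude $\etaa\circ\tilde N_\infty\equiv 0$ (property (iv)). The map $\tilde N_\infty$ now satisfies all hypotheses of Theorem \ref{t:I=1}: either it is a single classical harmonic with multiplicity $Q$, which together with $\etaa\circ\tilde N_\infty\equiv 0$ forces $\tilde N_\infty\equiv Q\a{0}$, contradicting (v); or $I_0=1$, which is the desired conclusion.

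The main obstacle is Step 2, where one must upgrade weak to strong convergence \emph{and} establish Dir-minimality in the limit simultaneously. Both rest on the full machinery developed in the previous sections --- the strong Lipschitz approximation (Theorem \ref{t:strong_Lipschitz}), the mass Taylor expansion (Proposition \ref{p:TaylorExpansionMassGraph}), and the two-sided bound on $D(s_j)$ provided by Proposition \ref{p:change-of-center-manifolds} --- and form the technical core of the blow-up analysis; once they are in place, the remaining properties (i), (iii), (iv), (v) and the final identification $I_0=1$ via Theorem \ref{t:I=1} follow cleanly.
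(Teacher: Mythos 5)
Your proposal follows essentially the same route as the paper: compactness, strong $W^{1,2}$-convergence and Dir-minimality via a competitor argument, $I_0$-homogeneity from the constancy of the limiting frequency, vanishing of the average, and the final appeal to Theorem \ref{t:I=1}. The main missing ingredient is the \emph{doubling estimate} for $D$, which the paper establishes as its very first step. Since $D(s_j)$ is defined with the non-sharp cut-off $\phi$ supported on $\{d<s_j\}$, the normalization $D(s_j)^{-1/2}$ only yields $\int_{B^+_{1/2}}|D\tilde N_j|^2\le 1+o(1)$ a priori, and gives no upper bound on $\int_{B^+_1}|D\tilde N_j|^2$; the claim ``$\int_{B^+_1}|D\tilde N_j|^2 = 1 + o(1)$'' in your Step 1 is not justified as stated. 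The paper uses the two-sided control on $H(4s_j)/H(s_j/2)$ (a consequence of the almost monotonicity of $H'/H$) together with the boundedness of $I(r)$ near $I_0$ to deduce $D(4s_j)\le C\,D(s_j/2)$, which in turn yields $\int_{B_2^+}|D\tilde N_j|^2\le C\int_{B_1^+}|D\tilde N_j|^2$. This bound is indispensable both for the uniform $W^{1,2}$-bound on $B_1^+$ in your Step 1 and for the annular patching of competitors in your Step 2; your Step 3 invokes Corollary \ref{c:frequency} only after compactness has been secured, while the doubling estimate must come first.

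A secondary imprecision is in Step 4: Proposition \ref{p:change-of-center-manifolds} gives the lower bound $D\ge C^{-1}t_k^2\,\bE(T,6t_k)$ only at the stopping radii $t_k$, not at the arbitrary scale $s_j\in(t_{k(j)+1},t_{k(j)}]$; extending it to general $s_j$ again requires the doubling estimate. Once supplied, your route to $\etaa\circ\tilde N_\infty\equiv 0$ via Proposition \ref{p:additional} is a legitimate alternative to the paper's appeal to the argument of \cite[Section~10.2]{DDHM} (which proceeds through the estimate \eqref{e:av_region}); both yield the vanishing of the average because the center manifold is, by construction, comparable to the average of the sheets up to terms smaller than $D(s_j)^{1/2}$.
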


Then the arguments of Theorem \ref{t:classification} apply to $\tilde N_\infty$ and in particular give that $I_0=1$. 

\begin{proof}[Proof of Theorem \ref{t:blow-up-2}]
Observe first that, following the computations of \cite[Section 10.1]{DDHM} we conclude that
\begin{align*}
    e^{-Cs_j} 8^{1+I_0} \leq \frac{H(4s_j)}{H(s_j/2)} \leq e^{Cs_j} 8^{1+4I_0}
\end{align*}
as long as $s_j \leq t_{k(j)}$.
Since $I_0$ exists and is finite, there is a constant $C$ (depending only on $I_0$) such that
\[
D(4 s_j) \leq C D (s_j/2)\,.
\]
On the other hand, arguing as in the proof of Proposition \ref{p:change-of-center-manifolds}, we easily see that 
\[
D (t_{k(j)}) \geq C^{-1} t_{k(j)}^2 \bE (T, 24 t_{k(j)})
\]
(we just need to choose the constant $M_0$ appropriately large to compensate for the larger radius in the right hand side) while $D (4t_{k(j)})\leq C t_{k(j)}^2 \bE(T, 24 t_{k(j)})$. Now, since the geodesic ball $\mathcal{B}_{t_{k(j)}}$ in $\cM_{k(j)}$ contains $\{d<t_{k(j)}/2\}$ while the geodesic ball $\mathcal{B}_{2 t_{k(j)}}\subset \{d < 4 t_{k(j)} \}$, using the fact that the rescaling of the manifolds converge smoothly to the flat plane $V_0$, we easily conclude that
\[
\int_{B_2^+} |D\tilde{N}_j|^2 \leq C \int_{B_1^+} |D\tilde{N}_j|^2\, .
\]
We can then follow the argument of \cite[Section 10.3]{DDHM} to conclude that, up to subsequences, $\tilde{N}_j$ converges strongly in the $W^{1,2} (B_1^+)$ topology to a Dir-minimizing map $\tilde{N}_\infty$. Likewise we can follow the argument of \cite[Section 10.2]{DDHM} to conclude that $\etaa\circ \tilde{N}_\infty$ vanishes identically. Recall that the maps $N_{k(j)}$ vanish identically on $\Gamma$, while the rescalings of the latter converge smoothly to $T_0 \Gamma = \{x_2=0\}$. The strong convergence then implies that $\tilde{N}_\infty = Q \a{0}$ on $\{x_2=0\}\cap B_1$. We have thus proved (i), (ii), (iv), and (v). We can however also see that 
\[
\frac{r \int \phi (r^{-1} |x|) |D\tilde{N}_\infty (x)|^2\, dx}{- \int \phi' ( r^{-1} |x|) |x|^{-1} |\tilde{N}_\infty (x)|^2\, dx} = \lim_{j \to \infty} \frac{r s_j D (r s_j)}{H (r s_j)} = I_0\, ,
\]
which means that the frequency function of $\tilde{N}_\infty$ is constant. This however happens if and only if $\tilde{N}_\infty$ is $I_0$-homogeneous. 

As for the final statement, we invoke Theorem \ref{t:I=1}. 
\end{proof}

Now that we know that $I_0=1$, we can then conclude that by the strong convergence of $\{\tilde N_j\}_j$ in $W^{1,2} (B_1^+)$, we have
\begin{corollary}\label{c:frequency=1}
If $T$ is as in Theorem \ref{t:blow-up-2}, then
\[
\lim_{r\downarrow 0} \frac{D(2r)}{D(r)} = 4\, .
\]
\end{corollary}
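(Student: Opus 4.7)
The plan is to reduce the claim to the $1$-homogeneity of the blow-up maps produced by Theorem~\ref{t:blow-up-2}. Suppose by contradiction that the limit does not exist or differs from $4$: then there is a sequence $s_j\downarrow 0$ such that $D(2s_j)/D(s_j)\to L\in[0,\infty]$ with $L\neq 4$. For each $j$, pick $k(j)$ as in \eqref{e:def_k(j)} and define $\tilde N_j$ via \eqref{e:rescaled-maps}. By Theorem~\ref{t:blow-up-2}, a subsequence (still denoted $\{\tilde N_j\}$) converges strongly in $W^{1,2}(B_1^+)$, and in fact in $W^{1,2}(B_R^+)$ for every fixed $R<\mathrm{dist}(0,\partial\bB_1)/s_j$ once $j$ is large, to a $1$-homogeneous Dir-minimizer $\tilde N_\infty:V^+\to\cA_Q(\R^n)$ with $\tilde N_\infty|_{\{x_2=0\}}=Q\a{0}$ and $\int_{B_1^+}|D\tilde N_\infty|^2=1$. (The fact that the convergence holds on $B_R^+$ rather than just $B_1^+$ is contained in the proof of Theorem~\ref{t:blow-up-2}, since one obtains a uniform Dirichlet bound on $B_2^+$ from the almost-monotonicity of the frequency and the existence of $I_0$.)

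Next I would rewrite $D(\rho)$, for $\rho\in\{s_j,2s_j\}$, in the rescaled coordinates. Using the exponential map $\ex_{k(j)}:T_0\cM_{k(j)}\to\cM_{k(j)}$ identified with $V_0=\R^2$, the change of variables $x=\ex_{k(j)}(s_j y)$ and the definition of $\tilde N_j$ yield
\begin{equation}\label{e:rescaled-D}
\frac{D(\lambda s_j)}{D(s_j)}=\int \phi\!\left(\frac{d_{k(j)}(\ex_{k(j)}(s_jy))}{\lambda s_j}\right)|D\tilde N_j|^2(y)\,J_j(y)\,dy\bigg/\int \phi\!\left(\frac{d_{k(j)}(\ex_{k(j)}(s_jy))}{s_j}\right)|D\tilde N_j|^2(y)\,J_j(y)\,dy,
\end{equation}
where $J_j$ is the Jacobian of the exponential map in these coordinates. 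Since the rescaled center manifolds $s_j^{-1}\cM_{k(j)}$ converge smoothly to the flat plane $V_0$ on compact sets (by the $C^{3,\omega}$ bound $\|\phii_{k(j)}\|_{C^{3,\omega}}\leq C\bmo(k(j))^{1/2}$ and $\bmo(k(j))\to 0$), and because $d$ satisfies $d(x)=|x|+O(|x|^2)$ by Definition~\ref{Def:distance}, we have $s_j^{-1}d_{k(j)}(\ex_{k(j)}(s_jy))\to|y|$ locally uniformly and $J_j\to 1$ locally uniformly. Combined with the strong $W^{1,2}$ convergence of $\tilde N_j$ and the boundedness of $\phi$, we can pass to the limit in \eqref{e:rescaled-D} for $\lambda=2$ to obtain
\[
\lim_{j\to\infty}\frac{D(2 s_j)}{D(s_j)}=\frac{\int \phi(|y|/2)\,|D\tilde N_\infty(y)|^2\,dy}{\int \phi(|y|)\,|D\tilde N_\infty(y)|^2\,dy}.
\]

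It remains to compute this ratio. Because $\tilde N_\infty$ is $1$-homogeneous, each component satisfies $(\tilde N_\infty)_i(\lambda y)=\lambda (\tilde N_\infty)_i(y)$, whence $D\tilde N_\infty$ is $0$-homogeneous, i.e.\ $D\tilde N_\infty(\lambda y)=D\tilde N_\infty(y)$ for $\lambda>0$. The substitution $y=2z$, together with $dy=4\,dz$ in two dimensions, gives
\[
\int \phi(|y|/2)\,|D\tilde N_\infty(y)|^2\,dy=\int \phi(|z|)\,|D\tilde N_\infty(2z)|^2\cdot 4\,dz=4\int \phi(|z|)\,|D\tilde N_\infty(z)|^2\,dz,
\]
and the denominator in \eqref{e:rescaled-D} is strictly positive (equal, after normalization, to a positive multiple of $\int_{B_1^+}|D\tilde N_\infty|^2=1$). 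Hence $L=4$, contradicting $L\neq 4$. Since every vanishing sequence $s_j$ admits a subsequence along which $D(2s_j)/D(s_j)\to 4$, the full limit exists and equals $4$. The main subtlety is to ensure the strong $W^{1,2}$ convergence on a neighborhood of $\overline{B_1^+}$ that covers the support of both cut-offs $\phi(\cdot/s_j)$ and $\phi(\cdot/(2s_j))$ after rescaling, which is provided by applying Theorem~\ref{t:blow-up-2} on $B_R^+$ for some fixed $R>2$.
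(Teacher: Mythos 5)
Your approach is the same one the paper sketches (strong $W^{1,2}$ convergence of the blow-ups combined with the $1$-homogeneity of $\tilde N_\infty$ and the scaling of $|D\tilde N_\infty|^2$), and the homogeneity computation at the end is correct. However, there is one genuine gap that needs to be addressed, and one claim that is stated more strongly than what Theorem~\ref{t:blow-up-2} actually asserts.

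The genuine gap is in the identity \eqref{e:rescaled-D}: it implicitly assumes that both $D(s_j)$ and $D(2s_j)$ are computed using the \emph{same} normal approximation $N_{k(j)}$. By the piecewise definition of $D$, this requires $2s_j \leq t_{k(j)}$. When instead $s_j \in (t_{k(j)}/2,\, t_{k(j)}]$ — which can happen for the entire contradicting sequence — one has $2s_j \in (t_{k(j)},\, t_{k(j)-1}]$ and the ``official'' $D(2s_j)$ is $D(N_{k(j)-1}, 2s_j)$, computed on the \emph{previous} center manifold $\cM_{k(j)-1}$. Your rescaling produces $D(N_{k(j)}, 2s_j)$ instead, so \eqref{e:rescaled-D} is not an identity in this case. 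You need an extra step showing $D(N_{k(j)-1}, 2s_j)/D(N_{k(j)}, 2s_j) \to 1$; this is the content of Proposition~\ref{p:change-of-center-manifolds2} together with Proposition~\ref{p:change-of-center-manifolds}, which give the comparability at $r = t_k$ with a relative error $\bE(T,6t_k)^{\tau_1} \to 0$, and the argument must be run (or the proposition extended) for $r$ in the range $(t_{k(j)}, 2t_{k(j)}]$. Without this, the contradiction argument does not go through for sequences that stay near the upper endpoint of the flattening intervals.

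Secondly, Theorem~\ref{t:blow-up-2} as stated asserts strong convergence only in $W^{1,2}(B_1^+)$, while you need it on a region covering the rescaled support of $\phi(\cdot/(2s_j))$, i.e.\ roughly $B_2^+$ (or slightly more to account for $d = |x| + O(|x|^2)$ and the Jacobian). You correctly flag that the proof of Theorem~\ref{t:blow-up-2} establishes the uniform Dirichlet bound on $B_2^+$ via $D(4s_j) \leq C\,D(s_j/2)$, so the extension to $B_R^+$ for a fixed $R > 2$ is plausible, but your assertion that convergence holds ``in $W^{1,2}(B_R^+)$ for every fixed $R < \operatorname{dist}(0,\partial\bB_1)/s_j$'' is not literally what the theorem provides: for larger $R$ you need a uniform Dirichlet bound on $B_{2R}^+$, which in turn requires $\lambda s_j$ to remain within the domain of $\cM_{k(j)}$, and this again runs into the interval-crossing issue for $\lambda$ not small. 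It would be cleaner to state precisely which $R$ suffices (any fixed $R$ with $2 < R < \tfrac{7}{2}$ works given the domain of $\bar\cM_k$) and to cite the Dirichlet bound from the proof of Theorem~\ref{t:blow-up-2} rather than the theorem's statement. With these two repairs the argument is complete and coincides with the paper's intended proof.
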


\subsection{Proof of \eqref{e:slow-decay} and conclusion} Fix $\varsigma>0$ and consider the sequence of radii $r_k := 2^{-k}$. We know from Corollary \ref{c:frequency=1} that, for $k$ sufficiently large
\[
D (r_k) \geq 2^{-2-\varsigma/2} D (r_{k-1})\, .
\]
In particular we conclude the existence of a $k_0$ such that for every $k \geq k_0$, we have
\[
D (2^{-k}) \geq 2^{-(2+\varsigma/2) (k-k_0)} D (2^{-k_0})\, .
\]
In particular for every $r\leq 2^{-k_0}$ we can write
\[
D (r) \geq \frac{D (2^{-k_0})}{2^{2+\varsigma/2}} r^{2+\varsigma/2}
\]
and since $D (2^{-k_0})>0$, \eqref{e:slow-decay} readily follows.



\bibliographystyle{plain}
\bibliography{references}

\begin{thebibliography}{10}

\bibitem{Collection}
Some open problems in geometric measure theory and its applications suggested
  by participants of the 1984 {AMS} summer institute.
\newblock In J.~E. Brothers, editor, {\em Geometric measure theory and the
  calculus of variations ({A}rcata, {C}alif., 1984)}, volume~44 of {\em Proc.
  Sympos. Pure Math.}, pages 441--464. Amer. Math. Soc., Providence, RI, 1986.

\bibitem{AllPhD}
W.~K. Allard.
\newblock {On boundary regularity for {P}lateau's problem}.
\newblock {\em Bull. Amer. Math. Soc.}, 75:522--523, 1969.

\bibitem{All}
W.~K. Allard.
\newblock {On the first variation of a varifold}.
\newblock {\em Ann. of Math. (2)}, 95:417--491, 1972.

\bibitem{AllB}
W.~K. Allard.
\newblock {On the first variation of a varifold: boundary behavior}.
\newblock {\em Ann. of Math. (2)}, 101:418--446, 1975.

\bibitem{Almgren}
Frederick~J. Almgren, Jr.
\newblock {\em Almgren's big regularity paper}, volume~1 of {\em World
  Scientific Monograph Series in Mathematics}.
\newblock World Scientific Publishing Co. Inc., River Edge, NJ, 2000.

\bibitem{Alm}
Jr. F.~J. Almgren.
\newblock {\em {Almgren's big regularity paper}}, volume~1 of {\em {World
  Scientific Monograph Series in Mathematics}}.
\newblock World Scientific Publishing Co. Inc., River Edge, NJ, 2000.

\bibitem{Theodora}
T.~{Bourni}.
\newblock {Allard-type boundary regularity for {$C^{1,\alpha}$} boundaries}.
\newblock {\em ArXiv e-prints}, August 2010.

\bibitem{DDHM}
C.~{De Lellis}, G.~{De Philippis}, J.~Hirsch, and A.~Massaccesi.
\newblock On the boundary behavior of mass-minimizing integral currents, 2018.

\bibitem{DNS}
C.~De~Lellis, S.~Nardulli, and S.~Steinbr\"uchel.
\newblock In preparation.

\bibitem{DS1}
C.~{De Lellis} and E.~Spadaro.
\newblock {{$Q$}-valued functions revisited}.
\newblock {\em Mem. Amer. Math. Soc.}, 211(991):vi+79, 2011.

\bibitem{DS3}
C.~{De Lellis} and E.~Spadaro.
\newblock {Regularity of area minimizing currents {I}: gradient {$L^p$}
  estimates}.
\newblock {\em Geom. Funct. Anal.}, 24(6):1831--1884, 2014.

\bibitem{DS2}
C.~{De Lellis} and E.~Spadaro.
\newblock {Multiple valued functions and integral currents}.
\newblock {\em Ann. Sc. Norm. Super. Pisa Cl. Sci. (5)}, 14(4):1239--1269,
  2015.

\bibitem{DS4}
C.~{De Lellis} and E.~Spadaro.
\newblock {Regularity of area minimizing currents {II}: center manifold}.
\newblock {\em Ann. of Math. (2)}, 183(2):499--575, 2016.

\bibitem{DS5}
C.~{De Lellis} and E.~Spadaro.
\newblock {Regularity of area minimizing currents {III}: blow-up}.
\newblock {\em Ann. of Math. (2)}, 183(2):577--617, 2016.

\bibitem{DSS4}
C.~{De Lellis}, E.~{Spadaro}, and L.~{Spolaor}.
\newblock {Regularity theory for $2$-dimensional almost minimal currents III:
  blowup}.
\newblock {\em ArXiv e-prints. To appear in {Jour. of Diff. Geom}}, August
  2015.

\bibitem{DSS3}
C.~{De Lellis}, E.~Spadaro, and L.~Spolaor.
\newblock {Regularity {T}heory for 2-{D}imensional {A}lmost {M}inimal
  {C}urrents {II}: {B}ranched {C}enter {M}anifold}.
\newblock {\em Ann. PDE}, 3(2):3:18, 2017.

\bibitem{DSS1}
C.~{De Lellis}, E.~Spadaro, and L.~Spolaor.
\newblock {Uniqueness of tangent cones for two-dimensional almost-minimizing
  currents}.
\newblock {\em Comm. Pure Appl. Math.}, 70(7):1402--1421, 2017.

\bibitem{DSS2}
C.~De~Lellis, E.~Spadaro, and L.~Spolaor.
\newblock Regularity theory for {$2$}-dimensional almost minimal currents {I}:
  {L}ipschitz approximation.
\newblock {\em Trans. Amer. Math. Soc.}, 370(3):1783--1801, 2018.

\bibitem{Fed}
H.~Federer.
\newblock {\em {Geometric measure theory}}.
\newblock {Die Grundlehren der mathematischen Wissenschaften, Band 153}.
  Springer-Verlag New York Inc., New York, 1969.

\bibitem{HS}
R.~Hardt and L.~Simon.
\newblock {Boundary regularity and embedded solutions for the oriented
  {P}lateau problem}.
\newblock {\em Ann. of Math. (2)}, 110(3):439--486, 1979.

\bibitem{Jonas}
J.~Hirsch.
\newblock {Boundary regularity of {D}irichlet minimizing {$Q$}-valued
  functions}.
\newblock {\em Ann. Sc. Norm. Super. Pisa Cl. Sci. (5)}, 16(4):1353--1407,
  2016.

\bibitem{HM}
Jonas Hirsch and Michele Marini.
\newblock Uniqueness of tangent cones to boundary points of two-dimensional
  almost-minimizing currents.
\newblock {\em arXiv preprint arXiv:1909.13383}, 2019.

\bibitem{Sim}
L.~Simon.
\newblock {\em {Lectures on geometric measure theory}}, volume~3 of {\em
  {Proceedings of the Centre for Mathematical Analysis, Australian National
  University}}.
\newblock Australian National University Centre for Mathematical Analysis,
  Canberra, 1983.

\bibitem{Wh}
B.~White.
\newblock {Tangent cones to two-dimensional area-minimizing integral currents
  are unique}.
\newblock {\em Duke Math. J.}, 50(1):143--160, 1983.

\end{thebibliography}

\end{document}